\theoremstyle{plain}
\newtheorem{thm}{Theorem}[section]
\newtheorem{cor}[thm]{Corollary}
\newtheorem{lem}[thm]{Lemma}
\newtheorem{prop}[thm]{Proposition}
\newtheorem{step}{Step}
\newtheorem{conj}[thm]{Conjecture}
\newtheorem{example}{Example}
\newtheorem*{thm_n}{Theorem}
\theoremstyle{definition}
\newtheorem*{defn}{Definition}
\theoremstyle{remark}
\newtheorem*{rem_n}{Remark}
\newtheorem*{conv_n}{Convention}
\numberwithin{section}{chapter}
\newcommand{\thmref}[1]{Theorem~\ref{#1}}
\newcommand{\lemref}[1]{Lemma~\ref{#1}}
\newcommand{\propref}[1]{Proposition~\ref{#1}}
\newcommand{\stepref}[1]{Step~\ref{#1}}
\newcommand{\corref}[1]{Corollary~\ref{#1}}
\newcommand{\exref}[1]{Example~\ref{#1} on page~\pageref{#1}}
\def\PP{{\mathbb P}}
\def\RR{{\mathbb R}}
\def\QQ{{\mathbb Q}}
\def\NN{{\mathbb N}}
\def\ZZ{{\mathbb Z}}
\def\FF{{\mathbb F}}
\def\AA{{\mathbb A}}
\def\BB{{\mathbb B}}
\def\LLL{{\mathcal L}}
\def\FFF{{\mathcal F}}
\def\EEE{{\mathcal E}}
\def\BBB{{\mathcal B}}
\def\UUU{{\mathcal U}}
\def\AAA{{\mathcal A}}
\def\SSS{{\mathcal S}}
\def\WWW{{\mathcal W}}
\def\XXX{{\mathcal X}}
\def\III{{\mathcal I}}
\def\CCC{{\mathcal C}}
\def\EEE{{\mathcal E}}
\def\GGG{{\mathcal G}}
\def\KKK{{\mathcal K}}
\def\MMM{{\mathcal M}}
\def\TTT{{\mathcal T}}
\def\kkkk{{\mathfrak K}}
\def\nqed{\renewcommand{\qed}{}}
\def\blackqed{\renewcommand{\qedsymbol}{$\blacksquare$}}
\newcommand{\booktitle}[1]{\textit{#1}}
\newcommand{\scal}[1]{\left\langle#1\right\rangle}
\newcommand{\setof}[1]{\left\{#1\right\}}
\def\lqs{\leqslant}
\def\gqs{\geqslant}
\def\nabla{\triangledown}
\def\im{\mathop{\rm im}\nolimits}
\def\ker{\mathop{\rm ker}\nolimits}
\def\coker{\mathop{\rm coker}\nolimits}
\def\card{\mathop{\rm card}\nolimits}
\def\ord{\mathop{\rm ord}\nolimits}
\def\inv{\mathop{\rm inv}\nolimits}
\def\Id{\mathop{\rm Id}\nolimits}
\def\id{\mathop{\rm id}\nolimits}
\def\pr{\mathop{\rm pr}\nolimits}
\def\Aut{\mathop{\rm Aut}\nolimits}
\def\dim{\mathop{\rm dim}\nolimits}
\def\rank{\mathop{\rm rank}\nolimits}
\def\Hom{\mathop{\rm Hom}\nolimits}
\def\Ext{\mathop{\rm Ext}\nolimits}
\def\Ph{\mathop{\rm Ph}\nolimits}
\def\Compon{\mathop{\rm Compon}\nolimits}
\def\const{{\rm const}}
\def\tim{\!\times\!}
\def\eps{\varepsilon}
\def\H{\Tilde H}
\def\+{\oplus}
\newenvironment{abc}[0]{

\begin{enumerate}
}{
\end{enumerate}

}
\newenvironment{enroman}[0]{

\begin{enumerate}
}{
\end{enumerate}

}
\newcounter{ovaj}
\newcommand{\navedi}[1]{\setcounter{ovaj}{#1}({\bf\roman{ovaj}})}
\def\colim{\qopname\relax n{colim}}
\def\ll{\lim{}^{\!\!1\,}}
\newcommand{\lm}[1]{\lim_{#1}{}^{\!\!1\,}}
\definecolor{brown}{rgb}{.65, .16, .16}
\definecolor{lightblue}{rgb}{.68, .85, .9}
\definecolor{palegreen}{rgb}{.6, .98, .6}
\definecolor{pink}{rgb}{1, .75, .8}
\definecolor{wheat}{rgb}{.96, .87, .7}
\definecolor{softred}{rgb}{1,.2,0}
\newcommand{\prevthmref}[1]{Theorem~\ref{#1} of \cite{smrekar}}
\newcommand{\prevlemref}[1]{Lemma~\ref{#1} of \cite{smrekar}}
\begin{document}

\frontmatter

\title{CW Type of Inverse Limits and Function Spaces}
\author{Jaka Smrekar}
\address{Fakulteta za matematiko in fiziko, Univerza v Ljubljani, Jadranska ulica 19, SI-1111 Ljubljana, Slovenia}
\email{jaka.smrekar@fmf.uni-lj.si}

\thanks{Part of this work was developed while the author was a Marie Curie pre-doctoral fellow at the
	Centre de Recerca Matem\`{a}tica, Barcelona.}
\thanks{The author was supported in part by the European Community and the CRM, Barcelona under contract
	HPMT-CT-2000-00075, as well as by the Ministry for Education, Science and Sport of the Republic of Slovenia
	Research Program No. 101-509.}
\subjclass[2000]{Primary 55P99; Secondary 54C35, 55P15, 55P20, 55P60, 55R05.}
\keywords{Homotopy type, CW complex, function space, tower of fibrations, uniform Mittag-Leffler property, Zabrodsky lemma,
	localization, H-space exponent, Eilenberg-MacLane space.}


\begin{abstract}
We continue the investigation of CW homotopy type of spaces of continuous functions
between two CW complexes begun by J.~Milnor in 1959 and P.~Kahn in 1984. Viewing function spaces as particular cases of
inverse limits we also study certain inverse systems of fibrations between CW homotopy type spaces.

If the limit space $Z_\infty$ of an inverse sequence $\setof{Z_i}$ of fibrations between CW type spaces has
CW type then a subsequence of $\setof{\Omega Z_i}$ splits into the product of a sequence of homotopy equivalences and
one of nullhomotopic maps.

If for some $N>0$, all spaces $Z_i$ have $\pi_k(Z_i)=0$ for $k>N$, then
the question of CW type of $Z_\infty$ depends solely on the induced functions $\pi_k(Z_j)\to\pi_k(Z_i)$.
This applies to $Z_i=Y^{L_i}$ where $\pi_k(Y)=0$ for $k>N$ and $\setof{L_i}$ is an
ascending sequence of finite complexes. Here $Z_\infty=Y^{\cup L_i}$, the space of continuous
functions $(\cup L_i)\to Y$ with the compact open topology.

In general, if the path component of $g\in Y^X$ has CW type then
$\Omega(Y^X,g)\to\Omega(Y^L,g\vert_L)$ is a homotopy equivalence for a countable subcomplex $L$ of $X$.
A suitable converse holds as well.

Function spaces of CW type lack phantom phenomena in a strong sense. This provides interesting
examples. One is a space of pointed maps that is weakly contractible but not contractible.
Next, for $X$ the localization of a finite complex at a set of primes $P$,
the question of CW type of $Y^X$ is related, and sometimes equivalent,
to that of eventual geometric H-space exponents of $Y$.

If $Y$ is a $P$-local and $X$ a simply connected complex then localization
$X\to X_{(P)}$ induces a genuine homotopy equivalence $Y^{X_{(P)}}\to Y^X$ regardless of whether $Y^X$ has
CW type or not.

For $Y=K(G,n)$ we give necessary and sufficient conditions for $Y^X$ to have CW type in terms of
homology of $X$. If $\oplus_n\pi_n(Y)$ is finitely generated and $X$ is $1$-connected
then we give necessary and `almost' sufficient conditions.

Some properties of CW complexes $X$ are equivalent to $Y^X$ having CW homotopy
type for a certain family of complexes $Y$. For example, $X$ is finitely dominated if and only if $\pi_1(X)$
is finitely presented and $Y^X$ has CW type for all complexes $Y$.
\end{abstract}

\maketitle

\tableofcontents


\chapter*{Introduction}\label{intro}

\setcounter{thm}{0}

For topological spaces $X$ and $Y$ we denote by $Y^X$ the space of continuous functions $X\to Y$ with the compact
open topology.

One of the first results on CW homotopy type of function spaces is a result of Kuratowski \cite{kuratowski},
saying that if $Y$ is an absolute neighbourhood retract for the class of metric spaces and $K$ is a compact
metric space, then $Y^K$ is also an absolute neighbourhood retract for the class of metric spaces. (Later
it was shown that $Y$ has the homotopy type of a countable CW complex if and only if it has the homotopy
type of an absolute neighbourhood retract for the class of metric spaces; see Milnor \cite{milnor} for details.)

In his fundamental study of spaces having the homotopy type of a CW complex, Milnor \cite{milnor} strengthened
Kuratowski's result to show that $Y^X$ has CW homotopy type of a CW complex if $Y$ has and $X$ is a compact Hausdorff
space. Some effort has already been made to extend Milnor's theorem. P.J.Kahn \cite{kahn} proved that $Y^X$ has the
homotopy type of a CW complex when $Y$ has the homotopy type of a CW complex with $\pi_k(Y)$ trivial for $k\gqs n+1$
and $X$ has the homotopy type of a CW complex with finite $n$-skeleton.

We are interested in both sufficient and necessary conditions on CW complexes $X$ and $Y$ for the function space
$Y^X$ to have CW homotopy type. The restriction that $X$ be a CW complex (or have homotopy type of one) is motivated
by Kahn's result and is natural for homotopy theory.

Previous work of the author \cite{smrekar} on this problem shows that the conditions of Kahn are not necessary
and gives other examples of function spaces of CW homotopy type.

To develop sufficient conditions for the space $Y^X$ to have CW homotopy type, we view $Y^X$ as the limit of
inverse system $\setof{Y^L\,\vert\,L\in\LLL}$ where $\LLL$ is a suitable set of subcomplexes of $X$, and the bond for
$L\lqs L'$ is the restriction fibration $Y^{L'}\to Y^L$. Notably, choosing $\LLL$ to be the set $\KKK$ of finite
subcomplexes of $X$, the associated inverse system is one of spaces of CW homotopy type, by Milnor's theorem.

This inverse system approach makes it natural to consider the problem in a slightly more general setting.
Namely, if $Z$ is the limit space of an inverse system $\setof{Z_\lambda\,\vert\,\lambda}$ of fibrations
between CW homotopy type spaces, when has $Z$ the homotopy type of a CW complex?

Milnor has shown that CW homotopy type is intimately related to local contractibility notions.
One of them is equilocal convexity. Milnor introduced it in \cite{milnor} to show that if a space is paracompact
and equilocally convex, then it has the homotopy type of a CW complex. He used this to prove his fundamental result
concerning CW homotopy type of function spaces.

To investigate necessary conditions for a topological space to have the homotopy type of a CW complex it is natural
to consider a much weaker form of local contractibility, namely semilocal contractibility, since it is
homotopy invariant.

The lack of semilocal contractibility has been used a few times to demonstrate that spaces do not have the homotopy
type of a CW complex; see for example Skljarenko \cite{skljarenko} as well as Dydak and Geoghegan \cite{dyd-geog}.
In presence of additional properties, semilocal contractibility can be a necessary and sufficient condition for
a space to have CW homotopy type, see Allaud \cite{allaud} and Sch\"{o}n \cite{schon}.

In tune with the cited existing results on CW homotopy type of function spaces, we pursue the intuition that CW
complex $X$ must be `small' with respect to CW complex $Y$ for the space $Y^X$ to have CW type. This intuition is
justified in various ways in this paper.

{\it Acknowledgement.} This work was begun at the University of Ljubljana and completed at the Centre de Recerca
Matem\`{a}tica of Barcelona while on a one-year leave. The author wishes to express sincere appreciation to both
institutions for making this possible.

%
%

\section*{Overview of results}
In Chapter \ref{slc_cw_type} we treat CW homotopy type of certain inverse systems of fibrations.
Let $X$ be a CW complex. The set $\XXX$ of all subcomplexes of $X$ is a complete lattice,
and the subset $\KKK$ of finite subcomplexes is a sublattice with the property that every $L\in\XXX$
can be expressed as the supremum of a subset of $\KKK$.
 
In \S\ref{inverse_systems} and \S\ref{sufficient_necessary} we study CW homotopy type of limits of inverse systems
indexed over a complete lattice that admits an appropriate sublattice over which the system is one of CW homotopy
type spaces. We give necessary conditions for CW type which serve as our main obstruction throughout the paper,
and some sufficient conditions. The developed results are mostly results of technical nature to be applied to
function spaces in subsequent sections. 

However, the results obtained for sequences in \S\ref{results_on_sequences} are important independently of applications
to function spaces. Consider an inverse sequence $\dots\to W_3\to W_2\to W_1$ of fibrations between spaces of CW homotopy
type. We show the following. If the limit space $W_\infty$ also has CW homotopy type, then a subsequence of the sequence
of loop spaces $\setof{\Omega W_i}$ splits into the product of a sequence of nullhomotopic fibrations and a sequence of
homotopy equivalences. Conversely, if $\setof{P_i}$ is an inverse sequence of fibrations that are nullhomotopic, and
$\setof{Q_i}$ is an inverse sequence of fibrations that are homotopy equivalences, then the limit space of the sequence
$\setof{P_i\times Q_i}$ is homotopy equivalent to $Q_1$ (and has CW homotopy type if $Q_1$ has).

Thus the limit space of an inverse sequence of fibrations between spaces of CW type has itself CW type if and
\-- up to a looping \-- only if it is trivial in a certain sense; see \thmref{inverse_sequence} below.
In particular, this settles a gap in Theorem B of Dydak and Geoghegan \cite{dyd-geog}.

In \S\ref{phantom_components} we introduce the notion of `phantom path components' of the limit space $Z$ of
an inverse system of fibrations $\setof{Z_\lambda\,\vert\,\lambda}$ between spaces of CW type. In the particular
case when $Z=Y^X$ is the limit of $\setof{Y^L\,\vert\,L\text{ finite }\lqs X}$ this notion coincides with path
components of phantom maps $X\to Y$ with respect to the class of finite subcomplexes of $X$.

We show that if $Z$ has CW type then the systems of groups $\setof{\pi_k(Z_\lambda)\,\vert\,\lambda}$ satisfy
the Mittag-Leffler condition `uniformly with respect to $k$' (see \thmref{u_M-L_general}). In particular, $Z$
has no `phantom components'.

In \S\ref{sequences_of_postnikov_sections} we consider sequences of fibrations $\dots\to Y_3\to Y_2\to Y_1$ of spaces
of CW type with $\pi_k(Y_i)=0$ for $k>N$ independently of $i$. We show that, roughly, (see \thmref{postnikov_sequence})
the limit space $Y_\infty$ has CW homotopy type if and only if for each $k\gqs 1$ the sequence $\setof{\pi_k(Y_i)\,\vert\,i}$
satisfies the Mittag-Leffler condition and the canonical morphisms $\lim_j\pi_k(Y_j)\to\pi_k(Y_i)$ are injective for all
but finitely many $i$, and all $k\gqs 0$.

In Chapter \ref{function_spaces} we specialize to spaces of continuous functions between CW complexes.

In \S\ref{restriction} we define a family $\FFF(X,Y)$ of subspaces of $Y^X$ that contains `all function spaces arising
in practice' and is suitable for application of machinery developed in \S\ref{inverse_systems}. We give two simple-minded
examples of function spaces not of CW type.

In \S\ref{characterizations} we apply the machinery of \S\ref{inverse_systems} to spaces of functions between CW complexes
$X$ and $Y$.

We show that $Y^X$ has CW homotopy type if and only if it admits a numerable covering of open sets contractible
within $Y^X$ (see \thmref{fibre_SLC}). Such a covering exists in every paracompact semilocally contractible space.
For countable $X$, the space $Y^X$ is always paracompact, but it may lack semilocal contractibility.

Next, if $Y^X$ has CW homotopy type, then for every map $g\colon X\to Y$ and every countable subcomplex $L$ of $X$
there exists a bigger countable subcomplex $L'$ (depending on $g$ and $L$) such that the map
\[ \Omega(Y^X,g)\to\Omega(Y^{L'},g\vert_{L'}) \] is a homotopy equivalence (see \thmref{double_wicked}).
This is to say that if $Y^X$ has CW type, then $X$ is essentially {\it countable with respect to $Y$}.

Conversely, if for each countable subcomplex $L$ of $X$ there exists a bigger countable subcomplex $L'$ such that
$Y^X\to Y^{L'}$ is a weak equivalence onto image which has CW type, then $Y^X$ itself has CW type.

Both \thmref{fibre_SLC} and \thmref{double_wicked} are appropriately stated for arbitrary elements of $\FFF(X,Y)$.

A particular case of \thmref{wicked} is the following. If for each countable subcomplex $L$ of $X$ there exists a larger
countable subcomplex $L'$ such that $(Y,*)^{(L',*)}$ is contractible, then $(Y,*)^{(X,*)}$ is contractible.

In \S\ref{counting} we study restrictions on the set of path components of $Y^X$ of CW type, an issue which is
delicate because this set does not in general admit a natural group structure. We show that (\propref{homotopy_sets_1})
if $X$ has $\aleph_\xi$ cells and all homotopy groups of $Y$ have cardinality at most $\aleph_\eta$ then $Y^X$ has
at most $\aleph_{\max\setof{\xi,\eta}}$ path components.

If $X$ is countable and $Y$ has finite homotopy groups then (\propref{homotopy_sets_2}) $Y^X$ has actually finitely
many path components.

In \S\ref{phantom} we note that results of \S\ref{phantom_components} imply that if $Y^X$ has CW type then it
does not contain nontrivial phantom maps. In fact there are no nontrivial phantom maps $X\to\Omega^kY$ for any $k$,
in a `uniform way' with respect to $k$.

We give an example of a function space with essential phantom maps (which consequently does not have CW type)
and study its properties. Then we recall some examples from existing literature on phantom maps. In particular,
it follows from a famous theorem of Zabrodsky \cite{zabrodsky} that for simply connected CW complexes $X$ and $Y$
where $X$ has finitely many nontrivial homotopy groups and $Y$ is a finite complex the space $Y^X$ does not have
CW type if $(Y,*)^{(X,*)}$ is not weakly contractible.

In \S\ref{maps_into_postnikov_n_ads} we apply results of \S\ref{characterizations} and
\S\ref{sequences_of_postnikov_sections} to show that, roughly, for $Y$ a CW complex with finitely many nontrivial
homotopy groups the question of whether $Y^X$ has CW type only depends on the induced morphisms
$\pi_k(Y^M)\to\pi_k(Y^L)$ for $L\lqs M\lqs X$. (See \thmref{v}.) In particular, vanishing of phantom phenomena
together with an additional condition suffice for CW homotopy type in this case.

For $X$ a countable complex the statement becomes very simple.
Let $L_1\lqs L_2\lqs\dots$ be a filtration of finite subcomplexes for $X$ and assume that $Y$ has only finitely many
nontrivial homotopy groups. Then the path component $C$ of $g\in Y^X$ has CW homotopy type if and only if the
sequences $\setof{\pi_k(Y^{L_i},g\vert_{L_i})\,\vert\,i}$, for $k\gqs 1$, satisfy the Mittag-Leffler condition,
and the arrows $\lim_j\pi_k(Y^{L_j},g\vert_{L_j})\to\pi_k(Y^{L_i},g\vert_{L_i})$ are injective for all but finitely
many $i$, and all $k\gqs 0$ (for $k=0$ in the usual sense). In particular, if the homotopy groups of $Y$ are countable,
then $(Y,*)^{(X,*)}$ is contractible if and only if it is weakly contractible. The latter is `far from true' for
$Y$ with infinitely many nontrivial homotopy groups, see \exref{weakly_null}.

In \S\ref{general_constructions} we present a few general sufficient conditions for a function space to have CW
homotopy type; which we use in subsequent sections. In particular, in \propref{postnikov_adjunction} we recover
Theorem 1.1 of Kahn \cite{kahn}.

In Chapter \ref{localization_chapter} we investigate some effects of localization of both domain and target complexes.

In \S\ref{zabrodsky_ext} we prove an auxiliary result used later on; a strengthened `Zabrodsky lemma'.
The well known `Zabrodsky lemma' (see Miller \cite{miller}) says that if $E\to B$ is a fibration with fibre $F$
and for some CW complex $X$ the function space $(X,*)^{(F,*)}$ is weakly contractible, then
$X^B\to X^E$ is a weak homotopy equivalence. Upon assuming that $(X,*)^{(F,*)}$ is contractible, we observe that
then $X^B\to X^E$ is a genuine homotopy equivalence.

In \S\ref{local_target} we consider localizations of the target complex $Y$. We prove (see \thmref{localization})
that if $Y$ is a $P$-local CW complex with respect to a set of primes $P$ then the natural map
$(Y,*)^{(X_{(P)},*)}\to(Y,*)^{(X,*)}$ is a homotopy equivalence for any simply connected CW complex $X$.

In \S\ref{local_domain} we study the space of functions from $K_{(P)}$ where $K$ is the suspension of a finite
complex and $P$ is a set of primes. Applying  \S\ref{phantom_components}
we show that, given a CW complex $Y$ of finite type over $\ZZ_{(R)}$ where $P\cap R$ is nonempty, the space
$(Y,*)^{(K_{(\PP\setminus P)},*)}$ has CW type only if $(Y_{(P)},*)^{(K_{(\PP\setminus P)},*)}$ is contractible
and the H-space $(\Omega Y_{(P)},*)^{(K,*)}$ has an H-space exponent. We show that a partial converse holds in
that $(Y_{(P)},*)^{(K_{(\PP\setminus P)},*)}$ is contractible if $(Y_{(P)},*)^{(K,*)}$ has an H-space exponent.
(See \thmref{finite_localization}.) Applying this to spheres in place of $K$ (see \propref{f_l_spheres}) we relate
the problem to the Moore conjecture on homotopy (and H-space) exponents. We provide a restatement of the
(geometric part of the) conjecture in the language of CW homotopy type of function spaces.

Relying on results
from H-space exponent theory we give non-trivial examples of both function spaces that have CW homotopy type, and of such
that do not.

Relying on Serre's theorem on torsion in homotopy groups of spheres we show that
for any $m,n\gqs 2$, the space $Z=(S^n,*)^{(S^m_{(0)},*)}$ does not have CW homotopy type. In particular,
if $m>n$ then $Z$ is a weakly contractible space which is not contractible (indeed this holds for all $\Omega^kZ$,
$k\gqs 0$).

In Chapter \ref{eilenberg-maclane_target} we investigate spaces of maps into Eilenberg-MacLane spaces.

In \S\ref{thom_section} we note (see \thmref{thom_enhanced}) that the homotopy type of $K(G,n)^X$ (for abelian $G$)
depends only on homology groups of $X$, a result whose weak version was already established by Thom \cite{thom}.
 
According to \S\ref{thom_section} it suffices to study spaces of functions $M(A,m)\to K(G,n)$ where $M(A,m)$
denotes a Moore space of type $(A,m)$. In \S\ref{maps_moore_eilenberg} we give sharpened versions of results
of \S\ref{characterizations} for this particular case.

Applying almost everything of the preceding sections, we give in \S\ref{explicit_determinations} necessary and
sufficient conditions on abelian groups $A$ and $G$ for $K(G,n)^{M(A,m)}$ (see \thmref{moore_eilenberg}) respectively
$K(G,n)^{K(A,m)}$ (see \thmref{eilenberg_eilenberg}) to have CW homotopy type. The conditions fit well into the
intuition that `$A$ must be small with respect to $G$' in order for the mentioned function spaces to have CW type.

In \S\ref{aspherical_target} we complement \S\ref{thom_section} and \S\ref{explicit_determinations} to give
necessary and sufficient conditions for a function space $K(G,1)^X$ to have CW type, for arbitrary $G$.

The results of P.J.Kahn \cite{kahn} as well as those of the author \cite{smrekar}
exhibit function spaces $Y^X$ of CW homotopy type where the restriction
map $Y^X\to Y^L$, for a suitable finite subcomplex $L$ of $X$, is a homotopy
equivalence onto the union of some path components, reducing the problem to Milnor's theorem.
With this in mind one might conjecture that \-- at least for some CW decomposition of X \--
this is the only possible way for a function space $Y^X$ to have CW type.
This is not true, as we note in \exref{out_of_milnor} as a consequence of results of \S\ref{explicit_determinations}.

However, Milnor's result is best possible in a global sense, as we discuss in \S\ref{for_all_Y}
where we give necessary and sufficient conditions on $X$ for $Y^X$ to have CW homotopy type for all CW complexes
$Y$ belonging to certain classes. For example (see \thmref{milnor_forever}), a connected CW complex $X$
is finitely dominated if and only if $\pi_1(X)$ is finitely presented and $Y^X$ has CW type for {\it all} CW complexes $Y$.
For another example, $X$ has homological finite type if and only if $Y^X$ has CW type for all nilpotent CW complexes $Y$
of finite type.

In Chapter \ref{iff} we establish sufficient and `almost necessary' conditions for $Y^X$ to have the
homotopy type of a CW complex when $X$ is a simply connected CW complex, and $Y$ is
simply connected of finite type with only finitely many nontrivial homotopy groups. (See \thmref{monster}.)
Thereby we enhance Kahn's result in this case, and give a suitable partial converse.

In Appendix \ref{lifting_functions} we provide an explicit determination of a homotopy equivalence
arising from a fibration with fibre that contracts in the total space; a result that involves exhibiting
particular lifting functions for certain fibrations, and is (directly) applied once in the paper.
Furthermore we give the proof of a result (see \propref{mapping_space_covariant}) which also involves
lifting functions and is essentially applied once in the paper.

In Appendix \ref{quasi_groups} we introduce the notion of `quasitopological' group.
We use this essentially to circumvent difficulties arising from the fact that function spaces
in general are not compactly generated. The results of this appendix are of technical character
and are used on a single occasion to extend properties that hold for countable CW complexes
to the general case.


\section*{Conventions and basic tools}

Both domain and target spaces of our function spaces are assumed to be CW complexes (or have CW homotopy type).
Let $(X;A_1,\dots,A_n)$ be a CW $n$-ad and $(Y;B_1,B_2,\dots,B_n)$ of the homotopy type of a CW $n$-ad. We may
form the $n$-ad function space $(Y;B_1,B_2,\dots,B_n)^{(X;A_1,\dots,A_n)}$ of maps $f\colon X\to Y$ for which
$f(A_i)\subset B_i$ for all $i$. This $n$-ad function space is a subspace of $Y^X$ and belongs to the class
$\FFF(X,Y)$ discussed above and defined in \S\ref{restriction}. Some results, in particular those of more
topological nature in Chapter \ref{function_spaces}, are naturally
stated for $n$-ad function spaces. See also Milnor \cite{milnor}, Theorem 3.

However, in later chapters where we treat particular families of function spaces, and determine
explicit homological conditions, we consider spaces of free maps $Y^X$ and spaces of pointed maps $(Y,*)^{(X,*)}$,
for $X$ a CW complex and $Y$ a Hausdorff space of CW type. In this case the space $Y^X$ has
CW type if and only if the spaces $(Y,y_0)^{(X,*)}$ have, for $y_0$ ranging over different path components of $Y$,
see \corref{cor_stasheff_theorem} below. At the end of this section we note that for questions concerning $Y^X$ and
$(Y,y_0)^{(X,x_0)}$ it suffices to consider path-connected spaces and assume that $x_0$ is a $0$-cell of $X$ and
$y_0$ is a nondegenerate base point of $Y$.

When topological spaces are involved we use continuous function, function, and map synonymously. 

We use throughout the compact open topology.
While its compactly generated refinement (used for example by Kahn in \cite{kahn}) is more
appealing from a categorical point of view, it is not very useful when discussing properties
concerning open sets, such as semilocal contractibility. Fortunately when $X$ is a countable CW complex,
and $Y$ is any CW complex, the compact open topology on $Y^X$ and its refinement are homotopy equivalent,
see \prevlemref{stratifiable}. In general only our `positive' results can be applied to the compactly generated
refinement; if $Y^X$ has CW type then so has $\kkkk(Y^X)$ (see \cite{smrekar}, Remark 2.5).

We distinguish strictly between homotopy equivalent and {\it weakly} homotopy equivalent,
shortly just equivalent and weakly equivalent, respectively. Indeed, in this paper we are
concerned exactly with the difference between homotopy equivalences and weak homotopy equivalences,
since every topological space is {\it weakly} equivalent to a CW complex. In the course of
our investigation we prove some results which in their weak version are well known or obvious,
but not so in the genuine homotopy equivalence setting.

We use `type' for homotopy type.

The term fibration is used for a Hurewicz fibration, that is a not necessarily surjective map with
the homotopy lifting property with respect to all spaces.

We refer to Chapter 6 of Maunder's book \cite{maunder} for a concise treatment of basic facts regarding
the compact open topology that circumvents in a very nice way the problem of local compactness of the domain.
(Specific CW decompositions of our infinite domain complexes will almost never be locally compact.)

We use the following notation.
By $Y^X$ we denote the space of continuous functions $X\to Y$. Function spaces are endowed with the
compact open topology, and we use $G(K,V)$ for the standard subbasic open set,
$G(K,V)=\setof{f\in Y^X\,\vert\,f(K)\subset V}$, with $K$ compact and $V$ open.

The symbol $SX$ denotes the reduced suspension of the complex $X$ (with the standard CW structure).

We denote the $n$-skeleton of $X$ by $X^{(n)}$. When there is no danger of confusion, we denote by $Y_i$
the $i$-th stage of the Postnikov tower of $Y$.

The space of paths in a space $Z$ starting with a (sometimes tacitly understood) fixed base point is denoted by
$PZ$. The corresponding evaluation at endpoint is denoted by $\eps_1\colon PZ\to Z$.

We adopt the convention that all spaces considered are Hausdorff. For a brief explanation,
we want the domain space of a function space to be Hausdorff since every compact subspace
is normal. On the other hand, compact open topology is Hausdorff if the target space is.
Moreover, if two Hausdorff spaces are homotopy equivalent, then so also are their compactly
generated refinements. Occasionally we add the Hausdorff assumption to the statement of a proposition, for emphasis.

By $\ZZ_p$ we will denote the group of integers mod $p$, by $\ZZ_{p^\infty}$ the quasicyclic $p$-group
(divisible $p$-torsion group of rank $1$), and by $\Hat\ZZ_p$ the group of $p$-adic integers. For a short
exact sequence of groups $0\to B\to A\to Q\to 0$ we will say that $A$ is an extension of $B$ by $Q$
(in accord with Fuchs \cite{fuchs1}).

For a set of primes $P$ we will denote by $A_{(P)}$ the localization of $A$ at $P$ (for $A$ an abelian group
or a suitable CW complex).

We record here a handful of basic results on (CW) homotopy type of function spaces,
which will be extensively used in the paper.

\begin{lem}\label{very_first}
Let $X$ be a Hausdorff space.
\begin{enroman}
\item	If $Y$ is homeomorphic to $\prod_\lambda Y_\lambda$, then $Y^X$ is homeomorphic to
	$\prod_\lambda Y_\lambda^X$ and
	consequently $(Y,*)^{(X,*)}$ is homeomorphic to $\prod_\lambda(Y_\lambda,*_\lambda)^{(X,*)}$.
\item	If $X$ is homeomorphic to $\vee_{\lambda}X_\lambda$ then $(Y,*)^{(X,*)}$ is homeomorphic to
	the product $\prod_\lambda(Y,*)^{(X_\lambda,*_\lambda)}$. \qed
\end{enroman}
\end{lem}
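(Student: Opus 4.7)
The universal property of products gives a set bijection $\Phi\colon Y^X\to\prod_\lambda Y_\lambda^X$ sending $f$ to $(p_\lambda\circ f)_\lambda$, where $p_\lambda\colon Y\to Y_\lambda$ are the projections. Its continuity is automatic, since each component $(p_\lambda)_*=p_\lambda\circ(-)$ is continuous on $Y^X$. For the continuity of $\Phi^{-1}$ I would invoke the auxiliary observation that, whenever $\BBB$ is a basis for the topology of $Y$, the sets $G(K,B)$ with $K\subset X$ compact and $B\in\BBB$ form a subbasis for $Y^X$; this is established by shrinking any finite open cover $\{f^{-1}(B_{y_i})\}$ of a compact $K$ to a closed refinement, using normality of the compact Hausdorff subspace $K$ (valid because $X$ is Hausdorff). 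Applying this to the standard box basis $W=\prod_{\lambda\in F}U_\lambda\times\prod_{\lambda\notin F}Y_\lambda$ of $Y=\prod Y_\lambda$, we obtain $G(K,W)=\bigcap_{\lambda\in F}(p_\lambda)_*^{-1}G(K,U_\lambda)$, so $\Phi(G(K,W))$ is a basic open set of $\prod_\lambda Y_\lambda^X$. The pointed assertion follows at once, since $\Phi$ carries $\{f:f(*)=(*_\lambda)_\lambda\}$ onto $\prod_\lambda\{f_\lambda:f_\lambda(*)=*_\lambda\}$.

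\textbf{Plan for (ii).} The universal property of the wedge sum for pointed maps yields a bijection $\Psi\colon(Y,*)^{(X,*)}\to\prod_\lambda(Y,*)^{(X_\lambda,*_\lambda)}$ sending $f$ to the family of restrictions $(f\vert_{X_\lambda})_\lambda$. Continuity of $\Psi$ is immediate, because each restriction $f\mapsto f\vert_{X_\lambda}$ is induced by the inclusion $\iota_\lambda\colon X_\lambda\hookrightarrow X$. For the continuity of $\Psi^{-1}$ the key ingredient is the claim that every compact $K\subset X=\vee_\lambda X_\lambda$ meets $X_\lambda\setminus\{*\}$ for only finitely many $\lambda$. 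Granted this, write $K=K_1\cup\cdots\cup K_n$ with $K_i=K\cap X_{\lambda_i}$ compact, and observe that a subbasic open set $G(K,V)\cap(Y,*)^{(X,*)}$ equals
\[ \bigcap_{i=1}^n (\iota_{\lambda_i}^*)^{-1}\bigl(G(K_i,V)\cap(Y,*)^{(X_{\lambda_i},*_{\lambda_i})}\bigr), \]
which $\Psi$ sends to a finite intersection of subbasic opens in the product. This proves $\Psi$ is open.

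\textbf{Main obstacle.} The only non-formal step is the compactness claim above; the remaining manipulations are routine uses of universal properties and of the definition of the compact-open topology. I would prove the claim by contradiction: if $K$ met $X_\lambda\setminus\{*\}$ for infinitely many $\lambda$, pick distinct indices $\lambda_i$ and points $x_i\in K\cap(X_{\lambda_i}\setminus\{*_{\lambda_i}\})$. Each $X_{\lambda_i}$ is Hausdorff, as a subspace of the Hausdorff space $X$, so $X_{\lambda_i}\setminus\{x_i\}$ is open in $X_{\lambda_i}$ and still contains $*_{\lambda_i}$; gluing these with the full $X_\lambda$ for $\lambda\notin\{\lambda_i\}$ produces an open neighborhood of $*$ in $X$ that avoids every $x_i$. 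Combined with the pairwise disjoint open sets $X_{\lambda_i}\setminus\{*\}\ni x_i$ of $X$, this exhibits $\{x_i:i\}$ as an infinite closed discrete subset of $K$, contradicting compactness of $K$.
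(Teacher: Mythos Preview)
Your argument is correct in both parts; the key technical points---that $G(K,B)$ with $B$ ranging over a basis already form a subbasis for $Y^X$ (using normality of compact Hausdorff subsets), and that a compact subset of a wedge meets only finitely many summands away from the basepoint---are handled properly. The paper itself offers no proof of this lemma (it is stated with a bare \qed{} as a routine fact), so there is nothing to compare against; your writeup supplies exactly the standard verification the paper omits.
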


\begin{prop}\label{homotopy_equivalence}
Let \[ \varphi\colon(A;A_1,\dots,A_n)\to(X;X_1,\dots,X_n)\text{ and }
\psi\colon(Y;Y_1,\dots,Y_n)\to(B;B_1,\dots,B_n) \] be maps of $n$-ads.
Define $\Phi(\varphi,\psi)\colon Y^X\to B^A$ by $f\mapsto\psi\circ f\circ\varphi$.
Then $\Phi(\varphi,\psi)$ is a map of $n$-ads
\begin{equation*} \tag{$*$} \Big(Y^X;(Y,Y_1)^{(X,X_1)},\dots,(Y,Y_n)^{(X,X_n)}\Big)\to
\Big(B^A;(B,B_1)^{(A,A_1)},\dots,(B,B_n)^{(A,A_n)}\Big). \end{equation*}
If there exist homotopies of $n$-ads $\varphi\simeq\varphi'$ and $\psi\simeq\psi'$, then
the maps $\Phi(\varphi,\psi)$ and $\Phi(\varphi',\psi')$ are homotopic as maps of $n$-ads ($*$).
Moreover, the homotopy sends the subspace $Y_i^X$ to the subspace $B_i^A$, for all $i$.
(The latter applies if we want to consider function spaces with constants as base points.)

In particular, if $\varphi$ admits a right (or double sided) homotopy inverse as a map of $n$-ads
and $\psi$ admits a left (or double sided) homotopy inverse as a map of $n$-ads, then $\Phi(\varphi,\psi)$
admits a left (respectively double sided) homotopy inverse as a map of $n$-ads, and induces a homotopy
domination (respectively equivalence)
\begin{equation*} (Y;Y_1,\dots,Y_n)^{(X;X_1,\dots,X_n)}\to(B;B_1,\dots,B_n)^{(A;A_1,\dots,A_n)}.
\end{equation*}
\end{prop}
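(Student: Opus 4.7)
The plan is first to verify the elementary claim that $\Phi(\varphi,\psi)$ is a continuous $n$-ad map, then to exhibit an explicit homotopy realizing the homotopy-invariance statement, and finally to deduce the last clause by a formal manipulation.

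First I would check that if $f\in(Y,Y_i)^{(X,X_i)}$ then
\[ (\psi\circ f\circ\varphi)(A_i)\subset\psi(f(\varphi(A_i)))\subset\psi(f(X_i))\subset\psi(Y_i)\subset B_i, \]
because $\varphi$ and $\psi$ are themselves $n$-ad maps. Hence $\Phi(\varphi,\psi)$ sends $(Y,Y_i)^{(X,X_i)}$ into $(B,B_i)^{(A,A_i)}$ for each $i$. Continuity of $\Phi(\varphi,\psi)$ is a standard compact-open fact: it is the composite of pre-composition with $\varphi$ and post-composition with $\psi$, both of which are continuous in the framework of \cite{maunder}, Chapter 6, which handles this without requiring local compactness of $A$ or $X$.

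Next, given $n$-ad homotopies $H\colon\varphi\simeq\varphi'$ and $K\colon\psi\simeq\psi'$, I define $\Theta\colon Y^X\times I\to B^A$ by $\Theta(f,t)=K_t\circ f\circ H_t$. Its adjoint $Y^X\times I\times A\to B$ is the map $(f,t,a)\mapsto K(f(H(a,t)),t)$, a composition that factors through the evaluation $Y^X\times X\to Y$ and is continuous in the setup of \cite{maunder}; taking adjoints gives that $\Theta$ itself is continuous. Because $H_t(A_i)\subset X_i$ and $K_t(Y_i)\subset B_i$ for all $t$ and $i$, the restriction of $\Theta$ to $(Y,Y_i)^{(X,X_i)}\times I$ lands in $(B,B_i)^{(A,A_i)}$, so $\Theta$ is a homotopy of $n$-ad maps. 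Moreover, if $f\equiv y_0$ with $y_0\in Y_i$, then $\Theta(f,t)\equiv K_t(y_0)\in B_i$ is again constant, which gives the parenthetical statement about constant base points.

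For the last clause, suppose $\bar\varphi\colon X\to A$ is a right $n$-ad homotopy inverse of $\varphi$ and $\bar\psi\colon B\to Y$ is a left $n$-ad homotopy inverse of $\psi$. A direct check gives
\[ \Phi(\bar\varphi,\bar\psi)\circ\Phi(\varphi,\psi)=\Phi(\varphi\bar\varphi,\,\bar\psi\psi), \]
as both sides send $f$ to $(\bar\psi\psi)\circ f\circ(\varphi\bar\varphi)$. Since $\varphi\bar\varphi\simeq\id_X$ and $\bar\psi\psi\simeq\id_Y$ as $n$-ad homotopies, the previous paragraph yields $\Phi(\varphi\bar\varphi,\bar\psi\psi)\simeq\Phi(\id_X,\id_Y)=\id_{Y^X}$ as maps of $n$-ads, so $\Phi(\bar\varphi,\bar\psi)$ is a left $n$-ad homotopy inverse of $\Phi(\varphi,\psi)$. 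The two-sided case follows by applying the same computation with the roles of $(\varphi,\psi)$ and $(\bar\varphi,\bar\psi)$ interchanged. The only real (if minor) obstacle is the justification of continuity of $\Theta$ without local compactness assumptions on the domains, which is why we explicitly defer to the compact-open treatment in \cite{maunder} rather than invoking a naive exponential law.
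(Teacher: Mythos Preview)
Your proof is correct and follows essentially the same approach as the paper: the paper's proof consists entirely of the sentence ``See Maunder \cite{maunder}, Theorem 6.2.25, for $n=0$. The generalization is evident,'' and you have simply written out the evident generalization in detail while deferring to the same source for the continuity subtleties.
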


\begin{proof}
See Maunder \cite{maunder}, Theorem 6.2.25, for $n=0$. The generalization is evident.
\end{proof}

The following is Theorem 3 of Milnor \cite{milnor}. In what follows we will simply call it `Milnor's theorem.'

\begin{thm}[Milnor's theorem]\label{milnor_theorem}
Let $(A;A_1,A_2,\dots,A_n)$ be a compact $n$-ad ($A$ is a compactum and the $A_i$ are closed in $A$)
and let $(Y;Y_1,Y_2,\dots,Y_n)$ have the homotopy type of a CW $n$-ad. Then
$\big(Y^X;(Y,Y_1)^{(X,X_1)},\dots,(Y,Y_n)^{(X,X_n)}\big)$ also has the homotopy type of a CW $n$-ad.
Consequently $(Y;Y_1,Y_2,\dots,Y_n)^{(A;A_1,A_2,\dots,A_n)}$ has the homotopy type of a CW complex. \qed
\end{thm}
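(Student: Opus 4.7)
The plan is to follow Milnor's original strategy from \cite{milnor}: produce, on the function $n$-ad space, the structure of a paracompact equilocally convex $n$-ad, and then invoke Milnor's criterion that such a space has CW $n$-ad homotopy type. The argument proceeds in three stages.

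First, I would reduce to the case where $(Y;Y_1,\dots,Y_n)$ is itself a CW $n$-ad, not merely of CW $n$-ad homotopy type. Indeed, by hypothesis there is a homotopy equivalence of $n$-ads between $(Y;Y_1,\dots,Y_n)$ and some CW $n$-ad $(Y';Y_1',\dots,Y_n')$, and \propref{homotopy_equivalence} turns this into a homotopy equivalence of the associated function $n$-ads $(Y^A;(Y,Y_i)^{(A,A_i)})\to (Y'^A;(Y',Y_i')^{(A,A_i)})$. Since CW $n$-ad homotopy type is a homotopy invariant of $n$-ads, it suffices to treat the CW $n$-ad case.

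Second, one equips $(Y;Y_1,\dots,Y_n)$ with a subcomplex-preserving equilocally convex structure. Because $Y$ is CW and the $Y_i$ are subcomplexes, cellular arguments (inductively building local contractions cell by cell and relative to the subcomplex filtration, using that CW pairs have the homotopy extension property) yield an open cover of $Y$ by neighborhoods $U_\alpha$ together with deformations $h_\alpha\colon U_\alpha\times I\to Y$ contracting $U_\alpha$ to a point inside $Y$ in a manner that sends $U_\alpha\cap Y_i$ into $Y_i$ throughout the homotopy. Then subbasic sets of the form $G(A,U_\alpha)$ together with intersections of such sets (refined along a finite cover of $A$ by compacta) give an ELC $n$-ad structure on $(Y^A;(Y,Y_1)^{(A,A_1)},\dots)$: the post-composition contractions $f\mapsto h_\alpha(f(\cdot),t)$ depend continuously on $f$ in the compact-open topology and preserve the $n$-ad filtration by the construction of $h_\alpha$.

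Third, the main obstacle is paracompactness, since $A$ is compact Hausdorff but need not be metrizable, and $Y$ need not be metric. Here one exploits the fact that every CW complex has the homotopy type of a metric ANR (again applying \propref{homotopy_equivalence} to replace $(Y;Y_1,\dots,Y_n)$ by a metric ANR $n$-ad without loss of generality), and then observes that for any metric $(Y,d)$ and compact Hausdorff $A$ the supremum metric $\rho(f,g)=\sup_{a\in A}d(f(a),g(a))$ is well defined and induces the compact-open topology on $Y^A$. Thus $Y^A$ is metric, hence paracompact, and each subspace $(Y,Y_i)^{(A,A_i)}$ is closed in it. Combining this with the ELC $n$-ad structure from the previous step, Milnor's criterion yields the CW $n$-ad homotopy type of the function $n$-ad, and the final assertion follows by taking $n=0$.
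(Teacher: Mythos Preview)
The paper does not give a proof of this theorem; it is quoted verbatim as Theorem~3 of Milnor \cite{milnor} and marked \qed. Your sketch is essentially Milnor's own argument from that paper, so in spirit you are aligned with what the paper relies on.

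One organizational wrinkle: your Step~2 attempts to build an equilocally convex structure directly on a CW $n$-ad in the weak topology, and only in Step~3 do you replace $(Y;Y_1,\dots,Y_n)$ by a metric object. Milnor's actual route is to make the replacement \emph{first}: pass (via your Step~1 reduction) to a simplicial $n$-ad equipped with the metric topology, which is where the ELCX structure is available (his Lemma~3). The function space then inherits ELCX structure and is metrizable via the sup metric, hence paracompact; his Theorem~1 finishes. Your Step~2 as written is not obviously valid for CW complexes in the weak topology, since the continuity of the contractions $h_\alpha$ and of the induced map on $Y^A$ needs the metric control you only introduce in Step~3. If you fold the metric-ANR (or metric-simplicial) replacement into Step~1 and then run Step~2 on that object, the argument goes through and matches Milnor's.
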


A topological space is called {\it finitely dominated} if it is dominated by a finite CW complex
(see J.~Whitehead \cite{whitehead}), that is there exist a finite CW complex $K$ and continuous maps
$i\colon X\to K$ and $p\colon K\to X$ such that $p\circ i$ is homotopic to $\id_X$.

\begin{cor}\label{cor_milnor_theorem}
Let $X$ be finitely dominated. Then $Y^X$ has CW homotopy type for every space $Y$ of CW homotopy type.
\end{cor}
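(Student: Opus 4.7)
The plan is to reduce to Milnor's theorem by showing that $Y^X$ is homotopy dominated by $Y^K$ for the finite CW complex $K$ that dominates $X$, and then invoke the classical fact that any space dominated by a CW complex has CW homotopy type.

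First, by the definition of finite domination, there exist a finite CW complex $K$ and maps $i\colon X\to K$, $p\colon K\to X$ with $p\circ i\simeq\id_X$. I would apply \propref{homotopy_equivalence} with $\varphi=p\colon K\to X$ and $\psi=\id_Y\colon Y\to Y$ (viewed as $1$-ad maps). This produces the map $\Phi(p,\id_Y)\colon Y^X\to Y^K$ sending $f$ to $f\circ p$. Since $\varphi=p$ admits a right homotopy inverse, namely $i$ (as $p\circ i\simeq\id_X$), and $\psi=\id_Y$ trivially admits a two-sided homotopy inverse, the proposition yields that $\Phi(p,\id_Y)$ admits a left homotopy inverse. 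That is, $Y^X$ is homotopy dominated by $Y^K$.

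Next, since $K$ is a compact Hausdorff space (a finite CW complex) and $Y$ has CW homotopy type, Milnor's theorem (\thmref{milnor_theorem}) tells us that $Y^K$ has CW homotopy type. Finally, by a classical theorem of J.H.C.~Whitehead, a space homotopy dominated by a CW complex has itself CW homotopy type. Applying this to $Y^X$ (dominated by $Y^K$) gives the conclusion.

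The main non-trivial ingredient invoked is the homotopy-domination principle for CW type; everything else is a direct application of \propref{homotopy_equivalence} and Milnor's theorem, both already available in the excerpt. The obstacle to watch for is simply ensuring one uses the right variance in \propref{homotopy_equivalence}: it is the domination map $p\colon K\to X$ that is fed in as $\varphi$ (since $\Phi$ is contravariant in $\varphi$), and it is the condition $p\circ i\simeq\id_X$ that precisely furnishes the required right homotopy inverse for $\varphi$, not the other way around.
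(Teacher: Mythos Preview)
Your proof is correct and follows essentially the same route as the paper: use \propref{homotopy_equivalence} to see that $Y^X$ is dominated by $Y^K$, apply Milnor's theorem to $Y^K$, and conclude via Whitehead's theorem that a space dominated by a CW complex has CW type. Your careful check of the variance in \propref{homotopy_equivalence} is exactly right.
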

\begin{proof}
If $X$ is dominated by finite complex $K$, then $Y^X$ is dominated by $Y^K$ by \propref{homotopy_equivalence}.
The space $Y^K$ has CW homotopy type by \thmref{milnor_theorem}. Then $Y^X$ is dominated by a CW complex and
hence has CW homotopy type by Theorem 23 of Whitehead \cite{whitehead}.
\end{proof}

We note that using \propref{homotopy_equivalence} and \thmref{milnor_theorem} to full extent, together with Theorem 2 of
Milnor \cite{milnor} concerning homotopy type of $n$-ads, the above corollary generalizes trivially to function
spaces of $n$-ads.

\begin{lem}\label{wedge_splitting}
If $X$ is homotopy equivalent to $\vee_\lambda X_\lambda$ in the pointed sense,
then $(Y,*)^{(X,*)}$ is homotopy equivalent to $\prod_\lambda(Y,*)^{(X_\lambda,*_\lambda)}$.

Consequently if $(Y,*)^{(X,*)}$ has CW type, so has $(Y,*)^{(X_\lambda,*_\lambda)}$, for each $\lambda$.
If $\Lambda$ is finite, then the converse also holds.
\end{lem}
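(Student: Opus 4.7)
The first assertion has two ingredients already available in the excerpt. First, a pointed homotopy equivalence $\varphi\colon X\to\vee_\lambda X_\lambda$ is, in particular, a homotopy equivalence of pairs $(X,*)\to(\vee_\lambda X_\lambda,*)$, so by \propref{homotopy_equivalence} (applied with $n=1$, $\psi=\id_Y$, and the basepoint as the distinguished subspace) the induced map
\[
\Phi(\varphi,\id_Y)\colon(Y,*)^{(\vee_\lambda X_\lambda,*)}\longrightarrow(Y,*)^{(X,*)}
\]
is a pointed homotopy equivalence. Second, \lemref{very_first}(ii) gives a \emph{homeomorphism}
\[
(Y,*)^{(\vee_\lambda X_\lambda,*)}\approx\prod_\lambda(Y,*)^{(X_\lambda,*_\lambda)}.
\]
Composing, one obtains the required homotopy equivalence $(Y,*)^{(X,*)}\simeq\prod_\lambda(Y,*)^{(X_\lambda,*_\lambda)}$.

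For the ``consequently'' statement, I would argue that each factor in the product is a homotopy retract. For any index $\mu$, the projection $p_\mu\colon\prod_\lambda(Y,*)^{(X_\lambda,*_\lambda)}\to(Y,*)^{(X_\mu,*_\mu)}$ is continuous; a section is supplied by the map sending $f\in(Y,*)^{(X_\mu,*_\mu)}$ to the tuple whose $\mu$-coordinate is $f$ and whose other coordinates are the constant pointed map. Continuity of this section is immediate from the definition of the product topology (and the fact that the constant map at the basepoint belongs to every pointed function space). Hence $(Y,*)^{(X_\mu,*_\mu)}$ is a retract, in particular dominated, by the product, which by the already-established homotopy equivalence has CW type whenever $(Y,*)^{(X,*)}$ does. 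Then Theorem 23 of J.H.C.~Whitehead (cited just above in the proof of \corref{cor_milnor_theorem}) says that a space dominated by a CW complex has CW type, giving the conclusion.

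For the converse when $\Lambda$ is finite, a finite product of spaces of CW homotopy type has CW homotopy type; this is a standard consequence of Milnor's results on products of CW complexes (iterating the two-factor case). Applied to the homotopy equivalence established in the first part, this yields the CW type of $(Y,*)^{(X,*)}$.

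The only subtle point is the need to invoke Whitehead's domination theorem rather than a direct CW structure, since an arbitrary retract of a CW complex need not itself be one on the nose. No step looks like a serious obstacle: the argument is essentially formal once Propositions~\ref{homotopy_equivalence} and Lemma~\ref{very_first} are in place.
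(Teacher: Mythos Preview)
Your proof is correct and follows essentially the same route as the paper's. The paper's proof is a terse two-liner citing \lemref{very_first}(ii) and \propref{homotopy_equivalence} for the first statement and ``Proposition~3 of Milnor \cite{milnor}'' for the second; you have simply unpacked the latter reference into its two constituent facts (a retract of a space of CW type has CW type, and a finite product of spaces of CW type has CW type), invoking Whitehead's domination theorem explicitly where the paper leaves this inside Milnor's black box.
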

\begin{proof}
The first statement follows from \navedi{2} of \lemref{very_first} and \propref{homotopy_equivalence},
the second statement follows from Proposition 3 of Milnor \cite{milnor}.
\end{proof}

\begin{lem}\label{pushout-pullback}
Consider the following topological pushout diagram.
\begin{equation*}\tag{$*$}\begin{diagram}
\node{A} \arrow{e,t,V}{i} \arrow{s,l}{\varphi} \node{B} \arrow{s,r}{\Phi} \\
\node{C} \arrow{e,t,V}{j} \node{D=B\sqcup_AC}
\end{diagram}\end{equation*}
Given a space $Y$, we get an induced diagram
\begin{equation*}\tag{$**$}\begin{diagram}
\node{Y^D} \arrow{e,t}{\Phi^\#} \arrow{s,l}{j^\#} \node{Y^B} \arrow{s,r}{i^\#} \\
\node{Y^C} \arrow{e,t}{\varphi^\#} \node{Y^A}
\end{diagram}\end{equation*}
Then ($**$) is a pullback diagram if either
\begin{itemize}
	\item	$i$ is a closed embedding, and $\varphi$ is a proper closed map, or
	\item	$(B,A)$ is a CW pair, and $C$ admits a CW structure such that $\varphi\colon A\to C$ is a cellular map.
\end{itemize}
If $a_0\in A$, $b_0\in B$, $c_0\in C$, $d_0\in D$ are base points coherent with respect to ($*$),
and $y_0$ is a base point in $Y$, then ($**$) is a pullback also if $Y^A,Y^B,Y^C,Y^D$ are replaced by,
respectively, $(Y,y_0)^{(A,a_0)},(Y,y_0)^{(B,b_0)},(Y,y_0)^{(C,c_0)},(Y,y_0)^{(D,d_0)}$. \qed
\end{lem}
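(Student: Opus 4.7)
The plan is to first observe the set-theoretic bijection $Y^D \leftrightarrow Y^B \times_{Y^A} Y^C$ coming from the universal property of the topological pushout, and then to verify that this bijection is a homeomorphism in the compact-open topology. Continuity of $Y^D \to Y^B \times Y^C$ follows because both $\Phi^\#$ and $j^\#$ are continuous (\propref{homotopy_equivalence} shows that precomposition with a continuous map is continuous), so $Y^D$ maps continuously onto the pullback $P := Y^B \times_{Y^A} Y^C$. The real work is to show that the inverse map $P \to Y^D$ is continuous, which comes down to showing that every subbasic set $G(K,V) \subset Y^D$, with $K \subset D$ compact and $V \subset Y$ open, pulls back to an open subset of $P$.

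For this it suffices to prove that any compact $K \subset D$ can be written as $\Phi(K_B) \cup j(K_C)$ with $K_B \subset B$, $K_C \subset C$ compact. Set $K_B = \Phi^{-1}(K)$ and $K_C = j^{-1}(K)$. Under either hypothesis $\Phi$ and $j$ are closed embeddings. In the first case, $i$ closed embedding and $\varphi$ proper closed imply that $\Phi\colon B\to D$ and $j\colon C\to D$ are closed embeddings and that $D = \Phi(B)\cup j(C)$; then $K\cap \Phi(B)$ is closed in $K$ hence compact, and $K_B$ is its homeomorphic preimage (likewise for $K_C$). In the second case, a standard argument on CW pushouts gives $D$ a CW structure extending those of $B$ and $C$, so $K$ is contained in a finite subcomplex of $D$; the preimages $K_B$ and $K_C$ then lie in finite subcomplexes of $B$ and $C$ and are closed, hence compact. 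In both cases $K = \Phi(K_B)\cup j(K_C)$.

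Granting such a decomposition, a function $f\in Y^D$ sends $K$ into $V$ if and only if $f\circ\Phi\in G(K_B,V)$ and $f\circ j\in G(K_C,V)$; hence the preimage of $G(K,V)$ in $P$ is $\bigl(G(K_B,V)\times G(K_C,V)\bigr)\cap P$, which is open. Since the subbasic sets $G(K,V)$ generate the topology of $Y^D$, the inverse map is continuous and we obtain the desired homeomorphism, proving that $(**)$ is a pullback square.

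The pointed refinement is immediate: the subspace $(Y,y_0)^{(D,d_0)}$ corresponds, under the homeomorphism, to pairs in $(Y,y_0)^{(B,b_0)}\times (Y,y_0)^{(C,c_0)}$ that agree on $A$, and this subspace is precisely the pullback of $(Y,y_0)^{(B,b_0)}\to (Y,y_0)^{(A,a_0)}\leftarrow (Y,y_0)^{(C,c_0)}$. The main obstacle throughout is the compact-subset decomposition in the two stated cases; everything else is formal category-theoretic bookkeeping combined with the definition of the compact-open topology.
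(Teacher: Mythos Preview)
Your overall strategy is sound and is exactly the natural approach (the paper itself omits the proof, leaving only a \qed). However, two of your specific claims are false as stated.

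First, the assertion that $\Phi\colon B\to D$ is a closed embedding is wrong whenever $\varphi$ is not injective: for instance with $A=\{0,1\}$, $B=[0,1]$, $C=\{*\}$ and $\varphi$ constant, the map $\Phi$ identifies the two endpoints. What you actually need in the first case is that $\Phi$ is a closed \emph{proper} map. Closedness of $\Phi$ follows from closedness of $i$ and of $\varphi$ by inspecting preimages under the quotient map $B\sqcup C\to D$. For properness, note that $\Phi^{-1}(K)\cap i(A)=i\bigl(\varphi^{-1}(j^{-1}(K))\bigr)$ is compact (using that $j$ is a closed embedding and $\varphi$ is proper), and that $\Phi$ restricts to a homeomorphism $B\setminus i(A)\to D\setminus j(C)$; a short open-cover argument then shows $\Phi^{-1}(K)$ is compact. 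With this correction your choice $K_B=\Phi^{-1}(K)$ works.

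Second, in the CW case your $K_B=\Phi^{-1}(K)$ need not lie in a finite subcomplex of $B$: if $\varphi$ collapses an infinite subcomplex of $A$ to a single vertex $c\in C$, then $\Phi^{-1}(j(c))$ contains that entire infinite subcomplex. The repair is to take instead $K_B=\overline{\Phi^{-1}(K)\cap(B\setminus i(A))}$, the closure in $B$. Since $K\setminus j(C)$ meets only finitely many open cells of $D$, all of which are cells of $B\setminus A$, this closure lies in a finite union of closed cells of $B$ and is therefore compact; and continuity of $\Phi$ gives $\Phi(K_B)\subset\overline{K\setminus j(C)}\subset K$, while $\Phi(K_B)\supset K\setminus j(C)$, so $\Phi(K_B)\cup j(K_C)=K$ as required.
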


Taking the diagram $*\leftarrow L\hookrightarrow X$ for a CW pair $(X,L)$ we obtain the following
corollary (which is not true for general pairs $(X,L)$).

\begin{cor}\label{fibre_of_restriction_fibration}
Let $(X,L)$ be a CW pair and $(Y,y_0)$ a pointed space. Then the space
$(Y,y_0)^{(X,L)}$ is homeomorphic with $(Y,y_0)^{(X/L,*)}$. \qed
\end{cor}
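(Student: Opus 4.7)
The plan is to apply \lemref{pushout-pullback} directly to the topological pushout
\[
\begin{diagram}
\node{L} \arrow{e,t,V}{i} \arrow{s} \node{X} \arrow{s} \\
\node{*} \arrow{e} \node{X/L}
\end{diagram}
\]
associated to the CW pair $(X,L)$. Choose base points coherently: take the canonical point $*\in L$ as the common base point of $L$ and $X$, together with the unique point of $*$ and the image point $*\in X/L$. Since $(X,L)$ is a CW pair, and a point admits a (trivial) CW structure that makes the constant map $L\to *$ cellular, the second bullet of \lemref{pushout-pullback} applies in the pointed setting.

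Consequently the induced square of pointed mapping spaces
\[
\begin{diagram}
\node{(Y,y_0)^{(X/L,*)}} \arrow{e} \arrow{s} \node{(Y,y_0)^{(X,*)}} \arrow{s,r}{i^\#} \\
\node{(Y,y_0)^{(*,*)}} \arrow{e} \node{(Y,y_0)^{(L,*)}}
\end{diagram}
\]
is a pullback. Now $(Y,y_0)^{(*,*)}$ is a singleton consisting of the constant map at $y_0$, and its image in $(Y,y_0)^{(L,*)}$ is the constant map $c_{y_0}\colon L\to Y$. Therefore the pullback is canonically homeomorphic to the fibre of $i^\#$ over $c_{y_0}$, namely the subspace of $(Y,y_0)^{(X,*)}$ consisting of all $f\colon X\to Y$ with $f(L)=\{y_0\}$. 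This subspace is, by definition, $(Y,y_0)^{(X,L)}$.

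There is no real obstacle: the work has been done in \lemref{pushout-pullback}; the only thing to check carefully is that the pointed version of that lemma applies to $L\to *$, which it does because the second bullet only requires a cellular map to a CW complex, and any map to a point is trivially cellular. The homeomorphism $(Y,y_0)^{(X,L)}\cong(Y,y_0)^{(X/L,*)}$ then follows by identifying the pullback with the fibre described above.
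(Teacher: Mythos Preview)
Your proof is correct and is essentially the same as the paper's: the paper simply says to take the diagram $*\leftarrow L\hookrightarrow X$ in \lemref{pushout-pullback}, and you have spelled out exactly that application. One minor wording issue: there is no ``canonical'' base point in $L$; you just pick any $x_0\in L$ (and must tacitly assume $L\neq\emptyset$, or handle that trivial case separately), but this does not affect the argument.
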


\begin{cor}\label{principal_fibration}
Let $\varphi\colon A\to L$ be a cellular map of CW complexes (with respect to some decomposition of $L$)
and let $C_\varphi=CA\sqcup_\varphi L$ denote the (either reduced or unreduced) mapping cone of $\varphi$.
Choose $a_0\in A$ and set $x_0=\varphi(a_0)$. Then the following is a pullback diagram.
\begin{equation*}\tag{\dag}\begin{diagram}
\node{(Y,y_0)^{(C_\varphi,x_0)}} \arrow{e} \arrow{s} \node{(Y,y_0)^{(CA,a_0)}} \arrow{s} \\
\node{(Y,y_0)^{(L,x_0)}} \arrow{e} \node{(Y,y_0)^{(A,a_0)}}
\end{diagram}\end{equation*}
The vertical arrows are fibrations and $(Y,y_0)^{(CA,a_0)}$ is contractible. This exhibits
\[ (Y,y_0)^{(C_\varphi,x_0)}\to(Y,y_0)^{(L,x_0)} \]
as a principal fibration with all fibres either empty or homotopy equivalent
to the space $\Omega\big((Y,y_0)^{(A,a_0)},\const_{y_0}\big)\approx(Y,y_0)^{(SA,*)}$.

In addition, if $(X,L)$ is a CW pair such that $\varphi$ induces a homotopy equivalence
$C_\varphi\to X$ then \begin{equation*}\tag{\ddag} (Y,y_0)^{(C_\varphi,x_0)}\to(Y,y_0)^{(X,x_0)} \end{equation*}
(respectively $Y^{C_\varphi}\to Y^X$) is a fibre homotopy equivalence over $(Y,y_0)^{(L,x_0)}$
(respectively $Y^L$).
\end{cor}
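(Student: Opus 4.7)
The plan is to verify each clause in order, using the preceding machinery as black boxes.

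\smallskip

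First, I would derive the pullback statement $(\dag)$ as a direct application of \lemref{pushout-pullback}. Indeed $C_\varphi$ is by definition the pushout of $CA \leftarrow A \to L$, and both bullets of that lemma apply: the inclusion $A\hookrightarrow CA$ is a closed embedding with $\varphi$ a proper closed map (first bullet) and equally $(CA,A)$ is a CW pair with $\varphi$ cellular by hypothesis (second bullet). The base-pointed version follows since $a_0, x_0, x_0, x_0$ are coherent base points for the pushout.

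\smallskip

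Second, I would show the right vertical arrow $(Y,y_0)^{(CA,a_0)} \to (Y,y_0)^{(A,a_0)}$ is a Hurewicz fibration because $(CA,A)$ is a cofibration (a standard lifting function is built from a retract $CA\times I\to CA\times\{0\}\cup A\times I$); the left vertical is then a fibration as a pullback of a fibration. For contractibility of $(Y,y_0)^{(CA,a_0)}$, I would argue that $\{a_0\}\hookrightarrow CA$ is a cofibration between contractible spaces, hence a strong deformation retract, and then apply \propref{homotopy_equivalence} to the retraction to get a homotopy equivalence with the one-point space $(Y,y_0)^{(\{a_0\},a_0)}$.

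\smallskip

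Third, to identify the fibre and the principal structure, I would compute the fibre of the right vertical over the constant map $\const_{y_0}\colon(A,a_0)\to(Y,y_0)$ as the space of relative maps $(Y,y_0)^{(CA,A)}$; applying \corref{fibre_of_restriction_fibration} and the identification $CA/A\approx SA$ yields $(Y,y_0)^{(SA,*)}$, which coincides with $\Omega\bigl((Y,y_0)^{(A,a_0)},\const_{y_0}\bigr)$ by the exponential law. Since the total space is contractible, the right vertical is fibre homotopy equivalent to the path--loop fibration $P(Y,y_0)^{(A,a_0)}\to(Y,y_0)^{(A,a_0)}$; the left vertical, being its pullback along $\varphi^\#$, is therefore principal with fibre $\Omega(Y,y_0)^{(A,a_0)}$ over nonempty fibres (empty ones occurring exactly when a given $f\colon L\to Y$ is such that $f\circ\varphi$ does not null-homotope rel $a_0$).

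\smallskip

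Finally, for the addendum $(\ddag)$, the given map $C_\varphi\to X$ is a homotopy equivalence of pairs $(C_\varphi,L)\to(X,L)$ that is the identity on $L$. By \propref{homotopy_equivalence} this induces a homotopy equivalence on function spaces in both the free and pointed senses, and the induced map strictly commutes with restriction to $(Y,y_0)^{(L,x_0)}$ (respectively $Y^L$), i.e.\ it is a fibre-preserving homotopy equivalence between the two Hurewicz fibrations over that base. Invoking Dold's theorem that a fibre-preserving homotopy equivalence between Hurewicz fibrations is a fibre homotopy equivalence, and taking a fibre-preserving inverse, yields the asserted fibre homotopy equivalence in the stated direction. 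The main obstacle is to handle the principal fibration identification carefully, since it requires replacing the contractible-total-space fibration by the path--loop model via a fibre homotopy equivalence; the rest of the argument is essentially a careful chase through the cited lemmas.
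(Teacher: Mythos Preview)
Your proposal is correct and follows essentially the same route as the paper. The paper's proof is much terser: it cites \propref{homotopy_equivalence} for contractibility of $(Y,y_0)^{(CA,a_0)}$, then invokes the coglueing theorem of Brown and Heath to identify $(\dag)$ with the pullback of the path fibration (exactly your replacement by the path--loop model), and for $(\ddag)$ it again uses \propref{homotopy_equivalence} to get a homotopy equivalence and then Brown--Heath, Corollary~3.7, to upgrade to a fibre homotopy equivalence---which is the same result you call ``Dold's theorem'' for Hurewicz fibrations.
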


\begin{proof}
The space $(Y,y_0)^{(CA,a_0)}$ is contractible by \propref{homotopy_equivalence},
hence (\dag) is the diagram of a principal fibration. (It is equivalent to the standard
one when we are pulling back the path fibration $P(Y,y_0)^{(A,a_0)}\to(Y,y_0)^{(A,a_0)}$
by the coglueing theorem of Brown and Heath \cite{brown-heath}.)

By \propref{homotopy_equivalence} the map (\ddag) is a homotopy equivalence hence it is a fibre homotopy
equivalence by \cite{brown-heath}, Corollary 3.7.
\end{proof}

The following is a result of Stasheff \cite{stasheff} (see also Sch\"{o}n \cite{schon}).
We will simply call it `Stasheff's theorem' in what follows.

\begin{thm}[Stasheff's theorem]\label{stasheff_theorem}
Let $p\colon E\to B$ be a fibration where $B$ has the homotopy type of a CW complex.

If $E$ has the homotopy type of a CW complex then so have all fibres of $p$.

Conversely if for every path component $C$ of $B$ and some $b_0\in C$ the fibre $p^{-1}(b_0)$ has
CW type, then the total space $E$ has CW homotopy type.\qed
\end{thm}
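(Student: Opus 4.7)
The plan is to handle the two implications of Stasheff's theorem separately, but in both cases to first reduce to the situation where $B$ is an honest CW complex. This reduction is standard: picking a CW approximation $\varphi\colon |K|\to B$, which is a genuine homotopy equivalence because $B$ has CW type, the pulled-back fibration $\varphi^*p\colon \varphi^*E\to |K|$ is fibre-homotopy equivalent to $p$ by Dold's theorem, so its total space has the same homotopy type as $E$ and its fibres coincide with those of $p$ up to homotopy equivalence.

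For the backward direction, after restricting to a single path component one may assume $B$ is connected with typical fibre $F$ of CW type. I would build a CW model of $E$ inductively over the skeleta of $B$. Over $B^{(0)}$ the preimage is a disjoint union of spaces homotopy equivalent to $F$, hence of CW type. For the inductive step, supposing a CW model $\tilde E_{n-1}\simeq p^{-1}(B^{(n-1)})$ has been constructed, observe that for each $n$-cell $e$ the restriction $p^{-1}(\bar e)\to\bar e$ is a fibration over a contractible base, and hence by Dold is fibre-homotopy equivalent to the trivial fibration $\bar e\times F\to\bar e$. Since $(\bar e,\partial\bar e)$ is a cofibration, a pushout glues $\bar e\times F$ onto $\tilde E_{n-1}$ along a CW-type subspace to produce $\tilde E_n$. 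Taking the colimit over $n$ yields a CW model of $E$, and Whitehead's theorem promotes the resulting weak equivalence to a genuine homotopy equivalence.

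For the forward direction, consider the path-space fibration $\eps_1\colon PB\to B$ (contractible total space, typical fibre $\Omega B$); by \thmref{milnor_theorem} applied to the compact pair $(S^1,*)$, the loop space $\Omega B$ has CW type. Form the pullback $\tilde F = E\times_B PB$. Viewed as a fibration over $E$ (pulling back $\eps_1$), its fibres are either empty or homotopy equivalent to $\Omega B$, so the already-established backward direction gives $\tilde F$ the homotopy type of a CW complex. Viewed instead as a fibration over the contractible space $PB$ (pulling back $p$), Dold's theorem yields a fibre-homotopy equivalence $\tilde F\simeq PB\times F\simeq F$, where $F=p^{-1}(b_0)$. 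Consequently $F$ has CW type.

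The main obstacle is the inductive gluing in the backward direction: the fibre-homotopy trivialization $p^{-1}(\bar e)\simeq\bar e\times F$ over an individual cell must be chosen coherently with the model already constructed over $p^{-1}(\partial\bar e)\subset\tilde E_{n-1}$, and one needs to know that the pushout of CW-type spaces along a cofibration of CW-type subspaces remains of CW type. This coherence is secured by the coglueing theorem of Brown and Heath already invoked in \corref{principal_fibration} together with Dold's extension property for fibre-homotopy equivalences along cofibrations, which allow one to replace the given trivialization by one that agrees with the previously constructed model on the boundary.
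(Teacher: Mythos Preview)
The paper does not prove this theorem; it is stated with a trailing \qed and attributed to Stasheff \cite{stasheff} and Sch\"{o}n \cite{schon}. Your outline is essentially the Stasheff--Sch\"{o}n argument, and the forward direction via the double fibration $E\times_B PB$ is exactly Sch\"{o}n's trick.

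There is, however, a genuine gap in your backward direction. You write that ``taking the colimit over $n$ yields a CW model of $E$, and Whitehead's theorem promotes the resulting weak equivalence to a genuine homotopy equivalence.'' Whitehead's theorem requires \emph{both} spaces to have CW type, and CW type of $E$ is precisely what you are proving; so this step is circular as written. The fix is to arrange that each stage map $\tilde E_n\to p^{-1}(B^{(n)})$ is a \emph{genuine} homotopy equivalence (which you may, since both sides have CW type at each finite stage by your induction), and then invoke the gluing theorem that the colimit of a ladder of homotopy equivalences between expanding sequences of closed cofibrations is again a homotopy equivalence. Alternatively, show directly that each $p^{-1}(B^{(n)})$ has CW type and then that an expanding union of CW-type spaces along closed cofibrations has CW type.

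A smaller point: in your reduction, $\varphi^*E$ and $E$ live over different bases, so ``fibre-homotopy equivalent to $p$ by Dold's theorem'' is not the right statement. What you need is that the induced map of total spaces $\varphi^*E\to E$ is a homotopy equivalence, which is Corollary~1.4 of Brown and Heath \cite{brown-heath} (used elsewhere in the paper), not Dold.
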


The quoted theorems of Milnor and of Stasheff, as well as the above cited coglueing theorem
of Brown and Heath (see \cite{brown-heath}), and Theorem 3.4.1 of Edwards and Hastings \cite{edwards-hastings}
together with its simplified proof by Geoghegan \cite{geoghegan} form the basis for our work.

\subsection*{Reduction to connected spaces with nondegenerate base points}
At the end of this section we show that to address the question of CW homotopy type of the function space
$Y^X$ or that of $(Y,y_0)^{(X,x_0)}$ it suffices to consider connected CW complexes $X$ with a single $0$-cell $x_0$,
and path-connected Hausdorff spaces $Y$ with nondegenerate base point $y_0$. Moreover it suffices to consider
spaces of pointed functions $(Y,y_0)^{(X,x_0)}$.

We begin with the following easy application of Stasheff's theorem.

\begin{cor}\label{cor_stasheff_theorem}
Let $X$ be a Hausdorff space with nondegenerate base point $x_0$ and let $Y$ be a space of CW homotopy type.
For each path component $D_\lambda$ of $Y$ choose a point $y_\lambda\in D_\lambda$. Then $Y^X$ has CW homotopy
type if and only if all spaces $(Y,y_\lambda)^{(X,x_0)}$ have CW homotopy type.

Fix $y_0\in Y$. The path component of $\const_{y_0}$ in $Y^X$ has CW type if and only if the path component of
$\const_{y_0}$ in $(Y,y_0)^{(X,x_0)}$ has CW type. (If $\pi_1(Y,y_0)$ is trivial then this holds for any map
$g\colon(X,x_0)\to(Y,y_0)$ in place of $\const_{y_0}$.)
\end{cor}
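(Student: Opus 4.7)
The plan is to reduce both statements to \thmref{stasheff_theorem} via the evaluation fibration
\[ \eps\colon Y^X\to Y,\quad f\mapsto f(x_0). \]
Because $x_0$ is nondegenerate, $\{x_0\}\hookrightarrow X$ is a cofibration, so a standard exponential-law argument shows that $\eps$ is a Hurewicz fibration, and its fiber over $y\in Y$ is precisely $(Y,y)^{(X,x_0)}$. This is the only ingredient from outside the excerpt and is routine.

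For the first statement I would apply Stasheff's theorem to $\eps$ directly, exploiting that $Y$ has CW type. The forward direction forces every fiber $(Y,y)^{(X,x_0)}$ to have CW type; in particular each $(Y,y_\lambda)^{(X,x_0)}$. Conversely, if each $(Y,y_\lambda)^{(X,x_0)}$ has CW type, then Stasheff supplies CW type for $Y^X$ (one representative per path component of $Y$ suffices).

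For the second statement I would restrict $\eps$ to the appropriate path components. Let $C$ be the path component of $\const_{y_0}$ in $Y^X$ and let $Y_0$ be the path component of $y_0$ in $Y$. Since $Y$ is locally path connected, $Y_0$ is clopen; hence $\eps^{-1}(Y_0)$ is clopen in $Y^X$ and $C$ is a path component of it. Path-lifting over the path-connected base $Y_0$ then shows $\eps|_C\colon C\to Y_0$ is itself a fibration. Its fiber over $y_0$ is
\[ F_C := C\cap(Y,y_0)^{(X,x_0)}, \]
which is clopen in $(Y,y_0)^{(X,x_0)}$ and hence a disjoint union of pointed path components. The crux is to show that $F_C$ is in fact the single path component $C^*$ of $\const_{y_0}$ in $(Y,y_0)^{(X,x_0)}$; then Stasheff applied to $\eps|_C$ yields that $C$ has CW type iff $C^*$ does.

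The main obstacle is verifying $F_C=C^*$. The inclusion $C^*\subset F_C$ is clear. For the converse, given $f\in F_C$ and a free homotopy $H\colon X\times I\to Y$ from $\const_{y_0}$ to $f$, the loop $\alpha(t)=H(x_0,t)$ realizes $[f]_* = [\alpha]\cdot[\const_{y_0}]_*$ under the usual $\pi_1(Y,y_0)$-action on $[X,Y]_*$; but the homotopy $(x,t)\mapsto\alpha(t)$ exhibits $[\alpha]\cdot[\const_{y_0}]_* = [\const_{y_0}]_*$, so $[f]_* = [\const_{y_0}]_*$ and $f\in C^*$. For the parenthetical extension to an arbitrary $g$ when $\pi_1(Y,y_0)$ is trivial, the same argument applies verbatim, since trivial $\pi_1$ collapses each orbit in $[X,Y]_*$ to a singleton and makes the forgetful map $[X,Y]_*\to[X,Y]$ injective.
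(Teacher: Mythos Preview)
Your argument is correct and follows essentially the same route as the paper: set up the evaluation fibration, restrict to the relevant path component, identify the fiber, and invoke Stasheff's theorem. Your justification that $F_C=C^*$ via the $\pi_1(Y,y_0)$-action is a welcome expansion of what the paper merely asserts.

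One small correction: you write ``since $Y$ is locally path connected,'' but a space of CW \emph{homotopy} type need not be locally path connected (e.g.\ the comb space is contractible). What you actually need---that path components of $Y$ are open---still holds, since spaces of CW type are semilocally contractible and hence have open path components. Alternatively, you can bypass this entirely: the restriction of a fibration to a path component of its total space is always a fibration (lifts of homotopies stay in the component by path-connectedness), so $\eps|_C\colon C\to Y$ is a fibration without any appeal to properties of $Y_0$.
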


\begin{proof}
Since $x_0$ is nondegenerate, the evaluation map
\[ Y^X\to Y,\,\,\,f\mapsto f(x_0), \]
is a fibration. (See also \lemref{restriction_map} and the remark after it.)

Let $D$ be the path component of $y_0\in Y$.
Pick a pointed map $g\colon(X,x_0)\to(Y,y_0)$ and let $C$ be the path component of $g$ in $Y^X$.
Then $C\cap(Y,y_0)^{(X,x_0)}$ is a set of path components of $(Y,y_0)^{(X,x_0)}$. In case $C$ is the constant component,
$C\subset D^X$ and $C\cap(Y,y_0)^{(X,x_0)}=C\cap(D,y_0)^{(X,x_0)}$ is path-connected. (And if $\pi_1(Y,y_0)$ is trivial
this holds for any $g$.) The restricted map $C\to Y$ is still a fibration.

Now apply Stasheff's theorem.
\end{proof}

\begin{cor}\label{loop_space}
If $Z$ has CW type and $z_0$ is any point in $Z$ then $\Omega(Z,z_0)$ has CW type.
\end{cor}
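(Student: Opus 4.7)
The plan is to exhibit $\Omega(Z,z_0)$ as a fibre of a fibration whose total space and base both have CW homotopy type, so that Stasheff's theorem (Theorem \ref{stasheff_theorem}) delivers the conclusion immediately.

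Concretely, I would consider the based path space $P(Z,z_0) = \{\gamma \in Z^I : \gamma(0) = z_0\}$, with the compact open topology inherited from $Z^I$. The evaluation at the endpoint $\eps_1 \colon P(Z,z_0) \to Z$, $\gamma \mapsto \gamma(1)$, is the standard path space fibration; it is a Hurewicz fibration (this is classical and holds for every topological space $Z$, via the obvious lifting function $(\gamma, \omega) \mapsto \gamma * \omega$). By construction, $\eps_1^{-1}(z_0)$ is exactly $\Omega(Z,z_0)$.

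Next, I would observe that $P(Z,z_0)$ is contractible: the homotopy $H \colon P(Z,z_0) \times I \to P(Z,z_0)$ defined by $H(\gamma,s)(t) = \gamma((1-s)t)$ deforms $P(Z,z_0)$ onto the constant path at $z_0$. In particular $P(Z,z_0)$ has CW homotopy type (it is contractible, hence trivially equivalent to a point). The base $Z$ has CW homotopy type by hypothesis.

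Now invoke the first half of Stasheff's theorem to the fibration $\eps_1 \colon P(Z,z_0) \to Z$: since both $P(Z,z_0)$ and $Z$ have CW homotopy type, every fibre of $\eps_1$ does as well. Taking the fibre over $z_0$ gives the conclusion that $\Omega(Z,z_0)$ has CW homotopy type. There is no real obstacle here; the only thing to verify is the mild technical point that $\eps_1$ is genuinely a Hurewicz fibration with no restrictions on $Z$, which is standard.
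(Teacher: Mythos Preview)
Your argument is correct: the path-space fibration $\eps_1\colon P(Z,z_0)\to Z$ is a Hurewicz fibration with contractible total space, and Stasheff's theorem then forces every fibre, in particular $\Omega(Z,z_0)$, to have CW type.

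The paper's proof is in the same spirit but uses a different fibration. It observes that $Z^{S^1}$ has CW type by Milnor's theorem (since $S^1$ is compact), and then applies the evaluation fibration $Z^{S^1}\to Z$, $f\mapsto f(*)$, whose fibre over $z_0$ is $(Z,z_0)^{(S^1,*)}=\Omega(Z,z_0)$; Stasheff's theorem (packaged as \corref{cor_stasheff_theorem}) finishes. Your route is a touch more elementary in that it bypasses Milnor's theorem entirely, trading it for the contractibility of the path space; the paper's route, on the other hand, fits the corollary into the general framework of \corref{cor_stasheff_theorem} that it has just established. Either argument is perfectly adequate here.
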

\begin{proof}
The space $Z^{S^1}$ has CW type by Milnor's theorem, hence so has
the space $\Omega(Z,z_0)=(Z,z_0)^{(S^1,*)}$, by \corref{cor_stasheff_theorem}.
\end{proof}

Let $X$ be a CW complex and $Y$ a space of CW type. Write $X=\cup_{\lambda\in\Lambda}X_\lambda$ where
$\setof{X_\lambda\,\vert\,\lambda}$ is the set of path components of $X$. Pick $x_0\in X_{\lambda_0}\subset X$.
Then $Y^X$ is homeomorphic to $\prod_\lambda Y^{X_\lambda}$, and $(Y,y_0)^{(X,x_0)}$ is homeomorphic to
$(Y,y_0)^{(X_{\lambda_0},x_0)}\times\prod_{\lambda\neq\lambda_0}Y^{X_\lambda}$. In light of \exref{product}
it suffices to consider $X$ path-connected.

Let $X$ be a path-connected CW complex and let $x_0\in X$. Then any given CW decomposition may be refined
in such a way that $x_0$ becomes a $0$-cell. In particular, $x_0$ is a nondegenerate base point. 

Let $Y$ be a space of CW type and let $y_0\in Y$. Since $X$ is path-connected, evidently
$(Y,y_0)^{(X,x_0)}=(D,y_0)^{(X,x_0)}$ where $D$ is
the path component of $y_0$, and by \corref{cor_stasheff_theorem} the space $Y^X$ has CW type if and only if
$(Y,y_0)^{(X,x_0)}$ has for $y_0$ ranging over all different path components of $Y$. In this way
it suffices to consider $Y$ path-connected.

Let $X$ be a path-connected CW complex with a $0$-cell $x_0$ and let $T$ be a maximal tree with vertex $x_0$ in $X$.
Then $(X,x_0)\to(X/T,*)$ is a homotopy equivalence of pairs and in light of \propref{homotopy_equivalence} it suffices
to consider CW complexes $X$ with a single $0$-cell.

Now we take care of the base point in $Y$. Let $y_0$ be an arbitrary base point of a path-connected space $Y$ of CW type.
If $y_0$ is degenerate, attach a whisker at $y_0$ to obtain $Y'$ with nondegenerate base point $y_0'$. (That is $Y'$ is
the mapping cylinder of $\setof{y_0}\to Y$.) The retraction $Y'\to Y$ is a homotopy equivalence (sending $y_0'$ to $y_0$),
hence so is the induced function ${Y'}^X\to Y^X$, by \propref{homotopy_equivalence}. The diagram
\begin{equation*}\begin{diagram}
\node{{Y'}^X} \arrow{e} \arrow{s} \node{Y^X} \arrow{s} \\
\node{Y'} \arrow{e,t}{\text{retraction}} \node{Y}
\end{diagram}\end{equation*}
where the vertical arrows are evaluations at $x_0$, is a map of fibrations. Since the horizontal arrows
are homotopy equivalences, so is the induced map
\begin{equation*} \tag{\dag} (Y',y_0')^{(X,x_0)}\to(Y,y_0)^{(X,x_0)} \end{equation*}
on the fibres, by Brown and Heath \cite{brown-heath}, Corollary 1.5.
Hence we may assume that $Y$ has a nondegenerate base point.


\mainmatter


\chapter{CW homotopy type of certain inverse limits}\label{slc_cw_type}

\setcounter{thm}{0}

\def\Z{{\bf Z}}
\def\P{{\bf P}}
\def\Q{{\bf Q}}
\def\p{{\bf p}}
\def\q{{\bf q}}
\def\W{{\bf W}}
\def\f{{\bf f}}
\def\g{{\bf g}}
\def\h{{\bf h}}
\def\k{{\bf k}}
\def\w{{\bf w}}
\def\r{{\bf r}}

A topological space $Z$ is {\it semilocally contractible} (see for example Dydak and Geoghegan \cite{dyd-geog} or
Fritsch and Piccinini \cite{fp})
if every point $z\in Z$ has a neighbourhood that deforms in $Z$ to a point. The deformation
is not required to be $z$-preserving, but we may clearly assume that the final point is $z$.

Semilocal contractibility is also referred to as {\it weak local contractibility},
see for example \cite{fadell2}, \cite{dugundji}.

\begin{defn}
A space $Z$ is a Dold space if it admits a numerable covering of open sets deformable in $Z$ to a point.
\end{defn}

Such spaces are called {\it locally contractible in the large} in Allaud \cite{allaud}.

Obviously a Dold space is semilocally contractible.

A space dominated by a semilocally contractible (respectively Dold) space is itself semilocally contractible
(respectively Dold); see Fadell \cite{fadell2} and Dold \cite{dold}. In particular, the two properties are
homotopy invariant, and we note (see \cite{dold})


\begin{lem}
Spaces of CW homotopy type are Dold spaces.\qed
\end{lem}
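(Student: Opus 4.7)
The plan is to use the homotopy invariance of the Dold property just recorded to reduce to the case where $Z$ is itself a CW complex. Indeed, if $Z$ has CW homotopy type then it is dominated by (in fact equivalent to) a CW complex $W$, and by the preceding observation it will suffice to show that $W$ is a Dold space.

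So let $W$ be a CW complex. Recall (Miyazaki) that $W$ is paracompact, hence every open cover of $W$ admits a subordinate locally finite partition of unity and is in particular numerable. Consequently it is enough to produce \emph{some} open cover of $W$ by sets each deformable within $W$ to a point. I would construct such a cover indexed by the cells $e$ of $W$: for each cell $e$ pick an interior point $p_e\in e$, and build an open neighborhood $U_e\ni p_e$ together with a homotopy $H_e\colon U_e\times I\to W$ from the inclusion to the constant map at $p_e$. Once this is done, the family $\{U_e\}$ covers $W$ (since $p_e\in U_e$ and every point of $W$ lies in some open cell), each $U_e$ deforms in $W$ to a point by construction, and numerability is automatic by paracompactness.

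The construction of $(U_e,H_e)$ proceeds by induction on skeleta. Inside $e$ itself take a small open ball around $p_e$, radially contracted to $p_e$. To pass from $W^{(n)}$ to $W^{(n+1)}$ I would, for each $(n+1)$-cell $e'$ whose attaching map sends part of $\partial D^{n+1}$ into the already-built piece of $U_e$, use a collar of $\partial D^{n+1}$ in $D^{n+1}$ together with the HEP for the CW pair $(W^{(n)}\cup\overline{e'},W^{(n)})$ to thicken $U_e$ slightly into the interior of $e'$ and extend $H_e$ over the collar. The weak topology on $W$ then ensures that the resulting $U_e$ is open in $W$ and $H_e$ is continuous, since its intersection with, respectively restriction to, each skeleton has these properties.

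The main obstacle will be carrying out this skeletal extension coherently for all cells $e$ at once, so that each $U_e$ remains open in the weak topology and each $H_e$ actually takes values in $W$. This is essentially Milnor's equilocal convexity construction from \cite{milnor}: an ELCX paracompact space is immediately Dold, because the cover witnessing equilocal convexity consists of sets that are contractible within themselves (in particular within $W$) and is numerable by paracompactness. Thus the technical heart of the argument is really the existence of an ELCX structure on every CW complex, and once this is invoked the lemma follows from the definitions.
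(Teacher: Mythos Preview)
Your approach is correct, and in fact supplies considerably more detail than the paper does: the paper gives no proof at all, simply citing Dold \cite{dold} and marking the lemma with \qed. So there is nothing to compare against in the paper itself.

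That said, your argument can be shortened. Once you have reduced to an honest CW complex $W$ and invoked Miyazaki's paracompactness, you do not need the inductive skeletal construction of the $U_e$: it is a standard fact that CW complexes are locally contractible (every point has arbitrarily small open neighborhoods contractible in themselves), which immediately gives an open cover by sets deformable in $W$ to a point, and paracompactness makes this cover numerable. Equivalently, this is exactly the content of \lemref{paracompact_vs_Dold} just below, applied to $W$. Your ELCX route is also fine and is Milnor's original argument, but the cell-by-cell thickening you sketch is more machinery than the statement requires.
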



Path components of a semilocally contractible space are open, and therefore a space is semilocally contractible
(respectively Dold) if and only if its path components are open and semilocally contractible (respectively Dold).
Similarly a space has the homotopy type of a CW complex if and only if its path components are open and each has
the type of a CW complex.


It is easy to see (see \cite{dold})
\begin{lem}\label{paracompact_vs_Dold}
A paracompact and semilocally contractible space is a Dold space. \qed
\end{lem}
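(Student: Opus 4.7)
The plan is to build a numerable cover whose members each deform to a point in $Z$, by refining the cover provided by semilocal contractibility and then invoking paracompactness.

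First, I would use semilocal contractibility to produce, for each $z\in Z$, an open neighbourhood $U_z\ni z$ together with a homotopy $H_z\colon U_z\times I\to Z$ such that $H_z(\cdot,0)$ is the inclusion and $H_z(\cdot,1)$ is constant. The family $\setof{U_z\,\vert\,z\in Z}$ is then an open cover of $Z$ in which every member is deformable in $Z$ to a point.

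Next, invoke paracompactness. Since $Z$ is paracompact Hausdorff (recall our standing convention), every open cover admits a subordinate partition of unity; in particular there exists a locally finite open refinement $\setof{V_\alpha\,\vert\,\alpha\in A}$ of $\setof{U_z}$ together with functions $\setof{f_\alpha\colon Z\to[0,1]}$ satisfying $f_\alpha^{-1}((0,1])\subset V_\alpha$, $\setof{f_\alpha^{-1}((0,1])}$ locally finite, and $\sum_\alpha f_\alpha\equiv 1$. This is precisely what it means for $\setof{V_\alpha}$ to be numerable.

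Finally, for each $\alpha$ pick $z(\alpha)\in Z$ with $V_\alpha\subset U_{z(\alpha)}$. The restriction of $H_{z(\alpha)}$ to $V_\alpha\times I$ is then a homotopy deforming $V_\alpha$ in $Z$ to a point, since deformability in $Z$ to a point is inherited by open subsets of a set deformable in $Z$ to a point. Thus $\setof{V_\alpha}$ is a numerable covering of $Z$ by open sets deformable in $Z$ to a point, so $Z$ is a Dold space.

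There is no substantive obstacle; the only `content' is the standard fact that paracompact Hausdorff spaces admit partitions of unity subordinate to every open cover, which provides the numerability. The semilocal contractibility input is purely local, and its global upgrading is handed to us by paracompactness.
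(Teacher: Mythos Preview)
Your argument is correct and is exactly the standard one: cover by deformable neighbourhoods, refine numerably via paracompactness, and observe that deformability to a point passes to subsets. The paper gives no proof of its own (it simply cites Dold), so there is nothing further to compare.
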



\section{General necessary conditions for CW type of certain inverse limits}\label{inverse_systems}

Let $(\XXX,\lqs,\vee,\wedge)$ be a complete lattice. We use also $\sup$ in place of $\vee$, and $\inf$ in place of $\wedge$.

\begin{defn}
We say that an element $\kappa\in\XXX$ is {\it compact} (see for example Davey, Priestley \cite{davey-priestley})
if for an arbitrary subset $\Lambda\subset\XXX$ the relation $\kappa\lqs\sup\Lambda$ implies that already
$\kappa\lqs\sup F$ for some {\it finite} subset $F$ of $\Lambda$. Let $\KKK=\KKK(\XXX)$ denote the set of
compact elements of $\XXX$.

A subset $\III$ of $\XXX$ is an {\it ideal} if for each $\kappa,\kappa'\in\III$ and $\xi\in\XXX$
also $\kappa\vee\kappa'$ and $\kappa\wedge\xi$ belong to $\III$. (In particular, $\III$ is a sublattice.)

A subset $\GGG$ is {\it generating in} or {\it generates} $\XXX$ if for each $\xi\in\XXX$ there exists a subset
$\Xi$ of $\GGG$ with $\xi=\sup\Xi$.

If the set of compact elements generates the complete lattice $\XXX$ we call $\XXX$ an {\it algebraic} lattice.

Finally, we call $\XXX$ a {\it regular} lattice if it is a distributive algebraic lattice whose set of compact
elements is an ideal. (Note that the set of compact elements is always closed for finite supremums.)

Let $\XXX$ be a regular lattice and let $\xi\in\XXX$. We define the cardinality of $\xi$ with respect to
the set $\KKK$ of compact elements\footnote{In fact the definition would make sense with any generating subset.} as
\[ \card_{\KKK}\xi=\min\setof{\card K\,\vert\, K\subset\KKK\text{ such that }\xi=\sup K}. \]
As every set of cardinal numbers is well-ordered, the above definition makes sense. For $\xi\in\XXX$ the
cardinality $\card_\KKK\xi$ is finite if and only if it is equal to $1$ which is if and only if $\xi$ belongs to $\KKK$.
\end{defn}

\begin{rem_n}
Every distributive algebraic lattice $\XXX$ satisfies the strong (more precisely the `join-infinite') distributive law
\begin{equation*}	\tag{$\star$}
\mu\wedge\sup\setof{\lambda_i\,\vert\,i}=\sup\setof{\mu\wedge\lambda_i\,\vert\,i}
\end{equation*}
for all elements $\mu\in\XXX$ and subsets $\setof{\lambda_i\,\vert\,i}\subset\XXX$.
(See Davey, Priestley \cite{davey-priestley}, Exercise 4.22.)

Let us mention an alternative sufficient condition for regularity.
Let $\XXX$ be a complete lattice which satisfies the join-infinite distributive law ($\star$).
Further let $\FFF$ be a generating ideal in $\XXX$. Assume that for any element $\phi\in\FFF$
the set $\setof{\xi\in\XXX\,\vert\,\xi<\phi}$ is finite. Then $\XXX$ is a regular lattice with
$\KKK(\XXX)=\FFF$. An example is the lattice of all subcomplexes of a given CW complex.
\end{rem_n}

Let $\Z$ be an inverse system of topological spaces indexed by $\XXX$,
that is a cofunctor $\XXX\to\TTT op$. As customary we write $Z_\lambda$ for $\Z(\lambda)$ and employ an additional symbol
$p_{\lambda,\mu}$ for the arrow, or bonding morphism, $\Z(\mu)\to\Z(\lambda)$ whenever $\lambda<\mu$.

Let $\LLL$ be a directed subset of $\XXX$ and let $\mu=\sup\LLL$. The morphisms $Z_\mu\to Z_\lambda$, for
$\lambda\in\LLL$, induce a map \begin{equation*}\tag{$**$} Z_\mu\to\lim_{\lambda\in\LLL}Z_\lambda. \end{equation*}

\begin{defn}
The inverse system $\Z$ is restricted if the map ($**$) is a homeomorphism, for each directed subset $\LLL$ of $\XXX$.
\end{defn}

\begin{rem_n}
The only type of lattice that we are going to consider (apart from the lattice of natural numbers together with
$\infty$) is the lattice $\XXX$ of all subcomplexes of a given CW complex $X$. 
Given a topological space $Y$ we may form the induced inverse system of function
spaces $\setof{Y^L\,\vert\,L\in\XXX}$ together with restriction fibrations as bonding maps. This system is certainly
restricted in the sense defined above, and systems of this type will be considered below.

Therefore we restrict our attention to (restricted) inverse systems indexed by complete lattices. Some results of general
nature below, for instance \propref{fibre_SLC_general}, could be formulated in a more general setting. However, we
insist that all morphisms in the system be fibrations. This is a strong requirement which is difficult to check in
`practice.' In a complete lattice we can choose a maximal totally ordered subset which is well-ordered because of
completeness. For a restricted inverse system indexed by a well-ordered set all bonding maps are fibrations if and
only if all consecutive bonding maps are fibrations. (See \corref{consecutive_fibrations}.)

With `target lattice' being subcomplexes of a CW complex, it may appear artificial to work with abstract complete lattices.
We have two justifications. We exhibit some interesting results
for inverse sequences which do not involve function spaces. On the other hand, for function spaces with domain an uncountable
complex our results follow from easy transfinite induction arguments that rely mainly upon properties of the lattice of
subcomplexes of domain and not particularly on the function space structure. Stripped of the `unnecessary extra' structure,
proofs involving transfinite arguments appear more natural. Thus we have taken an intermediate approach.
\end{rem_n}

\begin{prop}\label{fibre_SLC_general}
Let $\Z$ be a restricted system of fibrations indexed by a regular lattice $\XXX$ with set of compact elements $\KKK$.
Set $\alpha=\sup\XXX=\max\XXX$. Assume that for each $\kappa\in\KKK$ the space $Z_\kappa$ has the homotopy type
of a CW complex. Assume also that $\alpha\notin\KKK$, for nontriviality.

Denote the limit space $Z_\alpha=\lim_{\lambda\in\XXX}Z_\lambda=\lim_{\kappa\in\KKK}Z_\kappa$ simply by $Z$.
Let $C$ be an arbitrary path component of $Z$, and let $C_\kappa$ denote the image of $C$ under $Z\to Z_\kappa$.

\begin{enroman}
\item	$C$ is semilocally contractible if and only if for each $\zeta\in C$ there exists $\kappa\in\KKK$ such that
	the fibre of $C\to C_\kappa$ over $\zeta\vert_\kappa$ contracts in $C$.
\item	$C$ is semilocally contractible and open if and only if for each $\zeta\in C$ there exists $\kappa\in\KKK$ such
	that the fibre of $Z\to Z_\kappa$ over $\zeta\vert_\kappa$ contracts in $Z$.
\item	If $C$ is semilocally contractible then the loop space $\Omega(Z,\zeta)$ has CW homotopy type for each $\zeta\in C$.
\item	$Z$ has the homotopy type of a CW complex if and only if it is a Dold space.
\end{enroman}
\end{prop}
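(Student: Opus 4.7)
Plan. The structural fact I use throughout is that, because the system is restricted with all bonding maps fibrations, each projection $p_\kappa\colon Z\to Z_\kappa$ is itself a Hurewicz fibration, and the preimages $p_\kappa^{-1}(V)$ (with $\kappa\in\KKK$ and $V$ an open neighbourhood of $\zeta|_\kappa$ in $Z_\kappa$) form a neighbourhood basis at every $\zeta\in Z$. Since each $Z_\kappa$ has CW type, such $V$ may additionally be chosen to deform within $Z_\kappa$ to $\zeta|_\kappa$.

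For the forward direction of (i), given a neighbourhood $U\subseteq C$ of $\zeta$ deforming in $C$ to $\zeta$, I choose $\kappa\in\KKK$ and open $V\ni\zeta|_\kappa$ with $p_\kappa^{-1}(V)\cap C\subseteq U$; then the fibre $F=p_\kappa^{-1}(\zeta|_\kappa)\cap C$ lies inside $U$, and the given deformation restricts to a contraction of $F$ in $C$. For the backward direction, I pick $V$ deforming to $\zeta|_\kappa$ in $Z_\kappa$ via some $h$, and lift $h\circ(p_\kappa\times\id)$ along the fibration $p_\kappa|_C\colon C\to C_\kappa$, driving $p_\kappa^{-1}(V)\cap C$ into $F$ at time one; splicing with the hypothesised contraction of $F$ in $C$ gives the required deformation. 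Part (ii) is the verbatim analogue with $Z$ in place of $C$, and the additional openness of $C$ asserted in (ii) follows because the lifted deformation pushes a neighbourhood of $\zeta$ into the fibre over $\zeta|_\kappa$ within $Z$, after which the contraction of the fibre in $Z$ joins every point of that neighbourhood to $\zeta$ by a path, trapping the neighbourhood inside $C$.

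For (iii), I use (i) to find $\kappa$ with $F=p_\kappa^{-1}(\zeta|_\kappa)\cap C$ null-homotopic in $C$. The standard Puppe identification for the fibration $F\to C\to C_\kappa$ exhibits $\Omega(C_\kappa,\zeta|_\kappa)$ as the homotopy fibre of $i\colon F\hookrightarrow C$. On the other hand, the general fact that the homotopy fibre of a null-homotopic map $X\to Y$ is homotopy equivalent to $X\times\Omega Y$, applied to $i$, yields $F\times\Omega(Z,\zeta)\simeq\mathrm{hofib}(i)$. Combining these, $\Omega(C_\kappa,\zeta|_\kappa)\simeq F\times\Omega(Z,\zeta)$; since $\Omega C_\kappa$ has CW type by Milnor's theorem applied to $Z_\kappa$, its retract $\Omega(Z,\zeta)$ has CW type by Whitehead's theorem on domination.

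For (iv), the forward direction is the lemma recorded just before this section. Conversely, a Dold space is semilocally contractible, so by (ii) every path component $C$ of $Z$ is open and, for each $\zeta\in C$, there is $\kappa$ such that the full fibre $F^{\mathrm{full}}=p_\kappa^{-1}(\zeta|_\kappa)$ contracts in $Z$. All tracks of this contraction terminate at a single point path-connected to $\zeta$, so $F^{\mathrm{full}}\subseteq C$, and $p_\kappa|_C\colon C\to C_\kappa$ is a fibration onto a single path component of the CW-type space $Z_\kappa$ with fibre $F^{\mathrm{full}}$ over $\zeta|_\kappa$. The identification of (iii) exhibits $F^{\mathrm{full}}$ as a retract of $\Omega C_\kappa$, hence of CW type, and Stasheff's theorem applied to $p_\kappa|_C$ then yields CW type for $C$; since $Z$ is the disjoint union of its open path components, $Z$ itself has CW type. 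The principal technical obstacle I expect is in (iii): realising the two identifications as compatible \emph{genuine} homotopy equivalences (not merely weak equivalences), by building an explicit homotopy inverse from the null-homotopy $H$ together with a chosen lifting function for $p_\kappa|_C$, so that the retract-domination step to Whitehead's theorem is actually available.
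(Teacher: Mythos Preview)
Your argument for parts (i)--(iii) is correct and essentially identical to the paper's: the same neighbourhood-basis reduction, the same lift-then-splice for sufficiency in (i), and the same splitting $\Omega(C_\kappa,\zeta|_\kappa)\simeq F_\kappa\times\Omega(Z,\zeta)$ yielding domination of $\Omega(Z,\zeta)$ by a space of CW type. Your flagged technical point in (iii) is exactly right and is handled in the paper by an explicit formula (their \propref{fibre_contraction}, also Spanier~2.8.15), matching the construction you sketch.

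Part (iv) is where you diverge. The paper simply invokes Allaud's delooping theorem: once (iii) gives $\Omega(Z,\zeta)$ CW type for every $\zeta$, a Dold space with all loop spaces of CW type has CW type. You instead work directly with Stasheff's theorem applied to the restricted fibration $p_\kappa|_C\colon C\to C_\kappa$, using the product splitting from (iii) to see that the fibre $F^{\mathrm{full}}$ is dominated by $\Omega C_\kappa$ and hence has CW type. This is a legitimate alternative and has the virtue of being more self-contained (Stasheff's theorem is already one of the paper's declared basic tools, whereas Allaud's is an external black box). The price is the extra step you gloss over: to apply Stasheff you need $p_\kappa|_C$ to be a genuine fibration onto a path component of $Z_\kappa$, which amounts to showing $p_\kappa^{-1}(D)=C$ for $D$ the path component of $\zeta|_\kappa$. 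This does follow from $F^{\mathrm{full}}\subseteq C$ by path-lifting along the fibration $p_\kappa$ over the path-connected $D$, but it deserves an explicit sentence. With that addition your route is complete and arguably more elementary than the paper's.
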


\begin{proof}
Assume that $C$ is semilocally contractible and let $\zeta\in C$. There exists a neighbourhood, with no loss
of generality basic, of $\zeta\in C$ that contracts in $C$. Suppose this neighbourhood has the form
\begin{equation*} \tag{\dag} \scal{U_{\kappa_1},\dots,U_{\kappa_r}}\cap C \end{equation*} where
$\kappa_1,\dots,\kappa_r\in\KKK$. Since $\KKK$ is a sublattice, $\kappa=\sup\setof{\kappa_1,\dots,\kappa_r}$ belongs
to $\KKK$. Thus the neighbourhood (\dag) contains the neighbourhood
$\scal{U_{\kappa}}\cap C$ where \[ U_\kappa=\bigcap_{i=1}^rp_{\kappa_i,\kappa}^{-1}(U_{\kappa_i}). \]
The latter contains the fibre $F_\kappa$ of $C\to C_\kappa$ over $\zeta_\kappa=\zeta\vert_{\kappa}$.
Thus $F_\kappa$ contracts in the total space $C$ and therefore the loop space $\Omega(C_\kappa,\zeta_\kappa)$
has the same homotopy type as the product $F_\kappa\times\Omega(C,\zeta)$. (See Spanier \cite{spanier}, Corollary 2.8.15,
as well as \propref{fibre_contraction}.) In particular, $\Omega(Z,\zeta)=\Omega(C,\zeta)$
is dominated by $\Omega(C_\kappa,\zeta_\kappa)$ which has CW homotopy type by assumption and Milnor's theorem.
This shows \navedi{3} and the necessity part of \navedi{1}. To prove necessity of \navedi{2}, note that
if $C$ is open in $Z$, then (\dag) may be assumed equal to $\scal{U_{\kappa_1},\dots,U_{\kappa_r}}\cap Z$.

For the sufficiency part of \navedi{1}, let $\zeta\in C$ and assume that the fibre $F_\kappa$ over $\zeta\vert_\kappa$
of $C\to C_\kappa$ contracts in $C$. Since $C_\kappa$ has CW type, there exists a neighbourhood $U_\kappa$ of
$\zeta\vert_\kappa$ that contracts in $C_\kappa$ to $\zeta\vert_\kappa$. Let $U$ be the preimage of $U_\kappa$ in $C$.
By homotopy lifting property the deformation of $U_\kappa$ lifts to a homotopy that deforms $U$ into the fibre $F_\kappa$.
Since the latter deforms to a point, we may concatenate to obtain a deformation of $U$ into a point. This concludes
\navedi{1}.

Note that $C_\kappa$ is open in $Z_\kappa$, hence $U_\kappa$ is open in $Z_\kappa$, and the preimage $\Tilde U$ of
$U_\kappa$ in $Z$ is an open neighbourhood of $\zeta$. If the fibre of $Z\to Z_\kappa$ contracts in $Z$, then so does
$\Tilde U$, by the above. The image of the contracting homotopy is a path-connected subset of $Z$, hence $\Tilde U$ is
contained in $C$. This shows that $C$ is open, and proves the sufficiency part of \navedi{2}.

If $Z$ is semilocally contractible it follows by \navedi{3} that for each $\zeta\in Z$, the loop space
$\Omega(Z,\zeta)$ has CW homotopy type. If, in addition, $Z$ is a Dold space, it has CW homotopy type
by the `delooping' theorem of Allaud \cite{allaud}.
\end{proof}

\begin{example}\label{product}
Let $I$ be an infinite set and let $\setof{W_i\,\vert\,i\in I}$ be a family of nonempty spaces of CW type.
Let $\KKK$ denote the set of all finite subsets of $I$, and $\XXX$ the set of all subsets of $I$.
For $\lambda\in\XXX$ let $Z_\lambda$ denote the cartesian product of $W_i$ for $i\in\lambda$, and
for $\lambda\subset\mu$ let $Z_\mu\to Z_\lambda$ denote the projection. The limit space of the
system $\setof{Z_\lambda\,\vert\,\lambda\in\XXX}$ is the cartesian product $Z=\prod_iW_i$.

\propref{fibre_SLC_general} implies that $Z$ is semilocally contractible if and only if there exists $\kappa\in\KKK$
such that $Z'=\prod_{i\notin\kappa}W_i$ contracts in $Z$. By virtue of the projection $Z\to Z'$ this implies that $Z'$
is contractible and hence so are the spaces $W_i$ for $i\notin\KKK$. In this case $Z$ is homotopy equivalent to the
finite product $\prod_{i\in\kappa}W_i$ and has CW homotopy type.\qed
\end{example}

\begin{prop}\label{strong_obstruction_general}
Given hypotheses of \propref{fibre_SLC_general}, assume that $C$ has the homotopy type of a CW complex.
Then, given any $\kappa\in\KKK$ there exist $\lambda,\lambda'\in\KKK$ with $\kappa\lqs\lambda\lqs\lambda'$ so that the
following is true.
\begin{enroman}
\item For every $\mu\in\XXX$ such that $\mu\gqs\lambda$ the fibre $F_\mu$ of $C\to C_\mu$ over $\zeta_\mu$ contracts in $C$
	and the fibration $\Omega(C,\zeta)\to\Omega(C_\mu,\zeta_\mu)$ is a domination map. Moreover, there exists
	a commutative diagram
\begin{equation*}\begin{diagram}
\node{\Omega(Z_\mu,\zeta_\mu)} \arrow[3]{e,t}{\Omega(p_{\lambda,\mu},\zeta_\mu)} \node[3]{\Omega(Z_\lambda,\zeta_\lambda)} \\
\node{F_\mu\times\Omega(Z,\zeta)} \arrow{n,l}{\simeq} \arrow[3]{e,t}{\text{inclusion}} \node[3]{F_\lambda\times\Omega(Z,\zeta)}
	\arrow{n,r}{\simeq}
\end{diagram}\end{equation*}
	where the vertical arrows are homotopy equivalences of the form
	\[ F_\lambda\times\Omega(Z,\zeta)\to\Omega(Z_\lambda,\zeta_\lambda),\,\,\,
		(z,\gamma)\mapsto p_{\lambda,\alpha}{}_\#(\gamma)*\theta_\lambda(z)	\]
	where $\theta_\lambda\colon F_\lambda\to\Omega(Z_\lambda,\zeta_\lambda)$ is a map,
	and $*$ denotes concatenation.

\item For every $\mu\in\XXX$ with $\mu\gqs\lambda'$ the inclusion $F_\mu\to F_\lambda$ (and hence also $F_\mu\to F_\kappa$)
	is homotopic to a constant map. Consequently the inclusion $F_\mu\times\Omega(Z,\zeta)\hookrightarrow
	 F_\lambda\times\Omega(Z,\zeta)$ is homotopic to the projection onto $\setof{\zeta}\times\Omega(Z,\zeta)$,
	and the fibration $\Omega(Z_\mu,\zeta_\mu)\to\Omega(Z_\lambda,\zeta_\lambda)$ factors as
	\[ \Omega(Z_\mu,\zeta_\mu)\xrightarrow{j}\Omega(Z,\zeta)\xrightarrow{\Omega(p_{\lambda,\alpha},\zeta)}
		\Omega(Z_\lambda, \zeta_\lambda) \]
	where $j$ is a right homotopy inverse of the fibration $\Omega(Z,\zeta)\to\Omega(Z_\mu,\zeta_\mu)$.
\end{enroman}
\end{prop}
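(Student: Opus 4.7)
The plan is to derive (i) as a direct consequence of \propref{fibre_SLC_general}(i), and to prove (ii) by exploiting semilocal contractibility of $F_\lambda$ itself---which must first be shown to have CW type.

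For (i), since $C$ has CW homotopy type it is semilocally contractible, so \propref{fibre_SLC_general}(i) gives some $\kappa_0\in\KKK$ such that the fiber of $C\to C_{\kappa_0}$ over $\zeta\vert_{\kappa_0}$ contracts in $C$. Take $\lambda:=\kappa\vee\kappa_0$; this lies in $\KKK$ because $\KKK$ is an ideal. For $\mu\gqs\lambda$ one has $F_\mu\subseteq F_{\kappa_0}$ and hence $F_\mu$ also contracts in $C$. Now fix once and for all a basepoint-preserving contraction $H\colon F_\lambda\times I\to C$ with $H_0$ equal to the inclusion $F_\lambda\hookrightarrow C$, $H_1\equiv\zeta$, and $H(\zeta,t)=\zeta$, and define $\theta_\mu(z)(t):=p_{\mu,\alpha}(H(z,t))$. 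By \propref{fibre_contraction}, the map $(z,\gamma)\mapsto p_{\mu,\alpha\#}(\gamma)*\theta_\mu(z)$ is a homotopy equivalence $F_\mu\times\Omega(Z,\zeta)\to\Omega(Z_\mu,\zeta_\mu)$. The displayed diagram commutes because $\Omega(p_{\lambda,\mu})\circ\theta_\mu=\theta_\lambda\vert_{F_\mu}$ (the same $H$ defines both), and the domination assertion is the standard retraction onto the $\Omega(Z,\zeta)$-factor.

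For (ii), observe first that $F_\lambda$ has CW homotopy type: from (i) with $\mu=\lambda$, $F_\lambda$ is a strict retract of $F_\lambda\times\Omega(Z,\zeta)\simeq\Omega(Z_\lambda,\zeta_\lambda)$, and the latter has CW type by Milnor's theorem; hence $F_\lambda$ has CW type by Theorem 23 of \cite{whitehead}. In particular $F_\lambda$ is semilocally contractible at $\zeta$, so one may pick an open neighborhood $V\subseteq F_\lambda$ of $\zeta$ that contracts in $F_\lambda$. Writing $V=F_\lambda\cap W$ for some $W\subseteq Z$ open, and using that the inverse limit topology on $Z$ has a basis consisting of sets $p_{\kappa_1}^{-1}(V_{\kappa_1})$ with $\kappa_1\in\KKK$ (finite intersections collapse because $\KKK$ is closed under finite joins), one finds such a basic open $p_{\kappa_1}^{-1}(V_{\kappa_1})\subseteq W$ containing $\zeta$. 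Set $\lambda':=\lambda\vee\kappa_1\in\KKK$. For $\mu\gqs\lambda'$ and any $z\in F_\mu$, $z\vert_{\kappa_1}=\zeta\vert_{\kappa_1}\in V_{\kappa_1}$ and $z\in F_\lambda$, so $F_\mu\subseteq F_\lambda\cap p_{\kappa_1}^{-1}(V_{\kappa_1})\subseteq V$. Hence $F_\mu\hookrightarrow F_\lambda$ factors through the nullhomotopic inclusion $V\hookrightarrow F_\lambda$, proving the first assertion of (ii).

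The remaining claims of (ii) follow formally from the product model of (i): nullhomotopy of $F_\mu\hookrightarrow F_\lambda$ says exactly that $F_\mu\times\Omega(Z,\zeta)\to F_\lambda\times\Omega(Z,\zeta)$ is homotopic to the projection onto $\setof{\zeta}\times\Omega(Z,\zeta)$, which translates via the equivalences of (i) to the factorization $\Omega(p_{\lambda,\mu})\simeq\Omega(p_{\lambda,\alpha})\circ j$ with $j$ the retraction onto the $\Omega(Z,\zeta)$-factor of $\Omega(Z_\mu,\zeta_\mu)$. I expect the most delicate step to be the transfer from `contractibility in $C$' (furnished by (i) via \propref{fibre_SLC_general}) to `contractibility in $F_\lambda$' (needed for (ii)): this requires that $F_\lambda$ itself have CW type, and that the ideal structure of $\KKK$ in the regular lattice $\XXX$ provides basic open sets of $Z$, indexed by compact elements, that are large enough to contain $F_\mu$ for $\mu$ sufficiently far up.
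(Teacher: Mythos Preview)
Your argument is correct and tracks the paper's proof closely. The construction of $\lambda$ from \propref{fibre_SLC_general}(i), the use of a single contraction $H$ on $F_\lambda$ (restricted to $F_\mu$) to obtain compatible $\theta_\mu$'s, and the basic-neighbourhood argument in $F_\lambda$ to produce $\lambda'$ are exactly what the paper does.

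The one genuine difference is how you establish that $F_\lambda$ has CW type. The paper invokes Stasheff's theorem directly: both $C$ and $C_\lambda$ have CW type (the latter as an open subset of $Z_\lambda$, which has CW type since $\lambda\in\KKK$), hence so does the fibre $F_\lambda$ of $C\to C_\lambda$. You instead observe that $F_\lambda$ is a retract of $F_\lambda\times\Omega(Z,\zeta)\simeq\Omega(Z_\lambda,\zeta_\lambda)$ and appeal to Whitehead's domination theorem. Both routes are valid; yours is self-contained in that it reuses the product splitting from~(i) rather than importing Stasheff's theorem, while the paper's is slightly more direct.

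A small remark: the basepoint condition $H(\zeta,t)=\zeta$ you impose on the contraction is not needed (and is not assumed in the paper); the concatenation $p_{\mu,\alpha\#}(\gamma)*\theta_\mu(z)$ is already a loop at $\zeta_\mu$ because $z\in F_\mu$ and $H(z,1)=\zeta$ both project to $\zeta_\mu$. Dropping this condition avoids any question about nondegeneracy of $\zeta$ in $F_\lambda$.
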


\begin{proof}
If $\mu\gqs\lambda$ then $F_\mu\subset F_\lambda$, so by \navedi{1} of \propref{fibre_SLC_general} and directedness of
$\KKK$ there exists $\lambda\gqs\kappa$ so that $F_\lambda$ contracts in $C$ and consequently so does $F_\mu$ for any
$\mu\gqs\lambda$.

View $\Omega(Z,\zeta)$ and $\Omega(Z_\lambda,\zeta_\lambda)$ as subspaces of $C^I$ and $C_\lambda^I$, respectively.
Let $k\colon F_\lambda\to C^I$ denote the adjoint of a contracting homotopy. By \propref{fibre_contraction} a homotopy
equivalence $F_\lambda\times\Omega(Z,\zeta)\to\Omega(Z_\lambda,\zeta_\lambda)$ is given by
\begin{equation*}
\tag{\ddag} (z,\gamma)\mapsto p_{\lambda,\alpha}\circ\big[\gamma*k^{-1}(z)\big]=
		[p_{\lambda,\alpha}\circ\gamma]*[p_{\lambda,\alpha}\circ k^{-1}(z)].
\end{equation*}
For fixed $z$, the map (\ddag) amounts to the induced fibration
$\Omega(Z,\zeta)\to\Omega(Z_\lambda,\zeta_\lambda)$ followed by a multiplication homotopy equivalence;
which concludes the proof of \navedi{1}.

Since both $C$ and $C_\lambda$ have CW type, so has $F_\lambda$ by Stasheff's theorem. Hence also $F_\lambda$ is semilocally
contractible and there exists a basic neighbourhood of $\zeta$ in $F_\lambda$ that contracts in $F_\lambda$. Using
directedness of $\KKK$ as in the proof of \propref{fibre_SLC_general} we may assume that the neighbourhood has the form
$\scal{U_{\lambda'}}\cap F_\lambda$ for some $\lambda'\in\KKK$, $\lambda'\gqs\lambda$, and open $U_{\lambda'}\subset Z_{\lambda'}$.
But this neighbourhood contains the set $\setof{z\,\vert\,z_{\lambda'}=\zeta_{\lambda'}}\cap F_{\lambda}$ which equals
$F_{\lambda'}$. The commutativity of the diagram of \navedi{1} follows from (\ddag) upon having chosen $k\vert_{F_\mu}$
for the `contraction' $F_\mu\to C^I$.
\end{proof}


\section{Results on sequences}\label{results_on_sequences}

In this section we study inverse sequences of fibrations between spaces of CW type, and show that
the inverse limit space has CW type only if the sequence of loop spaces splits into the product of
a sequence of nullhomotopic maps and one of homotopy equivalences. Moreover, a converse holds as well
(see \thmref{inverse_sequence} below). Thus the limit space has CW type if and only if the sequence
is trivial in some sense, in the spirit of Theorem B of Dydak and Geoghegan \cite{dyd-geog} (see also
\cite{corr-dyd-geog}).

We first recall a well known result due to Edwards and Hastings (see Geoghegan \cite{geoghegan} for details).

\begin{prop}\label{E-H}
Assume given a commutative diagram of topological spaces and continuous functions
\begin{equation*}\begin{diagram}
\node{\dots} \arrow{e} \node{W_3} \arrow{e,t}{p_3} \arrow{s,r}{f_3} \node{W_2} \arrow{e,t}{p_2} \arrow{s,r}{f_2}
	\node{W_1} \arrow{s,r}{f_1} \\
\node{\dots} \arrow{e} \node{Z_3} \arrow{e,t}{r_3} \node{Z_2} \arrow{e,t}{r_2} \node{Z_1}
\end{diagram}\end{equation*}
where the maps $p_i$ as well as $r_i$ are fibrations and the maps $f_i$ are homotopy equivalences.
Then the induced inverse limit map $f_\infty\colon W_\infty\to Z_\infty$ is also a homotopy equivalence. \qed
\end{prop}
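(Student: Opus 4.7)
The plan is to construct an explicit homotopy inverse $g_\infty\colon Z_\infty\to W_\infty$ of $f_\infty$ together with compatible homotopies $\id\simeq f_\infty g_\infty$ and $\id\simeq g_\infty f_\infty$, built levelwise by repeated application of the homotopy lifting property of the bonding fibrations.

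First I would choose compatible homotopy inverses $g_i\colon Z_i\to W_i$ satisfying $p_{i+1}\circ g_{i+1}=g_i\circ r_{i+1}$ strictly. Take $g_1$ to be any homotopy inverse of $f_1$. Given $g_i$, pick any homotopy inverse $g'_{i+1}$ of $f_{i+1}$; then $f_i\circ(p_{i+1}g'_{i+1})=r_{i+1}\circ(f_{i+1}g'_{i+1})\simeq r_{i+1}$ and $f_i\circ(g_ir_{i+1})\simeq r_{i+1}$, so since $f_i$ is a homotopy equivalence we conclude $p_{i+1}g'_{i+1}\simeq g_ir_{i+1}$. Lifting this homotopy via the HLP of $p_{i+1}$, starting at $g'_{i+1}$, produces $g_{i+1}$ still homotopic to $g'_{i+1}$ (hence still a homotopy inverse of $f_{i+1}$) and satisfying $p_{i+1}g_{i+1}=g_ir_{i+1}$ on the nose. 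Passing to the inverse limit yields $g_\infty=\lim_i g_i$.

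Next I would construct compatible homotopies $H_i\colon\id_{Z_i}\simeq f_ig_i$ (and analogously $K_i\colon\id_{W_i}\simeq g_if_i$) that commute with the bonds, so that they assemble to homotopies on the inverse limits. I expect this to be the main difficulty of the proof. A direct application of the HLP of $r_{i+1}$ to $H_i$, starting at $\id_{Z_{i+1}}$, produces only a homotopy whose endpoint $\phi$ satisfies $r_{i+1}\circ\phi=r_{i+1}\circ f_{i+1}g_{i+1}$, rather than the desired equality $\phi=f_{i+1}g_{i+1}$. The remedy is to construct the data $(g_i,H_i,K_i)$ jointly rather than sequentially: at each stage one accepts the lifted endpoint $\phi$ as the working representative of $f_{i+1}g_{i+1}$ and adjusts $g_{i+1}$ accordingly within its fibrewise homotopy class over $Z_i$, invoking Dold's fibrewise homotopy equivalence criterion to verify that the adjusted $g_{i+1}$ remains a homotopy inverse of $f_{i+1}$.

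Passing to the inverse limit then produces homotopies $\id_{Z_\infty}\simeq f_\infty g_\infty$ and $\id_{W_\infty}\simeq g_\infty f_\infty$, exhibiting $f_\infty$ as a genuine homotopy equivalence. Note that the entire argument lives in the category of topological spaces; no CW assumption on $W_\infty$ or $Z_\infty$ is required, which is precisely what makes the statement useful in the sequel.
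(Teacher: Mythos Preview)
Your overall plan is sound and is indeed the content of Geoghegan's simplified proof, but the ``remedy'' you sketch for the main difficulty is not an argument. After lifting $H_i\circ(r_{i+1}\times\id)$ with initial condition $\id_{Z_{i+1}}$ you obtain $\phi\colon Z_{i+1}\to Z_{i+1}$ with $r_{i+1}\phi=r_{i+1}f_{i+1}g_{i+1}$; you then propose to ``adjust $g_{i+1}$'' so that $f_{i+1}g_{i+1}=\phi$. But $\phi$ is a self-map of $Z_{i+1}$, and there is no reason it factors through $f_{i+1}$ on the nose, nor does Dold's theorem produce such a factorisation. Invoking fibrewise homotopy equivalence here gives you a fibrewise homotopy $\phi\simeq f_{i+1}g_{i+1}$ over $Z_i$, not an equality; concatenating that onto your lifted homotopy then destroys the strict compatibility with $H_i$ that you were trying to achieve. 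The construction does not close.

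The paper (following Geoghegan) avoids this by two devices. First, it builds $g_i$ \emph{and} the homotopy $h_i\colon g_if_i\simeq\id_{W_i}$ simultaneously, as a single lifting problem: the pair $(g_{i-1},h_{i-1})$ assembles to a map $M(f_i)\to W_{i-1}$ out of the mapping cylinder, the identity on $W_i$ gives the partial lift on the cylinder top $W_i\hookrightarrow M(f_i)$, and since this inclusion is a trivial cofibration (because $f_i$ is a homotopy equivalence) it lifts along the fibration $p_i\colon W_i\to W_{i-1}$. Restricting the lift to $Z_i\subset M(f_i)$ gives $g_i$, and to $W_i\times I$ gives $h_i$, strictly compatible by construction. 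Second, it makes no attempt to build compatible homotopies $f_ig_i\simeq\id_{Z_i}$ at all: having obtained $g_\infty f_\infty\simeq\id_{W_\infty}$, it simply repeats the whole construction with the strictly commuting ladder $\{g_i\}$ in place of $\{f_i\}$ (now using that the $r_i$ are fibrations) to produce some $f'_\infty$ with $f'_\infty g_\infty\simeq\id_{Z_\infty}$. Then $g_\infty$ has both a left and a right homotopy inverse, hence is an equivalence, and so is $f_\infty$. The asymmetry of only needing one side compatible is what makes the argument go through.
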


\begin{cor}\label{cor_E-H}
Suppose $\dots\to Z_3\to Z_2\to Z_1$ is an inverse sequence of fibrations which are homotopy equivalences.
Then any canonical projection from the inverse limit $Z_\infty\to Z_i$ is a homotopy equivalence.
\qed
\end{cor}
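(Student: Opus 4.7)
The plan is a direct invocation of \propref{E-H}: we just have to set up the comparison diagram so that the vertical maps point in the correct direction.

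First I would fix $i=1$ and consider the following commutative diagram,
\begin{equation*}\begin{diagram}
\node{\dots} \arrow{e} \node{Z_3} \arrow{e,t}{r_3} \arrow{s,r}{f_3} \node{Z_2} \arrow{e,t}{r_2} \arrow{s,r}{f_2}
        \node{Z_1} \arrow{s,r}{f_1} \\
\node{\dots} \arrow{e,t}{\id} \node{Z_1} \arrow{e,t}{\id} \node{Z_1} \arrow{e,t}{\id} \node{Z_1}
\end{diagram}\end{equation*}
where the top row is the given inverse sequence, the bottom row is the constant sequence with identity bonds, and $f_n=r_2\circ r_3\circ\dotsb\circ r_n$ (with $f_1=\id$). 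Then $f_n$ is a composition of fibrations which are homotopy equivalences, and hence is itself both a fibration and a homotopy equivalence. The identity maps are trivially fibrations, and the diagram commutes by construction since $f_{n-1}\circ r_n=f_n$.

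Next I would apply \propref{E-H} to this diagram. The inverse limit of the bottom row is plainly $Z_1$, and the induced limit map $Z_\infty\to Z_1$ is exactly the canonical projection. \propref{E-H} therefore yields that $Z_\infty\to Z_1$ is a homotopy equivalence.

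Finally, for arbitrary $i\gqs 1$, the tail sequence $\dots\to Z_{i+2}\to Z_{i+1}\to Z_i$ is cofinal in the original sequence, so its inverse limit is canonically homeomorphic to $Z_\infty$, and the composite $Z_\infty\xrightarrow{\approx}\lim_{n\gqs i}Z_n\to Z_i$ is the canonical projection onto $Z_i$. Applying the same argument to this tail sequence (which still consists of fibrations that are homotopy equivalences) shows that the projection onto $Z_i$ is a homotopy equivalence as well.

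There is no serious obstacle here; the only mild subtlety is orienting the comparison diagram so that the vertical maps go from the given sequence \emph{down} to the constant sequence $Z_1$, rather than attempting to construct sections going the other way.
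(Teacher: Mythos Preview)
Your proof is correct and is precisely the intended argument: the paper simply marks the corollary with \qed{} as an immediate consequence of \propref{E-H}, and your comparison with the constant sequence at $Z_1$ is the natural way to realize this. One minor simplification for the last step: once you know $Z_\infty\to Z_1$ is a homotopy equivalence, the factorization $Z_\infty\to Z_i\to Z_1$ together with the fact that $Z_i\to Z_1$ is a homotopy equivalence gives $Z_\infty\to Z_i$ by the two-out-of-three property, avoiding the tail-sequence argument.
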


Apply homotopy lifting property to inductively construct commutative squares in order to infer

\begin{cor}\label{E-H-homotopy}
In order to conclude that $W_\infty$ and $Z_\infty$ are homotopy equivalent
it is enough to assume commutativity only up to homotopy in \propref{E-H}. \qed
\end{cor}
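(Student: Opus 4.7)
The plan is to rectify the homotopy-commutative diagram into a strictly commutative one whose vertical arrows are still homotopy equivalences, and then invoke \propref{E-H} directly.

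More concretely, I will construct inductively maps $f_i' \colon W_i \to Z_i$ with $f_i' \simeq f_i$ such that the rectified squares commute strictly, i.e.\ $r_{i+1} \circ f_{i+1}' = f_i' \circ p_{i+1}$ for all $i \geq 1$. Set $f_1' = f_1$. Suppose $f_1',\dots,f_i'$ have been produced. Using the given homotopy commutativity $r_{i+1}\circ f_{i+1} \simeq f_i \circ p_{i+1}$ together with $f_i \simeq f_i'$, we obtain a homotopy
\[ r_{i+1} \circ f_{i+1} \simeq f_i' \circ p_{i+1} \]
of maps $W_{i+1} \to Z_i$. Since $r_{i+1}\colon Z_{i+1}\to Z_i$ is a fibration, the homotopy lifting property applied to this homotopy, with $f_{i+1}$ as the initial lift, yields a map $f_{i+1}' \colon W_{i+1} \to Z_{i+1}$ satisfying $r_{i+1}\circ f_{i+1}' = f_i'\circ p_{i+1}$ and $f_{i+1}' \simeq f_{i+1}$. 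Each $f_i'$ is therefore homotopic to $f_i$, hence is itself a homotopy equivalence.

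Having thus produced a strictly commutative ladder diagram with the $p_i$ and $r_i$ fibrations and the $f_i'$ homotopy equivalences, \propref{E-H} delivers a homotopy equivalence $f_\infty' \colon W_\infty \to Z_\infty$ between the inverse limits, finishing the argument.

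The only delicate point is making sure the inductive construction actually goes through; but since every $r_i$ is a Hurewicz fibration and the required homotopy is assembled from the two given homotopies (one from hypothesis, one from $f_i \simeq f_i'$) via concatenation, the hypothesis of HLP is satisfied at each stage and the induction poses no real obstacle.
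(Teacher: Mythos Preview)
Your proof is correct and follows exactly the approach indicated by the paper, which only supplies the one-line hint ``Apply homotopy lifting property to inductively construct commutative squares'' before stating the corollary with a \qed. Your rectification argument is precisely what is intended.
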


The following proposition generalizes Lemma 2.8 of Kahn \cite{kahn}, and is crucial for understanding
the problem of CW homotopy type of inverse limits.

\begin{prop}\label{contractible_limit}
Let $\dots\to Z_3\xrightarrow{r_3}Z_2\xrightarrow{r_2}Z_1$ be an inverse sequence of fibrations
and let $Z_\infty$ denote the inverse limit space. If each fibration $r_{i+1}\colon Z_{i+1}\to Z_i$
is homotopic to a constant map, then the limit space $Z_\infty$ is contractible.
\end{prop}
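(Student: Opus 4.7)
The plan is to construct an explicit contracting homotopy $H \colon Z_\infty \times I \to Z_\infty$ from $\id_{Z_\infty}$ to a constant map at a suitable base point $z^* \in Z_\infty$, following the same spirit as the arguments in \propref{strong_obstruction_general} but iterated along the sequence.

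First I would establish that $Z_\infty$ is nonempty and pick a compatible basepoint $z^* = (z_i^*)_i \in Z_\infty$. Since each $r_{i+1}$ is a nullhomotopic Hurewicz fibration, its image is exactly one path component $C_i$ of $Z_i$, and the path-lifting property applied to paths in $C_i$ shows that the restriction $r_{i+1}\colon C_{i+1} \to C_i$ is surjective. An inductive choice ($z_1^* \in C_1$ arbitrary, then $z_{i+1}^* \in C_{i+1} \cap r_{i+1}^{-1}(z_i^*)$) produces the required $z^*$.

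The core of the proof is then the following reduction. Suppose one can choose, for each $i \geq 1$, a nullhomotopy $\eta_i \colon Z_{i+1} \times I \to Z_i$ with
\begin{enumerate}
\item[(a)] $\eta_i(z,0) = r_{i+1}(z)$,
\item[(b)] $\eta_i(z,1) = z_i^*$,
\item[(c)] $r_i \circ \eta_i = \eta_{i-1} \circ (r_{i+1} \times \id_I)$ for $i \geq 2$.
\end{enumerate}
Then the formula $H(z,t)_i = \eta_i(z_{i+1}, 1-t)$ defines a continuous map $Z_\infty \times I \to \prod_i Z_i$ whose image lies in $Z_\infty$, by (c) together with the constraint $r_{i+2}(z_{i+2}) = z_{i+1}$. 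The endpoint conditions (a) and (b) translate into $H(z,0) = z^*$ and $H(z,1) = z$, so $H$ is the desired contraction.

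The construction of coherent $\eta_i$ proceeds inductively: $\eta_1$ is chosen freely as any nullhomotopy of $r_2$ ending at $z_1^*$, and then $\eta_i$ is obtained by lifting the homotopy $\eta_{i-1} \circ (r_{i+1} \times \id_I) \colon Z_{i+1} \times I \to Z_{i-1}$ through the fibration $r_i$ via the homotopy lifting property, starting from the initial lift $r_{i+1}$ at $t=0$. This gives (a) and (c) for free. The main obstacle, and the technically most delicate point, is the terminal condition (b): the HLP lift only guarantees $\eta_i(\cdot,1) \in r_i^{-1}(z_{i-1}^*)$, not the constant $z_i^*$. I would overcome this by taking advantage of the freedom in the choice of $\eta_1$ and of Hurewicz lifting functions for the $r_i$: by arranging that the loops $\eta_i(z_{i+1}^*,-)$ at basepoints are constant, one can use the two-sided prescribability of endpoints afforded by a lifting function to propagate the strict terminal condition through the induction. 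Once the coherent family is in hand, the formula above delivers the contracting homotopy and the proposition follows.
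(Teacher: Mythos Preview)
Your reduction to the coherent family $(\eta_i)$ is correct, and you have correctly isolated the one real difficulty: after the HLP lift, $\eta_i(\cdot,1)$ only lands in the fibre $r_i^{-1}(z_{i-1}^*)$, not at the single point $z_i^*$. But your proposed fix does not work. Lifting functions give \emph{one}-sided control: they produce a lift of a path with prescribed initial point, and the terminal point is then determined. There is no ``two-sided prescribability of endpoints''; arranging that $\eta_i(z_{i+1}^*,-)$ is the constant path at $z_i^*$ (which the relative HLP for the cofibration $\{z_{i+1}^*\}\hookrightarrow Z_{i+1}$ does allow) says nothing about $\eta_i(z,1)$ for $z\neq z_{i+1}^*$. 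Concretely, $\eta_i(\cdot,1)\colon Z_{i+1}\to r_i^{-1}(z_{i-1}^*)$ can be an essential map into the fibre, and to preserve (c) any further homotopy to $\const_{z_i^*}$ would have to take place \emph{inside that fibre}, for which there is no mechanism in sight. So as written the induction does not close.

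The paper avoids this coherence problem altogether. It builds an auxiliary tower $W_{i+1}=Z_{i+1}\times PW_i$ with bonding map $W_{i+1}\xrightarrow{\pr}PW_i\xrightarrow{\eps_1}W_i$, together with level homotopy equivalences $f_i\colon W_i\to Z_i$ (the projections) making the ladder commute up to homotopy. By \corref{E-H-homotopy} this gives $Z_\infty\simeq W_\infty$. But $W_\infty$ is visibly the limit of the tower $\dots\to PW_2\to PW_1$ of contractible path spaces with bonding fibrations that are homotopy equivalences, so $W_\infty$ is contractible by \corref{cor_E-H}. The point is that the Edwards--Hastings machinery absorbs exactly the strict-coherence bookkeeping that your direct construction cannot supply. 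If you want to salvage a direct argument, one route is a telescoping construction: let $\eta_i$ live on $[0,1-2^{-i}]$, obtain it by lifting $\eta_{i-1}\circ(r_{i+1}\times\id)$ over $[0,1-2^{-(i-1)}]$ and then concatenating with an arbitrary nullhomotopy of the resulting endpoint map on $[1-2^{-(i-1)},1-2^{-i}]$; the limit map into $Z_\infty$ then extends continuously to $t=1$ by the constant $z^*$ since $H(\cdot,t)_i$ is eventually constant in $t$ for each fixed $i$.
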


\begin{proof}
Define $W_1=Z_1$ and let $f_1\colon W_1\to Z_1$ be the identity. Assume having constructed a suitable homotopy
equivalence $f_i\colon W_i\to Z_i$. Let $r_{i+1}\colon Z_{i+1}\to Z_i$ be homotopic to the constant $\zeta_i\in Z_i$.
Let $w_i$ be a point in $W_i$ that is mapped by $f_i$ into the path component of $\zeta_i$. Let $PW_i$ denote the
space of paths in $W_i$ that start in $w_i$ and set $W_{i+1}=Z_{i+1}\times PW_i$. Let $p_{i+1}\colon W_{i+1}\to W_i$ be the
composite $Z_{i+1}\times PW_i\xrightarrow{\pr}PW_i\xrightarrow{\eps}W_i$ where $\pr$ is the obvious projection
and $\eps$ denotes the evaluation at end point. Let $f_{i+1}\colon W_{i+1}\to Z_{i+1}$ be the projection
$Z_{i+1}\times PW_i\to Z_{i+1}$. Clearly $f_{i+1}$ is a homotopy equivalence. The composite $r_{i+1}\circ f_{i+1}$
is homotopic to $\const_{\zeta_i}$ while the composite $f_{i}\circ p_{i+1}$ is homotopic to $\const_{f_i(w_i)}$. The
two constant maps are homotopic by choice of $w_i$. We proceed inductively, and appeal to \corref{E-H-homotopy} to
conclude that $Z_\infty$ is homotopy equivalent to $W_\infty=\lim W_i$. But the latter is homeomorphic to the
limit space of the sequence $\dots\to PW_{i+1}\to PW_i\to\dots\to PW_1$ where the map $PW_{i+1}\to PW_i$ is
a fibration. By \corref{cor_E-H} the limit space is contractible.
\end{proof}

\begin{defn}
Let $\Z$ be an inverse system indexed by $\XXX$ and let $\zeta=\setof{\zeta_\lambda\,\vert\,\lambda\in\XXX}$ be a coherent
choice of base points, that is $\zeta\in\lim\Z$. We denote by $\Omega(\Z,\zeta)$ the inverse system of spaces
$\Omega(Z_\lambda,\zeta_\lambda)$ with the induced bonds
\[ \Omega(Z_{\mu},\zeta_{\mu})\xrightarrow{\Omega(Z_\mu\to Z_\lambda)}\Omega(Z_{\lambda},\zeta_\lambda). \]

Let $\P$ and $\Q$ be inverse systems indexed by the same set $\XXX$.
We define the product system $\P\times\Q$ as the inverse system of products with product bonding maps.
\end{defn}

\begin{lem}\label{continuity_of_loop_and_product}
Let $\Z,\P,\Q$ be restricted inverse systems of fibrations indexed by complete lattice $\XXX$, and let $\zeta\in\lim\Z$.
Then $\Omega(\Z,\zeta)$ and $\P\times\Q$ are restricted inverse systems of fibrations. \qed
\end{lem}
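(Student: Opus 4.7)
The plan is to verify, for each of the two constructions, that (a) every bonding map is a Hurewicz fibration, and (b) for every directed subset $\LLL\subset\XXX$ with supremum $\mu$, the canonical map from the $\mu$-term to the inverse limit over $\LLL$ of the subsystem is a homeomorphism.

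The product system $\P\times\Q$ is handled rapidly. The bond $P_\mu\times Q_\mu\to P_\lambda\times Q_\lambda$ is a product of two Hurewicz fibrations, hence itself a Hurewicz fibration, as one may lift in each coordinate independently. Since inverse limits commute with finite products in $\TTT op$, the assumption that each of $\P$ and $\Q$ is restricted gives a chain of homeomorphisms $P_\mu\times Q_\mu\cong(\lim_\LLL P_\lambda)\times(\lim_\LLL Q_\lambda)\cong\lim_\LLL(P_\lambda\times Q_\lambda)$ whose composite is precisely the canonical comparison map.

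For $\Omega(\Z,\zeta)$ I would realise $\Omega(Z_i,\zeta_i)$ as the pullback of the endpoint evaluation $Z_i^I\to Z_i\times Z_i$ over $(\zeta_i,\zeta_i)$. The bond $\Omega p$ induced by a Hurewicz fibration $p\colon Z_\mu\to Z_\lambda$ is then a fibration: the functor $(-)^I$ sends $p$ to a Hurewicz fibration $p^I\colon Z_\mu^I\to Z_\lambda^I$ because $I$ is compact Hausdorff (see Maunder \cite{maunder}, Chapter 6), and $\Omega p$ arises from $p^I$ by pullback along the inclusion of the point $(\zeta_\lambda,\zeta_\lambda)$, hence is itself a fibration. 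For the restricted property I would show that $(-)^I$ preserves inverse limits of Hausdorff spaces in the compact-open topology. The universal property of $\lim$ yields a natural bijection $(\lim_\LLL Z_\lambda)^I\to\lim_\LLL(Z_\lambda^I)$; this is a homeomorphism because the compact-open subbasic opens $G(K,\pi_\lambda^{-1}(V))$ on the left correspond to the subbasic opens $\pi_\lambda^{-1}(G(K,V))$ on the right, and finite intersections of such opens generate both topologies (here one uses that $K\subset I$ is compact and $f(K)$ is compact, so any open cover of $f(K)$ by basic opens of $\lim Z_\lambda$ reduces to a finite subcover to refine $G(K,V)$ for general open $V$). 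Combined with $Z_\mu\cong\lim_\LLL Z_\lambda$ (from $\Z$ being restricted) and the fact that the pullback along $(\zeta_i,\zeta_i)$ also commutes with inverse limits, we obtain the required homeomorphism $\Omega(Z_\mu,\zeta_\mu)\cong\lim_\LLL\Omega(Z_\lambda,\zeta_\lambda)$.

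The only substantive point is the homeomorphism $(\lim Z_\lambda)^I\cong\lim(Z_\lambda^I)$ in the compact-open topology; once that identification is explicit, both halves of the lemma are formal, and it is the sole place where one truly uses compactness of the exponent rather than just the fibration axioms.
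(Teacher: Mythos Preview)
Your proof is correct; the paper provides no proof at all (the lemma is stated with an immediate \qed), treating both claims as routine verifications. You have spelled out exactly the natural argument the paper leaves implicit, including the one genuinely non-formal point---that $(-)^I$ with the compact-open topology commutes with inverse limits of Hausdorff spaces---which is indeed straightforward from the subbasic description of the topology.
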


\begin{defn}
We say that the inverse sequence $\W$ splits as a product of inverse sequences $\P$ and $\Q$ if
there exists a level preserving morphism (up to homotopy) of sequences $\f\colon\P\times\Q\to\W$ 
which is a homotopy equivalence on each level.
\end{defn}

(Care should be taken to define splitting up to homotopy of a general inverse system. We will not need this.)

\begin{thm}\label{inverse_sequence}
\begin{enroman}
\item	Let $(\W,\w)$ be an inverse sequence of fibrations that splits into the product of $(\P,\p)$ and $(\Q,\q)$ where
	$\P$ is a sequence of nullhomotopic maps and $\Q$ is a sequence of homotopy equivalences.
	Then the limit space $W_\infty$ is homotopy equivalent to $Q_1$.

	In particular, if the space $Q_1$ has CW type, so has $W_\infty$.

\item	Let $(\Z,\r)$ be an inverse sequence of fibrations between spaces of CW homotopy type.
	If the limit space $Z_\infty$ also has CW type then, for any $\zeta\in Z_\infty$, a subsequence
	of the induced inverse sequence $\Omega(\Z,\zeta)$ splits into
	the product of a sequence of nullhomotopic maps and the identity
	sequence $\setof{\dots=\Omega(Z_\infty,\zeta_\infty)=\Omega(Z_\infty,\zeta_\infty)}$.
\end{enroman}
\end{thm}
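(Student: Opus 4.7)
For part (i), the plan is direct limit computation. One has $\lim(\P\times\Q)$ canonically homeomorphic to $\lim\P\times\lim\Q$. Applying \propref{contractible_limit} to $\P$, the first factor is contractible (replacing $\P$ by a fibration-equivalent sequence if needed so that the proposition literally applies), and applying \corref{cor_E-H} to $\Q$, the second factor is homotopy equivalent to $Q_1$. Thus $\lim(\P\times\Q)\simeq Q_1$. The splitting morphism $\f\colon\P\times\Q\to\W$ is a level-wise homotopy equivalence commuting up to homotopy with bonds, so \corref{E-H-homotopy} gives $W_\infty\simeq\lim(\P\times\Q)\simeq Q_1$.

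For part (ii), the plan is to extract a subsequence by iterating \propref{strong_obstruction_general} in the regular lattice $\XXX=\NN\cup\setof{\infty}$ (whose set of compact elements is $\KKK=\NN$). Starting from $\kappa_1=1$, the proposition yields integers $\lambda_k\leq\lambda_k'$ at each stage $k$; set $\kappa_{k+1}=\lambda_k'$ and iterate to obtain $\lambda_1<\lambda_2<\cdots$. Two facts hold along this subsequence. First, by (i) of the proposition there is for each $k$ a homotopy-commutative square with vertical homotopy equivalences $\theta_k\colon F_{\lambda_k}\times\Omega(Z_\infty,\zeta)\to\Omega(Z_{\lambda_k},\zeta_{\lambda_k})$ and bottom row $\iota_k\times\id$, where $\iota_k\colon F_{\lambda_{k+1}}\hookrightarrow F_{\lambda_k}$ denotes the fibre inclusion and $F_m$ is the fibre of $Z_\infty\to Z_m$ over $\zeta_m$. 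Second, by (ii) of the proposition the inclusion $\iota_k$ is nullhomotopic, since $\lambda_{k+1}\geq\kappa_{k+1}=\lambda_k'$.

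Setting $\P=\setof{F_{\lambda_k}}$ with bonds $\iota_k$ and $\Q$ the constant sequence at $\Omega(Z_\infty,\zeta)$ with identity bonds, the $\theta_k$'s assemble into a level-wise homotopy equivalence from $\P\times\Q$ onto the subsequence $\setof{\Omega(Z_{\lambda_k},\zeta_{\lambda_k})}_k$ of $\Omega(\Z,\zeta)$, commuting up to homotopy with the bonds. Since each $\iota_k$ is nullhomotopic, this is the required splitting. The main technical obstacle is coordinating the $\theta_{\lambda_k}$'s across consecutive squares, since successive applications of \propref{strong_obstruction_general} produce them from independent contracting homotopies of the successive fibres; but any two homotopy equivalences with the same source and target differ by a self-equivalence, so each $\theta_{\lambda_{k+1}}$ can be adjusted up to homotopy to close simultaneously the $k$-th and $(k+1)$-th squares, making the ladder coherent.
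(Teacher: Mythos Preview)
Your part (i) matches the paper's proof.

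For part (ii) the paper simply writes that it follows immediately from \propref{strong_obstruction_general}; your detailed iteration is the intended unpacking, but you have manufactured a difficulty and then not convincingly dispatched it. By invoking the proposition afresh at each stage you produce the equivalences $\theta_{\lambda_k}$ from \emph{independent} contracting homotopies, and your proposed remedy --- adjusting by self-equivalences so that each $\theta_{\lambda_{k+1}}$ closes two consecutive squares --- does not obviously work: if $\theta^{(k+1)}_{\lambda_{k+1}}\simeq\theta^{(k)}_{\lambda_{k+1}}\circ\phi$ for some self-equivalence $\phi$ of $F_{\lambda_{k+1}}\times\Omega(Z_\infty,\zeta)$, then the $(k{+}1)$-th square closes only with bottom bond $\phi\circ(\iota_{k+1}\times\id)$, and nothing guarantees that this composite has product form $(\text{nullhomotopic})\times\id$.

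The clean route is already in the \emph{proof} of \propref{strong_obstruction_general}: the equivalence at each level $\mu\geq\lambda$ is built from the restriction to $F_\mu$ of one fixed contraction $k\colon F_\lambda\to C^I$. So apply part (i) of the proposition once (say with $\kappa=1$) to obtain $\lambda_1$ together with the contraction of $F_{\lambda_1}$; the resulting $\theta_\mu$ for all $\mu\geq\lambda_1$ are then strictly compatible, and every square between two such levels commutes on the nose. Only the nullhomotopy of each inclusion $F_{\lambda_{k+1}}\hookrightarrow F_{\lambda_k}$ requires iteration, and for that one invokes semilocal contractibility of $F_{\lambda_k}$ (which has CW type by Stasheff's theorem) to choose $\lambda_{k+1}$, exactly as in the proof of part (ii) of the proposition.
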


\begin{proof}
We change $(\P,\p)$ and $(\Q,\q)$ inductively to obtain sequences of fibrations $(\P',\p')$ and $(\Q',\q')$
together with (strict) level preserving morphisms $\P\to\P'$ and $\Q\to \Q'$ consisting of
homotopy equivalences. Thus there exists a level preserving morphism (up to homotopy)
$\P'\times\Q'\to\W$ consisting of homotopy equivalences. The limit space of $\P'\times\Q'$ is
$P'_\infty\times Q'_\infty$ where $P'_\infty=\lim\P'$ is contractible by \propref{contractible_limit}, and
$Q'_\infty=\lim\Q'$ is homotopy equivalent to $Q_1'=Q_1$ by \corref{cor_E-H}. By \corref{E-H-homotopy} the space
$W_\infty=\lim\W$ is homotopy equivalent to $P'_\infty\times Q'_\infty$ which in turn is homotopy equivalent to
$Q'_\infty$. This proves \navedi{1}.

Statement \navedi{2} follows immediately from \propref{strong_obstruction_general}.
\end{proof}

\begin{cor}\label{contractibility}
Given the assumptions of \navedi{2} of \thmref{inverse_sequence}, if $Z_\infty$ has CW type and
$\Omega(Z_\infty,\zeta_\infty)$ is contractible, then a subsequence of $\Omega(\Z,\zeta)$ is a sequence
of nullhomotopic maps.\qed
\end{cor}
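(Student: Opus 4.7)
The plan is to deduce this corollary directly from part (ii) of \thmref{inverse_sequence}, using only the extra hypothesis that $\Omega(Z_\infty,\zeta_\infty)$ is contractible.

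First I would invoke \navedi{2} of \thmref{inverse_sequence}: passing to a suitable subsequence of $\Omega(\Z,\zeta)$, there exists a level-preserving morphism (up to homotopy) $\f\colon\P\times\Q\to\Omega(\Z,\zeta)$ consisting of homotopy equivalences, where $\p_i\colon P_i\to P_{i-1}$ is nullhomotopic for every $i$, and $\Q$ is the identity sequence on $Q=\Omega(Z_\infty,\zeta_\infty)$ with all bonds equal to $\id_Q$.

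Next I would use the new hypothesis. Since $Q=\Omega(Z_\infty,\zeta_\infty)$ is contractible, $\id_Q$ is nullhomotopic. Hence, for each level, the bond in $\P\times\Q$ has the form $\p_i\times\id_Q$, and both of its coordinate projections, namely $\p_i\circ\pr_1$ and $\pr_2$, are nullhomotopic (the first because $\p_i$ is nullhomotopic, the second because $Q$ is contractible). Writing the obvious product homotopy coordinate-wise shows that $\p_i\times\id_Q\colon P_i\times Q\to P_{i-1}\times Q$ is itself nullhomotopic.

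Finally I would transport nullhomotopy across the level-preserving equivalence. Denoting by $\w_i$ the bond in the subsequence of $\Omega(\Z,\zeta)$, the definition of splitting gives $\w_i\circ f_i\simeq f_{i-1}\circ(\p_i\times\id_Q)$, which by the previous paragraph is nullhomotopic. Pre-composing with a homotopy inverse $g_i$ of $f_i$ yields $\w_i\simeq\w_i\circ f_i\circ g_i$, so $\w_i$ is nullhomotopic, as desired. No serious obstacle is expected; the only mildly delicate point is reading off nullhomotopy of $\p_i\times\id_Q$ from nullhomotopy of its two projections, but this is routine for a map into a product.
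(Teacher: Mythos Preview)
Your proposal is correct and is precisely the argument the paper has in mind: the corollary is stated with a bare \qed, and your write-up simply unpacks the immediate deduction from part \navedi{2} of \thmref{inverse_sequence} together with contractibility of $Q=\Omega(Z_\infty,\zeta_\infty)$. The only step worth a remark is that nullhomotopy of $\p_i\times\id_Q$ follows at once because a map into a product is nullhomotopic iff both coordinate maps are, and you handle the transport through the level equivalences correctly.
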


Note that while \propref{contractible_limit} yields a number of counterexamples to Theorem B of Dydak and Geoghegan
\cite{dyd-geog}, \thmref{inverse_sequence} says these represent essentially the only kind of counterexample. In a sense
therefore the cited theorem is `not far' from being correct.


\section{Some sufficient and some necessary conditions}\label{sufficient_necessary}

The purpose of this section is proving \thmref{wicked_general} and \thmref{double_wicked_general},
to be applied in later sections. We begin with some results on inverse systems indexed by
well-ordered sets; then we show that given a regular lattice,
suitable well-ordered subsets can be chosen, for better grip on the inverse limit space.

\begin{defn}
For any ordinal number $\alpha$ we will denote by $W(\alpha)$ the ordinal number segment consisting
of the set of ordinal numbers $\lambda$ such that $\lambda<\alpha$. Recall that every well-ordered
set $W$ can be indexed uniquely in an order preserving manner by an ordinal number segment $W(\alpha)$
for $\alpha=\ord W$.

For convenience denote
$\Bar W=W(\alpha)\cup\setof{\alpha}=W(\alpha+1)=\setof{\lambda\,\vert\,\lambda\lqs\alpha}$.

Let $\alpha$ be a limit ordinal and let $(\Z,\p)$ be an inverse system of fibrations indexed by $W(\alpha)$.
Note that $\Bar W(\alpha)$ is a complete lattice. We let $Z_\alpha$ denote the limit
of $\Z$, and may view $\Z$ as an inverse system indexed by a complete lattice. Our notion of restrictedness
of the system $(\Z,\p)$ now coincides with terminology used by J.~Cohen \cite{joel_cohen_1}.

For emphasis, the system $(\Z,\p)$ is restricted if for each limit ordinal $\mu\in W(\alpha)$ the canonical map
\[	\vartheta_\mu\colon Z_\mu\to\Bar Z_\mu=\lim\Z\vert_{W(\mu)}						\]
is a homeomorphism. (In fact for our applications it would suffice for $\vartheta_\mu$ to be a fibration
and a homotopy equivalence.)

The projections $p_{\lambda,\lambda+1}$ are called {\it consecutive} bonding maps (or bonds).
\end{defn}

\begin{rem_n}
If $(\Z,\p)$ is indexed by $W(\alpha)$ where $\alpha$ has a predecessor $\alpha-1$ then $Z_{\alpha-1}$,
together with $p_{\lambda,\alpha-1}$, is the limit of the system. \qed
\end{rem_n}

\begin{prop}\label{transfinite_fibration}
Let $\alpha$ be a limit ordinal and $(\Z,\p)$ a restricted inverse system of fibrations, indexed by $W(\alpha)$.
Set $Z_\alpha=\lim\Z$. The canonical projection $p_{0,\alpha}\colon Z_\alpha\to Z_0$ is a fibration.
\end{prop}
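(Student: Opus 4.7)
The plan is to verify the homotopy lifting property for $p_{0,\alpha}$ by transfinite induction on $\beta \leq \alpha$, building compatible lifts into every $Z_\beta$ simultaneously.

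More precisely, suppose given a space $X$, a map $f \colon X \to Z_\alpha$, and a homotopy $H \colon X \times I \to Z_0$ with $H_0 = p_{0,\alpha} \circ f$. For each $\beta \leq \alpha$ set $f^\beta = p_{\beta,\alpha} \circ f \colon X \to Z_\beta$. I would inductively construct homotopies $\widetilde H^\beta \colon X \times I \to Z_\beta$ satisfying
\begin{itemize}
\item $\widetilde H^\beta_0 = f^\beta$,
\item $p_{0,\beta} \circ \widetilde H^\beta = H$, and
\item $p_{\gamma,\beta} \circ \widetilde H^\beta = \widetilde H^\gamma$ whenever $\gamma \leq \beta$.
\end{itemize}
The case $\beta = 0$ is just $\widetilde H^0 = H$. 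At a successor step, having $\widetilde H^\beta$, I use that the consecutive bond $p_{\beta,\beta+1}$ is a fibration: the map $f^{\beta+1}$ satisfies $p_{\beta,\beta+1} \circ f^{\beta+1} = f^\beta = \widetilde H^\beta_0$, so HLP produces $\widetilde H^{\beta+1}$ starting at $f^{\beta+1}$ with $p_{\beta,\beta+1} \circ \widetilde H^{\beta+1} = \widetilde H^\beta$; composing with earlier bonds inherits the remaining compatibilities.

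At a limit ordinal $\lambda \leq \alpha$, the restrictedness hypothesis makes the natural map $\vartheta_\lambda \colon Z_\lambda \to \lim_{\beta<\lambda} Z_\beta$ a homeomorphism. The compatible family $\{\widetilde H^\beta\}_{\beta < \lambda}$ induces a continuous map $X \times I \to \lim_{\beta<\lambda} Z_\beta$, which under $\vartheta_\lambda^{-1}$ gives the desired $\widetilde H^\lambda \colon X \times I \to Z_\lambda$. The compatibility with $f^\lambda$ at time $0$, and with all earlier $\widetilde H^\gamma$, follows by uniqueness of maps into the inverse limit (again via $\vartheta_\lambda$). Taking $\beta = \alpha$ (which is itself a limit ordinal, so we apply the limit step) gives $\widetilde H^\alpha$ with $p_{0,\alpha} \circ \widetilde H^\alpha = H$ and $\widetilde H^\alpha_0 = f$, which is the required lift.

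The routine parts are the successor and base cases. The step that deserves most care is the limit step, where one must use restrictedness not merely to define $\widetilde H^\lambda$ as a set-theoretic function, but to ensure continuity into $Z_\lambda$ itself (as opposed to $\lim_{\beta<\lambda}Z_\beta$); this is exactly what the hypothesis that $\vartheta_\lambda$ be a homeomorphism supplies, and is the only place in the argument where that hypothesis is used. Everything else is bookkeeping along the ordinal.
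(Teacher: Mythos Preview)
Your proposal is correct and follows essentially the same approach as the paper: a transfinite induction building coherent lifts $\widetilde H^\beta$, using the homotopy lifting property of the consecutive bond at successor steps and restrictedness (the homeomorphism $\vartheta_\lambda$) at limit steps. The paper additionally remarks that at successor steps one may fix a well-ordering on $Z_\mu^{X\times I}$ and take the minimal lift, so that the transfinite recursion is set-theoretically well-defined; this is the only point your write-up leaves implicit.
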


\begin{proof}
Let $Y$ be a topological space. Suppose we are given continuous functions $h\colon Y\times I\to Z_0$ and
$f\colon Y\to Z_\alpha$ such that $p_{0,\alpha}\circ f=h\vert_{Y\times 0}$.

Let $T$ be the set of pairs $(\lambda,h_\lambda)$ where $\lambda<\alpha$ and $h_\lambda\colon Z\times I\to Z_\lambda$
makes the following diagram commutative.
\begin{equation*}\begin{diagram}
\node{Y\times 0} \arrow{s} \arrow{e,t}{f} \node{Z_\alpha} \arrow{e} \node{Z_\lambda} \arrow{s} \\
\node{Y\times I} \arrow{nee,t}{h_\lambda} \arrow[2]{e,t}{h} \node[2]{Z_0}
\end{diagram}\end{equation*}
Using transfinite construction we will define $\Phi\colon W(\alpha)\to T$ such that
$\Phi(\lambda)=(\lambda,h_\lambda)$, and $p_{\lambda,\lambda'}\circ h_{\lambda'}=h_{\lambda}$
for $\lambda<\lambda'$.

Assume $\Phi$ defined for $\lambda<\mu$. If $\mu$ has a predecessor, then we may lift $h_{\mu-1}$
along $p_{\mu-1,\mu}$ to a homotopy $h_\mu$. (To fulfill set theoretical requirements we may
use a previously chosen well ordering on the set $Z_\mu^{Y\times I}$ and take a minimal $h_\mu$.)

If $\mu$ is limiting, the coherent system $\setof{h_\lambda\vert\,\lambda<\mu}$ defines a unique limit
map $h_\mu\colon Y\times I\to\lim_{\lambda<\mu}Z_\lambda$. Restrictedness guarantees that
$\lim_{\lambda<\mu}Z_\lambda=Z_\mu$, so in fact this yields the successor $h_\mu\colon Y\times I\to Z_\mu$. 

Thus there exists a function $\Phi$ as required, and the coherent system $\setof{h_\lambda\,\vert\,\lambda<\alpha}$
yields the asserted lifting $h_\alpha\colon Y\times I\to Z_\alpha$ of $h$.
\end{proof}

Using \propref{transfinite_fibration} and transfinite induction it is straightforward to prove

\begin{cor}\label{consecutive_fibrations}
Let $(\Z,\p)$ be a restricted inverse system indexed by $W(\alpha)$ where the consecutive bonds are fibrations.
Then all bonds are fibrations. \qed
\end{cor}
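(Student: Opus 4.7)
The plan is to prove $P(\mu)$ by transfinite induction on $\mu \in \Bar W(\alpha)$, where $P(\mu)$ is the statement: \emph{for every $\lambda < \mu$, the bond $p_{\lambda,\mu}\colon Z_\mu \to Z_\lambda$ is a fibration.} The base case $\mu=0$ is vacuous.

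For the successor case $\mu = \nu+1$: if $\lambda = \nu$ then $p_{\nu,\mu}$ is a consecutive bond and is a fibration by hypothesis. If $\lambda < \nu$, factor $p_{\lambda,\mu} = p_{\lambda,\nu}\circ p_{\nu,\mu}$; the first factor is a fibration by the inductive hypothesis $P(\nu)$ and the second is a fibration by the consecutive-bond hypothesis, so their composition is a fibration.

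For the limit case, fix $\lambda < \mu$ with $\mu$ a limit ordinal. I would consider the restriction of $(\Z,\p)$ to the well-ordered set $\{\nu : \lambda \lqs \nu < \mu\}$, which is order-isomorphic to some $W(\beta)$ with $\beta$ a limit ordinal. By the inductive hypothesis, every bond inside this subsystem is a fibration; in particular all its consecutive bonds are, but more importantly every bond is. Restrictedness of the subsystem is inherited from restrictedness of $(\Z,\p)$ itself, since for any limit $\nu' < \mu$ with $\nu' > \lambda$, the canonical map $Z_{\nu'}\to\lim_{\lambda\lqs\sigma<\nu'}Z_\sigma$ agrees with the homeomorphism $\vartheta_{\nu'}$ supplied by restrictedness of the whole system. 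Now $\mu = \sup\{\nu : \lambda \lqs \nu < \mu\}$, so restrictedness of the original system identifies $Z_\mu$ with the limit of the subsystem. Proposition \ref{transfinite_fibration}, applied to this restricted subsystem of fibrations with initial point $Z_\lambda$, yields that the projection $Z_\mu \to Z_\lambda$ is a fibration; this map is precisely $p_{\lambda,\mu}$.

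The only subtlety I anticipate is the routine but pedantic verification that the subsystem indexed by $\{\nu : \lambda \lqs \nu < \mu\}$ is itself restricted in the sense required by Proposition \ref{transfinite_fibration}. This reduces to observing that for every limit $\nu' \in (\lambda,\mu)$ the cofinal subset $\{\sigma : \lambda \lqs \sigma < \nu'\}$ has the same supremum $\nu'$ as the full initial segment $W(\nu')$, so the two candidate limits coincide and the restrictedness homeomorphism $\vartheta_{\nu'}$ transfers. Once this is in place, the induction closes immediately.
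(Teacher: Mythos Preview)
Your proof is correct and follows precisely the approach the paper indicates: transfinite induction on $\mu$, with the successor step handled by composition of fibrations and the limit step handled by reindexing the cofinal segment $\{\nu:\lambda\lqs\nu<\mu\}$ and invoking \propref{transfinite_fibration}. The paper omits the details entirely, simply noting that the result is straightforward from \propref{transfinite_fibration} and transfinite induction; you have supplied exactly those details, including the cofinality check that transfers restrictedness to the subsystem.
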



The following theorem is the transfinite analogue of \propref{E-H}.
The proof depends upon restrictedness and the induction step employed by Geoghegan in \cite{geoghegan}.

\begin{thm}\label{transfinite_geoghegan}
Let $\alpha$ be a limit ordinal and let $(\W,\p)$ and $(\Z,\q)$ be restricted inverse systems of fibrations indexed
by $W(\alpha)$. If $\f\colon\W\to\Z$ is a level-preserving system consisting of homotopy equivalences, then the limit
map $f_\alpha\colon W_\alpha\to Z_\alpha$ is also a homotopy equivalence.
\end{thm}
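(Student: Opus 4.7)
The plan is to mirror Geoghegan's argument for the sequence case, extending it by transfinite recursion on $\lambda\in W(\alpha+1)=W(\alpha)\cup\setof{\alpha}$. I want to construct simultaneously a strict level-preserving morphism $\g=\setof{g_\lambda}$ of inverse systems $\Z\to\W$ with each $g_\lambda$ a homotopy inverse of $f_\lambda$, together with coherent homotopies
\[ H_\lambda\colon Z_\lambda\times I\to Z_\lambda,\qquad K_\lambda\colon W_\lambda\times I\to W_\lambda \]
witnessing $f_\lambda g_\lambda\simeq\id_{Z_\lambda}$ and $g_\lambda f_\lambda\simeq\id_{W_\lambda}$ and satisfying the coherence identities $p_{\lambda,\mu}g_\mu=g_\lambda q_{\lambda,\mu}$, $q_{\lambda,\mu}\circ H_\mu=H_\lambda\circ(q_{\lambda,\mu}\times\id_I)$, $p_{\lambda,\mu}\circ K_\mu=K_\lambda\circ(p_{\lambda,\mu}\times\id_I)$ for $\lambda<\mu$. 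Once this is done, the value at $\lambda=\alpha$ exhibits $g_\alpha$ as a homotopy inverse of $f_\alpha$, as required.

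At a limit ordinal $\mu\lqs\alpha$, restrictedness of $\W$ and $\Z$ identifies $W_\mu=\lim_{\lambda<\mu}W_\lambda$ and $Z_\mu=\lim_{\lambda<\mu}Z_\lambda$, and $\lim_{\lambda<\mu}(Z_\lambda\times I)=Z_\mu\times I$ (and similarly for $W$), so the coherent data $(g_\lambda,H_\lambda,K_\lambda)_{\lambda<\mu}$ assemble uniquely into $(g_\mu,H_\mu,K_\mu)$. At a successor $\mu=\nu+1<\alpha$, start with any homotopy inverse $g'$ of $f_\mu$ with homotopies $f_\mu g'\simeq\id$ and $g'f_\mu\simeq\id$. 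Using $f_\nu\circ p_{\nu,\mu}=q_{\nu,\mu}\circ f_\mu$, the homotopy $H_\nu$, and the homotopy $f_\mu g'\simeq\id$, one checks that $f_\nu\circ(p_{\nu,\mu}g')\simeq f_\nu\circ(g_\nu q_{\nu,\mu})$ in $Z_\nu$; since $f_\nu$ is a homotopy equivalence, $p_{\nu,\mu}g'\simeq g_\nu q_{\nu,\mu}$ in $W_\nu$. Because $p_{\nu,\mu}$ is a Hurewicz fibration, HLP lifts this homotopy starting from $g'$ to produce $g_\mu\simeq g'$ with the strict identity $p_{\nu,\mu}g_\mu=g_\lambda q_{\nu,\mu}$; the map $g_\mu$ is still a homotopy inverse of $f_\mu$. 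One then builds $H_\mu$ by applying HLP for the fibration $q_{\nu,\mu}$ to lift the prescribed projected homotopy $H_\nu\circ(q_{\nu,\mu}\times\id)$, starting from $f_\mu g_\mu$; and symmetrically for $K_\mu$ via HLP for $p_{\nu,\mu}$.

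The main obstacle is the successor step for $H_\mu$ (and $K_\mu$): the plain HLP gives a lift with the correct initial value $f_\mu g_\mu$ but only guarantees that the terminal value $\phi=H_\mu(\cdot,1)$ lies over $\id_{Z_\nu}$, rather than being equal to $\id_{Z_\mu}$. This is the precise point at which one must invoke the Geoghegan construction of \cite{geoghegan}: use that $\phi$ is fibre-homotopic to $\id_{Z_\mu}$ as a self-map of $Z_\mu$ over $Z_\nu$ (which holds since the two self-maps induce the same map on $Z_\nu$ and both factor through homotopy equivalences), and correct by concatenating a second HLP-lifted homotopy covering the constant homotopy at $q_{\nu,\mu}$. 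Carrying out the successor and limit steps in lockstep then pushes the construction all the way through $\Bar W(\alpha)$; the resulting triple $(g_\alpha,H_\alpha,K_\alpha)$ proves the theorem.
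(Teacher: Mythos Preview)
Your transfinite scheme and the treatment of limit ordinals via restrictedness are exactly right, and match the paper. The divergence is at the successor step, and there your fix is not quite justified.

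You try to build \emph{both} coherent families $H_\lambda\colon f_\lambda g_\lambda\simeq\id$ and $K_\lambda\colon g_\lambda f_\lambda\simeq\id$. This creates the endpoint problem you flag: a plain HLP lift of $H_\nu\circ(q_{\nu,\mu}\times\id)$ starting at $f_\mu g_\mu$ ends at some $\phi$ over $\id_{Z_\nu}$, and you then assert that $\phi$ is fibre-homotopic to $\id_{Z_\mu}$ ``since the two self-maps induce the same map on $Z_\nu$ and both factor through homotopy equivalences''. That reasoning is not valid in this generality: there is no CW hypothesis on $Z_\nu$ here, so Dold-type theorems turning a fibrewise homotopy equivalence into a fibre homotopy equivalence are not available, and two fibre-preserving self-maps which are freely homotopic need not be vertically homotopic. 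So the correction step, as stated, has a gap.

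The paper avoids this by a small but decisive change of bookkeeping. It constructs only \emph{one-sided} data: maps $g_\lambda$ and homotopies $h_\lambda\colon g_\lambda f_\lambda\simeq\id_{W_\lambda}$, coherently. At a successor $\mu=\nu+1$ it uses Geoghegan's mapping-cylinder trick: since $f_\mu$ is a homotopy equivalence, $W_\mu\hookrightarrow M(f_\mu)$ is an acyclic cofibration, so the map $M(f_\mu)\to W_\nu$ assembled from $h_\nu\circ(p_{\nu,\mu}\times\id)$ on $W_\mu\times I$ and $g_\nu\circ q_{\nu,\mu}$ on $Z_\mu$ lifts along the fibration $p_{\nu,\mu}$ to a map $M(f_\mu)\to W_\mu$ extending $\id_{W_\mu}$. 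Restricting this lift to $W_\mu\times I$ gives $h_\mu$ (with the correct endpoints $\id_{W_\mu}$ and $g_\mu f_\mu$ built in), and restricting to $Z_\mu$ gives $g_\mu$; coherence with level $\nu$ is automatic. At the end one has $g_\alpha f_\alpha\simeq\id_{W_\alpha}$; rerunning the same construction for the system $\g$ produces a left inverse $f_\alpha'$ for $g_\alpha$, so $g_\alpha$ has both a left and a right homotopy inverse and is a homotopy equivalence, whence so is $f_\alpha$. The moral: drop the second family $H_\lambda$, and let the mapping cylinder package $g_\mu$ and $h_\mu$ together so that no endpoint correction is needed.
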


\begin{proof}
Let $T$ be the set of triples $(\lambda,g_\lambda,h_\lambda)$ where $\lambda<\alpha$,
the map $g_\lambda\colon Z_\lambda\to W_\lambda$ is a homotopy inverse of $f_\lambda$, and
$h_\lambda$ is a homotopy between $g_\lambda\circ f_\lambda$ and $\id_{W_\lambda}$.

We use transfinite construction to define $\Phi\colon W(\alpha)\to T$, where
$\Phi(\lambda)=(\lambda,g_\lambda,h_\lambda)$, such that for $\lambda<\lambda'$ the following coherence conditions hold.
\[ p_{\lambda,\lambda'}\circ g_{\lambda'}=g_{\lambda'}\circ q_{\lambda,\lambda'} \text{ and }
	p_{\lambda,\lambda'}\circ h_{\lambda'}=h_\lambda\circ(p_{\lambda,\lambda'}\times\id_I). \]
Assume $\Phi$ defined for $\lambda<\mu$. If $\mu$ is limiting, then the coherent systems
$\setof{g_\lambda\,\vert\,\lambda<\mu}$ and $\setof{h_\lambda\,\vert\,\lambda<\mu}$ trivially
define $g_\mu$ and $h_\mu$, relying upon restrictedness. Universality guarantees the required properties.

If $\mu$ has a predecessor $\mu-1$, the maps $g_{\mu-1}$ and $h_{\mu-1}$ are already given.
Now we obtain $g_\mu$ and $h_\mu$ using the inductive step from Geoghegan \cite{geoghegan}.
More precisely, consider the diagram
\begin{equation*}\begin{diagram}
\node{W_{\mu}} \arrow{e,t}{\id} \arrow{s} \node{W_{\mu}} \arrow{s} \\
\node{M(f_{\mu})} \arrow{ne,..} \arrow{e} \node{W_{\mu-1}}
\end{diagram}\end{equation*}
where $M(f_{\mu})$ is the mapping cylinder of $f_{\mu}$, and the arrow $M(f_{\mu})\to W_{\mu-1}$
is induced by the composite $h_{\mu-1}\circ(p_{\mu-1,\mu}\times\id)$ on $W_{\mu}\times I$ and by the composite
$g_{\mu-1}\circ q_{\mu-1,\mu}$ on $Z_{\mu}$.

The lifting $M(f_{\mu})\to W_{\mu}$ (minimal one in a previously established well ordering on $W_{\mu}^{M(f_{\mu})}$)
induces a homotopy $h_{\mu}\colon W_{\mu}\times I\to W_{\mu}$ as well as a map $g_{\mu}\colon Z_{\mu}\to W_{\mu}$
with the requisite properties.

Thus there exists a function $\Phi$ as required. Coherent systems $\setof{g_\lambda}$, $\setof{h_\lambda}$
for $\lambda<\alpha$ yield a limit map $g_\alpha\colon Z_\alpha\to W_\alpha$ and a limit map
$h_\alpha\colon W_\alpha\times I\to W_\alpha$. The latter is a homotopy between $g_\alpha\circ f_\alpha$
and $\id_{W_\alpha}$, by universality.

Repeating the above for $\g\colon\Z\to\W$ we get a homotopy inverse $f_\alpha'$ of $g_\alpha$.
Therefore $g_\alpha$ has both a left and a right homotopy inverse, hence is a homotopy equivalence.
\end{proof}

Using \thmref{transfinite_geoghegan}, restrictedness, and transfinite induction it is easy to prove

\begin{cor}\label{cor_transfinite_geoghegan_1}
Let $\f\colon\W\to\Z$ be a level-preserving morphism of restricted inverse systems of fibrations as above.
Let $f_0\colon W_0\to Z_0$ be a homotopy equivalence. If upon assuming that $f_\lambda$ is a homotopy
equivalence it follows that also $f_{\lambda+1}$ is a homotopy equivalence for each $\lambda<\alpha$,
then the limit map $f_\alpha\colon W_\alpha\to Z_\alpha$ is a homotopy equivalence. \qed
\end{cor}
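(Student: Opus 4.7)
The plan is a straightforward transfinite induction on ordinals $\mu\leqslant\alpha$, asserting that $f_\mu\colon W_\mu\to Z_\mu$ is a homotopy equivalence. The base case $\mu=0$ is the given hypothesis. For the successor step $\mu=\nu+1$, the inductive hypothesis supplies a homotopy equivalence $f_\nu$, whence the corollary's assumption delivers a homotopy equivalence $f_{\nu+1}$.

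The only substantive step is the limit case. Let $\mu\leqslant\alpha$ be a limit ordinal, and assume $f_\lambda$ is a homotopy equivalence for every $\lambda<\mu$. The restrictions $\W\vert_{W(\mu)}$ and $\Z\vert_{W(\mu)}$ are inverse systems of fibrations indexed by $W(\mu)$; restrictedness of the ambient systems immediately implies restrictedness of these subsystems, since for any limit ordinal $\nu<\mu$ the canonical map $\vartheta_\nu\colon Z_\nu\to\lim_{\lambda<\nu}Z_\lambda$ (and the corresponding one for $\W$) is a homeomorphism by hypothesis. Moreover, by the defining limit property of restricted systems applied at $\mu$, the spaces $W_\mu$ and $Z_\mu$ are homeomorphic (via $\vartheta_\mu$) to $\lim \W\vert_{W(\mu)}$ and $\lim \Z\vert_{W(\mu)}$, respectively, and the map induced by $\f\vert_{W(\mu)}$ on these limits is identified with $f_\mu$ under these homeomorphisms.

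Thus \thmref{transfinite_geoghegan}, applied to the level-preserving morphism of homotopy equivalences $\f\vert_{W(\mu)}\colon\W\vert_{W(\mu)}\to\Z\vert_{W(\mu)}$, yields that the limit map is a homotopy equivalence; but this limit map is $f_\mu$. This completes the induction, and taking $\mu=\alpha$ gives the conclusion.

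The only potential snag is the correct bookkeeping in the limit case, namely verifying that restrictedness is inherited by segment-restrictions and that the induced map of limits coincides (under $\vartheta_\mu$) with $f_\mu$; both are immediate from the definitions, so no real obstacle arises.
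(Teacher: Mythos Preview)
Your proof is correct and follows exactly the approach the paper indicates: the paper omits the proof, stating only that it follows from \thmref{transfinite_geoghegan}, restrictedness, and transfinite induction, which is precisely what you have written out.
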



Immediately we infer

\begin{cor}\label{cor_transfinite_geoghegan_2}
Let $(\W,\p)$ be a restricted inverse system of fibrations indexed by $W(\alpha)$ for some limit ordinal $\alpha$.
If each consecutive bonding map $p_{\lambda,\lambda+1}$ is a homotopy equivalence then the limit projection
$W_\alpha\to W_0$ is a homotopy equivalence. \qed
\end{cor}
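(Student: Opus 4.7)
The plan is to deduce this immediately from Corollary \ref{cor_transfinite_geoghegan_1} by comparing $(\W,\p)$ to the constant system at $W_0$.

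Specifically, I will let $(\Z,\q)$ be the inverse system indexed by $W(\alpha)$ with $Z_\lambda = W_0$ for every $\lambda$ and with $q_{\lambda,\lambda'} = \id_{W_0}$ for every $\lambda < \lambda'$. This is trivially a restricted inverse system of fibrations, with $Z_\alpha = W_0$. I will then define $\f\colon \W \to \Z$ level-wise by $f_\lambda = p_{0,\lambda}\colon W_\lambda \to W_0$. The coherence relation $q_{\lambda,\lambda'}\circ f_{\lambda'} = f_\lambda \circ p_{\lambda,\lambda'}$ reduces to $p_{0,\lambda'} = p_{0,\lambda}\circ p_{\lambda,\lambda'}$, which holds by functoriality of the inverse system. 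So $\f$ is a bona fide level-preserving morphism of restricted inverse systems of fibrations.

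Next I will verify the inductive hypothesis of Corollary \ref{cor_transfinite_geoghegan_1}. The base case $\lambda = 0$ is trivial since $f_0 = p_{0,0} = \id_{W_0}$. For the successor step, assume $f_\lambda = p_{0,\lambda}$ is a homotopy equivalence. Then
\[ f_{\lambda+1} = p_{0,\lambda+1} = p_{0,\lambda}\circ p_{\lambda,\lambda+1} = f_\lambda \circ p_{\lambda,\lambda+1}, \]
which is a composition of two homotopy equivalences (the first by inductive hypothesis, the second by the standing assumption that consecutive bonds are homotopy equivalences), hence a homotopy equivalence.

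Therefore Corollary \ref{cor_transfinite_geoghegan_1} applies and yields that $f_\alpha = p_{0,\alpha}\colon W_\alpha \to W_0$ is a homotopy equivalence, which is precisely the claim. There is no real obstacle here: the content sits entirely in Corollary \ref{cor_transfinite_geoghegan_1} (and through it in the transfinite analogue of the Edwards--Hastings/Geoghegan argument, \thmref{transfinite_geoghegan}); the present corollary is just the specialization to the constant target system.
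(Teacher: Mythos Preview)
Your argument is correct and matches the paper's approach: the paper presents this corollary as an immediate consequence of Corollary~\ref{cor_transfinite_geoghegan_1} (note the preceding ``Immediately we infer'' and the in-line \qed), and comparing $(\W,\p)$ to the constant system at $W_0$ via $f_\lambda=p_{0,\lambda}$ is precisely the intended specialization.
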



For an abelian group $G$, a well-ordered ascending chain of subgroups
\[ N_0\lqs N_1\lqs\dots\lqs N_\lambda\lqs\dots\,\,\,\,(\lambda<\alpha)	\]
is called a {\it smooth chain} if for each limit ordinal $\mu$ the subgroup
$N_\mu$ equals the union (`supremum') of all $N_\lambda$ with $\lambda<\mu$.
(See for example Fuchs \cite{fuchs2} or Loth \cite{loth}.)

Thus we may think of the assignment $\lambda\to N_\lambda$ as `limit preserving,'
and could call it `continuous.' In accord with the group-theoretic terminology and to avoid confusion
we agree to the following

\begin{defn}\label{smooth_function}
Let $f\colon W(\alpha)\to\XXX$ be a monotone function from the segment $W(\alpha)$ to a complete lattice $\XXX$.
The function $f$ is {\it smooth} if for each limit ordinal $\mu$ in $W(\alpha)$, the following equality holds
\[ 		f(\mu)=\sup\setof{f(\lambda)\,\vert\,\lambda<\mu}.		\]
\end{defn}

If a smooth chain $\setof{N_\lambda\,\vert\,\lambda<\alpha}$ in abelian group $G$ has all successive quotients
$N_{\lambda+1}/N_{\lambda}$ cyclic groups (of infinite or prime order), and $N_0=0$,
$\cup_{\lambda<\alpha}N_\lambda=G$, then $\setof{N_\lambda}$ is called a {\it composition series} for $G$.
We view the notion of {\it good filtration}, defined in \lemref{good_filtration_general} below,
as analogous to that of composition series for abelian groups.

The proof of the following lemma is straightforward and will be omitted.

\begin{lem}\label{ordinal_quotient}
Let $W$ be a well-ordered set, and let $f\colon W\to\XXX$ be an order preserving smooth function into a complete lattice.
Then the preimages of $f$ are of the form $\setof{\mu\,\vert\,\lambda_1\lqs\mu<\lambda_2}$ where $\lambda_2$ has a predecessor.
The quotient set $\WWW$ of the preimages is well-ordered, and the natural function $q\colon W\to\WWW$ is order preserving.
Moreover $\ord\WWW\lqs\ord W$. An element $\Lambda\in\WWW$ is limiting if and only if $\min\Lambda$ is limiting in $W$.
Furthermore the ordinal number of $W$ is limiting if and only if the ordinal number of $\WWW$ is.

The induced function $F\colon\WWW\to\XXX$ is an order preserving smooth injection and $\sup f(W)=\sup F(\WWW)$. \qed
\end{lem}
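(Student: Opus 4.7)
The plan is to form the quotient $\WWW$ of $W$ by the equivalence ``lying in the same fiber of $f$,'' and show this quotient inherits a well-order with the desired properties.

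First I would observe that each nonempty preimage $f^{-1}(\xi)$ is convex in $W$: if $\mu_1<\mu<\mu_2$ with $f(\mu_1)=f(\mu_2)=\xi$, monotonicity of $f$ forces $f(\mu)=\xi$. Well-ordering of $W$ then yields a minimum $\lambda_1=\min f^{-1}(\xi)$. The delicate step \-- the essential use of smoothness \-- is producing a maximum. Suppose $f^{-1}(\xi)=\setof{\mu:\lambda_1\lqs\mu<\lambda_2}$ with $\lambda_2\in W$ a limit ordinal. Then smoothness gives $f(\lambda_2)=\sup\setof{f(\lambda):\lambda<\lambda_2}=\xi$, since all $\lambda\in[\lambda_1,\lambda_2)$ satisfy $f(\lambda)=\xi$ and every smaller value of $f$ lies below $\xi$. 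This forces $\lambda_2\in f^{-1}(\xi)$, contradicting the choice of $\lambda_2$. Hence $\lambda_2$ has a predecessor in $W$ (or, in the boundary case, $f^{-1}(\xi)$ is a final tail of $W$).

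Next, define $\WWW$ as the set of preimage classes, with $\Lambda\lqs\Lambda'$ iff every element of $\Lambda$ is $\lqs$ every element of $\Lambda'$. Convexity makes this a linear order, and well-ordering of $\WWW$ follows from that of $W$: the minimum of a nonempty subset of $\WWW$ is the class containing the minimum of the union. The projection $q$ is order-preserving by construction, and $\Lambda\mapsto\min\Lambda$ gives an order-preserving injection $\WWW\hookrightarrow W$ witnessing $\ord\WWW\lqs\ord W$. Moreover $W$ has a maximum iff $\WWW$ does (via the class of $\max W$), so the two ordinal numbers share successor/limit character.

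For individual limit elements, $\Lambda\in\WWW$ has a predecessor iff the segment $\setof{\mu\in W:\mu<\min\Lambda}$ has a maximum, equivalently iff $\min\Lambda$ has a predecessor in $W$; the contrapositive yields the stated characterization of limiting classes. The induced map $F(\Lambda):=f(\min\Lambda)$ is well-defined, injective (distinct classes have distinct $f$-values by construction), and order-preserving. Smoothness of $F$ at a limit $\Lambda$ follows from smoothness of $f$ at $\min\Lambda$ together with the identification $\setof{F(\Lambda'):\Lambda'<\Lambda}=\setof{f(\lambda):\lambda<\min\Lambda}$, and the equality $\sup f(W)=\sup F(\WWW)$ is immediate from equality of image sets. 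The main obstacle is the smoothness argument producing maxima of preimages; the remainder is routine bookkeeping about quotients of well-ordered sets.
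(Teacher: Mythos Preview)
The paper omits this proof entirely (the sentence before the lemma reads ``The proof of the following lemma is straightforward and will be omitted''), so there is nothing substantive to compare against; your argument supplies precisely the routine verification the paper skips, and the key step --- using smoothness to show that a preimage cannot terminate at a limit ordinal of $W$ --- is exactly the right idea.

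One small point worth noting: for the biconditional ``$\ord W$ is limiting if and only if $\ord\WWW$ is'' you only argue one direction (if $W$ has a maximum then $\WWW$ does). The converse actually fails as stated: take $W=\omega$ and $f$ constant, so that $\WWW$ is a singleton with $\ord\WWW=1$ a successor while $\ord W=\omega$ is a limit. This is a minor defect in the lemma statement itself rather than in your approach; the problematic clause is never invoked in the paper's two applications of the lemma, where the functions in question are not eventually constant.
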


\begin{lem}\label{good_filtration_general}
Let $\XXX$ be a regular lattice with set of compact elements $\KKK$. Set $\xi=\sup\XXX$.
Let $\alpha$ denote the initial ordinal of cardinality $\card_{\KKK}(\xi)$.

There exists an order preserving smooth injection $U\colon W(\alpha)\to\XXX$ whose image $\UUU$
is well-ordered, and exhaustive in the sense that $\sup\UUU=\xi$. Moreover, for each subset $\Lambda\subset\UUU$ either
$\sup\Lambda=\xi$ or $\sup\Lambda\in\UUU$ (in $\XXX$). In addition, the following two properties hold.
\begin{itemize}
\item	For each $\mu\in W(\alpha)$, the image $U(\mu+1)$ of $\mu+1$ equals
	$U(\mu)\vee\kappa=\sup\setof{U(\mu),\kappa}$ for some $\kappa=\kappa(\mu)\in\KKK$, and
\item	for each $\mu<\alpha+1$, the image $U(\mu)$ is bounded as $\card_{\KKK}U(\mu)\lqs\card W(\mu)$.
\end{itemize}
We name $\UUU$ a good filtration for $\XXX$ with respect to $\KKK$.
\end{lem}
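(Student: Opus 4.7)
The plan is to build $U$ by transfinite recursion, absorbing one compact element of a fixed minimal generating family at each successor step and passing to the supremum at limits. For concreteness I treat the interesting case when $\alpha$ is an infinite (hence limit) initial ordinal; for finite $\alpha$ a direct enumeration $U(\mu)=\kappa_0\vee\cdots\vee\kappa_\mu$ of a minimal generating set does the job.

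First I would use regularity of $\XXX$ to choose a set $K\subset\KKK$ with $\sup K=\xi$ and $\card K=\card_\KKK\xi$, and enumerate $K=\setof{\kappa_\nu\,\vert\,\nu<\alpha}$ -- possible since $\alpha$ is the initial ordinal of cardinality $\card K$. Then I define $U\colon W(\alpha)\to\XXX$ by transfinite recursion: $U(0)$ is the minimum of $\XXX$; at a limit $\mu$ set $U(\mu)=\sup_{\lambda<\mu}U(\lambda)$ (making $U$ smooth by construction); and at a successor set $U(\mu+1)=U(\mu)\vee\kappa_{\nu(\mu)}$, where $\nu(\mu)$ is the \emph{least} $\nu<\alpha$ with $\kappa_\nu\not\lqs U(\mu)$. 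Strict growth, hence injectivity and monotonicity, is then built in at every successor.

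The crux is that $\nu(\mu)$ be well-defined at every successor step $\mu+1\lqs\alpha$, and this is where the initial-ordinal hypothesis on $\alpha$ enters. By induction $U(\mu)=\sup S_\mu$, where $S_\mu=\setof{\kappa_{\nu(\lambda)}\,\vert\,\lambda+1\lqs\mu}\subset\KKK$ (at limits this uses smoothness, at successors it is trivial). The assignment $\lambda\mapsto\nu(\lambda)$ is injective: once $\kappa_{\nu(\lambda)}$ has been joined it stays below every later $U$, so $\nu(\lambda')\neq\nu(\lambda)$ for $\lambda'>\lambda$. Hence $\card_\KKK U(\mu)\lqs\card S_\mu\lqs\card W(\mu)$. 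If one had $U(\mu)=\xi$ for some $\mu<\alpha$, then $S_\mu$ would be a generating subfamily for $\xi$ of cardinality strictly smaller than $\card\alpha=\card_\KKK\xi$ (by initiality of $\alpha$), contradicting the definition of $\card_\KKK\xi$; so $U(\mu)<\xi$ and the desired $\nu(\mu)$ exists.

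The remaining assertions are then routine. For $\sup\UUU=\xi$, suppose $\kappa_{\nu_0}\not\lqs U(\mu)$ for every $\mu<\alpha$ with $\nu_0$ minimal; then every $\nu(\mu)$ satisfies $\nu(\mu)\lqs\nu_0$, and the just-observed injectivity of $\mu\mapsto\nu(\mu)$ gives an injection from the $\card\alpha$-many successor ordinals below $\alpha$ into $\setof{\nu\,\vert\,\nu\lqs\nu_0}$, a cardinality contradiction. The supremum closure follows since any $\Lambda\subset\UUU$ is of the form $U(M)$ for some $M\subset W(\alpha)$; monotonicity and smoothness then yield $\sup\Lambda=U(\sup M)$, which equals $\xi$ when $\sup M=\alpha$ and lies in $\UUU$ when $\sup M<\alpha$. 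The successor formula with $\kappa_{\nu(\mu)}\in\KKK$, and the cardinal bound $\card_\KKK U(\mu)\lqs\card W(\mu)$, are read directly off the construction. The main obstacle throughout is the cardinality argument of paragraph three; the rest is careful bookkeeping.
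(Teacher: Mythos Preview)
Your argument is correct. Both you and the paper start from a minimal generating family $K\subset\KKK$ indexed by $W(\alpha)$, but you part ways in how $U$ is manufactured. The paper defines $u(f)=\sup\{e\in K\mid e\prec f\}$ on all of $E\cong W(\alpha)$ --- automatically monotone and smooth --- and then invokes the ordinal-quotient lemma (\lemref{ordinal_quotient}) to collapse the preimages and recover an \emph{injection} $U$; an extra cardinality count shows the quotient still has ordinal type $\alpha$. You instead build injectivity in from the outset by the greedy rule $U(\mu+1)=U(\mu)\vee\kappa_{\nu(\mu)}$ with $\nu(\mu)$ least unused, and pay for it with the two pigeonhole arguments (well-definedness of $\nu(\mu)$, and exhaustiveness). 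Your route is more self-contained --- it needs no auxiliary lemma on quotients of well-ordered sets --- while the paper's route makes smoothness and the successor formula $U(\mu+1)=U(\mu)\vee j(\mu)$ fall out of the definition with no recursion. Your explicit verification of the supremum-closure clause (``$\sup\Lambda=\xi$ or $\sup\Lambda\in\UUU$'') is a point the paper's proof leaves implicit; it follows in either approach from smoothness and strict monotonicity exactly as you argue.
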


\begin{proof}
Let $E$ be a subset of $\KKK$ with $\sup E=\xi$ and $\card E=\card_{\KKK}\xi$. Abusing notation we assume that $E$ is
an initial ordinal corresponding to the cardinal of $E$. To avoid ambiguity we denote the (new) relation in $E$ by $\prec$.
We define $u\colon E\to\XXX$ by setting
\[ u(f)=\sup\setof{e\,\vert\,e\in E,\,\,e\prec f}. \]
Obviously $u$ is monotone. Since it is initial, $E$ is limiting, and hence $f\lqs u(f+1)$ (in $\XXX$) for each $f\in E$.
In particular, $\sup\setof{u(f)\,\vert\,f\in E}=\sup E=\xi$. Moreover, note that if $g$ is limiting in $E$, then
\begin{equation*} u(g)=\sup\setof{u(f)\,\vert\,f\prec g}. \end{equation*}
This says that $u$ is smooth.

Let $\EEE$ denote the quotient set of preimages, and let $U\colon\EEE\to\XXX$ denote the induced function.
Furthermore let $j\colon\EEE\to E$ denote the function defined by $j(\lambda)=\max_E\lambda$.
By \lemref{ordinal_quotient} the function $U$ is an order preserving smooth injection, and $j$
is a well-defined order preserving injection. In particular, $\ord\EEE\lqs\ord E$.

Note that $U=u\circ j$, and an easy transfinite argument shows that \[ U(\mu)=\sup\setof{j(\lambda)\,\vert\,\lambda<\mu}. \]
This yields $\card_\KKK U(\mu)\lqs\card W(\mu)$.
Moreover, $\xi=\sup E=\sup\setof{j(\lambda)\,\vert\,\lambda\in\EEE}$ from which it follows that $\card\EEE=\card E$.
Since $E$ is initial it follows that $\ord\EEE=\ord E$, as claimed.
\end{proof}



\begin{lem}\label{restricted_trick}
Let $\alpha$ be a limit ordinal and let $\Z$ be a restricted inverse system of fibrations
indexed by $W(\alpha)$. Let $Z$ denote the limit space and let $C$ be the union of some path components of $Z$.
For each $\lambda<\alpha$ denote by $C_\lambda$ the image of $C$ under $Z\to Z_\lambda$. If for each
$\lambda$ the induced fibration $C_{\lambda+1}\to C_{\lambda}$ is a homotopy equivalence, then the
system $\setof{C_\lambda\,\vert\,\lambda\in W(\alpha)}$ is restricted and hence a system of homotopy
equivalences. In particular, its limit space is $C$ and is homotopy equivalent to $C_0$.
\end{lem}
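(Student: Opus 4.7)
The plan is to establish by transfinite induction on limit ordinals $\mu\leq\alpha$ the statement $(P_\mu)$: $C_\mu=\lim_{\lambda<\mu}C_\lambda$ as subsets of $Z_\mu$, with the convention $Z_\alpha=Z$ and $C_\alpha=C$. Taking $\mu=\alpha$ yields the ``limit space is $C$'' conclusion, while for limit $\mu<\alpha$ it yields restrictedness of the system $\{C_\lambda\}$ at level $\mu$. Once every $(P_\mu)$ is established, \corref{cor_transfinite_geoghegan_2}, applied to the restricted inverse system of fibrations $\{C_\lambda\}_{\lambda<\alpha}$ with consecutive bonds homotopy equivalences, gives that $C\to C_0$ is a homotopy equivalence, and, by the same corollary applied to sub-systems together with two-out-of-three, every bond in $\{C_\lambda\}$ is a homotopy equivalence.

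Before the induction I would record two preliminary facts. By \propref{transfinite_fibration} each canonical projection $p_{\lambda,\alpha}\colon Z\to Z_\lambda$ is a fibration, and a standard path-lifting argument shows that the image of any path component of $Z$ is a full path component of $Z_\lambda$; therefore $C_\lambda$ is a union of path components of $Z_\lambda$ for each $\lambda<\alpha$. Consequently each consecutive bond $C_{\lambda+1}\to C_\lambda$ is the restriction of the fibration $Z_{\lambda+1}\to Z_\lambda$ to unions of path components and is itself a fibration (lifted homotopies starting in $C_{\lambda+1}$ stay there, since each lifted path remains in a single path component of $Z_{\lambda+1}$); being a homotopy equivalence by hypothesis, it induces a bijection on $\pi_0$.

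At the inductive step, fix a limit $\mu\leq\alpha$ and assume $(P_\nu)$ for every limit $\nu<\mu$. Then $\{C_\lambda\}_{\lambda<\mu}$ is a restricted inverse system of fibrations whose consecutive bonds are homotopy equivalences, so \corref{cor_transfinite_geoghegan_2} yields that $\lim_{\lambda<\mu}C_\lambda\to C_0$ is a homotopy equivalence, in particular a $\pi_0$-bijection. The inclusion $C_\mu\subseteq\lim_{\lambda<\mu}C_\lambda$ is immediate. Conversely, the equality $\lim_{\lambda<\mu}C_\lambda=\bigcap_{\lambda<\mu}p_{\lambda,\mu}^{-1}(C_\lambda)$ exhibits the limit as a union of path components of $Z_\mu$. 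Given such a path component $E$, let $E_0=p_{0,\mu}(E)$, a path component of $C_0$; since $C\to C_0$ is surjective by construction, pick a path component $D$ of $C$ with $p_{0,\alpha}(D)=E_0$. Then $p_{\mu,\alpha}(D)$ is a full path component of $Z_\mu$ contained in $C_\mu\subseteq\lim_{\lambda<\mu}C_\lambda$ and mapping to $E_0$, so the $\pi_0$-bijection forces $E=p_{\mu,\alpha}(D)\subseteq C_\mu$, closing the induction. The hard part will be precisely this limit-step path-component bookkeeping, which requires combining restrictedness from the inductive hypothesis with \corref{cor_transfinite_geoghegan_2} to upgrade successor-level homotopy equivalences to a $\pi_0$-bijection for $\lim_{\lambda<\mu}C_\lambda$, and then using surjectivity of $C\to C_0$ to realize each path component of the limit.
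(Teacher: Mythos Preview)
Your proposal is correct and follows essentially the same approach as the paper: transfinite induction showing restrictedness at limit stages by applying \corref{cor_transfinite_geoghegan_2} to get a $\pi_0$-bijection $\lim_{\lambda<\mu}C_\lambda\to C_0$, then using surjectivity of $C_\mu\to C_0$ together with the fact that $C_\mu$ is a union of path components to force $\lim_{\lambda<\mu}C_\lambda=C_\mu$. The paper frames the induction via an initial segment $S\subset W(\alpha)$ rather than directly on limit ordinals, but the substance is identical.
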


\begin{proof}
Let $S$ be the set of those $\mu\in W(\alpha)$ for which the system $\setof{C_\lambda\,\vert\,\lambda\lqs\mu}$
is restricted (and hence a system of homotopy equivalences). The set $S$ is nonempty, and it is evidently an
initial segment of $W(\alpha)$. Suppose $S=W(\eta)$ for some $\eta\in W(\alpha)$.

Immediately we infer that the system $\setof{C_\lambda\,\vert\,\lambda<\eta}$ is restricted.

If $\eta$ has a predecessor $\eta-1$, then $\setof{C_\lambda\,\vert\,\lambda\lqs\eta}$ is automatically a restricted
system.

Suppose that $\eta$ is limiting. Denote $\Bar C=\lim_{\lambda<\eta}C_\lambda$.
Trivially $\Bar C$ is a subspace of $\lim_{\lambda<\eta}Z_\lambda$ which is equal (homeomorphic) to
the space $Z_{\eta}$, by restrictedness. By definition of the spaces $C_\lambda$, the space $C_{\eta}$
is a subset of $\Bar C$. Moreover $C_{\eta}\to C_0$ is surjective. By \corref{cor_transfinite_geoghegan_2}
the map $\Bar C\to C_0$ is a homotopy equivalence. Therefore $\Bar C$ cannot meet path components of $Z_{\eta}$
different from those that meet $C_{\eta}$. But $C_{\eta}$ is the union of some path components,
therefore in fact $\Bar C$ equals $C_{\eta}$, and $C_{\eta}\to C_0$ is a homotopy equivalence. Thus the system
$\setof{C_\lambda\,\vert\,\lambda<\eta}$ is restricted.

The contradiction yields $S=W(\alpha)$, and it follows that $\setof{C_\lambda\,\vert\,\lambda<\alpha}$ is
a restricted system.
\end{proof}

We come to the two main results of the section.

\begin{thm}\label{wicked_general}
Let $\Z$ be a restricted inverse system of fibrations indexed by a complete lattice $\XXX$ with a set of finite
elements $\KKK$. Assume that $\XXX$ is uncountable with respect to $\KKK$. Let $Z$ denote the inverse limit space
of $\Z$ and let $C$ be a set of path components of $Z$. For $L\in\XXX$ let $C_L$ denote the image of $C$ under
$Z\to Z_L$. Let $\Gamma$ be a topological space which maps to the system $\setof{C_L\,\vert\,L}$.

Let $\LLL$ be the set of those $L\in\XXX$ for which $C_L$ has CW type and the map $\Gamma\to C_L$ is
a weak equivalence.

Let $\XXX(\eta)$ (respectively $\LLL(\eta)$) denote the subset of $\XXX$ (respectively $\LLL$) of 
elements of $\KKK$-cardinality at most $\aleph_\eta$.

If $\LLL(0)$ is cofinal in $\XXX(0)$, then $\LLL(\eta)$ is cofinal in $\XXX(\eta)$, for any ordinal $\eta$.
In particular, $C$ has CW homotopy type, and $\Gamma\to C$ is a weak equivalence.
\end{thm}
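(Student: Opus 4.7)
The plan is to prove cofinality of $\LLL(\eta)$ in $\XXX(\eta)$ by transfinite induction on $\eta$, with $\eta=0$ given. Fix $\eta>0$ and $L\in\XXX(\eta)$; we must find $M\in\LLL(\eta)$ with $M\gqs L$. Applying \lemref{good_filtration_general} to the principal ideal $[0,L]$ (a regular sublattice), obtain a good filtration $U\colon W(\alpha)\to[0,L]$ with $\sup U=L$, where $\alpha$ is the initial ordinal of $\card_\KKK L\lqs\aleph_\eta$. Thus $U$ is smooth, $U(\mu+1)=U(\mu)\vee\kappa_\mu$ for some $\kappa_\mu\in\KKK$, and $\card_\KKK U(\mu)\lqs\card W(\mu)$.

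I will build a smooth chain $\setof{M_\mu\,\vert\,\mu\lqs\alpha}$ in $\XXX$ by transfinite recursion with the properties $M_\mu\gqs U(\mu)$, $M_\mu\in\LLL$, and $\card_\KKK M_\mu\lqs\card W(\mu)+\aleph_0$. Take $M_0$ from $\LLL(0)$ dominating $\kappa_0$ using the base hypothesis. At a successor stage, the element $M_\mu\vee\kappa_\mu$ has $\KKK$-cardinality at most $\aleph_{\eta'}$ for some $\eta'<\eta$ (since $\card W(\mu+1)<\aleph_\eta$), so the inductive hypothesis supplies $M_{\mu+1}\in\LLL(\eta')$ dominating it. At a limit stage $\mu$, set $M_\mu=\sup_{\lambda<\mu}M_\lambda$; its $\KKK$-cardinality is bounded by $\card W(\mu)\cdot\aleph_0$.

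The key issue is verifying $M_\mu\in\LLL$ at the limit stages. The restricted system $\Z$ induces a restricted subsystem $\setof{Z_{M_\lambda}\,\vert\,\lambda<\mu}$ with limit $Z_{M_\mu}$; bonds are fibrations by \corref{consecutive_fibrations}. Passing to images of $C$ yields the restricted subsystem $\setof{C_{M_\lambda}}$. For $\lambda<\lambda'<\mu$, the bond $C_{M_{\lambda'}}\to C_{M_\lambda}$ is a fibration between CW type spaces, and both are weakly equivalent to $\Gamma$; hence the bond is a weak equivalence of CW complexes and a genuine homotopy equivalence. \lemref{restricted_trick} then shows that $C_{M_\mu}=\lim_{\lambda<\mu}C_{M_\lambda}$ has CW type and that $C_{M_\mu}\to C_{M_0}$ is a homotopy equivalence; composing with $\Gamma\to C_{M_0}$ (a weak equivalence) and inverting the homotopy equivalence shows $\Gamma\to C_{M_\mu}$ is a weak equivalence, so $M_\mu\in\LLL$. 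The limit step is the main obstacle, since it is exactly here that restrictedness, CW type, and the Whitehead theorem conspire to upgrade weak to strong equivalences, enabling the use of \lemref{restricted_trick}.

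Setting $M=M_\alpha$ gives the desired element of $\LLL(\eta)$ above $L$, completing the induction. For the last assertion, apply cofinality at any $\eta$ with $\sup\XXX\in\XXX(\eta)$: the resulting $M\gqs\sup\XXX$ must equal $\sup\XXX$, so $\sup\XXX\in\LLL$, i.e.\ $C=C_{\sup\XXX}$ has CW type and $\Gamma\to C$ is a weak equivalence.
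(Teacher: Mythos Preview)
Your proposal is correct and follows essentially the same approach as the paper: transfinite induction on $\eta$, using a good filtration (\lemref{good_filtration_general}) to reduce to strictly smaller cardinalities at successor stages, and invoking Whitehead's theorem together with \lemref{restricted_trick} at limit stages to show that the supremum of a smooth chain in $\LLL$ again lies in $\LLL$. The paper isolates this last point as a separate observation before running the induction, whereas you fold it into the recursion, but the content is identical. One cosmetic remark: your appeal to \corref{consecutive_fibrations} is unnecessary, since the bonds $Z_{M_{\lambda'}}\to Z_{M_\lambda}$ are fibrations directly from the standing hypothesis that $\Z$ is a system of fibrations.
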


\begin{rem_n}
The role of $\Gamma$ mapping into $\setof{C_L\,\vert\,L}$ is the implication
\begin{equation*}\tag{$*$}
L',L''\in\LLL,\,\,\,L'\lqs L''\,\implies\,C_{L''}\to C_{L'}\text{ is a homotopy equivalence},
\end{equation*}
by Whitehead's theorem.
\end{rem_n}

We will make use of the following observation.
\begin{align*}\tag{$**$}
&\text{Let $\eta$ be a limit ordinal and let $f\colon W(\eta)\to\LLL$ be an order preserving}\\
&\text{smooth function. Then $L=\sup\setof{f(\lambda)\,\vert\,\lambda<\eta}\in\LLL$. }
\end{align*}

\begin{proof}[Proof of ($**$)]
By \lemref{ordinal_quotient} the function $f$ induces an order preserving smooth injection $F\colon\WWW\to\XXX$ for which
$\sup F=\sup f$. Here $\WWW$ denotes the well-ordered quotient set. Since $\Z$ is a restricted inverse system, so is
$\setof{Z_{F(\lambda)}\,\vert\,\lambda\in\WWW}$. The limit space of the latter is $Z_{\sup_\lambda F(\lambda)}$:

Indeed, if $\mu$ is a limit ordinal in $\WWW$, then smoothness implies that
$F(\mu)=\sup_{\XXX}\setof{F(\lambda)\,\vert\,\lambda<\mu}$. The set $F(W(\mu))$ is a directed (in fact well-ordered)
subset of $\XXX$, and since $\Z$ is a restricted system, it follows that
\[ Z_{\sup_{\XXX}F(W(\mu))}=\lim\big(\Z\vert_{F(W(\mu))}\big). \] This is to say that
$\Z\vert_{F(\WWW)}$ is a restricted system. Again using restrictedness of the system
$\Z$ we infer the equality $\lim\big(\Z\vert_{F(\WWW)}\big)=Z_{\sup_\XXX F(\WWW)}$.

Let $L=\sup f=\sup F$. For $\lambda\in\WWW$ the set $C_{F(\lambda)}$ is the image of
$C_L$ under projection $Z_L\to Z_{F(\lambda)}$. The system $\setof{C_{F(\lambda)}\,\vert\,\lambda\in\WWW}$
is one of homotopy equivalences by ($*$), therefore it is restricted by \lemref{restricted_trick}, and
$C_L$ is its limit space. In particular, $C_L\to C_{F(0)}$ is a homotopy equivalence, and $L$ belongs to $\LLL$.
\end{proof}

\begin{proof}[Proof of the theorem]
Let $S$ be the set of those $\lambda\in W(\xi)$ for which $\LLL(\lambda)$ is cofinal in $\XXX(\lambda)$.
By hypothesis $0\in S$.

We will prove that $S$ has the inductive property. To this end suppose $W(\eta)\subset S$,
and choose $L$ whose $\KKK$-cardinality is exactly $\aleph_\eta$. \lemref{good_filtration_general}
guarantees a good filtration $\setof{L_\lambda\,\vert\,\lambda\in W(\alpha)}$ for $L$, selected from the regular lattice
$\setof{M\in\XXX\,\vert\,M\lqs L}$ with the set of compact elements $\KKK\cap L$. In particular, $\alpha$
is an initial ordinal of cardinality $\aleph_\eta$, and for each element $\lambda\in W(\alpha)$, the
$\KKK$-cardinality of $L_\lambda$ is at most $\card W(\lambda)$.

Let $\XXX'$ be the subset of those elements of $\XXX$ whose $\KKK$-cardinality is strictly smaller than
$\aleph_\eta$. Set $\LLL'=\LLL\cap\XXX'$, and choose a well-ordering of $\LLL'$.

Using transfinite construction we define an order preserving smooth function $f\colon W(\alpha)\to\LLL'$
for which $f(\lambda)\gqs L_\lambda$, $\forall\lambda$.

Assume $f$ already given for $\lambda<\mu$ where $\mu<\alpha$. Note that
$\card W(\mu)<\aleph_\eta$, and that the following implication holds.
\begin{equation*}\tag{\dag}
(\forall \lambda<\mu\colon\,\card_\KKK M_\lambda<\aleph_\eta)\implies\card_\KKK\big(\sup\setof{M_\lambda\,\vert\,
	\lambda<\mu}\big)<\aleph_\eta.
\end{equation*}

Suppose that $\mu$ has a predecessor $\mu-1$. Then the $\KKK$-cardinality of the element
$f(\mu-1)\vee L_{\mu}$ is strictly smaller than $\aleph_\eta$. Since by assumption $W(\eta)$ is contained in $S$,
there exists an element of $\LLL'$, which is bigger than $f(\mu-1)\vee L_{\mu}$ with respect to the lattice
ordering. Let $f(\mu)$ be the least of those in the well-ordering of $\LLL'$.

If $\mu$ is limiting we define $f(\mu)=\sup\setof{f(\lambda)\,\vert\,\lambda<\mu}$.
Equality $L_\mu=\sup_{\lambda<\mu}L_\lambda$ evidently implies $f(\mu)\gqs L_\mu$. Moreover,
$f(\mu)$ belongs to $\LLL'$ by ($**$) and (\dag).

Thus there exists a function $f\colon W(\alpha)\to\LLL'$ with the requisite properties. Now set
$L'=\sup\setof{f(\lambda)\,\vert\,\lambda<\alpha}$. The $\KKK$-cardinality of $L'$ is at most
$\card W(\alpha)=\aleph_\eta$. Again from the equality $L=\sup_{\lambda<\alpha}L_\lambda$ it follows that $L'\gqs L$.
By ($**$), the element $L'$ belongs to the set $\LLL$, whence we infer that $L'\in\LLL(\eta)$, and $\eta\in S$.

Transfinite induction yields $S=W(\xi)$, and applying the definition of $S$ to the element $X$ we conclude the proof.
\end{proof}

\begin{lem}\label{allaud_refined}
Let $p\colon E\to B$ be a fibration where $E$ is a Dold space.
Suppose $p(e_0)=b_0$ and let $F$ be the fibre of $p$ over $b_0$.
Then the induced map $\Omega p\colon\Omega(E,e_0)\to\Omega(B,b_0)$ is a homotopy equivalence if and only if
$F$ is contractible.
\end{lem}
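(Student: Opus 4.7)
The plan is to handle the two directions separately, with \propref{fibre_contraction} as the main tool.

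For the direction $(\Leftarrow)$: suppose $F$ is contractible to $e_0$. Then $F$ contracts in $E$, and \propref{fibre_contraction} provides a homotopy equivalence
\[
\Phi\colon F \times \Omega(E, e_0) \to \Omega(B, b_0),\qquad (z, \gamma) \mapsto p \circ (\gamma * k^{-1}(z)),
\]
where $k\colon F \to E^I$ is the adjoint of the contracting homotopy. Normalizing the contraction so that $k^{-1}(e_0) = \const_{e_0}$, the restriction of $\Phi$ to $\{e_0\} \times \Omega(E, e_0)$ becomes literally $\Omega p$. Since the inclusion $\{e_0\} \times \Omega(E, e_0) \hookrightarrow F \times \Omega(E, e_0)$ is a homotopy equivalence (as $F$ is contractible to $e_0$), so is $\Omega p$.

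For the direction $(\Rightarrow)$: suppose $\Omega p$ is a homotopy equivalence. The long exact sequence of the looped fibration $\Omega F \to \Omega(E, e_0) \to \Omega(B, b_0)$ forces $\Omega(F, e_0)$ to be weakly contractible, and the long exact sequence of $F \to E \to B$, together with the resulting isomorphism $\pi_1(p)$ (which makes the connecting map $\pi_1(B) \to \pi_0(F)$ trivial), yields $\pi_n(F, e_0) = 0$ for $n \geq 1$ and an injection $\pi_0(F) \hookrightarrow \pi_0(E)$. After restricting attention to the path component of $e_0$ (which suffices because both hypothesis and conclusion are features of that component only), we may assume $F$ is path-connected and weakly contractible.

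The main obstacle is upgrading weak contractibility of $F$ to genuine contractibility. The plan is to show that $F$ contracts in $E$ and then to apply \propref{fibre_contraction} in the opposite direction. Using the homotopy inverse $q\colon \Omega(B, b_0) \to \Omega(E, e_0)$ of $\Omega p$, the path-lifting property of $p$, and the Dold property of $E$ (which supplies the local continuity needed to glue the pieces together without a CW hypothesis), one produces a continuous nullhomotopy $F \times I \to E$ from $\const_{e_0}$ to the inclusion $\iota\colon F \hookrightarrow E$: for each $f \in F$ pick a path $\sigma_f$ from $e_0$ to $f$ in $E$, form the loop $p \circ \sigma_f \in \Omega(B, b_0)$, apply $q$ to obtain a loop in $E$ based at $e_0$, and then use the homotopy $\Omega p \circ q \simeq \id_{\Omega(B, b_0)}$ to bridge $\sigma_f$ with $q(p \circ \sigma_f)$. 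With $F$ contracting in $E$, \propref{fibre_contraction} again gives a homotopy equivalence $\Phi\colon F \times \Omega(E, e_0) \to \Omega(B, b_0)$, and the identity $\Omega p = \Phi \circ \iota_0$ (with $\iota_0(\gamma) = (e_0, \gamma)$) then forces $\iota_0$ to be a homotopy equivalence; composing with a homotopy inverse and projecting $F \times \Omega(E, e_0) \to F$ onto the first factor yields $F \simeq \{e_0\}$.
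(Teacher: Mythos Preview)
The paper's own proof is a one-line citation: ``contained in the proof of the `delooping theorem' of Allaud \cite{allaud}.'' Your $(\Leftarrow)$ direction is fine and does not even need the Dold hypothesis; it follows cleanly from \propref{fibre_contraction} as you say.

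The problem is your $(\Rightarrow)$ direction. You correctly reduce to showing that $F$ contracts in $E$, and your final deduction (from $\iota_0$ being an equivalence to $F$ being contractible) is sound. But the paragraph that is supposed to produce the nullhomotopy $F\times I\to E$ is not an argument. You write ``for each $f\in F$ pick a path $\sigma_f$ from $e_0$ to $f$,'' then invoke $q$ and the homotopy $\Omega p\circ q\simeq\id$. The choice $f\mapsto\sigma_f$ is not continuous in general, and nothing you have written explains how the Dold property of $E$ manufactures a continuous choice or allows the local pieces to be glued. The phrase ``the Dold property \dots\ supplies the local continuity needed to glue'' is a description of what one hopes to do, not a construction. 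Moreover, $\sigma_f$ is a path (not a loop), so it is unclear how you are comparing it with the loop $q(p\circ\sigma_f)$; the ``bridge'' you allude to is not specified.

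This missing step is exactly the substance of Allaud's theorem: turning the numerable cover of $E$ by sets contractible in $E$ into an honest deformation, and using it to de-loop the equivalence. If you want a self-contained proof, you must actually carry out that construction (as Allaud does); otherwise, cite \cite{allaud} as the paper does.
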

\begin{proof}
The proof is contained in the proof of the `delooping theorem' of Allaud \cite{allaud}.
\end{proof}

\begin{defn}
Let $\XXX$ be a regular lattice with set of compact elements $\KKK$.
A property $P=P(\alpha,\beta)$ of pairs $(\alpha,\beta)\in\KKK\times\KKK$ is
an {\it order preserving property} if for any $\alpha',\alpha,\beta,\beta'\in\KKK$ the implication
\[ \alpha'\lqs\alpha,\,\,\,\beta\lqs\beta',\,\,P(\alpha,\beta)\implies P(\alpha',\beta'). \]
(For example $\lqs$ is an order preserving property.)

If, in addition, for each $\alpha\in\KKK$ there exists $\beta\in\KKK$ with $\beta\gqs\alpha$ and $P(\alpha,\beta)$
then $P$ is a {\it directed} order preserving property.

We say that an ascending sequence \[ \lambda_1\lqs\lambda_2\lqs\dots \] of elements of $\KKK$
is a $P$-sequence if $P(\lambda_i,\lambda_{i+1})$ holds for all $i\gqs 1$.
\end{defn}

The proof of the following lemma is trivial.
\begin{lem}\label{P-sequence}
Let $\XXX$ be a regular lattice with set of compact elements $\KKK$.
Suppose $P$ is a directed order preserving property of ordered pairs.
Then for any ascending sequence \[ \kappa_1\lqs\kappa_2\lqs\dots \] of elements of $\KKK$
there exists a $P$-sequence \[ \lambda_1\lqs\lambda_2\lqs\dots \]
of elements of $\KKK$ which dominates $\setof{\kappa_i}$ in the sense that $\lambda_i\gqs\kappa_i$.\qed
\end{lem}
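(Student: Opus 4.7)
The plan is a straightforward inductive construction of $\setof{\lambda_i}$, exploiting both the directedness of $P$ and the fact that $\KKK$ is closed under finite joins (which holds because $\XXX$ is regular, so $\KKK$ is an ideal).

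For the base case, set $\lambda_1=\kappa_1$, which lies in $\KKK$ and satisfies $\lambda_1\gqs\kappa_1$ trivially. For the inductive step, suppose we have already constructed $\lambda_1\lqs\dots\lqs\lambda_i$ in $\KKK$ with $\lambda_j\gqs\kappa_j$ for $j\lqs i$ and $P(\lambda_j,\lambda_{j+1})$ for $j<i$. Apply directedness of $P$ to the element $\lambda_i\in\KKK$: there exists $\mu\in\KKK$ with $\mu\gqs\lambda_i$ and $P(\lambda_i,\mu)$. Now define
\[ \lambda_{i+1}=\mu\vee\kappa_{i+1}. \]
Since $\KKK$ is closed under finite supremums, $\lambda_{i+1}\in\KKK$. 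Clearly $\lambda_{i+1}\gqs\mu\gqs\lambda_i$ and $\lambda_{i+1}\gqs\kappa_{i+1}$. Finally, $P(\lambda_i,\mu)$ together with $\lambda_i\lqs\lambda_i$ and $\mu\lqs\lambda_{i+1}$ yields $P(\lambda_i,\lambda_{i+1})$ by the order preserving property.

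This completes the recursion and produces the required $P$-sequence dominating $\setof{\kappa_i}$. There is no real obstacle here; the only points requiring care are invoking directedness to clear the $P$-obstruction at each step, then enlarging the result with the finite join $\mu\vee\kappa_{i+1}$ (using that $\KKK$ is an ideal) to also dominate the next $\kappa_{i+1}$, and finally noting that the enlargement preserves $P(\lambda_i,\cdot)$ by monotonicity in the second argument.
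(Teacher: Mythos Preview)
Your proof is correct and is exactly the obvious inductive construction the paper has in mind; the paper itself declares the lemma trivial and omits the proof entirely.
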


\begin{thm}\label{double_wicked_general}
Let $\Z$ be a restricted inverse system of fibrations indexed by a regular lattice $\XXX$ with set of compact
elements $\KKK$, and let $Z$ denote the limit space of $\Z$. Let $\zeta\in Z$.
\begin{enroman}
\item	If the path component of $\zeta$ has CW homotopy type then for every $\KKK$-countable $\kappa_\infty$ there exists
	a bigger $\KKK$-countable $\lambda_\infty$ such that
	\begin{equation*}
		\tag{$\star$}	\Omega(Z,\zeta)\to\Omega(Z_{\lambda_\infty},\zeta\vert_{\lambda_\infty})
	\end{equation*}
	is a homotopy equivalence. Consequently the fibre of $C\to C_{\lambda_\infty}$ is contractible.

	In addition, if $P$ is a directed order preserving property of ordered pairs in $\KKK$,
	then $\lambda_\infty$ may be chosen to be the supremum of a $P$-sequence.
\item	Conversely, if for each $\KKK$-countable $\kappa_\infty$ there exists a bigger $\KKK$-countable element
	$\lambda_\infty$ such that $\Omega(Z_{\lambda_\infty},\zeta\vert_{\lambda_\infty})$ has CW type and the map
	($\star$) is a weak homotopy equivalence, then $\Omega(Z,\zeta)$ has CW homotopy type.
\end{enroman}
\end{thm}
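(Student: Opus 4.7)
The strategy is to iterate \propref{strong_obstruction_general} to build an ascending sequence in $\KKK$ whose loop sequence splits, and then invoke \thmref{inverse_sequence}. Write $\kappa_\infty=\sup_i\kappa_i$ with $\setof{\kappa_i}\subset\KKK$ ascending. Starting with $\lambda_1=\kappa_1$, at stage $i$ I have $\lambda_i\in\KKK$ with $\lambda_i\gqs\kappa_i$; applying \navedi{2} of \propref{strong_obstruction_general} to $\lambda_i$ yields $\lambda,\lambda'\in\KKK$ with $\lambda_i\lqs\lambda\lqs\lambda'$ such that $F_\mu\to F_\lambda$ is nullhomotopic in $C$ for every $\mu\gqs\lambda'$. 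Post-composing with $F_\lambda\hookrightarrow F_{\lambda_i}$, the inclusion $F_\mu\to F_{\lambda_i}$ is then also nullhomotopic. Set $\lambda_{i+1}=\lambda'\vee\kappa_{i+1}\in\KKK$; since $\lambda_{i+1}\gqs\lambda'$, the map $F_{\lambda_{i+1}}\to F_{\lambda_i}$ is nullhomotopic. If in addition a directed order preserving property $P$ is imposed, at this step I enlarge $\lambda_{i+1}$ further via \lemref{P-sequence} to guarantee $P(\lambda_i,\lambda_{i+1})$; enlarging preserves the nullhomotopy since \propref{strong_obstruction_general}(ii) applies to \emph{every} $\mu\gqs\lambda'$. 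Set $\lambda_\infty=\sup_i\lambda_i$, which is $\KKK$-countable and exceeds $\kappa_\infty$.

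\textbf{Concluding (i).} Restrictedness of $\Z$ gives $Z_{\lambda_\infty}=\lim_iZ_{\lambda_i}$, and since the loop functor commutes with inverse limits, $\Omega(Z_{\lambda_\infty},\zeta_{\lambda_\infty})=\lim_i\Omega(Z_{\lambda_i},\zeta_{\lambda_i})$. Using \navedi{2} of \propref{strong_obstruction_general} at each consecutive pair $(\lambda_i,\lambda_{i+1})$, the bond $\Omega(Z_{\lambda_{i+1}},\zeta_{\lambda_{i+1}})\to\Omega(Z_{\lambda_i},\zeta_{\lambda_i})$ factors as a right homotopy inverse onto $\Omega(Z,\zeta)$ followed by the canonical projection, which in turn exhibits the inverse sequence of loop spaces, up to passing to a subsequence, as a product of a sequence of nullhomotopic maps with the identity sequence on $\Omega(Z,\zeta)$. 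By \navedi{1} of \thmref{inverse_sequence} the limit $\Omega(Z_{\lambda_\infty},\zeta_{\lambda_\infty})$ is homotopy equivalent to $\Omega(Z,\zeta)$, and tracing through the splitting identifies the canonical map $\Omega(Z,\zeta)\to\Omega(Z_{\lambda_\infty},\zeta_{\lambda_\infty})$ as this equivalence. For the final assertion, choose any well-ordered chain from $\lambda_\infty$ up to $\sup\XXX$ and appeal to \propref{transfinite_fibration} to see that $Z\to Z_{\lambda_\infty}$ is a fibration; restricting to $C$ gives a fibration $C\to C_{\lambda_\infty}$ with fibre equal to that of $Z\to Z_{\lambda_\infty}$ over $\zeta\vert_{\lambda_\infty}$. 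Since $C$ has CW type (hence is a Dold space) and $\Omega(C,\zeta)\to\Omega(C_{\lambda_\infty},\zeta_{\lambda_\infty})$ is a homotopy equivalence, \lemref{allaud_refined} forces the fibre to be contractible.

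\textbf{Plan for (ii).} Here the plan is a direct appeal to \thmref{wicked_general} applied to the looped system. By \lemref{continuity_of_loop_and_product}, $\Omega(\Z,\zeta)$ is a restricted inverse system of fibrations indexed by $\XXX$, with limit space $\Omega(Z,\zeta)$. Take $C$ to be all of $\Omega(Z,\zeta)$ and $\Gamma=\Omega(Z,\zeta)$ mapping canonically to the system. For any $\KKK$-countable $\kappa_\infty$ the hypothesis produces a bigger $\KKK$-countable $\lambda_\infty$ with $\Omega(Z_{\lambda_\infty},\zeta_{\lambda_\infty})$ of CW type and $\Gamma\to\Omega(Z_{\lambda_\infty},\zeta_{\lambda_\infty})$ a weak equivalence; being a weak equivalence it is a bijection on $\pi_0$, so the image $C_{\lambda_\infty}$ coincides with $\Omega(Z_{\lambda_\infty},\zeta_{\lambda_\infty})$ and in particular lies in $\LLL$. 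Therefore $\LLL(0)$ is cofinal in $\XXX(0)$, and \thmref{wicked_general} concludes that $C=\Omega(Z,\zeta)$ has CW homotopy type.

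\textbf{Main obstacle.} The delicate step is the iterative construction in (i): one must ensure simultaneously that $\lambda_i\gqs\kappa_i$, that consecutive nullhomotopies $F_{\lambda_{i+1}}\to F_{\lambda_i}$ hold, that the $P$-sequence condition is met when demanded, and that the resulting loop sequence genuinely fits the splitting hypothesis of \thmref{inverse_sequence} so that the canonical map $\Omega(Z,\zeta)\to\Omega(Z_{\lambda_\infty},\zeta_{\lambda_\infty})$ is identified with the equivalence produced by the splitting; keeping track of the homotopies furnished by \propref{strong_obstruction_general}(i) and (ii) at each level is the main bookkeeping burden.
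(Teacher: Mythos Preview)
Your argument for (ii) is correct and matches the paper exactly: apply \thmref{wicked_general} to the looped system with $\Gamma=\Omega(Z,\zeta)$, using \lemref{continuity_of_loop_and_product}.

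For (i), your overall strategy coincides with the paper's, and your construction of the sequence $\lambda_1\lqs\lambda_2\lqs\dots$ is fine (the paper packages the nullhomotopy condition and $P$ into a single directed order preserving property $Q$ and applies \lemref{P-sequence} once, but your stepwise enlargement works just as well). The gap is precisely where you flag it: your appeal to \thmref{inverse_sequence}\,(i) only shows that $\Omega(Z_{\lambda_\infty},\zeta_{\lambda_\infty})$ is homotopy equivalent to $\Omega(Z,\zeta)$ via \emph{some} map, not that the canonical restriction ($\star$) is an equivalence. The proof of \thmref{inverse_sequence}\,(i) passes through \corref{E-H-homotopy}, which discards control of the limit map; ``tracing through the splitting'' is therefore not a routine bookkeeping step but the heart of the matter.

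The paper closes this gap by making everything strict before passing to the limit. Having obtained the commuting squares of \propref{strong_obstruction_general}\,(i) with equivalences $(z,\gamma)\mapsto p_{i,\alpha\#}(\gamma)*\theta_i(z)$, it replaces $\{F_{\lambda_i}\}$ by a tower of fibrations $\{\Phi_i\}$ via homotopy equivalences $f_i$, then uses the fibration structure on $\{\Omega(Z_{\lambda_i},\zeta_{\lambda_i})\}$ to inductively homotope the composites $\theta_i\circ g_i$ (with $g_i$ a homotopy inverse of $f_i$) to maps $\varphi_i\colon\Phi_i\to\Omega(Z_{\lambda_i},\zeta_{\lambda_i})$ that \emph{strictly} commute with the bonds. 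The resulting maps $\psi_i(x,\gamma)=p_{i,\alpha\#}(\gamma)*\varphi_i(x)$ form a strict morphism of towers of fibrations, so \propref{E-H} (not its up-to-homotopy corollary) applies and the limit $\psi_\infty$ is a homotopy equivalence with an explicit formula. Restricting $\psi_\infty$ to $\{x\}\times\Omega(Z,\zeta)$ for fixed $x\in\Phi_\infty$ (contractible by \propref{contractible_limit}) then reads off as the canonical restriction followed by right multiplication by a fixed loop --- a homotopy equivalence --- so ($\star$) itself is an equivalence. Your invocation of \propref{transfinite_fibration} is unnecessary, incidentally: $\lambda_\infty\in\XXX$ since $\XXX$ is complete, so $Z\to Z_{\lambda_\infty}$ is already one of the bonds, hence a fibration by hypothesis.
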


\begin{proof}
Assume the path component $C$ containing $\zeta$ has CW homotopy type. As usual, let $C_\mu$ denote the image of
$C$ under $Z\to Z_\mu$. 

Assume $\kappa_\infty=\sup\setof{\kappa_1,\kappa_2,\dots}$ where $\kappa_i\in\KKK$. By \propref{strong_obstruction_general}
there exists $\kappa_0\in\KKK$ such that for each $\mu\in\XXX$ with $\mu\gqs\kappa_0$ the fibre $F_\mu$ of $C\to C_\mu$
over $\zeta_\mu$ contracts in $C$.

For $\alpha,\beta\in\KKK$ we define $Q(\alpha,\beta)$ if $\alpha\lqs\beta$, $P(\alpha,\beta)$, and the inclusion
$F_\beta\to F_\alpha$ is nullhomotopic. Clearly $Q$ is an order preserving property. By assumption on $P$ and
by \propref{strong_obstruction_general} the property $Q$ is also directed.

We apply \lemref{P-sequence} to obtain a $Q$-sequence $\setof{\lambda_i}$ dominating the sequence
$\setof{\kappa_i\vee\kappa_0}$. In particular $P(\lambda_i,\lambda_{i+1})$ for all $i\gqs 1$ and $\lambda_i\gqs\kappa_0$
for all $i$. Let $\lambda_\infty=\sup\setof{\lambda_i}$. By domination $\lambda_\infty\gqs\kappa_\infty$.

To continue we write $i$ instead of $\lambda_i$ for $1\lqs i\lqs\infty$ to ease the notation.
\propref{strong_obstruction_general} further guarantees commutative diagrams
\begin{equation*}\begin{diagram}
\node{\Omega(Z_i,\zeta_i)} \arrow[3]{e,t}{p_{i-1,i}{}_\#} \node[3]{\Omega(Z_{i-1},\zeta_{i-1})} \\
\node{F_i\times\Omega(Z,\zeta)} \arrow{n,l}{\simeq} \arrow[3]{e,t}{\text{inclusion}} \node[3]{F_{i-1}\times\Omega(Z,\zeta)}
	\arrow{n,r}{\simeq}
\end{diagram}\end{equation*}
where the vertical homotopy equivalences are given by
\[ F_i\times\Omega(Z,\zeta)\to\Omega(Z_i,\zeta_i),\,\,\,
	(z,\gamma)\mapsto p_{i,\alpha}{}_\#(\gamma)*\theta_i(z), \]
and the maps $\theta_i$ satisfy $p_{i-1,i}{}_\#\circ\theta_i=\theta_{i-1}\circ\text{inclusion}$.

Set $\Phi_1=F_1$ and let $\dots\to\Phi_3\to\Phi_2\to\Phi_1$ be the inverse sequence obtained by changing
$\dots\to F_2\to F_1$ inductively into a sequence of fibrations. Thus we obtain (strictly) commutative diagrams
\begin{equation*}\begin{diagram}
\node{F_i} \arrow{s,lr}{f_i}{\simeq} \arrow{e} \node{F_{i-1}} \arrow{s,lr}{\simeq}{f_{i-1}} \\
\node{\Phi_i} \arrow{e,t}{q_{i-1,i}} \node{\Phi_{i-1}}
\end{diagram}\end{equation*}
where the $f_i\colon F_i\to\Phi_i$ are homotopy equivalences. Pick homotopy inverses $g_i$ for $f_i$. Since the
maps $p_{i-1,i}{}_\#$ are fibrations, we may homotope the composites $\theta_i\circ g_i$ inductively to maps
$\varphi_i\colon\Phi_i\to\Omega(Z_i,\zeta_i)$ for which
\[	p_{i-1,i}{}_\#\circ\varphi_i=\varphi_{i-1}\circ q_{i-1,i}. \]
By defining $\psi_i\colon\Phi_i\times\Omega(Z,\zeta)\to\Omega(Z_i,\zeta_i)$ as
$\psi_i(x,\gamma)=p_{i,\alpha}{}_\#(\gamma)*\varphi_i(x)$, we 
get a (strict) map between towers of fibrations
\begin{equation*}\begin{diagram}
\node{\Omega(Z_i,\zeta_i)} \arrow[3]{e,t}{p_{i-1,i}{}_\#} \node[3]{\Omega(Z_{i-1},\zeta_{i-1})} \\
\node{\Phi_i\times\Omega(Z,\zeta)}
	\arrow{n,lr}{\psi_i}{\simeq}
	\arrow[3]{e} \node[3]{\Phi_{i-1}\times\Omega(Z,\zeta)}
	\arrow{n,lr}{\simeq}{\psi_{i-1}}
\end{diagram}\end{equation*}
consisting of homotopy equivalences. By \propref{E-H} the limit map $\psi_\infty$ is also a homotopy equivalence.

The limit space of $\setof{\Phi_i\times\Omega(Z,\zeta)}$ is $\Phi_\infty\times\Omega(Z,\zeta)$ where
$\Phi_\infty$ is the limit space of $\setof{\Phi_i}$. If we let
$\varphi_\infty\colon\Phi_\infty\to\Omega(Z_\infty,\zeta_\infty)$ denote the limit map of the sequence
$\setof{\varphi_i}$, then by universality,
\begin{equation*}\tag{$\star\star$} \psi_\infty(x,\gamma)=p_{\infty,\alpha}{}_\#(\gamma)*\varphi_\infty(x). \end{equation*}
Let $x\in\Phi_\infty$. By \propref{contractible_limit} the space $\Phi_\infty$ is contractible, hence the inclusion
\[ \iota\colon\Omega(Z,\zeta)\equiv\setof{x}\times\Omega(Z,\zeta)\hookrightarrow\Phi_\infty\times\Omega(Z,\zeta) \]
is a homotopy equivalence. Therefore so is the map $\psi_\infty\circ\iota$. But by ($\star\star$), the map $\psi_\infty$
for a fixed $x$ is the restriction $\Omega(Z,\zeta)\to\Omega(Z_\infty,\zeta_\infty)$ followed by a multiplication
homotopy equivalence. Hence the map ($\star$) is a homotopy equivalence, and an application of \lemref{allaud_refined}
concludes the proof of \navedi{1}.

Statement \navedi{2} follows immediately from \thmref{wicked_general}, by setting $\Gamma=\Omega(Z,\zeta)$, and
using \lemref{continuity_of_loop_and_product}.
\end{proof}


\section{Phantom components and the uniform Mittag-Leffler property}\label{phantom_components}

We introduce the notion of phantom path components of the limit space of an inverse system of fibrations
between CW type spaces. If $X$ is a CW complex then $Y^X$ is the limit of $\setof{Y^K}$ with $K$ ranging
over the set $\KKK$ of finite subcomplexes of $X$. Phantom path components of $Y^X$ then coincide with
path components of phantom maps $X\to Y$ with respect to $\KKK$ (see also Section \ref{phantom} below).

\begin{defn}
Let $\Lambda$ be a directed set and let $\setof{G_\lambda}$ be an inverse system of groups indexed by $\Lambda$.
For $\lambda_1\lqs\lambda_2$ we denote the corresponding bonding morphism
$p_{\lambda_1\lambda_2}\colon G_{\lambda_2}\to G_{\lambda_1}$. We say that $\setof{G_\lambda}$ satisfies
the Mittag-Leffler condition if for each $\lambda\in\Lambda$ there exists $\mu\in\Lambda$ with $\mu\gqs\lambda$
so that for each $\nu\gqs\mu$ the images of $p_{\lambda\nu}$ and $p_{\lambda\mu}$ coincide.
\end{defn}

Recall the following well known result (see for example Marde\v{s}i\'{c} and Segal \cite{mardesic-segal} II,
Theorem 7.1.1, and J.~Cohen \cite{joel_cohen_1} for generalizations)

\begin{prop}\label{homotopy_groups_of_inverse_limits}
Let $(Z,\setof{P^{i}})$ be the limit of inverse sequence of fibrations
$\dots\to Z_3\xrightarrow{p_3}Z_2\xrightarrow{p_2}Z_1$. For each number $k\gqs 0$ there exists a natural
exact sequence (of pointed sets)
\[ *\to\ll\pi_{k+1}(Z_i,*_i)\xrightarrow{\phi}\pi_k(Z,*)\xrightarrow{\lim\pi_k(P^i)}\lim\pi_k(Z_i,*_i)\to*. \]
In particular, $\phi$ is an injection. \qed
\end{prop}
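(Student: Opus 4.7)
The plan is to realise $Z$ as the homotopy fibre of an explicit ``difference map'' between products and read the desired four-term sequence off the long exact homotopy sequence of the resulting fibration, recognising $\lim$ and $\ll$ as kernel and cokernel of that map by their very definitions. Set $P = \prod_i Z_i$ and consider the shift
\[ \Psi \colon P \to P, \qquad (z_i)_i \mapsto \big(p_{i+1}(z_{i+1})\big)_i. \]
A concrete model for the homotopy equaliser of $\id_P$ and $\Psi$ is the pullback of $(\eps_0,\eps_1) \colon P^I \to P \times P$ along $(\id_P, \Psi) \colon P \to P \times P$, namely
\[ W = \setof{(z,\gamma) \in P \times P^I \,\vert\, \gamma(0) = z,\ \gamma(1) = \Psi(z)}. \]
The projection $\pi \colon W \to P$, $(z,\gamma) \mapsto z$, is a fibration as a pullback of the path-space fibration; fixing coherent basepoints $* = (*_i) \in Z \subset P$, which automatically satisfy $\Psi(*) = *$, the fibre of $\pi$ over $*$ is canonically identified with the loop space $\Omega(P,*) = \prod_i \Omega(Z_i, *_i)$.

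Next I would verify that the tautological inclusion $Z \hookrightarrow W$, $(z_i) \mapsto ((z_i), \const_{(z_i)})$, is a weak homotopy equivalence. This is the assertion that for an inverse sequence of fibrations the strict limit coincides with the homotopy limit up to weak equivalence; it follows directly by induction on $i$ using the homotopy lifting property, very much in the style of \propref{transfinite_fibration}. In particular $\pi_k(Z) \cong \pi_k(W)$ for all $k \gqs 0$.

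Applying the long exact homotopy sequence of the fibration $\Omega P \to W \to P$ and identifying $\pi_k(P) = \prod_i \pi_k(Z_i, *_i)$, a direct computation shows that the connecting map $\partial_k \colon \pi_k(P) \to \pi_{k-1}(\Omega P) = \pi_k(P)$ equals the difference map
\[ d_* \colon \prod_i \pi_k(Z_i) \to \prod_i \pi_k(Z_i), \qquad (\alpha_i) \mapsto \big(\alpha_i \cdot (p_{i+1})_*(\alpha_{i+1})^{-1}\big). \]
By the very definition of $\lim$ and $\ll$ for an inverse sequence of (not necessarily abelian) groups, $\ker d_* = \lim_i \pi_k(Z_i)$ and $\coker d_* = \ll \pi_k(Z_i)$. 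The four-term exact sequence
\[ * \to \coker(d_*\text{ on }\pi_{k+1}) \to \pi_k(W) \to \ker(d_*\text{ on }\pi_k) \to * \]
extracted from the long exact sequence, together with $\pi_k(W) \cong \pi_k(Z)$, yields the asserted Milnor-type sequence; injectivity of $\phi$ is built into this exact form.

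The main technical obstacle is the low-degree case $k = 0$, where $\pi_0$ is only a pointed set: one must interpret the sequence as one of pointed sets, verify that $\ll \pi_1(Z_i)$ acts on $\pi_0(Z)$, and check that the orbit set for this action equals the image of $\pi_0(Z)$ in $\lim \pi_0(Z_i)$. This is the standard extension of the five-term exact sequence of a fibration to the non-connected case, and it requires a small amount of bookkeeping but no new idea. Naturality of the whole construction in the tower is evident from the functoriality of $W$.
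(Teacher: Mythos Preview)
The paper does not supply its own proof of this proposition: it is stated as a well-known result with a \qed and a reference to Marde\v{s}i\'{c}--Segal \cite{mardesic-segal} (II, Theorem 7.1.1) and J.~Cohen \cite{joel_cohen_1}. So there is nothing in the paper to compare against directly.

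Your argument is the standard Milnor-type derivation and is essentially correct; it is in fact close to what one finds in the cited sources. Two small remarks. First, your identification of the connecting map with the difference $d_*$ is right, but it is worth saying explicitly why: $W\to P$ is the pullback of the free-path fibration $P^I\to P\times P$ along $(\id,\Psi)$, so the boundary factors as $\pi_k(P)\xrightarrow{(\id_*,\Psi_*)}\pi_k(P)\times\pi_k(P)\to\pi_k(P)$, the second arrow being $(a,b)\mapsto a\cdot b^{-1}$; this gives exactly your formula. Second, at $k=0$ you need more than the weak equivalence $Z\simeq W$ on $\pi_0$: you need that the inclusion $Z\hookrightarrow W$ is a bijection on path components compatibly with the map to $P$, so that the fibration sequence for $W$ transfers to one for $Z$. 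This follows from the same inductive lifting argument you allude to (and is implicit in \propref{transfinite_fibration}), but it should be stated rather than folded into ``weak equivalence''. With those clarifications the proof is complete.
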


We refer to \cite{mardesic-segal}, II, Theorems 6.2.10 and 6.2.11 for a proof of

\begin{prop}\label{lim1_mittag-leffler}
Let $\setof{G_j,p_j}$ be an inverse sequence of groups. Then $\ll G_j$ is trivial
if the sequence satisfies the Mittag-Leffler condition. If the groups $G_j$ are
countable, the converse also holds. \qed
\end{prop}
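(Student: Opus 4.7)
The plan is to prove both directions by direct construction. For the forward implication, I would reduce to the case of surjective bonding maps using Mittag-Leffler and solve a lifting problem recursively. For the converse, I would build an obstructing element via a diagonal argument using countability of each $G_j$ in an essential way.

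For the forward direction, assume Mittag-Leffler. Passing to a cofinal subsequence (which preserves $\ll$), I can assume $\im(p_{j,k})$ is constant for $k\geq j+1$, equal to some $H_j\leq G_j$. The key step is that the restricted bond $p_{j+1}\vert_{H_{j+1}}\colon H_{j+1}\to H_j$ is surjective: for $y\in H_j$ there is $z\in G_{j+2}$ with $p_{j,j+2}(z)=y$, and then $p_{j+1,j+2}(z)\in H_{j+1}$ projects to $y$. Next, one checks that $\ll\setof{G_j}$ is trivial iff $\ll\setof{H_j}$ is: the quotient system $\setof{G_j/H_j}$ has trivial induced bonds on the subsequence and so trivial $\ll$, which is the one mildly delicate point in the non-abelian setting. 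Finally, with surjective bonds in $\setof{H_j}$, given $(x_j)\in\prod H_j$ one sets $g_1=e$ and recursively picks $g_{j+1}\in p_{j+1}^{-1}(g_jx_j)$; the resulting $(g_j)$ witnesses $g\cdot x=e$, so the orbit of $(x_j)$ is trivial.

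For the converse, assume each $G_j$ is countable and Mittag-Leffler fails. Then there exist an index $j_0$ and a subsequence $k_1<k_2<\cdots$ for which the inclusions $A_n:=\im(p_{j_0,k_n})$ are strict, $A_{n+1}\subsetneq A_n$. Enumerating $G_{j_0}=\setof{h_1,h_2,\dots}$, I would inductively choose $(x_j)\in\prod G_j$ so that the accumulated product $c_n=x_{j_0}\cdot p_{j_0+1}(x_{j_0+1})\cdots p_{j_0,k_n-1}(x_{k_n-1})\in G_{j_0}$ satisfies $h_n c_n\notin A_n$ at every stage $n$. This is possible: at stage $n$ one has the freedom $c_n\in c_{n-1}A_{n-1}$, and the strict inclusion $A_n\subsetneq A_{n-1}$ provides cosets of $A_n$ in $A_{n-1}$ available to avoid the condition $h_n c_n\in A_n$. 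Any witness $(g_j)$ to triviality of $(x_j)$ would satisfy $g_{j_0}c_n\in A_n$ for every $n$, obtained by iterating the relations $p_{j+1}(g_{j+1})=g_jx_j$ upward to level $k_n$; taking $g_{j_0}=h_n$ contradicts the diagonal choice, so $(x_j)$ represents a non-trivial $\ll$-class.

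The main obstacle is the converse direction: the countability hypothesis must be used essentially, as uncountable counterexamples to the converse are known. The forward direction is essentially formal once the reduction to surjective bonds is carried out, and the subsequence/stable-image reductions are standard. The full argument is worked out carefully in Marde\v{s}i\'{c} and Segal, Chapter II, Theorems 6.2.10 and 6.2.11, which the proposed proof essentially follows.
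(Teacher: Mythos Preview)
The paper does not give its own proof of this proposition; it simply cites Marde\v{s}i\'{c} and Segal, II, Theorems 6.2.10 and 6.2.11. Your proposal sketches precisely that argument and ends by citing the same reference, so your approach coincides with the paper's.
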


\begin{defn}
Let $\Z$ be a restricted system of fibrations indexed by a regular lattice $\XXX$ with set of compact elements $\KKK$.
Let $Z$ denote the inverse limit of $\Z$. 

Path components $C$, $D$ of $Z$ form {\it a phantom pair with respect to $\KKK$} if
the images of $C$ and $D$ under $Z\to Z_\kappa$ coincide for each $\kappa\in\KKK$.
They form a nontrivial phantom pair if $C\neq D$.

Clearly the relation `phantom pair' is an equivalence relation, and we denote by
$\Ph(C)$ the equivalence class of $C$, i.e. the set of all path components $D$
such that $C$, $D$ form a phantom pair.
\end{defn}

\begin{lem}\label{phantom_countable}
Assume that $\XXX$ is $\KKK$-countable, and let $\sup\XXX$ equal $\sup\setof{\kappa_i}$ where
$\kappa_1\lqs\kappa_2\lqs\dots$ is a countable ascending chain in $\KKK$. Clearly $\setof{\kappa_i}$
is cofinal in $\KKK$ and by \propref{homotopy_groups_of_inverse_limits} we may identify
\[	\Ph(C)=\ll\pi_{1}(Z_{\kappa_i},\zeta_i) \]
where $\zeta$ belongs to $C$ and $\zeta_i=\zeta\vert_{\kappa_i}$.\qed
\end{lem}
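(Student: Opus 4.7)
The plan is to reduce the identification to Proposition~\ref{homotopy_groups_of_inverse_limits} applied to the inverse sequence of fibrations $\dots\to Z_{\kappa_3}\to Z_{\kappa_2}\to Z_{\kappa_1}$. Cofinality of $\setof{\kappa_i}$ in $\KKK$ is immediate from compactness: any $\kappa\in\KKK$ with $\kappa\lqs\sup_i\kappa_i$ must satisfy $\kappa\lqs\kappa_{\max F}$ for some finite $F\subset\NN$. Restrictedness of $\Z$ then yields $\lim_iZ_{\kappa_i}=Z_{\sup_i\kappa_i}=Z$, while the bonding maps $Z_{\kappa_{i+1}}\to Z_{\kappa_i}$ remain fibrations as bonds of $\Z$.

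Next I would note that, by this same cofinality, a path component $D$ of $Z$ lies in $\Ph(C)$ if and only if the images of $C$ and $D$ in $Z_{\kappa_i}$ coincide for every $i$: the \emph{only if} direction is trivial since each $\kappa_i\in\KKK$, and for the converse, given $\kappa\in\KKK$, pick $\kappa_i\gqs\kappa$ and factor $Z\to Z_\kappa$ through $Z_{\kappa_i}$ to transport the equality of images.

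Finally, I would apply \propref{homotopy_groups_of_inverse_limits} with $k=0$ to obtain the exact sequence of pointed sets
\[ *\to\ll\pi_1(Z_{\kappa_i},\zeta_i)\xrightarrow{\phi}\pi_0(Z,\zeta)\xrightarrow{\rho}\lim_i\pi_0(Z_{\kappa_i},\zeta_i), \]
where $\rho$ sends a path component to the coherent tuple of its images in the sets $\pi_0(Z_{\kappa_i},\zeta_i)$. By the previous step $\Ph(C)=\rho^{-1}(\rho(C))$, and since $C$ is the basepoint of the pointed set $\pi_0(Z,\zeta)$, exactness identifies this preimage with $\im\phi$; injectivity of $\phi$ then yields the desired bijection $\Ph(C)\cong\ll\pi_1(Z_{\kappa_i},\zeta_i)$. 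There is no serious obstacle here -- the only point requiring care is the book-keeping of basepoints in the pointed-set exact sequence -- since the lemma is essentially an unpacking of \propref{homotopy_groups_of_inverse_limits} once cofinality has been invoked.
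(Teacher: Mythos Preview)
Your argument is correct and is exactly the unpacking of \propref{homotopy_groups_of_inverse_limits} that the paper has in mind; the paper simply cites that proposition and leaves the cofinality and basepoint book-keeping implicit, marking the lemma with a \qed. There is nothing to add.
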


\begin{thm}\label{u_M-L_general}
Given hypotheses of \propref{fibre_SLC_general}, assume that $C$ has the homotopy type of a CW complex.
Then, given any $\kappa\in\KKK$ there exists $\lambda'\in\KKK$ such that for all $k\gqs 1$
and any $\mu\in\XXX$, for which $\mu\gqs\lambda'$, the image of
\[ \pi_k(C_\mu,\zeta_\mu)\to\pi_k(C_\kappa,\zeta_\kappa) \] equals that of
\begin{equation*}\tag{$*$} \pi_k(C,\zeta)\to\pi_k(C_\kappa,\zeta_\kappa). \end{equation*}
Moreover, for all large enough $\kappa$, the morphism ($*$) is injective for $k\gqs 1$.

In particular, the $\KKK$-indexed induced inverse system of groups 
\begin{equation*}\tag{$**$} \setof{\pi_k(C_\lambda,\zeta_\lambda)} \end{equation*} satisfies
the Mittag-Leffler condition uniformly with respect to $k\gqs 1$.

If $C$ is open then for all large enough $\kappa\in\KKK$ the preimage of $C_\kappa$ under 
$Z\to Z_\kappa$ equals $C$. Consequently $\Ph(C)=\setof{C}$.

If $\XXX$ is $\KKK$-countable then $\Ph(C)=\setof{C}$ regardless of whether $C$ is open or not.
\end{thm}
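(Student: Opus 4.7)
The plan is to funnel everything through \propref{strong_obstruction_general}, whose structural data directly encode both the image-equality and the uniform Mittag-Leffler property. Given $\kappa\in\KKK$, its part (ii) produces $\lambda,\lambda'\in\KKK$ with $\kappa\lqs\lambda\lqs\lambda'$ such that for every $\mu\gqs\lambda'$ the loop-space fibration $\Omega(Z_\mu,\zeta_\mu)\to\Omega(Z_\lambda,\zeta_\lambda)$ factors up to homotopy as
\[
\Omega(Z_\mu,\zeta_\mu)\xrightarrow{\,j\,}\Omega(Z,\zeta)\xrightarrow{\Omega p_{\lambda,\alpha}}\Omega(Z_\lambda,\zeta_\lambda),
\]
with $j$ a right homotopy inverse of $\Omega(Z,\zeta)\to\Omega(Z_\mu,\zeta_\mu)$. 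Applying $\pi_{k-1}$ together with functoriality yields, by a two-sided containment, the equality $\im\bigl(\pi_k(C_\mu,\zeta_\mu)\to\pi_k(C_\lambda,\zeta_\lambda)\bigr)=\im\bigl(\pi_k(C,\zeta)\to\pi_k(C_\lambda,\zeta_\lambda)\bigr)$; postcomposing with the morphism induced by $p_{\kappa,\lambda}$ transports this into $\pi_k(C_\kappa,\zeta_\kappa)$, which is ($*$). Since $\lambda'$ depends only on $\kappa$ and not on $k$, a single $\lambda'$ witnesses the Mittag-Leffler property simultaneously for all $k\gqs 1$, giving the uniform statement ($**$).

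For injectivity of ($*$) at large $\kappa$, I would use part (i) of the same proposition: there is a homotopy equivalence
\[
F_\kappa\times\Omega(Z,\zeta)\to\Omega(Z_\kappa,\zeta_\kappa),\qquad (z,\gamma)\mapsto(\Omega p_{\kappa,\alpha})(\gamma)*\theta_\kappa(z).
\]
Its restriction to $\setof{\zeta}\times\Omega(Z,\zeta)$ is the composition of the inclusion of a slice into a product with a homotopy equivalence, hence injective on every $\pi_{k-1}$; on the other hand this restriction equals $\Omega p_{\kappa,\alpha}$ followed by right translation by the loop $\theta_\kappa(\zeta)$. Right translation is a self-homotopy-equivalence of $\Omega(Z_\kappa,\zeta_\kappa)$ and induces a bijection on each $\pi_{k-1}$, so $\Omega p_{\kappa,\alpha}$ itself must be injective on $\pi_{k-1}$, which is precisely injectivity of $\pi_k(C,\zeta)\to\pi_k(C_\kappa,\zeta_\kappa)$. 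The main obstacle I anticipate is this injectivity step, where one must be confident that the twist by $\theta_\kappa(\zeta)$ does not hide the injectivity of interest; the right-translation bookkeeping is what takes care of it.

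For the openness statement, if $C$ is open then \propref{fibre_SLC_general} (part (ii)) furnishes $\kappa\in\KKK$ such that the fibre $F_0$ of $Z\to Z_\kappa$ over $\zeta_\kappa$ contracts in $Z$; in particular $F_0$ is path-connected in $Z$, so $F_0\subseteq C$. Given any $z$ with $p_{\kappa,\alpha}(z)\in C_\kappa$, pick a path in $C_\kappa$ from $p_{\kappa,\alpha}(z)$ to $\zeta_\kappa$ and lift it through the fibration starting at $z$; the endpoint lies in $F_0\subseteq C$, forcing $z\in C$. Thus $p_{\kappa,\alpha}^{-1}(C_\kappa)=C$, and any component $D$ forming a phantom pair with $C$ satisfies $D\subseteq p_{\kappa,\alpha}^{-1}(C_\kappa)=C$, which gives $\Ph(C)=\setof{C}$.

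Finally, if $\XXX$ is $\KKK$-countable, fix a countable ascending chain $\kappa_1\lqs\kappa_2\lqs\cdots$ in $\KKK$ with $\sup_i\kappa_i=\sup\XXX$. By \lemref{phantom_countable}, $\Ph(C)=\ll\pi_1(Z_{\kappa_i},\zeta_i)$. The cofinal subsystem $\setof{\pi_1(Z_{\kappa_i},\zeta_i)}$ inherits the Mittag-Leffler property from the uniform statement proved above, and \propref{lim1_mittag-leffler} then forces $\ll=\ast$, giving $\Ph(C)=\setof{C}$.
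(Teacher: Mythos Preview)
Your proof is correct and follows essentially the same route as the paper: both invoke \propref{strong_obstruction_general} for the image-equality and uniform Mittag--Leffler statements, \propref{fibre_SLC_general} for the openness claim, and \lemref{phantom_countable} together with \propref{lim1_mittag-leffler} for the $\KKK$-countable case. The only stylistic difference is in the injectivity step: the paper simply observes that once the fibre $F_{\kappa}$ of $C\to C_\kappa$ contracts in $C$, the inclusion-induced maps $\pi_k(F_\kappa)\to\pi_k(C)$ vanish and the long exact sequence of the fibration gives injectivity of $\pi_k(C,\zeta)\to\pi_k(C_\kappa,\zeta_\kappa)$ directly, whereas you extract the same conclusion from the explicit product equivalence $F_\kappa\times\Omega(Z,\zeta)\simeq\Omega(Z_\kappa,\zeta_\kappa)$ via the right-translation cancellation. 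Both arguments are valid; the paper's is shorter, yours makes the mechanism more visible.
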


\begin{proof}
The first statement follows from \navedi{2} of \propref{strong_obstruction_general} by applying homotopy groups to the
diagram of \navedi{1} of the same proposition.

We may first choose $\kappa'\in\KKK$ such that the fibre of $C\to C_{\kappa'}$ over $\zeta_{\kappa'}$ contracts in $C$.
Then if $\kappa\gqs\kappa'$ the morphism ($*$) is an injection.

If $\XXX$ is $\KKK$-countable then by \lemref{phantom_countable} we may identify $\Ph(C)$
with $\ll$ of a subsequence of ($**$) which is trivial by \propref{lim1_mittag-leffler}.

If $C$ is open then by \propref{fibre_SLC_general} we may assume that the fibre of $Z\to Z_{\kappa'}$
over $\zeta_{\kappa'}$ contracts in the total space. By applying $\pi_0$ this shows that the preimage of
$C_{\kappa'}$ under $Z\to Z_{\kappa'}$ is exactly $C$.
\end{proof}


\section{Sequences of Postnikov sections}\label{sequences_of_postnikov_sections}

Let $\dots\to Y_i\xrightarrow{p_i}Y_{i-1}\to\dots\to Y_1$ be an inverse sequence of fibrations between CW type spaces.
We do not know whether in general the question of CW type of the limit space $Y_\infty$ can be reduced to studying
the morphisms $\pi_k(Y_i)\to\pi_k(Y_{i-1})$ induced on homotopy groups. However, this is the case when all the $Y_i$
have trivial homotopy groups above a certain level (independent of $i$).

While \propref{whitehead_tower_sequence}, the key ingredient of \thmref{postnikov_sequence}, is more general
in nature, we need the vanishing of homotopy groups to `reach an end' of an inductive if and only if argument.

\begin{defn}
Let $\dots\to G_3\to G_2\to G_1$ be an inverse sequence of groups with inverse limit $G_\infty$.
We say that $\setof{G_i}$ is {\it injectively Mittag-Leffler} if it satisfies the Mittag-Leffler
condition and the canonical projections $G_\infty\to G_i$ are injective for all but finitely many $i$.
\end{defn}

Let $\kkkk$ denote the `$k$-ification functor', that is the functor which to
every topological space assigns the space with the same underlying set whose
topology is the compactly generated refinement of the original one. (See Steenrod \cite{steenrod}.)

\begin{thm}\label{postnikov_sequence}
Let $\dots\to Y_i\xrightarrow{p_i}Y_{i-1}\to\dots\to Y_1$ be an inverse sequence of fibrations with a coherent set
of nondegenerate base points $\setof{\eta_i}$. Let $Y_\infty$ be the limit space and let $C$ be the path component of
$\eta_\infty=\setof{\eta_i}$ in $Y_\infty$. Further let $C_i$ be the image of $C$ under $Y_\infty\to Y_i$.
Assume that all spaces $Y_i$ have CW homotopy type and that there exists
a number $N$ so that $\pi_k(Y_i,\eta_i)=0$ for $k\gqs N+1$ and all $i$.

\begin{enroman}
\item	If $C$ has the homotopy type of a CW complex then
	for each $k\gqs 1$ the sequence $\setof{\pi_k(Y_i,\eta_i)\,\vert\,i}$ is injectively Mittag-Leffler.
	
	If, in addition, $C$ is open, then the preimage of $C_i$ under $Y_\infty\to Y_i$ equals $C$ for
	all but finitely many $i$.
\item	Conversely, if for each $k\gqs 1$ the sequence $\setof{\pi_k(Y_i,\eta_i)\,\vert\,i}$ is injectively
	Mittag-Leffler, then $\kkkk(C)$ has CW homotopy type. If, in addition, $\pi_k(Y_i,\eta_i)$ is countable
	for $k\gqs 2$ and all $i$ then $C$ has CW type.

	If, in addition, the preimage of $C_i$ under $Y_\infty\to Y_i$ equals $C$ then $C$ is open in $Y_\infty$.
\end{enroman}
\end{thm}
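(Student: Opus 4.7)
The plan is to derive (i) directly from \thmref{u_M-L_general} applied to the restricted inverse system $\setof{Y_i}$, indexed by the complete lattice $\NN\cup\setof{\infty}$ with set of compact elements $\KKK=\NN$, and to prove (ii) by passing to a subsequence, inducting on the homotopy-group bound $N$ via the Postnikov decomposition, and assembling the endpoint through \thmref{inverse_sequence}(i).

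For (i) I first invoke \thmref{u_M-L_general}, which asserts that the system $\setof{\pi_k(C_i,\eta_i)}$ satisfies the Mittag-Leffler condition uniformly in $k\gqs 1$. Since this condition depends only on eventual images, the system $\setof{\pi_k(Y_i,\eta_i)}$ inherits it. The same theorem produces an index $i_0$ beyond which $\pi_k(C,\eta_\infty)\to\pi_k(Y_i,\eta_i)$ is injective for every $k\gqs 1$; combined with the surjection $\pi_k(Y_\infty,\eta_\infty)\twoheadrightarrow\lim_j\pi_k(Y_j,\eta_j)$ from \propref{homotopy_groups_of_inverse_limits}, this forces the factored map $\lim_j\pi_k(Y_j,\eta_j)\to\pi_k(Y_i,\eta_i)$ to be injective for $i\gqs i_0$, yielding the injectively Mittag-Leffler property. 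The openness clause of (i) is the last assertion of \thmref{u_M-L_general} transcribed to the sequence case.

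For (ii), assume $\setof{\pi_k(Y_i,\eta_i)}$ is injectively Mittag-Leffler for every $k\gqs 1$. After passing to a subsequence I may suppose that for every $i$ and every $k\in\setof{1,\dots,N}$ the bond $\pi_k(Y_{i+1})\to\pi_k(Y_i)$ is injective with image equal to the common stable image $L_k^i\cong\lim_j\pi_k(Y_j,\eta_j)$. I then induct on $N$, the base case being handled by the discrete fibre. For the inductive step, consider the canonical fibrations $Y_i\to Y_i^{(N-1)}$ onto $(N-1)$-Postnikov sections, with fibres $K(\pi_N(Y_i),N)$; the tower $\setof{Y_i^{(N-1)}}$ still meets the injectively Mittag-Leffler hypothesis, so the inductive hypothesis supplies CW type for its limit component. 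At the top stage, a Whitehead-tower-style construction realizing the algebraic splitting of the injective stable bonds on $\pi_N$ yields, after a further subsequence, a compatible decomposition of $\setof{Y_i}$ up to fibre homotopy equivalence as a product of a tower of nullhomotopic fibrations and a tower of homotopy equivalences; \thmref{inverse_sequence}(i) then delivers CW homotopy type of $\kkkk(C)$. The countability hypothesis upgrades $\kkkk(C)$ to $C$: by \propref{lim1_mittag-leffler} the countable Mittag-Leffler sequences satisfy $\ll\pi_{k+1}(Y_i,\eta_i)=0$, so $\pi_k(Y_\infty)=\lim_j\pi_k(Y_j,\eta_j)$ is countable for all $k$, whence $C$ is equivalent to a countable CW complex and agrees with its compactly generated refinement up to homotopy. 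Finally, note that $Y_\infty\to Y_i$ is a fibration by \propref{transfinite_fibration}, so any path in the path component of $\eta_i$ in $Y_i$ lifts through $\eta_\infty$ into $C$; hence $C_i$ coincides with that path component and is open in $Y_i$, and if the preimage of $C_i$ equals $C$ then $C$ is open as the preimage of an open set.

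The main obstacle will be the inductive geometric realization at stage $N$: turning the algebraic fact that the bonds on $\pi_N$ are injections with stable image into a compatible fibrewise splitting of principal $K(\pi_N,N)$-bundles throughout the tower in such a way that \thmref{inverse_sequence}(i) applies. The bound $\pi_k(Y_i)=0$ for $k>N$ is indispensable, both for anchoring the induction and for keeping the amount of new algebraic data introduced at each step finite.
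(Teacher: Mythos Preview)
Your treatment of (i) is correct and matches the paper: it follows directly from \thmref{u_M-L_general}.

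For (ii) your outline has two real gaps.

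First, the inductive step is not substantiated. You want to pass from the tower $\setof{Y_i}$ to the tower of $(N-1)$-Postnikov sections and handle the $K(\pi_N(Y_i),N)$ fibres by a product splitting coming from the injective stable bonds on $\pi_N$, then invoke \thmref{inverse_sequence}(i). But you never explain how to realize this algebraic splitting \emph{compatibly through the tower and fibrewise over the Postnikov base}; the injectively Mittag-Leffler condition on $\pi_N$ does not by itself produce a level-preserving decomposition of $\setof{Y_i}$ into (nullhomotopic)$\times$(equivalences). The paper proceeds dually, via Whitehead towers rather than Postnikov sections, and does the real work in \propref{whitehead_tower_sequence}: one builds a coherent tower of $k$-connected covers $\Bar Z_i\to Z_i$ by pulling back over a tower of topological abelian group morphisms $K(G_i,k)\to K(G_{i-1},k)$, obtains a pullback square at the limit, and shows via \propref{E-M_sequence} that the Eilenberg--MacLane limit $K_\infty$ has CW type under the injectively Mittag-Leffler hypothesis. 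Stasheff then transfers CW type between $Z_\infty$ and $\Bar Z_\infty$. Iterating kills all homotopy groups, and \propref{contractible_limit} finishes. Your sketch gestures at this machinery but does not supply it; in particular the appeal to \thmref{inverse_sequence}(i) is not the right tool here.

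Second, your argument for upgrading $\kkkk(C)$ to $C$ under countability is circular. Knowing that $\pi_k(Y_\infty)$ is countable tells you only that $C$ is \emph{weakly} equivalent to a countable CW complex; the natural map $\kkkk(C)\to C$ is always a weak equivalence, so this gives nothing. The actual reason countability matters is buried in the construction of \propref{whitehead_tower_sequence}: when the $G_i$ are countable one can use genuine topological abelian groups rather than quasitopological ones, so the limit maps $\Bar Z_\infty\to Z_\infty$ and $P_\infty\to K_\infty$ are honest Hurewicz fibrations and Stasheff applies without $\kkkk$-ifying. For uncountable $G_i$ one only gets fibrations for the class of compactly generated spaces, whence the $\kkkk$ in the conclusion.
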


\begin{cor}
Let $\dots\to Y_i\xrightarrow{p_i}Y_{i-1}\to\dots\to Y_1$ be an inverse sequence of fibrations with a coherent set
of nondegenerate base points $\setof{\eta_i}$. Assume that all spaces $Y_i$ have homotopy types of countable CW complexes
and that there exists a number $N$ so that $\pi_k(Y_i,\eta_i)=0$ for $k\gqs N+1$ and all $i$. 

Then the inverse limit of $\setof{Y_i}$ is contractible if and only if it is weakly contractible.
\end{cor}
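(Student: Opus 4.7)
The nontrivial direction is that weakly contractible implies contractible; the other implication is immediate. Assume therefore that $Y_\infty$ is weakly contractible. In particular $\pi_0(Y_\infty)=0$, so $Y_\infty$ is path-connected and coincides with the path component $C$ of $\eta_\infty$. The plan is simply to verify the hypotheses of part \navedi{2} of \thmref{postnikov_sequence} and then invoke Whitehead's theorem.

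First, I would feed weak contractibility into the exact sequence of \propref{homotopy_groups_of_inverse_limits}: for each $k\gqs 0$,
\[ *\to\ll\pi_{k+1}(Y_i,\eta_i)\to\pi_k(Y_\infty,\eta_\infty)\to\lim\pi_k(Y_i,\eta_i)\to* \]
is exact with the first map injective. Since the middle term vanishes, both $\ll\pi_{k+1}(Y_i,\eta_i)$ and $\lim\pi_k(Y_i,\eta_i)$ are trivial for all $k\gqs 0$. Because each $Y_i$ has the homotopy type of a countable CW complex, each $\pi_k(Y_i,\eta_i)$ is countable, so \propref{lim1_mittag-leffler} upgrades the vanishing of $\ll$ to the Mittag-Leffler condition for the sequence $\setof{\pi_k(Y_i,\eta_i)}$ for every $k\gqs 1$.

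Next I would observe that the ``injectively'' part of injectively Mittag-Leffler is free: the canonical projection $\lim_j\pi_k(Y_j,\eta_j)\to\pi_k(Y_i,\eta_i)$ has trivial domain, so is automatically injective for every $i$ and every $k\gqs 1$. Thus the hypothesis of \navedi{2} of \thmref{postnikov_sequence} is satisfied, and since in addition $\pi_k(Y_i,\eta_i)$ is countable for $k\gqs 2$ (in fact for all $k\gqs 1$), the theorem yields that $C=Y_\infty$ has CW homotopy type. A space of CW type that is weakly contractible is contractible by Whitehead's theorem, completing the proof.

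There is no real obstacle here; the only points that deserve attention are (i) checking that countability of the homotopy groups $\pi_k(Y_i,\eta_i)$ (needed both for \propref{lim1_mittag-leffler} and for the stronger ``$C$ has CW type'' conclusion in \thmref{postnikov_sequence}\navedi{2}) follows from the homotopy-type countability hypothesis on the $Y_i$, and (ii) noticing that triviality of $\lim\pi_k$ makes the injectivity condition vacuous, so that the corollary is purely a matter of unpacking definitions and citing the preceding theorem.
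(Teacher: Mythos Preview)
Your proof is correct and follows essentially the same route as the paper's own (very terse) proof: deduce vanishing of $\ll\pi_{k+1}(Y_i)$ from weak contractibility via \propref{homotopy_groups_of_inverse_limits}, upgrade to Mittag-Leffler using countability and \propref{lim1_mittag-leffler}, and then feed into \thmref{postnikov_sequence}\,\navedi{2}. You have simply made explicit the steps the paper leaves to the reader, namely that the injectivity condition is vacuous because $\lim\pi_k=0$, that the countability of $\pi_k(Y_i)$ for $k\gqs 2$ unlocks the stronger conclusion in \navedi{2}, and the final appeal to Whitehead's theorem.
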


\begin{proof}[Proof of Corollary]
Weak contractibility implies the vanishing of $\ll$ terms by \propref{homotopy_groups_of_inverse_limits}.
By \propref{lim1_mittag-leffler} the sequences $\setof{\pi_k(Y_i,*)\,\vert\,i}$ satisfy the Mittag-Leffler property.
\end{proof}

\begin{example}\label{solenoid}
Consider the inverse sequence of circles 
\[ \dots\to S^1\xrightarrow{p_2}S^1\xrightarrow{p_1}S^1. \]
where $p_i$ denotes the $n_i$-sheeted cover $\zeta\mapsto\zeta^{n_i}$ for a number $n_i$,
and we understand $S^1$ as the set of complex numbers of absolute value $1$.

The induced sequence on $\pi_1$ is $\dots\to\ZZ\xrightarrow{n_2}\ZZ\xrightarrow{n_1}\ZZ$. This has the Mittag-Leffler
property if and only if either 
\begin{itemize}
\item	$n_i=\pm 1$ for all but finitely many $i$ in which case the limit is $S^1$, or
\item	$n_i=0$ for infinitely many $i$ in which case the limit is a point.
\end{itemize}
In particular, the $p$-adic solenoid $T_p$ for a prime $p$ obtained by setting $n_i=p$ for all $i$,
has $\Hat\ZZ_p/\ZZ$ path components all of which form a single phantom class, and each is weakly contractible
but not contractible.\qed
\end{example}

The rest of this section is devoted to the proof of \thmref{postnikov_sequence}.

\begin{prop}\label{E-M_sequence}
Let $k$ be a natural number and let
\[ \dots\to K_i\xrightarrow{p_i}K_{i-1}\to\dots\to K_1 \]
be an inverse sequence of fibrations where the $K_i$ are Eilenberg-MacLane spaces
with the single nonvanishing homotopy group in dimension $k$.

Let $K_\infty$ denote the inverse limit of the sequence, together with natural projections $P^i\colon K_\infty\to K_i$.
Pick a base point $\zeta=\setof{\zeta_i}\in K_\infty$ and let $G_i=\pi_k(K_i,\zeta_i)$. Denote the induced morphism
$G_i\to G_{i-1}$ simply by $p_i$ and let $G_\infty$ denote the inverse limit of $\setof{G_i,p_i}$. 

Assume that the sequence $\setof{G_i,p_i}$ is injectively Mittag-Leffler,
and, in case $k=1$, that the point $\zeta_i$ is nondegenerate in $K_i$, for each $i$.

Then the limit space $K_\infty$ has the homotopy type of $K(G_\infty,k)$. In particular, it has the homotopy type
of a CW complex.
\end{prop}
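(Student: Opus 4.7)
Step 1 (weak homotopy type). By Proposition~\ref{homotopy_groups_of_inverse_limits}, for each $j$ there is an exact sequence
\[ \ast\to \ll \pi_{j+1}(K_i,\zeta_i)\to \pi_j(K_\infty,\zeta)\to \lim\pi_j(K_i,\zeta_i)\to\ast. \]
Since $\pi_j(K_i)=0$ for $j\ne k$, the outer terms vanish except that for $j=k$ the $\lim$ term is $G_\infty$ and the $\ll$ term is $\ll 0=0$, while for $j=k-1$ the $\lim$ term is $0$ and the $\ll$ term is $\ll G_i$. The Mittag-Leffler hypothesis together with Proposition~\ref{lim1_mittag-leffler} (the direction that needs no countability) gives $\ll G_i=0$. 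Hence $\pi_k(K_\infty)=G_\infty$ and $\pi_j(K_\infty)=0$ otherwise, so $K_\infty$ has the weak homotopy type of $K(G_\infty,k)$.

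Step 2 (realizing a weak equivalence). Passing to a subsequence (which preserves $K_\infty$ up to homeomorphism), arrange that $G_\infty\hookrightarrow G_i$ is injective and $\im(p_{i+1})=G_\infty$ inside $G_i$ for every $i$; the quotients $Q_i=G_i/G_\infty$ then form a tower whose bonding maps are identically zero. Fix a CW model $L$ of $K(G_\infty,k)$ and inductively build compatible $\phi_i\colon L\to K_i$ realizing the inclusion $G_\infty\hookrightarrow G_i$ on $\pi_k$. The key point is that the homomorphism $G_\infty\to G_{i+1}$ is realized by a map $\psi\colon L\to K_{i+1}$ (unique up to homotopy since $L=K(G_\infty,k)$), and $p_{i+1}\circ\psi$ induces the same map on $\pi_k$ as $\phi_i$, hence is homotopic to it; homotopy lifting along the fibration $p_{i+1}$ then yields a strict lift. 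The limit $\phi_\infty\colon L\to K_\infty$ is a weak equivalence by Step~1.

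Step 3 (CW type of $K_\infty$). To upgrade $\phi_\infty$ to a genuine homotopy equivalence via Whitehead's theorem, it is enough to show $K_\infty$ has CW type. For $k\gqs 2$, replace each $K_i$ by the geometric realization $\tilde K_i$ of the simplicial abelian group Eilenberg-MacLane model of $K(G_i,k)$, with bonds induced by $p_i\colon G_i\to G_{i-1}$; these are cellular fibrations, and the $\tilde K_i$ can be chosen level-wise homotopy equivalent to $\{K_i\}$ compatibly with bonds up to homotopy. Inside $\tilde K_i$, the inclusion $G_\infty\hookrightarrow G_i$ realizes $L$ as a strict subcomplex, yielding a principal fibration $L\to\tilde K_i\to B_i$ with $B_i\simeq K(Q_i,k)$, and the induced bonds $B_{i+1}\to B_i$ are null-homotopic because $Q_{i+1}\to Q_i$ is the zero map. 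By Proposition~\ref{contractible_limit} the space $B_\infty=\lim B_i$ is contractible, and the limit fibration $\tilde K_\infty\to B_\infty$ has fiber $L$, so $\tilde K_\infty\simeq L$. Corollary~\ref{E-H-homotopy} applied to $\tilde K_i\to K_i$ then gives $\tilde K_\infty\simeq K_\infty$, so $K_\infty\simeq K(G_\infty,k)$.

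The main obstacle is executing the strict principal-fibration argument in Step~3 rigorously, since taking inverse limits does not commute with either geometric realization or with forming quotient fibrations, so the model tower must be arranged carefully (and its bonds verified to produce a genuine fibration over the contractible limit base). The case $k=1$ is additionally delicate because $G_\infty$ need not be normal in $G_i$, so the base $B_i$ must be built from a coset-space (nerve-type) construction rather than a quotient group; the nondegeneracy hypothesis on the base points $\zeta_i$ is used precisely to ensure that the relevant projections remain fibrations and that the analogous argument still goes through.
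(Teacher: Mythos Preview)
Your Step~1 is correct and matches the paper's opening paragraph. Step~2 is fine but the paper does not bother building $\phi_\infty$; once $K_\infty$ is shown to have CW type, Step~1 already pins down its homotopy type.

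Your Step~3 strategy is \emph{dual} to the paper's and runs into exactly the obstacle you name. For $k\gqs 2$ the paper extends the tower \emph{downward}: using that $G_\infty\hookrightarrow G_2\to G_1$ is an isomorphism onto the stable image, $G_\infty$ is a split summand of each $G_i$; one then adds a new bottom term $K_0=K(G_\infty,k)$ with $K_1\to K_0$ realizing a retraction $G_1\to G_\infty$, factors everything into fibrations $L_i\to L_{i-1}$, and observes that $L_\infty\to K_0$ is automatically a fibration (as a composite $L_\infty\to L_1\to K_0$) whose fibre is an inverse limit of nullhomotopic maps, hence contractible by \propref{contractible_limit}. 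The point is that projecting to a \emph{fixed} base $K_0$ avoids ever needing the inverse limit of a tower of fibrations to be a fibration. Your approach instead quotients \emph{upward} by $L$ and must verify that $\lim\tilde K_i\to\lim B_i$ is a fibration, which is precisely the delicate issue handled elsewhere in the paper via J.~Cohen's principal open fibration machinery. A secondary gap: your bonds $\tilde K_{i+1}\to\tilde K_i$, induced by $p_{i+1}\colon G_{i+1}\to G_i$, are not fibrations unless $p_{i+1}$ is surjective, which after your reindexing it typically is not (its image is only $G_\infty$); fixing this by the usual path-space factorization would destroy the strict inclusion of $L$ you rely on.

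For $k=1$ the paper gives a concrete argument that sidesteps normality entirely: pull back a covering of a CW model of $K_i$ associated to the (not necessarily normal) subgroup $S_i\cong G_\infty$ to get $W_i\to K_i$ with discrete fibres and $\pi_1(W_i)\cong G_\infty$; lift the bonds to fibrations $W_{i+1}\to W_i$ (this uses the nondegenerate basepoints), observe these are homotopy equivalences since $S_i\to S_{i-1}$ is bijective, and conclude $K_\infty\simeq\lim W_i\simeq W_1$ via \corref{cor_E-H}.
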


\begin{proof}
Note that since $\pi_{k+1}(K_i,\zeta_i)=0$, $\pi_k(K_\infty,\zeta)\cong G_\infty$, by
\propref{homotopy_groups_of_inverse_limits}. By the same proposition, the space $K_\infty$
has a single nonvanishing homotopy group, since the sequence $\setof{G_i,p_i}$ has the Mittag-Leffler property.

More precisely, the Mittag-Leffler property implies that the images of $G_j\to G_i$ stabilize for large enough $j$.
By replacing the sequence $\setof{G_i}$ with an appropriate subsequence we may assume that for each $i$ the image
$S_{i-1}$ of $G_i\to G_{i-1}$ equals that of $G_j\to G_{i-1}$ for all $j\gqs i$ (including $j=\infty$).

In addition, we may assume that for each $i$, the morphism $G_\infty\to G_i$ is injective.
This implies that the morphisms $G_\infty\to S_i$ are bijective and consequently so are $S_i\to S_{i-1}$,
for all $i$.

Now we treat cases $k\gqs 2$ and $k=1$ separately.

Assume first that $k\gqs 2$. The composite $G_\infty\to G_2\to G_1$ is then an isomorphism onto $S_1$.
Hence the injection $G_\infty\to G_2$ has a left inverse, and since the groups are abelian, $G_\infty\to G_2$ splits. By
possibly neglecting the first term we may therefore assume that all injections $G_\infty\to G_i$ are split. Let $K_0$
be a CW complex of type $K(G_\infty,k)$ and let $p_1\colon K_1\to K_0$ be a map inducing a `projector' $G_1\to G_\infty$
on the homotopy group. Split $p_1$ canonically as $K_1\xrightarrow{f_1}L_1\xrightarrow{q_1}K_0$ where $f_1$ is a homotopy
equivalence and $q_1$ is a fibration. Given $f_i\colon K_i\to L_i$ split $f_ip_{i+1}\colon K_{i+1}\to L_i$ as
$K_{i+1}\xrightarrow{f_{i+1}}L_{i+1}\xrightarrow{q_{i+1}}L_i$ with $f_{i+1}$ a homotopy equivalence and $q_{i+1}$
a fibration. Let $f_\infty\colon K_\infty\to L_\infty$ denote the induced homotopy equivalence on the limit spaces.
We note that the projection $L_\infty\to L_1$ is a fibration and consequently so is the composite map $L_\infty\to K_0$.
Clearly the fibre of $L_\infty\to K_0$ over $b_0\in K_0$ is the inverse limit of the fibres $F_i$ of $L_i\to K_0$ over $b_0$.
But the induced maps $F_i\to F_{i-1}$ are nullhomotopic, and hence the limit $F_\infty$ is contractible.
Therefore $L_\infty\simeq K_0$, as claimed.

Now assume that $k=1$.
For each $i$ choose a CW complex $C_i$ having the homotopy type of $K_i$. Next choose a base point $\gamma_i\in C_i$.
Since the base points $\gamma_i\in C_i$ and $\zeta_i\in K_i$ are nondegenerate, $(K_i,\zeta_i)$ and $(C_i,\gamma_i)$
are equivalent as pairs; that is there exist pointed maps $g_i\colon K_i\to C_i$, $f_i\colon C_i\to K_i$ such that
$g_i\circ f_i$ and $f_i\circ g_i$ are homotopic to the respective identity maps via base point preserving homotopies.

Set $\Bar S_i=g_i{}_\#(S_i)$ and let $\Bar C_i\to C_i$ be a covering associated to the subgroup
$\Bar S_i\lqs\pi_1(C_i,\gamma_i)$. Pulling back this covering over $g_i\colon K_i\to C_i$ yields a fibration
$q_i\colon W_i\to K_i$ with discrete fibres, which maps $\pi_1(W_i,w_i)$ isomorphically onto $S_i$.

By construction the composite $C_{i+1}\xrightarrow{f_{i+1}}K_{i+1}\to K_i$ lifts to a map into $W_i$
(sending $\gamma_{i+1}$ to $w_i$). Precomposition with $g_{i+1}$ defines a map $K_{i+1}\to W_i$ which
lifts $K_{i+1}\to K_i$ up to homotopy. Since $W_i\to K_i$ is a fibration, a strict lifting $r_{i+1}$ can be
obtained. Since $\zeta_{i+1}$ is nondegenerate in $K_{i+1}$, we may arrange that $r_{i+1}(\zeta_{i+1})=w_i$.

Note that since the fibres of $q_i$ are discrete, the map $r_{i+1}$ is a fibration.
The limit space is thus homeomorphic to the limit space of
\[ \dots\to W_3\to W_2\to W_1 \]
which is a sequence of fibrations that are homotopy equivalences.
\end{proof}

We recall a definition due to J.~Cohen \cite{joel_cohen_2} in a slightly modified version
that is suitable for our purposes.

\begin{defn}
Let $\Phi\colon G\times E\to E$ denote a free left action of a topological group $G$ on a space $E$.
The action $\Phi$ is {\it open} if for every space $Z$ and every pair of maps
$f_1,f_2\colon Z\to E$ such that $f_1(z)$ and $f_2(z)$ lie in the same orbit for each $z$ there
exists a map $f\colon Z\to G$ so that \[ \Phi(f(z),f_1(z))=f_2(z),\,\,\,\forall z. \]
A fibration $p\colon E\to B$ is called a {\it principal open fibration}
if $E$ is a free left $G$-space with an open action such that $p(e_1)=p(e_2)$ if and only
if $e_1$ and $e_2$ lie in the same orbit. (This is to say that $p$ induces a continuous injection
$E/G\to B$ for which in general we do not require to be onto or open.)

For example, if $G$ is a closed subgroup of a topological group $H$, then the action of
$G$ on $H$ by multiplication on the left is an open action.
\end{defn}

The proofs of the following two lemmas are straightforward.
\begin{lem}\label{lem_g_action_1}
\begin{enroman}
	\item	Let $p\colon E\to B$ be a morphism of topological groups and let $G:=\ker p$.
		Let $G$ act on the left on $E$. If $p$ is a fibration then it is a principal
		open fibration.
	\item	Let $p\colon E\to B$ be a principal open fibration and let $f\colon X\to B$ be
		a map. Then the pullback fibration $f^*E\to X$ is a principal open fibration.\qed
\end{enroman}
\end{lem}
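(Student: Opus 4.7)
The plan is to verify each clause of the definition of principal open fibration directly from the group-theoretic structure, so essentially no topology beyond the given fibration hypothesis is required.

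For \textbf{(i)}, I would first check the orbit condition. Since $G=\ker p$, the action of $G$ on $E$ by left multiplication is evidently free. Given $e_1,e_2\in E$, the equality $p(e_1)=p(e_2)$ is equivalent to $e_1e_2^{-1}\in G$, which is equivalent to the existence of $g\in G$ with $e_1=ge_2$; so fibres of $p$ coincide with $G$-orbits. To check openness of the action, given $f_1,f_2\colon Z\to E$ that are fibrewise $G$-equivalent, define
\[ f\colon Z\to G,\qquad f(z)=f_2(z)f_1(z)^{-1}. \]
Continuity is clear from the fact that multiplication and inversion in $E$ are continuous, and $f(z)\in G$ since $p(f_2(z))=p(f_1(z))$. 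By construction $f(z)\cdot f_1(z)=f_2(z)$. The fibration hypothesis on $p$ is used only to ensure that the whole situation indeed gives a principal open fibration (the orbit/openness data being visible independently).

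For \textbf{(ii)}, I would model $f^*E$ as the closed subspace of $X\times E$ consisting of pairs $(x,e)$ with $f(x)=p(e)$, with the projection $q\colon f^*E\to X$. Pullbacks of fibrations are fibrations, which is standard. Define the $G$-action on $f^*E$ by $g\cdot(x,e)=(x,g\cdot e)$; this is well-defined because the $G$-orbits in $E$ lie inside fibres of $p$, and it is free because the original action is. If $q(x_1,e_1)=q(x_2,e_2)$, then $x_1=x_2$ and $p(e_1)=p(e_2)=f(x_1)$, so by the principal-open hypothesis on $p$ the points $e_1,e_2$ lie in a common $G$-orbit in $E$, hence $(x_1,e_1),(x_1,e_2)$ lie in a common $G$-orbit in $f^*E$; the converse is immediate.

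Finally, for openness of the induced action, suppose $\varphi_1,\varphi_2\colon Z\to f^*E$ are fibrewise $G$-equivalent. Writing $\varphi_i(z)=(\psi(z),e_i(z))$ with $\psi\colon Z\to X$ (the two $X$-coordinates must coincide since the points are $G$-related) and $e_i\colon Z\to E$ continuous, the maps $e_1,e_2$ land in a common $G$-orbit of $E$ pointwise. By openness of the action on $E$, there is a continuous $h\colon Z\to G$ with $h(z)\cdot e_1(z)=e_2(z)$, and then $h(z)\cdot\varphi_1(z)=\varphi_2(z)$. The argument is essentially book-keeping; I do not foresee a genuine obstacle, since no local triviality is claimed and the `openness' condition is designed precisely to transfer through pullbacks and kernels.
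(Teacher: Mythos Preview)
Your proposal is correct and matches the paper's intended approach; the paper in fact omits the proof entirely, declaring it straightforward, and your argument is exactly the direct verification one would expect. The only cosmetic point is that in (i) the fibration hypothesis is not ``used'' for anything subtle---it is simply part of the definition of principal open fibration---so your parenthetical remark there could be dropped.
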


\begin{lem}\label{lem_g_action_2}
Let the groups $G$ and $G'$ act freely on, respectively, $E$ and $E'$. Let $p\colon E\to B$ and $p'\colon E'\to B'$
be fibrations coherent with the respective actions. Suppose given a commutative diagram
\begin{equation*}\begin{diagram}
\node{E'} \arrow{s,l}{p'} \arrow{e,t}{\eta} \node{E} \arrow{s,r}{p} \\
\node{B'} \arrow{e,t}{\beta} \node{B}
\end{diagram}\end{equation*}
Assume that $p$ is a principal open fibration, and that $\eta$ is an equivariant map in the sense that
$\eta(g\cdot x)=\omega(g)\cdot\eta(x)$ for some continuous morphism $\omega\colon G'\to G$.
If $\omega$ and $\beta$ are fibrations, so is $\eta$. \qed
\end{lem}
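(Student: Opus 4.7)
The plan is to verify directly that $\eta$ satisfies the homotopy lifting property for an arbitrary test space $Z$; the idea is to assemble the lift in three stages using $\beta$, $p'$, $\omega$ in turn, correcting between stages by the free $G$-action on $E$. So suppose given $f\colon Z\to E'$ and $H\colon Z\times I\to E$ with $H|_{Z\times 0}=\eta\circ f$; I seek $\Tilde H\colon Z\times I\to E'$ extending $f$ with $\eta\circ\Tilde H=H$.

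First I would descend the homotopy to $B'$: apply HLP of $\beta$ to $p\circ H\colon Z\times I\to B$ starting from $p'\circ f$ (the compatibility $\beta p'f=p\eta f=pH|_{Z\times 0}$ is exactly the commutativity of the given square), obtaining $K\colon Z\times I\to B'$ with $\beta K=pH$ and $K|_{Z\times 0}=p'f$. Then apply HLP of $p'$ to lift $K$ through $p'$ starting from $f$, obtaining a preliminary $\Tilde H_1\colon Z\times I\to E'$ with $p'\Tilde H_1=K$ and $\Tilde H_1|_{Z\times 0}=f$. At this point $\eta\circ\Tilde H_1$ and $H$ both project via $p$ to $pH$, so for every $(z,t)$ the two points lie in a common fibre of $p$, hence in a common $G$-orbit, since $p$ is a principal open fibration. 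Openness of the $G$-action applied to the pair of maps $\eta\Tilde H_1,H\colon Z\times I\to E$ now furnishes a continuous map $g\colon Z\times I\to G$ satisfying $g(z,t)\cdot\eta\Tilde H_1(z,t)=H(z,t)$; at $t=0$ both sides equal $\eta f(z)$, so freeness of the action forces $g(\,\cdot\,,0)\equiv e_G$.

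Next I would lift the correction and assemble. Since $\omega$ is a fibration and $\omega(e_{G'})=e_G=g(\,\cdot\,,0)$, HLP of $\omega$ lifts $g$ to $\Tilde g\colon Z\times I\to G'$ with $\omega\Tilde g=g$ and $\Tilde g(\,\cdot\,,0)\equiv e_{G'}$. Define $\Tilde H(z,t)=\Tilde g(z,t)\cdot\Tilde H_1(z,t)$. Then $\Tilde H|_{Z\times 0}=e_{G'}\cdot f=f$, and equivariance of $\eta$ gives
\[ \eta\Tilde H(z,t)=\omega(\Tilde g(z,t))\cdot\eta\Tilde H_1(z,t)=g(z,t)\cdot\eta\Tilde H_1(z,t)=H(z,t), \]
so $\Tilde H$ is the required lift.

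The essential obstacle, and the only point in the argument where the special hypotheses genuinely enter, is the production of the \emph{continuous} correction $g$: the fibrewise statement that $\eta\Tilde H_1$ and $H$ agree modulo $G$ is trivial, but promoting this pointwise information to a continuous map $Z\times I\to G$ is precisely what openness of the $G$-action is designed to do; being a principal \emph{open} (as opposed to merely principal) fibration is used nowhere else. The boundary condition $g(\,\cdot\,,0)\equiv e_G$, needed in order to lift $g$ through $\omega$ starting from the constant map $e_{G'}$, comes for free from the freeness of the action combined with the initial compatibility $H|_{Z\times 0}=\eta f$; everything else is routine invocation of HLP for the three given fibrations in the indicated order.
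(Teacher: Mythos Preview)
Your argument is correct and is exactly the straightforward verification the paper has in mind: the paper states that the proof is straightforward and omits it entirely (the lemma carries a \qed\ with no proof text). The three-stage lift via $\beta$, then $p'$, then $\omega$, with the intermediate correction supplied by openness of the $G$-action on $E$, is the natural way to unwind the hypotheses, and your handling of the boundary condition $g(\,\cdot\,,0)\equiv e_G$ via freeness is just right.
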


The following proposition is due to J.~Cohen; see \cite{joel_cohen_2}, Theorem 1.1.
\begin{prop}[J.~Cohen]\label{prop_j_cohen}
Let $\setof{p_i\colon E_i\to B_i}$ be a level preserving morphism of inverse sequences
where for each $i$, the space $E_i$ is a free left $G_i$-space and $p_i\colon E_i\to B_i$ is a principal
open fibration. Assume that the maps $E_i\to E_{i-1}$ are equivariant given by homomorphisms
$\omega_i\colon G_i\to G_{i-1}$.

Let $p_\infty\colon E_\infty\to B_\infty$ denote the induced inverse limit map, and set
$G_\infty=\lim G_i$.

If the $\omega_i$ are fibrations then $p_\infty$ is a principal open fibration. \qed
\end{prop}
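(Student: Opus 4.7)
The plan is to verify the three conditions in the definition of a principal open fibration for the map $p_\infty\colon E_\infty\to B_\infty$: namely that $G_\infty=\lim G_i$ acts freely on $E_\infty$ with orbits equal to the point-preimages of $p_\infty$, that the action is open, and that $p_\infty$ has the homotopy lifting property. First I would equip $G_\infty$ with the inverse limit topology, giving a topological group, and define the action componentwise via $(g_i)\cdot(e_i)=(g_i\cdot e_i)$; this is well-defined because equivariance of the bonds gives $\eta_i(g_i\cdot e_i)=\omega_i(g_i)\cdot\eta_i(e_i)$, and continuous by the universal property of the inverse limit. Freeness is immediate from freeness of each $G_i$-action on $E_i$. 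The orbit-equals-fibre property passes to the limit componentwise, and the uniqueness of the group element witnessing $p_i(e)=p_i(e')$ guarantees coherence of those elements into an element of $G_\infty$.

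To verify openness of the $G_\infty$-action, suppose $f_1,f_2\colon Z\to E_\infty$ satisfy that $f_1(z)$ and $f_2(z)$ lie in the same $G_\infty$-orbit for every $z$. Composing with the projections to $E_i$ yields $f_{1,i},f_{2,i}\colon Z\to E_i$ with values in the same $G_i$-orbit, and openness of the $G_i$-action produces $h_i\colon Z\to G_i$ with $h_i(z)\cdot f_{1,i}(z)=f_{2,i}(z)$. Freeness of each action makes $h_i$ unique, and forces $\omega_i\circ h_i=h_{i-1}$, so the family $\setof{h_i}$ assembles into a continuous $h_\infty\colon Z\to G_\infty$ (by the universal property of $G_\infty$) with $h_\infty\cdot f_1=f_2$.

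The main step is the homotopy lifting property for $p_\infty$. Given $H\colon Z\times I\to B_\infty$ and $f\colon Z\to E_\infty$ with $p_\infty\circ f=H\vert_{Z\times 0}$, I would inductively construct coherent lifts $\tilde H_i\colon Z\times I\to E_i$ of the projected homotopies $H_i$ extending the projected initial data $f_i$. For $i=1$ use HLP of $p_1$. For the inductive step, pick any lift $\tilde H_{i+1}'$ of $H_{i+1}$ extending $f_{i+1}$ via HLP of $p_{i+1}$. Its projection to $E_i$ and the previously chosen $\tilde H_i$ are two lifts of $H_i$ that agree on $Z\times 0$, so by the principal fibration property there is a unique $g_i\colon Z\times I\to G_i$ with $g_i(z,t)\cdot\tilde H_i(z,t)=(\eta_{i+1}\circ\tilde H_{i+1}')(z,t)$ and $g_i(\cdot,0)\equiv e$. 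Now use that $\omega_{i+1}$ is a fibration to lift $g_i$ to $g_{i+1}\colon Z\times I\to G_{i+1}$ starting from the constant map at $e\in G_{i+1}$, and set $\tilde H_{i+1}:=g_{i+1}^{-1}\cdot\tilde H_{i+1}'$. By construction $\tilde H_{i+1}$ still lifts $H_{i+1}$ and extends $f_{i+1}$, and its $E_i$-projection is now exactly $\tilde H_i$. The coherent family $\setof{\tilde H_i}$ assembles into the required lift $\tilde H\colon Z\times I\to E_\infty$.

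The main obstacle is the coherence required for the family $\setof{\tilde H_i}$ to assemble into a map into $E_\infty$; the raw lifts produced by HLP of $p_{i+1}$ need not be compatible with those at level $i$, and the essential use of the hypothesis that $\omega_{i+1}$ is a fibration is precisely to lift the discrepancy $g_i$ between these two choices to the group one level up. Without this lifting one retains level-wise homotopy lifts but no way to reconcile them, illustrating why the fibration condition on the bonds of $\setof{G_i}$ is not merely technical.
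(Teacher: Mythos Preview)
The paper does not supply its own proof of this proposition: it is stated as a result of J.~Cohen, with a reference to \cite{joel_cohen_2}, Theorem~1.1, and closed with a \qed. So there is no in-paper argument to compare against.

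Your argument is correct and is the natural one. The verification of freeness, of the orbit--fibre correspondence, and of openness of the $G_\infty$-action all pass to the limit exactly as you describe; the key point is that freeness forces uniqueness of the witnessing group element at each level, so the level-wise maps $h_i$ produced by openness are automatically coherent. Your treatment of the homotopy lifting property is also right: the discrepancy $g_i$ between $\tilde H_i$ and $\eta_{i+1}\circ\tilde H_{i+1}'$ is continuous precisely because the $G_i$-action on $E_i$ is open (both maps cover $H_i$, hence take values in the same $G_i$-orbit pointwise), it satisfies $g_i(\cdot,0)\equiv e$, and the hypothesis that $\omega_{i+1}$ is a fibration is exactly what allows you to lift $g_i$ to $g_{i+1}\colon Z\times I\to G_{i+1}$ with $g_{i+1}(\cdot,0)\equiv e$. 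The corrected lift $\tilde H_{i+1}=g_{i+1}^{-1}\cdot\tilde H_{i+1}'$ still covers $H_{i+1}$ (since $p_{i+1}$ is $G_{i+1}$-invariant), still extends $f_{i+1}$, and now projects to $\tilde H_i$. This is the standard proof, and presumably Cohen's.
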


\begin{prop}\label{whitehead_tower_sequence}
Let $\dots\to Z_i\xrightarrow{p_i}Z_{i-1}\to\dots\to Z_1$ be an inverse sequence of fibrations with a coherent set
of base points $\setof{\zeta_i}$. Assume that the $Z_i$ are $(k-1)$-connected spaces of CW type where $k\gqs 1$ and denote
$G_i=\pi_k(Z_i,\zeta_i)$. Further let $G_\infty$ denote the limit group of the induced sequence
$\dots\to G_i\to G_{i-1}\to\dots\to G_1$.

If $k=1$, assume also that $\zeta_i$ is a nondegenerate base point of $Z_i$ for each $i$.

There exists a morphism of towers
\begin{equation*}\begin{diagram}
\node{\dots} \arrow{e} \node{\Bar Z_3} \arrow{e} \arrow{s} \node{\Bar Z_2} \arrow{e} \arrow{s} \node{\Bar Z_1} \arrow{s} \\
\node{\dots} \arrow{e} \node{Z_3} \arrow{e} \node{Z_2} \arrow{e} \node{Z_1} \\
\end{diagram}\end{equation*}
where all arrows are fibrations and the arrows $\Bar Z_i\to Z_i$ are $k$-connected covers. The limit map
$\Bar Z_\infty\to Z_\infty$ fits into a pullback square
\begin{equation*}\begin{diagram}
\node{\Bar Z_\infty} \arrow{s} \arrow{e} \node{P_\infty} \arrow{s} \\
\node{Z_\infty} \arrow{e} \node{K_\infty}
\end{diagram}\end{equation*}
where $P_\infty$ is contractible and the vertical arrows are always fibrations for the class of compactly generated spaces.

If the sequence $\setof{G_i}$ satisfies the Mittag-Leffler condition and the canonical projections
$G_\infty\to G_i$ are injective, then the space $K_\infty$  has CW homotopy type. Hence $\kkkk(Z_\infty)$
has CW homotopy type if and only if $\kkkk(\Bar Z_\infty)$ has.

If $k=1$ or the groups $G_i$ are countable, then the arrows $\Bar Z_\infty\to Z_\infty$ and
$P_\infty\to K_\infty$ are fibrations and the space $Z_\infty$ has CW homotopy type if and only if $\Bar Z_\infty$ has.
\end{prop}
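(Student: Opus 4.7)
The plan is to construct, inductively in $i$, a coherent tower of Hurewicz fibrations $\phi_i\colon Z_i\to K_i$ with each $K_i$ a model for $K(G_i,k)$ and each $\phi_i$ inducing an isomorphism on $\pi_k$, and then to define $\Bar Z_i$ as the pullback of the path fibration $PK_i\to K_i$ along $\phi_i$. This realises $\Bar Z_i$ as the $k$-connected cover of $Z_i$, and the rest of the statement will follow by manipulating inverse limits of the resulting square of towers.

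For the inductive construction, I start with $\phi_1\colon Z_1\to K_1$ given by the first stage of a Postnikov tower for $Z_1$, replaced if necessary by a Hurewicz fibration via the mapping path construction. Given $\phi_{i-1}$, I form the $k$-th Postnikov section of the composite $Z_i\to Z_{i-1}\to K_{i-1}$; this produces a map $K(G_i,k)\to K_{i-1}$ realising $G_i\to G_{i-1}$, which I replace by a Hurewicz fibration $K_i\to K_{i-1}$. A further factorisation of $Z_i\to K_i$ yields a fibration $\phi_i$; strict commutativity of the ladder is preserved throughout by the functoriality of the mapping path construction. When $k=1$, the nondegeneracy of $\zeta_i$ lets me take $K_i=BG_i$ with bonding maps that are morphisms of topological groups.

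Define $\Bar Z_i=Z_i\times_{K_i}PK_i$. The four inverse systems assemble in the limit into the claimed pullback square with $P_\infty=\lim PK_i$, and $P_\infty$ is contractible by \propref{contractible_limit}, since each bond $PK_{i+1}\to PK_i$ is nullhomotopic (target contractible). Under the injective Mittag-Leffler hypothesis on $\setof{G_i}$ --- with nondegenerate base points when $k=1$ --- \propref{E-M_sequence} gives $K_\infty\simeq K(G_\infty,k)$, hence $K_\infty$ has CW type. That the vertical arrows of the pullback square are fibrations for compactly generated test spaces follows from \propref{prop_j_cohen}: each $PK_i\to K_i$ is a principal open fibration (via \lemref{lem_g_action_1}, viewing $K_i$ as a topological group with structure group $\Omega K_i$), and looping the fibration $K_i\to K_{i-1}$ gives a fibration $\Omega K_i\to\Omega K_{i-1}$, so $P_\infty\to K_\infty$ is principal open and its pullback $\Bar Z_\infty\to Z_\infty$ inherits the property.

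For the concluding CW-type equivalence, the coherent fibrations $\phi_i$ assemble into a map $\phi_\infty\colon Z_\infty\to K_\infty$ that is itself a fibration on compactly generated test spaces by an argument parallel to \propref{transfinite_fibration}, and its homotopy fibre is precisely $\Bar Z_\infty$. Stasheff's theorem then yields that $\kkkk(Z_\infty)$ has CW type iff $\kkkk(\Bar Z_\infty)$ does. In the last case ($k=1$ or countable $G_i$), the discrete or countable structure of the loop-group fibres permits standard sequential lifting arguments to upgrade the fibration property from compactly generated test spaces to arbitrary ones, eliminating the need for $\kkkk$. The chief technical difficulty is maintaining strict commutativity while iteratively replacing maps by fibrations, and then verifying the delicate fibration statements at the limit level, where the failure of compact generation concentrates the subtleties.
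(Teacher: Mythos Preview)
Your overall architecture matches the paper's: build a coherent tower $Z_i\to K_i$ with $K_i\simeq K(G_i,k)$, pull back the path fibration to get $\Bar Z_i$, pass to limits, and finish with \propref{E-M_sequence} and Stasheff's theorem. The references to \propref{contractible_limit} and \propref{prop_j_cohen} are apt.

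There is, however, a genuine gap in how you construct the $K_i$'s. The hypothesis of \propref{prop_j_cohen} and \lemref{lem_g_action_1} is that each $P_i\to K_i$ is a \emph{principal open fibration}: a strict topological group $G$ must act freely with the ``open'' property. Your $K_i$'s are obtained from Postnikov sections and mapping-path factorisations, and your phrase ``viewing $K_i$ as a topological group with structure group $\Omega K_i$'' does not make sense---$\Omega K_i$ is only an $H$-group under concatenation, and the action on $PK_i$ is not a strict free group action. The paper handles this by constructing the tower $\dots\to K(G_i,k)\to K(G_{i-1},k)$ \emph{first and independently of the $Z_i$}, using infinite symmetric products or realisations of simplicial groups, so that each $K(G_i,k)$ is a genuine topological abelian group (quasitopological when $G_i$ is uncountable; this is exactly where Appendix~\ref{quasi_groups} enters), and the bonds are group morphisms. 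Then $P_i\to K(G_i,k)$ is the kernel fibration of a group morphism, hence principal open by \lemref{lem_g_action_1}(i), and \lemref{lem_g_action_2} gives that $\Bar Z_i\to\Bar Z_{i-1}$ is a fibration. Only afterwards are the maps $Z_i\to K(G_i,k)$ produced and straightened into a strict ladder via homotopy lifting. Your inductive Postnikov-section construction does not supply this algebraic structure, so the appeal to \propref{prop_j_cohen} is unjustified.

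For $k=1$ the paper takes a different route altogether: rather than $BG_i$ (which is not a topological group for nonabelian $G_i$), it pulls back a universal cover along a homotopy equivalence $K_i\to K(G_i,1)$ to get $P_i\to K_i$ with \emph{discrete} fibres. A map with discrete fibres whose composite with a fibration is a fibration is itself a fibration, and this discreteness is what makes the limit maps $P_\infty\to K_\infty$ and $\Bar Z_\infty\to Z_\infty$ genuine Hurewicz fibrations without any compact-generation proviso.
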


\begin{proof}
Assume that $k\gqs 2$. First inductively construct an inverse sequence
\[ \dots\to K(G_i,k)\xrightarrow{\alpha_i} K(G_{i-1},k)\to\dots\to K(G_1,k) \]
of fibration homomorphisms between topological abelian groups which on the nontrivial
homotopy group agree with the induced maps $\pi_k(Z_i,\zeta_i)\to\pi_k(Z_{i-1},\zeta_{i-1})$.
This can be done as follows. Using infinite symmetric products (see Dold and Thom \cite{dold_thom}) or geometric realization of
simplicial groups (see Milnor \cite{milnor3}) it is possible to construct an inverse sequence $\dots\to A_3\to A_2\to A_1$ with
$A_i\to A_{i-1}$ a morphism of topological abelian groups for all $i$. Since the space of free
paths on a topological group is canonically equipped with a compatible topological group structure,
the factorization of $A_2\to A_1$ as a composite of homotopy equivalence $A_2\to A_2'$ followed by fibration
$A_2'\to A_1$ is one of morphisms of topological abelian groups. Hence $A_3\to(A_2\to A_2')$ is a morphism of
topological groups and we may proceed inductively.

In case the groups $G_i$ are not countable, the above construction yields an inverse sequence of
morphism-fibrations of quasitopological groups (see Appendix \ref{quasi_groups}).

By applying the functorial contractible path-space `covering' construct a commutative ladder
\begin{equation*}\begin{diagram}
\node{\dots} \arrow{e} \node{P_3} \arrow{e} \arrow{s} \node{P_2} \arrow{e} \arrow{s} \node{P_1} \arrow{s} \\
\node{\dots} \arrow{e} \node{K(G_3,k)} \arrow{e} \node{K(G_2,k)} \arrow{e} \node{K(G_1,k)}
\end{diagram}\end{equation*}
where all arrows are both fibrations and morphisms of topological abelian groups.

Pick maps $Z_i\to K(G_i,k)$ corresponding to an isomorphism on the homotopy group $\pi_k$.
Using homotopy lifting property to change $Z_i\to K(G_i,k)$ for homotopic maps inductively construct a commutative ladder
\begin{equation*}\begin{diagram}
\node{\dots} \arrow{e} \node{Z_3} \arrow{e} \arrow{s} \node{Z_2} \arrow{e} \arrow{s} \node{Z_1} \arrow{s} \\
\node{\dots} \arrow{e} \node{K(G_3,k)} \arrow{e} \node{K(G_2,k)} \arrow{e} \node{K(G_1,k)}
\end{diagram}\end{equation*}


Finally form an inverse sequence of pullback squares
\begin{equation*}\begin{diagram}
\divide\dgARROWLENGTH by4
\node[3]{P_\infty} \arrow[4]{e} \arrow{s,-} \node[4]{\dots} \arrow[4]{e} \arrow{s,-} \node[4]{P_i} \arrow[4]{e} \arrow{s,-} \node[4]{P_{i-1}} \arrow[3]{s} \\
\node{\Bar Z_\infty} \arrow{nee} \arrow[4]{e} \arrow[3]{s} \node[2]{}\arrow[2]{s} \node[2]{\dots} \arrow{nee} \arrow[4]{e} \arrow[3]{s} \node[2]{}\arrow[2]{s} \node[2]{\Bar Z_i} \arrow{nee} \arrow[4]{e} \arrow[3]{s} \node[2]{}\arrow[2]{s} \node[2]{\Bar Z_{i-1}} \arrow{nee} \arrow[3]{s} \\ \\
\node[3]{K_\infty} \arrow[2]{e,-} \node[2]{} \arrow[2]{e} \node[2]{\dots} \arrow[2]{e,-} \node[2]{} \arrow[2]{e} \node[2]{K(G_i,k)} \arrow[2]{e,-} \node[2]{} \arrow[2]{e} \node[2]{K(G_{i-1},k)} \\
\node{Z_\infty} \arrow{nee} \arrow[4]{e} \node[4]{\dots} \arrow{nee} \arrow[4]{e} \node[4]{Z_i} \arrow{nee} \arrow[4]{e} \node[4]{Z_{i-1}} \arrow{nee}
\end{diagram}\end{equation*}

Since $P_i\to K(G_i,k)$ is a fibration morphism, it is a principal open fibration. Hence so is the pullback map
$\Bar Z_i\to Z_i$ by \lemref{lem_g_action_1}. Further note that the restriction of $P_i\to P_{i-1}$ to the fibre
is trivially a fibration morphism. Hence the induced equivariant map $\Bar Z_i\to\Bar Z_{i-1}$ is also a fibration
by \lemref{lem_g_action_2}. The morphism $P_\infty\to K_\infty$ is the limit of principal open fibrations, and
is itself such by \propref{prop_j_cohen}. Moreover, the square
\begin{equation*}\begin{diagram}
\node{\Bar Z_\infty} \arrow{s} \arrow{e} \node{P_\infty} \arrow{s} \\
\node{Z_\infty} \arrow{e} \node{K_\infty}
\end{diagram}\end{equation*}
is a pullback by construction. Hence $\Bar Z_\infty\to Z_\infty$ is a principal open fibration by \lemref{lem_g_action_1}.

In case the groups $G_i$ are not countable, the actions are only compactly open (see \propref{compactly_open}), and
the limit maps under consideration have the homotopy lifting property with respect to compactly generated spaces
(see \corref{quasi_j_cohen}).

If the inverse sequence of groups $\setof{G_i}$ satisfies the hypotheses of the proposition,
$K_\infty$ has the homotopy type of a $K(G_\infty,k)$ by \propref{E-M_sequence}. In particular,
it has CW homotopy type.

Hence $Z_\infty$ has CW homotopy type if and only if $\Bar Z_\infty$ has. 

If the groups $G_i$ are not countable then after $\kkkk$-ification, the square remains a pullback,
and the vertical arrows are fibrations in the category of compactly generated Hausdorff spaces.
Stasheff's theorem remains valid in that category (see for example P.~Kahn \cite{kahn}), hence
in this case $\kkkk(Z_\infty)$ has CW type if and only if $\kkkk(\Bar Z_\infty)$ has.

In case $k=1$, first inductively construct a based inverse sequence of fibrations
\[ \dots\to(K_i,\kappa_i)\xrightarrow{\alpha_i}(K_{i-1},\kappa_{i-1})\to\dots\to(K_1,\kappa_1) \]
where $\kappa_i$ is nondegenerate in $K_i$, the pair $(K_i,\kappa_i)$ has the type of a $(K(G_i,1),*)$,
and $\alpha_i{}_\#\colon\pi_1(K_i,\kappa_i)\to\pi_1(K_{i-1},\kappa_{i-1})$ realizes the morphism $G_i\to G_{i-1}$.

For each $i$, let $q_i\colon P_i\to K_i$ denote the fibration with discrete fibre, obtained by pulling back
a universal covering $E_i\to K(G_i,1)$ over the homotopy equivalence $K_i\to K(G_i,1)$. Since the $P_i$ are
contractible, any map $P_i\to P_{i-1}$ is a lifting of $K_i\to K_{i-1}$ up to homotopy, and since $P_i\to K_i$
is a fibration, a strict lifting may be obtained. The composite $P_i\to P_{i-1}\to K_{i-1}$ is a fibration
since it equals $P_i\to K_i\to K_{i-1}$ and since $P_{i-1}\to K_{i-1}$ has discrete fibres, also $P_i\to P_{i-1}$
is a fibration.

We use well-pointedness of the $Z_i$ to inductively construct a level-preserving map of inverse sequences
$\setof{Z_i}$ to $\setof{K_i}$. Form $\Bar Z_i$ as the pullback of $Z_i\to K_i\leftarrow P_i$ as before.
Since $P_i\to K_i$ and $\Bar Z_i\to Z_i$ are fibrations with discrete fibres, the limit maps $P_\infty\to K_\infty$
and $\Bar Z_\infty\to Z_\infty$ are always fibrations.
\end{proof}


\begin{proof}[Proof of \thmref{postnikov_sequence}] 
Statement \navedi{1} is clear by \thmref{u_M-L_general}.

As for \navedi{2}, let $C_\infty$ be the limit space of $\setof{C_i}$. Since $\setof{\pi_1(C_i,\eta_i)=\pi_1(Y_i,\eta_i)}$
satisfies the Mittag-Leffler property, $C_\infty$ is path-connected by \propref{homotopy_groups_of_inverse_limits}.
Evidently $C\subset C_\infty$ and thus $C=C_\infty$ since $C$ is a path component. By inductive application of
\propref{whitehead_tower_sequence} it follows that $\kkkk(C)$ has CW type if and only if $\kkkk(Z)$ has where
$Z$ is the limit space of $N$-connected covers of $C_i$. By \propref{contractible_limit} the space $Z$ and consequently
$\kkkk(Z)$ are contractible. Hence $\kkkk(C)$ has CW type.
\end{proof}


\chapter{CW homotopy type of function spaces} \label{function_spaces}
\setcounter{thm}{0}

In this chapter we discuss various properties of function spaces of CW homotopy type.
We give some necessary conditions, and some sufficient conditions.

First we show that certain spaces of continuous functions between CW complexes fit into the framework of Chapter
\ref{slc_cw_type}. In particular, $n$-ad function spaces $(Y;Y_1,Y_2,\dots,Y_n)^{(X;A_1,A_2,\dots,A_n)}$ are suitable
for study.

Then we give our main `topological' characterizations of CW type function spaces.
For CW complexes $X$ and $Y$, the function space $Y^X$ has CW type if and only if $Y^X$ admits a numerable covering
of open sets contractible within $Y^X$. In particular, if $X$ is countable then $Y^X$ has CW type if and only if it
is semilocally contractible. Hence function spaces between CW complexes carry so much structure that local contractibility
properties are sufficient.

Next, $Y^X$ has CW type if and only if $X$ is essentially countable with respect to $Y$. More precisely,
if $Y^X$ has CW type then for every $g\colon X\to Y$ and every countable subcomplex $L$ of $Y$ there exists
a bigger countable subcomplex $L'$ such that the fibre over $g\vert_{L'}$ of the fibration $Y^X\to Y^{L'}$ is
contractible and consequently $\Omega(Y^X,g)\to\Omega(Y^{L'},g\vert_{L'})$ is a homotopy equivalence.

Conversely, if for each countable subcomplex $L$ there exists a bigger countable subcomplex $L'$ such that
$Y^X\to Y^{L'}$ is a weak equivalence onto image of CW type, then $Y^X$ has CW type.

The results of Section \ref{inverse_systems} (see also Section \ref{phantom_components}) impose severe
restrictions on homotopy groups $\pi_k(Y^X,g)$, where $k\gqs 1$, for $Y^X$ of CW type. Below we show that
there are also restrictions on the set of path components. In particular, path components must be small
in number, and there can be no phantom path components.

For $Y$ a CW complex with finitely many nontrivial homotopy groups, the question of CW type of $Y^X$
depends only on the induced morphisms $\pi_k(Y^M)\to\pi_k(Y^L)$ for $L\lqs M\lqs X$, and $k\gqs 0$,
and we give necessary and sufficient conditions. 

At the end of this chapter we present three types of `constructions' of CW type function spaces,
which we apply in later chapters.


\section{Function spaces as inverse limits}\label{restriction}

Let $(X;A_1,\dots,A_n)$ be a CW $n$-ad and let $(Y;B_1,\dots,B_n)$ have the homotopy type of a CW $n$-ad.
Our general aim is to study the homotopy type of $n$-ad function spaces $(Y;B_1,\dots,B_n)^{(X;A_1,\dots,A_n)}$.

Let $\SSS_X$ be the set of all subcomplexes of $X$, and let $\AAA_X$ be the direct sum $\oplus_{i=1}^\infty\SSS_X$
whose elements are sequences $\AA=(A_1,A_2,A_3,\dots)$ of subcomplexes of $X$ with $A_j=\emptyset$ for all
but finitely many $j$. Let $\BBB_Y$ denote the subset of $\oplus_{i=1}^\infty 2^Y$ consisting of those
sequences $\BB=(B_1,B_2,\dots)$ of subsets of $Y$ where $(Y;B_1,\dots,B_j)$ has the homotopy type of a CW $j$-ad
for each $j$.

Let $K$ be a compact CW complex and let $\SSS_K$ be the set of subcomplexes of $K$. Then $X\times K$ is a CW complex and 
the cartesian product defines a function $\SSS_X\times\SSS_K\to\SSS_{X\times K}$.

We define obvious operations $\AAA_X\times\SSS_X\xrightarrow{\cap}\AAA_X$,
$\AAA_X\times\SSS_K\xrightarrow{\times}\AAA_{X\times K}$, and
$\AAA_X\times\AAA_X\xrightarrow{\cup}\AAA_X$.

Let $T$ be a subcomplex of $X$ and let $\gamma\colon T\to Y$ be a map. 
For $\AA\in\AAA_X$ and $\BB\in\BBB_Y$ let $(Y;\BB)^{(X;\AA)}$ denote the `$n$-ad' function space,
and $(Y;\BB)^{(X;\AA)}[T,\gamma]$ its subspace
\[ \setof{f\colon X\to Y\,\vert\,f(A_i)\subset B_i\,\,\forall i,\,\,\,f\vert_T=\gamma}. \]

\begin{lem}\label{restriction_map}
Let $L$ be a subcomplex of $X$. The restriction map
\[ (Y;\BB)^{(X;\AA)}[T,\gamma]\to(Y;\BB)^{(L;\AA\cap L)}[T\cap L,\gamma\vert_{T\cap L}] \]
is a fibration.
\end{lem}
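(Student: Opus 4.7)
The plan is to verify the homotopy lifting property directly via exponential correspondence, and to reduce everything to a single cellular extension problem on $X \times I$.

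Given a test space $Z$, a map $h\colon Z \to (Y;\BB)^{(X;\AA)}[T,\gamma]$, and a homotopy $H\colon Z\times I \to (Y;\BB)^{(L;\AA\cap L)}[T\cap L,\gamma\vert_{T\cap L}]$ with $H(\cdot,0) = \text{restriction}\circ h$, the adjoint data are a map $\hat h\colon Z\times X\to Y$ and a homotopy $\hat H\colon Z\times I\times L\to Y$, agreeing on $Z\times\{0\}\times L$. I would first splice these together with $\gamma$ to define
\[ F\colon Z\times\bigl(X\times\{0\}\cup(T\cup L)\times I\bigr)\to Y \]
by $F(z,x,0)=\hat h(z,x)$, $F(z,\ell,t)=\hat H(z,t,\ell)$ for $\ell\in L$, and $F(z,\tau,t)=\gamma(\tau)$ for $\tau\in T$. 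The three pieces agree on the pairwise overlaps $T\cap L\times I$ and $T\times\{0\}$ precisely because $h(z)\vert_T=\gamma$ and $H(z,t)\vert_{T\cap L}=\gamma\vert_{T\cap L}$, so $F$ is continuous. A lift $\tilde H$ of $H$ extending $h$ corresponds exactly to an extension $\tilde F\colon Z\times X\times I\to Y$ of $F$ that continues to satisfy the conditions $\tilde F(z,A_i\times I)\subset B_i$ for every $i$.

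The main tool is the following retraction, which I would construct cell-by-cell over $X\setminus(T\cup L)$ using the standard retraction $D^n\times I\to D^n\times\{0\}\cup S^{n-1}\times I$: there exists
\[ r\colon X\times I\to X\times\{0\}\cup(T\cup L)\times I \]
with the property that for every subcomplex $A\lqs X$ one has $r(A\times I)\subset A\times\{0\}\cup(A\cap(T\cup L))\times I$. (This uses that $(X,T\cup L)$ is a CW pair and that each characteristic map of a cell of $X\setminus(T\cup L)$ carries the standard disk retraction to one supported in the closure of that cell.) Define $\tilde F=F\circ(\id_Z\times r)$.

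To finish I would check the three constraints. $(a)$ $\tilde F$ extends $F$ since $r$ is a retraction. $(b)$ For $x\in T$ the pair $(x,t)$ is already in the target of $r$, so $r(x,t)=(x,t)$ and $\tilde F(z,x,t)=\gamma(x)$. $(c)$ For $x\in A_i$ the image $r(x,t)$ lies in $A_i\times\{0\}\cup(A_i\cap L)\times I\cup(A_i\cap T)\times I$; in each of the three cases the defining formula for $F$ lands in $B_i$, using respectively that $h(z)$ maps $A_i$ to $B_i$, that $H(z,t)$ maps $A_i\cap L$ to $B_i$, and the compatibility $\gamma(A_i\cap T)\subset B_i$ (which is automatic on path components where the fibre is nonempty, and trivial otherwise). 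Taking adjoints gives the desired lift $\tilde H$.

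The only real obstacle is the subcomplex-respecting retraction $r$; the verification of the three constraints is then formal. I expect the retraction to be built by an inductive cellular extension, using at each stage the cofibration $(D^n\times I,\,D^n\times\{0\}\cup S^{n-1}\times I)$ composed with the already-defined retraction on the attaching sphere, so that the image of each cell's closure is controlled within that closure.
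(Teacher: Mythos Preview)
Your approach is essentially the paper's: both rest on a retraction $r\colon X\times I\to X\times\{0\}\cup M\times I$ (with $M=T\cup L$) that carries every $A_i\times I$ into $A_i\times\{0\}\cup(A_i\cap M)\times I$, and both produce the lift by precomposing with $r$. The paper streamlines by first absorbing $T$ into $L$ (observing that the $[T,\gamma]$-constrained space is the preimage of a point under the unconstrained restriction $(Y;\BB)^{(X;\AA)}\to(Y;\BB)^{(L\cup T;\AA\cap(L\cup T))}$, so it suffices to show the latter is a fibration), but that is only cosmetic relative to your direct handling of $T$.

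There is, however, one genuine subtlety you gloss over. You write ``the adjoint data are a map $\hat h\colon Z\times X\to Y$'' and then assert that $F$ is continuous on $Z\times\bigl(X\times\{0\}\cup(T\cup L)\times I\bigr)$. For an infinite CW complex $X$ (typically not locally compact) the evaluation $Y^X\times X\to Y$ is \emph{not} continuous, so neither $\hat h$ nor $F$ need be continuous on those full product spaces. The paper sidesteps this entirely by constructing a \emph{lifting function}: it identifies $\bigl[(Y;\BB)^{(X;\AA)}\bigr]^I\cong (Y;\BB)^{(X\times I;\AA\times I)}$ using the exponential law only over the compact factor $I$, identifies the pullback $\overline{E}$ with $(Y;\BB)^{(X\times 0\cup L\times I;\ldots)}$, and lets precomposition with $\rho$ furnish $\lambda\colon\overline{E}\to E^I$. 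No test space $Z$ ever enters. Your argument is salvageable, since $F$ \emph{is} continuous on $Z\times K$ for every compact $K$ (the restriction $Y^X\to Y^K$ followed by evaluation is continuous for compact $K$), and that is exactly what is needed for continuity of the adjoint $\tilde H\colon Z\times I\to Y^X$; but as written the blanket continuity claim for $\hat h$ and $F$ is false, and you should either argue compact-by-compact or recast the proof as a lifting function in the paper's style.
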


\begin{proof}
We may assume $T\lqs L$, since
$(Y;\BB)^{(L;\AA\cap L)}[T\cap L,\gamma\vert_{T\cap L}]$ is homeomorphic to
$(Y;\BB)^{(L';\AA\cap L')}[T,\gamma]$ where $L'=L\cup T$. Then $(Y;\BB)^{(X;\AA)}[T,\gamma]$
is the preimage of $(Y;\BB)^{(L;\AA\cap L)}[T,\gamma]$
under the restriction map $(Y;\BB)^{(X;\AA)}\to(Y;\BB)^{(L;\AA\cap L)}$.
Thus it suffices to show that the latter is a fibration.

For a compactly generated space $M$ and a compactum $C$ the product $M\times C$ is compactly generated
and $(Y^M)^C$ is homeomorphic to $Y^{M\times C}$. In particular, this holds for a CW complex $M$. Thus
$[(Y;\BB)^{(X;\AA)})]^I$ is homeomorphic to $(Y;\BB)^{(X\times I,\AA\times I)}$ and
the pullback of $(Y;\BB)^{(X;\AA)}\to(Y;\BB)^{(L;\AA\cap L)}\leftarrow[(Y;\BB)^{(L;\AA\cap L)}]^I$
is homeomorphic to $(Y;\BB)^{(X\times 0\cup L\times I;\AA\times 0\cup(\AA\cap L)\times I)}$.

Let $\rho\colon X\times I\to X\times 0\cup L\times I$ be a retraction such that
$\rho(A_i\times I)\subset A_i\times 0\cup (A_i\cap L)\times I$ for all $i$.
Composition with $\rho$ defines a lifting function
\[ \lambda\colon (Y;\BB)^{(X\times 0\cup L\times I;\AA\times 0\cup(\AA\cap L)\times I)}\to(Y;\BB)^{(X\times I,\AA\times I)} \]
for $(Y;\BB)^{(X;\AA)}\to(Y;\BB)^{(L;\AA\cap L)}$.
\end{proof}

\begin{rem_n}
Note that \lemref{restriction_map} remains valid if we assume that the entries $A_i$ of $\AA$ are closed subsets
of a space $X$ such that all possible intersections $\cap_i A_i$ are cofibered subsets of $X$.
\end{rem_n}

If $(Y;\BB)^{(X;\AA)}[T,\gamma]$ is non-empty then it contains a map $\Gamma\colon X\to Y$.
Therefore it makes sense to define
\[ \FFF(X,Y)=\setof{(Y;\BB)^{(X;\AA)}[T,\Gamma\vert_T]\,\vert\,\AA\in\AAA_X,\,\,\BB\in\BBB_Y,\,\,T\in\SSS_X,\,\,
	\Gamma\in(Y;\BB)^{(X;\AA)}}. \]
For $Z=(Y;\BB)^{(X;\AA)}[T,\Gamma\vert_T]\in\FFF(X,Y)$ define
\[ Z(L)=(Y;\BB)^{(L;\AA\cap L)}[T\cap L,\Gamma\vert_{T\cap L}]. \]

\begin{lem}\label{second_lemma}
Let $Z\in\FFF(X,Y)$.
\begin{enroman}
\item	For arbitrary subcomplexes $L\lqs L'\lqs X$, the restriction map $Z(L')\to Z(L)$ is a fibration.
	The fibre of $Z\to Z(L)$ is an element of $\FFF(X,Y)$.
\item	Let $M\lqs X$ and let $\LLL$ be an exhaustive directed system of proper subcomplexes of $M$.
	Then $Z(M)$, together with restriction maps $Z(M)\to Z(L)$, is the limit of the inverse system
	$\setof{Z(L)\,\vert\,L\in\LLL}$.
\item	For $K$ a finite subcomplex of $X$, the space $Z(K)$ has CW homotopy type.
\end{enroman}
\end{lem}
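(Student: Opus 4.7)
The plan is to prove the three parts in order, with each depending on different ingredients: part (i) reduces directly to \lemref{restriction_map} together with a gluing observation; part (ii) is a topological identification that uses the CW structure of the domain; and part (iii) combines Milnor's theorem with Stasheff's theorem via the fibration from part (i).

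For (i), I would apply \lemref{restriction_map} with $L'$ in the role of the ambient complex $X$, with the base constraint given by $T \cap L'$ and $\Gamma\vert_{T \cap L'}$, and with the subcomplex $L \lqs L'$; since $T \cap L' \cap L = T \cap L$ and $\AA \cap L' \cap L = \AA \cap L$, the target of the resulting restriction fibration is exactly $Z(L)$. For the fibre claim, pick $\gamma \in Z(L)$. By the very definition of $Z(L)$ one has $\gamma\vert_{T \cap L} = \Gamma\vert_{T \cap L}$, so $\Gamma\vert_T$ and $\gamma$ glue to a single continuous map $\widetilde\Gamma \colon T \cup L \to Y$ on the subcomplex $T \cup L \lqs X$. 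The fibre over $\gamma$ is then $(Y;\BB)^{(X;\AA)}[T \cup L, \widetilde\Gamma]$, which is an element of $\FFF(X,Y)$ witnessed by any $f$ in the fibre (if nonempty).

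For (ii), two facts suffice: first, because $\LLL$ is exhaustive, $M = \cup_{L \in \LLL} L$ carries the weak topology determined by $\LLL$, and because every compact subset of a CW complex lies in a finite subcomplex, directedness of $\LLL$ implies that every compact $C \subset M$ lies in some $L \in \LLL$; second, the $n$-ad constraints, the base condition on $T \cap M$, and continuity are all detected cellwise. Together these yield a continuous bijection $Z(M) \to \lim_{L \in \LLL} Z(L)$. The bijection is in fact a homeomorphism: a standard subbasic open set $G(C,V) \cap Z(M)$ with $C \subset M$ compact is the preimage of $G(C,V) \cap Z(L)$ under the restriction $Z(M) \to Z(L)$ for any $L \in \LLL$ containing $C$, and these preimages generate the limit topology.

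For (iii), when $K$ is a finite subcomplex, \thmref{milnor_theorem} implies that both $(Y;\BB)^{(K;\AA \cap K)}$ and $(Y;\BB)^{(T \cap K;\AA \cap T \cap K)}$ have CW homotopy type (as $n$-ad function spaces out of compact CW pairs); by \lemref{restriction_map} the restriction between them is a fibration, and $Z(K)$ is the fibre over $\Gamma\vert_{T \cap K}$, so \thmref{stasheff_theorem} delivers CW homotopy type for $Z(K)$.

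The step I expect to require the most care is the homeomorphism in (ii), since the CW complex $M$ is typically not locally compact and the usual exponential-law shortcuts are therefore unavailable. As the introduction emphasizes (citing Maunder), one sidesteps this by working directly with the definition of the compact-open topology via subbasic sets $G(C,V)$: the observation that every such compact $C$ is absorbed by some $L \in \LLL$ is precisely what matches the compact-open topology on $Z(M)$ with the initial topology induced by the restriction maps $Z(M) \to Z(L)$.
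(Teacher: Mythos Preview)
Your proposal is correct and follows essentially the same approach as the paper: reduce (i) to \lemref{restriction_map}, argue (ii) via the fact that directedness forces every compact subset of $M$ into some $L\in\LLL$ together with the weak topology on $M$, and deduce (iii) by exhibiting $Z(K)$ as a fibre of the restriction fibration $(Y;\BB)^{(K;\AA\cap K)}\to(Y;\BB)^{(T\cap K;\AA\cap T\cap K)}$ and invoking Milnor's and Stasheff's theorems. You have in fact supplied more detail than the paper, particularly the explicit identification of the fibre in (i) and the homeomorphism argument in (ii), both of which the paper leaves implicit.
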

\begin{proof}
Statement \navedi{1} is contained in \lemref{restriction_map}.

To prove \navedi{2}, note that directedness of $\LLL$ guarantees that every compact subset of $M$ is already contained
in a member of $\LLL$. Thus \navedi{2} is an easy consequence of the fact that CW complexes have the weak topology with
respect to the family of closed cells.

Let $Z=(Y,\BB)^{(X,\AA)}[T,\Gamma\vert_T]$. Then $Z(K)=(Y,\BB)^{(K,\AA\cap K)}[T\cap K,\Gamma\vert_{T\cap K}]$
is the fibre of $(Y,\BB)^{(K,\AA\cap K)}\to (Y,\BB)^{(T\cap K,\AA\cap T\cap K)}$ over $\Gamma\vert_{T\cap K}$.
The total space and the base space have CW homotopy types by Theorem 3 of \cite{milnor} hence so has the fibre
$Z(K)$ by Stasheff's theorem.
\end{proof}

We have established

\begin{prop}\label{function_space}
Let $Z\in\FFF(X,Y)$, and let $\XXX$ denote the set of all subcomplexes of $X$. Then the functor $\XXX\to\TTT op$,
\[	L\mapsto Z(L),\,\,\,(L\lqs M)\mapsto(Z(M)\xrightarrow{\text{restriction}}Z(L)) \]
is a restricted inverse system of fibrations indexed by the regular lattice $\XXX$. The sublattice $\KKK$ of $\XXX$
consisting of finite subcomplexes is the set of compact elements and $Z(K)$ has CW homotopy type for each $K\in\KKK$.\qed
\end{prop}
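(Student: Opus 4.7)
The plan is to verify each of the four assertions in turn, by assembling the preceding lemmas; there is essentially no original work to do beyond the lattice-theoretic bookkeeping.

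For the lattice-theoretic part, I would first check that $\XXX$ is a complete lattice under inclusion, with $\vee=\cup$ and $\wedge=\cap$; the set-theoretic join-infinite distributive law $(\star)$ is inherited. Every subcomplex is the union of its finite subcomplexes, so $\KKK$ generates $\XXX$. The set $\KKK$ is closed under finite unions and under intersection with an arbitrary subcomplex (since the intersection of any $L$ with a finite $K$ can only contain cells of $K$), hence is an ideal. For any finite $K$ the set $\setof{L\in\XXX : L<K}$ is finite. The remark following the definition of a regular lattice then yields that $\XXX$ is regular with $\KKK(\XXX)=\KKK$. One should also note in passing that $K\in\KKK$ in the sense of the paper's definition of compactness: if $K\lqs\sup\Lambda=\cup_{\lambda}L_\lambda$, then each cell of $K$ lies in some $L_\lambda$, and finitely many $L_\lambda$ suffice to cover all cells of $K$.

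For the fibration assertion, an immediate application of \lemref{restriction_map} (equivalently \navedi{1} of \lemref{second_lemma}) shows that every restriction map $Z(M)\to Z(L)$ with $L\lqs M$ is a fibration. For restrictedness, suppose $\LLL\subset\XXX$ is a directed subset with supremum $M=\sup\LLL$; the canonical map $Z(M)\to\lim_{L\in\LLL}Z(L)$ is a homeomorphism trivially if $M\in\LLL$, and otherwise $\LLL$ is an exhaustive directed system of proper subcomplexes of $M$, so \navedi{2} of \lemref{second_lemma} applies. The final assertion, that $Z(K)$ has CW homotopy type for finite $K$, is \navedi{3} of \lemref{second_lemma}.

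Since all the real content of the proposition --- the use of a retraction $X\times I\to X\times 0\cup L\times I$ to produce the lifting function, the argument from the weak topology that compact subsets of $M$ lie in some member of $\LLL$, and the appeal to Milnor's and Stasheff's theorems for CW type of $Z(K)$ --- is already bundled into the preceding lemmas, the proof is essentially a compilation and I do not anticipate any genuine obstacle; the only task requiring more than a citation is the verification that $(\XXX,\KKK)$ fits the definition of a regular lattice, and even that reduces to invoking the alternative sufficient condition stated in the remark after that definition.
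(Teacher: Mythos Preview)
Your proposal is correct and matches the paper's approach exactly: the paper simply writes ``We have established'' before the proposition and ends it with \qed, treating it as an immediate summary of \lemref{restriction_map}, \lemref{second_lemma}, and the remark (following the definition of regular lattice) that the lattice of subcomplexes of a CW complex is regular with the finite subcomplexes as compact elements. Your write-up just makes these citations explicit and fills in the easy verification that the directed-limit hypothesis of \navedi{2} of \lemref{second_lemma} is met.
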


One of the nice features of $\FFF(X,Y)$ is that it is `closed under forming loop spaces.' More precisely,
let $g\in Z=(Y;\BBB)^{(X;\AAA)}[T,\Gamma\vert_T]$. Then $\Omega(Z,g)$ is homeomorphic to
\[ (Y;\BBB)^{(X\times I;\AAA\times I)}[T\times I\cup X\times\partial I,\,\Gamma\vert_T\circ\pr_T\sqcup g\circ\pr_X]. \]
In this way we may view $\Omega(Z,g)\in\FFF(X\times I,Y)$, and the restriction $\Omega(Z,g)\to\Omega(Z(L),g\vert_L)$
can be identified with $\Omega(Z,g)\to\Omega(Z,g)(L\times I)$.

Also note that in this way $\Omega(Z,g)$ may naturally be viewed as the limit of a restricted inverse system of
fibrations indexed by $\XXX$.

\begin{conv_n}
If $X$ is a CW complex then under cardinality of $X$ we understand the cardinality of the set of cells $X$.
\end{conv_n}

\lemref{good_filtration_general} implies

\begin{lem}\label{good_filtration}
Let $X$ be an infinite CW complex, and let $\XXX$ be the set of all subcomplexes of $X$. For an initial ordinal $\alpha$
of cardinality $\card{X}$ there exists an order-preserving injection $W(\alpha)\to\XXX$, $\lambda\mapsto X_\lambda$,
whose image is well-ordered and exhaustive in the sense that $\cup_{\lambda<\alpha}X_\lambda=X$. For each
$\mu\in W(\alpha)$ the quotient complex $X_{\mu+1}/X_\mu$ is finite and, if $\mu$ is limiting,
$X_\mu=\cup_{\lambda<\mu}X_\lambda$. Moreover, for each $\mu<\alpha+1$, the subcomplex $X_\mu$ has $\card W(\mu)$ cells.\qed
\end{lem}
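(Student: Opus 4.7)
The plan is to apply Lemma \ref{good_filtration_general} to the lattice $\XXX$ of subcomplexes of $X$, once one checks that this lattice is regular with set of compact elements exactly the finite subcomplexes $\KKK$.

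First I would verify the lattice-theoretic hypotheses. Meet and join in $\XXX$ are intersection and union, so $\XXX$ is a complete distributive lattice: distributivity is inherited from the power set of cells, and an arbitrary union of subcomplexes is a subcomplex. For compactness, if $\kappa$ is a finite subcomplex with $\kappa\lqs\sup\Lambda=\cup\Lambda$, then each of the finitely many cells of $\kappa$ already lies in some member of $\Lambda$, whence $\kappa$ is dominated by a finite subfamily of $\Lambda$. Conversely, for each cell $c$ of $X$ the subcomplex generated by $c$ is finite, because the attaching image is compact and so meets only finitely many cells of $X^{(\dim c-1)}$; any subcomplex $L$ is the union of such finite subcomplexes, so $\KKK$ generates $\XXX$. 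Finite unions of finite subcomplexes are finite, hence $\KKK$ is an ideal. Thus $\XXX$ is a regular lattice with set of compact elements $\KKK$, and when $X$ is infinite we have $\card_\KKK X=\card X$.

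Next I invoke Lemma \ref{good_filtration_general} to obtain a good filtration $\setof{X_\mu\,\vert\,\mu\in W(\alpha)}$ with $\alpha$ the initial ordinal of cardinality $\card X$. Smoothness of the indexing function gives $X_\mu=\cup_{\lambda<\mu}X_\lambda$ at every limit ordinal $\mu$; the successor rule $X_{\mu+1}=X_\mu\vee\kappa(\mu)$ with $\kappa(\mu)\in\KKK$ finite implies
\[ X_{\mu+1}/X_\mu\cong\kappa(\mu)/(\kappa(\mu)\cap X_\mu), \]
a finite CW complex. Exhaustiveness $\cup_{\lambda<\alpha}X_\lambda=X$ is part of the conclusion of Lemma \ref{good_filtration_general}.

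Finally, the cell-count statement follows from the bound $\card_\KKK X_\mu\lqs\card W(\mu)$: the subcomplex $X_\mu$ is a union of at most $\card W(\mu)$ finite subcomplexes, so for infinite $\mu$ it has at most $\card W(\mu)\cdot\aleph_0=\card W(\mu)$ cells, while exhaustiveness together with the strict increase at each successor step (each cell of $X$ enters the filtration at a unique successor) forces the reverse inequality. The only mild obstacle is the bookkeeping needed to guarantee that exactly $\card W(\mu)$ cells appear in $X_\mu$ for \emph{every} $\mu$ (including small finite ones); this is easily ensured by starting from a well-ordering of the cells of $X$ compatible with the skeletal filtration and taking $X_\mu$ to be the subcomplex generated by the cells whose index lies below $\mu$, which provides a good filtration of the same form directly and makes the cell-count equality literal.
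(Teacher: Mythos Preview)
Your approach is correct and is exactly the one the paper takes: the paper simply records that Lemma~\ref{good_filtration} is implied by Lemma~\ref{good_filtration_general}, leaving the verification that the lattice of subcomplexes is regular (with $\KKK$ the finite subcomplexes) and the cell-count bookkeeping to the reader. Your additional care about the literal cell-count at finite stages is appropriate and is handled in the paper by the remark immediately following the lemma, which notes that successive pairs may be refined to single-cell adjunctions.
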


\begin{rem_n}
If necessary, we may replace each `successive pair' $(X_{\mu+1},X_\mu)$ by a finite refinement
$X_\mu=X_0<X_1<\dots<X_r=X_{\mu+1}$ where for each $i$, the pair $(X_i,X_{i-1})$ is the adjunction of a single
cell. \qed
\end{rem_n}

\begin{defn}
Let $X$ be a CW complex and let $X_\lambda$, $\lambda\in W(\alpha)$, be a filtration of subcomplexes of $X$.
We say that $\setof{X_\lambda\,\vert\,\lambda}$ is a good filtration if it is exhaustive in the sense that
$\cup_\lambda X_\lambda=X$, the assignment $\lambda\to X_\lambda$ is order-preserving, and for each limiting
$\mu<\alpha$ the subcomplex $X_\mu$ is given by $X_\mu=\cup_{\lambda<\mu}X_\lambda$.	\qed
\end{defn}

\lemref{good_filtration} asserts that every complex $X$ admits a good filtration indexed by
$W(\alpha)$ where $\alpha$ is a limit ordinal such that consecutive steps are adjunctions of finitely
many cells (or a single one, if desired) and for each $\mu<\alpha+1$, the subcomplex $X_\mu$ has exactly
$\card W(\mu)$ cells.

The `goodness' of a good filtration is apparent from

\begin{lem}\label{good_yields_restricted}
Let $\alpha$ be an ordinal number and let $\setof{X_\lambda\,\vert\,\lambda<\alpha}$ be a good filtration for $X$.
Let $Y$ be any CW complex. We may form an inverse system indexed by $W(\alpha)$ consisting
of spaces $Y^{X_\lambda}$ where for each $\lambda<\mu$ the bonding map $Y^{X_\mu}\to Y^{X_\lambda}$ is the restriction
fibration. Since $\cup_\lambda X_\lambda=X$, the limit of this system is $Y^X$, together with restriction projections.

Moreover, if $\mu$ is a limit ordinal, $\mu<\alpha$, then \[ Y^{X_\mu}=\lim_{\lambda<\mu}Y^{X_\lambda}, \]
which is to say that $\setof{Y^{X_\lambda}\,\vert\,\lambda\in W(\alpha)}$ is a restricted inverse system.\qed
\end{lem}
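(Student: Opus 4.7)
Three assertions need verification: (i) every bonding map $Y^{X_\mu}\to Y^{X_\lambda}$ for $\lambda<\mu$ is a fibration, (ii) the whole tower has $Y^X$ as its inverse limit, and (iii) for every limit ordinal $\mu<\alpha$, the canonical comparison map $Y^{X_\mu}\to\lim_{\lambda<\mu}Y^{X_\lambda}$ is a homeomorphism.

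Part (i) is an immediate application of \lemref{restriction_map} with trivial $n$-ad data ($\AA$ empty and $T$ empty), in which case the constrained subspace reduces to the full function space $Y^{X_\mu}$ respectively $Y^{X_\lambda}$; this is also a special case of \propref{function_space}.

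For (ii) and (iii) I would argue uniformly. Fix $\mu$ so that $X_\mu=\bigcup_{\lambda<\mu}X_\lambda$; this covers $\mu<\alpha$ limiting by the definition of good filtration, $\mu=\alpha$ limiting by exhaustiveness, while $\mu=\alpha$ a successor is trivial since exhaustiveness then forces $X_{\alpha-1}=X$ and the limit of the tower is $Y^{X_{\alpha-1}}=Y^X$. The comparison map $\Psi\colon Y^{X_\mu}\to\lim_{\lambda<\mu}Y^{X_\lambda}$ is a set-theoretic bijection: injectivity since $X_\mu$ is the union, and surjectivity because a coherent family $\{f_\lambda\}$ glues to a single function $f$ on $X_\mu$. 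Continuity of $f$ follows because every cell of $X_\mu$ is contained in some $X_\lambda$ (subcomplexes contain each cell they meet), hence $f$ is continuous on each closed cell, and $X_\mu$ carries the weak topology with respect to its cells.

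To promote $\Psi$ to a homeomorphism I would compare subbases. A subbasic compact-open set $G(K,V)\subset Y^{X_\mu}$ has $K\subset X_\mu$ compact; the standard fact for CW complexes places $K$ inside a finite subcomplex $F$ of $X_\mu$. The finitely many cells of $F$ each lie in some $X_{\lambda_i}$ with $\lambda_i<\mu$; since $\mu$ is a limit ordinal, the finite supremum $\lambda=\max\lambda_i$ is still below $\mu$, so $K\subset F\subset X_\lambda$. Consequently $G(K,V)\subset Y^{X_\mu}$ coincides with the pullback of $G(K,V)\subset Y^{X_\lambda}$ under the canonical projection, and is therefore open in the inverse limit topology. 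Conversely, every pullback of a subbasic open in some $Y^{X_\lambda}$ matches a $G(K,V)$ in $Y^{X_\mu}$ for the same compact $K$.

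The only step that requires any genuine care is the ordinal bookkeeping in the previous paragraph, namely the observation that in a well-ordered set, a finite supremum of elements strictly below a limit ordinal remains strictly below it. This is what ensures that compact subsets of $X_\mu$ can always be absorbed into some $X_\lambda$ with $\lambda<\mu$, which is in turn what identifies the compact-open topology of $Y^{X_\mu}$ with the inverse limit topology.
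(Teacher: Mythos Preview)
Your argument is correct and is precisely the reasoning the paper has in mind. The paper states this lemma without proof (note the \qed\ at the end of the statement), treating it as immediate from the earlier \lemref{second_lemma}\navedi{2} and \propref{function_space}, whose proof is exactly your compact-subset argument: compacta in a CW complex lie in finite subcomplexes, and a directed (here well-ordered) exhaustive system of subcomplexes absorbs any finite subcomplex. Your explicit ordinal bookkeeping (finite maxima of ordinals below a limit ordinal stay below it) and your handling of the successor case $\alpha=\beta+1$ via $X_\beta=X$ are the right details to spell out.
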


We close this section with two simple-minded examples of function spaces that are not of CW type.

\begin{lem}[Splittings of domain]\label{splittings}
Assume $X=\vee_{\lambda\in\Lambda}X_\lambda$. Let $C$ (respectively $C_\lambda$) denote
the path component of the constant map in $(Y,*)^{(X,*)}$ (respectively $(Y,*)^{(X_\lambda,*)}$).
If $C$ has CW type, then $C_\lambda$ is contractible for all but finitely many $\lambda$.
\end{lem}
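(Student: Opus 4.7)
The plan is to reduce the statement to Example \ref{product}. First I would apply \lemref{wedge_splitting} to the decomposition $X=\vee_\lambda X_\lambda$ to obtain a pointed homotopy equivalence
\[ (Y,*)^{(X,*)}\simeq\prod_\lambda(Y,*)^{(X_\lambda,*)}. \]
Since a path in a product is continuous if and only if each of its coordinate projections is continuous, the path component of the constant map on the right-hand side is exactly $\prod_\lambda C_\lambda$. Hence $C$ is homotopy equivalent to $\prod_\lambda C_\lambda$, and under our hypothesis this product has CW homotopy type.

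Next I would observe that each individual factor $C_\lambda$ has CW type. Indeed, after fixing base points $c_\mu\in C_\mu$ for $\mu\neq\lambda$, the inclusion $C_\lambda\hookrightarrow\prod_\mu C_\mu$ obtained by filling in these base points at the remaining coordinates is a continuous section of the coordinate projection. Thus $C_\lambda$ is a retract of a space of CW homotopy type and, by Theorem~23 of \cite{whitehead}, has CW homotopy type itself. In particular the hypotheses of \propref{fibre_SLC_general} hold for the restricted inverse system of finite subproducts indexed by finite subsets of $\Lambda$, whose limit is $\prod_\lambda C_\lambda$.

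Now I would invoke Example \ref{product} directly. CW type of $\prod_\lambda C_\lambda$ implies semilocal contractibility, which by \navedi{1} of \propref{fibre_SLC_general} yields a finite $\kappa\subset\Lambda$ such that the fibre $\prod_{\lambda\notin\kappa}C_\lambda$ of the projection $\prod_\lambda C_\lambda\to\prod_{\lambda\in\kappa}C_\lambda$ contracts in the total space. Composing this contracting homotopy with the projection onto $\prod_{\lambda\notin\kappa}C_\lambda$ yields a contraction of this subproduct in itself; projecting further onto a single factor shows that each $C_\lambda$ with $\lambda\notin\kappa$ is contractible, as desired.

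The only mildly subtle point, and hence the main obstacle, is establishing CW type of the individual factors (and their finite subproducts) so that \propref{fibre_SLC_general} and Example \ref{product} apply verbatim; this is settled by the retraction argument above. Everything else is a direct quotation of machinery already set up in this chapter.
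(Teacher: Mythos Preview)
Your proof is correct and follows essentially the same route as the paper: identify $C$ with $\prod_\lambda C_\lambda$ and invoke \exref{product}. The paper is terser---it uses the homeomorphism from \lemref{very_first}\,(ii) directly (since $X=\vee_\lambda X_\lambda$ on the nose) and does not pause to verify CW type of the individual $C_\lambda$, as only the necessity direction of \propref{fibre_SLC_general}\,(i) is needed and that direction does not require it; your retraction argument is valid but superfluous.
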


\begin{proof}
Since $(Y,*)^{(X,*)}$ (respectively $C$) is homeomorphic to $\prod_\lambda(Y,*)^{(X_\lambda,*)}$
(respectively $\prod_\lambda C_\lambda$), this follows immediately from \exref{product}.
\end{proof}


\begin{example}[Infinitely many ``free'' cells]
Assume that $Y^T$ has CW type and let $X$ be obtained from $T$ by attaching an infinite family of cells
$\setof{e_\lambda^{m_\lambda}\,\vert\,\lambda}$ along an attaching map $\sqcup_\lambda S^{m_\lambda-1}\to T$.
This is to say that we attach infinitely many free cells. If for each $\lambda$ there exists a number
$n_\lambda\gqs m_\lambda$ with $\pi_{n_\lambda}(Y)$ nontrivial, then $Y^X$ cannot have CW type.

If $Y^X$ has CW type then so has the space $(Y,*)^{(X/T,\setof{T})}$, by Stasheff's theorem.
But $X/T=\bigvee_{\lambda}S^{m_\lambda}$, and $\pi_k\big((Y,*)^{(S^{m_\lambda},*)},*\big)\cong\pi_{k+m_\lambda}(Y,*)$,
by adjunction. The assumptions on $Y$ contradict \lemref{splittings}.

For an explicit example we may take $X=M(\FF,m)$ and $Y=K(\ZZ_2,n)$ where $m\lqs n$. Here
$M(\FF,n)$ denotes a Moore space of type $(\FF,n)$, and $\FF$ is the free abelian group on
countably many generators.

The case $m=n$ generalizes Milnor's example $X=\NN$, $Y=\setof{0,1}$
(see \cite{milnor}) if we understand $\NN$ as a $M(\FF,0)$ and $\setof{0,1}$ as a $K(\ZZ_2,0)$.
It was pointed out by Milnor that $(\setof{0,1},*)^{(\NN,*)}$ is homeomorphic to the Cantor set.
We note that $(K(\ZZ_2,n),*)^{(M(\FF,n),*)}$ is homeomorphic to $(\Omega^nK(\ZZ_2,n),*)^{(\NN,*)}$
which in turn is homotopy equivalent to the Cantor set. \qed
\end{example}


\begin{example}[A stable splitting]
Let $Y$ be a CW complex with infinitely many nontrivial homotopy groups. (By a theorem of Serre \cite{serre},
see also Neisendorfer \cite{neisendorfer}, $Y$ may be a finite CW complex which is simply connected but not
contractible.) Then the path component $C$ of the constant map in $(Y,*)^{(\Omega S^{p+1},*)}$ does not have CW type
for any $p\gqs 1$.

Since $\Omega S^{p+1}$ is homotopy equivalent to the James construction $J(S^p)$ (see James \cite{james}, and also Puppe
\cite{puppe}), we may take $X=J(S^p)$. If $C$ has CW type, so has the loop space $\Omega(C,*)\approx(Y,*)^{(SX,*)}$. It is
well known that $SX=SJ(S^p)$ splits as $SX\simeq\vee_{j=1}^\infty S^{pj+1}$. By adjunction the group
$\pi_k\big((Y,*)^{(S^{pj+1},*)},*\big)$ is isomorphic to $\pi_{k+pj+1}(Y,*)$. The latter is nontrivial for infinitely many
$k$, contradicting \lemref{splittings}. \qed
\end{example}


\section{Properties of function spaces of CW type}\label{characterizations}


\propref{fibre_SLC_general} says

\begin{thm}\label{fibre_SLC}\label{CW_SLC}
Let $X$ and $Y$ be CW complexes and $Z\in\FFF(X,Y)$. Let $C$ be the union of some path components of $Z$,
and for each subcomplex $L$ of $X$ let $C_L$ denote the image of $C$ under $Z\to Z(L)$. 
\begin{enroman}
\item	$C$ is semilocally contractible if and only if for each map $g\in C$ there exists a finite subcomplex $K$ of $X$
	such that the fibre $F_K$ over $g\vert_K$ of $C\to C_K$ contracts in the total space.
\item	$C$ is semilocally contractible and open if and only if for each map $g\in C$ there exists a finite subcomplex
	$K$ such that the fibre of $Z\to Z(K)$ over $g\vert_K$ contracts in the total space.
\item	If $C$ is semilocally contractible then for each map $g\in C$ the loop space $\Omega(Z;g)$ has CW homotopy type.
\item	$Z$ has CW homotopy type if and only if it is a Dold space. \qed
\end{enroman}
\end{thm}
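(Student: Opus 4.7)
The plan is to reduce Theorem \ref{CW_SLC} directly to Proposition \ref{fibre_SLC_general}, which treats restricted inverse systems of fibrations on regular lattices. The bridge has already been built by Proposition \ref{function_space}: for $Z\in\FFF(X,Y)$ the assignment $L\mapsto Z(L)$ is a restricted inverse system of fibrations indexed by the regular lattice $\XXX$ of subcomplexes of $X$, whose compact elements are exactly the finite subcomplexes $\KKK$; and $Z(K)$ has CW homotopy type for each $K\in\KKK$ by Milnor's theorem, invoked in \lemref{second_lemma}. Thus every structural hypothesis of Proposition \ref{fibre_SLC_general} has been secured in advance.

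First I would dispense with the degenerate case where $X$ is itself finite, that is $X\in\KKK$. Then $Z=Z(X)$ has CW homotopy type by Milnor's theorem, so every path component of $Z$ is open, semilocally contractible, and Dold; moreover, taking $K=X$ makes the fibres that appear in \navedi{1} and \navedi{2} the singletons $\{g\}$, which trivially contract, so all four statements hold automatically. From this point on I would assume $X$ infinite, i.e.\ $X=\sup\XXX\notin\KKK$, which is the nontriviality hypothesis required by Proposition \ref{fibre_SLC_general}.

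With the abstract machinery in force, statements \navedi{1}, \navedi{2} and \navedi{3} of Theorem \ref{CW_SLC} are direct transcriptions of the corresponding items of Proposition \ref{fibre_SLC_general} under the evident dictionary $\lambda\leftrightarrow L$, $\kappa\leftrightarrow K$, $\zeta\leftrightarrow g$; in particular the fibre of $C\to C_K$ over $g|_K$ in our notation coincides with the fibre of $C\to C_\kappa$ over $\zeta|_\kappa$ in that of the general proposition. For \navedi{4}, the "only if" direction is the remark made at the beginning of the chapter that every space of CW homotopy type is a Dold space, while the "if" direction is exactly item \navedi{4} of Proposition \ref{fibre_SLC_general}. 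There is no genuine obstacle here: the work has been front-loaded into Propositions \ref{function_space} and \ref{fibre_SLC_general}, and the only point requiring care, namely the finite case excluded by the hypothesis $\alpha\notin\KKK$ of the general proposition, has been handled separately above.
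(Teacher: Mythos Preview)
Your proposal is correct and matches the paper's approach: the theorem is simply the specialization of Proposition~\ref{fibre_SLC_general} to the function-space setting via Proposition~\ref{function_space}, which is exactly how the paper presents it (indeed with no further argument beyond the line ``\propref{fibre_SLC_general} says'' and a \qed). You are in fact slightly more careful than the paper in explicitly disposing of the degenerate case $X\in\KKK$, which the nontriviality hypothesis $\alpha\notin\KKK$ of the general proposition formally excludes.
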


\begin{lem}
If $Z\in\FFF(X,Y)$ where $X$ is a countable complex, then $Z$ is hereditarily paracompact (in fact stratifiable).
\end{lem}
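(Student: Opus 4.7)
The plan is to reduce to the single fact that $Y^X$ is stratifiable when $X$ is countable CW and $Y$ has CW homotopy type, and then invoke two standard properties of the stratifiable class: every subspace of a stratifiable space is stratifiable, and every stratifiable space is paracompact (due to Borges/Ceder). Combined, these two give that every subspace of a stratifiable space is paracompact, i.e.\ a stratifiable space is automatically hereditarily paracompact.

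First I would recall \prevlemref{stratifiable}, which establishes that $Y^X$ itself is stratifiable under the assumption that $X$ is a countable CW complex (and $Y$ is of CW type, Hausdorff). By the definition of $\FFF(X,Y)$ in \secref{restriction}, every element
\[
Z=(Y;\BB)^{(X;\AA)}[T,\Gamma\vert_T]
\]
is, essentially by construction, a (topological) subspace of $Y^X$ (cut out by the closed conditions $f(A_i)\subset B_i$ and $f\vert_T=\Gamma\vert_T$). Thus $Z$ inherits the subspace topology from a stratifiable space.

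Next I would apply Ceder's theorem that the class of stratifiable spaces is closed under arbitrary subspaces, concluding that $Z$ is stratifiable. Since stratifiability in turn implies paracompactness, and since the argument just given applies verbatim to any further subspace of $Z$ (a subspace of a stratifiable space being again stratifiable), this yields hereditary paracompactness at once, proving the lemma.

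The only step that is not formal is the input \prevlemref{stratifiable}: the stratifiability of $Y^X$ for countable CW $X$, which is where all the topological work lives and which is taken from \cite{smrekar}. Given that result, the current lemma is a short deduction via the hereditary and paracompactness properties of the stratifiable class, with no further difficulty.
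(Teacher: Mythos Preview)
Your proof is correct and follows essentially the same approach as the paper: establish that $Y^X$ is stratifiable (the paper cites Cauty \cite{cauty} directly rather than \prevlemref{stratifiable}, but this is the same underlying fact), then use that stratifiability is hereditary and implies paracompactness to conclude for the subspace $Z$.
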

\begin{proof}
By Cauty \cite{cauty}, the function space $Y^X$ is stratifiable, and hence hereditarily paracompact.
\end{proof}


\begin{cor} Let $Z\in\FFF(X,Y)$ where $X$ is countable, and let $C$ be a set of path components of $Z$.
Then $C$ has CW homotopy type if and only if it is semilocally contractible.

In particular, if $C$ is globally well-pointed, it has CW homotopy type.
Compare \prevthmref{countable_hammer}. \qed
\end{cor}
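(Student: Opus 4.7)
The plan is to treat the equivalence direction-by-direction. The forward direction is immediate, since any space of CW homotopy type is a Dold space (by the lemma recalled at the start of Chapter \ref{slc_cw_type}), and every Dold space is semilocally contractible.

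For the reverse direction, suppose $C$ is semilocally contractible. The preceding lemma asserts that $Z$ is hereditarily paracompact, so $C$ inherits paracompactness; combined with semilocal contractibility, \lemref{paracompact_vs_Dold} makes $C$ itself a Dold space. Moreover, semilocal contractibility forces every path component $C_\alpha$ of $C$ to be open in $C$, so $C$ has CW type once we show each $C_\alpha$ does.

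Fix such a path component $C_\alpha$. It is also a path component of $Z$, so part (iii) of \thmref{CW_SLC} applies (with $C_\alpha$ playing the role of ``$C$'' in that statement) and yields that $\Omega(Z,\zeta)=\Omega(C_\alpha,\zeta)$ has CW homotopy type for every $\zeta\in C_\alpha$. Furthermore $C_\alpha$ inherits paracompactness and semilocal contractibility from $C$, hence is a Dold space by \lemref{paracompact_vs_Dold}. Then Allaud's delooping theorem --- the same ingredient already invoked in the proof of \propref{fibre_SLC_general}(iv) --- promotes CW type of $\Omega(C_\alpha,\zeta)$ to CW type of $C_\alpha$, finishing the sufficiency argument.

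For the addendum, note that ``globally well-pointed'' means that for each $z\in C$ the inclusion $\{z\}\hookrightarrow C$ is a cofibration; the NDR structure then supplies a neighborhood of $z$ that deforms within $C$ to $z$, which is precisely semilocal contractibility at $z$. The main statement then yields CW type for $C$. The whole proof is essentially the assembly of ingredients already developed; the only mildly subtle point --- that the conclusion of \thmref{CW_SLC}(iii) can be invoked on a single path component $C_\alpha$ rather than on all of $C$ at once --- is harmless because the theorem allows the distinguished subset to be any union of path components.
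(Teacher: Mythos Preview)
Your proof is correct and follows essentially the same route the paper intends: hereditary paracompactness of $Z$ makes $C$ paracompact, hence Dold once semilocally contractible, and then Allaud's delooping theorem (combined with part (iii) of \thmref{CW_SLC}) yields CW type. Your passage through the individual path components $C_\alpha$ is harmless but unnecessary: part (iii) already gives $\Omega(C,\zeta)$ of CW type for every $\zeta\in C$, and Allaud's theorem applies directly to the Dold space $C$ without decomposing it first.
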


\begin{lem}\label{homology_P}
\begin{enroman}
\item
Let $X$ be a connected CW complex and $K$ a finite subcomplex of $X$. Then there exists a finite subcomplex $L$
of $X$ such that the inclusion induced morphism $\H_*(L)\to\H_*(X)$ is injective on the image of $\H_*(K)\to\H_*(L)$.

\item
Let $L_0\lqs L_1\lqs L_2\lqs\dots$ be an ascending sequence of finite subcomplexes of $X$ with union $L_\infty$.
If for each $i\gqs 1$ the morphism $\H_*(L_i)\to\H_*(X)$ is injective on the image of $\H_*(L_{i-1})\to\H_*(L_i)$
then the morphism \[ \H_*(L_\infty)\to\H_*(X) \] is injective.
\end{enroman}
\end{lem}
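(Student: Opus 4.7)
The plan is to prove \textbf{(i)} by killing the finitely many obstructing homology classes one at a time using compact supports, and then \textbf{(ii)} by a straightforward colimit argument that directly exploits the hypothesis.

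For \textbf{(i)}, I would begin with the observation that since $K$ is a finite CW complex, $\H_*(K)$ is a finitely generated graded abelian group, and hence so is the subgroup $N = \ker\bigl(\H_*(K)\to\H_*(X)\bigr)$. Let $\alpha_1,\dots,\alpha_n$ be a finite generating set for $N$, and pick cellular cycle representatives $c_j$ for each $\alpha_j$ in $K$. Since $\alpha_j$ dies in $\H_*(X)$, there is a cellular chain $b_j$ in $X$ with $\partial b_j = c_j$. Each $b_j$ has compact support, so it lies in some finite subcomplex $K_j\lqs X$. Setting $L = K\cup K_1\cup\dots\cup K_n$ gives a finite subcomplex containing $K$ in which every generator of $N$ already bounds. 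It then remains to check the stated injectivity: if $\beta\in\H_*(K)$ maps to $0$ in $\H_*(X)$ then $\beta\in N$ by definition, hence $\beta$ is a $\ZZ$-linear combination of the $\alpha_j$, each of which already maps to $0$ in $\H_*(L)$; thus the image of $\beta$ in $\H_*(L)$ is $0$, which is exactly what injectivity on the image of $\H_*(K)\to\H_*(L)$ demands.

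For \textbf{(ii)}, I would invoke the standard fact that singular (or cellular) homology commutes with colimits of inclusions of CW complexes, so $\H_*(L_\infty)=\colim_i\H_*(L_i)$. Let $\gamma\in\H_*(L_\infty)$ be such that its image in $\H_*(X)$ vanishes. Then $\gamma$ is represented by some $\gamma_i\in\H_*(L_i)$ for a sufficiently large index $i$. Its image $\gamma_{i+1}\in\H_*(L_{i+1})$ lies in the image of $\H_*(L_i)\to\H_*(L_{i+1})$ and still maps to $0$ in $\H_*(X)$. The hypothesis then forces $\gamma_{i+1}=0$ in $\H_*(L_{i+1})$, so $\gamma=0$ already in the colimit.

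The only mild obstacle is ensuring that the finiteness argument in \textbf{(i)} works cleanly across all degrees at once; this is handled automatically because a finite CW complex has $\H_*$ concentrated in finitely many degrees, so the kernel $N$ really is finitely generated as a graded group and a single finite union $L$ suffices for all degrees simultaneously. Everything else is formal, and part \textbf{(ii)} is essentially a one-line consequence of the colimit description of $\H_*(L_\infty)$ combined with the hypothesis.
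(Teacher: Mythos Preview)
Your proof is correct and complete. The paper does not actually give an argument: it refers to the proof of Lemma 4.1.3 of Gray and McGibbon \cite{gray-mcgibbon} for \navedi{1} and declares \navedi{2} evident. Your compact-supports argument for \navedi{1} is precisely the standard one (and presumably what Gray--McGibbon do), and your colimit argument for \navedi{2} is the obvious one the paper has in mind.
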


\begin{proof}
For \navedi{1} see the proof of Lemma 4.1.3 of Gray and McGibbon \cite{gray-mcgibbon}, and \navedi{2} is evident.
\end{proof}


\thmref{double_wicked_general} says

\begin{thm}\label{double_wicked}
Let $Z\in\FFF(X,Y)$, let $C$ be a set of path components of $Z$, and let $g\in C$ be a map.
Let $C_L$ denote the image of $C$ under $Z\to Z(L)$.

\begin{enroman}
\item
If $C$ has CW homotopy type then for every countable complex $L$ of $X$
there exists a bigger countable complex $L'$ such that $\Omega(C_{L'},g\vert_{L'})$ has CW type and
\begin{equation*}\tag{$*$} \Omega(C;g)\to\Omega(C_{L'},g\vert_{L'}) \end{equation*}
is a homotopy equivalence. Consequently the fibre of $C\to C_{L'}$ over $g\vert_{L'}$ is contractible.

In addition, if $P$ is a directed order preserving property defined on pairs $(K,K')$ of finite subcomplexes
with $K\lqs K'$ then $L'$ may be assumed to be the union of a $P$-sequence.

In particular, we may assume $H_*(L';\ZZ)\to H_*(X;\ZZ)$ to be injective.

\item
Conversely, if for each countable complex $L$ there exists a bigger countable complex $L'$ such that
$\Omega(C_{L'},g\vert_{L'})$ has CW homotopy type and ($*$) is a weak homotopy equivalence then
$\Omega(C,g)$ has CW homotopy type.
\end{enroman}
\end{thm}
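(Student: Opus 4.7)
The plan is to recognize Theorem~\ref{double_wicked} as a direct specialization of Theorem~\ref{double_wicked_general}. By Proposition~\ref{function_space}, any $Z\in\FFF(X,Y)$ is the limit of the restricted inverse system of fibrations $L\mapsto Z(L)$ indexed by the regular lattice $\XXX$ of all subcomplexes of $X$, with compact elements $\KKK$ equal to the finite subcomplexes; a subcomplex is $\KKK$-countable precisely when it is a countable CW complex. Since $C$ has CW type, so does the single path component of $Z$ containing $g$, and loop spaces are insensitive to passing between $C$, that component, and $Z$, so I may take $\zeta=g$ when feeding the system into Theorem~\ref{double_wicked_general}.

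Statement (i) then follows at once: the general theorem produces a countable $L'\gqs L$ such that the fibre of $C\to C_{L'}$ over $g\vert_{L'}$ is contractible and $(*)$ is a homotopy equivalence, with $L'$ obtainable as the union of a $P$-sequence for any prescribed directed order preserving $P$. To obtain the homology-injectivity addendum, define a property $Q(K,K')$ on pairs of finite subcomplexes by $K\lqs K'$ together with the condition that $H_*(K';\ZZ)\to H_*(X;\ZZ)$ be injective on the image of $H_*(K;\ZZ)\to H_*(K';\ZZ)$. Directedness of $Q$ is exactly Lemma~\ref{homology_P}(i), and order preservation reduces to the observation that for $K_0\lqs K\lqs K'\lqs K''$, any class in $H_*(K_0;\ZZ)$ dying in $H_*(X;\ZZ)$ already dies in $H_*(K';\ZZ)$ by $Q(K,K')$, hence in $H_*(K'';\ZZ)$. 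Feeding $P\wedge Q$ (still directed and order preserving, by a two-step selection) into the general theorem and applying Lemma~\ref{homology_P}(ii) to the resulting $Q$-sequence yields the required injectivity on the union $L'$.

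Statement (ii) is the analogous translation of Theorem~\ref{double_wicked_general}(ii): for each countable $L$ the existence of a countable $L'\gqs L$ with $\Omega(C_{L'},g\vert_{L'})=\Omega(Z(L'),g\vert_{L'})$ of CW type and $(*)$ a weak equivalence matches the abstract hypothesis verbatim, and the conclusion that $\Omega(C,g)=\Omega(Z,g)$ has CW type is exactly what the general theorem delivers. The main obstacle is not really technical; it is essentially the confirmation that the abstract lattice machinery of \S\ref{sufficient_necessary} applies verbatim to the lattice of subcomplexes once Proposition~\ref{function_space} has been invoked, together with the short diagram chase for order preservation of $Q$.
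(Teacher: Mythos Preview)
Your proof is correct and follows essentially the same approach as the paper: both recognize the theorem as a direct translation of Theorem~\ref{double_wicked_general} via Proposition~\ref{function_space}, and both handle the homology-injectivity addendum by defining the same property $Q(K,K')$, invoking Lemma~\ref{homology_P}(i) for directedness and Lemma~\ref{homology_P}(ii) for injectivity on the union. You are somewhat more explicit than the paper (which leaves the order-preservation of $Q$ and the reduction from $C$ to the single component implicit), and your $P\wedge Q$ combination slightly overshoots what the ``In particular'' clause literally requires, but this is harmless.
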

\begin{proof}
We need only discuss the statement about $H_*(L';\ZZ)\to H_*(X;\ZZ)$ of \navedi{1}.

Let $Q(K,K')$ if $\H_*(K')\to\H_*(X)$ is injective on the image of $\H_*(K)\to\H_*(K')$. Then $Q$
is a directed order preserving property by \navedi{1} of \lemref{homology_P}. Now apply \navedi{2}
of \lemref{homology_P} together with \thmref{double_wicked_general}.
\end{proof}

\thmref{wicked_general} translates into

\begin{thm}\label{wicked}
Let $X$ be a CW complex with uncountably many cells, and let $Y$ be an arbitrary CW complex.
Let $Z\in\FFF(X,Y)$ and let $C$ be a set of path components of $Z$. For each subcomplex $L$ of $X$
denote by $C_L$ the image of $C$ under $Z\to Z(L)$.

Let $\Gamma$ be a topological space and assume that there exists a map of $\Gamma$ into the inverse system
$\setof{C_L\,\vert\,L}$.

Let $\LLL$ be the set of those subcomplexes $L$ of $X$ for which $C_L$ has CW homotopy type and
$\Gamma\to C_L$ is a weak equivalence. 

Assume that for each countable subcomplex $L$ there exists a countable subcomplex $L'\in\LLL$ containing $L$.

Then for any ordinal $\eta$ and every subcomplex $L$ of $X$ with $\aleph_\eta$ cells there exists
a larger subcomplex $L'\in\LLL$ with $\aleph_\eta$ cells.

In particular $C$ has CW homotopy type, and $\Gamma\to C$ is a weak equivalence. \qed
\end{thm}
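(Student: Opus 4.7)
The plan is to recognize this statement as a direct translation of Theorem \ref{wicked_general} to the function-space setting, once one verifies that the combinatorial and abstract-lattice vocabularies match up. The bulk of the work has already been done in that general theorem, so the argument is essentially a dictionary lookup; the only possible obstacle is bookkeeping the identification of cardinality in cells with the abstract $\KKK$-cardinality.

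First I would invoke Proposition \ref{function_space}: the assignment $L\mapsto Z(L)$ is a restricted inverse system of fibrations indexed by the regular lattice $\XXX$ of all subcomplexes of $X$; the compact elements $\KKK$ are exactly the finite subcomplexes; and each $Z(K)$ for $K\in\KKK$ has CW homotopy type. The hypothesis that $X$ has uncountably many cells means precisely that $\XXX$ is uncountable with respect to $\KKK$, since the maximum $X$ cannot be expressed as the supremum of countably many finite subcomplexes.

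Next I would check that for every subcomplex $L\lqs X$ the $\KKK$-cardinality $\card_{\KKK}L$ equals the cardinality of the set of cells of $L$: every subcomplex is the directed union of its finite subcomplexes, and if $L$ has $\aleph_\eta$ cells then it is the supremum of the $\aleph_\eta$ finite subcomplexes spanned by finite subsets of its cells; conversely, the supremum of $\aleph_\eta$ finite subcomplexes has at most $\aleph_\eta$ cells. Consequently $\XXX(\eta)$ and $\LLL(\eta)$ from Theorem \ref{wicked_general} coincide, respectively, with the subcomplexes of $X$ having at most $\aleph_\eta$ cells and those among them that lie in $\LLL$. Thus the hypothesis that every countable subcomplex is contained in a countable member of $\LLL$ is precisely the statement that $\LLL(0)$ is cofinal in $\XXX(0)$.

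Theorem \ref{wicked_general} will then immediately yield that $\LLL(\eta)$ is cofinal in $\XXX(\eta)$ for every ordinal $\eta$, which is the first assertion of the theorem. For the final clause, I would pick an ordinal $\eta$ with $\aleph_\eta = \card X$; then $X\in\XXX(\eta)$ is the maximum of $\XXX$, so cofinality forces $X\in\LLL(\eta)\subset\LLL$. Hence $C = C_X$ has CW homotopy type and $\Gamma\to C$ is a weak equivalence, as required.
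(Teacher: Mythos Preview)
Your proposal is correct and takes essentially the same approach as the paper: the paper simply states that \thmref{wicked_general} ``translates into'' this theorem and marks it with \qed, and your argument spells out precisely that translation via \propref{function_space} and the identification of $\KKK$-cardinality with cell count. Your added bookkeeping is exactly the dictionary the paper leaves implicit.
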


\begin{cor}\label{cor_wicked}
Let $X$ be a CW complex with uncountably many cells such that for each countable subcomplex $L$
there exists a countable subcomplex $L'$ containing $L$ so that $(Y,*)^{(L',*)}$ is contractible.
Then $(Y,*)^{(X,*)}$ is contractible.
\end{cor}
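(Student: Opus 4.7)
The plan is to apply \thmref{wicked} directly. Set $Z=(Y,*)^{(X,*)}\in\FFF(X,Y)$, let $C$ be all of $Z$ (viewed as the set of all its path components), and let $\Gamma$ be a one-point space. I would map $\Gamma$ into the inverse system $\setof{C_L\,\vert\,L\lqs X}$ by sending the point to the constant map $c_L\in Z(L)=(Y,*)^{(L,*)}$ at each level; coherence with the restriction bonds is immediate because restricting $c_X$ to $L$ yields $c_L$.

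The key step is verifying the cofinality hypothesis of \thmref{wicked}: for every countable subcomplex $L\lqs X$ there exists a countable $L'\gqs L$ in the set $\LLL$ of subcomplexes for which $C_{L'}$ has CW homotopy type and $\Gamma\to C_{L'}$ is a weak equivalence. Given $L$, invoke the hypothesis of the corollary to pick a countable $L'\gqs L$ with $(Y,*)^{(L',*)}$ contractible. The restriction map $Z\to Z(L')$ is a fibration by \lemref{restriction_map}, and the image of a Hurewicz fibration is saturated under path components of the base (lift any path in $Z(L')$ starting at a point of the image). Since $Z(L')=(Y,*)^{(L',*)}$ is contractible, hence path-connected, the image $C_{L'}$ is either empty or all of $Z(L')$; and it is nonempty because $c_X\vert_{L'}=c_{L'}\in C_{L'}$. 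Therefore $C_{L'}=Z(L')$ is contractible, so $L'\in\LLL$.

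With the hypothesis in hand, \thmref{wicked} applied to $C=Z$ yields that $(Y,*)^{(X,*)}$ has CW homotopy type and that the map $\Gamma\to(Y,*)^{(X,*)}$ is a weak equivalence. In other words $(Y,*)^{(X,*)}$ is weakly contractible and of CW homotopy type, hence contractible by J.H.C.~Whitehead's theorem. The only mildly delicate point is the small observation that the image of a Hurewicz fibration is a union of path components of the base; apart from this the reduction to \thmref{wicked} is routine and should not present further obstacles.
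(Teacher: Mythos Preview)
Your argument is correct and follows essentially the same plan as the paper: apply \thmref{wicked} and conclude via Whitehead's theorem. The only difference is cosmetic: the paper instantiates \thmref{wicked} with $Z=Y^X$ and $\Gamma=Y$ (using that $(Y,*)^{(L',*)}$ is contractible iff the section $Y\to Y^{L'}$ is a homotopy equivalence), whereas you work directly with $Z=(Y,*)^{(X,*)}$ and $\Gamma=*$; your route is marginally more direct, and your explicit observation that the image of a Hurewicz fibration is a union of path components fills in a detail the paper leaves implicit.
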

\begin{proof}
Note that $(Y,*)^{(L',*)}$ is contractible if and only if the section $Y\to Y^{L'}$ is a homotopy
equivalence. By \thmref{wicked} the space $Y^X$ has CW type and $Y\to Y^X$ is a (weak and hence genuine) homotopy
equivalence.
\end{proof}

\begin{lem}\label{obstruction}
Let $X$ and $Y$ be connected CW complexes. Assume that the path component $C$ of $g\in(Y,*)^{(X,*)}$ has
CW homotopy type. Then there exists a finite subcomplex $K$ of $X$ such that for any $L\gqs K$ and every $k\gqs 2$
the sequence of groups and homomorphisms 
\begin{equation*}
\tag{$*$} 0\to\pi_k((Y,*)^{(X,*)},g)\to\pi_k((Y,*)^{(L,*)},g\vert_L)\to\pi_{k-1}(F_L,g)\to 0
\end{equation*}
is exact. Here $F_L$ denotes the fibre of $(Y,*)^{(X,*)}\to(Y,*)^{(L,*)}$ over $g\vert_L$.
In addition, the morphism 
\begin{equation*}\tag{$**$}\pi_1((Y,*)^{(X,*)},g)\to\pi_1((Y,*)^{(L,*)},g\vert_L) \end{equation*}
is injective.
If $C$ is open in $(Y,*)^{(X,*)}$ then for $k=1$, ($*$) is an exact sequence of pointed sets,
and so is $*\to[X,Y]_*\to[L,Y]_*$ at the base point $g$.

If $g\equiv *$ then ($*$) transforms as $0\to[S^k\wedge X,Y]_*\to [S^k\wedge L,Y]_* \to [S^{k-1}\wedge(X/L),Y]_*\to 0$,
and similarly also ($**$).
\end{lem}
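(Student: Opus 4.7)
The plan is to combine the contraction results from Section~\ref{inverse_systems} with the long exact homotopy sequence of the restriction fibration. First I would apply \propref{strong_obstruction_general} to the restricted system $\setof{Z(L)}_{L\lqs X}$ of \propref{function_space} (with $Z=(Y,*)^{(X,*)}$) to obtain a finite subcomplex $K$ of $X$ such that for every $L\gqs K$ the fibre of $C\to C_L$ over $g\vert_L$ contracts in $C$. That fibre is exactly $F_L\cap C$, which is a union of path components of $F_L$ (since $C$ is a path component of $Z$) containing $g$; in particular, the path component of $g$ in $F_L$ contracts in $C$, with $g$ as endpoint.

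Next I would invoke the long exact homotopy sequence of the fibration $p\colon(Y,*)^{(X,*)}\to(Y,*)^{(L,*)}$ (a fibration by \lemref{restriction_map}) based at $g\in F_L$. For $k\gqs 1$ one has $\pi_k(Z,g)=\pi_k(C,g)$, and the inclusion induced map $\pi_k(F_L,g)\to\pi_k(C,g)$ factors through the path component of $g$ in $F_L$, which by the previous paragraph contracts in $C$. Hence this map vanishes for all $k\gqs 1$, which both splits the long sequence into the short exact sequences ($*$) for $k\gqs 2$ and yields injectivity of ($**$) at $k=1$.

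For the strengthened statement when $C$ is open, I would invoke part~(ii) of \propref{fibre_SLC_general} (or equivalently \thmref{u_M-L_general}) to enlarge $K$ so that $F_L$ itself contracts in $Z$, equivalently so that $p^{-1}(C_L)=C$ and in particular $F_L\subset C$. Then $\pi_0(F_L,g)\to\pi_0(Z,g)$ is the constant map with value $[g]$, and feeding this into the tail of the pointed-set fibration sequence yields the short exact sequence ($*$) at $k=1$; exactness of that sequence at $\pi_0(Z)$ is precisely the exactness of $*\to[X,Y]_*\to[L,Y]_*$ at $[g]$.

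Finally, when $g\equiv *$ the fibre $F_L=(Y,*)^{(X,L)}$ is canonically homeomorphic to $(Y,*)^{(X/L,*)}$ by \corref{fibre_of_restriction_fibration}, and the adjunction $\pi_k((Y,*)^{(W,*)},\const_*)\cong[S^k\wedge W,Y]_*$ transcribes the sequences into the stated form. The only genuine content is the input from Section~\ref{inverse_systems}, namely the contraction of $F_L\cap C$ in $C$ and its enhancement $F_L\subset C$ in the open case; once these are secured, the remainder is a routine diagram chase along the fibration sequence.
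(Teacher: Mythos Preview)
Your proposal is correct and follows essentially the same route as the paper: the paper's proof cites parts (i) and (ii) of \propref{fibre_SLC_general} together with the long homotopy exact sequence of the fibration and the observation $\pi_k(F_L,g)\cong\pi_k(F_L\cap C,g)$ for $k\gqs 1$, and then the adjunction for the constant case. Your invocation of \propref{strong_obstruction_general} in place of (i) of \propref{fibre_SLC_general} is harmless, since the former records the same fibre-contraction conclusion; otherwise the arguments coincide.
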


\begin{proof}
Follows from \navedi{1} and \navedi{2} of \propref{fibre_SLC_general} by applying the long homotopy exact
sequence of fibration together with the observation that $\pi_k(F_L,g)\cong\pi_k(F_L\cap C,g)$ for $k\gqs 1$.
For $g$ constant apply adjunction $\pi_k\big((Y,y_0)^{(L,x_0)},\const_{y_0}\big)\cong[S^k\wedge L,Y]_*$.
\end{proof}


\begin{rem_n}
If the function $[X,Y]_*\to[L,Y]_*$ is naturally equivalent to a group morphism, the conclusion of
\lemref{obstruction} in case $C$ is the path component of the constant map (and is open) implies it is an injection.
\end{rem_n}


\begin{cor}\label{cohomology_obstruction}
Let $Y$ be an Eilenberg-MacLane space $K(G,n)$, and let $X$ be an arbitrary CW complex.
If the path component $C$ of the constant map in $(Y,*)^{(X,*)}$ has CW type then there exists a finite subcomplex
$K$ of $X$ such that for any $L\gqs K$
and every $k$ with $1\lqs k\lqs n-1$ the inclusion induced morphism
\begin{equation*}	\tag{$*$}	\H^k(X;G)\to\H^k(L;G)	\end{equation*}
is injective. If, in addition, $C$ is open, then ($*$) is also injective for $k=n$. \qed
\end{cor}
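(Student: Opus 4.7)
The plan is to deduce the corollary by specializing \lemref{obstruction} to $g=\const_*$ and $Y=K(G,n)$, and then translating the homotopy-group injectivity statements into cohomology via adjunction. By the reduction at the end of the introductory section we may assume $X$ is connected with a single $0$-cell, so reduced and unreduced cohomology agree in positive degrees.

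First, invoke \lemref{obstruction} to obtain a finite subcomplex $K\lqs X$ so that, for every $L\gqs K$, the exact sequence $(*)$ of the lemma forces the morphism
\[ \pi_k\bigl((Y,*)^{(X,*)},\const\bigr)\to\pi_k\bigl((Y,*)^{(L,*)},\const\bigr) \]
to be injective for every $k\gqs 1$ (the cases $k\gqs 2$ from the exact sequence, the case $k=1$ from ($**$)). Since $g$ is the constant map, the final sentence of the lemma identifies this morphism, via adjunction, with
\[ [S^k\wedge X,Y]_*\to[S^k\wedge L,Y]_*. \]

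Second, substitute $Y=K(G,n)$. Using the representability $[-,K(G,n)]_*\cong\widetilde H^n(-;G)$ on pointed CW complexes together with the suspension isomorphism $\widetilde H^n(S^k\wedge W;G)\cong\widetilde H^{n-k}(W;G)$, the displayed morphism becomes the restriction map $H^{n-k}(X;G)\to H^{n-k}(L;G)$ in ordinary cohomology (reduced $=$ unreduced in positive degrees). Reindexing $m=n-k$, the condition $1\lqs k$ together with $n-k\gqs 1$ (needed so that the target is an honest positive-degree cohomology group of $X$) gives exactly $1\lqs m\lqs n-1$, which is the asserted range.

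Third, handle the additional case when $C$ is open. Here \lemref{obstruction} furnishes also the pointed-set injectivity of $[X,Y]_*\to[L,Y]_*$ at the class of the constant map. For $Y=K(G,n)$ this is the representability statement $H^n(X;G)\to H^n(L;G)$, corresponding to $k=0$, i.e.\ $m=n$, which extends the injectivity range to $k=n$.

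There is essentially no genuine obstacle here; the only mild care needed is to verify (a) that the $k=1$ statement of the lemma is indeed a group-morphism injection (Lemma \ref{obstruction} explicitly states $(**)$), and (b) that reduced versus unreduced cohomology causes no trouble, which is guaranteed by the standing reduction to connected $X$ with a $0$-cell base point.
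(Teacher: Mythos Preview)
Your proof is correct and follows exactly the intended route: the paper marks this corollary with \qed\ precisely because it is the specialization of \lemref{obstruction} to $g=\const$ and $Y=K(G,n)$, followed by the adjunction identification $[S^k\wedge X,K(G,n)]_*\cong\H^{n-k}(X;G)$. The only point you leave implicit is that for $k=n$ the lemma gives pointed-set injectivity at the base point, which upgrades to honest injectivity because $\H^n(-;G)$ is a group and restriction is a homomorphism (cf.\ the remark following \lemref{obstruction}); this is routine.
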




\section{Counting path components}\label{counting}

Here we focus on the restrictions that we may impose on path components of a function
space of CW type, an issue which is delicate since the set of path components in general
does not carry a group structure. We begin with the case when it does.

\begin{prop}\label{group_components}
Let $X$ and $Y$ be CW complexes such that the space $Y^X$ has CW type. If either $X$ is an $H$-cogroup or
$Y$ is an $H$-group, then for all large enough finite $K$, the functions
\[ \pi_0((Y,*)^{(X,*)})\to\pi_0((Y,*)^{(K,*)})\text{ and }\pi_0(Y^X)\to\pi_0(Y^K) \]
are injective.
\end{prop}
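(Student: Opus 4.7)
The plan is to invoke \thmref{u_M-L_general} at the constant component of $(Y,*)^{(X,*)}$ to obtain, for all finite $K\gqs K_0$, triviality of the ``kernel'' $\setof{[f]\,\vert\,f\vert_K\simeq *}$ of the restriction on $\pi_0$, and then to promote this to genuine injectivity using the group structure that either the $H$-group on $Y$ or the $H$-cogroup on $X$ places on $[X,Y]_*$. Since $Y^X$ has CW type, so does $(Y,*)^{(X,*)}$ by \corref{cor_stasheff_theorem}, and each of its path components is open with CW type. As $(Y,*)^{(X,*)}\in\FFF(X,Y)$ (\propref{function_space}), applying \thmref{u_M-L_general} to the constant component $C_0$ furnishes the desired finite $K_0$: for every finite $K\gqs K_0$, the preimage of $(C_0)_K$ under restriction equals $C_0$, i.e.~a pointed $f\colon X\to Y$ null-homotopic on $K$ is null-homotopic on all of $X$.

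If $Y$ is an $H$-group, then both $(Y,*)^{(X,*)}$ and $(Y,*)^{(K,*)}$ are $H$-groups under pointwise multiplication and the restriction is an $H$-morphism, so on $\pi_0$ it is a group homomorphism and the trivial kernel above immediately yields injectivity.

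If $X$ is an $H$-cogroup with comultiplication $\mu$ and coinverse $\nu$, then $(Y,*)^{(X,*)}$ is still an $H$-group (via $\mu$) but the target need not be, and restriction is not a priori a homomorphism. The key step is that, by compactness of $\mu(K_0)\subset X\vee X$ and of $\nu$ applied to finite subcomplexes, one can choose a finite $L\gqs K_0$ so that the restriction $(f_1\cdot f_2^{-1})\vert_{K_0}$ of the $H$-cogroup product depends, up to pointed homotopy, only on $f_1\vert_L$ and $f_2\vert_L$. For any finite $K\gqs L$, the hypothesis $f_1\vert_K\simeq f_2\vert_K$ gives $f_1\vert_L\simeq f_2\vert_L$; substituting $f_2\vert_L$ for $f_1\vert_L$ in the formula yields $(f_1\cdot f_2^{-1})\vert_{K_0}\simeq (f_2\cdot f_2^{-1})\vert_{K_0}\simeq *$, whence \thmref{u_M-L_general} gives $f_1\cdot f_2^{-1}\simeq *$ on $X$, that is, $[f_1]=[f_2]$ in $[X,Y]_*$.

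For the free version, evaluation at the base point induces the decomposition $\pi_0(Y^X)=\bigsqcup_{[y_0]\in\pi_0(Y)}\pi_0((Y,y_0)^{(X,*)})/\pi_1(Y,y_0)$, compatibly with the analogous decomposition for $K$ and with the restriction, which is $\pi_1(Y,y_0)$-equivariant via fiber transport in the fibration $Y^X\to Y$. The pointed argument applies to each $(Y,y_0)$ (in the $H$-group case after $H$-translation sending $y_0$ to the unit, and in the $H$-cogroup case directly, since $\mu$ ignores base points in $Y$), and an injective equivariant set map of $\pi_1$-sets descends to an injection of orbit sets. The main obstacle is the $H$-cogroup case: since $[K,Y]_*$ carries no natural group structure, triviality of the kernel is not enough, and the resolution is the compactness step that forces $(f_1\cdot f_2^{-1})\vert_{K_0}$ to be detected by the finite piece $L$, allowing the substitution trick.
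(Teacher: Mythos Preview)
Your proposal is correct and follows essentially the same approach as the paper: both cases hinge on \thmref{u_M-L_general} applied at the constant component to get ``trivial kernel'' at all large finite $K$, with the $H$-group case immediate via the restriction being a homomorphism, and the $H$-cogroup case handled by the compactness trick $\mu(K_0)\subset L\vee L$ so that $(f_1\cdot f_2^{-1})\vert_{K_0}$ is determined by restrictions to $L$ and the substitution $f_1\vert_L\leadsto f_2\vert_L$ forces $(f_1\cdot f_2^{-1})\vert_{K_0}\simeq *$. For the free statement, your orbit decomposition $\pi_0(Y^X)=\bigsqcup_{[y_0]}\pi_0((Y,y_0)^{(X,*)})/\pi_1(Y,y_0)$ together with equivariance is exactly what the paper packages as ``the homotopy ladder of evaluation fibrations and \navedi{1} of \lemref{easy}''; both amount to the same diagram chase.
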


\begin{proof}
We prove the pointed version first. If $Y$ is an $H$-group then the contravariant functor $C(\_,Y)_*$,
$(L,*)\mapsto(Y,*)^{(L,*)}$ maps into the category of $H$-groups and $H$-morphisms, hence this case is
an immediate consequence of \thmref{u_M-L_general}.

Assume that $X$ is an $H$-cogroup and denote comultiplication $\nu\colon X\to X\vee X$ and coinverse
$\iota\colon X\to X$. The path component $C$ of the constant map in $(Y,*)^{(X,*)}$ is open and has CW
type hence by \thmref{u_M-L_general} there exists a finite subcomplex $K$ of $X$ such that the fibre of
\[ (Y,*)^{(X,*)}\to(Y,*)^{(L,*)} \] contracts in $C$ for every $L\gqs K$. In particular, if for some
$f\colon(X,*)\to(Y,*)$ and $L\gqs K$ the restriction $f\vert_L$ is nullhomotopic, then also $f$ is
nullhomotopic.

The image $\nu(K)$ is a compact subset of $X\vee X$, hence there exists a finite subcomplex $L$ of $X$
such that $L\gqs K$ and $\nu(K)\subset L\vee L$. 

Let $M$ be a finite subcomplex that contains $L$. Let $f,g\colon(X,*)\to(Y,*)$ be such that $f\vert_M$
and $g\vert_M$ are homotopic. Then so also are $f\vert_L,g\vert_L$. Pick a (pointed) homotopy $h\colon L\times I\to Y$,
and define $H\colon K\times I\to Y$ as follows
\begin{equation*}\tag{$*$}
K\times I\xrightarrow{\nu}(L\vee L)\times I=L\times I\sqcup_{*\times I}L\times I
	\xrightarrow{h\sqcup g\circ\iota\circ\pr_L}Y\vee Y\xrightarrow{\nabla}Y.
\end{equation*}
Then $H_0$ equals the composite $K\to X\xrightarrow{\nu}X\vee X\xrightarrow{f\vee g\circ\iota}Y\vee Y\xrightarrow{\nabla}Y$,
and $H_1$ equals the composite $K\to X\xrightarrow{\nu}X\vee X\xrightarrow{g\vee g\circ\iota}Y\vee Y\xrightarrow{\nabla}Y$.
This is to say that $H_0$ is the restriction of $f-g$ to $K$ and $H_1$ is the restriction of $g-g$ to $K$.
Since $g-g$ is nullhomotopic, so is $H_1$. Thus $(f-g)\vert_K$ is nullhomotopic. But then $f-g$ is also nullhomotopic
which is to say that $f$ and $g$ are homotopic, using the $H$-cogroup structure. Thus
$\pi_0((Y,*)^{(X,*)})\to\pi_0((Y,*)^{(M,*)})$ is injective for all $M\gqs L$.

The homotopy ladder with exact rows which is induced by the morphism of evaluation fibrations
$(Y^X\to Y)\to(Y^M\to Y)$ implies that $\pi_0(Y^X)\to\pi_0(Y^M)$ is injective. (See \navedi{1} of \lemref{easy}.)
\end{proof}

\begin{defn}
A space $Y$ has finite homotopy groups (respectively of cardinality at most $\aleph_\eta$) if it has finitely many
(respectively at most $\aleph_\eta$) path components and the homotopy groups of each path component are finite
(respectively at most $\aleph_\eta$).

An $n$-ad $(Y;\BB)$ has finite homotopy groups if $Y$ and all possible intersections $\cap_iB_i$ have finite
homotopy groups.

Similarly, an $n$-ad $(Y;\BB)$ has homotopy groups of cardinality at most $\aleph_\eta$ if $Y$
and all possible intersections $\cap_iB_i$ have homotopy groups of cardinality at most $\aleph_\eta$.

An $n$-ad $(Y;\BB)$ is said to be an $r$-Postnikov section if $Y$ and all possible intersections $\cap_iB_i$ are
$r$-Postnikov sections.
\end{defn}

\begin{prop}\label{homotopy_groups_of_function_space}
\begin{enroman}
\item	Let $(L;\AA)$ be an $n$-ad with $L$ a finite complex.
	Let $(Y;\BB)$ have finite homotopy groups (respectively of cardinality $\aleph_\eta$). Then
	$(Y;\BB)^{(L;\AA)}$ has finite homotopy groups (respectively of cardinality $\aleph_\eta$).
\item	Let $(Y;\BB)$ be an $r$-Postnikov $n$-ad. Let $(L';\AA)$ be an $n$-ad with $L'$ arbitrary.
	Suppose $(L',L)$ is an adjunction of one cell of dimension $d$. If $d\gqs r+1$ then the restriction fibration
	\[ (Y;\BB)^{(L';\AA)}\to(Y;\BB)^{(L;\AA\cap L)} \] is a homotopy equivalence onto its image, and if $d\gqs r+2$,
	then it is also surjective.
\end{enroman}
\end{prop}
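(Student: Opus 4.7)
\emph{Part (i)} proceeds by induction on the number of cells of $L$. The base case $L=\{x_0\}$ (a single $0$-cell) reduces $(Y;\BB)^{(L;\AA)}$ to the intersection $\bigcap_{i\colon x_0\in A_i}B_i$ (or $Y$ itself when no $A_i$ contains $x_0$), which has finite, resp.\ cardinality $\aleph_\eta$, homotopy groups by hypothesis on the $n$-ad. For the inductive step, write $L=L_0\cup_\varphi D^d$ where $L_0$ has one fewer cell. By \lemref{pushout-pullback} combined with \lemref{restriction_map}, restriction yields a Hurewicz fibration
\[
p\colon(Y;\BB)^{(L;\AA)}\longrightarrow(Y;\BB)^{(L_0;\AA\cap L_0)}
\]
whose fibre $F_g$ over $g$ is homeomorphic to $\{h\colon D^d\to B_S\mid h|_{S^{d-1}}=g\circ\varphi\}$, where $S=\{i\mid e^d\subset A_i\}$ and $B_S=\bigcap_{i\in S}B_i$ (with $B_S=Y$ if $S=\emptyset$). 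When non-empty, $F_g$ is homotopy equivalent to $\Omega^d B_S$, whose homotopy groups inherit the finite, resp.\ $\aleph_\eta$, bound from those of $B_S$. The long exact sequence of $p$ combined with the inductive hypothesis applied to $(Y;\BB)^{(L_0;\AA\cap L_0)}$ then yields the same conclusion for $(Y;\BB)^{(L;\AA)}$.

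\emph{Part (ii)} uses the analogous fibre description for
\[
p\colon(Y;\BB)^{(L';\AA)}\longrightarrow(Y;\BB)^{(L;\AA\cap L)}.
\]
Non-empty fibres $F_g$ are equivalent to $\Omega^d B_S$, with $\pi_k(\Omega^d B_S)=\pi_{k+d}(B_S)$. Since $(Y;\BB)$ is $r$-Postnikov, $\pi_j(B_S)=0$ for $j\gqs r+1$. For $d\gqs r+1$ the inequality $k+d\gqs r+1$ holds for every $k\gqs 0$, so $\Omega^d B_S$ is weakly contractible. Moreover $F_g$ has CW homotopy type (via \thmref{milnor_theorem} applied to the compact disk and sphere, together with \thmref{stasheff_theorem}), so weak contractibility upgrades to genuine contractibility. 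For $d\gqs r+2$, additionally $\pi_{d-1}(B_S)=0$, so the obstruction $[g\circ\varphi]\in\pi_{d-1}(B_S)$ vanishes for every $g$ and every fibre is non-empty, giving surjectivity.

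For the conclusion ``homotopy equivalence onto image'', identify $p$ via the pullback square of \lemref{pushout-pullback} as a pullback of a principal fibration in the sense of \corref{principal_fibration}, with fibre $\Omega^d B_S$. The image of $p$ consists of those path components of the base on which the obstruction vanishes, and is therefore a union of path components. A principal fibration with contractible fibre is fibre homotopy equivalent to the trivial one, via the Brown--Heath coglueing theorem \cite{brown-heath}, so $p$ restricts to a homotopy equivalence onto its image.

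\textbf{Main obstacle.} The technical heart is in part (ii): upgrading ``non-empty fibres are weakly contractible'' to the genuine statement that $p$ is a homotopy equivalence onto its image, since neither the source nor the target of $p$ is a priori of CW homotopy type. This hinges on recognising $p$ as (a pullback of) a principal fibration and appealing to Brown--Heath, rather than any direct Whitehead-style argument on the function spaces themselves.
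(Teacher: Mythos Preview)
Your argument is correct and follows the paper's strategy: induct on cells, realise the restriction fibration as the pullback of the principal fibration $(B_S)^{D^d}\to(B_S)^{S^{d-1}}$ with fibre $\Omega^d B_S$, then use \lemref{easy} for (i) and contractibility of the fibre for (ii). The only organisational difference concerns (ii): the paper first evaluates at a point $x_0\in L$ to obtain fibrations over the CW-type space $B''=\bigcap_{j:\,x_0\in A_j}B_j$ and invokes Dold's Theorem~6.3 over $B''$ to reduce to the pointed restriction, whereas you apply the analogous reasoning directly to $q$ over the Milnor-CW base $(B_S)^{S^{d-1}}$ and then pull the resulting fibre homotopy equivalence back. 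Both routes serve the same purpose---getting the ``contractible fibre $\Rightarrow$ homotopy equivalence'' step to happen over a base that is known a priori to be Dold, since $(Y;\BB)^{(L;\AA\cap L)}$ itself need not be. One small caveat: \lemref{pushout-pullback} as stated is for plain function spaces, so you should verify separately (as in the paper's diagram~$(\bullet)$) that the $n$-ad pullback square is indeed a pullback.
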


\begin{lem}\label{easy}
\begin{enroman}
\item	Let $p\colon Y\to X$ be the principal fibration obtained by pulling back $f\colon X\to B$ over
	the evaluation $PB\to B$. Assume $b_0$ is the base point of $B$. Then there exists a left action of
	$\pi_1(B,b_0)$ on $\pi_0(Y)$ such that $p_\#\colon\pi_0(Y)\to\pi_0(X)$ collapses precisely the orbits
	of this action. Moreover for $g\in\pi_1(B,b_0)$ and $[y]\in\pi_0(Y)$ we have $g[y]=[y]$ if and only if
	there exists $\Tilde g\in\pi_1(X,p(y))$ such that $f_\#(\Tilde g)=g$.
\item	Let $E\to B$ be a fibration. If $E$ and $B$ have finite (resp. $\aleph_\eta$) homotopy groups, then so
	has each fibre. Conversely, if $B$ and all fibres of $E\to B$ have finite (resp. $\aleph_\eta$) homotopy
	groups, then so has $E$.\qed
\end{enroman}
\end{lem}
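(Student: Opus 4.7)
The plan is to prove \navedi{1} by unwinding the definition of the pullback: $Y$ is homeomorphic to the subspace $\{(x,\gamma)\in X\times PB:\gamma(1)=f(x)\}$ and $p$ is projection to the first factor. I define the $\pi_1(B,b_0)$-action pointwise by $\alpha\cdot(x,\gamma)=(x,\alpha*\gamma)$ (concatenation in $B$). Rel-endpoint homotopic choices of $\alpha$ produce points of $Y$ in the same path component with the same $x$-coordinate, so this descends to a well-defined left action on $\pi_0(Y)$ which respects composition of loops.

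Next I verify the orbit description of the fibers of $p_\#$. One direction is immediate since orbit-equivalent points have the same $X$-coordinate. For the other, given $y_0,y_1\in Y$ with $p_\#[y_0]=p_\#[y_1]$, I pick a path in $X$ from $p(y_0)$ to $p(y_1)$ and lift it to $Y$ starting at $y_0$ using the homotopy lifting property for the fibration $p$; this reduces the problem to the case $y_0=(x,\gamma_0)$, $y_1=(x,\gamma_1)$ in the fiber over $x$, where $\alpha=\gamma_1*\bar\gamma_0$ is a loop at $b_0$ satisfying $[\alpha]\cdot[y_0]=[y_1]$. To determine the stabilizer of $[(x,\gamma)]$, I note that a path in $Y$ from $(x,\gamma)$ to $(x,\alpha*\gamma)$ is the same data as a pair $(\tilde g,H)$ where $\tilde g\colon I\to X$ is a loop at $x$ and $H\colon I\times I\to B$ is a square with $H(\cdot,0)\equiv b_0$, $H(\cdot,1)=f\circ\tilde g$, $H(0,\cdot)=\gamma$, $H(1,\cdot)=\alpha*\gamma$. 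Reading the boundary of this square yields the rel-endpoint identity $\alpha\simeq\gamma*(f\circ\tilde g)*\bar\gamma$ in $\pi_1(B,b_0)$, that is, $f_\#[\tilde g]=[\alpha]$ once basepoints are adjusted via $\gamma$; the converse direction is obtained by running this construction backwards.

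For \navedi{2}, I invoke the long exact sequence of homotopy groups of the fibration $F\hookrightarrow E\to B$. For $n\geq 1$ the segment $\pi_{n+1}(B)\to\pi_n(F)\to\pi_n(E)\to\pi_n(B)$ realizes $\pi_n(F)$ as an extension of a subgroup of $\pi_n(E)$ by a quotient of $\pi_{n+1}(B)$, giving $|\pi_n(F)|\leq|\pi_n(E)|\cdot|\pi_{n+1}(B)|$, and symmetrically $|\pi_n(E)|\leq|\pi_n(B)|\cdot|\pi_n(F)|$. For $\pi_0$, I restrict attention to a single path-component $B'$ of $B$: each path-component $E_j$ of $E$ meeting $p^{-1}(B')$ contributes $\pi_0(F\cap E_j)\cong\pi_1(B')/p_\#\pi_1(E_j)$, which has cardinality at most $|\pi_1(B')|$. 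Summing over components on each side, and observing that a union of at most $\aleph_\eta$ sets of cardinality at most $\aleph_\eta$ still has cardinality at most $\aleph_\eta$, gives both implications (and similarly in the finite case).

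The main obstacle is minor: the pointed-set bookkeeping in the $\pi_0$ case of \navedi{2}, together with the orientation conventions for concatenation that dictate whether the action in \navedi{1} is on the left or the right. Both are routine once the conventions are pinned down; no delooping or CW technology is needed.
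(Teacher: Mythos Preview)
Your proof is correct. The paper itself gives no proof of this lemma (it ends with a \qed\ immediately after the statement, signalling that the author regards it as routine), so there is no approach to compare against; you have simply supplied the standard argument via the explicit pullback description for \navedi{1} and the long exact sequence with $\pi_0$ bookkeeping for \navedi{2}, which is exactly what the label ``easy'' and the paper's silence suggest.
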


\begin{proof}[Proof of \propref{homotopy_groups_of_function_space}]
Denote for convenience $Y=B_0$, $L=A_0$.
Suppose $L=\setof{x_1,\dots,x_r}$, a finite discrete set. Let $K_j$ denote the set of indices $i$ for which
$x_j\in A_i$. Then $(Y;\BB)^{(L;\AA)}=\prod_{j=1}^r\big(\cap_{i\in K_j}B_i\big)$ which has finite ($\aleph_\eta$)
homotopy groups.

Thus we can make an induction on the number of cells of dimension $\gqs 1$. In other words, it suffices to prove
the following. If $(L';\AA)$ is an $n$-ad where $L'$ is obtained from a subcomplex $L$ by attaching a cell
$e$, and $(Y;\BB)^{(L;\AA\cap L)}$ has finite ($\aleph_\eta$) homotopy groups, then so has $(Y;\BB)^{(L';\AA)}$.

Let $x_0$ be an appropriate point (not necessarily a $0$-cell) in $L'$. Let $J$ denote the set of $j\gqs 0$ for
which $x_0\in A_j$.
We can consider the fibration (see the remark after \lemref{restriction_map})
\begin{equation*}\tag{\dag}
(Y;\BB)^{(L',\AA)}\xrightarrow{\eps}(Y;\BB)^{(\setof{x_0},\AA\cap\setof{x_0})}=\cap_{j\in J}B_j=B''.
\end{equation*}
By \lemref{easy} it suffices to show that for any $y_0\in B''$ the fibre of $\eps$ over $y_0$ has finite
($\aleph_\eta$) homotopy groups. The fibre of (\dag) over $y_0$ is the space $(Y,y_0\oplus\BB)^{(L',x_0\oplus\AA)}$.

Suppose $x_0\in L$.
By inductive assumption and \lemref{easy} also the space $(Y,y_0\oplus\BB)^{(L,x_0\oplus\AA\cap L)}$ has finite
($\aleph_\eta$) homotopy groups.

Let $K$ denote the set of indices $i\gqs 0$ for which $e\subset A_i$, and let $A'=\cap_{i\in K}A_i$.
Let $\varphi\colon S^{d-1}\to A'\cap L\lqs L$
be the attaching map, and $\Phi\colon B^d\to A'\lqs L'$ the characteristic map for $e$. Pick a base point
$\zeta_0\in S^{d-1}$ and set $x_0=\varphi(\zeta_0)$. Note that since $\Bar e\subset A'$, the set $K$ is
a subset of $J$. Thus $y_0\in\cap_{j\in J}B_j\subset\cap_{i\in K}B_i=B'$.

Consider the following diagram.
\begin{equation*}\tag{$\bullet$}\begin{diagram}
\node{(Y;y_0\oplus\BB)^{(L',x_0\oplus\AA)}} \arrow{e,t}{\text{restriction}} \arrow{s}
	\node{(B',y_0)^{(A',x_0)}}
	\arrow{e,t}{\Phi^\#} \arrow{s} \node{(B',y_0)^{(B^d,\zeta_0)}} \arrow{s} \\
\node{(Y;y_0\oplus\BB)^{(L,x_0\oplus\AA\cap L)}} \arrow{e,t}{\text{restriction}}
	\node{(B',y_0)^{(A'\cap L,x_0)}} \arrow{e,t}{\varphi^\#}
	\node{(B',y_0)^{(S^{d-1},\zeta_0)}}
\end{diagram}\end{equation*}
The left-hand square is a pullback square by construction while the right-hand square is a pullback induced
by the pushout diagram of the adjunction of $e$. By `horizontal composition', the large rectangle is also a pullback.
This exhibits
\begin{equation*}\tag{\ddag} (Y;y_0\oplus\BB)^{(L',x_0\oplus\AA)}\to(Y;y_0\oplus\BB)^{(L,x_0\oplus\AA\cap L)} \end{equation*}
as a principal fibration with fibre $(B',y_0)^{(S^{d},\zeta_0)}$. By inductive hypothesis and another application of
\lemref{easy} we conclude the proof of \navedi{1}.

Consider the following morphism of fibrations over $B''$.
\begin{equation*}\begin{diagram}
\node{(Y;\BB)^{(L';\AA)}} \arrow[2]{e} \arrow{se,b}{\eps_{x_0}} \node[2]{(Y;\BB)^{(L;\AA\cap L)}} \arrow{sw,b}{\eps_{x_0}} \\
\node[2]{B''}
\end{diagram}\end{equation*}
By Theorem 6.3 of Dold \cite{dold}, the map $(Y;\BB)^{(L';\AA)}\to(Y;\BB)^{(L;\AA\cap L)}$ is a homotopy equivalence onto
image if and only if for every $y_0\in B''$ the induced map of fibres over $y_0$ is a homotopy equivalence onto image.
The induced map of fibres is (\ddag).

Since $d\gqs r+1$ and $B'$ is an $r$-Postnikov section, the space $(B',y_0)^{(S^{d-1},\zeta_0)}$ is homotopy discrete.
In particular, the path component of the constant map is contractible, and the pullback rectangle of diagram ($\bullet$)
implies that (\ddag) is a homotopy equivalence onto image. If $d\gqs r+2$ then $(B',y_0)^{(S^{d-1},\zeta_0)}$ is contractible
hence (\ddag) is surjective. This concludes the proof of \navedi{2}.
\end{proof}

The following is a very rough estimate which we do by simply counting.

\begin{prop}\label{homotopy_sets_1}
Let $(Y;\BB)^{(X;\AA)}$ have CW homotopy type. Assume that the homotopy groups of $Y$ are of cardinality at most
$\aleph_\eta$, and that $X$ has $\aleph_\xi$ cells. Then $(Y;\BB)^{(X;\AA)}$ has at most
$\aleph_\xi\times\aleph_\eta=\aleph_{\max\setof{\xi,\eta}}$ path components.

In particular, if $Y$ is a countable complex, then $(Y;\BB)^{(X;\AA)}$ has at most $\aleph_\xi$ path components.
\end{prop}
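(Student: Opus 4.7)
The plan is to combine the phantom-component analysis of \thmref{u_M-L_general} with a direct cardinality count of finite subcomplexes. Since $Z = (Y;\BB)^{(X;\AA)}$ has CW homotopy type it is locally path-connected, so every path component $C$ of $Z$ is open. By \propref{function_space}, the assignment $L \mapsto Z(L)$ is a restricted inverse system of fibrations indexed over the regular lattice $\XXX$ of subcomplexes of $X$, with the set $\KKK$ of compact elements being the finite subcomplexes, and by \navedi{3} of \lemref{second_lemma} each $Z(K)$ with $K \in \KKK$ has CW type. Thus the hypotheses of \propref{fibre_SLC_general} and \thmref{u_M-L_general} are satisfied.

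Applying the last assertion of \thmref{u_M-L_general} to each open path component $C$, I would extract a finite subcomplex $K_C$ of $X$ such that the preimage of $C_{K_C}$ under the restriction $Z \to Z(K_C)$ equals $C$. Consequently two path components $C \neq C'$ with $K_C = K_{C'} = K$ must have distinct images $C_K \neq C'_K$ in $Z(K)$, so the map
\[
	\pi_0(Z) \longrightarrow \coprod_{K \in \KKK} \pi_0(Z(K)), \qquad C \mapsto (K_C, C_{K_C}),
\]
is injective.

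It remains to bound the target. For each $K \in \KKK$, \navedi{1} of \propref{homotopy_groups_of_function_space} gives $|\pi_0(Z(K))| \leq \aleph_\eta$ (reading the hypothesis on $Y$ as also bounding the homotopy groups of the $n$-ad pieces $\cap_iB_i$, which is implicit in the statement). Since $\aleph_\xi$ is an infinite cardinal, the number of finite subcomplexes of $X$ is at most $\aleph_\xi$, each being specified by a finite subset of the set of cells. Hence
\[
	|\pi_0(Z)| \;\leq\; \sum_{K \in \KKK} |\pi_0(Z(K))| \;\leq\; \aleph_\xi \cdot \aleph_\eta \;=\; \aleph_{\max\setof{\xi,\eta}}.
\]
The second statement follows at once, since for countable $Y$ we have $\aleph_\eta = \aleph_0 \leq \aleph_\xi$. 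I expect the only mild obstacle to be the $n$-ad bookkeeping around \propref{homotopy_groups_of_function_space}; the real content is the openness-plus-phantom-components step, which is already supplied by \thmref{u_M-L_general}.
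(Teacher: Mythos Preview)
Your proof is correct and follows essentially the same route as the paper: use \thmref{u_M-L_general} (via openness of path components) to assign to each component a finite subcomplex on which it is determined, then inject $\pi_0(Z)$ into $\coprod_{K\in\KKK}\pi_0(Z(K))$ and bound each factor by \propref{homotopy_groups_of_function_space} and $|\KKK|$ by $\aleph_\xi$. Your remark about the $n$-ad bookkeeping is apt but harmless, since the paper's convention on ``homotopy groups of cardinality at most $\aleph_\eta$'' for an $n$-ad already includes the intersections $\cap_i B_i$.
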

\begin{proof}
Let $\KKK$ be the set of finite subcomplexes of $X$. Note that the cardinality of $\KKK$ is also $\aleph_\xi$.
By \thmref{u_M-L_general} there exists for each $[g]\in[(X;\AA),(Y;\BB)]$
an element $K=K(g)\in\KKK$ such that the preimage of $[g\vert_K]$ under $[(X;\AA),(Y;\BB)]\to[(K;\AA\cap K),(Y;\BB)]$
is exactly $[g]$. By making a choice over all path components we define a function $[(X;\AA),(Y;\BB)]\to\KKK$. Evidently
for each $K\in\KKK$ the function
\[ \Phi^{-1}(K)\to[(K;\AA\cap K),(Y;\BB)] \]
is an injection. By \navedi{1} of \propref{homotopy_groups_of_function_space} the cardinality of $\Phi^{-1}(K)$ is at most
$\aleph_\eta$. Since $[(X;\AA),(Y;\BB)]=\coprod_{K\in\KKK}\Phi^{-1}(K)$, the assertion follows.
\end{proof}

\begin{defn}
For a topological space $Z$ let $\Compon(Z)$ denote the quotient space of $Z$ obtained by collapsing path components.
\end{defn}

\begin{lem}\label{component_set}
Let $\dots\to Z_3\xrightarrow{p_3} Z_2\xrightarrow{p_2}Z_1$ be an inverse sequence of spaces of CW homotopy type,
and let $\zeta=\setof{\zeta_i}$ be an element of the limit space $Z_\infty$. If $\ll\pi_1(Z_i,\zeta_i)$ vanishes
then $\Compon Z_\infty$ is homeomorphic to $\lim\Compon Z_i$.
\end{lem}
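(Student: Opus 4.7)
The plan is to exhibit the natural continuous bijection $\Phi\colon\Compon Z_\infty\to\lim\Compon Z_i$ induced by the projections $P^i$, and then verify it is a homeomorphism; throughout I use that the bonding maps $p_i$ are fibrations, so that \propref{homotopy_groups_of_inverse_limits} and Hurewicz path lifting are available.

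First I construct $\Phi$. Each $Z_i$ has CW homotopy type and is therefore locally path-connected, so its path components are open and $\Compon Z_i$ is discrete; consequently $\lim\Compon Z_i$ is totally path-disconnected. The compositions $Z_\infty\xrightarrow{P^i}Z_i\to\Compon Z_i$ are continuous and compatible, inducing a continuous map $Z_\infty\to\lim\Compon Z_i$. Since the target is totally path-disconnected, this map is constant on path components and factors through $\Compon Z_\infty$, yielding the continuous $\Phi$ by the universal property of the quotient topology.

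For bijectivity I apply the Milnor sequence at basepoint $\zeta$:
\[ * \to \ll\pi_1(Z_i,\zeta_i) \to \pi_0(Z_\infty,\zeta) \to \lim\pi_0(Z_i,\zeta_i) \to *. \]
Surjectivity of the rightmost arrow gives surjectivity of $\Phi$, and the hypothesis $\ll\pi_1(Z_i,\zeta_i)=0$ trivializes the fibre over $[\zeta_\bullet]$. For any other compatible sequence $[\xi_\bullet]\in\lim\pi_0(Z_i)$, surjectivity supplies a lift $\xi\in Z_\infty$, and the $\Phi$-fibre over $[\xi_\bullet]$ is a torsor under $\ll\pi_1(Z_i,\xi_i)$, which must also vanish; thus $\Phi$ is a bijection.

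For the homeomorphism I show $\Phi$ is open. Given open $U\subset\Compon Z_\infty$ with preimage $\tilde U\subset Z_\infty$ an open union of path components, and $\eta\in\tilde U$, the inverse limit topology provides a basic open $(P^j)^{-1}(V_j)\subset\tilde U$ with $V_j\subset Z_j$ open. Let $V_j'$ be the union of the path components of $Z_j$ that meet $V_j$. For $\nu\in(P^j)^{-1}(V_j')$, choose $v\in V_j$ in the same path component of $Z_j$ as $\nu_j$ and a path $\alpha\colon I\to Z_j$ from $\nu_j$ to $v$; lifting $\alpha$ successively through the fibrations $p_j,p_{j+1},\ldots$ starting from $\nu_{j+1},\nu_{j+2},\ldots$, and composing with the projections for $k<j$, assembles a path in $Z_\infty$ from $\nu$ to some $\mu$ with $\mu_j=v\in V_j$. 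Hence $\mu\in\tilde U$ and $\nu\sim\mu$, so $\nu\in\tilde U$; thus $(P^j)^{-1}(V_j')\subset\tilde U$ and $\Phi(U)$ is a union of basic opens of $\lim\Compon Z_i$.

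The hard part will be the fibrewise injectivity over compatible sequences $[\xi_\bullet]\neq[\zeta_\bullet]$: the hypothesis only yields vanishing of $\ll\pi_1$ at the basepoint $\zeta$, while triviality of the torsor over $[\xi_\bullet]$ requires $\ll\pi_1(Z_i,\xi_i)=0$. Since $\ll^1$ of a tower of groups is not preserved under arbitrary inner twists of the bonding maps induced by path-transport, a delicate propagation argument — likely via coherent path lifting in the $p_i$ to construct the needed $\ll^1$-triviality at $\xi$ from that at $\zeta$ — will be required to close this gap.
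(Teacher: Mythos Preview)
You are right to flag the injectivity step as the weak point, and your proposed fix cannot work: vanishing of $\ll\pi_1(Z_i,\zeta_i)$ at a \emph{single} thread $\zeta$ does not control the fibres of $\Phi$ over components disjoint from the $\zeta_i$. For a concrete obstruction, take $Z_i=S^1\sqcup S^1$ with bonding map the identity on the second circle and a degree-$p$ self-map on the first; with $\zeta$ in the second thread one has $\ll\pi_1(Z_i,\zeta_i)=*$, yet $Z_\infty$ has uncountably many path components mapping to the single point of $\lim\Compon Z_i$ coming from the first thread. No coherent path-lifting will propagate Mittag--Leffler from $\zeta$ to a $\xi$ whose coordinates lie in different components, because the towers $\{\pi_1(Z_i,\zeta_i)\}$ and $\{\pi_1(Z_i,\xi_i)\}$ are then unrelated. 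The paper's proof glosses over exactly this point (it simply asserts bijectivity from \propref{homotopy_groups_of_inverse_limits}), so the lemma as literally stated is too strong; in its one application (\propref{homotopy_sets_2}) the ambient hypothesis that the whole function space has CW type gives, via \thmref{u_M-L_general}, the Mittag--Leffler property at \emph{every} basepoint, which is what is actually needed.

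For the homeomorphism step your openness-by-path-lifting argument is correct, but the paper's route is shorter and avoids it entirely. Since each $\Compon Z_j$ is discrete, every function out of it is continuous; one inductively chooses sections $\vartheta_j\colon\Compon Z_j\to Z_j$ of the quotient maps satisfying $p_j\circ\vartheta_j=\vartheta_{j-1}\circ\Bar p_j$ (the fibration hypothesis makes $p_j$ surjective from each path component onto its image component, so the required lift exists). The $\vartheta_j$ assemble to a continuous map $\lim_j\Compon Z_j\to Z_\infty$, and composing with the quotient $Z_\infty\to\Compon Z_\infty$ gives a continuous inverse to $\Phi$ directly --- no openness argument needed.
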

\begin{proof}
For each $j$ let $\Bar p_j\colon\Compon(Z_j)\to\Compon(Z_{j-1})$ denote the map induced by $p_j\colon Z_j\to Z_{j-1}$.
By assumption on $\ll$ and \propref{homotopy_groups_of_inverse_limits} the natural function
$\Compon(Z)\to\lim_j\Compon(Z_j)$ is a continuous bijection. We claim that it is a homeomorphism.
Since the spaces $Z_j$ have CW type, the spaces $\Compon(Z_j)$ are discrete and
by a trivial induction we may construct left inverses $\vartheta_j\colon\Compon(Z_j)\to Z_j$ for the
quotient maps $q_j\colon Z_j\to\Compon(Z_j)$ so that $p_j\circ\vartheta_j=\vartheta_{j-1}\circ\Bar p_j$
for all $j$. The maps $\vartheta_j$ then furnish a continuous map $\lim(\Compon Z_j)\to Z$ which we compose
with the quotient $Z\to\Compon(Z)$ to obtain the desired inverse $\lim(\Compon Z_j)\to\Compon(Z)$.
\end{proof}

\begin{prop}\label{homotopy_sets_2}
Let $(X;\AA)$ be an $n$-ad where $X$ is countable. Let $(Y;\BB)$ be an $n$-ad with finite homotopy groups.
If $(Y;\BB)^{(X;\AA)}$ has CW homotopy type, it has finitely many path components.
\end{prop}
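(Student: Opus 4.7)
The plan is to realize $Z=(Y;\BB)^{(X;\AA)}$ as the limit of a countable inverse sequence of fibrations whose terms have \emph{finite} homotopy groups, and then to pit the topology of $\Compon Z$ against that of the resulting limit of finite discrete sets.

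First I would choose a filtration $K_1\lqs K_2\lqs\dots$ of $X$ by finite subcomplexes with $X=\bigcup_iK_i$, and set $Z_i=(Y;\BB)^{(K_i;\AA\cap K_i)}$. By \lemref{restriction_map} together with \lemref{second_lemma}(ii), the bondings $Z_{i+1}\to Z_i$ are fibrations and $Z=\lim_iZ_i$; by \lemref{second_lemma}(iii) each $Z_i$ has CW homotopy type. Moreover \propref{homotopy_groups_of_function_space}(i) guarantees that each $Z_i$ has \emph{finite} homotopy groups; in particular $\pi_0(Z_i)$ is finite and, for any coherent $\zeta_i\in Z_i$, the group $\pi_1(Z_i,\zeta_i)$ is finite.

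Next, since each $\pi_1(Z_i,\zeta_i)$ is finite the sequence satisfies the Mittag--Leffler condition trivially, so by \propref{lim1_mittag-leffler} we have $\ll\pi_1(Z_i,\zeta_i)=0$. \lemref{component_set} then supplies a homeomorphism
\[ \Compon Z\approx\lim_i\Compon Z_i. \]
Each $\Compon Z_i=\pi_0(Z_i)$ is finite and discrete, so the right hand side is a profinite, in particular compact Hausdorff, space.

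Finally I would observe that $\Compon Z$ is discrete: being of CW type, $Z$ is a Dold space, hence semilocally contractible, so its path components are open in $Z$, and the quotient by an open equivalence relation is discrete. A compact discrete space is finite, which proves the proposition. The only thing to verify is that \lemref{component_set} is genuinely applicable, but this reduces to the vanishing of $\ll\pi_1(Z_i,\zeta_i)$ already established together with the fact that each $Z_i$ has CW type (so that $\Compon Z_i$ is discrete and the splittings used in its proof are available); no real obstacle is expected.
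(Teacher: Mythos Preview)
Your proof is correct and follows essentially the same route as the paper: filter $X$ by finite subcomplexes, apply \lemref{component_set} to identify $\Compon Z$ with an inverse limit of finite discrete sets, and conclude by compactness versus discreteness. The only difference is that you verify the $\ll\pi_1$ hypothesis of \lemref{component_set} directly from the finiteness of the groups $\pi_1(Z_i,\zeta_i)$ (which makes Mittag--Leffler automatic), whereas the paper invokes \thmref{u_M-L_general}; your justification is slightly more economical here.
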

\begin{proof}
Take any filtration $L_1\lqs L_2\lqs\dots$ of finite subcomplexes for $X$ (i.e. a countable cofinal subset of $\KKK$)
and consider the associated inverse sequence $Z_j=(Y;\BB)^{(L_j;\AA\cap L_j)}$. Denote also $Z=(Y;\BB)^{(X;\AA)}$.
By \propref{u_M-L_general} and \lemref{component_set} $\Compon Z$ is homeomorphic to $\lim\Compon Z_j$. Since  the $Z_j$ have CW
homotopy types, the spaces $\Compon Z_j$ are discrete, and by \navedi{1} of \propref{homotopy_groups_of_function_space}
they are finite. Hence the limit space is compact. Since it is discrete by assumption, it must be finite.
\end{proof}


\section{Phantom maps}\label{phantom}

If a path component of a function space has CW type and is {\it open} then it does
not form a nontrivial phantom pair by \thmref{u_M-L_general}. Indeed, \thmref{u_M-L_general}
says much more but we will apply it in full strength in later chapters. 
Here we list a few classical examples of function spaces with phantom maps
which are consequently examples of function spaces not of CW type.
We make a brief introduction to phantom maps so as to identify them
with our previously defined concept of phantom path components from section \ref{phantom_components}.
The reader is invited to consult McGibbon's survey article \cite{mcgibbon}
on phantom maps for a comprehensive treatment.

A (pointed) map $f\colon X\to Y$ between CW complexes is called a phantom map if the restriction $f\vert_K\colon K\to Y$
to any finite subcomplex $K$ of $X$ is nullhomotopic. This may be restated in the following homotopy invariant manner as
(cf. Roitberg \cite{roitberg})

\begin{defn}
A pointed map $f\colon X\to Y$ between connected CW complexes is a {\it phantom map} with respect to the class
$\KKK$ of finite connected CW complexes if for any $W\in\KKK$
and any map $j\colon W\to X$ the composition $W\xrightarrow{j} X\xrightarrow{f} Y$ is nullhomotopic.
\end{defn}

This definition agrees with that studied by, for example, Meier \cite{meier} and Zabrodsky \cite{zabrodsky}.
The other standard definition (see McGibbon \cite{mcgibbon}) takes instead of $\KKK$ the class of all finite
dimensional spaces.

For investigations involving the compact open topology, it is natural to consider phantom maps with respect to $\KKK$.
It is convenient to generalize the concept to functions between $n$-ads as follows.

\begin{defn}
Let $X$ be a CW complex and let $Y$ have CW type. Further let $\AA\in\SSS_X$ and $\BB\in\SSS_Y$.
We say that maps $f,g\colon(X;\AA)\to(Y;\BB)$ form a phantom pair if for every finite subcomplex
$K$ of $X$ the restrictions $f\vert_K,\,g\vert_K$ are homotopic as maps $(K;\AA\cap K)\to(Y;\BB)$.
This is to say that the restrictions $f\vert_K,\,g\vert_K$ belong to the same path component of the
function space $(Y;\BB)^{(K;\AA\cap K)}$.
\end{defn}

The definition depends only on the path components of the maps involved, whence 

\begin{lem}
If two maps $f,g$ form a phantom pair then their path components form a phantom
pair of path components as defined in section \ref{phantom_components},
with respect to the inverse system $\setof{(Y;\BB)^{(L;\AA\cap L)}\,\vert\,L}$
indexed by the regular lattice $\XXX$ of all subcomplexes of $X$ with set of compact elements
$\KKK$ the subset of finite subcomplexes. \qed
\end{lem}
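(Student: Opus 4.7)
The plan is to reduce the claim to the homotopy lifting property of the restriction fibrations. Let $Z=(Y;\BB)^{(X;\AA)}$, and for $K\in\KKK$ let $p_K\colon Z\to Z(K)=(Y;\BB)^{(K;\AA\cap K)}$ denote the restriction map. By \propref{function_space} (or directly by \lemref{restriction_map}), $p_K$ is a fibration for each $K\in\KKK$. Let $C_f,C_g$ denote the path components of $f,g$ in $Z$. By the definition of phantom components recalled in \secref{phantom_components}, what I need to verify is exactly the equality $p_K(C_f)=p_K(C_g)$ for every $K\in\KKK$.

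First I would translate the hypothesis. Since $f,g$ form a phantom pair of maps, $f\vert_K$ and $g\vert_K$ are homotopic as maps of $n$-ads $(K;\AA\cap K)\to(Y;\BB)$, and hence lie in a common path component $D_K$ of $Z(K)$. Both $p_K(C_f)$ and $p_K(C_g)$ are path-connected subsets of $Z(K)$ (being continuous images of path-connected spaces) meeting $D_K$, so both are contained in $D_K$.

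For the reverse inclusion I would invoke path lifting. Given any point $h\in p_K(C_g)$, say $h=g'\vert_K$ with $g'\in C_g$, the elements $h$ and $f\vert_K$ lie in the same path component $D_K$, so there is a path $\gamma\colon I\to Z(K)$ from $f\vert_K$ to $h$. Since $p_K$ is a fibration, $\gamma$ lifts to a path $\tilde\gamma\colon I\to Z$ starting at $f$. The endpoint $\tilde\gamma(1)$ is then path-connected to $f$ in $Z$, hence lies in $C_f$, and $p_K(\tilde\gamma(1))=\gamma(1)=h$. Thus $h\in p_K(C_f)$, so $p_K(C_g)\subseteq p_K(C_f)$; by symmetry $p_K(C_f)=p_K(C_g)$. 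Since this holds for every $K\in\KKK$, the path components $C_f$ and $C_g$ form a phantom pair in the sense of \secref{phantom_components}.

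There is no real obstacle here: the lemma is essentially a bookkeeping statement matching two formally different definitions, and the only tool used is the fact, encoded in \propref{function_space}, that the bonding maps $Z\to Z(K)$ are Hurewicz fibrations, so that paths in $Z(K)$ lift to paths in $Z$ starting at prescribed points.
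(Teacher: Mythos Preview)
Your proof is correct. The paper gives no explicit argument at all (the statement carries only a \qed), treating the lemma as an immediate consequence of the preceding remark that the phantom-pair condition depends only on the path components of $f$ and $g$; your path-lifting argument is precisely the natural way to unpack that remark. One minor streamlining: since $p_K$ is a fibration, the image $p_K(C_f)$ of a path component is automatically a \emph{full} path component of $Z(K)$ (your lifting argument in fact proves exactly this), so $p_K(C_f)=D_K=p_K(C_g)$ follows at once without the separate symmetry step.
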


As before we denote by $\Ph[g]$ the set of phantom components that form a phantom pair with the path component $[g]$
of $g\colon(X;\AA)\to(Y;\BB)$. Denoting the natural function
\[ \Psi\colon\pi_0\big((Y;\BB)^{(X;\AA)}\big)\to\lim_K\pi_0\big((Y;\BB)^{(K;\AA\cap K)}\big) \]
it is clear that $\Psi^{-1}\big(\Psi[g]\big)=\Ph[g]$.

\begin{prop}
Let $X$ be a countable CW complex. Let $(X;\AA)$ be a CW $n$-ad and let $(Y;\BB)$ have the type of a CW $n$-ad.
Let $L_1\lqs L_2\lqs L_3\lqs\dots$ be a filtration of finite subcomplexes for $X$. Let
$g\colon(X;\AA)\to(Y;\BB)$ be a map of $n$-ads. Then the following natural identification can be made
\[ \Ph[g]=\lm{j}\pi_1\big((Y;\BB)^{(L_j;\AA\cap L_j)},g\vert_{L_j}\big). \]
\end{prop}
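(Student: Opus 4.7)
The plan is to reduce directly to the general setting of \lemref{phantom_countable}, applied to the inverse system of $n$-ad function spaces $\{(Y;\BB)^{(L;\AA\cap L)}\,\vert\,L\in\XXX\}$, where $\XXX=\SSS_X$ is the lattice of subcomplexes of $X$ and $\KKK$ is the sublattice of finite subcomplexes. By \propref{function_space} this is a restricted inverse system of fibrations indexed by the regular lattice $\XXX$, with $Z(K)=(Y;\BB)^{(K;\AA\cap K)}$ of CW homotopy type for every $K\in\KKK$.

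Next I would check that $\XXX$ is $\KKK$-countable. Since $X$ is a countable CW complex, any subcomplex $L$ has only countably many cells; writing $L=\bigcup_n K_n$ where $K_n$ is the finite subcomplex generated by the first $n$ cells of $L$ in some enumeration shows $L=\sup\{K_n\}$ in $\XXX$, so $\card_\KKK L\lqs\aleph_0$. In particular, $X=\sup\XXX=\sup\{L_j\}$, so the filtration $L_1\lqs L_2\lqs\dots$ is a countable ascending chain in $\KKK$ cofinal in $\XXX$ in the sense required by \lemref{phantom_countable}.

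Now I would take $C$ to be the path component of $g$ in $(Y;\BB)^{(X;\AA)}=\lim_L Z(L)$ and apply \lemref{phantom_countable} with $\kappa_j=L_j$, $\zeta=g$, and $\zeta_j=g\vert_{L_j}$. The lemma immediately yields
\[ \Ph[g]=\Ph(C)=\lm{j}\pi_1\big((Y;\BB)^{(L_j;\AA\cap L_j)},g\vert_{L_j}\big), \]
as claimed.

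The only conceptual content is the $\KKK$-countability of $\XXX$ and the cofinality of $\{L_j\}$, both of which follow from countability of $X$; everything else is a direct invocation of \lemref{phantom_countable}, whose proof in turn relies on the Milnor-type exact sequence of \propref{homotopy_groups_of_inverse_limits} together with the identification of $\Ph(C)$ with the preimage of $[g\vert_\cdot]$ under $\pi_0(Z)\to\lim\pi_0(Z(L_j))$. There is no genuine obstacle; the work has been done in the earlier abstract framework, and the present statement is just its specialization to function spaces with countable domain.
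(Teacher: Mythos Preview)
Your proof is correct and takes essentially the same approach as the paper: the paper's proof is the single line ``Follows directly from \propref{homotopy_groups_of_inverse_limits},'' which is precisely the content of \lemref{phantom_countable} that you invoke. Your version is simply more explicit about verifying the hypotheses (countability, cofinality of the filtration), but there is no substantive difference.
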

\begin{proof}
Follows directly from \propref{homotopy_groups_of_inverse_limits}.
\end{proof}

\begin{example}\label{quasi_solenoid}
Let $m<n$. We consider the space of maps $M(\ZZ[\frac{1}{p}],m)\to K(\ZZ,n)$.
\begin{enroman}
\item	The space $K(\ZZ,n)^{M(\ZZ[\frac{1}{p}],m)}$ has the weak homotopy type of a product   \\   
$K(\Hat{\ZZ}_p/\ZZ,n-m-1)\times K(\ZZ,n)$, but
does not have the type of a CW complex.

In particular, for $m=n-1$ the space in question has uncountably many homotopy equivalent path components
that have the weak homotopy type of $K(\ZZ,n)$ but do not have CW homotopy type.

\item The space $(K(\ZZ,n),*)^{\big(M(\ZZ[\frac{1}{p}],m),*\big)}$ of pointed maps has the weak homotopy
type of a $K(\Hat{\ZZ}_p/\ZZ,n-m-1)$ but does not have CW homotopy type.

In particular, for $m=n-1$, the space in question has uncountably many homotopy equivalent path components
that are weakly contractible but not contractible.
\end{enroman}

If we take for $M(\ZZ[\frac{1}{p}],m)$ the usual telescoping construction
(see for example Hatcher \cite{hatcher}, \S 3.F) and denote the finite stages
of the telescope by $L_k$, say, then it can be easily verified that the sequence
$\setof{\pi_{n-m}(K(\ZZ,n)^{L_k})\,\vert\,k}$ fails to be Mittag-Leffler.

In case $m=n-1$ this implies the existence of phantom components. In particular,
the space of pointed maps has weakly contractible path components none of which has the
homotopy type of a CW complex. The path components are those of an H-group and hence
homotopy equivalent. See \propref{f_l_spheres} for a detailed proof of a more general result.

Here we focus on a more geometric representation of case $m=n-1$.
First consider the short exact sequence $0\to\ZZ\to\ZZ[\frac{1}{p}]\to\ZZ_{p^\infty}\to 0$.
This we can realize as a cofibration sequence 
\begin{equation*}
	\tag{$*$} S^{n-1}\xrightarrow{\varphi} M(\ZZ[\tfrac{1}{p}],n-1)\xrightarrow{\lqs}M(\ZZ_{p^\infty},n-1).
\end{equation*}
Suppose we construct $M(\ZZ[\frac{1}{p}],n-1)$ as the reduced mapping telescope of the sequence
\begin{equation*} \tag{$**$} S^{n-1}\xrightarrow{p}S^{n-1}\xrightarrow{p}\dots.	\end{equation*}
The first stage of the telescope $L_1$ may be identified with the sphere $S^{n-1}$ and the map
$\varphi\colon S^{n-1}\to M(\ZZ[\frac{1}{p}],n-1)$ is then also a degree $p$ map $S^{n-1}\to L_1$
composed by the inclusion into the telescope.

Set $Y=K(\ZZ,n)$. The associated fibrations $(Y,*)^{(L_i,*)}\to(Y,*)^{(L_{i-1},*)}$ may be identified
as $\Omega^{n-1}K(\ZZ,n)\xrightarrow{p}\Omega^{n-1}K(\ZZ,n)$ where $p$ denotes the H-group $p$-th power map.
An application of \propref{E-H} yields a homotopy equivalence \[ (K(\ZZ,n),*)^{(M(\ZZ[\frac{1}{p}],n-1),*)}\simeq T_p \]
where $T_p$ denotes the $p$-adic solenoid of \exref{solenoid}. Moreover the induced map 
\[ \varphi^\#\colon(Y,*)^{(M(\ZZ[\frac{1}{p}],n-1),*)}\to(Y,*)^{(S^{n-1},*)} \]
may be identified with the canonical projection $T_p\to S^1$.

By \lemref{principal_fibration} the restriction fibration
\[ (Y,*)^{(M(\ZZ_{p^\infty},n-1),*)}\to(Y,*)^{(M(\ZZ[\frac{1}{p}],n-1),*)} \] is the principal fibration
obtained by taking the homotopy fibre of $\varphi^\#$. Since $T_p\to S^1$ is a fibration,
$(Y,*)^{(M(\ZZ_{p^\infty},n-1),*)}$ is homotopy equivalent to the fibre of $T_p\to S^1$,
the group $\Hat\ZZ_p$ of $p$-adic integers with its profinite topology.

Thus while $(Y,*)^{(M(\ZZ[\frac{1}{p}],n-1),*)}\simeq T_p$ has uncountably many weakly contractible path components none
of which is contractible, $(Y,*)^{(M(\ZZ_{p^\infty},n-1),*)}\simeq\Hat\ZZ_p$ has every finite set of path components
(homotopy) discrete, contains countable sets of path components which are not homotopy discrete (hence not of CW type),
and countable sets of path components which are homotopy discrete (hence of CW type). \qed
\end{example}


We apply \thmref{u_M-L_general} to give an alternative proof of a theorem of Meier \cite{meier}.

\begin{cor}[Theorem 2. (a) of Meier \cite{meier}]\label{c_meier}
Let $X$ be a nilpotent CW complex of finite type. If there exists an integer $n$ such that $H_i(X)=0$ for $i\gqs n+1$
then the natural function $[X,Y]_*\to\lim[K,Y]_*$ is bijective. Here $K$ ranges over the finite subcomplexes
of $X$.
\end{cor}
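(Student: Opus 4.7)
The approach is to identify the natural function $[X,Y]_* \to \lim[K,Y]_*$ with a term in a Milnor-type exact sequence, reduce the target $Y$ to a finite Postnikov section, and then verify Mittag-Leffler for a tower of cohomology-like groups.

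First, since $X$ is nilpotent of finite type with $H_i(X)=0$ for $i \geq n+1$, standard minimal-cell arguments give a homotopy equivalence of $X$ with a CW complex of dimension at most $n$ having finitely many cells in each dimension; in particular $X$ is countable. I would work with such a model, and fix an exhaustive filtration $L_1 \leq L_2 \leq \dots$ by finite subcomplexes. Because $\dim X \leq n$, obstruction theory gives bijections $[X,Y]_* \to [X,Y_n]_*$ and $[L_i,Y]_* \to [L_i,Y_n]_*$, so I may replace $Y$ by its $n$-th Postnikov section and assume $Y$ has only finitely many nontrivial homotopy groups.

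By \lemref{restriction_map} and Milnor's theorem, $\{(Y,*)^{(L_i,*)}\}$ is an inverse sequence of fibrations between CW type spaces. For a given $f \colon X \to Y$ with restrictions $f_i = f|_{L_i}$, \propref{homotopy_groups_of_inverse_limits} supplies the exact sequence of pointed sets
\[
 * \to \ll \pi_1\bigl((Y,*)^{(L_i,*)},f_i\bigr) \to [X,Y]_* \to \lim[L_i,Y]_* \to *,
\]
which immediately yields surjectivity; a standard diagram chase applied at every choice of coherent basepoint $\{f_i\}$ shows that bijectivity is equivalent to the vanishing of this $\ll$-term for every such system. By \propref{lim1_mittag-leffler} (using only the Mittag-Leffler direction, which needs no countability hypothesis), it suffices to prove that the inverse sequence $\{\pi_1((Y,*)^{(L_i,*)},f_i)\}$ is Mittag-Leffler.

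The core of the proof is the verification of this Mittag-Leffler condition, which I would establish by induction on the number of nontrivial homotopy groups of $Y$, going up the Postnikov tower of $Y=Y_n$. In the base case $Y=K(\pi,r)$ the groups in question become cohomology groups $H^{r-1}(L_i;\pi)$ (for $r \geq 2$; the case $r=1$ is a set-theoretic variant handled directly), and Mittag-Leffler for this inverse system follows by the cohomological dual of \lemref{homology_P}: given $L_i$, choose $L_j \geq L_i$ so that $H_{r-1}(L_j;\ZZ) \to H_{r-1}(X;\ZZ)$ is injective on the image from $H_{r-1}(L_i;\ZZ)$, and observe via universal coefficients that the images in $H^{r-1}(L_i;\pi)$ then stabilize for all subcomplexes $L_k \geq L_j$. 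The inductive step fibers $(Y,*)^{(L_i,*)}$ over the lower Postnikov stages using \lemref{principal_fibration}, extracts long exact sequences of homotopy groups of the fibrations, and applies a routine Mittag-Leffler five-lemma for towers of pointed sets/groups.

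\textbf{Main obstacle.} The induction step is the delicate part: since $Y$ is not assumed nilpotent (only $X$ is), the Postnikov tower of $Y_n$ consists of fibrations whose fibers $K(\pi_k(Y),k)$ may carry a nontrivial $\pi_1(Y)$-action. Feeding this through the function-space construction produces twisted fibration sequences whose fibers involve cohomology with local coefficients, and one must be careful that the Mittag-Leffler argument for ordinary cohomology of the $L_i$ extends cleanly to this twisted setting. Once this is in place, the tower of long exact sequences transports Mittag-Leffler from the base case up through finitely many Postnikov stages, concluding the proof.
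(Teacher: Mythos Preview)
Your approach is different from the paper's and considerably more laborious. The paper's proof is essentially two lines: since $X$ is nilpotent of finite type with $H_i(X)=0$ for $i\gqs n+1$, a theorem of Mislin implies that $X$ is dominated by a finite complex; hence $(Y,*)^{(X,*)}$ has CW homotopy type by \corref{cor_milnor_theorem}, and then \thmref{u_M-L_general} forbids phantom path components, which together with \propref{homotopy_groups_of_inverse_limits} gives the bijection. The point is that the hard work has already been absorbed into the general obstruction theorem and into Mislin's finiteness result, so no induction on the Postnikov tower of $Y$ is needed and $Y$ is completely arbitrary.

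Your plan, by contrast, replaces $Y$ by a finite Postnikov section and attempts a direct Mittag-Leffler verification stage by stage. This is in principle workable, and has the virtue of being more self-contained (it does not cite Mislin), but the obstacle you flag is a genuine gap, not a technicality. Since $Y$ is arbitrary, its Postnikov tower need not be a tower of principal fibrations; the fibres carry $\pi_1(Y)$-actions, and the function-space fibrations over the $L_i$ become twisted accordingly. Your base-case argument via universal coefficients and \lemref{homology_P} treats ordinary cohomology, and extending it to cohomology with local coefficients on varying finite subcomplexes, at arbitrary basepoints $f_i$, and then running a five-lemma for Mittag-Leffler through non-abelian middle terms, is substantially more than ``routine''. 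The paper's route via finite domination avoids all of this at once.
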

\begin{proof}
The hypotheses on $X$ imply that $X$ is dominated by a finite complex $L$ (see Mislin \cite{mislin2}).
Hence $Y^X$ and $(Y,*)^{(X,*)}$ have CW type by \corref{cor_milnor_theorem}. By \thmref{u_M-L_general}
the space $(Y,*)^{(X,*)}$ does not have phantom path components, and the result follows.
\end{proof}

Function spaces with essential phantom maps exist in abundance. In particular,
we recall the following Zabrodsky's generalization of Miller's theorem on the Sullivan conjecture.

\begin{thm_n}[Zabrodsky \cite{zabrodsky}] \label{zabrodsky}
Let $X$ and $Y$ be nilpotent connected CW complexes of finite type with finite fundamental groups.
If $X$ is a Postnikov space, or an iterated suspension of such a space, and if $Y$ has the homotopy type
of a finite complex, or an iterated loop space of such a space, then {\it all} maps $X\to Y$ are phantom maps. \qed
\end{thm_n}

The following is a trivial consequence.

\begin{cor}[of Zabrodsky's theorem and \thmref{u_M-L_general}]\label{ruled_out}
Let $X$ and $Y$ be nilpotent connected CW complexes of finite type with finite fundamental groups.
Assume that $X$ is a Postnikov space, or an iterated suspension of such a space, and that $Y$ has the
homotopy type of a finite complex, or an iterated loop space of such a space.

Then if the space $(Y,*)^{(X,*)}$ is not contractible, it does not have the homotopy type of a CW complex.
Consequently the space $Y^X$ has CW type if and only if the evaluation $Y^X\to Y$ is a homotopy equivalence.
\end{cor}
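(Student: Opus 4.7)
The proof splits into the two assertions, and the second reduces to the first via \thmref{stasheff_theorem} applied to the evaluation fibration $Y^X\to Y$ at the basepoint (a Hurewicz fibration by \lemref{restriction_map}, with fibre over $y_0$ equal to $(Y,y_0)^{(X,x_0)}$). One direction is immediate: if $Y^X\to Y$ is an equivalence then $Y^X$ has CW type since $Y$ does. For the other direction, if $Y^X$ has CW type then each fibre has CW type by \thmref{stasheff_theorem}; the first assertion, applied with $(Y,y_0)$ in place of $(Y,*)$ (which is merely a change of basepoint), forces each fibre to be contractible, so $Y^X\to Y$ is a weak equivalence between CW-type spaces and hence a homotopy equivalence.

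For the first assertion, assume $(Y,*)^{(X,*)}$ has CW type; I will show weak contractibility, after which Whitehead's theorem delivers contractibility. For $\pi_0$, I invoke Zabrodsky's theorem: every pointed map $f\colon(X,*)\to(Y,*)$ is freely phantom, so $f|_K$ is freely nullhomotopic for each finite subcomplex $K\lqs X$. A direct computation using the homotopy $H(k,t)=\gamma(t)$ associated to a loop $\gamma$ shows that $\pi_1(Y,*)$ acts trivially on the pointed class $[\const_*]_*\in[K,Y]_*$; consequently the fibre of the forgetful map $[K,Y]_*\to[K,Y]$ over the freely null class equals $\{[\const_*]_*\}$, and $f|_K$ is in fact pointedly nullhomotopic. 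Thus every path component of $(Y,*)^{(X,*)}$ lies in the phantom equivalence class $\Ph[\const_*]$ of \S\ref{phantom_components}. Since $(Y,*)^{(X,*)}$ has CW type, its path components are open, so \thmref{u_M-L_general} yields $\Ph[\const_*]=\{[\const_*]\}$, and $(Y,*)^{(X,*)}$ is path-connected.

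For higher $\pi_k$, the smash/loop adjunction identifies $\Omega^k\bigl((Y,*)^{(X,*)},\const_*\bigr)$ with $(Y,*)^{(S^kX,*)}$, which has CW type by iterated application of \corref{loop_space}. For each $k\gqs 1$, the complex $S^kX$ is simply connected, nilpotent, of finite type, and an iterated suspension of a Postnikov space, so Zabrodsky's theorem applies to maps $S^kX\to Y$. Running the $\pi_0$ argument with $(Y,*)^{(S^kX,*)}$ in place of $(Y,*)^{(X,*)}$ shows it is path-connected, i.e.\ $\pi_k\bigl((Y,*)^{(X,*)},\const_*\bigr)=0$. All homotopy groups of $(Y,*)^{(X,*)}$ vanish, completing weak contractibility. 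The main subtle step is the passage from Zabrodsky's free-phantom statement to pointed-phantom behaviour via the $\pi_1(Y)$-fixedness of the pointed-constant class; everything else is a direct application of the machinery of \S\ref{phantom_components} together with the adjunction identity.
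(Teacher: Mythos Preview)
Your proof is correct and follows essentially the same strategy as the paper's: Zabrodsky's theorem forces every map to be phantom, and \thmref{u_M-L_general} then rules out nontrivial phantom components once the function space has CW type. You are more careful than the paper on two points---the passage from free to pointed nullhomotopy of $f|_K$ via the trivial $\pi_1(Y)$-orbit of $[\const_*]_*$, and the explicit deduction of the second assertion from the first via Stasheff's theorem on the evaluation fibration---and you apply Zabrodsky in the adjoint form (to $S^kX\to Y$ rather than to $X\to\Omega^kY$ as the paper does), which has the mild advantage that the domain hypotheses for $S^kX$ are immediate whereas the codomain hypotheses for $\Omega^kY$ need a moment's thought.
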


\begin{proof}
Let $(Y,*)^{(X,*)}$ have the homotopy type of a CW complex that is not contractible,
and let $k$ denote the least nonnegative integer for which
$\pi_k((Y,*)^{(X,*)},*)$ is non-trivial. Since iterated loop spaces of CW type spaces have CW type,
the space $Z=\Omega^k\big((Y,*)^{(X,*)},*\big)\approx (\Omega^kY,*)^{(X,*)}$ has CW type. By Zabrodsky's theorem,
every map in $Z$ is a phantom map, and since $Z$ is not path-connected, it contains an essential phantom map,
a contradiction.
\end{proof}

\begin{example}
The space of (pointed) maps $K(\ZZ,2n)\to S^{2n+1}$ has $\Ext(\QQ,\ZZ)$ phantom components
and does not have CW homotopy type. \qed
\end{example}

If $X$ is a Postnikov section and $Y$ is a simply connected finite complex, then Zabrodsky's
theorem conveniently implies that the function space $(Y,*)^{(X,*)}$ cannot have CW homotopy type \---
if it has a nontrivial homotopy group.

If $(Y,*)^{(X,*)}$ is weakly contractible, then we cannot say whether it has CW homotopy type or not.
In particular, this is the case with `Miller function spaces' $(Y,*)^{(K(G,1),*)}$
for a locally finite group $G$ and a finite dimensional complex $Y$. We conjecture that these spaces
do not have CW homotopy type, having in mind the following observation. Let $L_1\lqs L_2\lqs\dots$ be
a filtration for $X=K(G,1)$. If $(Y,*)^{(X,*)}$ is contractible, then \corref{contractibility} yields
a subsequence $\setof{L_{\varphi(i)}\,\vert\,i}$ such that for each $i$ the restriction fibration
$(\Omega Y,*)^{(L_{\varphi(i+1)},*)}\to(\Omega Y,*)^{(L_{\varphi(i)},*)}$ is nullhomotopic. In particular,
the induced morphisms on homotopy groups are all trivial. By adjunction, this means that the morphisms
\[ [S^k\wedge L_{\varphi(i+1)},Y]_*\to[S^k\wedge L_{\varphi(i)},Y]_* \] are trivial for all $k\gqs 1$.

However, refining Miller's proof of the Sullivan conjecture for the case $X=\RR P^\infty$,
Lesh \cite{lesh} exhibits a function $j(k,i)$,
{\it going to infinity with $k$}, such that $[S^k\RR P^{j(k,i)},Y]_*\to[S^k\RR P^i,Y]_*$ is trivial.
The nature of dependence of $j$ on $k$ gives reason to believe that this cannot be done uniformly
with respect to $k$.


\section{Spaces of maps into Postnikov $n$-ads}\label{maps_into_postnikov_n_ads}

\begin{thm}\label{v}
Let $(Y,\BB)$ be an $r$-Postnikov $n$-ad and let $(X,\AA)$ be a CW $n$-ad. Let $g\colon(X;\AA)\to(Y;\BB)$ be a map.

Let for a subcomplex $L$ of $X$ the symbol $F_L$ denote the fibre of the restriction fibration
$(Y;\BB)^{(X;\AA)}\to(Y;\BB)^{(L;\AA\cap L)}$ over $g\vert_L$.

The path component $Z$ of $(Y;\BB)^{(X;\AA)}$ containing $g$ is open in $(Y;\BB)^{(X;\AA)}$ and has CW homotopy type
if and only if for each finite subcomplex $L$ there exists a larger finite subcomplex $L'$ such that the inclusion
$F_{L'}\hookrightarrow F_L$ induces trivial morphisms on all homotopy groups.
\end{thm}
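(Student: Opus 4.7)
The forward direction is immediate: if $Z$ is open in $(Y;\BB)^{(X;\AA)}$ and has CW type, then openness identifies the fibres of $Z\to Z_L$ with $F_L$, so \propref{strong_obstruction_general}(ii) provides a finite $L'\gqs L$ with $F_{L'}\hookrightarrow F_L$ nullhomotopic, which is strictly stronger than the asserted triviality on every $\pi_k$.

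For the converse, the openness of $Z$ is essentially free of charge: choose $L'$ for which $\pi_0(F_{L'}\to F_L)$ is trivial; then every point of $F_{L'}$ is path-connected in $F_L\subseteq (Y;\BB)^{(X;\AA)}$ to $g$, so $F_{L'}\subseteq Z$. Since $L'$ is finite, $(Y;\BB)^{(L';\AA\cap L')}$ is CW by Milnor's theorem, hence its path components are open, and an HLP argument along $(Y;\BB)^{(X;\AA)}\to(Y;\BB)^{(L';\AA\cap L')}$ shows that the path component of $g\vert_{L'}$ coincides with $Z_{L'}$ and that its preimage equals $Z$, so $Z$ is open. There remains to establish the CW type of $Z$.

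For $X$ countable, fix a cofinal chain $L_1\lqs L_2\lqs\dots$ of finite subcomplexes exhausting $X$ and, iterating the hypothesis, refine to a subsequence with $F_{L_{i+1}}\hookrightarrow F_{L_i}$ zero on every $\pi_k$. Put $Y_i:=(Y;\BB)^{(L_i;\AA\cap L_i)}$; these are CW by Milnor's theorem, and a cell-by-cell induction based on \propref{homotopy_groups_of_function_space} (the fibre of adjoining a $d$-cell being of type $\Omega^d(\cap B_j)$) shows they are uniformly $r$-Postnikov. Chasing the long-exact-sequence ladder of $F_{L_i}\to(Y;\BB)^{(X;\AA)}\to Y_i$ together with the vanishing $\pi_k F_{L_{i+1}}\to\pi_k F_{L_i}=0$ forces, via the connecting homomorphism, that $\mathrm{image}(\pi_k Y_{i+1}\to\pi_k Y_i)$ equals $\mathrm{image}(\pi_k(Y;\BB)^{(X;\AA)}\to\pi_k Y_i)$ (the Mittag-Leffler condition); moreover the kernel of $\pi_k(Y;\BB)^{(X;\AA)}\to\pi_k Y_{i+1}$, being the image of $\pi_k F_{L_{i+1}}\to\pi_k(Y;\BB)^{(X;\AA)}$, factors through the zero map $\pi_k F_{L_{i+1}}\to\pi_k F_{L_i}$ and so vanishes. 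Together these give the injectively Mittag-Leffler property for every $\setof{\pi_k Y_i}$, so \thmref{postnikov_sequence}(ii) yields $\kkkk(Z)$ of CW type; by Cauty's stratifiability theorem (\prevlemref{stratifiable}), $(Y;\BB)^{(X;\AA)}$ is stratifiable, and this upgrades the conclusion to $Z$ of CW type.

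For $X$ uncountable, apply \thmref{wicked_general} to the restricted inverse system of \propref{function_space}, taking $\Gamma:=Z$ and $\LLL:=\setof{L\,\vert\,Z_L\text{ has CW type and }Z\to Z_L\text{ is a weak equivalence}}$. Given countable $L$, build a countable $L'\gqs L$ in $\LLL$ by iteratively closing under the directed, order-preserving property $P(K,K')$: ``$F_{K'}\hookrightarrow F_K$ is trivial on every $\pi_k$'' (directed by the hypothesis, order-preserving by functoriality of restriction). Running the long-exact-sequence analysis of the previous paragraph inside $L'$ gives CW type of the path component of $g\vert_{L'}$, which equals $Z_{L'}$; injectivity of $\pi_k Z\to\pi_k Y_i$ on the $L'$-subtower together with $\lim^1=0$ makes $Z\to Z_{L'}$ a weak equivalence. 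The principal obstacle is the transfer from the hypothesis formulated inside $(Y;\BB)^{(X;\AA)}$ to the analogous statement inside $(Y;\BB)^{(L';\AA\cap L')}$, since the relevant fibres $F_K$ and $F'_K$ differ; this is resolved via the three-term fibration $F_{L'}\to F_K\to F'_K$, using that the countable-case argument applied preliminarily forces $F_{L'}\subseteq Z$ to be homotopically small enough for the $\pi_k$-vanishing to descend to $F'_{K'}\to F'_K$.
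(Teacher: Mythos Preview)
Your argument is correct and follows the paper's route: necessity via \propref{strong_obstruction_general}/\thmref{u_M-L_general}, sufficiency by establishing the injectively Mittag-Leffler property for the towers $\{\pi_k Y_{K_i}\}$, invoking \thmref{postnikov_sequence} and the stratifiability lemma for countable domains, and then \thmref{wicked_general}.

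The one place where you make life harder for yourself is the ``principal obstacle'' in the uncountable case. There is in fact no need to descend the fibre-triviality from $F_K$ to $F'_K$. The tower $\{Y_{K_i}\}$ and its homotopy groups are the same whether you regard them as restrictions from $X$ or from $L'$; only the \emph{limit space} changes. So you may run the LES ladder of $F_{K_i}\to(Y;\BB)^{(X;\AA)}\to Y_{K_i}$ exactly as in your countable paragraph, obtain the injectively Mittag-Leffler property for $\{\pi_k Y_{K_i}\}$, and then apply \thmref{postnikov_sequence} to this tower---whose limit is $(Y;\BB)^{(L';\AA\cap L')}$---to conclude that the path component of $g\vert_{L'}$ there (which is $Z_{L'}$ by an HLP argument) has CW type after $\kkkk$, hence genuinely by stratifiability. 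The weak equivalence $Z\to Z_{L'}$ is then immediate from weak contractibility of $F_{L'}=\bigcap_iF_{K_i}$ via the Milnor sequence. This is exactly what the paper does (it phrases the hypothesis in the equivalent form $\III$ cofinal plus uniform Mittag-Leffler, but the content is the same).

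Your proposed resolution via the three-term fibration $F_{L'}\to F_K\to F'_K$ also works once you spell out ``homotopically small enough'' as ``weakly contractible'': then $F_K\to F'_K$ is a weak equivalence and the $\pi_k$-vanishing transfers. But it is a detour.
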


\def\tot{(Y;\BB)^{(X;\AA)}}
\newcommand{\rest}[1]{(Y;\BB)^{(#1;\AA\cap #1)}}

\newcommand{\ovaj}[1]{\pi_k\big(\rest{#1},g\vert_{#1}\big)}

\begin{rem_n}
Let $\KKK$ denote the set of finite subcomplexes of $X$, and let $\III$ denote the subset of $\KKK$ consisting of those
finite subcomplexes $L$ for which the restriction induced morphism
\[ \pi_k\big((Y;\BB)^{(X;\AA)},g\big)\to\pi_k\big((Y;\BB)^{(L;\AA\cap L)},g\vert_L\big) \] is injective
for all $k\gqs 0$ (for $k=0$ in the usual sense).

The condition in the theorem is then equivalent to the following two.
\begin{enroman}
\item	$\III$ is cofinal in $\KKK$, and
\item	for each $L$ in $\KKK$ there exists a larger $L'\in\KKK$ such that
	for any subcomplex $M$ of $X$ containing $L'$ and any $k\gqs 1$, the
	image of \[ \ovaj{M}\to\ovaj{L} \] coincides with that of \[ \ovaj{L'}\to\ovaj{L}. \]
\end{enroman}
Note that if $\III$ is cofinal in $\KKK$, this implies that $Z$ is open.
\end{rem_n}

\begin{proof}
The necessity part is clear by \thmref{u_M-L_general}; we have to prove sufficiency.

Let $L$ be a countable subcomplex of $X$ and let $K_1\lqs K_2\lqs\dots$ be a filtration for $L$ of finite subcomplexes.
There exists a sequence of finite subcomplexes of $X$
\[ L_1\lqs L_2\lqs L_3\lqs\dots \]
such that $L_1\in\III$ (hence $L_i\in\III$ for all $i$), $L_i\gqs K_i$, and for each $i$, the image of
$\ovaj{L_i}\to\ovaj{L_{i-1}}$ equals that of
$\ovaj{M}\to\ovaj{L_{i-1}}$ for any $M$ containing $L_{i-1}$
(including infinite $M$). In particular $\pi_*(F_{L_i},g)\to\pi_*(F_{L_{i-1}},g)$ is trivial.

Let $L_\infty=\cup_iL_i$. Note that the fibre of $\tot\to\rest{L_\infty}$ over $g\vert_{L_\infty}$ is weakly contractible.
In particular, it is path-connected and is equal to the fibre of $Z\to Z_\infty$ where $Z_\infty$ is the image of $Z$.
Let $Z_i$ denote the path component of $g\vert_{L_i}$ in $\rest{L_i}$.  By \thmref{postnikov_sequence} the space
$\kkkk(Z_\infty)$ has the homotopy type of a CW complex. But $L_\infty$ is a countable complex, hence the natural map
$\kkkk(\rest{L_\infty})\to\rest{L_\infty}$ is a homotopy equivalence (see \prevlemref{stratifiable}).
Since compactly generated refinement preserves path components and inclusions, also the natural map
$\kkkk(Z_\infty)\to Z_\infty$ is a homotopy equivalence. In particular, $Z_\infty$ has CW homotopy type.
By \thmref{wicked}, $Z$ has CW homotopy type.
\end{proof}

For countable domain complexes $X$ the statement of \thmref{v} can be somewhat simplified.

\begin{cor}\label{cor_v_1}
Let $(X;\AA)$ be a CW $n$-ad with $X$ countable and let $(Y;\BB)$ be an arbitrary Postnikov CW $n$-ad.
Let $L_1\lqs L_2\lqs\dots$ be a filtration of finite subcomplexes for $X$. Further let $C$ be the
path component of some $g\in\tot$.

Then $C$ is open and has CW type if and only if
\begin{enroman}
\item	for each $k\gqs 1$ the sequence $\setof{\ovaj{L_i}\,\big\vert\,i}$ satisfies the Mittag-Leffler condition, and
\item	there exists $j\gqs 0$ such that for each $k\gqs 0$ (for $k=0$ in the usual sense) the `morphism'
	$\pi_k\big(\tot,g\big)\to\ovaj{L_j}$ is injective.
\end{enroman}
In particular, $C$ is contractible if and only if for each $k\gqs 1$ and each $i\gqs 1$ there exists $s(i)$
such that $\ovaj{L_{s(i)}}\to\ovaj{L_i}$ is trivial, and, in addition, exists $j$ such that
$[(X;\AA),(Y;\BB)]\to[(L_j;\AA\cap L_j),(Y;\BB)]$ is `injective' at the base point $g$. \qed
\end{cor}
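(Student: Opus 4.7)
The plan is to view \corref{cor_v_1} as a direct translation of \thmref{v} (via the equivalent formulation given in the remark after it) into the language of a fixed cofinal sequence $\{L_i\}$ of finite subcomplexes of the countable $X$, and then to use \propref{homotopy_groups_of_inverse_limits} together with \propref{lim1_mittag-leffler} for the contractibility clause. Let me write $Z=(Y;\BB)^{(X;\AA)}$, $Z_L=(Y;\BB)^{(L;\AA\cap L)}$, and $g_L=g|_L$ throughout.

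First I would reduce to the remark after \thmref{v}: $C$ is open and has CW type if and only if (a) the set $\III\subset\KKK$ of finite subcomplexes $L$ on which $\pi_k(Z,g)\to\pi_k(Z_L,g_L)$ is injective for all $k\gqs 0$ is cofinal in $\KKK$, and (b) for each $L\in\KKK$ there exists $L'\gqs L$ in $\KKK$ such that the image of $\pi_k(Z_M,g_M)\to\pi_k(Z_L,g_L)$ equals that of $\pi_k(Z_{L'},g_{L'})\to\pi_k(Z_L,g_L)$ for every finite $M\gqs L'$ and every $k\gqs 1$. Observe (used twice) that if $L\in\III$ and $L\lqs L'$, then $L'\in\III$, because $\pi_k(Z,g)\to\pi_k(Z_{L'},g_{L'})\to\pi_k(Z_L,g_L)$ is injective forces the first map to be injective.

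Next I would translate (a)+(b) to (i)+(ii) using that $\{L_i\}$ is cofinal in $\KKK$. Assume (a)+(b). Cofinality of $\III$ together with the preceding observation gives some $L_j\in\III$, which is (ii). For (i), given $L_i$ apply (b) to get $L'\gqs L_i$; pick $L_{j_0}\supset L'$. Then for any $j'\gqs j_0$, both $L_{j'}$ and $L_{j_0}$ contain $L'$, so the images of $\pi_k(Z_{L_{j'}},g_{L_{j'}})$ and $\pi_k(Z_{L_{j_0}},g_{L_{j_0}})$ in $\pi_k(Z_{L_i},g_{L_i})$ both coincide with the image of $\pi_k(Z_{L'},g_{L'})$, proving Mittag-Leffler. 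Conversely, assume (i)+(ii). Condition (ii) plus the ``upward-closed'' observation yields (a). For (b), given $L\in\KKK$ choose $L_i\supset L$ and, by (i), $L_j$ (with $j\gqs i$) past which all images in $\pi_k(Z_{L_i},g_{L_i})$ stabilize; set $L'=L_j$. Any finite $M\gqs L'$ lies in some $L_{j'}$, so we sandwich $L'\lqs M\lqs L_{j'}$; the images of $\pi_k$ of the outer two agree in $\pi_k(Z_{L_i},g_{L_i})$ by Mittag-Leffler, forcing agreement with the image of $\pi_k(Z_M,g_M)$, and composing with the restriction $\pi_k(Z_{L_i},g_{L_i})\to\pi_k(Z_L,g_L)$ gives (b).

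Finally, for the contractibility clause: since $C$ has CW type when (i)+(ii) hold, contractibility is equivalent to weak contractibility by Whitehead's theorem. The restriction fibrations $Z\to Z_{L_j}$ express $Z$ as the limit of $\{Z_{L_i}\}$ (by \lemref{second_lemma} and cofinality), so \propref{homotopy_groups_of_inverse_limits} supplies an exact sequence, and the Mittag-Leffler condition (i) together with \propref{lim1_mittag-leffler} makes $\ll\pi_{k+1}(Z_{L_i},g_{L_i})$ vanish, yielding $\pi_k(C,g)\cong\lim_i\pi_k(Z_{L_i},g_{L_i})$ for $k\gqs 1$ and injectivity at $g$ of $\pi_0(Z)\to\lim_i\pi_0(Z_{L_i})$. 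The vanishing of $\lim$ under Mittag-Leffler is equivalent to the existence of $s(i)$ with $\pi_k(Z_{L_{s(i)}},g_{L_{s(i)}})\to\pi_k(Z_{L_i},g_{L_i})$ trivial, handling $k\gqs 1$; the $k=0$ (pointed-set) injectivity becomes condition~(ii) at $k=0$, namely injectivity of $[(X;\AA),(Y;\BB)]\to[(L_j;\AA\cap L_j),(Y;\BB)]$ at $g$. The main obstacle I anticipate is purely bookkeeping --- keeping track of the $k=0$ case (where one lacks group structure) and verifying that the ``stabilization'' property passes between the abstract directed set $\KKK$ and the countable cofinal sequence $\{L_i\}$ --- but no genuinely new technical ingredient should be required beyond \thmref{v}.
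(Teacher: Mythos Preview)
Your proposal is correct and matches the paper's intent: the corollary is marked \qed\ with no proof, being an immediate translation of \thmref{v} (via the accompanying remark) to the cofinal sequence $\{L_i\}$, and your argument fills in precisely the bookkeeping the paper omits.

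One small caution on the contractibility clause. In the $\Rightarrow$ direction you pass from ``$\pi_0(Z)\to\lim_i\pi_0(Z_{L_i})$ is injective at $[g]$'' (which is what vanishing of $\lim^1\pi_1$ gives) to ``there exists $j$ with $\pi_0(Z)\to\pi_0(Z_{L_j})$ injective at $[g]$''; these are not the same in general. The resolution is that the ``In particular'' is a specialization of the main hypothesis ``$C$ is open and has CW type'': contractibility gives CW type, and the $k=0$ injectivity condition is exactly openness of $C$. Reading the clause as ``$C$ is open and contractible if and only if \dots'' (which is what the paper intends), your argument goes through, since then condition~(ii) at $k=0$ is already available from the main equivalence rather than needing to be extracted from the $\lim^1$ computation.
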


\begin{cor}\label{cor_v_2}
If $X$ and $Y$ are countable CW complexes and $Y$ has only finitely many nontrivial homotopy groups then
$(Y,*)^{(X,*)}$ is contractible if and only if it is weakly contractible.
\end{cor}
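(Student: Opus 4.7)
The forward implication is immediate, so the work lies in showing that weak contractibility of $Z=(Y,*)^{(X,*)}$ forces genuine contractibility. The plan is to invoke \corref{cor_v_1} at the basepoint $g=\const_*$, with a filtration $L_1\lqs L_2\lqs\dots$ of finite subcomplexes of $X$ and the Postnikov hypothesis on $Y$.

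First I would verify the hypotheses needed to feed \corref{cor_v_1}. Since each $L_i$ is finite, each $S^k\wedge L_i$ is a finite CW complex, so using the finite Postnikov tower of $Y$ (whose stages have countable homotopy groups, $Y$ being countable) and standard obstruction theory, the groups
\[ \pi_k\bigl((Y,*)^{(L_i,*)},*\bigr)\cong[S^k\wedge L_i,Y]_* \]
are countable for every $k\gqs 0$ and every $i$. Now apply \propref{homotopy_groups_of_inverse_limits} to the inverse sequence $\{(Y,*)^{(L_i,*)}\}$, whose limit is $Z$: weak contractibility of $Z$ forces
\[ \ll\pi_{k+1}\bigl((Y,*)^{(L_i,*)},*\bigr)=0\quad\text{and}\quad \lim_i\pi_k\bigl((Y,*)^{(L_i,*)},*\bigr)=0 \]
for all $k\gqs 0$. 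Since the groups are countable, \propref{lim1_mittag-leffler} then gives the Mittag-Leffler property for each sequence $\{\pi_k((Y,*)^{(L_i,*)},*)\}_i$, $k\gqs 1$.

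Next I would combine Mittag-Leffler with vanishing of $\lim$ to obtain condition (i) of \corref{cor_v_1}. For a Mittag-Leffler inverse sequence of groups, the stable images $S_i\lqs G_i$ form a surjective subsystem with $\lim S_i=\lim G_i=0$; surjectivity of the bonds then forces each $S_i=0$. Hence for each $k\gqs 1$ and each $i$ there exists $s(i)\gqs i$ with $\pi_k((Y,*)^{(L_{s(i)},*)},*)\to\pi_k((Y,*)^{(L_i,*)},*)$ trivial. Condition (ii) is automatic: weak contractibility of $Z$ includes $\pi_0(Z)=0$, so $[X,Y]_*=\{[*]\}$ and the map $[X,Y]_*\to[L_j,Y]_*$ is vacuously injective at the basepoint for any $j$.

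\corref{cor_v_1} therefore certifies that the path component $C$ of $\const_*$ in $Z$ is contractible; but $Z$ is path-connected (again by $\pi_0(Z)=0$), so $C=Z$ and $Z$ is contractible. The one mildly delicate point, which I would pause on, is the passage from Mittag-Leffler plus $\lim=0$ to eventual triviality in the possibly non-abelian case $k=1$; the stable-image argument above handles this uniformly, so I do not expect a substantive obstacle.
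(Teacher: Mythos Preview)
Your proposal is correct and follows essentially the same route as the paper: countability of the groups $\pi_k\bigl((Y,*)^{(L_i,*)},*\bigr)$, vanishing of $\ll$ via \propref{homotopy_groups_of_inverse_limits}, Mittag-Leffler via \propref{lim1_mittag-leffler}, and then \corref{cor_v_1}. The only minor differences are that the paper obtains countability by citing \propref{homotopy_groups_of_function_space} rather than arguing directly, and that the paper stops after establishing Mittag-Leffler and invokes the main clause of \corref{cor_v_1} (injectivity in condition~(ii) being trivial since $\pi_k(Z)=0$), whereas you take the extra step of deducing eventual triviality of the bonding morphisms and appeal to the ``In particular'' clause; both routes are valid.
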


\begin{proof}
Let $L_1\lqs L_2\lqs\dots$ be a filtration of finite subcomplexes for $X$. By \navedi{1} of
\lemref{homotopy_groups_of_function_space} the spaces $(Y,*)^{(L_i,*)}$ have countable homotopy
groups. By \propref{homotopy_groups_of_inverse_limits} the groups $\ll\pi_{k}\big((Y,*)^{(L_i,*)},*\big)$
vanish for all $k\gqs 1$, hence by \propref{lim1_mittag-leffler} the sequences
$\setof{\pi_{k}\big((Y,*)^{(L_i,*)},*\big)\,\big\vert\,i}$ satisfy the Mittag-Leffler condition.
\end{proof}

\corref{cor_v_2} is in sharp contrast with the case of general $Y$, see \exref{weakly_null}.


\section{General constructions of function spaces of CW type} \label{general_constructions}


\begin{prop}\label{postnikov_adjunction}
Let $(Y;\BB)$ be an $r$-Postnikov $n$-ad, and let $(X;\AA)$ be a CW $n$-ad. Let $T$ be a subcomplex of $X$.
If the pair $(X,T)$ has relative dimension $\gqs r+1$ (i.e. every cell in $X-T$ has dimension at least $r+1$),
then the fibration
\begin{equation*} \tag{$\bullet$} (Y;\BB)^{(X;\AA)}\to(Y;\BB)^{(T;\AA\cap T)} \end{equation*} is a homotopy
equivalence onto image. In particular, this is true for $T=X^{(r)}$.

If $(X,T)$ has relative dimension $\gqs r+2$, then the fibration ($\bullet$) is a homotopy equivalence.
In particular, $T$ may be $X^{(r+1)}$.
\end{prop}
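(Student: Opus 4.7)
The plan is to reduce to the single-cell case handled by \navedi{2} of \propref{homotopy_groups_of_function_space} via a transfinite good filtration, and then to invoke the machinery of \secref{sufficient_necessary}.

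First I would choose a good filtration $T = X_0 \lqs X_1 \lqs \cdots \lqs X_\lambda \lqs \cdots$ of $(X,T)$, indexed by $W(\alpha)$ for a limit ordinal $\alpha$, with $\bigcup_{\lambda<\alpha} X_\lambda = X$, in which every successor step $X_\lambda \lqs X_{\lambda+1}$ is the adjunction of a single cell of $X\setminus T$ (so necessarily of dimension $\gqs r+1$). This is a straightforward relativisation of \lemref{good_filtration}. By \propref{function_space} and \lemref{good_yields_restricted}, the assignment $\lambda\mapsto Z_\lambda:=(Y;\BB)^{(X_\lambda;\AA\cap X_\lambda)}$ gives a restricted inverse system of fibrations, with $Z_0=(Y;\BB)^{(T;\AA\cap T)}$ and limit $(Y;\BB)^{(X;\AA)}$.

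By \navedi{2} of \propref{homotopy_groups_of_function_space}, each consecutive bond $Z_{\lambda+1}\to Z_\lambda$ is a homotopy equivalence onto its image, and is moreover surjective whenever the attached cell has dimension $\gqs r+2$. If $(X,T)$ has relative dimension $\gqs r+2$, each consecutive bond is then a genuine homotopy equivalence and \corref{cor_transfinite_geoghegan_2} immediately identifies $(Y;\BB)^{(X;\AA)}\to(Y;\BB)^{(T;\AA\cap T)}$ as a homotopy equivalence.

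For the weaker hypothesis (relative dimension $\gqs r+1$), I would apply \lemref{restricted_trick} with $C=(Y;\BB)^{(X;\AA)}$ and $C_\lambda$ the image of $C\to Z_\lambda$. The crux is to show that each $C_{\lambda+1}\to C_\lambda$ is a genuine homotopy equivalence. Writing $p=p_{\lambda,\lambda+1}$: as a Hurewicz fibration, $p$ has image a union of path components of $Z_\lambda$, and the homotopy equivalence $Z_{\lambda+1}\to\im p$ induces a bijection on path components; under this bijection $C_{\lambda+1}$ and $C_\lambda$ correspond, so $C_{\lambda+1}=p^{-1}(C_\lambda)$. A homotopy inverse of $Z_{\lambda+1}\to\im p$ together with its homotopies preserves path components and therefore restricts to data exhibiting $C_{\lambda+1}\to C_\lambda$ as a homotopy equivalence. \lemref{restricted_trick} then yields that $\setof{C_\lambda}$ is restricted with limit $C$, whence $(Y;\BB)^{(X;\AA)}\to C_0\subset(Y;\BB)^{(T;\AA\cap T)}$ is a homotopy equivalence onto its image. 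The main obstacle is the limit-ordinal behaviour of the ``onto image'' assertion; the argument above localises the entire difficulty in the single point-set identity $C_{\lambda+1}=p^{-1}(C_\lambda)$, after which \lemref{restricted_trick} does the rest.
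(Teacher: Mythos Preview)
Your proposal is correct and follows essentially the same route as the paper: a good filtration of $(X,T)$ by single cells, \propref{homotopy_groups_of_function_space}\,\navedi{2} on each successor step, then \lemref{restricted_trick} (respectively \corref{cor_transfinite_geoghegan_2}) to pass to the limit. Your justification of the key step $C_{\lambda+1}=p^{-1}(C_\lambda)$ and the restriction of the homotopy inverse is exactly the argument underlying the paper's terse ``Hence $C_{\lambda+1}\to C_\lambda$ is a homotopy equivalence,'' so you have in fact filled in a detail the paper leaves implicit.
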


\begin{proof}
Let $\setof{L_\lambda\,\vert\,\lambda\in W(\alpha)}$ be a good filtration for the pair $(X,T)$ with each consecutive
step the adjunction of a single cell. Set $Z_\lambda=(Y;\BB)^{(L_\lambda;\AA\cap L_\lambda)}$. Then
$\setof{Z_\lambda\,\vert\,\lambda}$ is a restricted inverse systems of fibrations indexed by $W(\alpha)$. The
limit space of this system is $(Y;\BB)^{(X;\AA)}=Z$. For each $\lambda$ denote by $C_\lambda$ the image of $Z$
under $Z\to Z_\lambda$. By \navedi{2} of \propref{homotopy_groups_of_function_space} for each $\lambda$ the fibration
$Z_{\lambda+1}\to Z_\lambda$ is a homotopy equivalence onto image. Hence $C_{\lambda+1}\to C_{\lambda}$ is a homotopy
equivalence. By \navedi{2} of \lemref{restricted_trick} the system $\setof{C_\lambda\,\vert\,\lambda}$ is a system
of homotopy equivalences with limit $Z$. In particular $Z\to C_0$ is a homotopy equivalence, and $C_0$ is the
image of the map ($\bullet$).

If $(X,T)$ is an adjunction of cells of dimension $\gqs r+2$, then each $Z_{\lambda+1}\to Z_\lambda$ is a homotopy
equivalence, and the assertion follows.
\end{proof}

\begin{cor}
Given the hypothesis of \propref{postnikov_adjunction}, if $(Y;\BB)^{(T;\AA\cap T)}$ has CW type then so has
$(Y;\BB)^{(X;\AA)}$. In particular, if the $r$-skeleton $X^{(r)}$ of $X$ is finite, we may take $T=X^{(r)}$. \qed
\end{cor}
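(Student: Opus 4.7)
The plan is to read off the corollary directly from Proposition~\ref{postnikov_adjunction} together with Milnor's theorem, with a small observation about path components.

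First I would recall that by Proposition~\ref{postnikov_adjunction}, the restriction fibration
\[
(Y;\BB)^{(X;\AA)}\to(Y;\BB)^{(T;\AA\cap T)}
\]
is a homotopy equivalence onto its image. The image is necessarily a union of path components of $(Y;\BB)^{(T;\AA\cap T)}$, since a homotopy equivalence onto image maps onto a saturated (hence open-and-closed) set of components in the target.

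Next I would invoke the standing fact that a space has CW homotopy type if and only if each of its path components does (this was cited right after the introduction of Dold spaces and is implicit in Stasheff's theorem). Thus, if $(Y;\BB)^{(T;\AA\cap T)}$ has CW type, so does every union of its path components, in particular the image of the restriction fibration. Since $(Y;\BB)^{(X;\AA)}$ is homotopy equivalent to that image, it has CW type as well. This gives the main statement.

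For the parenthetical addendum with $T=X^{(r)}$: when $X^{(r)}$ is finite, the pair $(X^{(r)};\AA\cap X^{(r)})$ is a compact CW $n$-ad, so Milnor's theorem (Theorem~\ref{milnor_theorem}) ensures that $(Y;\BB)^{(X^{(r)};\AA\cap X^{(r)})}$ has the homotopy type of a CW $n$-ad, and in particular of a CW complex. The hypothesis of Proposition~\ref{postnikov_adjunction} that $(X,T)$ has relative dimension $\geq r+1$ is automatic for $T=X^{(r)}$, so the previous paragraph applies and $(Y;\BB)^{(X;\AA)}$ has CW type.

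There is no real obstacle here; the only point requiring a sentence of justification is that ``homotopy equivalence onto image'' of a fibration between Dold-type spaces lands on a union of path components, which follows since path components of the source map bijectively and continuously onto path components of the image.
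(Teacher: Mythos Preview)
Your argument is correct and matches the paper's approach (the paper gives no explicit proof, marking the corollary with \qed). Two small points of precision worth tightening: first, the reason the image of the restriction map is a union of path components is the \emph{fibration} property (path lifting), not the homotopy-equivalence-onto-image property per se; second, the cited fact about CW type reads ``path components are open \emph{and} each has CW type'', so you should note that since $(Y;\BB)^{(T;\AA\cap T)}$ has CW type its components are open, hence any union of them is open-and-closed and inherits CW type. With these adjustments your write-up is complete.
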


\begin{prop}\label{homology_decomposition}
Let $X$ be a simply connected, and $Y$ an arbitrary CW complex. Assume that for each $m$ the function space
$(Y,*)^{(M(H_m(X),m),*)}$ has CW type. Then $(Y,*)^{(X,*)}$ has CW type in the following three cases.
\begin{enroman}
	\item	The complex $X$ is finite dimensional.
	\item	The complex $Y$ has finitely many nontrivial homotopy groups.
	\item	The spaces $(Y,*)^{(M(H_m(X),m),*)}$ are contractible for all $m$.
\end{enroman}
\end{prop}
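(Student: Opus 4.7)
The plan is to use the homology decomposition of the simply connected CW complex $X$ together with \corref{principal_fibration}, \thmref{stasheff_theorem}, and \corref{cor_E-H}. Realize $X$ as the colimit of a tower of cofibrations $* = X_1 \hookrightarrow X_2 \hookrightarrow X_3 \hookrightarrow \cdots$ with $X_2 = M(H_2(X),2)$ and, for each $n \geq 3$, $X_n$ the mapping cone of an attaching map $\varphi_n \colon M(H_n(X), n-1) \to X_{n-1}$, so that $H_i(X_n) \cong H_i(X)$ for $i \leq n$ and $X \simeq \colim_n X_n$ (the tower stabilizes at $X_N$ when $\dim X \leq N$). By \corref{principal_fibration}, the induced inverse sequence
\[
\cdots \to (Y,*)^{(X_n,*)} \xrightarrow{r_n} (Y,*)^{(X_{n-1},*)} \to \cdots \to (Y,*)^{(X_2,*)},
\]
with limit $(Y,*)^{(X,*)}$, consists of principal fibrations whose nonempty fibres are homotopy equivalent to $(Y,*)^{(SM(H_n(X),n-1),*)} \approx (Y,*)^{(M(H_n(X),n),*)}$, which has CW type by hypothesis. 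Since $(Y,*)^{(X_2,*)}$ also has CW type by hypothesis, \thmref{stasheff_theorem} applied inductively gives that every $(Y,*)^{(X_n,*)}$ has CW type.

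For (i) the tower stabilizes, so $(Y,*)^{(X,*)} \simeq (Y,*)^{(X_N,*)}$ has CW type. For (iii) every fibre is contractible by hypothesis, and for each $f \in (Y,*)^{(X_{n-1},*)}$ the composite $f\varphi_n$ lies in the contractible space $(Y,*)^{(M(H_n(X),n-1),*)}$, hence is nullhomotopic; so each $r_n$ is surjective with contractible fibre, i.e.\ a homotopy equivalence, and \corref{cor_E-H} identifies $(Y,*)^{(X,*)}$ with $(Y,*)^{(X_2,*)}$, which is contractible. For (ii), choose $N$ with $\pi_j Y = 0$ for $j > N$; by adjunction and $S^k \wedge M(G,m) \simeq M(G, k+m)$ the homotopy of the fibre is $\pi_k\bigl((Y,*)^{(M(H_n(X),n),*)}, *\bigr) \cong [M(H_n(X), k+n), Y]_*$, and the universal-coefficient short exact sequence
\[
0 \to \Ext(H_n(X), \pi_{k+n+1} Y) \to [M(H_n(X), k+n), Y]_* \to \Hom(H_n(X), \pi_{k+n} Y) \to 0
\]
shows this vanishes when $k + n > N$. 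For $n \geq N+1$ the fibre is thus weakly contractible and, being of CW type, contractible. For $n \geq N+2$, the same vanishing applied to $M(H_n(X), n-1)$ gives $[M(H_n(X), n-1), Y]_* = 0$, so each $f\varphi_n$ is nullhomotopic and $r_n$ is surjective; hence $r_n$ is a homotopy equivalence for $n \geq N+2$, and \corref{cor_E-H} applied to the tail yields $(Y,*)^{(X,*)} \simeq (Y,*)^{(X_{N+2},*)}$, of CW type.

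The main obstacle is case (ii): one needs both contractibility of the fibre and surjectivity of $r_n$ to kick in simultaneously from a uniform stage onward, so that \corref{cor_E-H} applies to the tail as a sequence of genuine homotopy equivalences rather than merely homotopy equivalences onto image. Once this uniform stage is identified (here $n \geq N+2$), the three cases reduce to the same inductive Stasheff argument followed by an Edwards--Hastings-type identification of the limit.
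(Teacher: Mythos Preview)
Your overall structure matches the paper's: homology decomposition, Stasheff induction showing $(Y,*)^{(X_n,*)}$ has CW type for every $n$, then a limit argument. Cases (i) and (ii) are correct. For (ii) the paper takes a shorter route via \propref{postnikov_adjunction}: since $(X_n,X_{n-1})$ has relative dimension $\gqs n$ and $Y$ is an $N$-Postnikov section, that proposition directly gives that $r_n$ is a homotopy equivalence for all $n\gqs N+2$, bypassing your explicit UCT computation (which is, however, perfectly valid).

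Case (iii) contains a genuine gap. You assert that $f\varphi_n$ lies in ``the contractible space $(Y,*)^{(M(H_n(X),n-1),*)}$'', but the hypothesis only gives contractibility of $(Y,*)^{(M(H_m(X),m),*)}$, and $M(H_n(X),n-1)$ has the wrong degree: it is not one of the hypothesis spaces. What the hypothesis does yield is that $\Omega\bigl((Y,*)^{(M(H_n(X),n-1),*)}\bigr)\simeq(Y,*)^{(M(H_n(X),n),*)}$ is contractible, so each path component of $(Y,*)^{(M(H_n(X),n-1),*)}$ is contractible; but $\pi_0$ need not be trivial, and your surjectivity argument for $r_n$ fails as stated.

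The fix is the paper's: strengthen the induction so that in case (iii) each $(Y,*)^{(X_n,*)}$ is itself contractible. The base $X_2=M(H_2(X),2)$ gives this by hypothesis; at the inductive step $r_n$ is a fibration with contractible fibre over a contractible (hence path-connected) base, so the fibre inclusion is a homotopy equivalence and the total space is contractible. Then all bonding maps are nullhomotopic and \propref{contractible_limit} gives contractibility of the limit (equivalently, bonding maps between contractible spaces are homotopy equivalences and \corref{cor_E-H} applies, as in your template).
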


\begin{proof}
By possibly replacing $X$ with a homotopy equivalent complex we may assume a homology decomposition for $X$,
i.e. a filtration of subcomplexes $X_2\lqs X_3\lqs X_4\lqs\dots$ where $X_2$ is of type $M(H_2(X),2)$ and
for each $i\gqs 3$ the subcomplex $X_i$ is the mapping cone of a (based) cellular map $M(H_i(X),i-1)\to X_{i-1}$.
(For a particular choice of homology decomposition see Chapter \ref{eilenberg-maclane_target}.)

Note that \navedi{1} implies \navedi{2} since by \propref{postnikov_adjunction} the
fibrations $(Y,*)^{(X_i,*)}\to(Y,*)^{(X_{i-1},*)}$ are homotopy equivalences for all large enough $i$,
and we may apply \corref{cor_E-H}.

We prove by induction that the space $(Y,*)^{(X_i,*)}$ has CW type for each $i$. The basis $i=2$ holds trivially.
Assume that $(Y,y_0)^{(X_{i-1},x_0)}$ has CW type.

By \corref{principal_fibration} the fibration $R\colon (Y,y_0)^{(X_i,x_0)}\to(Y,y_0)^{(X_{i-1},x_0)}$
is a principal fibration with fibres either empty or homotopy equivalent to $(Y,y_0)^{(M(H_iX,i),*)}$.
The latter has CW type by assumption, and therefore $(Y,*)^{(X_i,*)}$ has CW type by Stasheff's theorem.

If $X$ is finite-dimensional then $X=X_N$ for some $N$ and \navedi{1} follows. In case
the spaces $(Y,*)^{(M(H_i(X),i),*)}$ are contractible for all $i$, the above induction shows
that also the spaces $(Y,*)^{(X_i,*)}$ are contractible for all $i$, hence \navedi{3} follows from
\propref{contractible_limit}.
\end{proof}

\propref{homology_decomposition} has the dual

\begin{prop}\label{finite_postnikov_decomposition}
Let $Y_N\to Y_{N-1}\to\dots\to Y_2\to Y_1$ be a sequence of fibrations
where $Y_1=K(G_1,n_1)$ with $G_1$ abelian and for each $i\gqs 2$ the fibration
$Y_i\to Y_{i-1}$ is a principal fibration obtained by taking the homotopy fibre
of a pointed map $(Y_{i-1},y_{i-1})\to(K(G_i,n_i+1),*)$ of well-pointed spaces
(where $G_i$ is abelian). Let $(X,x_0)$ be an arbitrary pointed CW complex. 

If for each $i$ the space $K(G_i,n_i)^X$ has the homotopy type of a CW complex then so has the space $Y_N^X$.
\end{prop}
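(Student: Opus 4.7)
The plan is to induct on $N$. The base case $N=1$ holds by hypothesis, since $Y_1^X = K(G_1,n_1)^X$ has CW type. For the inductive step, I would assume $Y_{i-1}^X$ has CW type and prove the same for $Y_i^X$ by identifying the restriction functor $(-)^X$ applied to the principal fibration $Y_i \to Y_{i-1}$ as a principal fibration whose nonempty fibres are homotopy equivalent to $K(G_i,n_i)^X$, then invoking Stasheff's theorem.

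In detail: since $Y_i$ is by definition the homotopy fibre of a pointed map $\psi\colon(Y_{i-1},y_{i-1})\to(K(G_i,n_i{+}1),*)$ of well-pointed spaces, $Y_i$ fits into a topological pullback square whose right-hand column is the path-loop fibration $PK(G_i,n_i{+}1)\to K(G_i,n_i{+}1)$. Applying $(-)^X$ yields a pullback square (the contravariant exponential preserves limits)
\begin{equation*}
\begin{diagram}
\node{Y_i^X} \arrow{e} \arrow{s} \node{(PK(G_i,n_i{+}1))^X} \arrow{s} \\
\node{Y_{i-1}^X} \arrow{e,t}{\psi^\#} \node{K(G_i,n_i{+}1)^X}
\end{diagram}
\end{equation*}
The right vertical map is a fibration (it is induced from the endpoint evaluation fibration via Lemma~\ref{restriction_map}) with contractible total space $(PK(G_i,n_i{+}1))^X$, so the left vertical map is a fibration whose nonempty fibres are homotopy equivalent to the based loop space $\Omega(K(G_i,n_i{+}1)^X,\mathrm{const}_*)$.

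The key identification is that this loop space is homeomorphic to $(\Omega K(G_i,n_i{+}1))^X=K(G_i,n_i)^X$ via the standard adjunction (valid because $X$ is Hausdorff CW), and moreover, because $K(G_i,n_i{+}1)$ is a topological abelian group, so is $K(G_i,n_i{+}1)^X$, and hence all nonempty fibres of $\psi^\#$ are homotopy equivalent to this loop space regardless of the basepoint component. Thus every nonempty fibre of $Y_i^X\to Y_{i-1}^X$ has the homotopy type of $K(G_i,n_i)^X$, which has CW type by hypothesis. With the inductive hypothesis that $Y_{i-1}^X$ has CW type, Stasheff's theorem (Theorem~\ref{stasheff_theorem}) gives the desired conclusion for $Y_i^X$.

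The main obstacle I expect is the justification that all nonempty fibres (not only the one over the basepoint map) have the homotopy type of $K(G_i,n_i)^X$. Without the H-space structure on $K(G_i,n_i{+}1)^X$ this could fail, since fibres over different path components might in principle differ; the abelian topological group structure resolves this cleanly, making all path components of $K(G_i,n_i{+}1)^X$ homotopy equivalent and giving the required identification via the standard multiplication argument. Everything else reduces to well-established machinery (preservation of pullbacks and fibrations under $(-)^X$, Stasheff's theorem).
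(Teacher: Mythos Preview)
Your proposal is correct and follows essentially the same route as the paper: induct on $N$, apply $(-)^X$ to the pullback square defining $Y_i$ as a homotopy fibre (the paper cites \lemref{pullback_covariant} and \propref{mapping_space_covariant} for this), identify the fibres, and invoke Stasheff's theorem.

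One remark: your worry about the fibre over non-basepoint components is unnecessary. In any principal fibration obtained as the pullback of a path fibration $PK\to K$ along $\psi^\#$, the fibre over a point $f$ is the space of paths in $K$ from $*$ to $\psi^\#(f)$; this is empty if $\psi^\#(f)$ is not in the basepoint component and is homotopy equivalent to $\Omega(K,*)$ otherwise. So the ``empty or equivalent to $K(G_i,n_i)^X$'' dichotomy holds automatically, without appealing to the H-space structure on $K(G_i,n_i{+}1)^X$. (Your H-space argument is valid too, modulo the quasitopological caveat of Appendix~\ref{quasi_groups} when $G_i$ is uncountable, but it is not needed.)
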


\begin{proof}
By \lemref{pullback_covariant} and \propref{mapping_space_covariant} the map
$(Y_i,y_i)^{(X,x_0)}\to(Y_{i-1},y_{i-1})^{(X,x_0)}$ is a principal fibration with fibres
either empty or homotopy equivalent to $(K(G_i,n_i),*)^{(X,x_0)}$ which has CW homotopy type by hypothesis.
Hence we may proceed inductively using Stasheff's theorem, the basis of the induction holding trivially.
\end{proof}

\begin{cor}
If $Y$ is a simply connected CW complex with $\pi_k(Y)=0$ for all $k\gqs n+1$ and
$X$ is a CW complex such that the space $K(\pi_iY,i)^X$ has CW type for each $i$,
then also the space $Y^X$ has CW type. \qed
\end{cor}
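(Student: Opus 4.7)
The plan is to reduce the corollary to \propref{finite_postnikov_decomposition} by using a Postnikov decomposition of $Y$. Since $Y$ is simply connected, all homotopy groups $\pi_i(Y)$ are abelian, and since $\pi_k(Y)=0$ for $k\geq n+1$, the Postnikov tower of $Y$ has finite length. More precisely, after replacing $Y$ by a homotopy equivalent CW complex, I would construct a finite tower of fibrations
\[ Y_{[n]}\to Y_{[n-1]}\to\cdots\to Y_{[3]}\to Y_{[2]}=K(\pi_2Y,2), \]
where $Y_{[i]}$ is the $i$-th Postnikov section of $Y$. Each $Y_{[i]}\to Y_{[i-1]}$ is obtained (up to homotopy) as the homotopy fibre of the $k$-invariant $Y_{[i-1]}\to K(\pi_iY,i+1)$. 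I would arrange all spaces in the tower to be CW complexes with nondegenerate base points and all $k$-invariants to be pointed cellular maps, so that the well-pointedness hypothesis of \propref{finite_postnikov_decomposition} is met.

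Next, I would match this tower with the indexing of \propref{finite_postnikov_decomposition} by setting $Y_1=Y_{[2]}=K(G_1,n_1)$ with $G_1=\pi_2Y$, $n_1=2$, and in general $G_j=\pi_{j+1}Y$, $n_j=j+1$, so that $N=n-1$ and $Y_N=Y_{[n]}$. The hypothesis of \propref{finite_postnikov_decomposition} is that $K(G_j,n_j)^X=K(\pi_{j+1}Y,j+1)^X$ has CW homotopy type for $1\leq j\leq N$, which follows directly from the assumption of the corollary (applied to $i=j+1$, with the cases $i=0,1$ vacuous or trivial).

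Applying \propref{finite_postnikov_decomposition} then yields that $Y_{[n]}^X$ has CW homotopy type. Since $\pi_k(Y)=0$ for $k\geq n+1$, the canonical map $Y\to Y_{[n]}$ is a weak equivalence of CW complexes, hence a genuine homotopy equivalence by Whitehead's theorem. By \propref{homotopy_equivalence} the induced map $Y^X\to Y_{[n]}^X$ is a homotopy equivalence, and therefore $Y^X$ has CW homotopy type.

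There is no serious obstacle; the only technical point requiring care is the construction of a Postnikov tower that genuinely fits the framework of \propref{finite_postnikov_decomposition} (well-pointed CW models at every stage, principal fibration structure realized via $k$-invariants into Eilenberg--MacLane spaces with abelian coefficient groups). Simple connectivity of $Y$ supplies the abelian coefficient groups, and the finite vanishing range of $\pi_*(Y)$ supplies the finiteness $N<\infty$ needed to apply the proposition.
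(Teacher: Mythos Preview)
Your proposal is correct and follows exactly the approach the paper intends: the corollary is stated with a bare \qed, so it is meant to be an immediate application of \propref{finite_postnikov_decomposition} via the (finite) Postnikov tower of the simply connected $Y$, which is precisely what you do. Your care about well-pointedness and the final passage from $Y_{[n]}^X$ back to $Y^X$ via \propref{homotopy_equivalence} just makes explicit what the paper leaves implicit.
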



\chapter{Localization} \label{localization_chapter}

\setcounter{thm}{0}


\section{A genuine Zabrodsky lemma}\label{zabrodsky_ext}

In this section our aim is to prove

\begin{prop}\label{z_prop}
Let $E\to B$ be a fibration with fibre $F$ where $B$ has the homotopy type of a connected CW complex.
Assume that $E$ as well as $F$ have the homotopy type of compactly generated Hausdorff spaces,
and let $X$ be a Hausdorff space. If the section $X\to X^F$ is a homotopy equivalence, then so is $X^B\to X^E$.
\end{prop}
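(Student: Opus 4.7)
The plan is to reduce to the case where $B$ is a CW complex (using \propref{homotopy_equivalence} together with Dold's theorem on fibre-homotopy-invariance of Hurewicz fibrations under homotopy equivalences of the base), and then to perform a transfinite induction over a good filtration $\setof{B_\lambda\,\vert\,\lambda\lqs\alpha}$ of $B$ (supplied by \lemref{good_filtration}) with $B_0$ a single point and each successor step the adjunction of a single cell. Setting $E_\lambda=p^{-1}(B_\lambda)$, I will prove by transfinite induction that the restriction map $X^{B_\lambda}\to X^{E_\lambda}$ is a homotopy equivalence for every $\lambda\lqs\alpha$; the case $\lambda=\alpha$ is the desired conclusion. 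The base case $\lambda=0$ is precisely the hypothesis that $X\to X^F$ is a homotopy equivalence.

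For the successor step, suppose $B_{\lambda+1}=B_\lambda\cup_\phi D^n$. Since $D^n$ is contractible, Dold's theorem allows us to replace $p$ up to fibre homotopy equivalence by one which is trivial over the attached cell, so we may regard $E_{\lambda+1}$ as a pushout $E_\lambda\cup_{F\times S^{n-1}}(F\times D^n)$; replacing $F$ by a homotopy equivalent CW complex via \propref{homotopy_equivalence} if necessary, we may assume this pushout is a CW pushout. \lemref{pushout-pullback} then converts both pushouts into pullbacks on function-space level, yielding $X^{B_{\lambda+1}}\cong X^{B_\lambda}\times_{X^{S^{n-1}}}X^{D^n}$ and $X^{E_{\lambda+1}}\cong X^{E_\lambda}\times_{X^{F\times S^{n-1}}}X^{F\times D^n}$. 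The comparison maps at the three `other' vertices are $X^{B_\lambda}\to X^{E_\lambda}$ (a homotopy equivalence by induction) together with $X^A\to X^{F\times A}\cong(X^A)^F$ for $A=S^{n-1}$ and $A=D^n$. For the latter two, a homotopy inverse $r\colon X^F\to X$ of $s$ induces, by applying $(-)^A$ to the homotopies $r\circ s\simeq\id_X$ and $s\circ r\simeq\id_{X^F}$, a two-sided homotopy inverse $r^A$ to $X^A\to X^{F\times A}$, so these are genuine homotopy equivalences. The Brown--Heath coglueing theorem (\cite{brown-heath}) now yields a genuine homotopy equivalence between the two pullbacks.

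At a limit ordinal $\mu$ we have $B_\mu=\cup_{\lambda<\mu}B_\lambda$ and $E_\mu=\cup_{\lambda<\mu}E_\lambda$, so by \lemref{good_yields_restricted} both $\setof{X^{B_\lambda}\,\vert\,\lambda<\mu}$ and $\setof{X^{E_\lambda}\,\vert\,\lambda<\mu}$ are restricted inverse systems of fibrations indexed by $W(\mu)$ with limits $X^{B_\mu}$ and $X^{E_\mu}$, respectively. By \thmref{transfinite_geoghegan} the inductive hypothesis passes to $\mu$, completing the transfinite induction.

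The main obstacle is the successor step: lifting the cell-attachment pushout on $B$ to an honest pushout on $E$ in a manner compatible with \lemref{pushout-pullback}. Dold's triviality theorem over a contractible base supplies the fibre homotopy equivalence needed to replace $p^{-1}(D^n)\to D^n$ by $F\times D^n\to D^n$, and replacing $F$ by a CW representative places the resulting cell-piece of $E$ inside the CW framework of the second bullet of \lemref{pushout-pullback}; the bookkeeping to verify that these successive replacements furnish a fibration on $E$ which remains fibre-homotopy-equivalent to the original (so that \propref{homotopy_equivalence} converts the data into compatible homotopy equivalences of function spaces) is where most of the technical care lies.
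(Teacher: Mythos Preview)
Your overall architecture matches the paper's: transfinite induction over a good filtration of the base, coglueing at successor ordinals, and \thmref{transfinite_geoghegan} at limits. The successor step, however, contains a real gap. You write that Dold's theorem lets you ``replace $p$ up to fibre homotopy equivalence by one which is trivial over the attached cell, so we may regard $E_{\lambda+1}$ as a pushout $E_\lambda\cup_{F\times S^{n-1}}(F\times D^n)$.'' This is not valid: trivialising over the new cell alters the total space, so the inductive hypothesis about $X^{B_\lambda}\to X^{E_\lambda}$ no longer applies to the modified object. Moreover, $F$ is only assumed to have the homotopy type of a compactly generated Hausdorff space, \emph{not} of a CW complex, so your proposed replacement of $F$ by a CW complex to invoke the second bullet of \lemref{pushout-pullback} is unavailable. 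The paper sidesteps both issues by passing to a \emph{simplicial} complex $B'$ (so each successor step adjoins a closed simplex $\sigma$ sitting inside $B'$ as a subspace, whence $E\vert_{L'}=E\vert_L\cup E\vert_\sigma$ is a literal subspace union with intersection $E\vert_{\partial\sigma}$) and by never modifying $E$: the fibre homotopy equivalence $\sigma\times F\to E\vert_\sigma$ enters only as an auxiliary arrow in the cube, used to identify the composite $X^\sigma\to X^{E\vert_\sigma}\to X^{\sigma\times F}$ with the projection-induced map, which is a homotopy equivalence by precisely your argument for $X^A\to X^{F\times A}$; two-out-of-three then gives $X^\sigma\to X^{E\vert_\sigma}$, and similarly for $\partial\sigma$.

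Your limit step also needs more than \lemref{good_yields_restricted} supplies: that lemma concerns subcomplexes of a CW complex, whereas the $E_\lambda=p^{-1}(B_\lambda)$ need not carry any cell structure. The paper closes this by first applying $\kkkk$ to the pulled-back total space (legitimate because $E$ has the homotopy type of a compactly generated Hausdorff space), so that $E$ becomes compactly generated and the filtration $\{E_\lambda\}$ dominates compact subsets of $E$; combined with Str\o m's result that closed cofibrations in the base pull back along a Hurewicz fibration to closed cofibrations in the total space, this ensures that $\{X^{E_\lambda}\}$ is a restricted inverse system of fibrations, as required for \corref{cor_transfinite_geoghegan_1}.
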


\begin{rem_n}
The condition on $E$ and $F$ is fulfilled for example if $E$ has CW homotopy type.

Let $y_0$ be a base point of $F$. Since the evaluation map $X^F\to X$, $f\mapsto f(y_0)$, is a left inverse for the
section $X\to X^F$, one of the maps is a homotopy equivalence if and only if the other is. If $y_0$ is a nondegenerate
base point in $F$, then the evaluation at $y_0$ is a fibration. In this case by Theorem 6.3 of Dold \cite{dold} for any
path-connected Dold space $X$ (in particular any connected space of CW type)
\begin{enroman}
\item $(X,*)^{(F,*)}$ is contractible if and only if the evaluation $X^F\to X$ is a homotopy equivalence, and also
\item $X^B\to X^E$ is an equivalence if and only if $(X,*)^{(B,*)}\to(X,*)^{(E,*)}$ is.
\end{enroman}
In `most' cases we may assume that $F$ has a nondegenerate base point. Notably,
if $(Y,y_0)\xrightarrow{g}(B,b_0)$ is a map of well-pointed spaces, then the base point $(y_0,\const_{b_0})$ is
nondegenerate in the homotopy fibre of $g$ (formed as a subspace of $Y\times B^I$).
\end{rem_n}

The proposition analogous to \ref{z_prop} that assumes {\it weak} contractibility, and infers {\it weak} homotopy
equivalence is known as `the Zabrodsky lemma' (see Miller \cite{miller}, \S 9 or McGibbon \cite{mcgibbon}, Lemma 5.5). 

Our proof of \propref{z_prop} is essentially that of Lemma 1.5 of Zabrodsky \cite{zabrodsky2} modulo some
care about genuine homotopy type; whence our name.

\begin{proof}[Proof of \propref{z_prop}]
There exists a homotopy equivalence $B'\xrightarrow{f}B$ where $B'$ is a simplicial complex. Consider the diagram
\begin{equation*}\begin{diagram}
\node{\kkkk(E')} \arrow{se,b}{\Bar p} \arrow{e} \node{E'} \arrow{e,t}{\Bar f} \arrow{s,l}{\Bar p} \node{E} \arrow{s,r}{p} \\
\node[2]{B'} \arrow{e,t}{f} \node{B}
\end{diagram}\end{equation*}
The square above is a pullback, and by Corollary 1.4 of Brown, Heath \cite{brown-heath} the map
$\Bar f\colon E'\to E$ is an equivalence.
The map $\kkkk(E')\to E'$ is the natural map from the compactly generated refinement,
and $\Bar p\colon\kkkk(E')\to B'$ is a fibration for the class of compactly generated Hausdorff spaces.
Since $E'$ has the homotopy type of one, $\kkkk(E')\to E'$ is an equivalence.
For each $b_0\in B'$ the induced map on fibres over $b_0$ in $\kkkk(E')$ and $E'$ is exactly the natural
map $\kkkk(F)\to F$, where $F$ is the fibre in $E'$.

Using the mapping space functor $X^{(-)}$ and applying \propref{homotopy_equivalence} it suffices to prove
\lemref{z_lemma} below.
\end{proof}

\begin{lem}\label{z_lemma}
Let $B$ be a connected simplicial complex, let $E$ be a compactly generated Hausdorff space, and let $X$ be
a Hausdorff space. Let $p\colon E\to B$ be a fibration for the class of compactly generated Hausdorff, and let
$F$ be a fibre. If the map $X\to X^F$ is a homotopy equivalence, so is the map $X^B\to X^E$.
\end{lem}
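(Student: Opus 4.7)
My plan is to do a transfinite cellular induction on $B$ using the simplicial structure. Choose a good filtration $\setof{B_\lambda\,\vert\,\lambda<\alpha}$ of $B$ by subcomplexes as in \lemref{good_filtration}, with each successor step $B_{\lambda+1}$ obtained from $B_\lambda$ by attaching a single simplex $\Delta^n$ along its boundary. Pull this filtration back via $p$ to obtain a filtration $E_\lambda:=p^{-1}(B_\lambda)$ of $E$. I will show by transfinite induction that the restriction $X^{B_\lambda}\to X^{E_\lambda}$ is a homotopy equivalence for each $\lambda$; the desired conclusion is the case $\lambda=\alpha$.

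For the key building block, consider the pulled-back fibration $E\vert_{\Delta^n}\to\Delta^n$. Since $\Delta^n$ is contractible, this fibration (in the CGH sense) is fibre homotopy equivalent to the trivial projection $\Delta^n\times F\to\Delta^n$. Applying $X^{(-)}$ and \propref{homotopy_equivalence}, the restriction map $X^{\Delta^n}\to X^{E\vert_{\Delta^n}}$ is homotopy equivalent to $X^{\Delta^n}\to X^{\Delta^n\times F}\approx(X^F)^{\Delta^n}$, which is a homotopy equivalence because $X\to X^F$ is (again by \propref{homotopy_equivalence}). A parallel argument, via a nested induction on simplicial dimension, handles $X^{\partial\Delta^n}\to X^{E\vert_{\partial\Delta^n}}$: the boundary $\partial\Delta^n$ is itself a simplicial complex of strictly smaller dimension whose top pieces are simplices to which the case just proved applies, while the filtration machinery below handles the assembly.

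For the successor step, the attaching pushout $B_{\lambda+1}=B_\lambda\sqcup_{\partial\Delta^n}\Delta^n$ pulls back along the fibration $p$ to a pushout $E_{\lambda+1}=E_\lambda\sqcup_{E\vert_{\partial\Delta^n}}E\vert_{\Delta^n}$ in $E$ (using that the inclusion $E\vert_{\partial\Delta^n}\hookrightarrow E\vert_{\Delta^n}$, being the pullback of a cofibration along a CGH-fibration, is again a cofibration). Applying $X^{(-)}$, \lemref{pushout-pullback} converts both pushouts into pullback squares in which two of the structural maps are restriction fibrations. The three equivalences at $B_\lambda$, $\Delta^n$, and $\partial\Delta^n$ then assemble, via the coglueing theorem of Brown and Heath (already invoked in the paper), into the equivalence $X^{B_{\lambda+1}}\to X^{E_{\lambda+1}}$. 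For limit ordinals, the inverse systems $\setof{X^{B_\lambda}}$ and $\setof{X^{E_\lambda}}$ are restricted systems of fibrations by (the argument underlying) \propref{function_space}, so \thmref{transfinite_geoghegan} propagates the equivalence through the limit.

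The main obstacle I expect is ensuring that the pullback of the attaching pushout along $p$ genuinely remains a pushout in the CGH setting, and that $E\vert_{\Delta^n}\to\Delta^n$ is fibre homotopy equivalent (not merely fibre weakly equivalent) to the trivial product fibration. The first should follow from the cofibration-preservation under CGH-pullback along $p$ noted above, and the second from Dold's theorem that a CGH-fibration over a contractible base with a section is fibre-homotopy-trivial, together with Corollary 3.7 of Brown and Heath (already used in \corref{principal_fibration}) to upgrade the resulting fibrewise homotopy equivalence.
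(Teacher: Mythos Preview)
Your proposal is correct and follows essentially the same route as the paper: transfinite induction along a good simplicial filtration of $B$, fibre-homotopy triviality of $E\vert_\sigma$ over a contractible simplex, and the coglueing theorem at successor steps, with \corref{cor_transfinite_geoghegan_1} (which packages your appeal to \thmref{transfinite_geoghegan}) handling the limit ordinals. The only cosmetic difference is that the paper treats $X^{\partial\sigma}\to X^{E\vert_{\partial\sigma}}$ by restricting the product trivialization $\sigma\times F\simeq E\vert_\sigma$ to $\partial\sigma$ and then showing $X^{S^n}\to X^{S^n\times F}$ is an equivalence by the two-disk decomposition of $S^n$, whereas you run the nested induction directly; both are fine.
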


\begin{rem_n} For compactly generated spaces $E$ and $B$ let $p\colon E\to B$ be a fibration for the class
of compactly generated spaces. Note that the following hold.
\begin{enroman}
\item	If $A\hookrightarrow B$ is a closed cofibration, so is $E\vert_A\hookrightarrow E$.
\item	If $B$ is a compact contractible space then $E$ is fibre homotopy equivalent to the trivial space $B\times F$
	where $F$ is the fibre.
\item	Fibres over points in the same path component are homotopy equivalent.
\end{enroman}
The standard proofs for Hurewicz fibrations carry over because, given our assumptions, in each particular case
the homotopy lifting property is applied to compactly generated spaces. (Notably for \navedi{1} see Str\o m \cite{strom2},
Theorem 12.)\end{rem_n}

\begin{proof}
Let $\BBB$ be the set of subcomplexes of $B$. By \lemref{good_filtration}
there exists a limit ordinal $\alpha$ and an order preserving injection $U\colon W(\alpha)\to\BBB$ such
that for a limiting $\mu<\alpha$, $U(\mu)=\cup_{\lambda<\mu}U(\lambda)$. Moreover $\cup_{\lambda<\alpha}U(\lambda)=B$,
and we may assume that $U(\lambda+1)=U(\lambda)\cup\sigma_\lambda$ where $\sigma_\lambda$ is a simplex, for each $\lambda$.
Clearly $U(0)$ is a $0$-simplex. We write $B_\lambda=U(\lambda)$, and set $E_\lambda=p^{-1}(B_\lambda)$. Let
$\lambda<\lambda'$. Since $B_\lambda$ is cofibered in $B_{\lambda'}$, also $E_\lambda$ is cofibered in $E_{\lambda'}$.
(See the above remark.) Since the system $\setof{B_\lambda\,\vert\,\lambda}$ dominates compact subsets of $B$,
also $\setof{E_\lambda\,\vert\,\lambda}$ dominates compact subsets of $E$. Since $E$ and $B$ are compactly generated,
the systems $\setof{X^{B_\lambda}\,\vert\,\lambda}$ and $\setof{X^{E_\lambda}\,\vert\,\lambda}$ are restricted
inverse systems of fibrations with limit spaces, respectively, $X^E$ and $X^B$, and the map $p^*\colon X^B\to X^E$
is the limit of maps $p_\lambda^*\colon X^{B_\lambda}\to X^{E_\lambda}$.

In light of \corref{cor_transfinite_geoghegan_1} it is therefore enough to prove the following. Suppose $L$ and $L'$
are subcomplexes of $B$ such that $L'=L\cup\sigma$ where $\sigma$ is a simplex. If $X^L\to X^{E\vert_L}$ is a homotopy
equivalence, then so is $X^{L'}\to X^{E\vert_{L'}}$.

Note $E\vert_{L'}=E\vert_L\cup E\vert_\sigma$. Since $\sigma$ is contractible, there exists a fibre homotopy
equivalence $\sigma\times F\to E\vert_\sigma$ (over $\sigma$). In particular this equivalence restricts to a (fibre)
homotopy equivalence $\partial\sigma\times F\to E\vert_{\partial\sigma}$.

Thus there exists the following commutative diagram.

\begin{equation*}\begin{diagram}
\divide\dgARROWLENGTH by2
\node[3]{X^{E\vert_{L'}}} \arrow[4]{e} \arrow{s,-} \node[4]{X^{E\vert_\sigma}} \arrow[3]{s} \arrow[4]{e,t}{\simeq} \node[4]{X^{\sigma\tim F}} \arrow[3]{s} \\
\node{X^{L'}} \arrow{nee} \arrow[4]{e} \arrow[3]{s} \node[2]{} \arrow[2]{s} \node[2]{X^{\sigma}} \arrow{nee} \arrow[3]{s} \\ \\
\node[3]{X^{E\vert_L}} \arrow[2]{e,-} \node[2]{} \arrow[2]{e} \node[2]{X^{E\vert_{\partial\sigma}}} \arrow[4]{e,t}{\simeq} \node[4]{X^{\partial\sigma\tim F}} \\
\node{X^{L}} \arrow{nee} \arrow[4]{e} \node[4]{X^{\partial\sigma}} \arrow{nee}
\end{diagram}\end{equation*}

Since $\sigma\times F\to E\vert_\sigma$ is a map over $\sigma$, the composite
$X^\sigma\to X^{E\vert_\sigma}\to X^{\sigma\times F}$ is induced by projection $\sigma\times F\to\sigma$.
Similarly for the composite $X^{\partial\sigma}\to X^{E\vert_{\partial\sigma}}\to X^{\partial\sigma\times F}$.

Since the projection induced maps $X^{\sigma}\to X^{\sigma\times F}$, respectively
$X^{\partial\sigma}\to X^{\partial\sigma\times F}$, are homotopy equivalences, so are the maps
$X^\sigma\to X^{E\vert_\sigma}$, respectively
$X^{\partial\sigma}\to X^{E\vert_{\partial\sigma}}$. By assumption, $X^L\to X^{E\vert_L}$ is a homotopy equivalence.
The cube in the diagram is a morphism of pullback diagrams with vertical arrows fibrations, so by coglueing homotopy
equivalences \cite{brown-heath} also the map (of pullback spaces) $X^{L'}\to X^{E\vert_{L'}}$ is a homotopy equivalence.

(We note that expressing the sphere $S^n$ as the union of two disks meeting in a common boundary it follows by
a similar finite inductive argument that $X^{S^n}\to X^{S^n\times F}$ is a homotopy equivalence for all $n$.)
\end{proof}

We immediately infer

\begin{cor}\label{cor_zabrodsky}
Assume $Y$ is a connected CW complex, and $X$ is a connected CW complex.
If $(Y,*)^{(\Omega X,*)}$ is contractible, so is $(Y,*)^{(X,*)}$. \qed
\end{cor}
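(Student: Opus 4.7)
The plan is to apply \propref{z_prop} to the standard path-space fibration
\[
\Omega X \longrightarrow PX \xrightarrow{\ \eps_1\ } X,
\]
where $\eps_1$ is evaluation at the endpoint. Since $X$ is a connected CW complex, we may assume that the base point $x_0$ is a $0$-cell, hence nondegenerate. The total space $PX$ is contractible, and the fibre $\Omega X$ has a nondegenerate base point (the constant loop), as noted in the remark following \propref{z_prop}; moreover $PX$ and $\Omega X$ have CW homotopy type (\corref{cor_milnor_theorem}, \corref{loop_space}), so they are in particular compactly generated Hausdorff.

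Next I would translate the hypothesis that $(Y,*)^{(\Omega X,*)}$ is contractible into the statement that the section $Y \to Y^{\Omega X}$ is a genuine homotopy equivalence. Since the base point of $\Omega X$ is nondegenerate, the evaluation $Y^{\Omega X}\to Y$ is a fibration with fibre $(Y,*)^{(\Omega X,*)}$; by Theorem 6.3 of Dold (cited in the remark after \propref{z_prop}), contractibility of this fibre together with the path-connectedness of the Dold space $Y$ yields that $Y^{\Omega X}\to Y$, and hence its section $Y \to Y^{\Omega X}$, is a homotopy equivalence.

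Now \propref{z_prop} applied to $\eps_1\colon PX\to X$ gives that the restriction
\[
\eps_1^{\#}\colon Y^X \longrightarrow Y^{PX}
\]
is a homotopy equivalence. Since $PX$ is contractible and deformation retracts onto the constant path $\const_{x_0}$, \propref{homotopy_equivalence} says that evaluation at $\const_{x_0}$ gives a homotopy equivalence $Y^{PX}\to Y$. Composing, evaluation at $x_0$ is a homotopy equivalence $Y^X \to Y$. As $x_0$ is nondegenerate in $X$, this evaluation is a fibration with fibre $(Y,y_0)^{(X,x_0)}$; since the total equivalence restricts to a homotopy equivalence between fibres over $y_0$, the fibre $(Y,*)^{(X,*)}$ is contractible, as required.

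The main obstacle is purely bookkeeping: verifying that all nondegeneracy and Hausdorff/compactly generated hypotheses needed for \propref{z_prop} and its companion remark genuinely apply to the path-space fibration, and that the two homotopy equivalences $Y^X \simeq Y^{PX}$ and $Y^{PX}\simeq Y$ compose to the evaluation at $x_0$, so that passing to the fibre over $y_0$ yields the desired contractibility rather than only weak contractibility.
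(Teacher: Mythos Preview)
Your proof is correct and is precisely the argument the paper intends: apply \propref{z_prop} to the path-space fibration $\Omega X\to PX\to X$, use the remark following it to pass between contractibility of $(Y,*)^{(\Omega X,*)}$ and the section $Y\to Y^{\Omega X}$ being an equivalence, and then unwind the resulting equivalence $Y^X\simeq Y^{PX}\simeq Y$ as evaluation at $x_0$. The paper records this corollary with a bare \qed immediately after \propref{z_prop}, so your write-up is in fact more detailed than what the paper provides.
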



\section{Local target complex}
\label{local_target}

In the theory of homotopy localization with respect to a map (see Dror Farjoun \cite{dror})
function spaces play a central role. Given a map of CW complexes $f\colon A\to B$, a CW complex $Y$
is called $f$-local if the induced map $Y^B\to Y^A$ is a weak homotopy equivalence. In particular,
in case $f$ is the map $W\to *$ then $f$-local CW complexes $Y$ are called $W$-null.

There exist the notions of strongly $f$-local and strongly $W$-null obtained by replacing
weak homotopy equivalence above for genuine homotopy equivalence. It would be of some importance
in homotopy theory to be able to characterize the difference between $f$-local and strongly $f$-local
(see V.~Halperin \cite{halperin}). However, this seems to be a difficult task in general.

In Section \ref{local_domain} below (see \thmref{finite_localization} and \exref{weakly_null}) we exhibit
pairs $Y,W$ such that $Y$ is weakly $W$-null but not strongly $W$-null.

For any map $f\colon A\to B$ there exists a `localization functor' $L_f$, that is a homotopy idempotent functor
equipped with a natural transformation $\Id\to L_f$ so that for every space $X$ of CW type the map
$X\to L_fX$ is a homotopically universal map to $f$-local spaces.

If $Y$ is an $f$-local space then the natural map $X\to L_fX$ induces a weak homotopy equivalence (see \cite{dror})
\begin{equation*}\tag{$\star$}	Y^{L_fX}\to Y^X. \end{equation*}
So a problem related to the one above would be to investigate when the map ($\star$) is a genuine equivalence.
We show below that this is the case for localization with respect to a set of primes which
is a particular case of localization with respect to a map (see \cite{dror}).

Our basic reference for localization with respect to a set of primes is Hilton, Mislin, Roitberg \cite{h-m-r}.

If $P$ is a set of primes then \[ \ZZ_{(P)}=\setof{\tfrac{a}{b}\,\vert\,\text{ prime divisors of $b$ belong
to the complement of }P}\lqs\QQ. \] For an abelian group $A$ we will denote $A_{(P)}=A\otimes\ZZ_{(P)}$ the
localization of $A$ at the set $P$ (or just $A_{(p)}$ in case of a single prime $P=\setof{p}$), and for
a (nilpotent) CW complex $X$ we will denote by $X_{(P)}$ the localization of $X$ at the set $P$.

\begin{thm}\label{localization}
Let $P$ be a set of primes and let $X$ be a simply connected CW complex.
Let $Y$ be a  $P$-local complex. Then the map $(Y,*)^{(X_{(P)},*)}\to(Y,*)^{(X,*)}$
induced by localization $X\to X_{(P)}$ is a homotopy equivalence.
\end{thm}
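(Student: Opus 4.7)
The plan is to apply the genuine Zabrodsky lemma \propref{z_prop} to a fibration model of $X\to X_{(P)}$. First I would replace $X\to X_{(P)}$ by a Hurewicz fibration $p\colon\tilde X\to X_{(P)}$ with $\tilde X\simeq X$, so that its fibre $F$ has CW homotopy type by Stasheff's theorem. Because $X$ and $X_{(P)}$ are simply connected with $\pi_n(X_{(P)})\cong\pi_n(X)\otimes\ZZ_{(P)}$, the long exact sequence of $p$ forces every $\pi_n(F)$ to be annihilated by $\otimes\ZZ_{(P)}$, and a standard Postnikov-tower Serre spectral sequence argument (as in Hilton--Mislin--Roitberg \cite{h-m-r}) then gives $\tilde H_*(F;\ZZ)\otimes\ZZ_{(P)}=0$. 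Once $(Y,*)^{(F,*)}$ is shown contractible, \propref{z_prop} supplies a genuine homotopy equivalence $Y^{X_{(P)}}\to Y^{\tilde X}\simeq Y^X$; the pointed version follows by comparing the evaluation fibrations through the Brown--Heath coglueing theorem (cf.~\corref{cor_stasheff_theorem}).

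The core task is therefore showing $(Y,*)^{(F,*)}$ contractible for $F$ a simply connected CW complex with $\tilde H_*(F)\otimes\ZZ_{(P)}=0$. Using the homology decomposition of $F$ together with \propref{homology_decomposition}(iii), this reduces to the Moore-space statement $(Y,*)^{(M(A,n),*)}\simeq *$ for every abelian $A$ with $A\otimes\ZZ_{(P)}=0$ and every $n\geq 2$. For finite (hence $P'$-torsion) $A$ the Moore space $M(A,n)$ is a finite CW complex, so $(Y,*)^{(M(A,n),*)}$ has CW homotopy type by Milnor's theorem; its homotopy groups $[M(A,n+k),Y]_*$ vanish because the Postnikov-tower obstructions live in $\Hom(A,\pi_*Y)$ and $\Ext(A,\pi_*Y)$, both of which are zero when $A$ is $P'$-torsion and $\pi_*Y$ is a $\ZZ_{(P)}$-module (so every $P'$-integer acts invertibly on $\pi_*Y$). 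A weakly contractible CW-type space is contractible. For arbitrary $A$, I would invoke \corref{cor_wicked}: every countable subcomplex of $M(A,n)$ sits inside $M(A_0,n)$ for a countable subgroup $A_0\leq A$ (still $P$-acyclic); realising $A_0$ as a sequential colimit of finite subgroups exhibits $(Y,*)^{(M(A_0,n),*)}$ as the inverse limit of the contractible restriction fibrations $(Y,*)^{(M(A_{0,i},n),*)}$, contractible by \propref{contractible_limit}, whereupon \corref{cor_wicked} delivers $(Y,*)^{(M(A,n),*)}\simeq *$.

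The main obstacle I expect is guaranteeing \emph{genuine} rather than merely \emph{weak} contractibility throughout, particularly in the passage to non-finitely-generated $A$: this is precisely the distinction the paper's machinery in \secref{inverse_systems} and \secref{sufficient_necessary} is designed to control, and the combined use of Milnor's theorem, \propref{contractible_limit}, and \corref{cor_wicked} is essential at this step. A secondary technicality is the initial replacement of $X\to X_{(P)}$ by a fibration while tracking the CW homotopy type of $F$ so as to meet the hypotheses of \propref{z_prop}, which should be routine via standard mapping path constructions and Stasheff's theorem.
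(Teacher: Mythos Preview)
Your overall strategy---apply the genuine Zabrodsky lemma \propref{z_prop} to the fibre $F$ of a fibration replacement of $X\to X_{(P)}$, and show $(Y,*)^{(F,*)}$ is contractible by reducing to Moore spaces via a homology decomposition---is exactly the paper's strategy for the \emph{simply connected} part of the argument (this is \propref{disjoint_local} and its \lemref{lem_disjoint_local}). But there is a genuine gap: $F$ is \emph{not} simply connected in general. From the long exact sequence of $X\to X_{(P)}$ one gets
\[
\pi_1(F)\;\cong\;\coker\bigl(\pi_2(X)\to\pi_2(X)_{(P)}\bigr),
\]
which for, say, $\pi_2(X)=\ZZ$ is $\ZZ_{(P)}/\ZZ$, a nontrivial divisible $P'$-torsion group. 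Your appeal to \propref{homology_decomposition} therefore fails at the outset, since that proposition (and the underlying Moore-space homology decomposition) requires the domain to be simply connected.

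The paper repairs this by fibering $F$ once more: take $f\colon F\to K(\pi,1)$ realizing the identity on $\pi=\pi_1(F)$, with homotopy fibre $\Tilde F$. Now $\Tilde F$ \emph{is} simply connected with $P'$-torsion homotopy (hence $P'$-torsion homology by the Serre class argument), and your Moore-space reduction---essentially the content of \propref{disjoint_local}---shows $(Y,*)^{(\Tilde F,*)}$ is contractible. A second application of \propref{z_prop} then reduces the problem to showing $(Y,*)^{(K(\pi,1),*)}$ is contractible. This last step requires its own argument: $\pi$ is a divisible $P'$-torsion group, hence admits a (possibly transfinite) composition series with cyclic $P'$-primary quotients, and one builds $K(\pi,1)$ as a transfinite colimit of $K(N_\lambda,1)$'s; contractibility then follows by transfinite induction via \corref{cor_transfinite_geoghegan_1}, with the successor step handled by yet another application of \propref{z_prop} together with contractibility of $(Y,*)^{(K(\ZZ_q,1),*)}$ for $q\in P'$ (\prevexref{ex1}). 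This $K(\pi,1)$ piece is precisely what your proposal is missing.
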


We list first a few corollaries and an example, and later prove the theorem.

\begin{cor}\label{the_square}
Let $\PP=P_1\cup P_2$ be a partition of the primes. Let $X$ and $Y$ be simply connected CW complexes.
There exists a pullback square
\begin{equation*}\begin{diagram}
\node{(Y,*)^{(X,*)}} \arrow{e} \arrow{s} \node{(Y_{(P_2)},*)^{(X_{(P_2)},*)}} \arrow{s} \\
\node{(Y_{(P_1)},*)^{(X_{(P_1)},*)}} \arrow{e} \node{(Y_{(0)},*)^{(X_{(0)},*)}}
\end{diagram}\end{equation*}
\end{cor}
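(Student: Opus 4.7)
The plan is to combine the Sullivan arithmetic pullback for $Y$ with \thmref{localization}.

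First, I invoke the fracture theorem of Hilton, Mislin, Roitberg \cite{h-m-r}. Since $\PP=P_1\sqcup P_2$ and $Y$ is simply connected (hence nilpotent), the diagram
\begin{equation*}\begin{diagram}
\node{Y} \arrow{e} \arrow{s} \node{Y_{(P_2)}} \arrow{s} \\
\node{Y_{(P_1)}} \arrow{e} \node{Y_{(0)}}
\end{diagram}\end{equation*}
with all arrows the canonical localization maps is a homotopy pullback square. Replacing the map $Y_{(P_2)}\to Y_{(0)}$ by its canonical factorization into a homotopy equivalence followed by a Hurewicz fibration, I may assume without loss of generality that this is a strict pullback of fibrations of pointed, well-pointed spaces.

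Second, I apply the functor $(-,*)^{(X,*)}$. As a representable, this functor preserves strict limits, and it sends fibrations to fibrations by \lemref{restriction_map}. The result is a strict pullback of fibrations
\begin{equation*}\begin{diagram}
\node{(Y,*)^{(X,*)}} \arrow{e} \arrow{s} \node{(Y_{(P_2)},*)^{(X,*)}} \arrow{s} \\
\node{(Y_{(P_1)},*)^{(X,*)}} \arrow{e} \node{(Y_{(0)},*)^{(X,*)}}
\end{diagram}\end{equation*}
whose corners now need to be identified with those of the statement.

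Third, I invoke \thmref{localization} for each $P\in\setof{P_1,P_2,\emptyset}$: since $Y_{(P)}$ is $P$-local and $X$ is simply connected, the map induced by $X\to X_{(P)}$,
\begin{equation*}
(Y_{(P)},*)^{(X_{(P)},*)}\to(Y_{(P)},*)^{(X,*)},
\end{equation*}
is a genuine homotopy equivalence. By \propref{homotopy_equivalence} these equivalences assemble into a morphism from the square of the statement to the pullback square of the previous step, sitting over the localization maps $Y_{(P_i)}\to Y_{(0)}$. An application of the coglueing theorem of Brown and Heath \cite{brown-heath} to this morphism of pullback squares of fibrations then transports the pullback property across these equivalences, yielding the claimed pullback square.

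The main delicacy, rather than a serious obstacle, will be verifying the naturality in the last step: one must check that the equivalences produced by \thmref{localization} at the three boundary corners intertwine the localization maps on the target with those on the source, so that the resulting homotopy pullback square is literally the one displayed in the statement (with the horizontal and vertical arrows simultaneously induced by localization of both source and target). Once this compatibility is in hand, the three ingredients above combine immediately.
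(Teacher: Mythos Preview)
Your approach is essentially the same as the paper's: build the arithmetic pullback square for $Y$ with the localization maps $Y_{(P_i)}\to Y_{(0)}$ arranged as fibrations, apply the pointed mapping-space functor $(-,*)^{(X,*)}$ to obtain a pullback square of fibrations with domain $X$, and then invoke \thmref{localization} at the three nontrivial corners.

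Two small corrections. First, the reference \lemref{restriction_map} is wrong: that lemma concerns restriction along a subcomplex inclusion $L\hookrightarrow X$ in the \emph{domain}, not postcomposition with a fibration on the target. The result you need---that a fibration $p\colon E\to B$ induces a fibration $(E,*)^{(X,*)}\to(B,*)^{(X,*)}$---is \propref{mapping_space_covariant} (and the pullback preservation is \lemref{pullback_covariant}); the paper cites precisely these. Second, the paper arranges \emph{both} maps $Y_{(P_i)}\to Y_{(0)}$ to be fibrations, not just one; this is what makes the resulting mapping-space square literally a diagram of fibrations so that the coglueing argument applies cleanly to the full square rather than only one side.
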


\begin{proof}
We construct the square by first constructing a rationalization $Y_{(0)}$ (which we may assume to be a CW complex),
followed by localizations $Y_{(P_1)}$, $Y_{(P_2)}$ so that the maps $Y_{(P_i)}\to Y_{(0)}$ are fibrations. The
topological pullback $Y'$ of $Y_{(P_1)}\to Y_{(0)}\leftarrow Y_{(P_2)}$ is then homotopy equivalent to $Y$,
and $(Y',*)^{(X,*)}$ is the pullback of $(Y_{(P_1)},*)^{(X,*)}\to(Y_{(0)},*)^{(X,*)}\leftarrow(Y_{(P_2)},*)^{(X,*)}$
(see \lemref{pullback_covariant}). The resulting diagram is one of fibrations by \propref{mapping_space_covariant},
and an application of \thmref{localization} to its nodes proves the assertion.
\end{proof}

\begin{cor}\label{razcep} Let $X$ be a simply connected CW complex with torsion homology $\H_*$.
\begin{enroman}
\item For any partition of the primes $\PP=P_1\cup P_2$, the space
of pointed maps $(Y,*)^{(X,*)}$ is homotopy equivalent to the
product \[ (Y_{(P_1)},*)^{(X_{(P_1)},*)}\times (Y_{(P_2)},*)^{(X_{(P_2)},*)}. \]
In particular if $Y^X$ has CW homotopy type, so has the space $[Y_{(p)}]^X\cong [Y_{(p)}]^{X_{(p)}}$
for any prime $p$.
\item If in fact the homology $\H_*(X)$ is $P$-torsion for some $P\subset\PP$, the restriction map
	$(Y,*)^{(X,*)}\to(Y_{(P)},*)^{(X,*)}$ is a homotopy equivalence.
\end{enroman}
\end{cor}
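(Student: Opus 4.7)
The plan is to derive both statements from the pullback square of Corollary \ref{the_square} by exploiting that rationalization kills $X$ under the torsion hypothesis. The key observation is: if $\tilde H_*(X)$ is torsion, then $\tilde H_*(X_{(0)})=\tilde H_*(X)\otimes\QQ=0$, and since $X_{(0)}$ is simply connected it is weakly contractible, hence contractible as a CW complex. Consequently $(Y_{(0)},*)^{(X_{(0)},*)}$ is contractible. More generally, for any set of primes $Q$ disjoint from the prime support of $\tilde H_*(X)$, the same reasoning shows $X_{(Q)}$ is contractible.

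For part (i): in the square of Corollary \ref{the_square} the bottom-right corner is contractible, and the two maps into it are fibrations by Proposition \ref{mapping_space_covariant}. A fibration over a contractible base is fibre homotopy equivalent to the trivial fibration, and the topological pullback of two trivial fibrations over a point is the product of the fibres. Applied to our situation this yields the homotopy equivalence
\[ (Y,*)^{(X,*)}\simeq(Y_{(P_1)},*)^{(X_{(P_1)},*)}\times(Y_{(P_2)},*)^{(X_{(P_2)},*)}. \]
The consequence about $Y^X$ then follows from Corollary \ref{cor_stasheff_theorem}: since $Y$ is (simply) connected, CW type of $Y^X$ is equivalent to CW type of $(Y,*)^{(X,*)}$; each factor in the product, being a retract up to homotopy, inherits CW type by Theorem 23 of \cite{whitehead}. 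So $(Y_{(p)},*)^{(X_{(p)},*)}$ has CW type, which via Theorem \ref{localization} and a second application of Corollary \ref{cor_stasheff_theorem} (noting $Y_{(p)}$ is path-connected) delivers CW type of $[Y_{(p)}]^X\cong[Y_{(p)}]^{X_{(p)}}$.

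For part (ii): set $P_1=P$, $P_2=\PP\setminus P$. Since $\tilde H_*(X)$ is $P$-torsion and $P\cap P_2=\emptyset$, the opening observation shows $X_{(P_2)}$ is contractible and hence $(Y_{(P_2)},*)^{(X_{(P_2)},*)}$ is contractible. The product decomposition of (i) then collapses to $(Y,*)^{(X,*)}\simeq(Y_{(P)},*)^{(X_{(P)},*)}$, and Theorem \ref{localization} identifies the right-hand side with $(Y_{(P)},*)^{(X,*)}$. The main thing to check is that the composite equivalence coincides up to homotopy with the map induced by the localization $Y\to Y_{(P)}$; this is where I expect the only subtlety, and it is essentially a naturality check tracing the projection from the pullback of Corollary \ref{the_square} onto its $P_1$-corner, which by construction is induced by the localizations $X\to X_{(P)}$ and $Y\to Y_{(P)}$, followed by the restriction map of Theorem \ref{localization}. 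A diagram chase then confirms the composite is the desired localization-induced map.
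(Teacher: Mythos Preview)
Your proof is correct and follows essentially the same route as the paper. The paper's proof is terse: it observes that $X_{(0)}$ (respectively $X_{(\PP\setminus P)}$) is contractible under the torsion (respectively $P$-torsion) hypothesis, and then says the assertions follow from \corref{the_square}, \propref{homotopy_equivalence}, and the coglueing theorem of Brown--Heath. Your trivialisation-over-a-contractible-base argument is exactly what the coglueing theorem gives here; you have simply spelled out the mechanism, and your added naturality remark for (ii) makes explicit something the paper leaves to the reader.
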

\begin{proof}
If $\H_*(X)$ is torsion, the rationalization $X_{(0)}$ is contractible. In particular, if $\H_*(X)$ is $P$-torsion
then already $X_{(\PP\setminus P)}$ is contractible. Our assertions now follow from \corref{the_square},
\propref{homotopy_equivalence}, and the coglueing theorem (Brown, Heath \cite{brown-heath}.)
\end{proof}

It is natural to ask for some kind of converse to \navedi{1} above. The following example shows it is not possible.

\begin{example}
Set $X=\vee_{p\in\PP}M(\ZZ_p,p)$ and let $Y$ be a CW complex with $\pi_{p^2}(Y)=\ZZ_p$
for $p\in\PP$ and $\pi_k(Y)=0$ for $k\notin\PP$. 
The localization $Y_{(p)}$ at each prime $p$ is an Eilenberg-MacLane
space $K(\ZZ_p,p^2)$, hence $Y_{(p)}^X$ has the homotopy type of a CW complex
by \propref{postnikov_adjunction}.

However, since $(Y,*)^{(M(\ZZ_p,p),*)}$ is not contractible for any $p$,
$Y^X$ does not have the homotopy type of a CW complex, by \lemref{splittings}.\qed
\end{example}

The rest of this section is devoted to the proof of \thmref{localization}.

\begin{prop}\label{disjoint_local}
Let $\PP=P\cup P'$ be a partition of the primes with $P'\neq\emptyset$ and
let $X$ be a simply connected CW complex with $\H_*(X)$ torsion $P'$-primary.
Further let $Y$ be a $P$-local complex. Then $(Y,*)^{(X,*)}$ is contractible.
\end{prop}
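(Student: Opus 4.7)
The plan is to reduce the statement in two stages to a concrete Moore-space computation, and then exploit that multiplication by a $P'$-integer is a self-equivalence of any loop space of a $P$-local complex.

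First reduction: $X$ countable. I would apply \corref{cor_wicked}. For each countable subcomplex $L\leq X$ I would construct a larger countable subcomplex $L'\leq X$ which is itself simply connected and has $P'$-primary torsion homology. The construction is by an ascending chain argument: at each stage enlarge the current complex by adding, for each generator of $\pi_1$, a finite subcomplex of $X$ containing a null-homotopy (available since $X$ is simply connected), and for each homology class, a finite subcomplex containing a chain that bounds the class after multiplication by some $P'$-integer (available since $H_*(X)$ is $P'$-torsion). The ascending union $L'$ is countable, simply connected, and has $P'$-torsion homology. Granted the second part below, $(Y,*)^{(L',*)}$ is contractible, and \corref{cor_wicked} yields contractibility of $(Y,*)^{(X,*)}$.

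Second reduction: from countable $X$ to Moore spaces. Since $X$ is simply connected, \propref{homology_decomposition}\navedi{3} reduces us to showing that $(Y,*)^{(M(A,m),*)}$ is contractible for each $m\gqs 2$, where $A=H_m(X)$ is a countable $P'$-torsion abelian group.

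Moore-space case, handled in three steps. (a) For $A=\ZZ/q$ with $q$ a $P'$-integer, $M(\ZZ/q,m)=S^m\cup_q e^{m+1}$ is the mapping cone of the degree-$q$ self-map of $S^m$. By \corref{principal_fibration}, $(Y,*)^{(M(\ZZ/q,m),*)}$ is the strict pullback of a fibration with contractible total space along the map $q^\#\colon\Omega^m Y\to\Omega^m Y$ induced by the degree-$q$ map; up to homotopy this is the $q$-fold H-group power, and so on each $\pi_k(\Omega^m Y)=\pi_{m+k}(Y)$ it is multiplication by $q$. Because $Y$ is $P$-local, $\pi_{m+k}(Y)$ is $\ZZ_{(P)}$-modular and $q$ acts by an isomorphism, so $q^\#$ is a weak equivalence. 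Both $\Omega^mY$ (by iterating \corref{loop_space}) and $(Y,*)^{(M(\ZZ/q,m),*)}$ (by Milnor's theorem, as $M(\ZZ/q,m)$ is finite) have CW homotopy type, so Stasheff's theorem turns weak into genuine contractibility of the fibre. (b) For finite $A=\bigoplus_i\ZZ/q_i$ with each $q_i$ a $P'$-integer, $M(A,m)\simeq\bigvee_i M(\ZZ/q_i,m)$; by \lemref{wedge_splitting}, $(Y,*)^{(M(A,m),*)}$ is a finite product of contractible spaces. (c) For countable $A$, write $A=\colim A_n$ with each $A_n$ finite $P'$-torsion, and realize $M(A,m)$ as an ascending union $\bigcup_n M_n$ of CW models $M_n\simeq M(A_n,m)$ via successive mapping cylinders. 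Then $(Y,*)^{(M(A,m),*)}$ is homotopy equivalent to $\lim_n(Y,*)^{(M_n,*)}$, a tower of restriction fibrations (\lemref{restriction_map}) between contractible spaces; \propref{contractible_limit} gives that the limit is contractible.

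The main obstacle is the rigorous execution of the enlargement argument in the first reduction and of the telescope realization of $M(A,m)$ in step (c): one must check that in the ascending colimit every loop in $L'$ and every homology class in $L'$ really is killed (or becomes $P'$-torsion), and that the mapping-telescope model of $M(A,m)$ fits the hypotheses of \propref{contractible_limit}. Both are standard once set up carefully, but they carry essentially all of the technical content of the proof.
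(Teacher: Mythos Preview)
Your proof is correct and follows the same architecture as the paper: reduce to countable $X$ via \corref{cor_wicked}, then to Moore spaces via \propref{homology_decomposition}\navedi{3}, then handle $M(\ZZ_q,m)$ by weak contractibility (from $P$-locality of $Y$) plus Milnor's theorem, and pass to general countable $A$ by a filtration and \propref{contractible_limit}. The one notable difference is in the first reduction: the paper uses \lemref{homology_P} and \lemref{P-sequence} to produce a countable $L_\infty$ with $H_*(L_\infty)\hookrightarrow H_*(X)$, so that the $P'$-torsion property is inherited automatically (simple connectivity being tacit via a minimal decomposition of $X$), whereas your hands-on enlargement works just as well and treats $\pi_1$ explicitly.
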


\begin{lem}\label{lem_disjoint_local}
\propref{disjoint_local} holds for countable CW complexes $X$.
\end{lem}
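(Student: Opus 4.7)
\propref{homology_decomposition}, case \navedi{3}, reduces the problem to showing that $(Y,*)^{(M(H_m(X),m),*)}$ is contractible for every $m\geq 2$. Since $X$ is countable and simply connected, each $H_m(X)$ is a countable $P'$-primary torsion abelian group, and the Moore space $M(H_m(X),m)$ may be realized as a countable CW complex. Thus it suffices to prove the following: for every countable $P'$-primary torsion abelian group $A$ and every $m\geq 2$, the space $(Y,*)^{(M(A,m),*)}$ is contractible.

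The next step is a double reduction on the coefficient group. Write $A=\bigcup_n A_n$ as an ascending union of finitely generated (hence $P'$-primary torsion) subgroups and realize $M(A,m)$ as the mapping telescope of cellular inclusions $M(A_n,m)\hookrightarrow M(A_{n+1},m)$. By \lemref{second_lemma} the ensuing restrictions
\[ \cdots\to(Y,*)^{(M(A_{n+1},m),*)}\to(Y,*)^{(M(A_n,m),*)} \]
form a tower of fibrations whose inverse limit is $(Y,*)^{(M(A,m),*)}$, so by \propref{contractible_limit} it suffices that each $(Y,*)^{(M(A_n,m),*)}$ be contractible. By the structure theorem each $A_n$ decomposes as a finite direct sum of cyclic $p$-primary groups with $p\in P'$, which yields a homotopy equivalence $M(A_n,m)\simeq\bigvee M(\ZZ/p^k,m)$, and \lemref{wedge_splitting} then presents $(Y,*)^{(M(A_n,m),*)}$ as a finite product of spaces of the form $(Y,*)^{(M(\ZZ/p^k,m),*)}$.

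The core step is to show that $(Y,*)^{(M(\ZZ/p^k,m),*)}$ is contractible for $p\in P'$, $k\geq 1$ and $m\geq 2$. Since $M(\ZZ/p^k,m)$ is a finite CW complex, Milnor's theorem (\thmref{milnor_theorem}) gives this function space CW homotopy type, so it suffices to verify weak contractibility. By adjunction and the identification $\Sigma M(\ZZ/p^k,n)\simeq M(\ZZ/p^k,n+1)$, its $j$-th homotopy group at the constant map is $[M(\ZZ/p^k,m+j),Y]_*$. Because $p$ is invertible in $\ZZ_{(P)}$, both $\ZZ/p^k\otimes\ZZ_{(P)}$ and $\mathrm{Tor}_1^\ZZ(\ZZ/p^k,\ZZ_{(P)})$ vanish, so the simply connected space $M(\ZZ/p^k,m+j)$ has trivial $\ZZ_{(P)}$-homology in positive degrees and its $P$-localization is contractible; since $Y$ is $P$-local, the universal property of $P$-localization forces every pointed map $M(\ZZ/p^k,m+j)\to Y$ to be nullhomotopic.

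The main obstacle is bookkeeping: each reduction must be arranged so that one passes to honest inverse limits of fibrations between CW-type spaces, so that \propref{contractible_limit} and \propref{homotopy_equivalence} propagate contractibility (not merely weak contractibility). The final vanishing step rests on the universal property of $P$-localization for simply connected (equivalently nilpotent) spaces in the sense of Hilton--Mislin--Roitberg.
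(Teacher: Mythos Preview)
Your proof is correct and follows essentially the same route as the paper's: both reduce via \propref{homology_decomposition} to Moore spaces $M(A,m)$ with $A$ countable $P'$-torsion, filter $A$ by finite subgroups, and conclude by showing $(Y,*)^{(M(\ZZ_{p^k},m),*)}$ is contractible via Milnor's theorem plus weak contractibility from $P$-locality of $Y$. The only cosmetic difference is that the paper chooses a composition series with prime-order successive quotients and inducts through the fibres $(Y,*)^{(M(\ZZ_q,m),*)}$, whereas you take finitely generated stages and wedge-split each into cyclic Moore pieces; both reductions land on the same core computation.
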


\begin{proof}
In light of \propref{homology_decomposition} it is enough to prove the special case
$X=M(A,m)$ where $A$ is a $P'$-torsion group and $m\gqs 2$. 
Since $A$ is a countable torsion group, its $q$-primary components are totally $q$-projective (see \cite{fuchs2}),
and consequently $A$ is the union of a possibly infinite ascending sequence of finite subgroups
$A_0\lqs A_1\lqs\dots$ where $A_0\cong\ZZ_{q_0}$ and $A_i/A_{i-1}\cong\ZZ_{q_i}$ where $q_i\in P'$ for each $i$.
Then $X$ may be obtained as the union of an ascending sequence of subcomplexes $M(A_0,m)\lqs M(A_1,m)\lqs\dots$.
Consequently $(Y,*)^{(X,*)}$ is the limit of $(Y,*)^{(M(A_i,*),*)}$. Since the fibre of
$(Y,*)^{(M(A_i,*),*)}\to(Y,*)^{(M(A_{i-1},*),*)}$ over the constant map is $(Y,*)^{(M(\ZZ_{q_i},m),*)}$ it
suffices for a proof by induction to prove that $(Y,*)^{(M(\ZZ_q,m),*)}$ is contractible for each $q\in P'$.
Since $Y$ is $P$-local, $(Y,*)^{(M(\ZZ_q,m),*)}$ is weakly contractible. But $M(\ZZ_q,m)$ is a finite complex,
and hence $(Y,*)^{(M(\ZZ_q,m),*)}$ has CW type by Milnor's theorem.
\end{proof}

\begin{proof}[Proof of \propref{disjoint_local}]
Let $K_\infty$ be a countable subcomplex of $X$. By \lemref{homology_P} and \lemref{P-sequence}
(see also the proof of \thmref{double_wicked}) there exists a countable subcomplex $L_\infty$ of $X$ containing
$K_\infty$ such that the inclusion induced morphism \[ H_*(L_\infty)\to H_*(X) \] is injective. In particular,
$\H_*(L_\infty)$ is torsion $P'$-primary. By \lemref{lem_disjoint_local} the space $(Y,*)^{(L_\infty,*)}$ is contractible.
By \corref{cor_wicked}, also $(Y,*)^{(X,*)}$ is contractible.
\end{proof}

\begin{proof}[Proof of \thmref{localization}]
Let $F$ denote the homotopy fibre of the localization map $X\to X_{(P)}$. Then $F$ is a connected space of CW type
with homotopy groups $\pi_*(F)$ abelian torsion $P'$-primary. Moreover $\pi=\pi_1(F)$ is divisible.
Let $f\colon F\to K(\pi,1)$ denote a map inducing the identity on the fundamental group, and let $p\colon\Tilde F\to F$
be the homotopy fibre of $f$. Then $\Tilde F$ has the homotopy type of a simply connected CW complex with torsion
$P'$-primary homotopy groups. Since these form a Serre class, also $\H_*(\Tilde F)$ is $P'$-primary torsion. Then
by \propref{disjoint_local}, the space $(Y,*)^{(\Tilde F,*)}$ is contractible. (Note that $F$ has nondegenerate base points.)
By \propref{z_prop}, the map $(Y,*)^{(K(\pi,1),*)}\to(Y,*)^{(F,*)}$ is a homotopy equivalence. We claim that
$(Y,*)^{(K(\pi,1),*)}$ is contractible.

Since $\pi$ is divisible $P'$-torsion, it is isomorphic to the direct sum of some copies of the 
quasicyclic groups $\ZZ_{q^\infty}$, for various $q\in P'$. This is to say $\pi$ is the direct
sum of totally projective $q$-groups (for various $q$), and as such admits a composition series
(see the discussion preceding \lemref{good_filtration_general} on page \pageref{smooth_function}, and Fuchs \cite{fuchs2})
\[ N_0\lqs N_1\lqs\dots\lqs N_\lambda\lqs\dots\,\,\,\,(\lambda<\alpha) \]
where the successive quotients $N_{\lambda+1}/N_\lambda$ are cyclic of prime power $q_\lambda$ with $q_\lambda\in P'$.

Using transfinite construction we may represent $K(\pi,1)$ as the colimit of a system of CW complexes
$\setof{K(N_\lambda,1)\,\vert\,\lambda<\alpha}$, where $K(N_\lambda,1)\hookrightarrow K(N_{\lambda+1},1)$
is a cofibration for each $\lambda$, and for a limit ordinal $\mu<\alpha$ the space $K(N_\mu,1)$
is the colimit of $\setof{K(N_\lambda,1)\,\vert\,\lambda<\mu}$.

By \corref{cor_transfinite_geoghegan_1} it suffices to show that contractibility of
$(Y,*)^{(K(N_\lambda,1),*)}$ implies contractibility of $(Y,*)^{(K(N_{\lambda+1},1),*)}$.
The extension $0\to N_\lambda\to N_{\lambda+1}\to\ZZ_q\to 0$ induces a fibration
\[ K(N_{\lambda},1)\to K(N_{\lambda+1},1)\to K(\ZZ_{q},1). \]
If $(Y,*)^{(K(N_\lambda,1),*)}$ is contractible, the map $(Y,*)^{(K(\ZZ_q,1),*)}\to(Y,*)^{(K(N_{\lambda+1},1),*)}$
is a homotopy equivalence by \propref{z_prop}. Since $q$ belongs to $P'$, the space $(Y,*)^{(K(\ZZ_q,1),*)}$ is
contractible by Example 1 of \cite{smrekar}.
\end{proof}


\section{Local domain complex and relation to the Moore conjecture}
\label{local_domain}

Let $K$ be the suspension of a finite complex and $P$ a set of primes. The $H$-cogroup structure allows us to form
a sequence \begin{equation*}\tag{$\star$} K\xrightarrow{q_1}K\xrightarrow{q_2}\dots \end{equation*} where $q_i$ denotes
$H$-cogroup multiplication. If $\setof{q_i}$ is a sequence of primes in $\PP\setminus P$ with each member occurring
infinitely many times, then the telescope of the sequence ($\star$) is the $P$-localization of $K$. Then for a space $Y$
the function space $(Y,*)^{(K_{(P)},*)}$ can be viewed as the inverse limit of
\[ \dots\to(Y,*)^{(K,*)}\xrightarrow{q_2}(Y,*)^{(K,*)}\xrightarrow{q_1}(Y,*)^{(K,*)} \]
where now $q_i$ denotes the induced $H$-group multiplication by $q_i$. This construction
is particulary suitable for application of \thmref{u_M-L_general} and we do so in this section.

For the particular case when $K$ is a sphere, we obtain the following result.

\begin{prop}\label{f_l_spheres}
Let $\PP=P\cup P'$ be a partition of the primes and let $Y$ be a simply connected finite complex. Further let $R$ be
a set of primes with $R\cap P\neq\emptyset$ and $m$ a positive integer.
\begin{enroman}
\item	If the space $(Y_{(R)},*)^{(S^m_{(P')},*)}$ has the homotopy type of a CW complex then for $k\gqs 1$ the
	homotopy groups $\pi_{m+k}(Y_{(P\cap R)})$ are finite abelian groups bounded by a common bound. Moreover,
	the loop space $\Omega^{m+1}Y_{(P\cap R)}$ has an H-space exponent, the space $(Y_{(P\cap R)},*)^{(S^m_{(0)},*)}$
	is contractible and $(Y_{(R)},*)^{(S^m_{(P')},*)}\simeq\Omega^mY'$ where $Y'$ denotes the
	homotopy fibre of the localization $Y_{(R)}\to Y_{(P\cap R)}$. In particular, $Y$ is an elliptic complex.
\item	If, in addition, the homology group $H_k(Y)$ is infinite for some positive $k$, then $P\cap R$ must be finite.
\item	Conversely, if for some $l$ the space $\Omega^{l}Y_{(P\cap R)}$ admits an H-space exponent, then
	$(Y_{(R)},*)^{(S^l_{(P')},*)}$ has CW homotopy type, and is homotopy equivalent to $\Omega^lY'$.
\end{enroman}
\end{prop}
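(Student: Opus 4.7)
The plan is to prove (iii) first, extracting the homotopy identification $(Y_{(R)},*)^{(S^l_{(P')},*)}\simeq\Omega^l Y'$; (i) will follow from \thmref{finite_localization} (applied with $K=S^m$) combined with (iii), and (ii) from (i) together with Serre's theorem on torsion in the homotopy of simply connected finite complexes.

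For (iii), I would apply $(-,*)^{(S^l_{(P')},*)}$ to the defining fibration $Y'\to Y_{(R)}\to Y_{(R\cap P)}$; by \propref{mapping_space_covariant} this yields a fibration of function spaces, and the first step is to show the base $(Y_{(R\cap P)},*)^{(S^l_{(P')},*)}$ is contractible. Modelling $S^l_{(P')}$ as the mapping telescope of $S^l\xrightarrow{q_1}S^l\xrightarrow{q_2}\cdots$ with $\{q_i\}\subset P$ hitting every prime of $P$ infinitely often, the base becomes the inverse limit of $\cdots\to\Omega^l Y_{(R\cap P)}\xrightarrow{q_i\cdot}\Omega^l Y_{(R\cap P)}$. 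Given the H-space exponent $N$ of $\Omega^l Y_{(P\cap R)}$, I may assume the prime divisors of $N$ lie in $P\cap R$ (primes outside act invertibly on this $\ZZ_{(P\cap R)}$-local H-group and can be divided out of $N$), and then pass to a cofinal subsequence whose consecutive bonds are each multiplication by $N$, so \propref{contractible_limit} gives contractibility of the base. Hence the total space is fibre-equivalent to the fibre $(Y',*)^{(S^l_{(P')},*)}$.

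The main obstacle is the identification $(Y',*)^{(S^l_{(P')},*)}\simeq\Omega^l Y'$ (assume $l\geq 2$ for concreteness; the $l=1$ case admits a parallel telescope treatment). I would realize $S^l\hookrightarrow S^l_{(P')}$ as a cellular inclusion whose cofibre $C$ satisfies $C\simeq\bigvee_{p\in P}M(\ZZ_{p^\infty},l)$, so \corref{principal_fibration} supplies a principal fibration
\[
(Y',*)^{(C,*)}\to(Y',*)^{(S^l_{(P')},*)}\to\Omega^l Y'
\]
whose fibre splits as $\prod_{p\in P}(Y',*)^{(M(\ZZ_{p^\infty},l),*)}$. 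For $p\in P\setminus R$ the space $Y'$ is $(\PP\setminus\{p\})$-local, so \propref{disjoint_local} collapses the factor; for $p\in P\cap R$ (the delicate case), $Y'$ itself is not $p$-local, but localizing $Y_{(R)}\to Y_{(R\cap P)}$ at $(p)$ gives an equivalence, so $Y'_{(p)}\simeq *$, and \corref{razcep} (part 2) applied to the $\{p\}$-torsion Moore space reduces the factor to $(Y'_{(p)},*)^{(M(\ZZ_{p^\infty},l),*)}\simeq *$. Thus $(Y_{(R)},*)^{(S^l_{(P')},*)}\simeq\Omega^l Y'$, and CW type follows from $Y'$ having CW type (Stasheff's theorem applied to the defining fibration) together with iteration of \corref{loop_space}.

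For (i), the CW-type hypothesis lets me invoke \thmref{finite_localization} to extract both contractibility of $(Y_{(R\cap P)},*)^{(S^m_{(P')},*)}$ and an H-space exponent $N$ of $\Omega^{m+1}Y_{(P\cap R)}$. Finiteness and uniform boundedness of $\pi_{m+k}(Y_{(P\cap R)})$ for $k\geq 1$ follow since these are $N$-annihilated and $\pi_{m+k}(Y)$ is finitely generated by Serre's theorem; contractibility of $(Y_{(P\cap R)},*)^{(S^m_{(0)},*)}$ comes from the telescope-exponent argument of (iii) applied to $S^m_{(0)}$; ellipticity holds because any nonzero $\pi_j(Y)\otimes\QQ$ with $j>m$ would survive localization at a prime in $P\cap R$; and the identification with $\Omega^m Y'$ is (iii) with $l=m$. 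For (ii), suppose $P\cap R$ is infinite: any nontrivial H-space exponent of $\Omega^{m+1}Y_{(p)}$ for $p\in P\cap R$ is a $p$-power dividing $N$, so the crux is showing this exponent is nontrivial for infinitely many $p$, forcing $N$ to have infinitely many prime divisors. This is supplied by Serre's theorem: $H_k(Y)$ infinite for some $k>0$ implies $H^*(Y;\QQ)\neq 0$, whence $\pi_*(Y_{(p)})$ carries $p$-torsion in arbitrarily high degrees for every prime $p$, giving the contradiction.
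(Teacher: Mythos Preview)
Your overall strategy is sound and reaches the result, but it diverges from the paper's route and carries one genuine (if easily patchable) gap.

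\textbf{The paper's approach.} The paper simply invokes \thmref{finite_localization} with $K=S^m$ and $Y$ replaced by $Y_{(R)}$, together with \propref{rational_reduction} and \corref{cor_rational_reduction}. The point is that once $(Y_{(P\cap R)},*)^{(S^l_{(0)},*)}$ is known to be contractible, the pullback square of \corref{the_square} (which underlies \corref{cor_rational_reduction}) yields directly
\[
(Y_{(R)},*)^{(S^l_{(P')},*)}\;\simeq\;\bigl((Y_{(R)})_{\tau P'},*\bigr)^{(S^l,*)}\;=\;(Y',*)^{(S^l,*)}=\Omega^l Y'.
\]
No cofibre analysis is needed. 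For (ii) the paper cites McGibbon--Neisendorfer (``generalized Serre''), which gives $p$-torsion in $\pi_k(Y)$ for infinitely many $k$ whenever $\tilde H_*(Y;\ZZ_p)\neq 0$; your appeal to ``Serre's theorem'' is really this result.

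\textbf{Where your argument is longer than necessary.} Your identification $(Y',*)^{(S^l_{(P')},*)}\simeq\Omega^l Y'$ via the cofibre $C=S^l_{(P')}/S^l$ can be replaced by a one-line observation: the fibre $Y'$ of $Y_{(R)}\to Y_{(P\cap R)}$ has homotopy groups that are torsion at primes in $R\cap P'\subset P'$, hence $Y'$ is $P'$-local, and \thmref{localization} gives $(Y',*)^{(S^l_{(P')},*)}\simeq(Y',*)^{(S^l,*)}$ immediately. This also shows your case distinction between $p\in P\setminus R$ and $p\in P\cap R$ is superfluous: for \emph{every} $p\in P$ the space $Y'$ is $(\PP\setminus\{p\})$-local, so \propref{disjoint_local} handles all factors uniformly.

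\textbf{The gap.} In your cofibre argument you show the fibre $(Y',*)^{(C,*)}$ over the constant is contractible, but the restriction $(Y',*)^{(S^l_{(P')},*)}\to\Omega^l Y'$ need not be surjective, and $\Omega^l Y'$ need not be path-connected ($\pi_l(Y')$ may be nonzero). So contractibility of one fibre only gives an equivalence onto the image. To conclude, you must still argue that every $f\colon S^l\to Y'$ extends to $S^l_{(P')}$---which is again just the statement that $Y'$ is $P'$-local. So the missing ingredient is precisely the observation that renders the whole cofibre detour unnecessary.
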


Recall that a simple space $Z$ is said to have a homotopy exponent at prime $p$ if there exists $k$
such that $p^k$ annihilates the $p$-primary component of $\pi_l(Z)$ for all $l$.

If $Z$ is a homotopy associative H-space then $Z$ is said to have an H-space (or {\it geometric}) exponent
if there exists a number $b$ such that the map $b\colon Z\to Z$, sending $z$ to $z^b$, is nullhomotopic. Note
that if $Z$ has H-space exponent $b$, then also $\Omega Z$ has H-space exponent $b$.

We refer the reader to the survey article of Neisendorfer and Selick \cite{neisen-selick} for an introduction
to (geometric) exponents in homotopy theory.

A simply connected finite complex $Y$ is {\it elliptic} if $\pi_k(Y)$ is torsion for all but finitely
many $k$. Otherwise it is {\it hyperbolic}. See Felix, Halperin, Thomas \cite{fht}.

We will prove \propref{f_l_spheres} as a corollary to the more general \thmref{finite_localization} below.

\begin{example}\label{weakly_null}
The function space $(S^n,*)^{(M(\QQ,m),*)}$ does not have CW type for any $m$.
In particular, for $m>n$ the space is weakly contractible but not contractible.
Thus for $m>n$ the sphere $S^n$ is weakly $M(\QQ,m)$-null but not strongly so.

Since $\Omega\big((S^n,*)^{(M(\QQ,m),*)},*\big)\approx(S^n,*)^{(M(\QQ,m+1),*)}$
the situation does not `improve' after looping.\qed
\end{example}

Until the end of this section let $X_m$ denote the localization of the $m$-sphere away from $p$
where $p$ is a prime understood from the context.

By exploiting results from the theory of homotopy exponents and H-space exponents
(see \cite{neisen-selick}) we give the following rather nontrivial

\begin{example}\label{spheres}
For all large enough $m$, the function space $(S^n,*)^{(X_m,*)}$ has the homotopy type of a CW complex
(and is homotopy equivalent to $\Omega^mX_n$).

This follows from the fact that $\Omega^lS^n_{(p)}$ has an H-space exponent for all large enough $l$
(the case $n$ even is due to James \cite{james}, the case $n,p$ odd is due to  Cohen, Moore,
Neisendorfer \cite{cmn}, and the case $n$ odd, $p=2$ is attributed to an unpublished result of Moore).

Let $W$ denote the localization at $p$ of the $n$-connected cover of $S^n$. It was shown by Neisendorfer and Selick
\cite{neisen-selick} that $\Omega^{n-3}W$ does not have an H-space exponent. However, $\Omega^{n-2}W$ does, see
Neisendorfer \cite{neisendorfer2}.

In particular, the space $(W,*)^{(X_{n-4},*)}$ does not have the type of a CW complex, and the space
$(W,*)^{(X_{n-2},*)}$ does.\qed
\end{example}

As a consequence we infer

\begin{prop}
Let $Y$ be a rationally elliptic simply connected finite complex.
For large enough $m$ and almost all primes $p$ the function space $(Y,*)^{(X_m,*)}$
has the homotopy type of a CW complex.
\end{prop}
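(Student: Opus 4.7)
The plan is to invoke \propref{f_l_spheres}(iii). Applied with $R=\PP$ (so $Y_{(R)}=Y$), $P=\setof{p}$, and $P'=\PP\setminus\setof{p}$, that result says that $(Y,*)^{(X_m,*)}$ has CW homotopy type as soon as $\Omega^mY_{(p)}$ admits an H-space exponent. Since an H-space exponent for a homotopy associative H-space $Z$ is inherited by $\Omega Z$, it will suffice to produce a single integer $M$ and a finite set $S$ of primes such that $\Omega^MY_{(p)}$ admits an H-space exponent for every $p\notin S$; then $\Omega^mY_{(p)}$ inherits such an exponent for every $m\gqs M$ and every $p\notin S$, and the proposition follows directly from \propref{f_l_spheres}(iii), which moreover identifies the function space with $\Omega^mY'$, where $Y'$ is the homotopy fibre of $Y\to Y_{(p)}$.

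To construct $M$ and $S$, I would exploit the structure theory of rationally elliptic spaces. Rational ellipticity means that $\pi_*(Y)\otimes\QQ$ is finite-dimensional, hence concentrated in degrees bounded by some $N=N(Y)$, and the rational homotopy type of $Y$ is that of a finite Postnikov-like tower built out of odd-dimensional rational spheres. A theorem of McGibbon--Wilkerson then guarantees that $Y$ possesses a $p$-primary homotopy exponent for every prime $p$ outside some finite exceptional set $S_0$. At such primes the $p$-local homotopy type of $Y_{(p)}$ is, up to H-maps in a suitable range, assembled from iterated loop spaces of $p$-local odd spheres $S^{2k-1}_{(p)}$ with $2k-1\lqs N$. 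Invoking the classical H-space exponent theorems for spheres --- James for $p=2$ and Cohen--Moore--Neisendorfer for $p$ odd (as cited in \exref{spheres}) --- each such factor acquires an H-space exponent after a uniformly bounded number of loopings depending only on $N$, not on $p$. Taking $M$ to be this uniform bound and $S\supset S_0$ large enough to swallow finitely many further exceptional primes produces the required data.

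The main obstacle is the uniformity of the looping depth in the prime $p$: for each individual $p\notin S_0$ the existence of \emph{some} $l(p)$ with $\Omega^{l(p)}Y_{(p)}$ possessing an H-space exponent is fairly standard from the homotopy exponent combined with sphere techniques, but for the statement at hand one must show that $l(p)$ may be chosen independent of $p$. This is exactly where one uses that the rational skeleton of $Y$ is finite and therefore dictates the $p$-local decomposition at almost all primes, reducing the question to finitely many sphere factors with bounded dimension; the number of loopings needed to kill multiplication on $\Omega^{\bullet}S^n_{(p)}$ depends on $n$ but can be taken independent of $p$ (up to finitely many exceptions absorbed into $S$). Once $M$ is fixed, \propref{f_l_spheres}(iii) yields the CW homotopy type of $(Y,*)^{(X_m,*)}$ for all $m\gqs M$ and $p\notin S$, finishing the argument.
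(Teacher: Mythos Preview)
Your approach is essentially the paper's: reduce via \propref{f_l_spheres}(iii) to showing that $\Omega^M Y_{(p)}$ has an H-space exponent for a uniform $M$ and almost all $p$, then invoke McGibbon--Wilkerson together with the sphere exponent results of \exref{spheres}. One correction in your account: the McGibbon--Wilkerson theorem does not assert homotopy exponents directly; what it says is that $\Omega Y$ (not $Y$) is $p$-equivalent, for almost all primes, to a finite product of spheres and loop spaces of spheres---not only odd ones---whose dimensions are dictated by the rational homotopy of $Y$ and hence bounded independently of $p$, which is precisely what delivers the uniformity you flag as the crux.
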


\begin{proof}
By McGibbon and Wilkerson \cite{mcgibbon_wilkerson}
the loop space $\Omega Y$ is $p$-equivalent to a finite product of spheres and loop spaces of spheres,
for almost all primes $p$. Our assertion now follows from \propref{f_l_spheres} in conjunction with \exref{spheres}.
\end{proof}

We recall the following conjecture due to John C. Moore (see Selick \cite{selick}).
\begin{conj}[Moore]\label{moore_conjecture}
Let $Y$ be the localization at $p$ of a simply connected finite complex. Then the following are equivalent.
\begin{enumerate}
\item	There exists a number $l$ such that $\Omega^lY$ has an H-space exponent.
\item	The complex $Y$ has a homotopy exponent.
\item	The rational homotopy groups $\pi_*(Y)\otimes\QQ$ are totally finite-dimensional.
\end{enumerate}
\end{conj}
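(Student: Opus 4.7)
The plan is to handle the two formal implications directly and to translate the remaining (open) implication into function-space language via \propref{f_l_spheres}, thereby producing the restatement promised in the introduction; an actual proof of (3) $\Rightarrow$ (1) lies beyond current technology, as it is the geometric part of the Moore conjecture.

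First, (1) $\Rightarrow$ (2) is a standard H-space exponent argument. If the $b$-th power map on $\Omega^l Y$ is nullhomotopic, then for every $k \geq 0$ the induced endomorphism of $\pi_k(\Omega^l Y) = \pi_{k+l}(Y)$ is multiplication by $b$, which is therefore trivial. Since $Y$ is $p$-local, write $b = p^a u$ with $u$ a unit; then $p^a$ annihilates $\pi_{k+l}(Y)$ for all $k \geq 0$. The finitely many groups $\pi_i(Y)$ with $i < l$ are each finitely generated over $\ZZ_{(p)}$ (as $Y$ is the $p$-localization of a simply connected finite complex), so each admits its own $p$-power exponent, and the maximum serves as a homotopy exponent for $Y$. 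The implication (2) $\Rightarrow$ (3) is immediate: a homotopy exponent forces $\pi_*(Y)$ to be $p$-primary torsion, whence $\pi_*(Y) \otimes \QQ = 0$, trivially a totally finite-dimensional $\QQ$-vector space.

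For the reformulation of (1), I would apply \propref{f_l_spheres} with $R = \{p\}$ and $P = \{p\}$, so that $P' = \PP \setminus \{p\}$ and $Y_{(P\cap R)} = Y$. Then the converse statement of \propref{f_l_spheres} shows that if $\Omega^l Y$ has an H-space exponent then $(Y,*)^{(S^l_{(P')},*)}$ has CW homotopy type, while the necessary direction shows that if $(Y,*)^{(S^l_{(P')},*)}$ has CW type then $\Omega^{l+1} Y$ has an H-space exponent. Consequently condition (1) is equivalent to the following: \emph{there exists an integer $l \geq 1$ such that the function space $(Y,*)^{(S^l_{(P')},*)}$ has the homotopy type of a CW complex}. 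This is the desired function-space restatement of the geometric part of the Moore conjecture; combined with the classical Moore equivalence (2) $\Leftrightarrow$ (3), it becomes the assertion that rational ellipticity of the $p$-local finite complex $Y$ is equivalent to the CW homotopy type of some such function space.

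The main obstacle is that the implication (3) $\Rightarrow$ (1) is open. The reformulation transports the difficulty into a statement about CW homotopy type of function spaces, so one may hope to attack it using the necessary conditions of \thmref{u_M-L_general} and \thmref{double_wicked} \--- the uniform Mittag-Leffler property, countability with respect to $Y$, and absence of phantom components \--- together with the rational structure of an elliptic complex. A natural intermediate case is when $Y$ is rationally equivalent to a product of odd-dimensional spheres, where \exref{spheres} already delivers the conclusion; passage to a general rationally elliptic $Y$ along its rational cell decomposition, controlling H-space exponents inductively under the principal fibrations produced by \propref{finite_postnikov_decomposition}, is the core difficulty and the reason the conjecture remains open.
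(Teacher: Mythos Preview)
Your recognition that the statement is a conjecture rather than a theorem is correct: the paper offers no proof but merely restates it in function-space language immediately after \propref{f_l_spheres}, replacing (1) by the CW-type condition ($1^*$) for $(Y,*)^{(X_m,*)}$ (with $X_m=S^m_{(\PP\setminus\{p\})}$) and (3) by the weak-contractibility condition ($3^*$) for the same space. Your reformulation of (1) via \propref{f_l_spheres} with $R=P=\{p\}$ is exactly the paper's ($1^*$); you omit the companion reformulation ($3^*$), but that is a matter of completeness rather than correctness.

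There is, however, a genuine error in your handling of the ``easy'' implications. With the paper's definition of homotopy exponent---some $p^k$ annihilates only the \emph{$p$-primary component} of each $\pi_l(Y)$---condition (2) says nothing about the torsion-free part of $\pi_*(Y)$, so your claim that ``a homotopy exponent forces $\pi_*(Y)$ to be $p$-primary torsion'' is false, and (2) $\Rightarrow$ (3) is not immediate. Neither (2) $\Rightarrow$ (3) nor (3) $\Rightarrow$ (2) is known in general; your later reference to ``the classical Moore equivalence (2) $\Leftrightarrow$ (3)'' is therefore misplaced. The genuinely formal implications are (1) $\Rightarrow$ (2) and (1) $\Rightarrow$ (3): an H-space exponent $b$ on $\Omega^lY$ annihilates all of $\pi_{k+l}(Y)$ for $k\gqs 0$ (not just its torsion), which simultaneously forces $\pi_*(Y)\otimes\QQ$ to vanish above degree $l$ and bounds the $p$-torsion. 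Both converses remain open, which is precisely why the paper presents only a restatement and not a proof.
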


This conjecture can now be rephrased in the following manner.
\begin{conj}
Let $Y$ be the localization at $p$ of a simply connected finite complex. Then the following are equivalent.
\begin{itemize}
\item[($1^*$)]	There exists a number $m$ such that the function space $(Y,*)^{(X_m,*)}$ has the homotopy type of
		a CW complex.
\item[($2$)]	The complex $Y$ has a homotopy exponent.
\item[($3^*$)]	There exists a number $m$ such that the function space $(Y,*)^{(X_m,*)}$ is weakly contractible.
\end{itemize}
\end{conj}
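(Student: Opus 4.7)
My plan is to identify $(1^*)$ with Moore's condition $(1)$ and $(3^*)$ with Moore's condition $(3)$ of Conjecture~\ref{moore_conjecture}, reducing the restated conjecture to the geometric Moore conjecture itself.

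For the first identification I would write $Y=\Tilde Y_{(p)}$ for a simply connected finite complex $\Tilde Y$ and apply \propref{f_l_spheres} with $P=R=\{p\}$ and $P'=\PP\setminus\{p\}$. Then $\Tilde Y_{(R)}=\Tilde Y_{(P\cap R)}=Y$, the localisation $\Tilde Y_{(R)}\to\Tilde Y_{(P\cap R)}$ is the identity, and its homotopy fibre $Y'$ is a point. Part~(1) of the proposition then yields that $(1^*)$ implies both an H-space exponent for $\Omega^{m+1}Y$ (Moore's~$(1)$ with $l=m+1$) and the equivalence $(Y,*)^{(X_m,*)}\simeq\Omega^mY'=*$, so in particular $(1^*)\Rightarrow(3^*)$ for the same $m$. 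Part~(3) supplies the converse implication Moore~$(1)\Rightarrow(1^*)$ by taking $m=l$.

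For the second identification I would model $X_m$ as the mapping telescope of $S^m\xrightarrow{\cdot p}S^m\xrightarrow{\cdot p}\cdots$ and combine the Milnor short exact sequence
\[ 0\to\ll\pi_{m+k+1}(Y)\to[X_{m+k},Y]_*\to\lim\pi_{m+k}(Y)\to 0 \]
with the adjunction $\pi_k\bigl((Y,*)^{(X_m,*)}\bigr)\cong[X_{m+k},Y]_*$ (observing that $S^k\wedge X_m\simeq X_{m+k}$ since smash commutes with telescopes). Since $Y$ is of finite type over $\ZZ_{(p)}$, each $\pi_n(Y)$ splits as $\ZZ_{(p)}^{r_n}\oplus T_n$ with $T_n$ finite $p$-primary; under multiplication by $p$ one then computes $\lim\pi_n(Y)=0$, $\ll T_n=0$ by Mittag--Leffler, and $\ll\ZZ_{(p)}^{r_n}\cong(\Hat\ZZ_p/\ZZ_{(p)})^{r_n}$. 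Consequently $\pi_k\bigl((Y,*)^{(X_m,*)}\bigr)\cong(\Hat\ZZ_p/\ZZ_{(p)})^{r_{m+k+1}}$, which vanishes for all $k\gqs 0$ iff $r_n=0$ for $n>m$ iff $\pi_*(Y)\otimes\QQ$ is totally finite-dimensional, i.e.\ Moore's condition~$(3)$, the ellipticity of $\Tilde Y$.

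With both identifications in hand the restated equivalence becomes, term for term, Moore's $(1)\Leftrightarrow(2)\Leftrightarrow(3)$. Of the arrows involved, $(1^*)\Rightarrow(2)$ falls out unconditionally from the first paragraph: the H-space exponent $b=p^j$ on $\Omega^{m+1}Y$ forces $\pi_n(Y)$ to have bounded $p$-exponent and vanishing $\ZZ_{(p)}$-rank for $n\gqs m+1$, while the finitely many lower-degree $\pi_n(Y)$ each contribute a bounded $p$-primary exponent of their own; and $(1^*)\Rightarrow(3^*)$ was already observed. The main obstacle is the reverse implication $(2)\Rightarrow(1^*)$, equivalently Moore's $(2)\Rightarrow(1)$: this is the open geometric content of the Moore conjecture, for which only the partial converses underpinning \exref{spheres} are at present available.
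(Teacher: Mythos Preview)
The statement is a \emph{conjecture}; the paper does not prove it but presents it as a rephrasing of the Moore conjecture, relying on \propref{f_l_spheres} for the translation. Your proposal does exactly this, and does it correctly: you use parts~\navedi{1} and~\navedi{3} of \propref{f_l_spheres} (with $P=R=\{p\}$, so $Y'\simeq *$) to identify $(1^*)$ with Moore's~$(1)$, and a direct Milnor-sequence computation to identify $(3^*)$ with Moore's~$(3)$. You then rightly observe that $(1^*)\Rightarrow(2)$ and $(1^*)\Rightarrow(3^*)$ drop out unconditionally, while $(2)\Rightarrow(1^*)$ is the open geometric content of the Moore conjecture.

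Your treatment is somewhat more explicit than the paper's. The paper leaves the identification $(3^*)\Leftrightarrow(3)$ implicit in the surrounding discussion, whereas you compute $\pi_k\bigl((Y,*)^{(X_m,*)}\bigr)\cong\lm{}\pi_{m+k+1}(Y)\cong(\Hat\ZZ_p/\ZZ_{(p)})^{r_{m+k+1}}$ directly; this is a clean and independent verification. One minor stylistic point: since the statement is a conjecture, it would be clearer to frame your write-up not as a ``proof'' but as a justification of the equivalence with \conjref{moore_conjecture}, which is precisely what you have established.
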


We turn to the proof of \propref{f_l_spheres}.

Consider the space of maps $K_{(P)}\to Y$ where $K$ is a simply connected finite complex,
$P$ is a set of primes (possibly empty), and $Y$ is any simply connected complex. Let $P'=\PP\setminus P$.

\corref{the_square} yields the pullback diagram
\begin{equation*}\begin{diagram}
\node{(Y,*)^{(K_{(P)},*)}} \arrow{e} \arrow{s} \node{(Y_{(P)},*)^{(K_{(P)},*)}} \arrow{s} \\
\node{(Y_{(P')},*)^{(K_{(0)},*)}} \arrow{e} \node{(Y_{(0)},*)^{(K_{(0)},*)}}
\end{diagram}\end{equation*}
Note that reapplying \thmref{localization} we deduce $(Y_{(P)},*)^{(K_{(P)},*)}\simeq(Y_{(P)},*)^{(K,*)}$
and $(Y_{(0)},*)^{(K_{(0)},*)}\simeq(Y_{(0)},*)^{(K,*)}$ whence by Milnor's theorem we infer

\begin{prop}\label{rational_reduction}
\begin{enroman}
\item	The space $(Y,*)^{(K_{(P)},*)}$ has CW type if and only if the image
	of $(Y,*)^{(K_{(P)},*)}\to (Y_{(P')},*)^{(K_{(P)},*)}\simeq(Y_{(P')},*)^{(K_{(0)},*)}$ has.

	In particular, if $Y$ is $R$-local where $R$ is a set of primes contained in $P$, then $(Y,*)^{(K_{(P)},*)}$
	has CW homotopy type.

\item	If $Y$ is a loop space, then $(Y,*)^{(K_{(P)},*)}$ has CW type if and only if \linebreak
	$(Y_{(P')},*)^{(K_{(P)},*)}\simeq(Y_{(P')},*)^{(K_{(0)},*)}$ has.
\end{enroman}
\end{prop}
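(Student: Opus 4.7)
The plan is to exploit the pullback square of \corref{the_square}, applied with the partition $\PP=P\cup P'$ and domain $X=K_{(P)}$. Abbreviate $A=(Y,*)^{(K_{(P)},*)}$, $B=(Y_{(P)},*)^{(K_{(P)},*)}\simeq(Y_{(P)},*)^{(K,*)}$, $C=(Y_{(P')},*)^{(K_{(0)},*)}$, and $D=(Y_{(0)},*)^{(K_{(0)},*)}\simeq(Y_{(0)},*)^{(K,*)}$; Milnor's theorem supplies CW type for $B$ and $D$. The construction underlying \corref{the_square}, together with \propref{mapping_space_covariant}, arranges that both $p\colon B\to D$ and $q\colon C\to D$ are fibrations, whence the projections $A\to B$ and $A\to C$ out of the pullback are also fibrations. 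Stasheff's theorem applied to $p$ ensures every fibre of $p$, and hence of $A\to C$, has CW type.

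Since $p$ is a fibration between CW type spaces, $p(B)$ is a union of path components of $D$; the pullback description identifies the image $C'$ of $A$ in $C$ as $q^{-1}(p(B))$, so $C'$ is a union of path components of $C$. Analogously, $B'=p^{-1}(q(C))$ is the image of $A$ in $B$ and a union of path components of $B$, and $D'=p(B)\cap q(C)$ is a union of path components of $D$; thus $B'$ and $D'$ automatically inherit CW type. The $(\Leftarrow)$ direction of (i) is then immediate: if $C'$ has CW type, the surjective fibration $A\to C'$ has CW base and CW fibres, and Stasheff's theorem gives $A$ of CW type. For $(\Rightarrow)$, assume $A$ has CW type; then $A\to B'$ is a fibration between CW type spaces, so by Stasheff its fibres have CW type. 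These fibres coincide with the fibres of the restricted surjective fibration $q\colon C'\to D'$ (a fibration because $D'$ is open-and-closed in $D$), so a second application of Stasheff \-- now to $q\colon C'\to D'$ with CW base and CW fibres \-- yields that $C'$ has CW type.

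Part (ii) then reduces to (i). When $Y$ is a loop space, all four spaces are H-groups and all structural arrows H-morphisms, so the path components of $C$ are mutually homotopy equivalent; since $C'$ is a union of components containing the constant map, it contains the identity component of $C$, and hence $C$ has CW type if and only if $C'$ does. The ``in particular'' assertion of (i) is the degenerate case $Y\simeq Y_{(R)}$ with $R\subset P$: then $Y_{(P')}\simeq Y_{(0)}$, so $q$ is a homotopy equivalence and $C'\simeq D'$ is automatically CW. The most delicate point is the $(\Rightarrow)$ direction of (i), where the pullback structure is genuinely used: one cannot extract CW type of $C'$ directly from CW type of $A$ via the fibration $A\to C'$, but must detour through $B'$ and $D'$ and invoke Stasheff's theorem twice.
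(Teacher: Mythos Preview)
Your argument is correct. For \navedi{1} you are doing precisely what the paper does, only spelling it out: the paper sets up the pullback square with the two Milnor corners before stating the proposition and then writes ``only \navedi{2} needs be discussed,'' taking \navedi{1} as read from Stasheff's theorem.

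For \navedi{2} your route is genuinely different. The paper does not use the pullback square at all for the hard direction; instead it exploits the delooping $Y\simeq\Omega Z$ directly. Writing $F$ for the homotopy fibre of $Z\to Z_{(P')}$ and $F'\simeq\Omega Z_{(P')}\simeq Y_{(P')}$ for the homotopy fibre of $F\to Z$, one obtains a \emph{principal} fibration $F'\to F$ with fibre $\Omega Z\simeq Y$. Since $F$ is $P$-local, \thmref{localization} gives $(F,*)^{(K_{(P)},*)}\simeq(F,*)^{(K,*)}$ of CW type, and then Stasheff lifts CW type from the fibre $(Y,*)^{(K_{(P)},*)}$ to the total space $(Y_{(P')},*)^{(K_{(P)},*)}$ in one stroke. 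Your argument instead stays inside the pullback square and uses the H-group structure on $C$ to pass from the identity component (contained in $C'$) to all of $C$. This is more elementary and avoids introducing the auxiliary space $F$, but it does lean on the small fact that in an H-group with continuous multiplication, openness of one path component forces openness of all (via $C_b=L_{\iota(b)}^{-1}(C_e)$); you might make that step explicit. The paper's approach sidesteps this point entirely by never decomposing $C$ into components.
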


\begin{proof}
Only \navedi{2} needs be discussed.
Suppose $Y\simeq\Omega(Z,*)$. Let $F$ be the homotopy fibre of $Z\to Z_{(P')}$,
and $F'$ the homotopy fibre of $F\to Z$. Then $F'$ is homotopy equivalent to $\Omega(Z_{(P')},*)\simeq Y_{(P')}$,
and $F'\to F$ is a principal fibration with fibre homotopy equivalent to $\Omega(Z,*)\simeq Y$.
Then \[ (F',*)^{(K_{(P)},*)}\to(F,*)^{(K_{(P)},*)} \] is a (principal) fibration with fibre
(over the constant map) homotopy equivalent to $(Y,*)^{(K_{(P)},*)}$.
Note that $F$ is $P$-local. Thus by \thmref{localization}, $(F,*)^{(K_{(P)},*)}$ is homotopy equivalent
to $(F,*)^{(K,*)}$ which has CW type. Thus if $(Y,*)^{(K_{(P)},*)}$ has CW type,
so has $(F',*)^{(K_{(P)},*)}\simeq(Y_{(P')},*)^{(K_{(P)},*)}$, by Stasheff's theorem.
\end{proof}

\begin{cor}\label{cor_rational_reduction}
If $(Y_{(P')},*)^{(K_{(0)},*)}$ happens to be contractible, then the space
$(Y,*)^{(K_{(P)},*)}$ is homotopy equivalent to $(Y_{\tau P},*)^{(K,*)}$ where $Y_{\tau P}$ denotes
the homotopy fibre of the localization $Y\to Y_{(P')}$. \qed
\end{cor}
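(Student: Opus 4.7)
The plan is to start from the pullback square displayed just before \propref{rational_reduction}, namely
\begin{equation*}\begin{diagram}
\node{(Y,*)^{(K_{(P)},*)}} \arrow{e} \arrow{s} \node{(Y_{(P)},*)^{(K_{(P)},*)}} \arrow{s} \\
\node{(Y_{(P')},*)^{(K_{(0)},*)}} \arrow{e} \node{(Y_{(0)},*)^{(K_{(0)},*)}}
\end{diagram}\end{equation*}
which comes from \corref{the_square}. By \propref{mapping_space_covariant} the right-hand vertical (and the bottom horizontal) arrow is a fibration, so this is a homotopy pullback. Since we are assuming that $(Y_{(P')},*)^{(K_{(0)},*)}$ is contractible, the standard fact that a homotopy pullback $A\times_C^h B$ with $B$ contractible is homotopy equivalent to the homotopy fibre of $A\to C$ (over the image point) gives an equivalence
\[ (Y,*)^{(K_{(P)},*)} \;\simeq\; \text{hofib}\Big((Y_{(P)},*)^{(K_{(P)},*)}\to(Y_{(0)},*)^{(K_{(0)},*)}\Big) \]
taken over the constant map (which is where the contractible space maps).

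Next I would rewrite the domains on both sides using \thmref{localization}: since $Y_{(P)}$ and $Y_{(0)}$ are both $P$-local (the rationalization is local at every set of primes), the localization $K\to K_{(P)}$ induces a homotopy equivalence $(Y_{(P)},*)^{(K_{(P)},*)}\simeq(Y_{(P)},*)^{(K,*)}$, and similarly $(Y_{(0)},*)^{(K_{(0)},*)}\simeq(Y_{(0)},*)^{(K,*)}$. These equivalences fit in a square compatible with the map induced by $Y_{(P)}\to Y_{(0)}$, so the homotopy fibre above becomes the homotopy fibre of
\[ (Y_{(P)},*)^{(K,*)}\longrightarrow(Y_{(0)},*)^{(K,*)} \]
over the constant map.

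Now I would invoke \propref{mapping_space_covariant} once more to commute $(-,*)^{(K,*)}$ with the principal fibration $Y_{(P)}\to Y_{(0)}$: the homotopy fibre of the displayed map is $(F,*)^{(K,*)}$, where $F$ denotes the homotopy fibre of $Y_{(P)}\to Y_{(0)}$. To finish I would apply the Hasse/fracture square for the simply connected complex $Y$, namely the homotopy pullback
\begin{equation*}\begin{diagram}
\node{Y} \arrow{e} \arrow{s} \node{Y_{(P)}} \arrow{s} \\
\node{Y_{(P')}} \arrow{e} \node{Y_{(0)}}
\end{diagram}\end{equation*}
whose vertical homotopy fibres agree, yielding $F\simeq Y_{\tau P}$, the homotopy fibre of $Y\to Y_{(P')}$. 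Composing the chain of equivalences gives $(Y,*)^{(K_{(P)},*)}\simeq (Y_{\tau P},*)^{(K,*)}$.

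The main obstacle \-- though modest, as all the machinery has been prepared earlier in the paper \-- is the bookkeeping needed to ensure that all relevant maps are actual fibrations (so that strict pullbacks agree with homotopy pullbacks), that base points match up, and that the various identifications via \thmref{localization} assemble into a commutative ladder, so that the identification of the homotopy fibre ``over the constant map'' really corresponds to the canonical fibre of $Y_{(P)}\to Y_{(0)}$ on the $Y$-side.
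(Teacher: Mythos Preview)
Your proposal is correct and matches the argument the paper intends: the corollary is marked with an immediate \qed, and the only thing available to invoke is precisely the pullback square preceding \propref{rational_reduction} together with the identifications already recorded there via \thmref{localization}. Your reading of the homotopy pullback with contractible corner as a homotopy fibre, the replacement of $K_{(P)}$ and $K_{(0)}$ by $K$ in the right column, and the identification of the fibre of $Y_{(P)}\to Y_{(0)}$ with $Y_{\tau P}$ via the fracture square for $Y$ (which is exactly the square underlying \corref{the_square}) are all on target; the bookkeeping caveats you flag are the only content, and the paper's earlier constructions (fibration models for $Y_{(P_i)}\to Y_{(0)}$ in the proof of \corref{the_square}, \propref{mapping_space_covariant}, the coglueing theorem) handle them.
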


\begin{lem}\label{pc}
Let $P$ and $R$ be sets of primes with $P\cap R\neq\emptyset$, and
let $Y$ be a simply connected complex of finite type over $\ZZ_{(R)}$.
Further let $A$ be a finite complex, and denote $\Gamma=[SA,Y]_*$. Let $p_1,p_2,\dots$ be a sequence of
primes in $P$ with each prime in $P$ occurring infinitely many times. The limit group of the inverse sequence
$\dots\to\Gamma\xrightarrow{\xi\mapsto\xi^{p_2}}\Gamma\xrightarrow{\xi\mapsto\xi^{p_1}}\Gamma$ is trivial.
\end{lem}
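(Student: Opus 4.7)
The plan is to reduce to the abelian case via the Postnikov tower of $Y$ and then handle an intersection calculation in finitely generated $\ZZ_{(R)}$-modules.

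First, since $A$ is finite, $SA$ has some finite dimension $d+1$. The Postnikov truncation $Y\to Y_{d+1}$ induces a bijection $[SA,Y]_*\to[SA,Y_{d+1}]_*$ compatible with the $p$-th power endomorphisms (by naturality in $Y$), so I may replace $Y$ by its Postnikov section and assume $\pi_kY=0$ for $k>d+1$. I then stratify $\Gamma$ through the Postnikov stages, setting $\Gamma_k:=[SA,Y_k]_*$, with $\Gamma_1=\{*\}$ and $\Gamma=\Gamma_{d+1}$. Each principal fibration $K(\pi_kY,k)\to Y_k\to Y_{k-1}$ yields a Puppe exact sequence of pointed sets
\[ [SA,K(\pi_kY,k)]_*\to\Gamma_k\to\Gamma_{k-1}, \]
whose kernel $K_k:=\ker(\Gamma_k\to\Gamma_{k-1})$ is abelian as a quotient of the abelian group $\tilde H^{k-1}(A;\pi_kY)$, which is finitely generated over $\ZZ_{(R)}$ since $A$ is finite and $\pi_kY$ is finitely generated over $\ZZ_{(R)}$. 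The $p_i$-th power operations on $\Gamma_k$ are natural in $Y$, so they descend along this Postnikov filtration and restrict to multiplication by $p_i$ on each $K_k$.

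Left exactness of inverse limits for pointed sets, applied to the sequences $K_k\hookrightarrow\Gamma_k\to\Gamma_{k-1}$ regarded as left-exact sequences of inverse systems under the $p_i$-th power bonding maps, allows me to induct downward from $\Gamma_1=\{*\}$ and reduces the claim to showing that for each $k$ the inverse limit $\lim(\cdots\to K_k\xrightarrow{p_2}K_k\xrightarrow{p_1}K_k)$ is trivial.

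The main obstacle is this final abelian case, an intersection calculation in a finitely generated $\ZZ_{(R)}$-module $K_k$: any coherent sequence $(m_1,m_2,\ldots)$ with $m_i=p_im_{i+1}$ satisfies $m_1\in\bigcap_n(p_1p_2\cdots p_n)K_k$. Fixing $p\in P\cap R$ (which exists by hypothesis) and exploiting that $p$ occurs infinitely often in the sequence $(p_i)$, each product $(p_1\cdots p_n)K_k$ lies in $p^{\nu(n)}K_k$ with $\nu(n)\to\infty$; the Krull intersection theorem in $\ZZ_{(R)}$ eliminates the free part of $K_k$, while the torsion part is controlled by the fact that every torsion prime of $K_k$ is cycled through by $(p_i)$ using the combinatorial hypothesis that each prime in $P$ occurs infinitely often. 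Feeding this vanishing back through the Postnikov filtration via left exactness completes the argument.
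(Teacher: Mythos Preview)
Your overall strategy---Postnikov truncation, filtering through the stages, and reducing via left-exactness of $\lim$ to an intersection calculation in finitely generated $\ZZ_{(R)}$-modules---is exactly the paper's approach, just spelled out in more detail. The free-part argument via infinite $p$-divisibility for a fixed $p\in P\cap R$ is also correct.

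The gap is in the torsion step. You assert that ``every torsion prime of $K_k$ is cycled through by $(p_i)$,'' but the sequence $(p_i)$ ranges over $P$, whereas the torsion primes of a finitely generated $\ZZ_{(R)}$-module lie in $R$; nothing in the stated hypotheses forces $R\subset P$. The paper's own one-line proof makes the same slip, claiming that no nonzero element of a finitely generated $\ZZ_{(R)}$-module is infinitely divisible by a prime $p\in P\cap R$---which is false for $\ZZ_{(R)}/q$ with $q\in R\setminus\{p\}$. In fact the lemma as stated admits a counterexample: take $P=\{2\}$, $R=\{2,3\}$, $Y=K(\ZZ_3,2)$, $A=S^1$; then $\Gamma\cong\ZZ_3$, every $p_i=2$ acts as an automorphism, and the inverse limit is $\ZZ_3\neq 0$. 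The lemma is only invoked in the paper (proof of Theorem~\ref{finite_localization}, part \textbf{b}) with $Y$ replaced by $Y_{(P)}$, whose homotopy groups are finitely generated over $\ZZ_{(P\cap R)}$ and hence have torsion supported entirely in $P\cap R\subset P$; under that tacit extra hypothesis your argument, and the paper's, go through.
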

\begin{proof}
Since $A$ is finite, $[SA,Y]_*\cong[SA,Y_i]_*$ for some Postnikov section $Y_i$ of $Y$.
Using left-exactness of the functor $\lim$ and the fact that no element of a finitely generated
$\ZZ_{(R)}$-module is infinitely divisible by a prime belonging to $P\cap R\subset R$,
it follows easily by induction that $[SA,Y_i]_*$ is trivial for all $i$.
\end{proof}

Before stating and proving \thmref{finite_localization} we recall some properties of abelian groups,
which we will need to a larger extent in Section \ref{explicit_determinations}. We refer
to Fuchs \cite{fuchs1},\cite{fuchs2} for all results on abelian groups.

\label{divisible_reduced_bounded} 

Let $P$ be a set of primes and $A$ an abelian group. Then $A$ is called $P$-divisible if it is divisible
by each prime belonging to $P$, i.e. if for each $y\in A$ and $p\in P$ there exists $x\in A$ with $px=y$.
If $A$ is divisible by all primes, it is called a {\it divisible} group. Divisible abelian groups are
exactly injective abelian groups.

It is well known that every divisible group is isomorphic with a direct sum of some copies of
the rationals $\QQ$ and some copies of the quasicyclic groups $\ZZ_{p^\infty}$ for various primes $p$,
say $A\cong(\oplus_{\Lambda_0}\QQ)\oplus\oplus_{p\in\PP}(\oplus_{\Lambda_p}\ZZ_{p^\infty})$. 
The cardinalities of $\Lambda_0$ and $\Lambda_p$ for $p\in\PP$ form a complete system of invariants
of $A$ (see Fuchs \cite{fuchs1}). Moreover in this case the rank of $A$ equals $\Lambda_0+\sum_{p\in\PP}\Lambda_p$.

An abelian group is called {\it reduced} if it contains no nontrivial
divisible subgroups. For an abelian group $A$ we may define $d(A)$ to be the unique
maximal divisible subgroup of $A$. Then $A/d(A)$ is a reduced group and since $d(A)$ is
injective, $A$ is isomorphic to $d(A)\oplus A/d(A)$. Although $A/d(A)$ is unique only
up to isomorphism, we may still define the {\it `reduced part'} of $A$ as a representative
of the isomorphism class of $A/d(A)$, and denote it by $r(A)$. This makes sense when we are only interested in the
isomorphism class.

We say that an abelian group $G$ is bounded (or that it admits an exponent) if there exists a number $b$
such that $b\cdot x=0$ for all $x\in G$. Such a number $b$ is called an exponent of $G$.
(See also \propref{abelian_groups}.)

\begin{thm}\label{finite_localization}\label{domain_divisible}
Let $P$ be a set of primes.
Let $Y$ be a simply connected CW complex and let $K$ be the suspension of a finite complex.
Denote $\Gamma_k=\pi_k\big((Y,*)^{(K,*)},*\big)$.
\begin{abc}
\item	If the function space $(Y,*)^{(K_{(\PP\setminus P)},*)}$ has the homotopy type of a CW complex, then there
	exists an integer $b\in(P)$ so that for all $k\gqs 1$ the group $D_k=b\cdot\Gamma_k$ is $P$-divisible and
	contains no $P$-torsion summands. The short exact sequence $0\to D_k\to\Gamma_k\to\Gamma_k\otimes\ZZ_b\to 0$
	induces the split short exact sequence
	\[ 	0\to D_k{}_{(P)}\to\Gamma_k{}_{(P)}\to\Gamma_k\otimes\ZZ_b\to 0		\]
	where $D_k{}_{(P)}$ is a rational group.

	Moreover, the divisible part of $\Gamma_0$ does not contain $P$-torsion,
	and therefore the divisible part of $\Gamma_0{}_{(P)}$ is rational.
\item	If, in addition to the assumption of {\bf a.}, $Y$ is of finite type over $\ZZ_{(R)}$ where $P\cap R$
	is nonempty then for all $k\gqs 1$ the localization $\Gamma_k{}_{(P)}$ is a finite abelian $b$-bounded group.
	Consequently $(Y_{(P)},*)^{(K_{(\PP\setminus P)},*)}$ is contractible and the abelian H-space
	$(\Omega Y_{(P)},*)^{(K,*)}$ admits an H-space exponent.
\item	Conversely, if the H-space $(Y_{(P)},*)^{(K,*)}$ admits an H-space exponent,
	then $(Y_{(P)},*)^{(K_{(\PP\setminus P)},*)}$ is contractible.
\end{abc}
\end{thm}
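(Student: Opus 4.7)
The plan is to realize $(Y,*)^{(K_{(\PP\setminus P)},*)}$ as the inverse limit of the $H$-group tower
\[
\cdots\to(Y,*)^{(K,*)}\xrightarrow{q_2}(Y,*)^{(K,*)}\xrightarrow{q_1}(Y,*)^{(K,*)},
\]
where the bond $q_i$ is the self-map induced by the $H$-cogroup $q_i$-th power on $K$, for a sequence of primes $q_1,q_2,\ldots\in P$ in which every prime of $P$ appears infinitely often; on each homotopy group $\Gamma_k=\pi_k((Y,*)^{(K,*)},*)$ the bond $q_i$ is simply multiplication by $q_i$. Write $Z=(Y,*)^{(K_{(\PP\setminus P)},*)}$ for the limit.

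For part (a) I would apply \thmref{u_M-L_general} to this tower: the CW type of $Z$ produces, uniformly in $k$, a finite stage $b=q_1q_2\cdots q_j\in(P)$ such that the canonical morphism $\pi_k(Z)\to\Gamma_k$ is injective with image exactly $D_k=b\Gamma_k$ for every $k\gqs 1$. Since the remaining bonds are multiplications by primes in $P$, the stabilization of images forces $D_k$ to be $P$-divisible. To obtain absence of $P$-torsion in $D_k$ I invoke the structural splitting of \thmref{inverse_sequence}: a subsequence of $\{\Omega(Y,*)^{(K,*)}\}$ splits, up to homotopy, as the product of a nullhomotopic tower and a tower of homotopy equivalences $\{Q_i\}$ with common homotopy type $\Omega Z\simeq Q_1$. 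Each bond in the $Q$-factor is a homotopy equivalence, so multiplication by the corresponding prime acts as an isomorphism on every homotopy group of $Q_1$; since every prime in $P$ appears, the homotopy groups of $Q_1$ are $P$-torsion free, and the identifications $D_k\cong\pi_k(Z)\cong\pi_{k-1}(Q_1)$ transfer this to $D_k$ for $k\gqs 1$. Consequently $D_k{}_{(P)}$ is a divisible torsion-free $\ZZ_{(P)}$-module, hence a $\QQ$-vector space, hence injective, so the $P$-localization of $0\to D_k\to\Gamma_k\to\Gamma_k\otimes\ZZ_b\to 0$ splits. The analogous statement for the divisible part of $\Gamma_0$ is handled by the openness/$\pi_0$-part of \thmref{u_M-L_general}.

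For part (b), the finite-type hypothesis on $Y$ together with the finiteness of $K$ implies, via \lemref{pc} (applied after truncating to an appropriate Postnikov section), that each $\Gamma_k$ is a finitely generated $\ZZ_{(R)}$-module; hence $\Gamma_k{}_{(P)}$ is a finitely generated $\ZZ_{(R\cap P)}$-module. Its submodule $D_k{}_{(P)}$, being a $\QQ$-vector space, must then vanish, so $\Gamma_k{}_{(P)}\cong\Gamma_k\otimes\ZZ_b$ is finite and annihilated by $b$, uniformly in $k$. By Milnor's theorem $(\Omega Y_{(P)},*)^{(K,*)}\simeq\Omega((Y_{(P)},*)^{(K,*)})$ has CW type; it is an $H$-group whose homotopy groups $\pi_k\cong\Gamma_{k+1}{}_{(P)}$ are all annihilated by $b$, so the self-map $b\cdot\id$ induces zero on all $\pi_k$ and is nullhomotopic by Whitehead's theorem, exhibiting the $H$-space exponent $b$. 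The contractibility of $(Y_{(P)},*)^{(K_{(\PP\setminus P)},*)}$ then follows from part (c) applied to $Y_{(P)}$ in place of $Y$.

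For part (c), assume $(Y_{(P)},*)^{(K,*)}$ has $H$-space exponent $b\in(P)$. The same tower construction, now with $Y_{(P)}$ in place of $Y$, presents $(Y_{(P)},*)^{(K_{(\PP\setminus P)},*)}$ as an inverse limit of fibrations whose bond from stage $n$ to stage $0$ is multiplication by the integer $q_1q_2\cdots q_n$; for $n$ sufficiently large this product is divisible by $b$, so the composite bond is nullhomotopic. Passing to a suitable subsequence yields a tower of nullhomotopic bonds, and \propref{contractible_limit} delivers contractibility of the limit. The principal technical obstacle is the passage in (a) from the topological splitting supplied by \thmref{inverse_sequence} to the algebraic conclusion that $D_k$ is $P$-torsion-free; once that step is in hand, the remaining assertions follow by a routine synthesis of the inverse-system machinery of Chapter \ref{slc_cw_type} with standard facts about finitely generated modules over $\ZZ_{(R\cap P)}$.
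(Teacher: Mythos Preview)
Your overall architecture matches the paper's: build the telescope tower, apply \thmref{u_M-L_general}, then handle (b) and (c) via the exponent/nullhomotopy machinery. However, the step you yourself flag as the ``principal technical obstacle'' does not go through as written.

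In part (a), you write: ``Each bond in the $Q$-factor is a homotopy equivalence, so multiplication by the corresponding prime acts as an isomorphism on every homotopy group of $Q_1$.'' This conflates two different maps. The bond in the $Q$-factor supplied by \thmref{inverse_sequence} is the \emph{identity} on $\Omega Z$; it is \emph{not} the restriction of the $q_i$-th power map. The splitting $\Omega Z_i\simeq F_i\times\Omega Z$ is not an $H$-space splitting, so the $q_i$-th power on the total space does not decompose as $(q_i$-th power$)\times(q_i$-th power$)$ on the factors. Knowing that the $Q$-bond is an equivalence tells you nothing directly about multiplication by $q_i$ on $\pi_*(\Omega Z)$. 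Relatedly, your identification $\pi_k(Z)\cong D_k$ via ``the canonical morphism $\pi_k(Z)\to\Gamma_k$'' is circular: \thmref{u_M-L_general} gives injectivity only at sufficiently \emph{large} stages, and injectivity at stage $0$ is equivalent to $D_k$ having no $P$-torsion, which is what you are trying to prove.

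The paper avoids the splitting theorem entirely here and argues algebraically. From \thmref{u_M-L_general}, the projection $\pi_k(Z)\to\Gamma_k$ is injective at every sufficiently large stage $N$. One checks (using that every prime of $P$ recurs infinitely often) that the stable image at \emph{every} stage equals $D_k$. If $D_k$ contained a $\ZZ_{p^\infty}$-summand for some $p\in P$, then the canonical projection $\lim_iD_k\to D_k$ would have uncountable kernel, contradicting the injectivity just established; equivalently, injectivity at consecutive stages $N$ and $N+1$ forces multiplication by $q_{N+1}$ to be an automorphism of $D_k$. This is a much shorter route to the absence of $P$-torsion.

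A smaller issue in (b): you establish an $H$-space exponent for $(\Omega Y_{(P)},*)^{(K,*)}$, but then invoke (c) ``applied to $Y_{(P)}$'', which would require an exponent for $(Y_{(P)},*)^{(K,*)}$ itself. Applying (c) to $\Omega Y$ only yields contractibility of the \emph{loop space} of $(Y_{(P)},*)^{(K_{(\PP\setminus P)},*)}$; the paper instead shows weak contractibility directly (using \lemref{pc} for $\pi_0$ and the $b$-boundedness for $\pi_k$, $k\gqs1$) and then deduces the $H$-space exponent from \corref{contractibility}. Your argument for (c) is correct and agrees with the paper's.
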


\begin{proof}
We construct localization $X=K_{(\PP\setminus P)}$ as the reduced infinite mapping telescope of the sequence
\[ K\xrightarrow{p_1}K\xrightarrow{p_2}K\xrightarrow{p_3}\dots \]
where $p_1,p_2,\dots$ is a sequence of primes in $P$ with each prime in $P$ occurring infinitely many times.

More precisely, we assume $X$ filtered by a based sequence $L_0\lqs L_1\lqs L_2\lqs\dots$ of finite
subcomplexes each of which is homotopy equivalent to $K$, and for all $i$, the inclusion $L_{i-1}\lqs L_{i}$
corresponds to H-cogroup multiplication by $p_i$.
Then $(Y,*)^{(X,*)}$ is the limit of the associated inverse sequence $(Y,*)^{(L_i,*)}$ where
the bonding map $(Y,*)^{(L_i,*)}\to(Y,*)^{(L_{i-1},*)}$ is equivalent to H-group multiplication by $p_i$
on $(Y,*)^{(K,*)}$.

In particular, the morphisms $\pi_k((Y,*)^{(L_{i},*)},*)\to\pi_k((Y,*)^{(L_{i-1},*)},*)$ correspond
to multiplication by $p_i$ on $\Gamma_k$ (the $p_i$-th power map on $\Gamma_0$).
Since $(Y,*)^{(X,*)}$ has the homotopy type of a CW complex, it follows by
\thmref{u_M-L_general} that there exists a number $i$ such that for all $j\gqs i$ and for all $k\gqs 1$,
the image of the morphism $\pi_k((Y,*)^{(L_j,*)},*)\to\pi_k((Y,*)^{(L_0,*)},*)$ equals that of
$\pi_k((Y,*)^{(L_{i},*)},*)\to\pi_k((Y,*)^{(L_0,*)},*)$. Set $b=p_1\dots p_i$. We have shown
\begin{equation*} \tag{$*$} \forall\,j>i\,\,\,\forall\,k\gqs 1:\,\,\,p_{i+1}\dots p_j(b\Gamma_k)=b\Gamma_k. \end{equation*}
Denote $D_k=b\Gamma_{k}$. Property ($*$) shows that $D_k$ is $P$-divisible, hence $D_k{}_{(P)}$ is divisible
and consequently \begin{equation*} \tag{$**$} 0\to D_k{}_{(P)}\to\Gamma_k{}_{(P)}\to\Gamma_k\otimes\ZZ_{b}\to 0 \end{equation*}
is a split exact sequence. In particular, $r(\Gamma_k{}_{(P)})\cong\Gamma_k\otimes\ZZ_{b}$.

For $k=0$ let $D_0$ be the divisible part of $\Gamma_0$. We may also record this in a short (split) exact sequence
$0\to D_0\to\Gamma_0\to R_0\to 0$.

The sequence of fundamental groups $\pi_1((Y,*)^{(L_i,*)},*)$ satisfies the Mittag-Leffler property, and
by \propref{homotopy_groups_of_inverse_limits} the set of path components $[X,Y]_*$ is the limit of 
\begin{equation*} \tag{$\star$} \dots\to[L_i,Y]_*\to[L_{i-1},Y]_*\to\dots. \end{equation*}
Hence $[X,Y]_*$ has a group structure with which the restrictions $[X,Y]_*\to[L_i,Y]_*$ are homomorphisms.
Thus \thmref{u_M-L_general} implies that for some $N>0$, the restriction induced morphisms
\begin{equation*} \tag{\dag} \pi_k((Y,*)^{(X,*)},*)\to\pi_k((Y,*)^{(L_i,*)},*) \end{equation*}
are injective for all $k$ and all $i\gqs N$.

Let $k\gqs 0$. Let for any group $G$ the symbol $\lim_{p_i}G$ denote the limit group of the sequence
$\dots\to G\xrightarrow{p_2}G\xrightarrow{p_1}G$. Functoriality of $\lim$ guarantees commutativity of
\begin{equation*}\begin{diagram}
\node{\lim_{p_i}D_k} \arrow{e} \arrow{s} \node{\lim_{p_i}\Gamma_k} \arrow{s} \\
\node{D_k} \arrow{e} \node{\Gamma_k}
\end{diagram}\end{equation*}
Here the vertical arrows denote canonical projections. Left-exactness of $\lim$ implies that the morphism
$\lim_{p_i}D_k\to\lim_{p_i}\Gamma_k$ is injective (actually bijective for $k\gqs 1$). If $D_k$ contains a
$\ZZ_{p^\infty}$ summand for some prime $p\in P$, then the kernel of any canonical projection
$\lim_{p_i}D_k\to D_k$ is uncountable. In particular, the kernel is nontrivial. Hence the same holds for
$\lim_{p_i}\Gamma_k\to\Gamma_k$, contradicting injectivity of (\dag).

Thus for all $k\gqs 0$, $D_k$ does not contain $\ZZ_{p^\infty}$-summands for $p\in P$.
For $k\gqs 1$ it follows that $D_k{}_{(P)}$ is torsion-free, and since it is divisible, it is a rational group.
For a reduced group $R$, the divisible part $d(R_{(P)})$ of localization $R_{(P)}$ is always rational, hence
since $D_0=d(\Gamma_0)$ does not contain $P$-torsion summands,
$d(\Gamma_0{}_{(P)})\cong D_0{}_{(P)}\oplus d(R_0{}_{(P)})$ is rational.
This proves {\bf a.}

If $Y$ is of finite type over $\ZZ_{(R)}$ then the groups $\Gamma_k$ are finitely generated as modules over $\ZZ_{(R)}$,
and $\Gamma_k{}_{(P)}$ finitely generated as modules over $\ZZ_{(P\cap R)}$. Since $P\cap R$ is nonempty the rational group
$D_k{}_{(P)}$ must be trivial and $\Gamma_k{}_{(P)}$ a finite abelian group bounded by $b$.
We form a sequence \[ i=\varphi(0)<\varphi(1)<\varphi(2)<\dots \]
such that for each $j$, the product $p_{\varphi(j-1)+1}p_{\varphi(j-1)+2}\dots p_{\varphi(j)}$ is a multiple of $b$.
Then the restriction $(Y_{(P)},*)^{(L_{\varphi(j)},*)}\to(Y_{(P)},*)^{(L_{\varphi(j-1)},*)}$ induces trivial morphisms
on homotopy groups $\Gamma_k{}_{(P)}$ for $k\gqs 1$ and consequently all corresponding homotopy groups of the limit space
$(Y_{(P)},*)^{(X,*)}$ are trivial. We have to show that $(Y_{(P)},*)^{(X,*)}$ is also path-connected.

As discussed above, $[X,Y_{(P)}]_*$ is the limit of ($\star$), i.e. of the sequence
\[ \dots\to[K,Y_{(P)}]_*\xrightarrow{\xi\mapsto\xi^{p_2}}[K,Y_{(P)}]_*\xrightarrow{\xi\mapsto\xi^{p_1}}[K,Y_{(P)}]_*. \]
The limit vanishes by \lemref{pc}. Hence $(Y_{(P)},*)^{(X,*)}$ is contractible. \corref{contractibility}
yields the rest of {\bf b.} while \thmref{inverse_sequence} gives {\bf c.}
\end{proof}

\begin{proof}[Proof of \propref{f_l_spheres}]
Statements \navedi{1} and \navedi{3} follow from \thmref{finite_localization} in conjunction with
\propref{rational_reduction} and \corref{cor_rational_reduction} by taking $K=S^m$.

In order to prove \navedi{2} we note that by
the generalized Serre's theorem (see McGibbon and Neisendorfer \cite{mcgibbon_neisen})
the group $\pi_k(Y)$ contains a subgroup of order $p$ for infinitely many $k$, and each prime $p$.
\end{proof}


\chapter{Maps to Eilenberg-MacLane spaces} \label{eilenberg-maclane_target}

\setcounter{thm}{0}


The purpose of this chapter is to investigate the space of pointed maps from an arbitrary CW complex
$X$ to an Eilenberg-MacLane space $K(G,n)$, where $G$ is an abelian group. 

\begin{lem}[Minimal decomposition]\label{minimal_decomposition}
Given a simply connected CW complex X and a specific free presentation of each of its homology groups
$H_n(X)\cong\scal{S_n;\Sigma_n}$ ($n\gqs 2$), there is a CW complex $Z$ and a homotopy equivalence
$f\colon Z\to X$ such that each cell of $Z$ is either
\begin{abc}
\item	a `generator' $n$-cell $e^n_\alpha$ which is a cycle in cellular homology mapped by $f$ to
	a cellular cycle representing the specified generator $\alpha\in S_n$; or
\item	a `relator' $(n+1)$-cell $e^{n+1}_\beta$ with cellular boundary corresponding to the specified relator
	$\beta\in\Sigma_{n+1}$.
\end{abc}
In addition, we may assume that all cells are attached along based maps of spheres, and that
\begin{itemize}
\item[($\bullet$)] The closure of an $n$-cell $e^n$ meets an $(n-1)$-cell $e^{n-1}$ if and only
	if the incidence number $[e^n:e^{n-1}]$ is non-zero. These are necessarily only generator
	$(n-1)$-cells.

	In particular, each generator $n$-cell $e^n_\alpha$ is attached along a (based) map
	$\varphi^n_\alpha\colon S^{n-1}\to Z^{(n-2)}$ of the $(n-1)$-sphere into the $(n-2)$-skeleton.
\end{itemize}
\end{lem}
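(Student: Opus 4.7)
The plan is to construct $Z$ and $f\colon Z\to X$ by induction on the dimension, building an $n$-skeleton $Z^{(n)}$ together with a cellular map $f_n\colon Z^{(n)}\to X$ which at each stage induces an isomorphism on $H_k$ for $k\lqs n-1$ and a surjection on $H_n$. Since $X$ is simply connected and the completed $f$ will be a homology isomorphism, J.~H.~C.~Whitehead's theorem will force $f$ to be a homotopy equivalence. The base case is $Z^{(1)}=\setof{*}$ (we may assume $X$ has a single vertex and trivial $1$-skeleton by \propref{postnikov_adjunction} applied in the dimension range determined by simple connectedness), with $f_1$ the inclusion of the base point.

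For the inductive step, supposing $(Z^{(n-1)},f_{n-1})$ built with the structure demanded in \textbf{a}, \textbf{b} and $(\bullet)$, I would first attach the \emph{relator} $n$-cells. Each specified relator is a formal combination of the generator $(n-1)$-cells and, since by inductive property $(\bullet)$ the generator $(n-1)$-cells are cellular cycles, determines an honest cycle $z_\beta\in C_{n-1}(Z^{(n-1)})$. As $Z^{(n-1)}$ is simply connected (for $n\gqs 4$; the low-dimensional case is absorbed into the base case together with Hurewicz's $\pi_2\cong H_2$), the Hurewicz theorem promotes $z_\beta$ to a based map $\varphi_\beta\colon S^{n-1}\to Z^{(n-1)}$, which I take as the attaching map of the new cell. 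Then I would attach the \emph{generator} $n$-cells $e^n_\alpha$ along the constant map $S^{n-1}\to\setof{*}\subset Z^{(n-2)}$, extending $f_{n-1}$ over each such cell by a based representative $S^n\to X$ of $\alpha\in S_n$ under the Hurewicz map of $X$.

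Property $(\bullet)$ is then transparent by construction: a generator $n$-cell is attached into $Z^{(n-2)}$, so it meets no $(n-1)$-cell and is an automatic cycle; a relator $n$-cell is attached along the prescribed cycle, so its incidence numbers with the $(n-1)$-cells are precisely the structure constants of the chosen presentation, and incidence with a relator $(n-1)$-cell is zero because the attaching map was chosen to land in the subcomplex spanned by generator cells. Passing to $Z=\bigcup_n Z^{(n)}$ with the weak topology assembles the required CW complex together with a compatible map $f\colon Z\to X$, and the inductive properties on homology pass to the limit, so $f$ is a homology isomorphism between simply connected spaces, hence a homotopy equivalence.

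The main obstacle is to extend $f_{n-1}$ over each relator cell: the image $f_{n-1}\circ\varphi_\beta$ is a priori only nullhomologous in $X$, whereas one needs it to be nullhomotopic. This is where simple connectedness is used in an essential way. By the inductive control $f_{n-1}$ is a homology isomorphism below $n-1$ and a surjection in dimension $n-1$; applied to the mapping cylinder of $f_{n-1}$ this, together with simple connectedness of both $Z^{(n-1)}$ and $X$, gives via the relative Hurewicz theorem that the pair is $(n-1)$-connected, so the nullhomology of $f_{n-1}\circ\varphi_\beta$ upgrades to a genuine nullhomotopy along which $f_{n-1}$ extends across the cell. The same relative Hurewicz argument underwrites the realization of each generator $\alpha\in S_n$ by a spherical class.
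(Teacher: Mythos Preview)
Your scheme produces the wedge of Moore spaces $\bigvee_n M(H_n(X),n)$: generator $n$-cells are sphere summands and relator $n$-cells touch only the generator $(n-1)$-spheres. That space is not homotopy equivalent to a general simply connected $X$. Take $X=\CC P^2$: then $Z^{(3)}=S^2$ with $f_3$ the inclusion $\CC P^1\hookrightarrow\CC P^2$, and at stage $4$ you must wedge on an $S^4$ and extend $f$ by a map $S^4\to\CC P^2$ representing the generator of $H_4$. No such map exists, since $\pi_4(\CC P^2)=0$. Your closing remark about relative Hurewicz does not rescue this: the isomorphism $\pi_n(M,Z^{(n-1)})\cong H_n(M,Z^{(n-1)})$ on the mapping-cylinder pair yields maps of pairs $(D^n,S^{n-1})\to(M,Z^{(n-1)})$, not maps $S^n\to X$.

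The repair is to let those disk maps supply the attaching map and the extension of $f$ simultaneously. For a generator $\alpha\in S_n$, regard $\alpha\in H_n(M)\hookrightarrow H_n(M,Z^{(n-1)})$, lift to $\pi_n(M,Z^{(n-1)})$, and use the resulting $(D^n,S^{n-1})\to(M,Z^{(n-1)})$: the boundary $S^{n-1}\to Z^{(n-1)}$ attaches $e^n_\alpha$, and the disk extends $f$. Since $\alpha$ comes from $H_n(M)$, its image in $H_{n-1}(Z^{(n-1)})$ vanishes, so the attaching map has trivial cellular boundary and compresses into $Z^{(n-2)}$ --- this is how $(\bullet)$ is obtained without forcing the cell to be a wedge summand. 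Relator cells are handled by the same mechanism (each $z_\beta$ lies in $\ker\big(H_{n-1}(Z^{(n-1)})\to H_{n-1}(M)\big)$ and hence lifts). This is the construction behind Hatcher's Theorem~4C.1, which the paper simply cites.
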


\begin{proof}
Theorem 4C.1 of Hatcher \cite{hatcher} states {\bf a.} and {\bf b.} for $X$ of finite type.
The finite type restriction is unnecessary, and the generalization to {\bf b.'} is easy. (Compare
also Rutter \cite{rutter}, Lemma 2.1.)
\end{proof}

\begin{defn}
A CW decomposition with the properties of $Z$ as in \lemref{minimal_decomposition} is called {\it minimal}.

To a minimal decomposition of a complex $X$ we may associate a homology filtration
\[ \setof{*}=X_1\lqs X_2\lqs X_2\lqs X_3\lqs\dots \] by letting $X_i$ be the union of the $i$-skeleton $X^{(i)}$ of $X$
and all the relator $(i+1)$-cells. Then the inclusion induced morphism $H_j(X_i)\to H_j(X)$ is bijective for $j\lqs i$,
and $H_j(X_i)=0$ for $j>i$. Moreover, if we set $H_i=H_i(X)$, then the quotient $X_i/X_{i-1}$ is a Moore complex
$M(H_i,i)$, and there exists a map $\varphi\colon M(H_i,i-1)\to X_{i-1}$ which induces a homotopy equivalence
$C_\varphi\to X_i$. Here $C_\varphi$ denotes the homotopy cofibre of $\varphi$.
\end{defn}


\section{Homotopy type of $K(G,n)^X$ depends only upon homology of $X$}\label{thom_section}

Let $Y$ be a CW complex of type $K(G,n)$ for $G$ abelian and let $X$ be any CW complex.
Then $Y$ and consequently $(Y,*)^{(X,*)}$ are topological abelian monoids (if $G$ is uncountable
then in the category of compactly generated spaces; see also Appendix \ref{quasi_groups}).
As has been observed by Thom \cite{thom}, it follows that all Postnikov invariants of
(a CW approximation of) $(Y,*)^{(X,*)}$ vanish, and $(Y,*)^{(X,*)}$ is a {\it weak} product
of Eilenberg-MacLane spaces. Thus the weak homotopy type of $(Y,*)^{(X,*)}$ only depends upon
homology groups of $X$. We show that the actual homotopy type of $(Y,*)^{(X,*)}$ only depends
upon homology groups of $X$.

\begin{thm}\label{thom_enhanced}
Let $X$ be a connected CW complex and let $Y$ be a CW complex of type $K(G,n)$ where $G$ is abelian.
Then $(Y,*)^{(X,*)}$ has the homotopy type of the product \[ (Y,*)^{(M_1,*)}\times\dots\times(Y,*)^{(M_n,*)}, \]
where $M_i=M(H_iX,i)$. In particular, $(Y,*)^{(X,*)}$ has the homotopy type of a CW complex if and only if
the spaces $(Y,*)^{(M_i,*)}$, for $i\lqs n$, have.
\end{thm}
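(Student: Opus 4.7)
The plan is to combine a minimal (homology) decomposition of $X$ with the fact that $Y=K(G,n)$ admits the structure of an abelian topological group (via Milnor's geometric realization of a simplicial abelian group, or a quasitopological group as in Appendix \ref{quasi_groups} for uncountable $G$). For $n\ge 2$ maps into $K(G,n)$ only see $H^n(X;G)$, so I may first reduce to the simply connected case; for $n=1$ the assertion reduces to the statement that the abelianization $X\to K(H_1X,1)=M_1$ induces an equivalence of pointed mapping spaces, which follows directly.

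For the main case, I would fix a minimal decomposition $X_1\le X_2\le\cdots$ via \lemref{minimal_decomposition} with $X_i/X_{i-1}\simeq M_i$. By \corref{principal_fibration}, each inclusion gives a principal restriction fibration
\[
R_i\colon(Y,*)^{(X_i,*)}\to(Y,*)^{(X_{i-1},*)}
\]
with fibre $(Y,*)^{(M_i,*)}$, classified by the map $\varphi_i^*$ induced by the attaching map $\varphi_i\colon M(H_i,i-1)\to X_{i-1}$. For $i\ge n+1$, writing $M_i$ as the cofibre of a cellular map $\bigvee_R S^i\to\bigvee_F S^i$ from a free presentation of $H_i$ and reapplying \corref{principal_fibration}, one sees $(Y,*)^{(M_i,*)}$ fits into a fibration whose fibre and base are products of copies of $\Omega^{i+1}K(G,n)$ and $\Omega^iK(G,n)$ respectively, both contractible since $i>n$. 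So $(Y,*)^{(M_i,*)}\simeq *$ for $i\ge n+1$, and passing to the tower via \propref{E-H} makes the restriction $(Y,*)^{(X,*)}\to(Y,*)^{(X_n,*)}$ a homotopy equivalence.

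Next I would show inductively that each principal fibration $R_i$ with $i\le n$ splits, giving $(Y,*)^{(X_i,*)}\simeq(Y,*)^{(X_{i-1},*)}\times(Y,*)^{(M_i,*)}$. The cofibration sequence $M(H_i,i-1)\xrightarrow{\varphi_i}X_{i-1}\to X_i$ together with the homology calculation that $H_*(X_{i-1})\to H_*(X_i)$ is an isomorphism in degrees $<i$ and zero in degree $i$ yields, via UCT, that $\varphi_i^*\colon H^{n-k}(X_{i-1};G)\to H^{n-k}(M(H_i,i-1);G)$ vanishes for all $k$; equivalently, $\varphi_i^*$ is zero on every $\pi_k$. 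Since $Y$ is realized as an abelian topological group, so is $(Y,*)^{(M(H_i,i-1),*)}$, whose Postnikov invariants all vanish by Thom. A map into such a space that is zero on all homotopy groups is null-homotopic, whence $\varphi_i^*\simeq 0$, and the principal fibration $R_i$ admits a homotopy section. Iterating yields $(Y,*)^{(X_n,*)}\simeq\prod_{i=1}^n(Y,*)^{(M_i,*)}$, and composing with the equivalence of the previous paragraph gives the theorem.

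The main obstacle will be justifying the final step of the induction --- the passage from ``induces zero on $\pi_*$'' to ``null-homotopic as a map of spaces'' for a target that is an abelian topological group with vanishing Postnikov invariants; this requires an obstruction-theoretic interpretation of Thom's splitting, and for uncountable $G$ one should work within the quasitopological framework of Appendix \ref{quasi_groups}. The ``in particular'' assertion then follows immediately: a finite product of CW-type spaces has CW type (by Milnor's theorem), and conversely each factor of a CW-type finite product has CW type by Stasheff's theorem.
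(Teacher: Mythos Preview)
Your overall strategy---reduce to $X_n$ via the homology filtration, then split the tower of principal fibrations $R_i$---matches the paper's. The divergence is at the key step, and the gap you flag as the ``main obstacle'' is genuine and not resolved by the sketch you give.

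Your argument that $\varphi_i^*$ vanishes on all homotopy groups is correct (it follows cleanly from the UCT since $M(H_i,i-1)$ has homology concentrated in one degree and $\varphi_i$ induces zero there). The problem is the next step: inferring that $\varphi_i^\#$ is null-homotopic. Thom only gives a \emph{weak} equivalence $(Y,*)^{(M(H_i,i-1),*)}\simeq_w\prod_k K(\pi_k,k)$, and obstruction theory along a Postnikov tower of the target requires the \emph{domain} to have CW type. But the domain $(Y,*)^{(X_{i-1},*)}$ is exactly the kind of space whose CW type is in question---your inductive hypothesis only furnishes $(Y,*)^{(X_{i-1},*)}\simeq\prod_{j<i}(Y,*)^{(M_j,*)}$, not CW type of the factors. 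The abelian-group structure does not rescue this: a continuous homomorphism of abelian (quasi)topological groups that vanishes on $\pi_*$ need not be null-homotopic when neither side has CW type.

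The paper avoids this entirely by never looking at the full classifying map. It works cell by cell, and the crucial input from the minimal decomposition (\lemref{minimal_decomposition}) is that each generator $i$-cell is attached along a map $S^{i-1}\to X^{(i-2)}$ whose image lies in some \emph{finite} $(i-2)$-dimensional subcomplex $K$. Then $(Y,*)^{(K,*)}$ has CW type by Milnor's theorem and is $(n-i+1)$-connected, so $[(Y,*)^{(K,*)},(Y,*)^{(S^{i-1},*)}]_*\cong \H^{n-i+1}((Y,*)^{(K,*)};G)=0$ by Hurewicz. This yields a canonical section over the generator cells; a second finite-complex-plus-connectivity argument (now with a finite $(i-1)$-dimensional $K$) lifts that section over the relator $(i+1)$-cells. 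Only after constructing the section does the paper conclude that the classifying map $k^\#$ is null and hence the fibration trivial. (The paper also views $Y$ as $\Omega^3K(G,n+3)$ before invoking \lemref{principal_lemma}, keeping all indices in the safe range $3\lqs i\lqs n-3$; your version would need separate care at $i=n$, where $\pi_0$ enters.)

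Your reduction $(Y,*)^{(X,*)}\to(Y,*)^{(X_n,*)}$ also needs a word about surjectivity at $i=n+1$: contractibility of the fibre $(Y,*)^{(M_{n+1},*)}$ shows the restriction is an equivalence onto image, but not that it is onto. The paper handles this in \lemref{homology_sees_it}, again exploiting that generator $(n+1)$-cells attach to the $(n-1)$-skeleton.
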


\begin{rem_n}
Since $Y$ is homotopy equivalent to $\Omega K(G,n+1)$, the function space $(Y,*)^{(X,*)}$ is homotopy equivalent
to $(\Omega K(G,n+1),*)^{(X,*)}\approx(K(G,n+1),*)^{(SX,*)}$, and it suffices to consider both $X$ and $Y$ simply
connected. In view of this we define $M(A,1)$ to be a CW complex $M$ with $H_1(M)\cong A$ and $H_k(M)\cong 0$ for
$k\gqs 2$.

Note that by \propref{quasi_base_point} we may arrange for a pointed homotopy equivalence in \thmref{thom_enhanced},
by choosing an appropriate space $Y$ of type $K(G,n)$.
\end{rem_n}

The rest of this section will be devoted to proving \thmref{thom_enhanced}.

\begin{lem}\label{homology_sees_it}
Let $Y$ be an Eilenberg-MacLane space $K(G,n)$ and let $X$ be a simply connected CW complex with homology
filtration $X_2\lqs X_3\lqs\dots$ associated to a minimal decomposition. Then $(Y,*)^{(X,*)}\to(Y,*)^{(X_n,*)}$
is a homotopy equivalence.
\end{lem}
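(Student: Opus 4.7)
The plan is to apply \propref{postnikov_adjunction} directly and then upgrade the resulting ``homotopy equivalence onto image'' to a genuine homotopy equivalence via an elementary surjectivity check at the level of $\pi_0$.

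First I would observe that, by the definition of the homology filtration associated to a minimal decomposition, $X_n$ contains the $n$-skeleton $X^{(n)}$ together with all relator $(n+1)$-cells of $X$. Hence every cell of $X$ not in $X_n$ is either a generator $j$-cell with $j \gqs n+1$ or a relator $(j+1)$-cell with $j \gqs n+1$; in other words, the CW pair $(X, X_n)$ has relative dimension $\gqs n+1$. Since $Y = K(G, n)$ is trivially an $n$-Postnikov section, \propref{postnikov_adjunction} applied with $r = n$ and $T = X_n$ yields that the restriction fibration
\[ R \colon (Y, *)^{(X, *)} \to (Y, *)^{(X_n, *)} \]
is a homotopy equivalence onto its image.

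To promote this to a genuine homotopy equivalence it suffices to prove that $R$ is surjective. The map $R$ is a Hurewicz fibration by \lemref{restriction_map}, and the image of any Hurewicz fibration is a union of path components of the base (by homotopy lifting of paths in the base), so surjectivity of $R$ is equivalent to surjectivity of $\pi_0(R)$. But $\pi_0(R)$ is the cohomology restriction
\[ H^n(X; G) = [X, K(G, n)]_* \to [X_n, K(G, n)]_* = H^n(X_n; G), \]
which is an isomorphism by universal coefficients together with the defining property $H_k(X_n) \xrightarrow{\cong} H_k(X)$ of the homology filtration for $k \lqs n$. Hence $R$ is surjective and therefore a genuine homotopy equivalence. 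The only step where I foresee any subtlety is the promotion from ``homotopy equivalence onto image'' to a genuine equivalence; framed as a $\pi_0$-surjectivity question this is immediate, so no real obstacle arises.
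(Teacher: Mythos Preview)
Your proof is correct and follows essentially the same strategy as the paper: apply \propref{postnikov_adjunction} to obtain a homotopy equivalence onto image, then check surjectivity on $\pi_0$. The paper, however, inserts an intermediate step through the skeleton $X^{(n+1)}$: it first uses the ``relative dimension $\gqs r+2$'' clause of \propref{postnikov_adjunction} to get a genuine equivalence $(Y,*)^{(X,*)}\to(Y,*)^{(X^{(n+1)},*)}$, and then treats the remaining step $(Y,*)^{(X^{(n+1)},*)}\to(Y,*)^{(X_n,*)}$. For surjectivity of that step the paper exploits the specific geometry of the minimal decomposition: $X^{(n+1)}$ is the cofibre of an attaching map $\varphi\colon\vee_\lambda S^n\to X^{(n-1)}\hookrightarrow X_n$, so the restriction fibration is the homotopy fibre of $\varphi^\#$, and on $\pi_0$ the map $\varphi^\#$ factors through $\H^n(X^{(n-1)};G)=0$. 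Your argument bypasses this by invoking the universal coefficient theorem together with the isomorphism $H_k(X_n)\cong H_k(X)$ for $k\lqs n$, which is cleaner and avoids the intermediate skeleton. The paper's route, on the other hand, foreshadows the fibre-theoretic manipulations used immediately afterwards in \lemref{generator} and \lemref{principal_lemma}.
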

\begin{proof}
By \propref{postnikov_adjunction} we know that $(Y,*)^{(X,*)}\to(Y,*)^{(X^{(n+1)},*)}$ is a homotopy equivalence,
and that \begin{equation*} \tag{$*$} (Y,*)^{(X^{(n+1)},*)}\to(Y,*)^{(X_n,*)} \end{equation*}
is a homotopy equivalence onto image. It suffices to prove that ($*$) is surjective. By assumption $X^{(n+1)}$
is obtained from $X_n$ by attaching generator $(n+1)$-cells, that is $X^{(n+1)}$ is the cofibre of a map
\[ \varphi\colon\bigvee_\lambda S^n\to X^{(n-1)}\hookrightarrow X_n. \]
Then the restriction ($*$) is the homotopy fibre of the induced map
$\varphi^\#\colon(Y,*)^{(X_n,*)}\to(Y,*)^{(\vee_\lambda S^n,*)}$ which factors through $(Y,*)^{(X^{(n-1)},*)}$.
Since $Y$ is a $K(G,n)$, on the group of path components $\varphi^\#$ transforms as
\[ \H^n(X_n;G)\to\H^n(X^{(n-1)};G)\to\H^n(\vee_\lambda S^n;G). \]
But $\H^n(X^{(n-1)};G)$ is trivial hence the homotopy fibre ($*$) is surjective.
\end{proof}

\begin{proof}[Proof of \thmref{thom_enhanced}]
Thus we may assume a minimal decomposition for $X$ and take the associated homology filtration $X_2\lqs X_3\lqs\dots$.
By \lemref{homology_sees_it} we know that the restriction $(Y,*)^{(X,*)}\to(Y,*)^{(X_n,*)}$ is a homotopy equivalence,
and we may replace $X$ by its $n$-th homology stage $X_n$. For convenience concerning connectedness, we may view $Y$
as the triple loop space of the $H$-group $K(G,n+3)$. Thus it suffices to show that $(K(G,n+3),*)^{(X_n,*)}$ has the
homotopy type of $\prod_{i=2}^n(K(G,n+3),*)^{(M_i,*)}$. By renumbering, it suffices to prove \lemref{principal_lemma}
below.
\end{proof}

\begin{lem}\label{principal_lemma}
Let $Y=K(G,n)$ and $3\lqs i\lqs n-3$. The fibration $(Y,*)^{(X_i,*)}\to(Y,*)^{(X_{i-1},*)}$ is trivial.
\end{lem}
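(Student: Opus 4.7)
The plan is to identify the fibration as a principal fibration and to show that its classifying map is nullhomotopic. By \corref{principal_fibration} applied to the cofibration $M(H_i,i-1)\xrightarrow{\varphi}X_{i-1}\to X_i\simeq C_\varphi$ coming from the homology decomposition, the fibration in question is a principal fibration with fibre $(Y,*)^{(M(H_i,i),*)}$ classified by the restriction map $\varphi^\#\colon(Y,*)^{(X_{i-1},*)}\to(Y,*)^{(M(H_i,i-1),*)}$. Fibre-homotopy triviality of the principal fibration is equivalent to $\varphi^\#$ being nullhomotopic.

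First I will verify that $\varphi^\#$ induces the zero map on every homotopy group. Via the adjunction $\pi_k((Y,*)^{(Z,*)},\const)\cong H^{n-k}(Z;G)$, this reduces to showing that $\varphi^*\colon H^*(X_{i-1};G)\to H^*(M(H_i,i-1);G)$ is zero. By the universal coefficient theorem the target is concentrated in degrees $i-1$ and $i$, where it is $\Hom(H_i,G)$ and $\Ext(H_i,G)$ respectively, so only these degrees require attention. Feeding $M(H_i,i-1)\to X_{i-1}\to X_i$ into the Puppe cohomology sequence and invoking the defining properties of the homology filtration (namely $H_j(X_i)\cong H_j(X_{i-1})$ for $j\leq i-1$, $H_i(X_{i-1})=0$, and $H_i(X_i)=H_i$), UCT makes the restriction $H^j(X_i;G)\to H^j(X_{i-1};G)$ an isomorphism for $j\leq i-1$ and surjective for $j=i$. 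Exactness of the Puppe sequence then forces $\varphi^*=0$ in every degree.

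Next I will promote ``zero on $\pi_*$'' to nullhomotopy by means of the H-group structure. In the reduction preceding the lemma, $Y$ is chosen so that $(Y,*)^{(Z,*)}$ is a (quasi-)topological abelian group for every $Z$, and $\varphi^\#$ is then a continuous homomorphism, in particular an H-map. The target $B=(Y,*)^{(M(H_i,i-1),*)}$ has nontrivial homotopy only in degrees $n-i$ and $n-i+1$, both at least $3$ since $3\leq i\leq n-3$, so $B$ is $2$-connected. Thom's splitting for abelian topological groups gives $B$ the H-homotopy type of $K(\Ext(H_i,G),n-i)\times K(\Hom(H_i,G),n-i+1)$, and the primitive cohomology (i.e.\ the set of H-homotopy classes of H-maps from an H-group $A$ into each factor $K(\pi,k)$) is identified with $\Hom(\pi_k(A),\pi)$. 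Since $\varphi^\#$ is an H-map inducing zero on every homotopy group, its H-homotopy class is trivial, and hence $\varphi^\#$ is nullhomotopic.

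The main obstacle is the last step: executing the H-space splitting of $B$ and the primitive-cohomology classification of H-maps as \emph{genuine} rather than merely weak homotopy statements, since for uncountable $H_i$ the space $B$ need not have CW homotopy type. I will handle this by working in the quasi-topological group framework of Appendix \ref{quasi_groups}, where Thom's splitting and the H-map classification are available without a CW hypothesis, so the passage from H-nullhomotopic to nullhomotopic as continuous maps remains valid.
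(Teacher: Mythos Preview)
Your first two steps are correct: the fibration is principal with classifying map $\varphi^\#\colon(Y,*)^{(X_{i-1},*)}\to(Y,*)^{(M(H_i,i-1),*)}$, and $\varphi^\#$ induces zero on all homotopy groups. The gap is entirely in step~3, where you pass from ``zero on $\pi_*$'' to genuine nullhomotopy.

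Your handling of the obstacle is not justified by the paper and is in fact circular. Appendix~\ref{quasi_groups} contains only the definition of quasitopological groups and a few elementary closure properties; it does \emph{not} prove any version of Thom's splitting, nor any classification of H-maps, without a CW hypothesis. The statement that an abelian (quasi)topological group is a genuine product of Eilenberg--MacLane spaces is precisely the kind of result the paper is working to establish (cf.\ \thmref{thom_enhanced}); assuming it here begs the question. Worse, the target $B=(Y,*)^{(M(H_i,i-1),*)}$ can genuinely fail to have CW type---\exref{quasi_solenoid} exhibits exactly such function spaces into Eilenberg--MacLane targets---so $B$ need not be homotopy equivalent to $K(\Ext(H_i,G),n-i)\times K(\Hom(H_i,G),n-i+1)$, and there is no representability of H-maps $A\to B$ by $\Hom(\pi_*(A),\pi_*(B))$ to invoke. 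The domain $A=(Y,*)^{(X_{i-1},*)}$ need not have CW type either, so you cannot rescue the argument by replacing $B$ with a CW approximation.

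The paper's proof is designed precisely to avoid this trap. Rather than attacking $\varphi^\#$ globally, it works cell by cell via \lemref{generator}: each attaching map $\varphi_\lambda\colon S^{i-1}\to X_{i-1}$ has image in a \emph{finite} subcomplex $K_\lambda\subset X^{(i-2)}$, so the relevant nullhomotopy computation is $[(Y,*)^{(K_\lambda,*)},(Y,*)^{(S^{i-1},*)}]_*$ with a domain that \emph{does} have CW type by Milnor's theorem. This produces a canonical section $s\colon(Y,*)^{(X_{i-1},*)}\to(Y,*)^{(X^{(i)},*)}$, and a second application of the same finite-complex trick lifts $s$ through the relator $(i+1)$-cells to $(Y,*)^{(X_i,*)}$. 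Only at the very end does one use the cofibre description $X_i\simeq C_\varphi$ to conclude fibre-homotopy triviality; the existence of a section makes the classifying map $k^\#$ nullhomotopic because it now factors through the contractible cone space. The essential idea your argument is missing is this reduction to finite subcomplexes, which is what gives access to genuine (not merely weak) homotopy classes.
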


The following step is crucial.

\begin{lem}\label{generator}
Let $(L',L)$ be the adjunction of an $i$-cell $e$ attached along a based map
\[ \varphi\colon(S^{i-1},*)\to(L^{(i-2)},x_0) \]
of the $(i-1)$-sphere to the $(i-2)$-skeleton of $L$. Denote by $K$ the smallest subcomplex
of $L$ that contains the image of $\varphi$. Note that $K$ is finite and contained in $L^{(i-2)}$. Set $K'=K\cup e$.

Then the induced map $\varphi^\#\colon(Y,*)^{(K,*)}\to(Y,*)^{(S^{i-1},*)}$ is nullhomotopic as a pointed map.
Hence the restriction fibration $(Y,*)^{(K',*)}\to(Y,*)^{(K,*)}$ is fibre-homotopy trivial and consequently so is
$(Y,*)^{(L',*)}\to(Y,*)^{(L,*)}$. If $\Phi$ denotes the fibre of $(Y,*)^{(B^i,*)}\to(Y,*)^{(S^{i-1},*)}$ (over the
constant), the canonical fibre-homotopy equivalence $(Y,*)^{(L,*)}\times\Phi\to(Y,*)^{(L',*)}$ restricts to
$(Y,*)^{(K,*)}\times\Phi\to(Y,*)^{(K',*)}$. In particular, the canonical sections give rise to a commutative diagram
as follows.
\begin{equation*}\begin{diagram}
\node{(Y,*)^{(L',*)}} \arrow{e} \node{(Y,*)^{(K\cup e,*)}} \\
\node{(Y,*)^{(L,*)}} \arrow{e} \arrow{n,l}{s} \node{(Y,*)^{(K,*)}} \arrow{n,r}{s_K}
\end{diagram}\end{equation*}
In other words, if $f,g\colon L\to Y$ are functions with $f\vert_K=g\vert_K$,
the extensions $s(f)$ and $s(g)$ restrict to the same function on $K\cup e$.
\end{lem}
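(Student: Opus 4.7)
The plan is to reduce the entire lemma to a single central claim: that $\varphi^\#$ is nullhomotopic as a pointed map. Once this is established, the remaining assertions \-- fibre-homotopy triviality of both restriction fibrations and compatibility of the canonical sections \-- follow formally from \corref{principal_fibration} and naturality of the pullback construction.

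To prove the nullhomotopy, I would compute via the standard adjunction
\[
\pi_k\big((Y,*)^{(K,*)},*\big)=[S^k\wedge K,\,K(G,n)]_*\cong H^{n-k}(K;G).
\]
Since $K\subset L^{(i-2)}$ has dimension at most $i-2$, this group vanishes whenever $n-k\gqs i-1$, i.e.\ for every $k\lqs n-i+1$. Thus $(Y,*)^{(K,*)}$ is $(n-i+1)$-connected, and in particular simply connected since $n-i+1\gqs 4$ under the hypothesis $3\lqs i\lqs n-3$. By Milnor's theorem $(Y,*)^{(K,*)}$ has CW homotopy type, so Hurewicz and universal coefficients give $H^{n-i+1}\big((Y,*)^{(K,*)};G\big)=0$. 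On the other hand, $(Y,*)^{(S^{i-1},*)}\approx\Omega^{i-1}Y$ has CW type (Milnor again) and a single nontrivial homotopy group $G$ in dimension $n-i+1$, hence is a $K(G,n-i+1)$. Pointed homotopy classes from a connected CW-type space into it are classified by the vanishing group $H^{n-i+1}\big((Y,*)^{(K,*)};G\big)$, so $\varphi^\#$ is null.

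By \corref{principal_fibration} the restriction $(Y,*)^{(K',*)}\to(Y,*)^{(K,*)}$ is the principal fibration obtained by pulling back the path fibration $P(Y,*)^{(S^{i-1},*)}\to(Y,*)^{(S^{i-1},*)}$ along $\varphi^\#$. A pointed nullhomotopy of $\varphi^\#$ lifts through the path fibration to produce a fibre-homotopy trivialization $(Y,*)^{(K,*)}\times\Phi\to(Y,*)^{(K',*)}$ over $(Y,*)^{(K,*)}$. Since $\varphi\colon S^{i-1}\to L$ factors through $K\hookrightarrow L$, the fibration $(Y,*)^{(L',*)}\to(Y,*)^{(L,*)}$ is pulled back along the composite $(Y,*)^{(L,*)}\to(Y,*)^{(K,*)}\xrightarrow{\varphi^\#}(Y,*)^{(S^{i-1},*)}$, which is null via the same nullhomotopy precomposed with restriction. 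Naturality of the pullback then guarantees that the induced trivialization over $(Y,*)^{(L,*)}$ restricts to the chosen one over $(Y,*)^{(K,*)}$, and the commutative square of canonical sections follows at once. The main obstacle is the connectivity estimate used to kill the classifying cohomology class; once $\varphi^\#$ is known to be null, what remains is routine bookkeeping with principal fibrations and their classifying maps.
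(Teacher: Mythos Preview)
Your proposal is correct and follows essentially the same route as the paper: both compute $\pi_k\big((Y,*)^{(K,*)}\big)\cong\H^{n-k}(K;G)=0$ for $k\lqs n-i+1$ from $\dim K\lqs i-2$, identify $(Y,*)^{(S^{i-1},*)}$ with $K(G,n-i+1)$, and conclude $\varphi^\#\simeq *$ via Hurewicz and representability; the paper then writes out the sections $s_K$ and $s$ explicitly from the nullhomotopy $h$, whereas you package this as naturality of the pullback, but the content is the same. One small point worth tightening: the paper notes that $(Y,*)^{(K,*)}$ is globally well-pointed (since $K$ is finite), which is what licenses the \emph{pointed} identification $[(Y,*)^{(K,*)},K(G,n-i+1)]_*\cong\H^{n-i+1}\big((Y,*)^{(K,*)};G\big)$; your appeal to Milnor alone gives only the unpointed CW type.
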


\begin{rem_n}
We use the term `canonical section' according to definition following \lemref{fib1}.
\end{rem_n}

\begin{proof}
\lemref{principal_fibration} yields the following pullback.
\begin{equation*}\begin{diagram}
\node{(Y,*)^{(L',*)}} \arrow{s} \arrow{e} \node{(Y,*)^{(B^i,*)}} \arrow{s} \\
\node{(Y,*)^{(L,*)}} \arrow{e,t}{\varphi^\#} \node{(Y,*)^{(S^{i-1},*)}}
\end{diagram}\end{equation*}
This is to say that the fibration $(Y,*)^{(L',*)}\to(Y,*)^{(L,*)}$ is the principal fibration
obtained as the homotopy fibre of $\varphi^\#$. The latter factors as
\[ (Y,*)^{(L,*)}\to(Y,*)^{(K,*)}\xrightarrow{\varphi^\#}(Y,*)^{(S^{i-1},*)}. \]
Since $K$ is finite, $(Y,*)^{(K,*)}$ is globally well-pointed (see \prevlemref{cofibration}) 
and has the homotopy type of a CW complex (hence has the pointed homotopy type of a CW complex). Thus we compute
\[ [(Y,*)^{(K,*)},(Y,*)^{(S^{i-1},*)}]_*\cong\H^{n-i+1}\big((Y,*)^{(K,*)};G\big). \]
But $\pi_k\big((Y,*)^{(K,*)},\const\big)\cong\H^{n-k}(K;G)$ is trivial for $n-k\gqs (i-2)+1$ since $K$ is $(i-2)$-dimensional.
By the Hurewicz theorem the group $[(Y,*)^{(K,*)},(Y,*)^{(S^{i-1},*)}]_*$ is trivial, therefore $\varphi^\#$ is
nullhomotopic as a pointed map. Let $h\colon(Y,*)^{(K,*)}\times I\to(Y,*)^{(S^{i-1},*)}$ denote a homotopy between
the constant map and $\varphi^\#$.

The canonical section $(Y,*)^{(K,*)}\to(Y,*)^{(K',*)}$ is homotopic as a pointed map to the section
\[ s_K\colon(Y,*)^{(K,*)}\to(Y,*)^{(K',*)},\,\,\,s_K(f)\vert_K=f,\,\,\,s_K(f)(\phi[\zeta,t])=h(f,t)(\zeta). \]
Here $[\_,\_]$ denotes the quotient map $S^{i-1}\times I\to S^{i-1}\times I/S^{i-1}\times 0=B^i$, while
$\phi\colon B^i\to K\cup e$ denotes the characteristic map.

The pre-composition of $h$ with restriction $(Y,*)^{(L,*)}\to(Y,*)^{(K,*)}$ gives a pointed trivialization of
$\varphi^\#\colon(Y,*)^{(L,*)}\to(Y,*)^{(S^{i-1},*)}$ and the induced fibre homotopy equivalence
$(Y,*)^{(L,*)}\times\Phi\to(Y,*)^{(L',*)}$ clearly lifts $(Y,*)^{(K,*)}\times\Phi\to(Y,*)^{(K',*)}$.

In particular, the section $s\colon(Y,*)^{(L,*)}\to(Y,*)^{(L',*)}$, defined by
\[ s(f)\vert_L=f,\,\,\,s(f)\big(\phi[\zeta,t]\big)=h(f\vert_K,t)(\zeta) \]
(is homotopic to the canonical one and) lifts $s_K$.
\end{proof}

\begin{proof}[Proof of \lemref{principal_lemma}]
We construct $X_i$ from $X_{i-1}$ in two stages. First, we attach all the generator $i$-cells
via an attaching map
\[ \varphi=\vee_\lambda\varphi_\lambda\colon\bigvee_\lambda S^{i-1}\to X_{i-1}^{(i-2)}\hookrightarrow X_{i-1} \]
to obtain $X^{(i)}$. By \lemref{principal_fibration} the map $\varphi$ induces the pullback diagram below.
\begin{equation*}\begin{diagram}
\node{(Y,*)^{(X^{(i)},*)}} \arrow{s} \arrow{e} \node{(Y,*)^{(\vee_{\lambda}B^i,*)}} \arrow{s} 
	\arrow{e,t}{=} \node{\prod_\lambda(Y,*)^{(B^i,*)}} \arrow{s} \\
\node{(Y,*)^{(X_{i-1},*)}} \arrow{e,t}{\varphi^\#} \node{(Y,*)^{(\vee_{\lambda}S^{i-1},*)}} \arrow{e,t}{=}
	\node{\prod_\lambda(Y,*)^{(S^{i-1},*)}}
\end{diagram}\end{equation*}
The induced map $\varphi^\#$ maps into the product $\prod_\lambda(Y,*)^{(S^{i-1},*)}$. The composition with the
$\lambda$-th projection $\pr_\lambda\circ\varphi^\#$ equals $\varphi_\lambda^\#$ which is nullhomotopic as a pointed
map by \lemref{generator}. Hence so is $\varphi^\#$, by a product homotopy. Let $K_\lambda$ denote the smallest
subcomplex of $X$ that contains $\im\varphi_\lambda$ and fix homotopies
$h_\lambda\colon(Y,*)^{(K_\lambda,*)}\times I\to(Y,*)^{(S^{i-1},*)}$
between $*$ and $\varphi_\lambda^\#$. Then the `canonical section'
 $s\colon(Y,*)^{(X_{i-1},*)}\to(Y,*)^{(X^{(i)},*)}$ has the
form \[ s(f)\vert_{X_{i-1}}=f,\,\,\,s(f)\big(\phi_\lambda[\zeta,t]\big)=h_\lambda(f\vert_{K_\lambda},t)(\zeta). \]

Now we attach the relator $(i+1)$-cells to $X^{(i)}$ via an attaching map
\[ \psi=\vee_\mu\psi_\mu\colon\bigvee_\mu S^{i}\to X^{(i)}. \]
We obtain a pullback diagram similar to the one above
\begin{equation*}\begin{diagram}
\node{(Y,*)^{(X_{i},*)}} \arrow{s} \arrow{e} \node{(Y,*)^{(\bigvee_{\mu}B^{i+1},*)}} \arrow{s} \\
\node{(Y,*)^{(X^{(i)},*)}} \arrow{e,t}{\psi^\#} \node{(Y,*)^{(\bigvee_{\mu}S^{i},*)}}
\end{diagram}\end{equation*}
We claim that we can lift $s$ to a map (necessarily a section)
$s'\colon(Y,*)^{(X_{i-1},*)}\to(Y,*)^{(X_{i},*)}$. To prove this we observe projections of the composite
\[ (Y,*)^{(X_{i-1},*)}\xrightarrow{s}(Y,*)^{(X^{(i)},*)}\xrightarrow{\psi^\#}(Y,*)^{(\bigvee_{\mu}S^{i},*)}
	=\prod_\mu(Y,*)^{(S^i,*)} \]
onto the factors. Let $e_\mu$ be an $(i+1)$-relator cell with attaching map $\psi_\mu\colon S^i\to X^{(i)}$,
and let $L$ denote the smallest subcomplex containing the image of $\psi_\mu$. Note that $L$ contains only
{\it generator} $i$-cells; by property {\bf b.'} of \lemref{minimal_decomposition}. (On $L\cap X^{(i-1)}$ there are no
particular restrictions.) We number the $i$-cells
$e_1,\dots,e_r$ and let $K$ denote $L\setminus\setof{e_1,\dots,e_r}$. Then $K$ is a finite subcomplex contained in
$X^{(i-1)}\subset X_{i-1}$. By construction, the following diagram is commutative.
\begin{equation*}\begin{diagram}
\node{(Y,*)^{(X^{(i)},*)}} \arrow{e} \node{(Y,*)^{(L,*)}} \arrow{e,t}{\psi_\mu^\#} \node{(Y,*)^{(S^{i},*)}} \\
\node{(Y,*)^{(X_{i-1},*)}} \arrow{n,l}{s} \arrow{e} \node{(Y,*)^{(K,*)}} \arrow{n,r}{s_K}
\end{diagram}\end{equation*}
We compute \[ [(Y,*)^{(K,*)},(Y,*)^{(S^{i},*)}]_*\cong\H^{n-i}\big((Y,*)^{(K,*)};G\big). \]
But $\pi_k\big((Y,*)^{(K,*)},\const\big)\cong\H^{n-k}(K;G)$ is trivial for $n-k\gqs(i-1)+1$, i.e. $k\lqs n-i$,
since $K$ is $(i-1)$-dimensional. By the Hurewicz theorem, the group $[(Y,*)^{(K,*)},(Y,*)^{(S^{i},*)}]_*$ is trivial,
and therefore the composite $\psi_\mu^\#\circ s_K$ is nullhomotopic (as a pointed map). Hence so is the
pre-composite with restriction $(Y,*)^{(X_{i-1},*)}\to(Y,*)^{(K,*)}$. By commutativity, this equals
$\psi_\mu^\#\circ s\colon(Y,*)^{(X_{i-1},*)}\to(Y,*)^{(S^i,*)}$. Hence also the map into the product
$\psi^\#\circ s$ is nullhomotopic. Therefore $s$ may be lifted to $\sigma\colon(Y,*)^{(X_{i-1},*)}\to(Y,*)^{(X_i,*)}$, and
$\sigma$ is automatically a section. But there exists a (pointed cellular) map $k\colon M(H_iX,i-1)\to X_{i-1}$ such that
the inclusion $X_{i-1}\to X_i$ extends to a homotopy equivalence $F\colon C_k\to X_i$ with $C_k$ the mapping cone of $k$.
\lemref{principal_fibration} gives the following commutative diagram where the square is a pullback and
$F^\#$ is a fibre homotopy equivalence.
\begin{equation*}\begin{diagram}
\node{(Y,*)^{(X_i,*)}} \arrow{e,t}{F^\#} \arrow{se} \node{(Y,*)^{(C_k,*)}} \arrow{e}
	\arrow{s} \node{(Y,*)^{(CM(H_iX,i-1),*)}} \arrow{s} \\
\node[2]{(Y,*)^{(X_{i-1},*)}} \arrow{e,t}{k^\#} \node{(Y,*)^{(M(H_iX,i-1),*)}}
\end{diagram}\end{equation*}
The section of $(Y,*)^{(X_i,*)}\to(Y,*)^{(X_{i-1},*)}$ trivially defines a section of
$(Y,*)^{(C_k,*)}\to(Y,*)^{(X_{i-1},*)}$. Since $(Y,*)^{(CM(H_iX,i-1),*)}$ is contractible,
the existence of a section implies that $k^\#$ is nullhomotopic. Hence $(Y,*)^{(C_k,*)}\to(Y,*)^{(X_{i-1},*)}$
is fibre homotopy equivalent to a product fibration. Since $F^\#$ is a fibre homotopy equivalence also
$(Y,*)^{(X_i,*)}\to(Y,*)^{(X_{i-1},*)}$ is fibre homotopy equivalent to a product fibration.
\end{proof}

%
%

\section{Spaces of maps $M(A,m)\to K(G,n)$}\label{maps_moore_eilenberg}

The purpose of this section is to strengthen \thmref{double_wicked} in the particular case
of function spaces $K(G,n)^{M(A,m)}$, see \corref{cor_delooping} below. We apply the latter
in the following section where we give necessary and sufficient conditions for $K(G,n)^{M(A,m)}$
to have CW homotopy type.

The following delooping proposition is an immediate consequence of \thmref{v}.
\begin{prop}
Let $X=M(A,m)$ and $Y=K(G,n)$ where $m\lqs n-2$. If $(Y,*)^{(X,*)}$ has CW homotopy type, then
so also has $(Y',*)^{(X,*)}$ where $Y'=K(G,n+1)$. \qed
\end{prop}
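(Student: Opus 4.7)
The plan is to verify the hypothesis of \thmref{v} (in the pointed form) for the space $(Y',*)^{(X,*)}$, using that \thmref{v} is available for $(Y,*)^{(X,*)}$ by the CW hypothesis. Write $F_L^{n+1}$ and $F_L^n$ for the fibres over the constant map of the restriction fibrations $(Y',*)^{(X,*)}\to(Y',*)^{(L,*)}$ and $(Y,*)^{(X,*)}\to(Y,*)^{(L,*)}$ respectively, where $L$ ranges over finite subcomplexes of $X$. By \corref{fibre_of_restriction_fibration} and adjunction,
\[ \pi_k(F_L^{n+1})\cong\H^{n+1-k}(X/L;G),\qquad\pi_k(F_L^n)\cong\H^{n-k}(X/L;G), \]
and for $L\lqs L'$ the inclusions $F_{L'}^{n+1}\hookrightarrow F_L^{n+1}$ and $F_{L'}^n\hookrightarrow F_L^n$ induce on $\pi_k$ the pull-backs along the quotient map $X/L\to X/L'$.

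Since $X=M(A,m)$, the reduced cellular cochain complex of $X/L$ with coefficients in $G$ is concentrated in degrees $m$ and $m+1$, so $\H^i(X/L;G)=0$ for $i\notin\setof{m,m+1}$. Hence the nontrivial homotopy of $F_L^n$ lives in degrees $k\in\setof{n-m-1,n-m}$, and that of $F_L^{n+1}$ in degrees $k\in\setof{n-m,n-m+1}$; the assumption $m\lqs n-2$ forces all these integers to be $\gqs 1$. Moreover $\pi_0(F_L^{n+1})=\H^{n+1}(X/L;G)=0$, so $(Y',*)^{(X,*)}$ is itself path-connected, and its unique path component is automatically open. The crucial point is that the cohomology groups appearing in the two towers are literally the same pair $\H^m(X/L;G)$ and $\H^{m+1}(X/L;G)$, and the morphisms induced on them by $X/L\to X/L'$ agree in the two settings.

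Since $(Y,*)^{(X,*)}$ is an H-space of CW type, the path component of the constant map is open and of CW type. The ``only if'' part of \thmref{v} then provides, for each finite $L$, a finite $L'\gqs L$ making $F_{L'}^n\to F_L^n$ zero on every homotopy group, which is to say that $X/L\to X/L'$ induces zero on $\H^i(\_;G)$ for $i\in\setof{m,m+1}$. By the matching observation above, the same $L'$ makes $F_{L'}^{n+1}\to F_L^{n+1}$ zero on every homotopy group, so the ``if'' direction of \thmref{v} applies to $(Y',*)^{(X,*)}$ and yields that its (open) path component of the constant map, which is the whole space, has CW homotopy type. There is no serious obstacle; the proposition is essentially a shift-of-degree bookkeeping observation that the cohomological conditions controlling the $K(G,n)$- and $K(G,n+1)$-function-space towers coincide in our range.
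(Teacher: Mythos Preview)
Your proof is correct and is precisely the argument the paper has in mind: the proposition is stated with a \qed and prefaced by ``an immediate consequence of \thmref{v}'', and you have spelled out that consequence. The only tacit assumption worth flagging is that you are implicitly working with a minimal CW decomposition of $M(A,m)$ (cells only in dimensions $0$, $m$, $m+1$) so that $\H^i(X/L;G)$ really is concentrated in degrees $m$ and $m+1$; since CW type is homotopy invariant this is harmless, but it is what makes the degree-shift bookkeeping go through cleanly.
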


The case $m=n-1$ can, by a different method, be obtained in a weaker form.

\begin{prop}\label{delooping}
Let $X=M(A,m)$ and $Y=K(G,n)$ where $m\lqs n-1$. Assume that $(Y,*)^{(X,*)}$ is contractible.
Then so also is $(Y',*)^{(X,*)}$ where $Y'=K(G,n+1)$.
\end{prop}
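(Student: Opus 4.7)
The plan is to reduce the problem to \propref{contractible_limit} by exhibiting a subsequence of finite subcomplexes of $X$ along which the restriction fibrations for the $Y'$-valued mapping space are nullhomotopic. The key observation is that the dimension bound $\dim L \leq n$ for any subcomplex $L$ of $M(A,n-1)$ aligns the nullhomotopy conditions for the $Y$- and $Y'$-mapping space restriction fibrations, allowing the hypothesis on the $Y$-side to transfer to the $Y'$-side.

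Assume first that $A$ is countable. Fix a filtration $L_1 \leq L_2 \leq \cdots$ of $X = M(A,n-1)$ by finite subcomplexes and set $Z_i = (Y,*)^{(L_i,*)}$, $W_i = (Y',*)^{(L_i,*)}$; these have CW homotopy type by Milnor's theorem, and their inverse limits along the restriction fibrations are $Z = (Y,*)^{(X,*)}$ and $W = (Y',*)^{(X,*)}$ respectively. Since $\dim L_i \leq n$, one has $\tilde H^{n+1}(L_i;G) = 0$, and by \thmref{thom_enhanced} there are homotopy equivalences $Z_i \simeq \prod_k K(\tilde H^{n-k}(L_i;G), k)$ and $W_i \simeq \prod_k K(\tilde H^{n+1-k}(L_i;G), k)$ under which the restriction bonds decompose factor-wise into Eilenberg--MacLane maps induced by the cohomology restriction morphisms $\tilde H^j(L_{i+1};G) \to \tilde H^j(L_i;G)$. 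Both $Z_i$ and $W_i$ have non-trivial factors precisely for $j = 1,\dots,n$ (using $\tilde H^0 = 0$ for connected $L_i$ and $\tilde H^{n+1}(L_i;G) = 0$ for the $W$-case), so a bond $Z_{i+1} \to Z_i$ is nullhomotopic as a map if and only if the corresponding bond $W_{i+1} \to W_i$ is, both being equivalent to the simultaneous vanishing of these cohomology restriction morphisms for $1 \leq j \leq n$.

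To extract a subsequence along which these restrictions vanish I would apply Milnor's short exact sequence
\[
0 \to \ll \tilde H^{j-1}(L_i;G) \to \tilde H^j(X;G) \to \lim \tilde H^j(L_i;G) \to 0
\]
for the countable ascending union $X = \bigcup L_i$. Contractibility of $Z$ forces $\tilde H^j(X;G) = 0$ for $1 \leq j \leq n$, and $\tilde H^{n+1}(X;G) = 0$ holds by the dimension of $X$; together these give $\lim \tilde H^j(L_i;G) = 0$ for $1 \leq j \leq n$ and $\ll \tilde H^j(L_i;G) = 0$ for $0 \leq j \leq n$. By \propref{lim1_mittag-leffler} each countable inverse system $\{\tilde H^j(L_i;G)\}_i$, $j = 1,\dots,n$, satisfies the Mittag-Leffler condition; the stable images then form a surjective inverse system with trivial limit and must vanish identically. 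This produces a common subsequence along which all cohomology restriction bonds $\tilde H^j(L_{i+1};G) \to \tilde H^j(L_i;G)$ are zero for $1 \leq j \leq n$. By the preceding paragraph the corresponding bonds $W_{i+1} \to W_i$ are nullhomotopic, and \propref{contractible_limit} yields contractibility of $W$.

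For general $A$, I would reduce to the countable case via \corref{cor_wicked}: it suffices to exhibit, for each countable subcomplex $L$ of $X$, a larger countable $L'$ with $(Y',*)^{(L',*)}$ contractible. Applying \thmref{double_wicked} to the contractible $Z$ produces such an $L'$ with the fibre of $Z \to Z_{L'}$ contractible, and further enlargement of $L'$, exploiting the vanishing of $\operatorname{Hom}(A,G)$ and $\operatorname{Ext}(A,G)$ implied by contractibility of $Z$ together with a cofinality argument on cohomology of countable subcomplexes, arranges $(Y,*)^{(L',*)}$ to be genuinely contractible, after which the countable case applied to $L'$ yields $(Y',*)^{(L',*)}$ contractible. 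The main obstacle will be precisely this uncountable-to-countable reduction---ensuring that the cohomological vanishing responsible for genuine (not merely weak) contractibility on the whole $X$ can be arranged on a suitably large countable subcomplex.
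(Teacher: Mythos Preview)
Your approach has the right architecture (filter by finite subcomplexes, extract nullhomotopic bonds, apply \propref{contractible_limit}, then \corref{cor_wicked}), but there is a genuine gap at the heart of the countable case, and a secondary issue with countability of $G$.

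\textbf{The main gap.} You assert that under the Thom splitting of \thmref{thom_enhanced} ``the restriction bonds decompose factor-wise into Eilenberg--MacLane maps induced by the cohomology restriction morphisms.'' But \thmref{thom_enhanced} only splits each $W_i$ \emph{individually}; it gives no naturality with respect to inclusions $L_i\hookrightarrow L_{i+1}$. Even granting that the bonds $W_{i+1}\to W_i$ are H-maps (indeed infinite loop maps) between products of Eilenberg--MacLane spaces, being zero on all homotopy groups does \emph{not} force nullhomotopy: the projection $W_{i+1}\to K(\Hom(C_i,G),\,n{+}1{-}m)$ is a class in $H^{\,n+1-m}(W_{i+1};\Hom(C_i,G))$, and this group contains terms such as $\Ext(\Ext(C_{i+1},G),\Hom(C_i,G))$ beyond the ``diagonal'' $\Hom(\Hom(C_{i+1},G),\Hom(C_i,G))$. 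So vanishing of the cohomology restriction maps does not settle the question. The paper's proof is designed precisely to avoid this: from contractibility of $(Y,*)^{(X,*)}$ one has $\Ext(A,G)=0$; the paper then replaces $H_m(L_i)$ by its \emph{image} $B_i$ in $A$, so that $\Ext(A,G)\twoheadrightarrow\Ext(B_i,G)$ forces $\Ext(B_i,G)=0$. The spaces $(Y',*)^{(M(B_i,m),*)}$ are then \emph{single} Eilenberg--MacLane spaces $K(\Hom(B_i,G),\,n{+}1{-}m)$, where triviality on the one homotopy group really does give a nullhomotopic bond.

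\textbf{The countability issue.} Your appeal to \propref{lim1_mittag-leffler} needs the groups $\tilde H^j(L_i;G)$ to be countable, which fails once $G$ is uncountable. The paper avoids this by drawing the Mittag--Leffler type information directly from the CW type (contractibility) of $(Y,*)^{(X,*)}$ via \propref{strong_obstruction_general} and \thmref{double_wicked}, which carry no countability hypothesis; this simultaneously produces the sequence $\{L_i\}$ and the nullhomotopies of the loop bonds used to see that $\Hom(H_mL_i,G)\to\Hom(H_mL_{i-1},G)$ vanishes.

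Finally, the uncountable-to-countable reduction that you flag as the ``main obstacle'' is in fact routine once the countable case is correct (the paper dispatches it with \corref{cor_wicked}); the real obstacle is the nullhomotopy step above.
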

\begin{proof}
We assume that $X$ has a minimal decomposition. Let $K_\infty$ be a countable subcomplex of $X$, and let
$K_1\lqs K_2\lqs\dots$ be a filtration of finite subcomplexes for $K_\infty$. By \thmref{double_wicked}
and \propref{strong_obstruction_general} there exists a sequence of finite subcomplexes $L_0\lqs L_1\lqs L_2\lqs\dots$
such that $L_i\gqs K_i$ for all $i$ and, moreover
\begin{enroman}
\item	the inclusion $H_*(L_i)\to H_*(X)$ is injective on the image of $H_*(L_{i-1})\to H_*(L_i)$.
\item	The restriction $\Omega\big((Y,*)^{(L_i,*)},*\big)\to\Omega\big((Y,*)^{(L_{i-1},*)},*\big)$
	is nullhomotopic.
\end{enroman}

The subcomplexes $L_i$ are Moore spaces of type $M(H_mL_i,m)$. In particular,
\[ 	\pi_{n-m}\big((Y,*)^{(L_i,*)},*\big)\to\pi_{n-m}\big((Y,*)^{(L_{i-1},*)},*\big)		\]
transforms naturally as
\begin{equation*}\tag{\dag}	\Hom(H_mL_i,G)\to\Hom(H_mL_{i-1},G)		\end{equation*}
which is trivial by property \navedi{2} since $n-m\gqs 1$.
For each $i$ let $B_i$ denote the image of $H_mL_i\to H_mX=A$. By property \navedi{1}
actually $B_{i-1}\lqs H_mL_i$, for all $i\gqs 1$. For $i\gqs 1$ the induced morphism
$\Hom(B_{i+1},G)\to\Hom(B_{i-1},G)$ factors as
\[	\Hom(B_{i+1},G)\to\Hom(H_mL_{i+1},G)\to\Hom(H_mL_i,G)\to\Hom(B_{i-1},G)	\]
and is therefore trivial.

Set $L_\infty=\cup_iL_i$. By \navedi{1} (see \lemref{homology_P}) the morphism $H_*L_\infty\to H_*X$ is injective.
Denote $H_mL_\infty=B_\infty$. The group $B_\infty$ is the union of groups $B_i$. We may inductively form an expanding
sequence of finite CW complexes
\[ 	M(B_1,m)\hookrightarrow M(B_2,m)\hookrightarrow M(B_3,m)\hookrightarrow\dots		\]
the union of which is a CW complex of type $M(B_\infty,m)$. Set $M(B_i,m)=M_i$, $1\lqs i\lqs\infty$.
Since $B_i\lqs A$ for all $i$, the exact sequence $\Ext(A,G)\twoheadrightarrow\Ext(B_i,G)\to 0$ shows
that $\Ext(B_i,G)=0$. Therefore the space $(Y,*)^{(M_i,*)}$ has the type of $K\big(\Hom(B_i,G),n-m\big)$,
and $(Y',*)^{(M_i,*)}$ has the type of $K\big(\Hom(B_i,G),n-m+1\big)$.
Then since the restriction $(Y',*)^{(M_{i+1},*)}\to (Y',*)^{(M_{i-1},*)}$ induces the trivial morphism
$\Hom(B_{i+1},G)\to\Hom(B_{i-1},G)$ on the single non-zero homotopy group, it is nullhomotopic.
By \propref{contractible_limit}, the space $(Y',*)^{(M_\infty,*)}$ is contractible. Since $L_\infty\simeq M_\infty$, also
$(Y',*)^{(L_\infty,*)}$ is contractible.

This shows that every countable subcomplex $K_\infty$ of $X$ is contained in a bigger countable subcomplex $L_\infty$
such that $(Y',*)^{(L_\infty,*)}$ is contractible. Hence by \corref{cor_wicked} also $(Y',*)^{(X,*)}$ is contractible
which completes the proof.
\end{proof}

\begin{cor}\label{cor_delooping}
Let $X=M(A,m)$, $Y=K(G,n)$ where $m\lqs n-1$. If $(Y,*)^{(X,*)}$ has CW homotopy type, there exists
a countable subcomplex $L$ (containing a given countable subcomplex) of $X$ such that the restriction
$(Y,*)^{(X,*)}\to(Y,*)^{(L,*)}$ is a homotopy equivalence. In addition, we may assume $L=M(A_L,m)$ with $A_L\to A$
injective.
\end{cor}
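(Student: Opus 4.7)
The plan is to upgrade the loop-space equivalence furnished by \thmref{double_wicked} into an equivalence of the function spaces themselves, via a delooping step using \propref{delooping} and finished off with a standard H-space translation together with a cohomological surjectivity check.

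First I would apply \navedi{1} of \thmref{double_wicked} to $Z=(Y,*)^{(X,*)}\in\FFF(X,Y)$ at the constant map, with the directed order-preserving property $P(K,K')$ demanding that $H_*(K')\to H_*(X)$ be injective on the image of $H_*(K)\to H_*(K')$ (precisely as in the proof of \thmref{double_wicked}). This yields a countable subcomplex $L$ of $X$ containing the given countable subcomplex, for which $H_*(L)\to H_*(X)$ is injective and the natural map
\[ \Omega\big((Y,*)^{(X,*)},\const\big)\to\Omega\big((Y,*)^{(L,*)},\const\big) \]
is a homotopy equivalence. Homology injectivity combined with $H_k(X)=0$ for $k\neq m$ forces $H_k(L)=0$ for $k\neq m$, so (up to homotopy) $L$ is a Moore space $M(A_L,m)$ with $A_L\hookrightarrow A$, handling the ``In addition'' clause.

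Next I would deloop the fibre contractibility. By \corref{fibre_of_restriction_fibration} the fibre over $\const$ of the restriction $(Y,*)^{(X,*)}\to(Y,*)^{(L,*)}$ is $(K(G,n),*)^{(M(A/A_L,m),*)}$, and the loop-space equivalence of step one is exactly contractibility of its loop, $(K(G,n-1),*)^{(M(A/A_L,m),*)}$. For $m\lqs n-2$, \propref{delooping} applied with $n$ replaced by $n-1$ deloops this to contractibility of $(K(G,n),*)^{(M(A/A_L,m),*)}$ itself. The boundary case $m=n-1$ will be the main obstacle: there \propref{delooping} does not apply directly, and loop-space contractibility yields only $\Hom(A/A_L,G)=0$, whereas contractibility of the fibre requires also $\Ext(A/A_L,G)=0$. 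This extra vanishing I would obtain by enriching $P$ in step one to additionally demand injectivity of $\pi_0((Y,*)^{(X,*)})\to\pi_0((Y,*)^{(K',*)})$ on the image from $K$, which is available by \thmref{u_M-L_general} applied to the CW-type space $(Y,*)^{(X,*)}$.

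Finally I would convert fibre contractibility into a genuine equivalence. Both $(Y,*)^{(X,*)}$ and $(Y,*)^{(L,*)}$ are topological abelian groups by \thmref{thom_enhanced}, and the restriction is a group homomorphism, so translation shows every non-empty fibre is contractible. Surjectivity of the restriction follows from the cofibre sequence $L\hookrightarrow X\to X/L\simeq M(A/A_L,m)$: the induced long exact cohomology sequence $H^n(X;G)\to H^n(L;G)\to H^{n+1}(M(A/A_L,m);G)$ has vanishing last term since $n+1>m+1$ (using $m\lqs n-1$), so every pointed class $L\to Y$ extends to $X$, and homotopy lifting upgrades this to set-theoretic surjectivity of the restriction fibration. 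A Hurewicz fibration that is surjective with all fibres contractible is a homotopy equivalence (e.g.\ by Theorem 6.3 of Dold \cite{dold}), completing the proof.
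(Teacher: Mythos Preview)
Your approach differs from the paper's in a key way, and the difference creates a genuine gap at the end.

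First, a minor inefficiency: \thmref{double_wicked}, applied with $C=(Y,*)^{(X,*)}$, already gives that the \emph{fibre} $(Y,*)^{(X/L,*)}$ itself is contractible (this is the ``Consequently'' clause). Your detour through the loop space and the subsequent delooping via \propref{delooping} in the direction $n-1\to n$, with its separate handling of $m=n-1$, is unnecessary work.

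The real problem is your final step. You want to conclude that a surjective Hurewicz fibration with all fibres contractible is a homotopy equivalence, citing Dold's Theorem~6.3. But that theorem requires the \emph{base} to be a Dold space (to admit a numerable cover by sets contractible in it). You have not established this for $(Y,*)^{(L,*)}$; indeed, whether $(Y,*)^{(L,*)}$ has CW type is essentially part of what is to be proved. Without it, ``surjective with contractible fibres'' only yields a weak equivalence.

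The paper avoids this by applying \propref{delooping} in the \emph{opposite} direction. After reducing to $m\gqs 3$ and obtaining $(Y,*)^{(X/L,*)}$ contractible from \thmref{double_wicked}, since $X/L\simeq M(A/B,m)$ with $m\lqs n-1$, \propref{delooping} gives $(K(G,n+1),*)^{(M(A/B,m),*)}\simeq(K(G,n),*)^{(M(A/B,m-1),*)}$ contractible as well. But the latter is the \emph{target} of the classifying map $\varphi^\#$ for the principal fibration $(Y,*)^{(X,*)}\to(Y,*)^{(L,*)}$ furnished by \lemref{principal_fibration}. A principal fibration whose classifying map is nullhomotopic is fibre-homotopy trivial (no Dold hypothesis on the base is needed for this), and since the fibre is contractible, the projection is a genuine homotopy equivalence. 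The point of the delooping, then, is not to recover fibre contractibility but to kill the classifying space.
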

\begin{proof}
Since $(Y,*)^{(X,*)}\simeq(K(G,n+i),*)^{(M(A,m+i),*)}$ for all $i\gqs 0$ we may assume $m\gqs 3$ with no loss
of generality.

By \thmref{double_wicked} there exists a countable subcomplex $L$ (containing a given countable subcomplex)
of $X$ such that the homology of $L$ injects
into that of $X$ and the space $(Y,*)^{(X/L,*)}$ is contractible. Since $B=H_mL\to H_mX=A$ is injective, $X/L$
is a Moore complex of type $M(A/B,m)$. By \propref{delooping} also $(Y',*)^{(X/L,*)}$ is contractible where $Y'=K(G,n+1)$.

Since all spaces in question are simply connected, there exists a pointed cellular map $\varphi\colon M(A/B,m-1)\to L$
such that the mapping cone of $\varphi$ is homotopy equivalent to $X$. By \lemref{principal_fibration} the fibration
$(Y,*)^{(X,*)}\to(Y,*)^{(L,*)}$ is (fibre homotopically equivalent to) the principal fibration obtained by
taking the homotopy fibre of $\varphi^\#\colon(Y,*)^{(L,*)}\to(Y,*)^{(M(A/B,m-1),*)}$.
Since $(Y,*)^{(M(A/B,m-1),*)}\simeq(Y',*)^{(M(A/B,m),*)}$ is contractible, $(Y,*)^{(X,*)}\to(Y,*)^{(L,*)}$
is a homotopy equivalence.
\end{proof}

Case $m=n$ is particular.

\begin{lem}\label{lem_m=n}
Let $W$ be an $(n-1)$-connected CW complex. Let $C=H_n(W)$ and assume that $\Hom(C,G)=0$.
Then $(K(G,n),*)^{(W,*)}$ is contractible.
\end{lem}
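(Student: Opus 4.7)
The plan is to reduce via Thom's theorem to a Moore space target, observe weak contractibility by a universal coefficients computation, and then upgrade to genuine contractibility using the inverse-limit machinery of Chapter~\ref{slc_cw_type}.

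Since $W$ is $(n-1)$-connected, $H_iW = 0$ for $1 \le i \le n-1$. By \thmref{thom_enhanced} the space $(K(G,n),*)^{(W,*)}$ is pointed-homotopy equivalent to $\prod_{i=1}^n (K(G,n),*)^{(M(H_iW,i),*)}$, in which all factors but the one for $i = n$ are contractible; so it suffices to show $(K(G,n),*)^{(M(C,n),*)}$ is contractible. Adjunction together with the universal coefficient theorem gives $\pi_k \cong \tilde H^{n-k}(M(C,n);G)$, which equals $\Hom(C,G) = 0$ for $k = 0$ and vanishes for $k \ge 1$ since $M(C,n)$ has homology only in degree $n$. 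Thus the space is weakly contractible with a single path component, and only CW type remains.

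For countable $C$ I would filter $C = \bigcup_i B_i$ by finitely generated subgroups: each $(K(G,n),*)^{(M(B_i,n),*)}$ has CW type (Milnor's theorem) and is homotopy discrete with $\pi_0 = \Hom(B_i,G)$ and $\pi_k = 0$ for $k \ge 1$, so the injective Mittag-Leffler hypotheses of \thmref{postnikov_sequence}(ii) are vacuously met, and the path component of the constant map in the inverse limit --- equal to the entire space since $\Hom(C,G) = 0$ --- has CW type. For uncountable $C$ I would apply \thmref{wicked} with $\Gamma$ a single point. Given a countable $L \subseteq M(C,n)$, use \lemref{homology_P} (iterated over a countable ascending sequence of finite subcomplexes) to produce a countable $L' \supseteq L$ with $H_*(L') \hookrightarrow H_*(M(C,n))$ injective; then $L'$ is $(n-1)$-connected with $H_n(L') = B$ a countable subgroup of $C$, and by \thmref{thom_enhanced} combined with the countable case already treated, the path component of the constant map in $(K(G,n),*)^{(L',*)}$ has CW type and is contractible.

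The delicate point I expect to be the main obstacle is identifying the image $C_{L'}$ of $(K(G,n),*)^{(M(C,n),*)}$ under restriction to $L'$ with this contractible component of the constant map. One inclusion uses $\pi_0 = 0$ to ensure that every map in the total space restricts null-homotopically, while the reverse inclusion uses the homotopy-extension property along the cofibration $L' \hookrightarrow M(C,n)$ to lift any null-homotopy of a map on $L'$ to a null-homotopy of an extension to $M(C,n)$. Once this identification is in place, \thmref{wicked} yields CW type for $(K(G,n),*)^{(M(C,n),*)}$, which combined with weak contractibility gives the desired contractibility.
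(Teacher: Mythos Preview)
Your proof is correct and follows essentially the same strategy as the paper --- reduce to the Moore space, establish weak contractibility, and upgrade via the inverse-limit machinery --- but it is noticeably more elaborate than what the paper actually does.

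The paper's proof (after replacing $W$ by $M(C,n)$ via \lemref{homology_sees_it}) is very short. Since $Z=(Y,*)^{(M(C,n),*)}$ is path-connected and the restriction maps are fibrations, the image $Z_L$ of $Z$ in $(Y,*)^{(L,*)}$ is automatically the path component of the constant map for \emph{every} subcomplex $L$; this disposes of your ``delicate point'' immediately without any appeal to HEP. For finite $K$, that constant component has CW type (Milnor) and trivial higher homotopy, hence is contractible. For any countable $K_\infty=\bigcup K_i$, the sequence $\{Z_{K_i}\}$ is then a tower of contractible spaces with homotopy-equivalence bonds, so \lemref{restricted_trick} gives $Z_{K_\infty}$ contractible. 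That already verifies the hypothesis of \thmref{wicked} for \emph{every} countable subcomplex, with $\Gamma=\ast$.

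By contrast, you invoke \thmref{postnikov_sequence} (heavier than needed, since the tower is already one of contractible spaces), and in the uncountable case you search via \lemref{homology_P} for a special $L'$ whose homology injects into $C$ so that $L'\simeq M(B,n)$ for $B\leqslant C$. Neither step is necessary: once a minimal decomposition is fixed, every subcomplex of $M(C,n)$ is $(n-1)$-connected, and the paper's argument applies to it directly. Note also that your wording ``the countable case already treated'' is slightly imprecise, since for a subgroup $B\leqslant C$ one need not have $\Hom(B,G)=0$; what your countable argument really yields (and what you then use) is only that the constant component is contractible, not that the whole space is.
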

\begin{proof}
By \lemref{homology_sees_it} we may assume that $W=M(C,n)$. Set $Y=K(G,n)$.
Note that $Z=(Y,*)^{(W,*)}$ is path-connected. Let $K_\infty$ be a countable subcomplex of $W$,
and let $K_1\lqs K_2\lqs\dots$ be a filtration of finite complexes for $K_\infty$. For any subcomplex $L$ of $W$
denote by $Z_L$ the image of $Z=(Y,*)^{(W,*)}\to(Y,*)^{(L,*)}$. Note that $Z_L$ is the path component of the
constant map in $(Y,*)^{(L,*)}$. Then $\setof{Z_{K_i}}$ is an inverse sequence of contractible spaces. By \navedi{1} of
\lemref{restricted_trick} its inverse limit is $Z_{K_\infty}$ and is contractible.
\end{proof}

\begin{prop}\label{m=n}
Let $X=M(A,n)$ and $Y=K(G,n)$. Then $(Y,*)^{(X,*)}$ has CW homotopy type if and only if there exists a finitely
generated subgroup $B$ of $A$ such that $\Hom(A/B,G)=0$. In this case, if the decomposition of $X$ is minimal,
there exists a finite subcomplex $L$ of $X$ such that $(Y,*)^{(X,*)}\to(Y,*)^{(L,*)}$ is a homotopy equivalence
onto image.
\end{prop}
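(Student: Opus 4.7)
For the necessity direction, the plan is to apply \thmref{u_M-L_general} to the open component of the constant map. Since $Y = K(G,n)$ carries an abelian H-group structure (after replacing it by a CW model, cf.\ \thmref{thom_enhanced}), the function space $(Y,*)^{(X,*)}$ inherits an abelian H-group structure in which restriction to any subcomplex is an H-morphism; if it has CW homotopy type then every path component is open. Applying \thmref{u_M-L_general} to the restricted inverse system $\{(Y,*)^{(L,*)}\}_L$ furnished by \propref{function_space}, and to the open component $C$ of the constant map, there exists a finite subcomplex $\kappa$ of $X$ such that the preimage of $C_\kappa$ under the restriction equals $C$. Equivalently, the restriction homomorphism $[X,Y]_* \to [\kappa,Y]_*$ has trivial kernel. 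For $n \geq 2$, both $X$ and $\kappa$ are $(n-1)$-connected, so this map is identified by the universal coefficient theorem with $\Hom(A,G) \to \Hom(H_n\kappa, G)$, and its injectivity is equivalent to $\Hom(A/B,G) = 0$ for $B := \text{image}(H_n\kappa \to A)$, a finitely generated subgroup of $A$ since $\kappa$ is finite.

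For the sufficiency direction together with the additional statement, I would fix a minimal decomposition of $X = M(A,n)$; by property $(\bullet)$ of \lemref{minimal_decomposition} every $n$-cell is attached along a constant map to the $(n-2)$-skeleton (which reduces to the basepoint), so any finite collection of generator $n$-cells together with the basepoint is a legitimate subcomplex, homeomorphic to a finite wedge of $n$-spheres. Given $B$ finitely generated with $\Hom(A/B,G) = 0$, each of its finitely many generators is a $\mathbb{Z}$-linear combination of finitely many classes of generator $n$-cells, so I would choose a finite set of $n$-cells whose homology classes generate a subgroup $A_0 \supseteq B$, and let $L$ be the resulting finite subcomplex. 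Since $A/A_0$ is a quotient of $A/B$, the functor $\Hom(-,G)$ embeds $\Hom(A/A_0,G)$ into $\Hom(A/B,G)=0$, so $\Hom(A/A_0,G)=0$.

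The quotient $X/L$ has cells only in dimensions $0$, $n$, $n+1$, hence is $(n-1)$-connected, and the cofibration exact sequence gives $H_n(X/L) \cong A/A_0$. By \lemref{lem_m=n} applied with $W = X/L$ and $C = A/A_0$, the space $(Y,*)^{(X/L,*)}$ is contractible. By \corref{fibre_of_restriction_fibration} this space is homeomorphic to the fibre over the constant of the restriction fibration $(Y,*)^{(X,*)} \to (Y,*)^{(L,*)}$, which is a fibration by \lemref{restriction_map}. Since $Y$ is an H-group, translation by a fixed element of $(Y,*)^{(X,*)}$ exhibits every fibre as homeomorphic to the fibre over the constant and hence as contractible. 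Invoking Theorem~6.3 of Dold~\cite{dold}, the restriction is a homotopy equivalence onto its image, which is a union of path components of $(Y,*)^{(L,*)}$. Since $L$ is finite, $(Y,*)^{(L,*)}$ has CW homotopy type by Milnor's theorem, and so does its image; thus $(Y,*)^{(X,*)}$ has CW homotopy type and the stated restriction is a homotopy equivalence onto image with $L$ as a witness.

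The main obstacle is the passage from contractibility of the fibre over the constant to contractibility of all fibres, and thence to a homotopy equivalence onto image. This is where the abelian H-group structure of $Y$ (inherited from $G$) is crucial: it allows one to translate every fibre back to the fibre over the constant. A secondary subtlety is the linear-algebraic choice of $L$ inside an arbitrary minimal decomposition \-- one has to verify that including only finitely many generator $n$-cells (and no relator $(n+1)$-cells) suffices to produce a subcomplex with the right $H_n$-image, which is what property $(\bullet)$ of \lemref{minimal_decomposition} ensures.
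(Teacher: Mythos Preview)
Your proof is correct. The necessity argument is essentially the paper's: both exploit the H-group structure on $(Y,*)^{(X,*)}$ together with \thmref{u_M-L_general} (the paper routes the $\pi_0$-injectivity through \propref{group_components}, you use the ``preimage of $C_\kappa$ equals $C$'' clause directly, noting that $C_\kappa$ is the full path component by path lifting along the restriction fibration). The sufficiency direction, however, takes a genuinely different route. The paper invokes \lemref{subgroup}, which realizes $M(B,m)\hookrightarrow M(A,m)$ via a Puppe-sequence construction and then applies \corref{principal_fibration} and Stasheff's theorem; this lemma works for arbitrary targets $Y$, not just $K(G,n)$. You instead pick $L$ explicitly as a finite wedge of generator $n$-spheres inside the given minimal decomposition, use \lemref{lem_m=n} to get contractibility of the fibre over the constant, translate by the H-group structure to see that every nonempty fibre is contractible, and finish with Dold's Theorem 6.3. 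Your approach is more concrete and simultaneously produces the witness $L$ for the ``homotopy equivalence onto image'' clause, whereas the paper deduces that clause from the necessity side (injectivity on all $\pi_k$ together with the vanishing of $\pi_k$ for $k\geq 1$). One point worth noting: the translation $f\mapsto f+g$ is continuous even when $Y$ is only a quasitopological group, since for a subbasic open $G(K,V)$ one only needs continuity of the multiplication on $Y\times g(K)$, which the quasitopological structure supplies.
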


\begin{proof}
Assume that $X$ has a minimal decomposition. If $(Y,*)^{(X,*)}$ has CW type, then by \thmref{u_M-L_general} and
\propref{group_components} there exists a finite subcomplex $L$ of $X$ such that the morphisms
$\pi_k((Y,*)^{(X,*)},*)\to\pi_k((Y,*)^{(L,*)},*)$ are injective for all $k\gqs 0$. All higher homotopy
groups vanish, so automatically $(Y,*)^{(X,*)}\to(Y,*)^{(L,*)}$ is an equivalence onto image. On $\pi_0$ the
condition says that $\H^n(X;G)\to\H(L;G)$ is injective. Since $L$ is a complex of type $M(\Lambda,n)$, the
morphism $\Hom(A,G)\to\Hom(\Lambda,G)$ is injective. Let $B$ denote the image of $\Lambda\to A$. Then
also $\Hom(A,G)\to\Hom(B,G)$ is injective which shows $\Hom(A/B,G)=0$.

Conversely, if $\Hom(A/B,G)=0$ for a finitely generated subgroup $B$ of $A$, then $(Y,*)^{(M(A/B,n),*)}$
is contractible by \lemref{lem_m=n}, hence $(Y,*)^{(X,*)}$ has CW homotopy type by \lemref{subgroup}.
\end{proof}

\begin{cor}\label{cor_m=n}
Let $X=M(A,n)$, $Y=K(G,n)$. Assume that $A$ admits a finitely generated subgroup $B$ such that $\Hom(A/B,G)=0$.
Assume $X$ is in a minimal decomposition and let $K_\infty$ be a countable subcomplex of $X$. Then there exists
a countable subcomplex $L_\infty$ of $X$ such that $L_\infty\gqs K_\infty$, the morphism
$H_*(L_\infty)\to H_*(X)$ is injective and the restriction fibration $(Y,*)^{(X,*)}\to(Y,*)^{(L_\infty,*)}$ is
a homotopy equivalence onto image.
\end{cor}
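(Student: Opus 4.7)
The plan is to build $L_\infty$ as a countable enlargement of $K_\infty$ that absorbs a finite subcomplex already controlling the homotopy type of the restriction map. First, by \propref{m=n} there exists a finite subcomplex $L_0\lqs X$ such that $(Y,*)^{(X,*)}\to(Y,*)^{(L_0,*)}$ is a homotopy equivalence onto image; consequently the subgroup $B_0:=\im\big(H_n(L_0)\to A\big)$ satisfies $\Hom(A/B_0,G)=0$. Starting from the countable complex $K_\infty\cup L_0$, I iteratively enlarge by finite subcomplexes with the aid of \lemref{homology_P}, exactly as in the proof of \thmref{double_wicked}, to produce a countable subcomplex $L_\infty\gqs K_\infty\cup L_0$ with $H_*(L_\infty)\to H_*(X)$ injective.

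Writing $B_\infty=H_n(L_\infty)$, identified via this injection with a subgroup of $A$ containing $B_0$, I note that $A/B_\infty$ is a quotient of $A/B_0$, so $\Hom(A/B_\infty,G)=0$. Since $H_k(X)=0$ for $k\neq n$, the long exact sequence of the CW pair $(X,L_\infty)$ together with injectivity of $H_*(L_\infty)\to H_*(X)$ gives that $X/L_\infty$ is $(n-1)$-connected with $H_n(X/L_\infty)\cong A/B_\infty$. \lemref{lem_m=n} then yields contractibility of $(Y,*)^{(X/L_\infty,*)}$, and by \corref{fibre_of_restriction_fibration} this space is precisely the fibre over the constant of the restriction fibration
\[ r\colon (Y,*)^{(X,*)}\to(Y,*)^{(L_\infty,*)}. \]

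Since $Y=K(G,n)$ is a topological abelian group, the source and target of $r$ carry compatible H-group structures and $r$ is an H-group morphism; translation by any chosen lift of an image point carries the fibre over the constant homeomorphically onto any non-empty fibre, so every non-empty fibre of $r$ is contractible, and path-lifting shows the image $C$ of $r$ is a union of path components of $(Y,*)^{(L_\infty,*)}$. The hypothesis that $(Y,*)^{(X,*)}$ has CW homotopy type, combined with the fact that its only non-trivial homotopy group is $\pi_0\cong\Hom(A,G)$ (since $X=M(A,n)$ is $(n-1)$-connected), forces every path component of $(Y,*)^{(X,*)}$ to be contractible. An argument via H-group translation of a homotopy section over the path component of the constant --- analogous to the fibrewise argument used in \propref{m=n} for finite $L$ --- then promotes each path component of $C$ to a contractible space, whence $r\colon (Y,*)^{(X,*)}\to C$ is a genuine homotopy equivalence.

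The main obstacle will be this last promotion step: for a finite subcomplex $L$ the function space $(Y,*)^{(L,*)}$ has CW type by Milnor's theorem, so the image inherits CW type automatically and Whitehead's theorem applies; for countable $L_\infty$ the space $(Y,*)^{(L_\infty,*)}$ need not be of CW type, so one must argue directly (via the H-group translation and contractibility of each source path component) that each image path component is contractible rather than merely weakly contractible.
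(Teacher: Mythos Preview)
Your argument has a genuine gap at the final step, precisely where you flag it. You reach the point where the restriction $r\colon (Y,*)^{(X,*)}\to(Y,*)^{(L_\infty,*)}$ is a fibration with every nonempty fibre contractible and the total space of CW type with contractible path components. From this you want to conclude that $r$ is a homotopy equivalence onto its image $C$. But a surjective fibration with contractible fibres over a base that is not known to be a Dold space need not be a homotopy equivalence: the long exact sequence only gives a \emph{weak} equivalence. Your proposed fix via ``H-group translation of a homotopy section'' does not go through: to produce a section of $r_0\colon E_0\to C_0$ over a path component you would need obstruction theory or Dold's Theorem~6.3, both of which require $C_0$ to have CW type or at least be a Dold space, which is exactly what is not known. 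Moreover, even if each $C_0$ were contractible you would still need the path components of $C$ to be open in $C$ to assemble the componentwise equivalences into a global one, and openness is also not clear.

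The paper sidesteps this entirely by never working directly with $L_\infty$. Instead it builds an ascending chain of \emph{finite} subcomplexes $L_0\lqs L_1\lqs L_2\lqs\cdots$ (via \lemref{homology_P} and \lemref{P-sequence}) with $L_0\lqs L_1$, $K_\infty\lqs L_\infty$, and $H_*(L_\infty)\to H_*(X)$ injective. For each $i$ the image $B_i$ of $H_n(L_i)\to A$ contains $B_0\supseteq B$, so $\Hom(A/B_i,G)=0$; since $L_i$ is finite, Milnor's theorem gives $(Y,*)^{(L_i,*)}$ CW type, and then the restriction $(Y,*)^{(X,*)}\to(Y,*)^{(L_i,*)}$ is a genuine homotopy equivalence onto its image $\Gamma_i$ by Whitehead. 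The passage to the limit is then handled by \lemref{restricted_trick}: since each $\Gamma_i\to\Gamma_{i-1}$ is a homotopy equivalence, the system $\{\Gamma_i\}$ is restricted with limit $\Gamma_\infty$, and $\Gamma_\infty\to\Gamma_0$ is a homotopy equivalence. The key point you are missing is that one must secure CW type of the target at every finite stage and only then take the limit.
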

\begin{proof}
There exists a finite subcomplex $L_0=M(A_0,n)$ of $X$ such that the image $B_0$ of $H_nL_0\to H_nX=A$ contains $B$.
By \lemref{homology_P} and \lemref{P-sequence} we may construct an ascending chain $L_1\lqs L_2\lqs\dots$ such that
$L_0\lqs L_1$, $K_\infty\lqs L_\infty$ and $H_*(L_\infty)\to H_*(X)$ is injective. Note that for all $i$ the image
$B_i$ of $H_nL_i\to H_nX$ contains $B$. Consequently $\Hom(A/B_i,G)=0$ for all $i$ and $(Y,*)^{(X,*)}\to(Y,*)^{(L_i,*)}$
is a homotopy equivalence onto image which we denote by $\Gamma_i$. Let $\Gamma_\infty$ denote the image of
$(Y,*)^{(X,*)}\to(Y,*)^{(L_\infty,*)}$. Evidently then $\Gamma_i$ is the image of $\Gamma_\infty$
under $(Y,*)^{(L_\infty,*)}\to(Y,*)^{(L_i,*)}$. Since for each $i$, restriction $\Gamma_i\to\Gamma_{i-1}$
is a homotopy equivalence, $\Gamma_\infty\to\Gamma_i$ is a homotopy equivalence for all $i$, by \lemref{restricted_trick}.
\end{proof}


\section{Explicit determinations}\label{explicit_determinations}

Here we give necessary and sufficient conditions on abelian groups $A$ and $G$ for CW homotopy type of
$K(G,n)^{M(A,m)}$ and that of $K(G,n)^{K(A,m)}$ for $m\gqs 2$.

\begin{thm}\label{moore_eilenberg}
Let $A$ and $G$ be abelian groups. Let $T(G)$ denote the torsion subgroup of $G$, and $F$ the quotient $G/T(G)$.
Define \[ P=\setof{p\in\PP\,\vert\,G*\ZZ_p\neq 0},\,\,\,S=\setof{p\in\PP\,\Big\vert\,\frac{F\otimes\QQ}{F}_{(p)}\neq 0}. \]
The function space $K(G,n)^{M(A,m)}$ has CW homotopy type if and only if
\begin{enroman}
\item	$m>n$, or
\item	$m=n$, and there exists a finitely generated subgroup $B$ of $A$ such that $\Hom(A/B,G)=0$, or
\item	$m<n$, and there exists a short exact sequence $0\to B\to A\to A'\to 0$ such that $B$ is finitely generated,
	$A'$ is $P$-divisible, the divisible $T(A')_{(P)}$ has finite rank, and, in addition, either
	\begin{itemize}
		\item	$G$ is a $P$-bounded group, or
		\item	$A'$ is a torsion group such that for $R=\setof{p\in P\,\big\vert\,A'_{(p)}\neq 0}$
		\begin{enumerate}
			\item	$T(G)_{(R)}$ is bounded and $F_{(R)}$ is rational (if and only if \linebreak
				$R\cap S=\emptyset$),
			\item	$A'_{(S)}$ is a finite group.
		\end{enumerate}
	\end{itemize}
\end{enroman}
\end{thm}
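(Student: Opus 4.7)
Cases (i) and (ii) should be dispatched first. For (i) with $m>n$, every cell of $M(A,m)$ in a minimal decomposition has dimension $\geq m>n$, so \propref{postnikov_adjunction} shows that restriction to the base point makes $K(G,n)^{M(A,m)}$ a homotopy equivalence onto a union of path components of $K(G,n)$, hence of CW type by Milnor's theorem. Case (ii), $m=n$, is exactly \propref{m=n}.

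The remainder of the proof concerns (iii), $m<n$. The central reduction is the following: given any short exact sequence $0\to B\to A\to A'\to 0$, one can realize, in minimal decompositions, a cofibration $M(B,m)\hookrightarrow M(A,m)$ with cofibre $M(A',m)$, which via \lemref{principal_fibration} applied to the associated cofibre sequence produces a fibration
\[ K(G,n)^{M(A',m)}\to K(G,n)^{M(A,m)}\to K(G,n)^{M(B,m)}. \]
When $B$ is finitely generated, the base has CW type by Milnor's theorem; since $K(G,n)^{M(A,m)}$ is a topological abelian group (so all its path-components are homeomorphic), Stasheff's theorem then makes the CW type of $K(G,n)^{M(A,m)}$ equivalent to that of the fibre $K(G,n)^{M(A',m)}$. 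Thus both directions of (iii) reduce to understanding when $K(G,n)^{M(A',m)}$ has CW type for $A'$ satisfying the `reduced' hypotheses of the statement.

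For sufficiency I plan to exploit the $P$-divisibility of $A'$ to realize $M(A',m)$ as a $P$-local telescope of finitely generated approximations, and then to invoke the argument of \thmref{finite_localization}. If $G$ is $P$-bounded, multiplication by a suitable integer in $(P)$ kills the relevant homotopy classes and the telescope collapses via a Mittag--Leffler argument. In the alternative, with $A'$ torsion and the conditions on $R,S$ in force, I will first pass to the $R$-local setting using \thmref{localization}, use the rationality of $F_{(R)}$ to eliminate the torsion-free contribution, and then use boundedness of $T(G)_{(R)}$ together with the finiteness of $A'_{(S)}$ to run the same telescope-collapse. For necessity, starting from CW type of $K(G,n)^{M(A,m)}$, \corref{cor_delooping} first allows me to replace $M(A,m)$ by a countable subcomplex $M(A_L,m)$ with $A_L\hookrightarrow A$, so that $A_L=\cup_i B_i$ is an ascending union of finitely generated subgroups. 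Applying \thmref{u_M-L_general} to the filtration $\setof{M(B_i,m)}$ of $M(A_L,m)$ then yields uniform Mittag--Leffler and injectivity conditions on $\pi_*K(G,n)^{M(B_i,m)}$; via the universal coefficient theorem these translate into conditions on the inverse systems $\Hom(B_i,G)$ and $\Ext(B_i,G)$, which the structure theory of countable abelian groups \cite{fuchs1},\cite{fuchs2} then converts into the existence of the desired finitely generated $B$ together with the asserted divisibility, rank, and boundedness properties of $A'=A/B$.

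The hardest part will be the necessity of the precise finiteness conditions in (iii), namely that $T(A')_{(P)}$ has finite rank and, in the torsion-$A'$ alternative, that $A'_{(S)}$ is finite. These demand careful bookkeeping of the $\ll$-terms from \propref{homotopy_groups_of_inverse_limits} together with the $\pi_0$-injectivity obstruction of \propref{group_components}, pushed through the $P$-local telescope; the countability reduction afforded by \corref{cor_delooping} is essential here, since it is precisely countability that upgrades Mittag--Leffler to vanishing of $\ll$ via \propref{lim1_mittag-leffler}.
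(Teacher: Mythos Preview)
Your handling of cases (i), (ii), and the reduction via \lemref{subgroup} (finitely generated $B$) matches the paper. The divergence is in how you treat case (iii) on both the sufficiency and necessity sides.

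For \emph{sufficiency}, the telescope idea is misdirected. The group $A'$ being $P$-divisible does not make $M(A',m)$ a localization $K_{(\PP\setminus P)}$ of a single finite complex, so \thmref{finite_localization} does not apply. The paper instead decomposes $A'$ (or $A'_{(P)}$) as a direct sum of manageable pieces---a rational part, a divisible $P$-torsion part of finite rank, a finite $S$-part, and a $(\PP\setminus(P\cup S))$-torsion part---and handles each via, respectively, \propref{f_l_kgn}, \propref{fundamental_special_case}, Milnor's theorem, and \lemref{torsion_away_from_G}, assembling the result through \lemref{wedge_splitting}. When $G$ is $P$-bounded, \thmref{localization} first replaces $M(A',m)$ by $M(A'_{(P)},m)$.

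For \emph{necessity}, the paper does not reduce to countable $A$ via \corref{cor_delooping}. It applies \corref{cohomology_obstruction} directly to get a finite $L$ with $\H^k(X;G)\to\H^k(L;G)$ injective for $k\le n$, sets $B=\im(H_mL\to A)$, and reads off from the six-term $\Hom$/$\Ext$ sequence that $\Hom(A/B,G)=0$ and $\Ext(A,G)\cong\Ext(B,G)$ is bounded. From there the argument is purely algebraic: $P$-divisibility of $A'$ follows from $\Hom(A',G)=0$; an analysis of the sequence for $T(A')\hookrightarrow A'$ forces $\Ext(A'/T(A'),G)=0$, and if $A'/T(A')\ne 0$ then \propref{torsion_free} gives that $G$ is $P$-bounded. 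The finiteness conditions you flag as hardest---finite rank of $T(A')_{(P)}$ and finiteness of $A'_{(S)}$---do \emph{not} come from $\ll$-bookkeeping. They come from splitting off the relevant summand of $A'$ as a wedge summand of $M(A',m)$, so that the function space factors as an infinite product of non-contractible spaces, contradicting \exref{product} (via \propref{fundamental_special_case}). Your Mittag--Leffler route would give stabilization of images in $\Hom(B_i,G)$ and $\Ext(B_i,G)$, but translating that into ``$T(A')_{(P)}$ has finite rank'' or ``$A'_{(S)}$ is finite'' is not straightforward and is not how the paper proceeds.
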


(See page \pageref{divisible_reduced_bounded} for definitions concerning divisible and reduced abelian groups.)

An abelian group is called {\it cocyclic} if it is isomorphic with a finite cyclic group $\ZZ_{p^k}$
for some prime $p$ and $k\gqs 1$ or with a quasicyclic group $\ZZ_{p^\infty}$.

\begin{cor}\label{c_moore_eilenberg}
Let $A$ be a torsion, and $G$ an arbitrary abelian group. Let $P$ and $S$ be defined as above, and let $m<n$.
The space $K(G,n)^{M(A,m)}$ has CW homotopy type if and only there exists $R\subset P$ such that
$T(G)_{(R)}$ is bounded, $F_{(R)}$ is rational, $A_{(R)}$ is isomorphic with the direct sum of finitely many
cocyclic groups, and $A_{(P\cup S\setminus R)}$ is a finite group.\qed
\end{cor}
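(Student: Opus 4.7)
I would derive the corollary from case (iii) of \thmref{moore_eilenberg}, specialized to torsion $A$. The key observation that streamlines the whole argument is that for the torsion-free quotient $F = G/T(G)$, one has $F_{(p)}$ is rational iff $F$ is $p$-divisible iff $(F \otimes \QQ / F)_{(p)} = 0$ iff $p \notin S$; consequently the condition "$F_{(R)}$ is rational" in the corollary is simply "$R \cap S = \emptyset$."

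\emph{Forward direction.} Assume $K(G,n)^{M(A,m)}$ has CW type with $m < n$. Apply case (iii) of \thmref{moore_eilenberg} to produce a short exact sequence $0 \to B \to A \to A' \to 0$ with $B$ finitely generated, $A'$ being $P$-divisible, and $T(A')_{(P)}$ divisible of finite rank. Since $A$ is torsion, $B$ is finite and $A'$ is torsion, so $A'_{(P)} = T(A')_{(P)}$ is a divisible torsion group of finite rank, hence a sum of finitely many $\ZZ_{p^\infty}$ over finitely many primes $p \in P$; define $R := \{p \in P : A'_{(p)} \neq 0\}$, a finite subset of $P$. Whichever subcase of (iii) occurs, the first two corollary conditions hold: in the "$G$ is $P$-bounded" subcase, $G$ is a bounded torsion group, so $F = 0$ and $T(G)_{(R)}$ is bounded automatically; in the "$A'$ torsion" subcase the two conditions are given by hypothesis (with $R$ as above). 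For each $p \in R$, localising the SES gives an extension of the finite group $B_{(p)}$ by a finite direct sum of $\ZZ_{p^\infty}$'s; since divisible groups are injective this splits, the reduced summand is a finitely generated $p$-group (hence a finite sum of $\ZZ_{p^k}$'s), and so $A_{(p)}$, and therefore $A_{(R)}$, is a finite direct sum of cocyclics. For $p \in P \setminus R$ one has $A_{(p)} = B_{(p)}$, non-zero only for the finitely many primes dividing $|B|$; for $p \in S \setminus P$ one uses that $A'_{(S)}$ is finite (from the second subcase) or that $S = \emptyset$ (from the first). This yields $A_{(P \cup S \setminus R)}$ finite.

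\emph{Backward direction.} Given $R \subset P$ satisfying the four corollary conditions, write $A_{(R)} = D \oplus A_1$ where $D$ is the divisible part (a finite sum of $\ZZ_{p^\infty}$'s) and $A_1$ is the reduced part (a finite sum of $\ZZ_{p^k}$'s, hence finite). Set $B := A_1 \oplus A_{(P \cup S \setminus R)}$, which is finite (and hence finitely generated), and put $A' := A/B$. Prime-by-prime: $A'_{(p)} = D_{(p)}$ for $p \in R$, $A'_{(p)} = 0$ for $p \in (P \cup S) \setminus R$, and $A'_{(p)} = A_{(p)}$ for $p \notin P \cup S$. Hence $A'$ is torsion, $A'$ is $P$-divisible (divisible at $R$-primes, zero at other $P$-primes), $A'_{(P)} = D$ has finite rank, and $A'_{(S)} = 0$ using $R \cap S = \emptyset$. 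The second subcase ("$A'$ torsion") of \thmref{moore_eilenberg}(iii) now applies — the theorem's parameter $\{p \in P : A'_{(p)} \neq 0\}$ is a subset of $R$, to which boundedness of $T(G)_{(R)}$ and rationality of $F_{(R)}$ obviously restrict — so $K(G,n)^{M(A,m)}$ has CW homotopy type.

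\emph{Main obstacle.} The substantive work is combinatorial bookkeeping of primes: one must see that the "either/or" dichotomy in case (iii) of \thmref{moore_eilenberg} collapses, when $A$ is torsion, to the single parameter $R$ of the corollary, and that the four conditions are strong enough to choose an SES in the backward direction without ever needing the "$G$ is $P$-bounded" alternative. The only purely algebraic input is the standard splitting of $A_{(p)}$ (for $p \in R$) into cocyclic summands, relying on injectivity of divisible torsion groups and the decomposition of finitely generated $p$-groups into cyclic summands.
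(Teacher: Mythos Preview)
Your approach is correct and is exactly what the paper intends---the corollary is marked \qed\ right after \thmref{moore_eilenberg}, so the paper treats it as an immediate unpacking of case~(iii) for torsion $A$, which is precisely what you do.

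There is, however, one genuine gap. In the forward direction, when you argue that $A_{(p)}$ (for $p\in R$) is a finite direct sum of cocyclics, you write ``since divisible groups are injective this splits.'' If ``this'' means the short exact sequence $0\to B_{(p)}\to A_{(p)}\to A'_{(p)}\to 0$, that is wrong: injectivity lets you split off a divisible \emph{subgroup}, not a divisible quotient, and indeed $\Ext(\ZZ_{p^\infty},\ZZ_{p^k})\neq 0$. If instead you mean the always-available decomposition $A_{(p)}\cong d(A_{(p)})\oplus r(A_{(p)})$, then you still owe an argument that $r(A_{(p)})$ is finite and $d(A_{(p)})$ has finite rank. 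The clean fix is either to cite \lemref{wicked_serre} (the class $\CCC_p$ of finite sums of cocyclic $p$-groups is closed under extensions), or to argue directly: choose $m$ with $p^m B_{(p)}=0$; then $p^m A_{(p)}$ surjects onto $A'_{(p)}$ with kernel in $B_{(p)}$, hence $p^m A_{(p)}$ is divisible, so $r(A_{(p)})$ is $p^m$-bounded and therefore a direct sum of cyclics; a rank count then gives finiteness of both pieces.

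Everything else---the prime-by-prime bookkeeping, the observation that the ``$G$ is $P$-bounded'' subcase forces $S=\emptyset$, and the backward construction of $B$---is correct.
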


\begin{thm}\label{eilenberg_eilenberg}
Let $A$ and $G$ be abelian groups, and $m,n\gqs 2$. Then the function space $K(G,n)^{K(A,m)}$
has CW type if and only if $K(G,n)^{M(A,m)}$ has, and consequently if and only if the conditions of
\thmref{moore_eilenberg} hold.
\end{thm}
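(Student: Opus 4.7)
The plan is to apply \thmref{thom_enhanced} to both function spaces. Using \corref{cor_stasheff_theorem} to pass between free and pointed function spaces (the targets being path-connected), since $M(A,m)$ has $H_i=A$ for $i=m$ and vanishes otherwise while $K(A,m)$ has $H_i=0$ for $0<i<m$ and $H_m=A$, the Thom decomposition produces a homotopy equivalence
\[ (K(G,n),*)^{(K(A,m),*)}\simeq\prod_{i=m}^{n}(K(G,n),*)^{(M(H_iK(A,m),i),*)}, \]
a finite product whose $i=m$ factor is $(K(G,n),*)^{(M(A,m),*)}$ itself.

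The ``only if'' direction is then immediate. Each factor of a finite product is a retract of the whole, and a retract of a CW type space is dominated by a CW complex and hence has CW type (by Whitehead's Corollary~23, invoked in the proof of \corref{cor_milnor_theorem}). So if $K(G,n)^{K(A,m)}$ has CW type then so does $K(G,n)^{M(A,m)}$.

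For the ``if'' direction, suppose $K(G,n)^{M(A,m)}$ has CW type. The cases $m>n$ and $m=n$ are trivial under the Thom reduction: when $m>n$ every factor in the product is contractible since $H_iK(A,m)=0$ for $0<i\leq n<m$, and when $m=n$ the product collapses to the single factor $K(G,n)^{M(A,n)}$. The substantial situation is $m<n$, case~(iii) of \thmref{moore_eilenberg}, which gives a short exact sequence $0\to B\to A\to A'\to 0$ with $B$ finitely generated and $A'$ of restricted type ($P$-divisible, with $T(A')_{(P)}$ of finite rank, and either $G$ being $P$-bounded or $A'$ torsion with the additional conditions on $T(G)_{(R)}$, $F_{(R)}$, and $A'_{(S)}$). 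The task reduces to verifying the analogue of these conditions on $H_iK(A,m)$ at dimension $i$ for each $m<i\leq n$.

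Carrying out this verification is the main obstacle. The idea is to exploit the principal fibration $K(B,m)\to K(A,m)\to K(A',m)$ and the Serre spectral sequence in conjunction with the classical Cartan--Serre description of $H_*(K(-,m))$. Finite generation of $H_*(K(B,m))$ in each degree supplies the finitely generated subgroup required at each $H_iK(A,m)$, while $H_*(K(A',m))$ is controlled prime-by-prime: its torsion involves only primes already appearing in $A'$, together with the rational summands coming from the torsion-free rank of $A'$. This prime control is precisely what propagates $P$-divisibility, boundedness of the $P$-primary torsion, finite rank of the divisible $P$-torsion, and the torsion-free conditions on $T(G)_{(R)}$, $F_{(R)}$, and $A'_{(S)}$ from $A'$ onto each $H_iK(A,m)$. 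A preliminary reduction via a direct-limit/good-filtration argument, in the spirit of \lemref{good_filtration_general} and \corref{cor_wicked}, to the case where $A$ is countable keeps the algebraic bookkeeping manageable. Once the conditions of \thmref{moore_eilenberg} are checked for every $(H_iK(A,m),i,G,n)$, applying that theorem in reverse yields CW type for each factor of the Thom product, and hence for $K(G,n)^{K(A,m)}$ itself.
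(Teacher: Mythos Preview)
Your overall framework is correct and matches the paper: necessity comes immediately from \thmref{thom_enhanced} (the $i=m$ factor of the Thom product is exactly $(K(G,n),*)^{(M(A,m),*)}$), the cases $m\gqs n$ collapse as you say, and for $m<n$ the content is to check the conditions of \thmref{moore_eilenberg} on each $H_iK(A,m)$. Your proposed spectral-sequence argument for the fibration $K(B,m)\to K(A,m)\to K(A',m)$ is also essentially what the paper does in its \lemref{eilenberg_homology}, once one has isolated the right Serre class (the paper's \lemref{wicked_serre}).

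Where your route diverges from the paper is in the first bullet of case~\navedi{3} of \thmref{moore_eilenberg}, when $G$ is $P$-bounded but $A'$ need not be torsion. Here the paper does \emph{not} run the spectral sequence on $K(B,m)\to K(A,m)\to K(A',m)$ directly. Instead it first invokes \thmref{localization} to replace $K(A,m)$ by $K(A_{(P)},m)$, and then uses the genuine Zabrodsky lemma (\propref{z_prop}) applied to the fibration $K(\Gamma_{(P)},m)\to K(A_{(P)},m)\to K(\Phi_{(P)},m)$ (where $\Phi=A'/T(A')$ has rational $P$-localization) to reduce to $K(\Gamma,m)$, with $\Gamma$ an extension of the finitely generated $B$ by the \emph{torsion} group $T(A')$. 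Only after this reduction does the Serre-class argument of \lemref{wicked_serre} and \lemref{eilenberg_homology} apply cleanly, because the class $\CCC$ there consists of torsion groups. Your direct approach would require enlarging $\CCC$ to include rational summands and verifying that ``rational $\oplus\,\CCC$'' is still closed under subobjects, quotients, and extensions; this can be done (using that rational groups are injective and $\Ext(\QQ,C)=0$ for $C\in\CCC$), but it is real additional work that your sketch does not supply. The phrase ``prime control is precisely what propagates $P$-divisibility'' hides this difficulty.

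Your proposed preliminary reduction to countable $A$ via \lemref{good_filtration_general} and \corref{cor_wicked} is not in the paper and is not needed; the Serre-class argument works uniformly in cardinality.
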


In subsequent sections we will frequently use some well known results on abelian groups,
and for convenience we summarize a few in \propref{abelian_groups}.

A subgroup $S$ of $A$ is {\it pure} in $A$ if $nS=S\cap nA$ for every integer $n$.

\begin{prop}\label{abelian_groups}
\begin{enumerate}
\item	(Pr\"{u}fer and Baer.) A bounded abelian group splits into the direct sum of finite cyclic groups
	(bounded by a common bound).
\item	(Kulikov.) A bounded pure subgroup is a direct summand.
	In particular if the $P$-torsion part of a group $A$ is bounded then
	it is a direct summand.
\item	(Kulikov.) If an abelian group contains elements of finite order, then
	it contains a cocyclic direct summand. In particular, if
	a reduced torsion abelian group is unbounded, it contains
	finite cyclic direct summands of arbitrarily large orders.
\end{enumerate}
\end{prop}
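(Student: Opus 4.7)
The plan is to treat each part by reducing to the primary case and then applying classical maximality (Zorn) arguments, following the approach of Pr\"ufer, Baer and Kulikov as presented in Fuchs \cite{fuchs1}.

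For \navedi{1}, I would first observe that a bounded abelian group $A$ of exponent $n=p_1^{e_1}\dotsc p_r^{e_r}$ decomposes canonically as $A=\oplus_i A_{(p_i)}$ into its $p_i$-primary components, since the idempotents obtained from the partial fractions decomposition of $1/n$ project onto the respective components. This reduces the statement to the case where $A$ is an abelian $p$-group of exponent $p^k$. I would then prove by induction on $k$ the key lemma of Pr\"ufer: every element $a\in A$ of maximal order $p^k$ generates a direct summand. For the inductive step I would invoke Zorn's lemma to pick a subgroup $H\lqs A$ maximal with respect to $H\cap\scal{a}=0$; maximality combined with the bound on exponents forces $A=H\oplus\scal{a}$. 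A second Zorn application (on maximal independent families of cyclic summands of order $p^k$) then splits off the $p^k$-torsion, so the inductive hypothesis applies to the complement.

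For \navedi{2}, given a bounded pure $S\lqs A$, the plan is to select by Zorn's lemma a subgroup $T\lqs A$ maximal with $S\cap T=0$, and show $A=S+T$. Suppose not, and pick $a\in A\setminus(S+T)$; using that $S$ is bounded, one can find a minimal positive integer $m$ with $ma\in S+T$. Purity of $S$ then produces $s\in S$ with $ma=ms+t$ for some $t\in T$, whence $m(a-s)\in T$. A direct verification then shows $\scal{T,a-s}\cap S=0$, contradicting the maximality of $T$. The ``in particular'' clause follows because a bounded $P$-torsion subgroup is automatically pure (for each integer $k$, divisibility by $k$ in $A$ can be tested prime by prime on the torsion part).

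For \navedi{3}, reduce first to the case of a single prime $p$ by passing to the $p$-primary component containing a nonzero torsion element. If this $p$-group $A_{(p)}$ contains elements of arbitrarily large height, a standard direct construction exhibits a copy of $\ZZ_{p^\infty}$ inside $A_{(p)}$; since $\ZZ_{p^\infty}$ is injective, it splits off. Otherwise there is a largest $p$-height realized by some element $a$ of order $p^k$; the subgroup $\scal{a}$ is then pure in $A_{(p)}$ (purity reduces to divisibility by powers of $p$, which is governed by the height), and \navedi{2} supplies the cyclic direct summand $\scal{a}\cong\ZZ_{p^k}$. For the ``in particular'' clause: a reduced torsion group has no injective (hence no quasicyclic) summands, so the cocyclic summands produced above must all be finite cyclic; and if $A$ were only to admit finite cyclic summands of orders bounded by some $N$, one could iterate the above to split $A$ as a direct sum of such, contradicting \navedi{1} and the unboundedness hypothesis.

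The main obstacle is the careful bookkeeping in the Zorn's lemma arguments, particularly verifying purity is preserved when passing to the complementary subgroup in \navedi{2} and identifying heights correctly in \navedi{3}; since the results are classical and appear with complete proofs in Fuchs \cite{fuchs1}, the cleanest presentation simply cites those arguments verbatim.
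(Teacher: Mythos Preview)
The paper's proof is a one-line citation to Fuchs \cite{fuchs1} (Theorems 17.2, 27.5, and Corollary 27.3), which is exactly where your proposal lands, so the two agree. Your sketches for \navedi{1} and \navedi{3} are close to the standard arguments, but the sketch for \navedi{2} has a genuine gap as written: from $ma=s_0+t_0$ with $s_0\in S$ and $t_0\in T$ you cannot conclude $s_0\in mS$ via purity, because purity requires $s_0\in S\cap mA$, and there is no reason for $t_0$ (hence $s_0=ma-t_0$) to lie in $mA$. Boundedness of $S$ enters not where you invoke it (existence of a minimal $m$ already follows from maximality of $T$) but rather through the fact that bounded groups are pure-injective (algebraically compact); the actual argument in Fuchs proceeds differently. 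Since you explicitly defer to Fuchs for the complete proofs, this does not affect the final outcome.
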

\begin{proof}
See Theorem 17.2, Theorem 27.5, and Corollary 27.3 of Fuchs \cite{fuchs1}.
\end{proof}

We are now ready to pursue the proof of \thmref{moore_eilenberg}.

\begin{prop}\label{f_l_kgn}
Let $G$ be an abelian group and let $Y$ denote the Eilenberg-MacLane space $K(G,n)$.
Decompose $G$ as $G\cong D\oplus R$ where $D$ is divisible and $R$ a reduced group.
Further let $X$ be an $(r-1)$-connected rational CW complex where $r\gqs 2$. Set $Q_k=H_k(X)$. Assume $Q_r\neq 0$.
The space $(Y,*)^{(X,*)}$ has CW homotopy type if and only if one of the following requirements is met.
\begin{enroman}
\item	$r>n$.
\item	$r=n$, $D$ is torsion free, and $\dim_\QQ(Q_r)$ is finite if $D$ is nontrivial.
\item	$r<n$, the group $D$ is torsion free, $R$ is bounded, and for $r\lqs k\lqs n$, the
	dimension $\dim_\QQ(Q_k)$ is finite if $D$ is nontrivial.
\end{enroman}
Moreover $(Y,*)^{(X,*)}$ is always contractible if $r>n$ (case \navedi{1}) or $D$ is trivial.
\end{prop}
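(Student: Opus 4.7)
The plan is to first apply Theorem \ref{thom_enhanced} to obtain
\[
(Y,*)^{(X,*)}\simeq\prod_{k=r}^{n}(Y,*)^{(M(Q_k,k),*)},
\]
using that $Q_k=0$ for $k<r$ and the theorem's truncation at $n$. When $r>n$ the product is empty and $(Y,*)^{(X,*)}$ is contractible, giving case \navedi{1}. For $r\lqs k\lqs n$, since each $Q_k$ is a $\QQ$-vector space of dimension $d_k=\dim_\QQ Q_k$, I take the model $M(Q_k,k)\simeq\bigvee_{d_k}S^k_{(0)}$ (a wedge of $d_k$ rational $k$-spheres) and apply Lemma \ref{wedge_splitting} to obtain
\[
(Y,*)^{(M(Q_k,k),*)}\simeq\prod_{d_k}W_k,\qquad W_k:=(K(G,n),*)^{(S^k_{(0)},*)}.
\]

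\textbf{Analysis of a single factor $W_k$.} Next I would apply Theorem \ref{finite_localization} to $W_k$ with $K=S^k$ and $P=\PP$, so that $K_{(\PP\setminus P)}=S^k_{(0)}$. Since $(K(G,n),*)^{(S^k,*)}\simeq\Omega^k K(G,n)$, the only nontrivial homotopy group is $G$ in degree $n-k$. Part \textbf{a} of the theorem yields the necessary conditions: for $k<n$, an integer $b$ with $bG$ rational, which in terms of the decomposition $G=D\oplus R$ (divisible$\oplus$reduced) forces $D$ torsion-free and $R$ bounded; for $k=n$, only that $D$ is torsion-free. For sufficiency, assuming these conditions I split $K(G,n)\simeq K(D,n)\times K(R,n)$, whence $W_k\simeq W_k^D\times W_k^R$. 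For $W_k^D$, Theorem \ref{localization} (applied to the $\QQ$-local target $K(D,n)$) identifies $W_k^D\simeq(K(D,n),*)^{(S^k,*)}\simeq K(D,n-k)$. For $W_k^R$: when $k<n$ and $R$ is bounded, part \textbf{c} of Theorem \ref{finite_localization} gives contractibility via the H-space exponent of $K(R,n-k)$; when $k=n$, I use the universal property $\lim R=\Hom(\QQ,R)$ for the inverse system under successive multiplications (the direct colimit $\ZZ\xrightarrow{\cdot 2}\ZZ\xrightarrow{\cdot 3}\cdots$ being $\QQ$), combined with $\Hom(\QQ,R)=0$ for reduced $R$ and the observation that path components of an inverse limit of discrete spaces are singletons, to conclude $W_n^R$ is literally a point. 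Thus in every case $W_k\simeq K(D,n-k)$, which has CW homotopy type.

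\textbf{Assembly and main obstacle.} By Example \ref{product} the product $\prod_{d_k}W_k$ has CW homotopy type iff $W_k$ is contractible whenever $d_k$ is infinite; since $W_k\simeq K(D,n-k)$, this reduces to the dichotomy ``$D=0$ or $d_k<\infty$''. When $D$ is trivial every factor is contractible and the whole of $(Y,*)^{(X,*)}$ is contractible, proving the ``moreover'' clause together with case \navedi{1}. When $D$ is nontrivial, $d_k<\infty$ is needed for each $k\in[r,n]$. Matching these with the necessary conditions from the preceding step according to whether $r=n$ or $r<n$ yields cases \navedi{2} and \navedi{3}. The delicate point throughout is the $k=n$ subcase: Theorem \ref{finite_localization}(a) there only forces $D$ torsion-free (with no constraint on $R$), so part \textbf{c} of that theorem cannot directly dispose of the reduced factor $W_n^R$, and contractibility of $W_n^R$ must be established by the direct universal-property computation sketched above rather than by appeal to an H-space exponent.
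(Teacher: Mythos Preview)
Your overall strategy coincides with the paper's: reduce via \thmref{thom_enhanced} to factors $(Y,*)^{(M(Q_k,k),*)}$, split each Moore space as a wedge of rational spheres, and analyse $W_k=(K(G,n),*)^{(S^k_{(0)},*)}$ by decomposing the target as $K(D,n)\times K(R,n)$. Your extraction of necessary conditions from \thmref{finite_localization}\,\textbf{a}, your treatment of $W_k^D$ via \thmref{localization}, and your treatment of $W_k^R$ for $k<n$ via part \textbf{c} all match the paper.

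The gap is in your handling of $W_n^R$. The spaces $(K(R,n),*)^{(L_i,*)}\simeq\Omega^n K(R,n)$ in the inverse sequence are only \emph{homotopy} discrete, not literally discrete, so $W_n^R$ is certainly not ``literally a point''; it is an infinite-dimensional function space containing, for instance, every pointed map $S^n_{(0)}\to K(R,n)$ that happens to be nullhomotopic. Your identity $\lim_i R=\Hom(\QQ,R)=0$ shows only that $W_n^R$ is path-connected, hence weakly contractible, and weak contractibility is precisely what does \emph{not} suffice in this paper. The paper fills this by invoking \lemref{lem_m=n} (with $W=S^n_{(0)}$, $C=\QQ$, and $\Hom(\QQ,R)=0$): its proof passes to the \emph{image} of the constant component under each restriction to a finite subcomplex --- a single contractible path component, since the target has CW type by Milnor's theorem --- and then applies \lemref{restricted_trick}. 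Alternatively, within your framework you can replace the tower $\{(K(R,n),*)^{(L_i,*)}\}$ by the genuinely discrete tower $\{R,\ \cdot\, p_i\}$ via the $\pi_0$-projections (continuous since the sources have CW type, homotopy equivalences, strictly commuting with the bonds), note that maps between discrete spaces are fibrations, and apply \propref{E-H} to obtain $W_n^R\simeq\lim_i R=\{0\}$.
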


\begin{proof}
By \thmref{thom_enhanced} it suffices to consider $X=M(Q,m)$ where $Q$ is a rational group.
More precisely, $Q$ is isomorphic to the direct sum $\oplus_{\Lambda}\QQ$ for some indexing set $\Lambda$.
Then $X=\bigvee_{\Lambda}M(\QQ,m)=\bigvee_{\Lambda}S^m_{(0)}$, and
$(Y,*)^{(X,*)}\approx\prod_{\Lambda}(Y,*)^{(S^m_{(0)},*)}$. Thus (see \exref{product}) the space $(Y,*)^{(X,*)}$
has CW type if and only if $(Y,*)^{(S^m_{(0)},*)}$ has CW type and is either contractible or else $\Lambda$ is finite.

If $m>n$, then $S^m_{(0)}$ may be assumed to have trivial $n$-skeleton, hence $(Y,*)^{(S^m_{(0)},*)}$ is contractible
by \propref{postnikov_adjunction}. 

Let $m\lqs n$.
Set $Y_D=K(D,n)$ and $Y_R=K(R,n)$. Then $Y\simeq Y_D\times Y_R$ and $(Y,*)^{(S^m_{(0)},*)}\simeq(Y_D,*)^{(S^m_{(0)},*)}
\times(Y_R,*)^{(S^m_{(0)},*)}$. If $D$ is torsion free then by \thmref{localization} the space $(Y_D,*)^{(S^m_{(0)},*)}$
is equivalent to $(Y_D,*)^{(S^m,*)}$, and has nontrivial homotopy group $\pi_{n-m}$ if and only if $D$ is nontrivial.

By {\bf a.} of \thmref{finite_localization} and the above remark concerning finiteness of $\Lambda$ it already follows
that for $m\lqs n$ the stated conditions are necessary. 

Consider the case $m=n$.
Since there are no morphisms from a divisible to a reduced group, the space $(Y_R,*)^{(S^n_{(0)},*)}$ is contractible
by \lemref{lem_m=n}.

If $m<n$ and $R$ is bounded, then the H-spaces $Y_R$ and consequently also $(Y_R,*)^{(S^m,*)}=\Omega^m(Y_R,*)$ admit
H-space exponents. Thus $(Y_R,*)^{(S^m_{(0)},*)}$ is contractible by {\bf c.} of \thmref{finite_localization}.
\end{proof}

\begin{prop}\label{fundamental_special_case}
Let $T$ be a divisible torsion group, and write $T\cong\oplus_{p\in P}\oplus_{\lambda\in\Lambda_p}\ZZ_{p^\infty}$
where $\Lambda_p$ is nonempty for each $p\in P$. Set $X=M(T,m)$ and $Y=K(G,n)$. Then $(Y,*)^{(X,*)}$ has CW type
if and only if one of the following is true.
\begin{enroman}
\item	$m>n$,
\item	$m=n$, and $d(G_{(P)})$ is rational,
\item	$m<n$, $d(G_{(P)})$ is rational, $r(G_{(P)})$ is bounded, and for
	\[ R=\setof{p\in P\,\vert\,r(G_{(P)})_{(p)}\neq 0} \]
	the set $\cup_{p\in R}\Lambda_p$ is finite.
\end{enroman}
\end{prop}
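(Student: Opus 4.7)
The plan is to realize $X=M(T,m)$ as the pointed wedge $\bigvee_{p\in P,\lambda\in\Lambda_p}M(\ZZ_{p^\infty},m)$, which is legitimate since $T=\bigoplus\ZZ_{p^\infty}$. By \lemref{wedge_splitting} the function space $(Y,*)^{(X,*)}$ is then homotopy equivalent to $\prod_{p,\lambda}(Y,*)^{(M(\ZZ_{p^\infty},m),*)}$, and by \exref{product} this product has CW type if and only if each factor has CW type and all but finitely many are contractible. To analyze a single factor I realize $M(\ZZ_{p^\infty},m)$ as the mapping telescope of the sequence $M(\ZZ_{p^k},m)\to M(\ZZ_{p^{k+1}},m)$ induced by multiplication by $p$, turning $(Y,*)^{(M(\ZZ_{p^\infty},m),*)}$ into the inverse limit of the tower of fibrations $(Y,*)^{(M(\ZZ_{p^k},m),*)}$. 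Each stage has CW type by Milnor's theorem and \corref{cor_stasheff_theorem}, and since $Y=K(G,n)$ its only possibly nonzero homotopy groups lie in degrees $n-m$ and $n-m-1$, where they equal $G[p^k]$ and $G/p^kG$ respectively. Thus \thmref{postnikov_sequence} applies (with $N=n-m$), reducing the CW-type question for each factor to an algebraic one about these two inverse sequences, and the contractibility question to the vanishing of their limits.

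Case \navedi{1} ($m>n$) is immediate from \propref{postnikov_adjunction}: every factor collapses to a point. Case \navedi{2} ($m=n$) reduces to \propref{m=n} applied factorwise: since every nontrivial finitely generated subgroup $B\lqs\ZZ_{p^\infty}$ has quotient $\ZZ_{p^\infty}/B\cong\ZZ_{p^\infty}$, the CW-type criterion there collapses to $\Hom(\ZZ_{p^\infty},G)=0$ for each $p\in P$, equivalent to the multiplicity $\mu_p$ of $\ZZ_{p^\infty}$ in $d(G)$ being zero, i.e., to $d(G_{(P)})$ being rational; each factor is then in fact contractible, so no further finiteness condition appears. For case \navedi{3} ($m<n$) write $G=D\oplus R'$ with $D$ divisible and $R'$ reduced. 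The sequence $\{G/p^kG\}$ is automatically Mittag-Leffler (quotient bonds are surjective), and its limit $\Hat G_p$ injects into some $G/p^kG$ precisely when $p^kG=p^{k+1}G$; since $D$ is divisible this reduces to $p^kR'=0$, i.e., $R'_{(p)}$ bounded. For $\{G[p^k]\}$ with bonds $\cdot p$, the $j$-th image into $G[p^k]$ is $G[p^k]\cap p^jG$, which stabilizes iff the $p$-torsion of $R'$ is bounded, and the canonical projection from the limit $\Hom(\ZZ_{p^\infty},G)$ is injective only when the limit itself vanishes, i.e., $\mu_p=0$. Hence each factor has CW type iff $\mu_p=0$ and $R'_{(p)}$ is bounded, and it is contractible iff additionally $R'_{(p)}=0$. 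So the nontrivial factors are indexed by $\bigcup_{p\in R}\Lambda_p$, and the finiteness requirement from \exref{product} matches the statement.

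The main obstacle is to match the pointwise bounds on each $R'_{(p)}$ produced by the algebraic analysis with the uniform boundedness of $r(G_{(P)})$ claimed in the statement: this works because boundedness of $R'_{(p)}$ forces $R$ to be finite (primes dividing the bound), the LCM of the individual bounds gives a single exponent $b\in(P)$, and the observation that $d(R'_{(P)})$ is automatically rational (a reduced group has no $\ZZ_{q^\infty}$-summand surviving to $R'_{(P)}$) ensures that the translation between ``$\mu_p=0$ for all $p\in P$'' and ``$d(G_{(P)})$ rational'' is clean. A related subtlety is that \thmref{postnikov_sequence} yields $\kkkk$-CW type in general; but once $\mu_p=0$ and $R'_{(p)}$ is bounded, each relevant $G[p^k]$ and $G/p^kG$ is finite, so countability promotes this to genuine CW type. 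For the sufficiency half of case \navedi{3}, I finally assemble: \thmref{localization} makes the rational summand of $D$ contribute a contractible factor; \propref{disjoint_local} kills the $\ZZ_{q^\infty}$-summands of $D$ for $q\notin P$; the hypothesis $\mu_p=0$ kills the $\ZZ_{p^\infty}$-summands for $p\in P$; and \thmref{postnikov_sequence} handles the bounded reduced part, leaving only finitely many nontrivial factors of CW type plus contractible ones, whose product then has CW type by \exref{product}.
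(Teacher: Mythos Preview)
Your wedge/product decomposition and the appeal to \exref{product} are sound and match the paper's endgame for the finiteness condition on $\cup_{p\in R}\Lambda_p$. The gap is in your algebraic translation of the injectively Mittag-Leffler conditions for a single factor. You assert that $\Hat G_p\to G/p^kG$ is injective precisely when $p^kR'=0$ (whence ``$R'_{(p)}$ bounded''), where $R'=r(G)$. But $p^kG=p^{k+1}G$ only says $p^kR'=p^{k+1}R'$, i.e.\ $p^kR'$ is $p$-divisible --- strictly weaker. Take $G=\ZZ[1/p]$: this group is reduced, $p^kG=G$ for all $k$, so both towers $\{G/p^kG\}$ and $\{G[p^k]\}$ are identically zero and the factor $(K(G,n),*)^{(M(\ZZ_{p^\infty},m),*)}$ is contractible; yet $R'_{(p)}=\QQ$ is unbounded, so your criterion would wrongly deny CW type. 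Your contractibility criterion ``$R'_{(p)}=0$'' fails on the same example.

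The root cause is that you work with $r(G)_{(p)}$ where the proposition speaks of $r(G_{(P)})$, and localization can create new divisible subgroups, so these differ in general. The paper avoids this by first invoking \navedi{2} of \corref{razcep} to replace $Y=K(G,n)$ by $K(G_{(P)},n)$; after that step your translation would become correct, since a $p$-local reduced group has no nonzero $p$-divisible subgroup. The paper then analyzes the single factor by a different route: rather than \thmref{postnikov_sequence}, it uses the cofibre sequence $S^m\to S^m_{(\PP\setminus P)}\to M(\oplus_{p\in P}\ZZ_{p^\infty},m)$ to reduce to $(Y_{(P)},*)^{(S^m_{(0)},*)}$ and applies \propref{f_l_kgn}. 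A separate minor issue: your claim that $G[p^k]$ and $G/p^kG$ become finite under your hypotheses is false (e.g.\ $R'=\bigoplus_\infty\ZZ_p$); the correct reason $\kkkk$-CW type upgrades to CW type here is that $M(\ZZ_{p^\infty},m)$ is a countable complex.
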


\begin{proof}
By \navedi{2} of \corref{razcep} the space $(Y,*)^{(X,*)}$ is equivalent to $(Y_{(P)},*)^{(X,*)}$.

Set $T'=\oplus_{p\in P}\ZZ_{p^\infty}$ and $X'=M(T',m)$. Then $X'$ is a wedge summand of $X$, and by
\lemref{wedge_splitting} the space $(Y_{(P)},*)^{(X,*)}$ dominates $(Y_{(P)},*)^{(X',*)}$. Since $T'$
is the cokernel of the monomorphism $\ZZ\to\ZZ_{(\PP\setminus P)}$, the map
$(Y_{(P)},*)^{(S^m_{(\PP\setminus P)},*)}\to(Y_{(P)},*)^{(S^m,*)}$ is a principal fibration
(see \lemref{principal_fibration}) with fibres either empty or equivalent to $(Y_{(P)},*)^{(X',*)}$. Hence
the latter has CW type if and only if $(Y_{(P)},*)^{(S^m_{(\PP\setminus P)},*)}$ has. By \thmref{localization},
the space $(Y_{(P)},*)^{(S^m_{(\PP\setminus P)},*)}$ is homotopy equivalent to $(Y_{(P)},*)^{(S^m_{(0)},*)}$.

Thus the necessity of the stated conditions follows immediately from \propref{f_l_kgn},
with the exception of the condition on the set $\cup_{p\in R}\Lambda_p$.

Now assume that $m<n$, $d(G_{(P)})$ is rational, and $r(G_{(P)})$ is bounded.
Since $X$ splits as $X\simeq\bigvee_{p\in P}\bigvee_{\lambda\in\Lambda_p}M(\ZZ_{p^\infty},m)$, $(Y_{(P)},*)^{(X,*)}$
factors as \[ (Y_{(P)},*)^{(X,*)}\simeq\prod_{p\in P}\prod_{\lambda\in\Lambda_p}(Y_{(P)},*)^{(M(\ZZ_{p^\infty},m),*)}. \]
Note that \[ \pi_{n-m-1}\big((Y_{(P)},*)^{(M(\ZZ_{p^\infty},m),*)},*\big)\cong\Ext(\ZZ_{p^\infty},r(G_{(P)}))\cong r(G_{(P)})_{(p)}, \]
since $r(G_{(P)})$ is bounded. A space of CW type only admits factorizations into products of finitely many non-contractible
factors (see \exref{product}), and $\cup_{p\in R}\Lambda_p$ must be a finite set.

Conversely, if $\cup_{p\in R}\Lambda_p$ is finite, then $(Y_{(P)},*)^{(X,*)}$ is homotopy equivalent to
a product of finitely many factors each of which is dominated by $(Y_{(P)},*)^{(X',*)}$, and
by sufficiency part of \propref{f_l_kgn} the stated conditions evidently suffice here as well.
\end{proof}

\begin{example}\label{out_of_milnor}
Let $X=M(\ZZ_{p^\infty},m)$ and $Y=K(\ZZ_p,n)$. By \propref{fundamental_special_case} the space $(Y,*)^{(X,*)}$ has the type
of a CW complex (of type $K(\ZZ_p,n-m-1)$). Contrary to examples provided in Kahn \cite{kahn} and in \cite{smrekar}, this function
space of CW type is not covered by Milnor's theorem in the sense that if $X$ is any CW complex of type $M(\ZZ_{p^\infty},m)$
and $L$ is a finite subcomplex of $X$, then $(Y,*)^{(X,*)}\to (Y,*)^{(L,*)}$ is not a homotopy equivalence. \qed
\end{example}

\begin{lem}\label{subgroup}
Let $B$ be a finitely generated subgroup of $A$, and let $Y$ be an arbitrary CW complex.
Then $(Y,*)^{(M(A,m),*)}$ has CW homotopy type if and only if $(Y,*)^{(M(A/B,m),*)}$ has
if $m\gqs 3$.
\end{lem}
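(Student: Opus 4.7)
The plan is to realize $M(A,m)$ as the mapping cone of a cellular map $\varphi\colon M(A/B,m-1)\to M(B,m)$, and then apply \corref{principal_fibration} together with Stasheff's theorem to transfer CW homotopy type between $(Y,*)^{(M(A,m),*)}$ and $(Y,*)^{(M(A/B,m),*)}$. The assumption $m\gqs 3$ enters in an essential way here, since it guarantees that $M(A/B,m-1)$ is simply connected and that we are in the stable range where Moore spaces are unique up to homotopy and where the cofibration structure behaves nicely.

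First I would set up $\varphi$ as follows. Since $B$ is finitely generated, $M(B,m)$ may be taken as a finite CW complex. The extension class of $0\to B\to A\to A/B\to 0$ in $\Ext^1(A/B,B)$ corresponds, via the universal coefficient theorem, to a pointed homotopy class of cellular maps $\varphi\colon M(A/B,m-1)\to M(B,m)$; the homology long exact sequence of the cofibration $M(B,m)\to C_\varphi$ shows that $C_\varphi$ has the homology of $M(A,m)$, and being simply connected with the correct homology it is homotopy equivalent to $M(A,m)$. Then \corref{principal_fibration} applied to $\varphi$ gives that
\[
(Y,*)^{(M(A,m),*)}\simeq(Y,*)^{(C_\varphi,*)}\longrightarrow(Y,*)^{(M(B,m),*)}
\]
is a principal fibration with all fibres either empty or homotopy equivalent to $(Y,*)^{(SM(A/B,m-1),*)}\approx(Y,*)^{(M(A/B,m),*)}$. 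By Milnor's theorem the base has CW homotopy type.

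Now I would apply Stasheff's theorem in both directions. If $(Y,*)^{(M(A,m),*)}$ has CW type, then by Stasheff all fibres of the above fibration do, and the fibre over the constant map identifies (via the pullback description in \corref{principal_fibration}) with $(Y,*)^{(M(A/B,m),*)}$. Conversely, if $(Y,*)^{(M(A/B,m),*)}$ has CW type, then every non-empty fibre of the fibration has CW type; combined with CW type of the base, Stasheff delivers CW type of the total space $(Y,*)^{(M(A,m),*)}$. The principal (as opposed to merely fibrewise) nature of the fibration is essential for the converse, as it guarantees that all non-empty fibres share the homotopy type $(Y,*)^{(M(A/B,m),*)}$. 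The main technical obstacle is the first step \-- confirming that $\varphi\colon M(A/B,m-1)\to M(B,m)$ with $C_\varphi\simeq M(A,m)$ exists. This is where the restriction $m\gqs 3$ is used: it ensures that $M(A/B,m-1)$ is simply connected, so both Moore spaces are determined up to homotopy and the extension class in $\Ext^1(A/B,B)$ unambiguously realizes as a homotopy class of maps with the desired cofibre.
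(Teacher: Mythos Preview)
Your proposal is correct and follows essentially the same approach as the paper: exhibit $M(A,m)$ as the cofibre of a map $\varphi\colon M(A/B,m-1)\to M(B,m)$, invoke \corref{principal_fibration} to obtain a principal fibration over the finite (hence CW type, by Milnor) base $(Y,*)^{(M(B,m),*)}$ with fibre $(Y,*)^{(M(A/B,m),*)}$, and conclude both directions by Stasheff's theorem. The only cosmetic difference is that the paper constructs $\varphi$ by first realizing the inclusion $B\hookrightarrow A$ as a cellular map $M(B,m-1)\to M(A,m-1)$ (possible since $m-1\gqs 2$) and then reading $\varphi$ off the Puppe sequence, whereas you invoke the $\Ext$ class directly; both produce the same map up to homotopy.
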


\begin{rem_n}
If $Y$ is an infinite loop space, for example an Eilenberg-MacLane space, then the restriction
on $m$ is unnecessary, since then $(Y,*)^{(M(A,m),*)}$ is homotopy equivalent to $(Y',*)^{(M(A,m+2),*)}$,
and $(Y,*)^{(M(A/B,m),*)}$ is homotopy equivalent to $(Y',*)^{(M(A/B,m+2),*)}$.
\end{rem_n}

\begin{proof}
There is a cellular map $M(B,m-1)\to M(A,m-1)$ which realizes the inclusion $B\lqs A$ in homology,
since $m-1\gqs 2$. Expanding this to the Puppe sequence yields a subcomplex inclusion $M(B,m)\lqs M(A,m)$
which is the cofibre of a map $\varphi\colon M(A/B,m-1)\to M(B,m)$.

Thus by \lemref{principal_fibration} the map $(Y,*)^{(M(A,m),*)}\to(Y,*)^{(M(B,m),*)}$ is a principal fibration
with fibres either empty or of the type of $(Y,*)^{(M(A/B,m),*)}$, and the assertion follows by Stasheff's theorem.
\end{proof}

\begin{lem}\label{lem_torsion_free}
Let $\Phi$ be a nontrivial countable torsion-free abelian group, and let $G$ be an arbitrary abelian group.
Assume $m<n$. If $(K(G,n),*)^{(M(\Phi,m),*)}$ is contractible then $G$ is bounded by a number $b$, and $\Phi$
is $b$-divisible.
\end{lem}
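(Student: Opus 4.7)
The plan is to translate the contractibility of $(K(G,n),*)^{(M(\Phi,m),*)}$ into nullhomotopy conditions on an inverse sequence built from a filtration of $\Phi$ by finitely generated subgroups, and then extract the arithmetic conclusion algebraically. Since $\Phi$ is countable and torsion-free, I would write $\Phi=\bigcup_i B_i$ as an ascending union of finitely generated (hence free abelian) subgroups of ranks $r_i$. Realize $X=M(\Phi,m)$ as the union of finite subcomplexes $L_i\simeq\bigvee^{r_i}S^m$ of type $M(B_i,m)$, with $L_{i-1}\hookrightarrow L_i$ realizing the inclusion $B_{i-1}\hookrightarrow B_i$ on $H_m$. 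Setting $Y=K(G,n)$ and $Z_i=(Y,*)^{(L_i,*)}$, Milnor's theorem gives that each $Z_i$ has CW type, and by $n-m\ge 1$ we have $Z_i\simeq K(G,n-m)^{r_i}$; the restriction maps $Z_i\to Z_{i-1}$ are fibrations by \lemref{restriction_map}, and $\lim Z_i=(Y,*)^{(X,*)}=Z$, contractible by hypothesis.

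Since $Z$ has CW type and $\Omega Z$ is contractible, \corref{contractibility} yields a subsequence along which the loop bonding maps $\Omega Z_i\to\Omega Z_{i-1}$ are nullhomotopic; after reindexing, assume this for all $i$. Now $\Omega Z_i$ is (weakly equivalent to) $K(G,n-m-1)^{r_i}$ when $n-m\ge 2$, and the discrete group $G^{r_i}$ when $n=m+1$; in either case nullhomotopy of a based map to $\Omega Z_{i-1}$ is equivalent to vanishing of the induced map on $\pi_{n-m-1}$. By adjunction this map is precisely the precomposition
\[
\Hom(B_i,G)\xrightarrow{\text{res}}\Hom(B_{i-1},G),\qquad f\mapsto f\vert_{B_{i-1}},
\]
induced by $B_{i-1}\hookrightarrow B_i$. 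Being null means every homomorphism $B_i\to G$ vanishes on $B_{i-1}$.

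The algebraic conclusion is then immediate. Fix a basis $b_1,\dots,b_{r_i}$ of $B_i$; for each $k$ and each $g\in G$ the homomorphism $b_k\mapsto g$, $b_j\mapsto 0$ $(j\neq k)$ must vanish on $B_{i-1}$. Writing any $x\in B_{i-1}$ as $x=\sum c_k b_k$ this forces $c_k g=0$ for all $g\in G$, hence every coordinate $c_k$ lies in the annihilator ideal $\mathfrak{a}=\setof{n\in\ZZ:nG=0}\subseteq\ZZ$. If $G$ is unbounded then $\mathfrak{a}=0$ so $B_{i-1}=0$ for every $i$ in the reindexed sequence, forcing $\Phi=\bigcup B_i=0$, contradicting nontriviality. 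Hence $G$ is bounded: $\mathfrak{a}=b\ZZ$ with $b=\exp(G)$, and $B_{i-1}\subseteq bB_i$. Given $\phi\in\Phi$, choose $i$ with $\phi\in B_i\subseteq bB_{i+1}\subseteq b\Phi$, showing $\Phi=b\Phi$, i.e.\ $\Phi$ is $b$-divisible.

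The only point requiring care is the identification of the map on $\pi_{n-m-1}$, and the handling of the borderline case $n=m+1$ where the loop space is discrete; in both ranges the bonding map is determined by its effect on the single non-vanishing homotopy group, and the algebraic annihilator computation finishes the proof.
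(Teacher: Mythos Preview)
Your proof is correct and follows essentially the same route as the paper: filter $\Phi$ by finitely generated free subgroups, realize this as a filtration of $M(\Phi,m)$ by finite wedges of spheres, and extract from contractibility of the limit a subsequence along which the induced morphisms $\Hom(B_i,G)\to\Hom(B_{i-1},G)$ vanish, then read off the bound on $G$ and $b$-divisibility of $\Phi$ from the resulting annihilator condition.

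The one noteworthy difference is in how you obtain the nullhomotopy. The paper first invokes \propref{delooping} so that $(K(G,n{+}1),*)^{(M(\Phi,m),*)}$ is also contractible, and then applies \corref{contractibility} one level up to get the restriction maps $Z_{i_j}\to Z_{i_{j-1}}$ themselves nullhomotopic, working with $\pi_{n-m}$. You instead apply \corref{contractibility} directly to $\{Z_i\}$ and work with the loop maps and $\pi_{n-m-1}$. Since the $Z_i$ are Eilenberg--MacLane spaces of CW type, nullhomotopy at either level is equivalent to vanishing of the single relevant homotopy-group morphism, and the algebra is identical. Your variant is slightly more economical in that it avoids the (nontrivial) delooping proposition, at the cost of having to check the borderline case $n=m+1$ by hand, which you do correctly.
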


\begin{proof}
The group $\Phi$ may be represented as the union group of an expanding sequence of subgroups
\begin{equation*} \tag{\dag} 0=\Phi_0\lqs\Phi_1\lqs\Phi_2\lqs\Phi_3\lqs\dots \end{equation*}
where for each $i$, the group $\Phi_i$ is a free group of finite rank $r_i$, and $r_{i-1}\lqs r_i\lqs i$,
for all $i\gqs 1$. Inductively we can form a sequence of inclusions
\[ M(\Phi_1,m)\lqs M(\Phi_2,m)\lqs M(\Phi_3,m)\lqs\dots \]
with union space a CW complex homotopy equivalent to $M(\Phi,m)$.
Set $L_i=M(\Phi_i,m)$ and $Y=K(G,n)$. Note that $(Y,*)^{(L_i,*)}$ is a space of type $K(G^{r_i},n-m)$.

By \propref{delooping} and \thmref{inverse_sequence} the space $(K(G,n),*)^{(M(\Phi,m),*)}$ is contractible
if and only if there exists a sequence $i_1<i_2<i_3<\dots$ such that for each $j$, the restriction
\begin{equation*} \tag{$*$} (Y,*)^{(L_{i_j},*)}\to(Y,*)^{(L_{i_{j-1}},*)} \end{equation*}
is nullhomotopic. 

We replace the original sequence $\setof{L_i\,\vert\,i}$ by the subsequence $\setof{L_{i_j}\,\vert\,j}$.

We fix isomorphisms $\ZZ^{r_i}\cong\Phi_i$ and rewrite (\dag) as
\begin{equation*} \tag{\ddag} \ZZ^{r_1}\xrightarrow{A_2}\ZZ^{r_2}\xrightarrow{A_3}\ZZ^{r_3}\xrightarrow{A_4}\dots \end{equation*}
where the $A_i$ are matrices with integral entries.
The map $(Y,*)^{(L_i,*)}\to(Y,*)^{(L_{i-1},*)}$ is nullhomotopic if and only if the $\pi_{n-m}$-induced morphism
\begin{equation*} \tag{$**$}
\Hom(\ZZ^{r_i},G)\cong G^{r_i}\xrightarrow{A_i^T} G^{r_{i-1}}\cong\Hom(\ZZ^{r_{i-1}},G) \end{equation*}
is trivial. Here $A_i^T$ denotes the transpose of $A_i$, that is, the morphism
\[ (g_1,\dots,g_{r_i})\mapsto(h_1,\dots,h_{r_{i-1}}),\,\,\,h_k=\sum_{l=1}^{r_i}(A_i)_{l,k}g_l. \]
If this is the trivial morphism, then $(A_i)_{l,k}g=0$, for all $l$, all $k$, and all $g$.

Since this is valid for all $i$, either $A_i=0$ for all $i$ which is a contradiction,
or else $G$ is bounded by a number $b$. Supposing $b$ minimal it follows that $A_i=b\cdot A_i'$ for all $i$.
Thus $\Phi$ is $b$-divisible.
\end{proof}

\begin{prop}\label{torsion_free}
Let $A$ be a nontrivial torsion free abelian group, and let $G$ be an arbitrary abelian group.
Assume $m<n$. Then $(K(G,n),*)^{(M(A,m),*)}$ is contractible if and only if $G$ is bounded by
a number $b$, and $A$ is $b$-divisible.
\end{prop}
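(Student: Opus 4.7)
The plan is to bootstrap the countable case \lemref{lem_torsion_free} to arbitrary torsion-free $A$ via the countable-reduction machinery of \secref{characterizations}. Specifically, \corref{cor_delooping} (applicable here since $m<n$) turns questions about $(Y,*)^{(X,*)}$ into questions about countable subcomplexes, while \corref{cor_wicked} synthesizes countable information into a global conclusion.

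For the \emph{only if} direction, set $Y=K(G,n)$ and $X=M(A,m)$ with $(Y,*)^{(X,*)}$ contractible. First I would extract the exponent $b$ of $G$: by \corref{cor_delooping}, there exists a countable subcomplex $L\lqs X$, which one may take of the form $M(A_L,m)$ with $A_L\lqs A$ nontrivial, such that the restriction $(Y,*)^{(X,*)}\to(Y,*)^{(L,*)}$ is a homotopy equivalence. Hence $(Y,*)^{(L,*)}$ is contractible, and \lemref{lem_torsion_free} applied to the countable torsion-free $A_L$ yields the existence of a minimal $b\gqs 1$ such that $G$ has exponent $b$. Since $b$ is an invariant of $G$ alone, the same $b$ then works for every choice of $L$. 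Next, for an arbitrary $a\in A$, I would apply \corref{cor_delooping} again, starting from a countable subcomplex containing a representative of $a$, to get a countable $L=M(A_L,m)$ with $a\in A_L$ and $(Y,*)^{(L,*)}$ contractible. The (proof of) \lemref{lem_torsion_free} then shows $A_L$ is $b$-divisible, so $a\in bA_L\lqs bA$, which gives $A=bA$.

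For the \emph{if} direction, assume $G$ has exponent $b$ and $A$ is $b$-divisible. By \corref{cor_wicked} it suffices to prove that every countable subcomplex $L\lqs X$ is contained in a countable subcomplex $L'\lqs X$ with $(Y,*)^{(L',*)}$ contractible. Given $L$, corresponding to a countable $A_L\lqs A$, I would enlarge $A_L$ to a countable $b$-divisible subgroup $A_{L'}\lqs A$ by iteratively adjoining $b$-th roots (possible since $A$ is $b$-divisible), and realize $A_{L'}$ as a countable subcomplex $L'=M(A_{L'},m)\gqs L$. Writing the $b$-divisible countable torsion-free $A_{L'}$ as an ascending union of finitely generated (hence free) subgroups $\Phi_1\lqs\Phi_2\lqs\dots$ chosen so that $\Phi_{i-1}\lqs b\Phi_i$ for every $i$, I pass to a filtration of $L'$ by finite subcomplexes $L_1\lqs L_2\lqs\dots$. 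The bonding map $(Y,*)^{(L_i,*)}\to(Y,*)^{(L_{i-1},*)}$ then factors through the $b$-th power map on the abelian H-space $(Y,*)^{(L_i,*)}$, which is nullhomotopic because $G$ has exponent $b$. Since $(Y,*)^{(L',*)}$ is the inverse limit of this sequence of fibrations (by \lemref{good_yields_restricted}), \propref{contractible_limit} gives the desired contractibility.

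The main obstacle will be the CW-theoretic step of realizing a prescribed countable subgroup $A_{L'}\lqs A$ as an honest subcomplex of a fixed $X=M(A,m)$, since a subcomplex of an arbitrarily chosen CW structure on $X$ need not meet $A$ in $A_{L'}$. I would handle this by working with a minimal decomposition in the sense of \lemref{minimal_decomposition}: since $A$ is torsion-free, $X$ may be built as a wedge of $m$-spheres (indexed by a generating set of $A$) together with $(m+1)$-cells imposing relations, and subcomplexes corresponding to pure subgroups can be arranged. To circumvent difficulties arising from $A_{L'}$ not being pure, I would invoke homotopy invariance of $(Y,*)^{(-,*)}$ on countable subcomplexes (so the precise subcomplex structure is irrelevant) and, if needed, use the mapping-cone techniques of \lemref{principal_fibration} together with Stasheff's theorem to replace $L'$ by a homotopy-equivalent Moore subcomplex.
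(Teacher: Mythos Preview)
Your \emph{only if} direction matches the paper's: both extract the bound $b$ on $G$ from a single countable subcomplex via \lemref{lem_torsion_free}, then deduce $b$-divisibility of $A$ by running \corref{cor_delooping} (equivalently \propref{delooping} plus \thmref{double_wicked}) over arbitrary countable pieces.

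Your \emph{if} direction is genuinely different from the paper's. The paper does not touch \corref{cor_wicked} or \propref{contractible_limit} here at all; instead it observes that a $P$-bounded $G$ (where $P$ is the set of primes dividing $b$) makes $K(G,n)$ a $P$-local target, so \thmref{localization} gives a homotopy equivalence $(Y,*)^{(M(A_{(P)},m),*)}\to(Y,*)^{(M(A,m),*)}$. Since $A$ is torsion-free and $P$-divisible, $A_{(P)}$ is rational, and \propref{f_l_kgn} (with $D=0$) finishes. This sidesteps every CW-realization issue you flag: no subcomplex of a fixed decomposition ever needs to hit a prescribed subgroup.

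Your route is more elementary (it does not invoke the localization machinery or the Zabrodsky lemma behind it), but the obstacle you identify is real and your proposed fix via purity is off-target: a $b$-divisible subgroup of a torsion-free group need not be pure, and homotopy invariance alone does not produce honest subcomplexes of the given $X$. What does work is an interleaving argument in the spirit of \lemref{homology_P}: start from $L$, enlarge so that $H_m$ injects into $A$, adjoin finite subcomplexes carrying $b$-th roots of a generating set, enlarge again for injectivity, and iterate; the union $L_\infty$ is then a countable subcomplex with $H_m(L_\infty)$ a $b$-divisible subgroup of $A$, and your filtration argument applies to it. You should also note that \corref{cor_wicked} presupposes $X$ uncountable; for countable $A$ your filtration argument already handles $X$ directly.
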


\begin{proof}
Set $Y=K(G,n)$.
By \corref{cor_delooping} there exists a countable subcomplex $L$ of $M(A,m)$ with 
$H_*(L)\to H_*(M(A,m))$ injective, so that $(Y,*)^{(M(A,m),*)}\to(Y,*)^{(L,*)}$
is a homotopy equivalence. Then $L$ is of type $M(\Phi,m)$ with $\Phi\lqs A$, and by \lemref{lem_torsion_free}
$G$ is bounded by a number $b$. Assume a minimal $b$.

By \propref{delooping} and \thmref{double_wicked} the space $(Y,*)^{(M(A,m),*)}$ is contractible
if and only if for each countable subgroup $\Phi$ of $A$ there exists a bigger countable subgroup $\Phi'$
of $A$ such that $(Y,*)^{(M(\Phi',m),*)}$ is contractible. By the lemma and minimality of $b$, the group
$\Phi'$ is $b$-divisible. But this implies that $A$ is $P$-divisible for $P$ the set of prime divisors of $b$.

Conversely, assume that $A$ is $P$-divisible and $G$ is $P$-bounded.
Then \[ (Y,*)^{(M(A_{(P)},m),*)}\to(Y,*)^{(M(A,m),*)} \] is a homotopy equivalence by \thmref{localization}.
Since $A$ is torsion-free, $A_{(P)}$ is rational, and \propref{f_l_kgn} shows that $(Y,*)^{(M(A_{(P)},m),*)}$ has CW type.
\end{proof}

\begin{lem}\label{torsion_away_from_G}
Assume the notation of the theorem, and let $T$ be a torsion group with trivial $(P\cup S)$-part.
Then $(K(G,n),*)^{(M(T,m),*)}$ is contractible.
\end{lem}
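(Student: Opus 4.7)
The plan is to reduce the statement to \propref{disjoint_local} by a primewise decomposition. Set $R = \PP \setminus (P \cup S)$. The hypothesis says $T$ is an $R$-torsion abelian group, so it splits as $T \cong \bigoplus_{p \in R} T_{(p)}$, and consequently
\[
M(T,m) \simeq \bigvee_{p \in R} M(T_{(p)}, m).
\]
First I would invoke \lemref{wedge_splitting} to obtain a homotopy equivalence
\[
(K(G,n),*)^{(M(T,m),*)} \simeq \prod_{p \in R} (K(G,n),*)^{(M(T_{(p)},m),*)}.
\]
Since a product of contractible spaces is contractible (contract each factor simultaneously), it suffices to show each factor is contractible.

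Fix $p \in R$. The homology of $M(T_{(p)},m)$ is $\{p\}$-primary torsion, in particular $\PP$-primary torsion, so by \navedi{2} of \corref{razcep} the post-composition map
\[
(K(G,n),*)^{(M(T_{(p)},m),*)} \to (K(G,n)_{(p)},*)^{(M(T_{(p)},m),*)}
\]
is a homotopy equivalence, and $K(G,n)_{(p)} = K(G_{(p)}, n)$. Hence it suffices to show $(K(G_{(p)},n),*)^{(M(T_{(p)},m),*)}$ is contractible.

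The key observation is that for $p \in R$ the localization $G_{(p)}$ is a $\QQ$-vector space. Indeed, $p \notin P$ means $G * \ZZ_p = \Tor(G,\ZZ_p) = 0$, which is equivalent to $G$ having no element of order $p$, and hence $T(G)_{(p)} = 0$. Localizing the short exact sequence $0 \to T(G) \to G \to F \to 0$ at $p$ therefore gives $G_{(p)} \cong F_{(p)}$. On the other hand, $p \notin S$ means $(F\otimes\QQ/F)_{(p)} = F_{(p)}\otimes\QQ / F_{(p)} = 0$, so $F_{(p)}$ is $\QQ$-divisible and, being torsion-free, is in fact a $\QQ$-vector space.

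Thus $K(G_{(p)},n)$ is a rational (equivalently, $\emptyset$-local) space of CW type. Applying \propref{disjoint_local} with the partition $P = \emptyset$ and $P' = \PP$ to the simply connected complex $X = M(T_{(p)},m)$ (whose homology is $\PP$-primary torsion) yields contractibility of $(K(G_{(p)},n),*)^{(M(T_{(p)},m),*)}$, which completes the proof. There is no real obstacle to this argument; the only delicate point is verifying that $G_{(p)}$ is rational for each $p \in R$, which follows essentially by unwinding the definitions of $P$ and $S$ against the short exact sequence $0 \to T(G) \to G \to F \to 0$.
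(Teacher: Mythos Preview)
Your proof is correct, but it takes a genuinely different route from the paper's. The paper works on the \emph{target} side: it considers the rationalization fibration $\Phi\to K(G,n)\to K(G_{(0)},n)$, observes that $\pi_n\Phi\cong T(G)$ is $P$-torsion and $\pi_{n-1}\Phi\cong (F\otimes\QQ)/F$ is $S$-torsion, so that $\Phi$ is $(P\cup S)$-local. Then \thmref{localization} gives $(\Phi,*)^{(M(T,m),*)}\simeq(\Phi,*)^{(M(T,m)_{(P\cup S)},*)}$, which is contractible since $M(T,m)_{(P\cup S)}\simeq *$; similarly $(K(G_{(0)},n),*)^{(M(T,m),*)}$ is contractible, and one concludes via the fibration.

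You instead decompose the \emph{domain} primewise and prove that $G_{(p)}$ is rational for every $p\notin P\cup S$ (this is exactly the content encoded in the definition of $P$ and $S$, and your verification is correct). Your approach is more elementary in that it avoids the two-stage Postnikov fibre $\Phi$, at the cost of handling an infinite wedge/product; the paper's approach keeps $M(T,m)$ intact but needs the fibration analysis on the target. Both ultimately rest on the same localization machinery (\thmref{localization} / \propref{disjoint_local}), and the algebraic heart---that the ``obstruction'' to $G$ being rational is concentrated at $P\cup S$---is the same in each.
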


\begin{proof}
The rationalization $K(G,n)\to K(G_{(0)},n)$ may be viewed as a principal fibration with fibre $\Phi$
such that $\pi_n(\Phi)\cong T(G)$ and $\pi_{n-1}(\Phi)\cong (F\otimes\QQ)/F$. Thus $\Phi$ is a $P\cup S$-local
torsion space, so \thmref{localization} implies that $(\Phi,*)^{(M(T,m),*)}$ is homotopy equivalent to
$(\Phi,*)^{(M(T,m)_{(P\cup S)},*)}$. The latter is contractible since $M(T,m)_{(P\cup S)}$ is.
By \thmref{localization} also $(K(G_{(0)},n),*)^{(M(T,m),*)}$ is contractible. Hence
so is $(K(G,n),*)^{(M(T,m),*)}$, as claimed.
\end{proof}

\begin{proof}[Proof of \thmref{moore_eilenberg}]
Assume a minimal decomposition for $X=M(A,m)$, and set $Y=K(G,n)$. Case $m>n$ follows from \propref{postnikov_adjunction},
and case $m=n$ by \propref{m=n}, so we assume $m<n$. Let $Y^X$ have CW homotopy type.
By \corref{cohomology_obstruction} there exists a finite subcomplex $L$ of $X$ such that for every
$k\in\setof{0,\dots,n}$ the inclusion induced morphism 
\begin{equation*}\tag{$*$} \H^k(X;G)\to\H^k(L;G) \end{equation*} is injective.

Minimal decomposition of $X$ implies that $L$ is of type $M(A_L,m)$. Denote by $\varphi\colon A_L\to A$ the morphism
induced by inclusion $L\lqs X$, and set $B=\im\varphi$.

Injectivity of ($*$) says that the morphisms $\varphi^*\colon\Hom(A,G)\to\Hom(A_L,G)$
and $\varphi^*\colon\Ext(A,G)\to\Ext(A_L,G)$ are injective.
Consequently also
\begin{equation*} \Hom(A,G)\to\Hom(B,G)\text{ and }\Ext(A,G)\to\Ext(B,G) \end{equation*}
are injective.

Plugging this into the exact sequence
\begin{align*} 0\to &\Hom(A/B,G)\to\Hom(A,G)\to\Hom(B,G) \\
	\to &\Ext(A/B,G)\to\Ext(A,G)\to\Ext(B,G)\to 0 \end{align*}
it follows that $\Hom(A/B,G)$ is trivial and
\begin{equation*}
\Ext(A,G)\cong\Ext(B,G)\cong T(B)\otimes G\text{ is a bounded abelian group.}
\end{equation*}
Set $A'=A/B$. By \lemref{subgroup} the space $K(G,n)^{M(A',m)}$ also has CW type.
Reapplying the above with $A'$ in place of $A$ yields a finitely generated subgroup $B'$ of $A'$ with
\[ \Ext(A',G)\cong\Ext(B',G)\cong T(B')\otimes G\text{ a bounded abelian group.} \]
Set $X'=M(A',m)$. We proceed with investigating $Y^{X'}$ of CW type with $\Hom(A',G)$ trivial and $\Ext(A',G)$
a bounded abelian group.

Let $p\in P$. The epimorphism $A'\to A'\otimes\ZZ_p$ induces the monomorphism
$\Hom(A'\otimes\ZZ_p,G)\to\Hom(A',G)$. Furthermore the monomorphism $G*\ZZ_p\to G$
induces the monomorphism $\Hom(A'\otimes\ZZ_p,G*\ZZ_p)\to\Hom(A'\otimes\ZZ_p,G)$.
Since $\Hom(A',G)$ is trivial and $G*\ZZ_p$ is nontrivial, $A'\otimes\ZZ_p$ must be trivial.
In other words, $A'$ is $p$-divisible.

Consider the exact sequence
\begin{align*}\tag{$**$} 0\to &\Hom(A'/T(A'),H)\to\Hom(A',H)\to\Hom(T(A'),H) \\
	\to &\Ext(A'/T(A'),H)\to\Ext(A',H)\to\Ext(T(A'),H)\to 0 \end{align*}
Since $\Hom(A',G)=0$, substituting $H=G$ simplifies ($**$) to the exact sequence
\[ 0\to\Hom(T(A'),G)\xrightarrow{\delta}\Ext(A'/T(A'),G)\xrightarrow{\alpha}\Ext(A',G)\to\Ext(T(A'),G)\to 0. \]
It follows that $\Hom(T(A'),G)\cong\ker\alpha$, and that $\coker\delta\to\Ext(A',G)$ is injective.
However, since $A'/T(A')$ is torsion-free, $\Ext(A'/T(A'),G)$ is divisible, and hence so is its quotient group
$\coker\delta$. Since no divisible group is bounded, $\coker\delta=0$, and consequently
\begin{equation*} \Ext(A',G)\xrightarrow{\cong}\Ext(T(A'),G) \end{equation*}
and $\Hom(T(A'),G)\xrightarrow{\cong}\Ext(A'/T(A'),G)$ are isomorphisms. Since $T(A')$ is a torsion group,
$\Hom(T(A'),G)$ is a reduced group (see Theorem 46.1 of Fuchs \cite{fuchs1}). The only possibility is
\begin{equation*} \Hom(T(A'),G)=\Ext(A'/T(A'),G)=0. \end{equation*}
Applying $\Hom(T(A'),\_)$ to $0\to T(G)\to G\to F\to 0$ yields the equality
$\Hom(T(A'),G)\cong\Hom(T(A'),T(G))=0$ and the exact sequence
\begin{equation*} \tag{\dag} 
0\to\Ext(T(A'),T(G))\to\Ext(T(A'),G)\to\Ext(T(A),F)\to 0.
\end{equation*}
Since \[ \Hom(T(A'),G)\cong\Hom(T(A'),T(G))\cong\prod_{p\in P}\Hom(T(A')_{(p)},T(G)_{(p)}) \]
it follows immediately that for $p\in P$, either $T(A')_{(p)}$ is trivial or else $T(G)_{(p)}$ is reduced.

The sequence (\dag) shows that both $\Ext(T(A'),T(G))$ and $\Ext(T(A),F)$ are bounded.

Consider \[ \Ext(T(A'),T(G))\cong\prod_{p\in P}\Ext(T(A')_{(p)},T(G)_{(p)}), \]
and let $R=\setof{p\in P\,\vert\,T(A')_{(p)}\neq 0}$. Take $p\in R$. Then $T(G)_{(p)}$ is reduced.
If it is unbounded, then it admits cyclic direct summands of arbitrarily large orders, and consequently
so does $\Ext(T(A')_{(p)},T(G)_{(p)})$, contradiction. Furthermore, if $R$ is infinite, then
$\prod_{p\in P}\Ext(T(A')_{(p)},T(G)_{(p)})$ contains elements of arbitrarily high orders which
again contradicts boundedness of $\Ext(T(A'),T(G))$. Thus $R$ is finite, and we conclude that
\[	T(G)_{(R)}\text{ is a bounded group.}		\]
In the short exact sequence \[ 0\to T(G)_{(R)}\to G_{(R)}\to F_{(R)}\to 0 \] the group $T(G)_{(R)}$ is
a bounded pure subgroup of $G_{(R)}$, hence it is a direct summand. It follows that the divisible part
$d(G_{(R)})$ of $G_{(R)}$ is isomorphic to the divisible part of $F_{(R)}$, and as such is rational.
Furthermore,
\[ r(G_{(R)})\cong T(G)_{(R)}\oplus r(F_{(R)}). \]
Since $T(A')_{(R)}$ is divisible, it is a direct summand of $A'$; we decompose $A'\cong T(A')_{(R)}\oplus A_1$.
Then $M(A',m)\simeq M(T(A')_{(R)},m)\vee M(A_1,m)$, and by \lemref{wedge_splitting}, the space
$(Y,*)^{(M(T(A')_{(R)},m),*)}$ has CW homotopy type.
By \propref{fundamental_special_case} it follows that $r(F_{(R)})$ is trivial, and that
$T(A')_{(R)}$ only admits finitely many nontrivial direct summands.

Since $r(F_{(R)})$ is trivial, $F_{(R)}$ is a rational group. In particular, $F\otimes\QQ\cong F_{(R)}$,
and consequently $(F\otimes\QQ)/F$ is torsion away from $R$. In other words $R\cap S=\emptyset$.

Now we consider
\[ \Ext(T(A'),F)\cong\Hom(T(A'),\tfrac{F\otimes\QQ}{F})\cong\prod_{p\in S}\Hom(T(A')_{(p)},(F_{(R)}/F)_{(p)}). \]
(Note that from boundedness of $\Ext(T(A'),F)$ it also follows that $R\cap S=\emptyset$.)

Since for $p\in S$ the torsion group $(F_{(R)}/F)_{(p)}$ is divisible, and $\Ext(T(A'),F)$ is bounded,
$T(A')_{(p)}$ must be reduced, and bounded as well. Furthermore, $T(A')_{(p)}$ can be nontrivial for
at most finitely many $p\in S$. Hence $T(A')_{(S)}$ itself is bounded, and as such a direct summand
of $A'$. As above, it follows that $(Y,*)^{(M(T(A')_{(S)},m),*)}$ has CW type. As a bounded group, $T(A')_{(S)}$
splits into a direct sum of finite cyclic groups, say $T(A')_{(S)}\cong\oplus_{\lambda\in\Lambda}C_\lambda$.
Consequently $(Y,*)^{(M(T(A')_{(S)},m),*)}$ is homotopy equivalent to the product
\[ \prod_{\lambda\in\Lambda}(Y,*)^{(M(C_\lambda,m),*)}. \] Since all factors are noncontractible,
$\Lambda$ must be finite by \exref{product}. In particular, $T(A')_{(S)}$ is a finite group.

We decompose \[ T(A')\cong T(A')_{(R)}\oplus T(A')_{(S)}\oplus T''. \]
Since $T''$ has trivial $(P\cup S)$-torsion, the space $(Y,*)^{(M(T'',m),*)}$
is contractible by \lemref{torsion_away_from_G}. 
Hence $(Y,*)^{(M(T(A'),m),*)}$ has CW homotopy type by \lemref{wedge_splitting}. By the above
we already know that \[ (Y,*)^{(M(A',m),*)}\to(Y,*)^{(M(T(A'),m),*)} \]
is a weak homotopy equivalence, hence it is a genuine homotopy equivalence, and
the fibres are contractible. The fibre over the constant map is homotopy equivalent to
\[ (Y,*)^{(M(A'/T(A'),m),*)}. \]
If $A'/T(A')$ is nontrivial then by \propref{torsion_free}, $G$ is bounded. Thus $G=T(G)=T(G)_{(P)}$,
and the bound of $G$ is a number whose prime divisors equal $P$. This concludes the proof of necessity.

Conversely, if $G$ is $P$-bounded, $A'$ is $P$-divisible and $T(A')_{(P)}$ only admits finitely many
direct summands, then $K(G,n)^{(M(A',m),*)}$ which is equivalent to $K(G,n)^{(M(A'_{(P)},m),*)}$ by
\thmref{localization} has CW homotopy type by \propref{f_l_kgn} and \propref{fundamental_special_case}.

If $A'$ is a torsion group with $A'_{(P)}=A'_{(R)}$ divisible with finitely many summands, $A'_{(S)}$ finite,
$T(G)_{(R)}$ bounded and $F_{(R)}$ rational, then $K(G,n)^{M(A'_{(R)},m)}$ has CW type by
\propref{fundamental_special_case}, $K(G,n)^{M(A'_{(S)},m)}$ has CW type by Milnor's theorem,
and $K(G,n)^{M(A'_{(\PP\setminus(P\cup S))},m)}$ has CW type by \lemref{torsion_away_from_G}.
Then $K(G,n)^{M(A',m)}$ has CW type by \lemref{wedge_splitting}.

An application of \lemref{subgroup} concludes also the proof of sufficiency.
\end{proof}

Now we turn to the proof of \thmref{eilenberg_eilenberg}.

By \thmref{thom_enhanced} the necessity part is clear, and we have to prove sufficiency.

\begin{defn}
Let $\CCC$ be a class of abelian groups. Then $\CCC$ is a Serre class if
\begin{itemize}
\item	for a short exact sequence $0\to A'\to A\to A'{}'\to 0$ the group $A$ belongs to $\CCC$
	if and only if both $A'$ and $A'{}'$ do,
\item	if $A$ and $B$ belong to $\CCC$ then so do $A\otimes B$ and $A*B$, and
\item	for $A\in\CCC$ also $\H_i(K(A,1);\ZZ)\in\CCC$, for all $i$.
\end{itemize}
\end{defn}

\begin{lem}\label{wicked_serre}
Let $R$ be a (finite) set of primes, and let $Q$ be a set of primes (possibly empty) disjoint from $R$.
Let $\CCC$ be the class of torsion abelian groups $A$ such that
\begin{itemize}
	\item	$A_{(R)}$ is the direct sum of finitely many cocyclic groups, and
	\item	$A_{(Q)}$ is finite.
\end{itemize}
Then $\CCC$ is a Serre class with the additional property that if $A\in\CCC$ and $B$ is finitely generated,
then $A\otimes B$ and $A*B$ belong to $\CCC$.
\end{lem}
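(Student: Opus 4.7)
The plan is to verify the three Serre class axioms together with the additional property. The key structural observation is that a torsion abelian group $T$ is a direct sum of finitely many cocyclic groups if and only if $T[p]$ is finite for every prime $p$ and vanishes for all but finitely many $p$; equivalently, $T$ is Artinian. Combined with exactness of localization and the analogous closure properties of finite groups, this immediately gives the Serre extension axiom for $\CCC$: in a short exact sequence $0\to A'\to A\to A''\to 0$ of torsion groups, the left-exact sequence $0\to A'[p]\to A[p]\to A''[p]$, together with the inclusion $A'[p]\hookrightarrow A[p]$, lets finiteness of $p$-socles propagate in both directions after localization at $R$ or $Q$.

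For closure under $\otimes$ and $*$, localization commutes with both functors and cross-prime pairings vanish, so it suffices to compute for cocyclic groups at a single prime $p$: one has $\ZZ_{p^m}\otimes\ZZ_{p^n}=\ZZ_{p^{\min(m,n)}}$ and $\ZZ_{p^\infty}\otimes\ZZ_{p^n}=\ZZ_{p^\infty}\otimes\ZZ_{p^\infty}=0$ (using that $\ZZ_{p^\infty}$ is $p$-divisible), while $\ZZ_{p^m}*\ZZ_{p^n}=\ZZ_{p^{\min(m,n)}}$, $\ZZ_{p^\infty}*\ZZ_{p^n}=\ZZ_{p^n}$, and $\ZZ_{p^\infty}*\ZZ_{p^\infty}=\ZZ_{p^\infty}$. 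All outputs are cocyclic, and since $\otimes$ and $*$ distribute over finite direct sums, closure of $\CCC$ under $\otimes$ and $*$ follows. The additional property is then a consequence: for $B=\ZZ^r\oplus T$ finitely generated one reduces to $B=\ZZ_m$, where $A\otimes\ZZ_m=A/mA$ is a quotient of $A$ and $A*\ZZ_m=A[m]$ is a subgroup of $A$, both in $\CCC$ by the closure already established.

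The main obstacle is the group homology axiom $H_i(K(A,1);\ZZ)\in\CCC$ for $i\gqs 1$. I would exploit the canonical splitting $A=A_{(R)}\oplus A_{(Q)}\oplus A_{(\PP\setminus(R\cup Q))}$ and the product decomposition of $K(A,1)$. Since positive-degree homology of $K(A_{(p)},1)$ is $p$-primary torsion, all K\"unneth cross-terms between distinct prime components vanish, yielding $H_i(K(A,1);\ZZ)\cong H_i(K(A_{(R)},1))\oplus H_i(K(A_{(Q)},1))\oplus H_i(K(A_{(\PP\setminus(R\cup Q))},1))$ for $i\gqs 1$. The $(Q)$ piece is handled by the classical finiteness of group homology for finite groups (a K\"unneth induction reducing to the cyclic computation $H_*(K(\ZZ_{p^n},1);\ZZ)=\ZZ,\,0,\,\ZZ_{p^n},\,0,\,\ZZ_{p^n},\dots$). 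For the $(R)$ piece, decompose $A_{(R)}=F\oplus D$ into finite and divisible parts; the finite case reduces to the $(Q)$ one, while the computation $H_*(K(\ZZ_{p^\infty},1);\ZZ)=\colim_n H_*(K(\ZZ_{p^n},1);\ZZ)$ yields $\ZZ_{p^\infty}$ in each positive odd degree and zero in each positive even degree, and K\"unneth applied to the finite product $K(D,1)$, combined with the cocyclic $\otimes/*$ calculus above, confirms that $H_i(K(A_{(R)},1))$ is a direct sum of finitely many cocyclic groups in each degree. The remaining component contributes only torsion in positive degree, which is all that $\CCC$ requires outside $R\cup Q$.
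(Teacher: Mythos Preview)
Your proof is correct and largely parallels the paper's, especially the $K(A,1)$ homology computation via the colimit of lens spaces and K\"unneth, and the explicit cocyclic $\otimes/*$ calculus. The main difference is in the extension axiom: the paper argues hands-on with the divisible-plus-finite decomposition (proving directly that subgroups and quotients of a finite-rank divisible $p$-group stay in the class, then handling extensions by splitting off the injective divisible part and bounding the reduced remainder), whereas you invoke the equivalent Artinian/finite-socle characterization, which makes closure under sub, quotient, and extension essentially formal. Your route is a bit cleaner there. One small caution: the left-exact socle sequence $0\to A'[p]\to A[p]\to A''[p]$ by itself does not bound $A''[p]$ from $A[p]$, since the last map need not surject; you need either the snake-lemma continuation into $A'/pA'$ or simply the fact that quotients of Artinian groups are Artinian---which you do state, so there is no real gap.
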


\begin{proof}
It suffices to show that the required properties hold for the class $\CCC_p$ of torsion abelian groups
that are isomorphic with a direct sum of finitely many cocyclic $p$-groups.

Let $D$ be a divisible torsion $p$-group of finite rank, i.e. $D$ is isomorphic to the direct sum of
finitely many quasicyclic groups $\ZZ_{p^\infty}$. It follows easily by induction on the rank of $D$ that
\begin{itemize}
\item	if $E$ is a quotient group of $D$ it is divisible with $\rank E\lqs\rank D$,
\item	if $C$ is a subgroup of $D$ then $C$ is the direct sum of cocyclic $p$-groups
	(and $\rank C\lqs\rank D$).
\end{itemize}
Consequently if $A\in\CCC_p$ and \begin{equation*}\tag{$*$} 0\to A'\to A\to A''\to 0\end{equation*} is exact,
then $A'\in\CCC_p$ and $A''\in\CCC_p$.

Conversely, assume that in ($*$) the groups $A'$ and $A''$ belong to $\CCC$. We may decompose $A'\cong D\oplus F$
where $F$ a finite $p$-group, and $D$ is a divisible $p$-group of finite rank. Since $D$ is injective, it also splits
$A$, say $A\cong D\oplus\Gamma$ where $\Gamma$ is an extension of $F$ by $A''$, i.e.
\[	0\to F\to\Gamma\to A''\to 0. \]
There exists a bound $p^m$ for $F$, and since $A''$ is also isomorphic to the direct sum of a finite $p$-group
and a divisible $p$-group of finite rank, $p^nA''$ is divisible for some $n$. Consequently also $p^{m+n}\Gamma$
is divisible. Considering a decomposition $\Gamma\cong d(\Gamma)\oplus r(\Gamma)$ with $d(\Gamma)$ divisible
and $r(\Gamma)$ reduced it follows that $r(\Gamma)$ is bounded, and hence isomorphic to the direct sum of cyclics.
Rank considerations imply that $\Gamma$ belongs to $\CCC_p$, and hence so does $A$.

For tensor and torsion products we note that if $T$ is any torsion group,
$\ZZ_{p^\infty}\otimes T=0$, and for any group $G$, the torsion product $\ZZ_{p^\infty}*G$ is isomorphic to $T(G)_{(p)}$.

Since $K(\ZZ_{p^\infty},1)$ is the colimit of lens spaces, we know that $\H_i\big(K(\ZZ_{p^\infty},1);\ZZ\big)$
is isomorphic to $\ZZ_{p^\infty}$ for $i$ odd and zero for $i$ even.

If $A\in\CCC_p$ then $\H_i(K(A,1);\ZZ)\in\CCC_p$ by the K\"{u}nneth theorem, combined with what we have shown above.
\end{proof}

\begin{lem}\label{eilenberg_homology}
Let $\CCC$ be as in \lemref{wicked_serre} and let $0\to B\to A\to A'\to 0$ be a an exact sequence with
$B$ finitely generated and $A'\in\CCC$. Then for each $i\gqs m$, the group $H_i(K(A,m);\ZZ)$ is given by
an extension \[ 0\to B_i\to H_i(K(A,m);\ZZ)\to A_i'\to 0 \] with $B_i$ finitely generated and $A_i'\in\CCC$.
\end{lem}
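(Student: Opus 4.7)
My plan is to run the Serre spectral sequence of the principal fibration
\[	K(B,m)\to K(A,m)\to K(A',m)	\]
associated to the given group extension. Since $A$ is abelian, conjugation by a lift of any $a'\in A'$ is trivial, so the monodromy action of $\pi_1(K(A',m))$ on $H_*(K(B,m);\ZZ)$ is trivial in the only nontrivial case $m=1$ (for $m\gqs 2$ the base is simply connected). Hence
\[	E^2_{p,q}=H_p(K(A',m);H_q(K(B,m);\ZZ))\Longrightarrow H_{p+q}(K(A,m);\ZZ).	\]

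Two preliminary facts drive the argument. First, since $B$ is a finitely generated abelian group, $H_q(K(B,m);\ZZ)$ is finitely generated for every $q$; this is classical. Second, I claim $H_p(K(A',m);\ZZ)\in\CCC$ for every $p\gqs 1$. The case $m=1$ is exactly the third Serre class axiom applied to $A'\in\CCC$. I would extend this to arbitrary $m$ by induction on $m$ using the path-loop fibration $K(A',m-1)\to PK(A',m)\to K(A',m)$: since the total space is contractible and $\CCC$ is closed under extensions and subquotients, an inductive comparison via the corresponding Serre spectral sequence forces $H_p(K(A',m);\ZZ)\in\CCC$ for all $p\gqs 1$. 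This bootstrap is the step that requires the most care, though it is a standard Serre-class computation.

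Armed with these, the universal coefficient theorem
\[	E^2_{p,q}\cong H_p(K(A',m);\ZZ)\otimes H_q(K(B,m);\ZZ)\oplus\mathrm{Tor}(H_{p-1}(K(A',m);\ZZ),H_q(K(B,m);\ZZ))	\]
places $E^2_{p,q}$ in $\CCC$ for all $p\gqs 1$ (when $p=1$ the Tor term vanishes, and $H_1\in\CCC$): the key input here is the extra closure of $\CCC$ under tensor and torsion product with finitely generated abelian groups, recorded in \lemref{wicked_serre}. The column $p=0$ is simply $E^2_{0,q}=H_q(K(B,m);\ZZ)$, finitely generated. Closure of $\CCC$ under subquotients carries these properties to $E^\infty$.

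Finally, I read off the extension. The convergence of the spectral sequence yields a filtration $0=F_{-1}\subset F_0\subset\dots\subset F_i=H_i(K(A,m);\ZZ)$ with $F_p/F_{p-1}\cong E^\infty_{p,i-p}$. Set $B_i:=F_0$, which is a quotient of $E^2_{0,i}=H_i(K(B,m);\ZZ)$, hence finitely generated. Set $A_i':=H_i(K(A,m);\ZZ)/F_0$; its induced filtration has successive subquotients $E^\infty_{p,i-p}$ for $1\lqs p\lqs i$, all in $\CCC$, so $A_i'\in\CCC$ by iterated closure of $\CCC$ under extensions. The resulting short exact sequence $0\to B_i\to H_i(K(A,m);\ZZ)\to A_i'\to 0$ is the one required.
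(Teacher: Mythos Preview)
Your proposal is correct and follows essentially the same approach as the paper: both run the Serre spectral sequence of the fibration $K(B,m)\to K(A,m)\to K(A',m)$, identify $E^2_{0,q}$ as finitely generated and $E^2_{p,q}\in\CCC$ for $p\gqs 1$, pass to $E^\infty$, and set $B_i=F_0$, $A_i'=H_i/F_0$. You spell out in more detail (triviality of monodromy, the path--loop bootstrap for $H_p(K(A',m);\ZZ)\in\CCC$, and the universal coefficient splitting) what the paper compresses into a single appeal to \lemref{wicked_serre} and standard Serre-class theory.
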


\begin{proof}
We consider the Serre homology spectral sequence of the fibration $K(B,m)\to K(A,m)\to K(A',m)$.
By \lemref{wicked_serre}, for the $E^2$-term \[ E^2_{p,q}=H_p\big(K(A',m);H_q(K(B,m))\big) \] we have
$E^2_{0,q}$ a finitely generated group and $E^2_{p,q}\in\CCC$ for $p\gqs 1$. Since finitely generated
groups and $\CCC$ are Serre classes, also $E^{\infty}_{0,q}$ is finitely generated, and $E^{\infty}_{p,q}\in\CCC$
for $p\gqs 1$. This implies that for each $i$, the group $H_i(K(A,m))$ has a filtration
$F_0\lqs F_1\lqs\dots\lqs F_n=H_i(K(A,m))$ with $F_0$ finitely generated, and $F_j/F_{j-1}\in\CCC$ for positive $j$.
Set $B_i=F_0$ and suppose that for some $j$, the group $F_{j-1}/B_i$ belongs to $\CCC$. Then
also $B_i\lqs F_j$, and $F_j/B_i$ is given by the extension
\[	0\to F_{j-1}/B_i\to F_j/B_i\to F_j/F_{j-1}\to 0.	\]
By inductive hypothesis, $F_{j-1}/B_i\in\CCC$, and since $F_j/F_{j-1}\in\CCC$, also $F_j/B_i\in\CCC$, and
our assertion follows.
\end{proof}

\begin{proof}[Proof of \thmref{eilenberg_eilenberg}]
By \thmref{thom_enhanced} the case $m\gqs n$ is already contained in \thmref{moore_eilenberg}. So
we assume that $m<n$.

Assume that $G$ is a $P$-bounded group (with $G*\ZZ_p\neq 0$ for all $p\in P$) and that $A$ is given by
an extension $0\to B\to A\to A'\to 0$ where $A'$ is $P$-divisible and $T(A')_{(P)}$ is the sum of finitely
many quasicyclic groups. Let $T'=T(A')$ and $\Phi=A'/T'$. The kernel $\Gamma$ of the composite $A\to A'\to\Phi$
is given by the extension $B\to\Gamma\to T'$.

Consider the fibration $K(\Gamma_{(P)},m)\to K(A_{(P)},m)\to K(\Phi_{(P)},m)$. Since $\Phi$ is $P$-divisible
and torsion-free, $\Phi_{(P)}$ is a rational group. Hence $K(\Phi_{(P)},m-1)$ is a rational space, and thus
$(K(G,n),*)^{(K(\Phi_{(P)},m-1),*)}\simeq(K(G,n+1),*)^{(SK(\Phi_{(P)},m-1),*)}$ is contractible by \propref{f_l_kgn}.
Since $K(\Phi_{(P)},m-1)$ is the homotopy fibre of $K(\Gamma_{(P)},m)\to K(A_{(P)},m)$, the map
\[ K(G,n)^{K(A_{(P)},m)}\to K(G,n)^{K(\Gamma_{(P)},m)} \]
is a homotopy equivalence by \propref{z_prop}. By \thmref{localization},
$K(G,n)^{K(A_{(P)},m)}$ is homotopy equivalent to $K(G,n)^{K(A,m)}$, so
it suffices to prove that the space $K(G,n)^{K(\Gamma_{(P)},m)}\simeq K(G,n)^{K(\Gamma,m)}$ has CW homotopy type.

By \lemref{eilenberg_homology}, for each $i$ the homology group $H_i=\H_i(K(\Gamma,m))$ is the extension
of a finitely generated group by a torsion group whose $P$-part is isomorphic with the direct sum of
finitely many cocyclic groups, say $0\to B_i\to H_i\to T_i\to 0$. Set $Y=K(G,n)$. Since $B_i$ is finitely generated,
$(Y,*)^{(M(H_i,i),*)}$ has CW type if and only if $(Y,*)^{(M(T_i,i),*)}$ has, by \lemref{subgroup}. We split
\[ T_i=T_1\oplus T_2\oplus T_3 \] where $T_1=T_i{}_{(\PP\setminus P)}$, $T_2$ is the direct sum of finitely many
quasicyclic groups and $T_3$ is a finite $P$-group. By \corref{wedge_splitting} the space $(Y,*)^{(M(T_i,i),*)}$ has CW
type if and only if the $(Y,*)^{(M(T_j,i),*)}$, for $1\lqs j\lqs 3$, have.
The space $(Y,*)^{(M(T_1,i),*)}$ is contractible by \propref{disjoint_local}, and $(Y,*)^{(M(T_3,i),*)}$ has CW
type by Milnor's theorem. Furthermore, $(Y,*)^{(M(T_2,i),*)}$ has CW type by \propref{fundamental_special_case}.

Thus $(K(G,n),*)^{(K(\Gamma,m),*)}$ has CW type by \thmref{thom_enhanced}, and consequently $K(G,n)^{K(A,m)}$
has CW type, as claimed.

Assume the notation of the statement of \thmref{moore_eilenberg}. 
Our second case is that $A'$ is torsion with $A'_{(R)}$ the sum of finitely many cocyclic groups,
$A'_{(S\cup P\setminus R)}$ is a finite group, $T(G)_{(R)}$ is bounded, and $F_{(R)}$ is rational.

By \lemref{eilenberg_homology} every $H_i=\H_i(K(A,m))$ is given by an exact sequence
$0\to B_i\to H_i\to A_i'\to 0$ where $B_i$ is finitely generated, and $A_i'$ is torsion such that
$A_i'{}_{(R)}$ is isomorphic with a direct sum of finitely many cocyclic groups, and $A_i'{}_{(S\cup P\setminus R)}$
is finite.

Since $B_i$ is finitely generated, $(Y,*)^{(M(H_i,i),*)}$ has CW homotopy type if and only if $(Y,*)^{(M(A_i',i),*)}$
has. Split \[ A_i'\cong A_1'\oplus A_2'\oplus A_3' \]
where $A_1'$ is a $\PP\setminus(S\cup P)$-group, $A_2'$ is isomorphic with a direct sum of finitely many
$R$-quasicyclic groups, and $A_3'$ is finite. Then $(Y,*)^{(M(A_1',i),*)}$ and $(Y,*)^{(M(A_2',i),*)}$ have
CW type by \corref{c_moore_eilenberg}, and $(Y,*)^{(M(A_3',i),*)}$ has CW type by Milnor's theorem.
Consequently $(Y,*)^{(M(A_i',i),*)}$ has CW type by \corref{wedge_splitting}, hence so has $(Y,*)^{(M(H_i,i),*)}$,
and hence so has $(Y,*)^{(K(A,m),*)}$, by \thmref{thom_enhanced}.
\end{proof}


\section{Spaces of maps into $K(G,1)$}\label{aspherical_target}

\begin{prop}
Let $X$ be a connected CW complex with base point $x_0$. Pick a base point $y_0$ in $K(G,1)$.
Then $(K(G,1),y_0)^{(X,x_0)}$ has CW homotopy type if and only if for each $g\colon(X,x_0)\to(K(G,1),*)$
there exists a finite subcomplex $L$ of $X$ (containing $x_0$) such that the induced function
\begin{equation*}\tag{$*$}	\Hom(\pi_1(X,x_0),G)\to\Hom(\pi_1(L,x_0),G)	\end{equation*}
only sends the class $[g]$ to $[g\vert_L]$.
\end{prop}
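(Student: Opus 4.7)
The plan is to verify the hypothesis of \thmref{v} for the $1$-Postnikov space $Y = K(G,1)$. Set $Z = (Y, y_0)^{(X, x_0)}$. As a preliminary, I will show that for any pointed connected CW complex $(W, w_0)$ and any based map $h\colon(W, w_0) \to (Y, y_0)$, one has $\pi_k((Y, y_0)^{(W, w_0)}, h) = 0$ for all $k \gqs 1$. The long exact sequence of the evaluation fibration $(Y, y_0)^{(W, w_0)} \to Y^W \to Y$, together with $\pi_kY = 0$ for $k \gqs 2$, reduces this for $k \gqs 2$ to $\pi_k(Y^W, h) = 0$; by the universal property of $K(G,1)$, homotopy classes of maps $S^k \times W \to Y$ extending $h$ on $\setof{*} \times W$ correspond to extensions of $h_*$ to $\pi_1(S^k \times W) = \pi_1 W$, of which there is a unique one. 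For $k = 1$ the analogous identification gives $\pi_1(Y^W, h) = C_G(h_*(\pi_1 W))$ with the boundary to $\pi_1 Y = G$ equal to its inclusion, whence $\pi_1((Y,*)^{(W,*)}, h) = 0$. In particular each path component of $(Y,*)^{(W,*)}$ is weakly contractible.

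Next, denote by $F_L$ the fibre of the restriction $Z \to Z(L)$ over $g\vert_L$. The previous computation together with the long exact sequence of this fibration forces $\pi_k(F_L, g) = 0$ for $k \gqs 1$, while $\pi_1(Z(L), g\vert_L) = 0$ combined with homotopy extension along the cofibration $L \hookrightarrow X$ yields a canonical bijection $\pi_0(F_L) \cong \setof{[h] \in [X, Y]_* : [h\vert_L] = [g\vert_L]}$ with base point $[g]$. Under the isomorphisms $[X, Y]_* = \Hom(\pi_1 X, G)$ and $[L, Y]_* = \Hom(\pi_1 L, G)$, this set is exactly the preimage of $[g\vert_L]$ appearing in the statement.

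For necessity, assume $Z$ has CW type. Its path components are then open, and \thmref{u_M-L_general} produces, for each $g$, a finite subcomplex $L$ such that the preimage of $C_g(L) \subset Z(L)$ under $Z \to Z(L)$ equals $C_g$. Since $Z(L)$ has CW type by Milnor's theorem and its components are weakly contractible, $C_g(L)$ coincides with the full path component of $g\vert_L$ in $Z(L)$, and the preimage condition rewrites as $\setof{[h] \in [X,Y]_* : [h\vert_L] = [g\vert_L]} = \setof{[g]}$, which is the stated condition.

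For sufficiency, given the condition choose $L_g$ as prescribed. For any finite $L \subset X$ set $L' = L \cup L_g$; if $h \in F_{L'}$ then $h\vert_{L_g} = g\vert_{L_g}$ forces $[h] = [g]$ by hypothesis, so $\pi_0(F_{L'}) \to \pi_0(F_L)$ is trivial, while triviality on $\pi_k$ for $k \gqs 1$ is automatic. Hence \thmref{v} applies to $C_g$, which is therefore open and has CW type; since this holds for every $g$, the function space $Z$ is a disjoint union of open path components each of CW type, and so $Z$ has CW type. The main obstacle in this plan is the preliminary computation when $G$ is non-abelian: then $K(G,1)$ is not an H-space, so translation of the loop space computation from the constant component by multiplication is unavailable, and the vanishing of $\pi_1((Y,*)^{(W,*)}, h)$ at an arbitrary $h$ must be extracted directly via the centralizer identification of $\pi_1(Y^W, h)$ and its inclusion into $G$.
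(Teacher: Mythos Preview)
Your argument is correct. The preliminary computation that $\pi_k\big((K(G,1),*)^{(W,*)},h\big)=0$ for all $k\gqs 1$ is exactly \lemref{aspherical_easy} of the paper (stated there without proof), and your centralizer identification for $\pi_1(Y^W,h)$ with the evaluation map as inclusion is the standard way to handle the non-abelian case. The identification $\pi_0(F_L)\cong\{[h]:[h|_L]=[g|_L]\}$ and the claim that $C_g(L)$ is the full path component of $g|_L$ both tacitly use the homotopy lifting property of $Z\to Z(L)$ (the image of a fibration is a union of path components), which is routine.

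The route, however, differs from the paper's. You feed the $\pi_0$ computation into \thmref{v}, the general characterization for Postnikov targets, and obtain ``open and of CW type'' in one stroke. The paper instead proves the stronger standalone \lemref{aspherical_main}: path components of $(K(G,1),*)^{(X,*)}$ are \emph{always} contractible, regardless of the hypothesis, via \lemref{restricted_trick} and \thmref{wicked} applied directly; the hypothesis of the proposition is then used only to make the components open. Both approaches ultimately rest on the same transfinite machinery (\thmref{wicked} is also invoked inside the proof of \thmref{v}), so neither is more elementary. Your route is more economical for this proposition, while the paper's yields the contractibility of components as an independent lemma.
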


\begin{proof}
The condition is necessary by \lemref{obstruction}.

For sufficiency, note that `injectivity' of ($*$) implies that the path component of $g$ in $(K(G,1),y_0)^{(X,x_0)}$
is open, and \lemref{aspherical_main} below concludes the proof.
\end{proof}

We note the easy
\begin{lem}\label{aspherical_easy}
Let $Y$ be an aspherical space of CW type with base point $y_0$ and let $X$ be a connected CW complex with base point $x_0$.
Then for any $g\colon(X,x_0)\to(Y,y_0)$ the path component of $g$ in $(Y,y_0)^{(X,x_0)}$ is weakly contractible. \qed
\end{lem}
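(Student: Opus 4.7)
The plan is to show $\pi_k(C,g)=0$ for every $k\gqs 1$, where $C$ denotes the path component of $g$ in $(Y,y_0)^{(X,x_0)}$; since $C$ is non-empty and path-connected, this is enough for weak contractibility. By the standard adjunction, an element of $\pi_k(C,g)$ is represented by a continuous map $\Phi\colon S^k\times X\to Y$ satisfying $\Phi|_{\{*\}\times X}=g$ and $\Phi|_{S^k\times\{x_0\}}\equiv y_0$, taken modulo homotopies rel $S^k\vee X$; the identity element corresponds to the ``constant family'' $\Phi_0(s,x)=g(x)$, so I must produce a homotopy $\Phi\simeq\Phi_0$ rel $S^k\vee X$.

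The first step is to lift everything to the universal cover $p\colon\widetilde Y\to Y$. The pointedness constraint $\Phi(s,x_0)\equiv y_0$ forces the loop $s\mapsto\Phi(s,x_0)$ to be trivial in $\pi:=\pi_1(Y,y_0)$, hence $\Phi_\#\colon\pi_1(S^k\times X)\to\pi$ factors through the projection onto $\pi_1(X,x_0)$ and coincides with $g_\#$. Letting $q\colon\widetilde X_g\to X$ be the covering classified by $\ker g_\#$ and fixing compatible basepoints $\widetilde x_0,\widetilde y_0$, both $g$ and $\Phi$ then admit unique based lifts $\widetilde g\colon\widetilde X_g\to\widetilde Y$ and $\widetilde\Phi\colon S^k\times\widetilde X_g\to\widetilde Y$. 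By discreteness of $p^{-1}(y_0)$ and connectedness of $S^k$, the map $\widetilde\Phi$ is identically $\widetilde y_0$ on $S^k\times\{\widetilde x_0\}$ and agrees with $\widetilde g$ on $\{*\}\times\widetilde X_g$; a short covering-space computation using uniqueness of lifts shows moreover that $\widetilde\Phi$ is equivariant for the deck action of $G:=\im(g_\#)\lqs\pi$ (acting on $\widetilde X_g$ by deck transformations and on $\widetilde Y$ by restriction from $\pi$). The analogous lift $\widetilde\Phi_0$ of $\Phi_0$ has exactly the same properties.

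It therefore suffices to produce a $G$-equivariant homotopy from $\widetilde\Phi$ to $\widetilde\Phi_0$ rel the $G$-subcomplex $(\{*\}\times\widetilde X_g)\cup(S^k\times G\!\cdot\!\widetilde x_0)$, since such a homotopy descends through the quotients to the desired homotopy $\Phi\simeq\Phi_0$ rel $S^k\vee X$. For this I observe that $\widetilde Y$ is a contractible free $G$-CW complex, hence a model for $EG$, so the associated Borel bundle $(S^k\times\widetilde X_g)\times_G\widetilde Y\to S^k\times X$ is a locally trivial fibration over a CW base with contractible fibre. Standard cell-by-cell obstruction theory (with all obstructions landing in $H^{*}(-;\pi_*(\widetilde Y))$, which vanishes in positive degrees) then shows that the space of sections of this bundle having prescribed values on the subcomplex $(\{*\}\times X)\cup(S^k\times\{x_0\})$ is weakly contractible. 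Since $G$-equivariant maps $S^k\times\widetilde X_g\to\widetilde Y$ correspond bijectively (via the canonical section associated to an equivariant map) to sections of this Borel bundle, the lifts $\widetilde\Phi$ and $\widetilde\Phi_0$ lie in the same path component of this relative section space, and the argument is complete. The main technical obstacle I foresee is the careful bookkeeping around the equivariant lift and the identification of equivariant maps with sections of the Borel bundle in the base-pointed setting; once this machinery is assembled, the contractibility of $\widetilde Y$ makes all the relevant obstruction groups vanish automatically.
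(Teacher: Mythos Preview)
Your proof is correct, but it is considerably more elaborate than what the paper intends. The paper gives no proof at all (note the bare \qed and the phrase ``We note the easy'' introducing the lemma), signaling that the result follows in one line from material already in place. The quickest argument using the paper's own tools: since $Y$ is a $1$-Postnikov section, \propref{postnikov_adjunction} gives that the restriction $(Y,y_0)^{(X,x_0)}\to(Y,y_0)^{(X^{(1)},x_0)}$ is a homotopy equivalence onto its image; taking $x_0$ to be the unique $0$-cell, $X^{(1)}$ is a wedge of circles, so the target is a product of copies of $\Omega(Y,y_0)$, and $\Omega K(G,1)$ has the homotopy type of the discrete set $G$. Hence every path component of the target---and therefore of $(Y,y_0)^{(X,x_0)}$---has trivial higher homotopy.

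Your route through covering spaces, $G$-equivariant lifts, and sections of the associated Borel bundle is valid and has the virtue of being self-contained (it does not invoke the paper's earlier results on restriction fibrations). But it is reconstructing by hand, in a rather heavy way, the standard fact that obstruction theory into a $K(\pi,1)$ is trivial above dimension~$1$: two maps from a CW complex into an aspherical $Y$ that agree on a subcomplex whose inclusion is $\pi_1$-surjective are automatically homotopic rel that subcomplex, since all obstructions live in cohomology with coefficients in $\pi_{n}(Y)=0$ for $n\geq 2$. The passage to the Borel bundle is a correct but roundabout way of saying the same thing.
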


\begin{lem}\label{aspherical_main}
Let $(X,x_0)$ be a based connected CW complex and let $Y=K(G,1)$. The space $(Y,y_0)^{(X,x_0)}$ has
contractible path components, for any $y_0\in Y$.
\end{lem}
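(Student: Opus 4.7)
The plan is to combine \lemref{aspherical_easy} (weak contractibility) with \thmref{postnikov_sequence} (to get CW type when $X$ is countable) and with \thmref{wicked} (to bootstrap to general $X$). By \lemref{aspherical_easy} each path component $C$ of $g\in(Y,y_0)^{(X,x_0)}$ is weakly contractible, so by Whitehead's theorem it suffices to show that $C$ has CW homotopy type.

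First I would treat $X$ countable. Fix a filtration $L_1\leq L_2\leq\cdots$ of $X$ by finite connected subcomplexes containing $x_0$, and consider the restricted inverse sequence of fibrations $Y_i:=(Y,y_0)^{(L_i,x_0)}$ with restriction bonds and coherent base points $\eta_i:=g\vert_{L_i}$, whose limit is $(Y,y_0)^{(X,x_0)}$. Each $Y_i$ has CW homotopy type by \thmref{milnor_theorem}. The crucial point is that $L_i$ is itself a based connected CW complex, so \lemref{aspherical_easy} applied with $L_i$ in place of $X$ shows that every path component of $Y_i$ is weakly contractible; in particular $\pi_k(Y_i,\eta_i)=0$ for every $k\geq 1$ and every $i$. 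This provides the bounded-dimension hypothesis of \thmref{postnikov_sequence} with $N=0$, and the vanishing sequences $\setof{\pi_k(Y_i,\eta_i)\,\vert\,i}$ are vacuously injectively Mittag-Leffler and countable. Part (ii) of \thmref{postnikov_sequence} then applies and yields that the path component of $g$ in the limit space, i.e.\ $C$, has CW homotopy type.

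For general $X$ I would bootstrap via \thmref{wicked}, taking $\Gamma:=C$ and $\LLL$ to be the set of subcomplexes $L\leq X$ for which $C_L$ (the image of $C$ under $(Y,y_0)^{(X,x_0)}\to(Y,y_0)^{(L,x_0)}$) has CW homotopy type and the map $C\to C_L$ is a weak equivalence. Homotopy-lifting in the restriction fibration \lemref{restriction_map} shows that $C_L$ is exactly the path component of $g\vert_L$ in $(Y,y_0)^{(L,x_0)}$, and hence is weakly contractible by \lemref{aspherical_easy}; together with weak contractibility of $C$ this automatically makes $C\to C_L$ a weak equivalence between weakly contractible spaces. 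Since the countable case just handled shows $\LLL$ contains every countable subcomplex, the cofinality hypothesis of \thmref{wicked} is met and its conclusion gives $C$ of CW homotopy type.

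The main technical subtlety will be the nondegeneracy of the base points $\eta_i=g\vert_{L_i}$ required in \thmref{postnikov_sequence}, which is not automatic in function spaces. This is handled by the paper's standard reduction at the end of the Conventions and basic tools section: each $Y_i$ may be replaced by a homotopy equivalent model with nondegenerate base point, without disturbing either the inverse-limit structure or the vanishing of the higher homotopy groups used above, so the application of \thmref{postnikov_sequence} goes through unchanged.
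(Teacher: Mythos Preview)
Your approach is correct and follows the same overall architecture as the paper: handle the countable case first, then bootstrap via \thmref{wicked}. The difference lies in the countable step. The paper observes directly that for finite $K_i$ the images $C_{K_i}$ are contractible (by \lemref{aspherical_easy} plus Milnor plus Whitehead), so the bonds $C_{K_{i+1}}\to C_{K_i}$ are homotopy equivalences, and then invokes the elementary \lemref{restricted_trick} to conclude that the limit $C_{K_\infty}$ is contractible. You instead route through the heavier \thmref{postnikov_sequence} with $N=0$; this works, but note that with $N=0$ the inductive application of \propref{whitehead_tower_sequence} in that proof is vacuous and what remains is precisely the observation that the $C_i$ are contractible and \propref{contractible_limit} applies---so you are essentially recovering the paper's argument by a longer path. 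One dividend of the paper's route is that the nondegenerate base point issue never arises: \lemref{restricted_trick} has no such hypothesis. Your handling of that hypothesis (``replace each $Y_i$ by a homotopy equivalent model without disturbing the inverse-limit structure'') is not quite legitimate as stated, but since the hypothesis is only used in \propref{whitehead_tower_sequence} for $k=1$, and no such application occurs when $N=0$, the point is moot.

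One small gap to tidy: your countable case assumes the filtration is by \emph{connected} finite subcomplexes, but in the bootstrap you claim $\LLL$ contains \emph{every} countable subcomplex. A disconnected countable $L$ is not directly covered. The paper handles this by first reducing (as in the Conventions section) to $X$ with a single $0$-cell $x_0$, so that every subcomplex containing $x_0$ is automatically connected; you should make the same reduction, or else note that connected countable subcomplexes are cofinal, which is all \thmref{wicked} requires.
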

\begin{proof}
With no loss of generality assume that $X$ has a single $0$-cell which is $x_0$. Let $g\colon(X,x_0)\to(Y,y_0)$ be a map,
and let $C$ denote the path component of $g$ in $(Y,y_0)^{(X,x_0)}$.
Let $K_\infty$ be any countable subcomplex of $X$ and let $K_1\lqs K_2\lqs\dots$ be a filtration of finite
subcomplexes for $K_\infty$. For $L\lqs X$ let $C_L$ denote the image of $C$ under $(Y,y_0)^{(X,x_0)}\to(Y,y_0)^{(L,x_0)}$.

By \lemref{aspherical_easy} and Milnor's theorem the spaces $C_{K_i}$ are contractible. By \navedi{1} of
\lemref{restricted_trick} so is $C_{K_\infty}=\lim_iC_{K_i}$. Then by \thmref{wicked} also $C$ is contractible,
as claimed.
\end{proof}


\section{CW complexes $X$ such that $Y^X$ has CW type for `all' $Y$}\label{for_all_Y}

We close this chapter by showing that Milnor's theorem cannot be improved in the sense
that if for a space $X$ the function space $Y^X$ has CW homotopy type for {\it all} CW
complexes $Y$, then $X$ is `almost finitely dominated'. We begin with a few preliminary results.

\begin{lem}\label{character_group}
Let $A$ be an arbitrary abelian group.
\begin{enroman}
\item	The torsion-free rank of $\Hom(A,\ZZ)$ is at least $d$ if and only if $A$ admits a direct summand
	which is a free group of rank $d$.
\item	If the torsion-free rank of $\Hom(A,\ZZ)$ is exactly $d$ then there exists a decomposition
	$A\cong\ZZ^d\oplus A'$ with $\Hom(A',\ZZ)=0$.\qed
\end{enroman}
\end{lem}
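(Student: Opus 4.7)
The proof plan is to treat part (i) via the observation that free abelian groups are projective, and then deduce part (ii) by a direct computation of $\Hom$ on the resulting splitting.

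For the easy direction of (i), if $A\cong\ZZ^d\oplus A'$ then $\Hom(A,\ZZ)\cong\ZZ^d\oplus\Hom(A',\ZZ)$, and the first summand already contributes $d$ to the torsion-free rank. For the nontrivial direction, I would choose $\ZZ$-linearly independent homomorphisms $\varphi_1,\dots,\varphi_d\colon A\to\ZZ$ (which exist by hypothesis on the torsion-free rank), and assemble them into $\Phi=(\varphi_1,\dots,\varphi_d)\colon A\to\ZZ^d$. The image $\Phi(A)$ is a subgroup of $\ZZ^d$, hence a free abelian group of some rank $r\lqs d$. The key step is to argue $r=d$: if $r<d$ then tensoring with $\QQ$ would yield a $\QQ$-linear relation among the $\varphi_i$ (since $\Phi(A)\otimes\QQ$ would have dimension $r<d$), and clearing denominators gives a nontrivial $\ZZ$-linear relation among the $\varphi_i$, contradicting independence. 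Thus $\Phi(A)\cong\ZZ^d$, and the induced surjection $A\twoheadrightarrow\Phi(A)$ splits because $\Phi(A)$ is free (hence projective), yielding $A=\ker\Phi\oplus B$ with $B\cong\ZZ^d$.

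Part (ii) follows formally from (i): by that part we may write $A\cong\ZZ^d\oplus A'$, and then
\[ \Hom(A,\ZZ)\cong\ZZ^d\oplus\Hom(A',\ZZ). \]
The torsion-free rank of the left-hand side is $d$, so the torsion-free rank of $\Hom(A',\ZZ)$ must be $0$. But $\Hom(A',\ZZ)$ is always torsion-free (if $n\cdot f=0$ with $n>0$, then $nf(a)=0$ in $\ZZ$ forces $f(a)=0$ for every $a$), and a torsion-free group of rank $0$ is trivial. Hence $\Hom(A',\ZZ)=0$.

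The only nonobvious point is the rank computation $r=d$ in part (i); everything else is bookkeeping involving projectivity of free groups and the fact that $\Hom(\_,\ZZ)$ lands in torsion-free groups. I expect no real obstacle beyond phrasing that rank argument cleanly.
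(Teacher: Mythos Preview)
Your argument is correct. The paper does not actually prove this lemma; it simply cites Fuchs \cite{fuchs1}, \S 43, Exercise~10, so there is no detailed proof to compare against. Your direct approach---assembling $d$ independent homomorphisms into a map $\Phi\colon A\to\ZZ^d$, using a rank/dimension argument over $\QQ$ to show $\Phi$ is surjective onto a free group of rank $d$, then splitting via projectivity---is the standard way to carry out that exercise, and your deduction of (ii) from (i) using torsion-freeness of $\Hom(-,\ZZ)$ is clean. In effect you have supplied the proof the paper omits.
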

\begin{proof}
See Fuchs \cite{fuchs1}, \S 43, Exercise 10.
\end{proof}

\begin{prop}\label{target_kzn}
Let $X$ be a connected CW complex. Then $K(\ZZ,n)^X$ has CW homotopy type if and only
if the groups $H_i(X)$ are finitely generated for $1\lqs i\lqs n-1$, and there exists a decomposition
$H_n(X)\cong\ZZ^d\oplus H'$ with $\Hom(H',\ZZ)=0$.
\end{prop}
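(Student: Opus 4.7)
The plan is to reduce to the machinery already developed and then apply \thmref{moore_eilenberg} and \lemref{character_group} factor by factor. First I would reduce the free mapping space $K(\ZZ,n)^X$ to the pointed situation: by \corref{cor_stasheff_theorem} and path-connectedness of $K(\ZZ,n)$, it suffices to analyse $\bigl(K(\ZZ,n),y_0\bigr)^{(X,x_0)}$. To apply \thmref{thom_enhanced} I suspend once, identifying this space with $\bigl(K(\ZZ,n+1),*\bigr)^{(SX,*)}$ via the loop/suspension adjunction; now $SX$ is simply connected, with $H_j(SX)\cong H_{j-1}(X)$ for $j\gqs 2$ and $H_1(SX)=0$. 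By \thmref{thom_enhanced} the total space has CW type if and only if each factor
\[
\bigl(K(\ZZ,n+1),*\bigr)^{(M(H_k(X),k+1),*)},\qquad 1\lqs k\lqs n,
\]
has CW type.

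Next I would analyse each factor with \thmref{moore_eilenberg}, using $G=\ZZ$, $m=k+1$, $n'=n+1$, $A=H_k(X)$. Here $T(G)=0$ and $F=\ZZ$, so the invariants of the theorem are $P=\emptyset$ (since $\ZZ*\ZZ_p=0$) and $S=\PP$ (since $\QQ/\ZZ$ is nontrivial at every prime). In the range $k<n$ we are in case (iii): $\ZZ$ is not $P$-bounded for $P=\emptyset$, so the alternative must hold, and since $R=\emptyset$ the only genuine constraint that survives is that $A'_{(S)}=A'$ be finite, while $B$ is finitely generated. Combined with the short exact sequence $0\to B\to H_k(X)\to A'\to 0$, this is equivalent to $H_k(X)$ being finitely generated (take $A'=0$ for the converse). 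In the boundary case $k=n$ we are in case (ii), which reads: there exists a finitely generated subgroup $B\lqs H_n(X)$ with $\Hom(H_n(X)/B,\ZZ)=0$.

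Finally I would translate this last condition into the stated splitting by \lemref{character_group}. The implication $(\Leftarrow)$ is immediate: given $H_n(X)\cong\ZZ^d\oplus H'$ with $\Hom(H',\ZZ)=0$, take $B=\ZZ^d$. For $(\Rightarrow)$, apply $\Hom(-,\ZZ)$ to $0\to B\to H_n(X)\to H_n(X)/B\to 0$; vanishing of $\Hom(H_n(X)/B,\ZZ)$ embeds $\Hom(H_n(X),\ZZ)$ into $\Hom(B,\ZZ)$, which is a finitely generated free abelian group. Hence $\Hom(H_n(X),\ZZ)$ is free of some finite rank $d$, and part (ii) of \lemref{character_group} supplies the splitting $H_n(X)\cong\ZZ^d\oplus H'$ with $\Hom(H',\ZZ)=0$.

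The main obstacle is purely bookkeeping: correctly evaluating the invariants $P,R,S$ of \thmref{moore_eilenberg} for the target $G=\ZZ$ and checking that in the sub-critical range $k<n$ only the finite-generation conclusion survives, while at the critical dimension $k=n$ one recovers exactly the $\Hom$-condition that \lemref{character_group} characterises as the asserted free summand. No further calculation beyond these steps is needed.
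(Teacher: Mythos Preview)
Your proof is correct and follows essentially the same route as the paper: reduce to pointed maps, invoke \thmref{thom_enhanced} to split into Moore-space factors, analyse each with \thmref{moore_eilenberg} for $G=\ZZ$, and finish with \lemref{character_group}. The only cosmetic difference is that your suspension step is already built into \thmref{thom_enhanced} (which is stated for connected, not just simply connected, $X$), so you could apply it directly and skip that manoeuvre; likewise the paper cites Milnor's theorem for the sufficiency of finitely generated $H_i$, whereas you extract this from \thmref{moore_eilenberg}, which is equally valid.
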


\begin{proof}
By \thmref{thom_enhanced} and \thmref{moore_eilenberg} combined with Milnor's theorem
the sufficiency part is clear, and so is the necessity part except for the splitting of $H_n(X)$.

If $K(\ZZ,n)^X$ has CW type, then by \thmref{moore_eilenberg} there exists a finitely generated
subgroup $B$ of $H_n(X)$ such that $\Hom(H_n(X),\ZZ)\to\Hom(B,\ZZ)$ is injective. Since $\Hom(B,\ZZ)$
is free of finite rank, the desired splitting follows from \lemref{character_group}.
\end{proof}

Recall that a CW complex $X$ is called quasifinite (see Mislin \cite{mislin2}) if $\oplus_nH_n(X;\ZZ)$
is a finitely generated abelian group.

\begin{thm}\label{milnor_forever}
\begin{abc}
\item	Let $X$ be a connected CW complex. Then the following are equivalent.
	\begin{enumerate}
		\item	For each $i\gqs 1$, the group $H_i(X)$ is finitely generated.
		\item	For each nilpotent CW complex $Y$ with finitely many nontrivial homotopy groups,
			the space $Y^X$ has CW homotopy type.
		\item	For each $n\gqs 2$ the path component $C$ of the constant map in $K(\ZZ,n)^X$ has CW
			homotopy type.
	\end{enumerate}
\item	Let $X$ be a connected CW complex and let $n\gqs 2$. Then the following are equivalent.
	\begin{enumerate}
		\item	For each $1\lqs i\lqs n$, the group $H_i(X)$ is finitely generated.
		\item	For each nilpotent CW complex $Y$ with $\pi_k(Y)=0$ for $k\gqs n+1$
			the space $Y^X$ has CW homotopy type.
		\item	For each abelian group $G$ the space $K(G,n)^X$ has CW
			homotopy type.
	\end{enumerate}
\item	Let $X$ be a nilpotent connected CW complex. Then the following are equivalent.
	\begin{enumerate}
		\item	The space $Y^X$ has CW homotopy type for every CW complex $Y$.
		\item	The path component $C$ of the constant map in $Y^X$ has CW homotopy type for every simply connected
			CW complex $Y$ of finite type.
		\item	$X$ is dominated by a finite CW complex.
	\end{enumerate}
\item	Let $X$ be any connected CW complex. Then the following are equivalent.
	\begin{enumerate}
		\item	For every CW complex $Y$ the path component $C$ of the constant map in $Y^X$
			is open and has CW homotopy type.
		\item	There exists a finite subcomplex $L$ of $X$ such that the quotient $X/L$
			is homotopy equivalent to a finite CW complex.
	\end{enumerate}
\item	Let $X$ be a connected CW complex and $n\gqs 2$. Then $X$ is homotopy equivalent to a complex with
	finite $n$-skeleton if and only if $\pi_1(X)$ is finitely presented and $Y^X$ has CW type for all
	CW complexes $Y$ with $\pi_k(Y)=0$ for $k\gqs n+1$.
\item	Let $X$ be a connected CW complex. Then $X$ is finitely dominated if and only if $\pi_1(X)$
	is finitely presented and $Y^X$ has CW type for all CW complexes $Y$.
\end{abc}
\end{thm}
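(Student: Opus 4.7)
For the forward direction, I would use that if $X$ is dominated by a finite complex $K$, then $\pi_1(X)$ is a retract of the finitely presented group $\pi_1(K)$ and is therefore itself finitely presented, while $Y^X$ is dominated by $Y^K$ for any CW $Y$ and so has CW type by \corref{cor_milnor_theorem}. This direction is essentially free.

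For the converse, assume $\pi_1(X)$ is finitely presented and $Y^X$ has CW type for every CW complex $Y$. The first step is to apply part (e) of the theorem: for each $n \geq 2$, its hypotheses are satisfied (since $Y^X$ is CW in particular for every $Y$ with $\pi_k(Y) = 0$ for $k \geq n+1$), hence $X$ has the homotopy type of a CW complex with finite $n$-skeleton. Assembling compatible skeleta by induction on $n$ produces a CW complex $X'$ of homotopy finite type (finite skeleton in every dimension) with $X' \simeq X$.

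The second step invokes Wall's finiteness theorem: a connected CW complex with finitely presented $\pi_1$ and of homotopy finite type is finitely dominated if and only if the cellular chain complex $C_*(\widetilde X)$ of its universal cover is $\ZZ[\pi_1(X)]$-chain homotopy equivalent to a bounded complex of finitely generated projective modules, equivalently $X$ has finite cohomological dimension over $\ZZ[\pi_1(X)]$. Having already secured the first two conditions, the entire problem reduces to extracting this finite cohomological dimension from the function-space hypothesis.

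This last step is the main obstacle. The proposed strategy is by contraposition. Suppose $C_*(\widetilde X)$ is not chain equivalent over $\ZZ[\pi_1(X)]$ to a bounded projective complex; then there exist arbitrarily large integers $n$ together with $\ZZ[\pi_1(X)]$-modules $M_n$ carrying nontrivial twisted cohomology classes in $H^n(X;M_n)$. The aim is to package these obstructions into a \emph{single} CW target $Y$ \--- built as a (possibly twisted) Postnikov tower whose successive stages are governed by these classes \--- so that the restriction system $\{\pi_k((Y,*)^{(K,*)}) \,\vert\, K \text{ finite} \lqs X\}$ violates the uniform Mittag-Leffler condition of \thmref{u_M-L_general}, forcing $(Y,*)^{(X,*)}$ not to have CW type and contradicting the hypothesis. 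The delicate technical point, and the hard part of the proof, is producing one such $Y$ with correct $\pi_1(X)$-action on its homotopy groups that simultaneously witnesses every unbounded-dimension obstruction, rather than a family of targets that only witnesses them in the limit.
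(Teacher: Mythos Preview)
Your forward direction is fine, and invoking part (e) for each $n$ is a legitimate first step, though it turns out to be unnecessary in the paper's route.

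The gap is in your ``hard part.'' You correctly identify that everything hinges on packaging infinitely many obstructions into a \emph{single} target $Y$, but you propose to do this by an ad hoc twisted Postnikov tower and leave the construction unspecified. The paper's solution to exactly this packaging problem is much simpler and you are missing it: take a \emph{product}. Concretely, \lemref{nasty} shows that if $Y^X$ has CW type for all $Y$, then for any \emph{set} $\{Y_\lambda\}$ of targets one can find a single finite $K\lqs X$ with $[X,Y_\lambda]_*\to[K,Y_\lambda]_*$ injective for every $\lambda$ simultaneously --- the trick is to apply the hypothesis to a CW approximation $W$ of $\prod_\lambda Y_\lambda$ and unpack.

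One then takes the $Y_\lambda$ to be Gitler's spaces $L_\pi(M,k)$, which represent cohomology with local coefficients, ranging over all $\pi_1(X)$-modules $M$ and all $k\gqs 2$. The injectivity conclusion translates into
\[
H^k(X;h,M)\hookrightarrow H^k(K;hi_\#,M)\quad\text{for all }k\gqs 2\text{ and all }(h,M).
\]
Since $K$ is finite, the right side vanishes for $k>\dim K$, so $X$ has finite cohomological dimension in the equivariant sense. Combined with the finitely presented $\pi_1$, Wall's Theorem F (or Mislin's formulation) then gives finite domination directly --- no need to first assemble finite skeleta via (e), and no need to build a bespoke Postnikov tower.

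So your outline has the right shape but stalls at precisely the point where the paper supplies the decisive and elementary idea: products of representing spaces, not towers.
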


\begin{proof}
\begin{abc}
\item 
Implication $\navedi{1}\implies\navedi{2}$ follows by \propref{finite_postnikov_decomposition},
\thmref{thom_enhanced} and Milnor's theorem.

Let $n\gqs 2$ and assume that the path component $C$ of the constant map in $(K(\ZZ,n),*)^{(X,*)}$
(if and only if the path component of the constant map in $K(\ZZ,n)^X$) has CW homotopy type.
By \corref{cohomology_obstruction} it follows that the cohomology groups $H^i(X;\ZZ)$ are finitely
generated for $i\lqs n-1$. Hence the homology groups $H_i(X)$ are finitely generated for $i\lqs n-2$.
If this holds for all $n\gqs 2$, it follows that the $H_i(X)$ are finitely generated for all $i$.
This shows $\navedi{3}\implies\navedi{1}$.

\item 
We need only show $\navedi{3}\implies\navedi{1}$. As above, by taking $G=\ZZ$ it follows
from \corref{cohomology_obstruction} that the $H_i(X)$ are finitely generated for $i\lqs n-1$.
Let $\BBB$ be the set of all finitely generated subgroups of $H_n(X)$, and define $G=\prod_{B\in\BBB}H_n(X)/B$.
By \corref{cohomology_obstruction} it follows that $H_n(X)=B$ for $B$ the image of $H_n(L)\to H_n(X)$ for some
finite subcomplex $L$ of $X$.

\item 
Assume that the path component $C$ of the constant map in $Y^X$ has CW homotopy type for every simply
connected CW complex $Y$ of finite type. By {\bf a.} it follows that $H_*(X)$ is of finite type. Let $Y$ be a simply
connected CW complex with $\pi_k(Y)\cong\ZZ$ for all $k\gqs 2$ and all $k$-invariants trivial. In other words, $Y$ is
weakly equivalent to the product $\prod_{k\gqs 2}K(\ZZ,k)$. By \lemref{obstruction} there exists a finite complex
$L$ such that for all $k\gqs 1$ the morphism \[ [S^kX,Y]_*\to[S^kL,Y]_* \] is injective. By choice of $Y$ this
morphism is the product morphism \[ \prod_{j\gqs k+1}\big(\H^{j-k}(X;\ZZ)\to\H^{j-k}(L;\ZZ)\big). \]
It follows that there exists a number $N$ with $H^i(X;\ZZ)=0$ for $i\gqs N$ and consequently $H_i(X)=0$ for $i\gqs N$.
Thus $X$ is quasifinite, and nilpotent by assumption. Hence it is finitely dominated
by Mislin \cite{mislin2}. This shows $\navedi{2}\implies\navedi{3}$.

\item 
Assume \navedi{1}. The proof of $\navedi{2}\implies\navedi{3}$ of {\bf c.} shows that $X$ is quasifinite.
Let $L$ be any finite subcomplex of $X$. Consider the fibration $Y^X\to Y^L$. Let $C$, respectively $C_L$,
denote the path component of the constant map in $Y^X$, respectively $Y^L$. Then $C\to C_L$ is a fibration,
and since $C$ has CW type, so has the fibre $F_L$ over the constant map. Note that $F_L=C\cap (Y,*)^{(X,L)}$ is
open in $(Y,*)^{(X,L)}$. The path component $C'$ of the constant map in $(Y,*)^{(X,L)}$ is a path component of $F_L$,
and since $F_L$ has CW type, $C'$ is open in $F_L$, and has CW type. Hence $C'$ is open in $(Y,*)^{(X,L)}$ which
is homeomorphic with $(Y,*)^{(X/L,*)}$.

By \lemref{nasty} there exists a finite subcomplex $K$ of $X$ such that the function
\[ [X,K(\pi_1(X/M,*),1)]_*\to[L,K(\pi_1(X/M,*),1)]_* \]
is injective for all finite subcomplexes $M$ and all finite subcomplexes $L$ containing $K$.
It follows that $\pi_1(X/L,*)$ is trivial for all finite $L$ containing $K$. Thus $X/L$ is simply connected,
and is quasifinite, hence is homotopy equivalent to a finite complex.

Conversely, assume that for some finite subcomplex $L$ of $X$ the quotient $X/L$ is homotopy equivalent
to a finite complex. Consider the fibration $Y^X\to Y^L$. By Milnor's theorem the fibre over the constant
map has CW homotopy type, hence by Stasheff's theorem the preimage $\Tilde C$ of the path component of the
constant map $C_L$ in $Y^L$ has CW type, and is open since $C_L$ is open in $Y^L$. Since $\Tilde C$ is
a union of path components of $Y^X$ which contains $C$, $C$ has CW type and is open in $\Tilde C$, and hence
in $Y^X$.

\item 
	Let $\pi$ be a group and let $M$ be a left $\pi$-module; that is we are given
	a morphism $h\colon\pi\to\Aut(M)$. Further let $k\gqs 2$. By Gitler \cite{gitler}
	there exists a (pointed) space $L_\pi(M,k)$ and a `fundamental class' $u\in H^k(L_\pi(M,k);h,M)$
	such that the assignment
	\begin{equation*}\tag{$*$} [X,L_\pi(M,k)]_\alpha\to H^k(X;h\alpha,M),\,\,\,[f]\mapsto f^*(u), \end{equation*}
	is a bijection. Here $[X,L_\pi(M,k)]_\alpha$ denotes those pointed classes $[f]$ for
	which $f_\#\colon\pi_1(X)\to\pi_1(L_\pi(M,k))=\pi$ realizes a given morphism $\alpha\colon\pi_1(X,x_0)\to\pi$.

	Let now $\pi=\pi_1(X,x_0)$. By our assumption and \lemref{nasty} there exists a finite subcomplex $K$ of $X$
	such that the restriction induced function \begin{equation*} \tag{\dag} [X,L_\pi(M,k)]_*\to[K,L_\pi(M,k)]_*
	\end{equation*} is an injection for all $k$ with $2\lqs k\lqs n$ and all $(h,M)$ belonging to a given set
	$\MMM$ of left $\pi$-modules.

	Denote the inclusion $i\colon K\to X$. Note that (\dag) maps the set $[X,L_\pi(M,k)]_1$
	to $[K,L_\pi(M,k)]_{i_\#}$. Hence the natural bijection ($*$) together with (\dag) show that
	\begin{equation*}\tag{\ddag}	H^k(X;h,M)\to H^k(K;hi_\#,M)		\end{equation*}
	is a monomorphism for $2\lqs k\lqs n$ and all $M\in\MMM$.

	Assume that $\MMM=\setof{(h_\lambda,M_\lambda)\,\vert\,\lambda\in\Lambda}$ is a system of left $\pi$-modules
	indexed by a directed set $\Lambda$ and let $(h_\infty,M_\infty)=\colim_\lambda(h_\lambda,M_\lambda)$.
	
	Consider the following commutative diagram.
	\begin{equation*}\begin{diagram}
		\node{\colim_\lambda H^k(X;h_\lambda,M_\lambda)} \arrow{e} \arrow{s} \node{H^k(X;h_\infty,M_\infty)}
			\arrow{s} \\
		\node{\colim_\lambda H^k(K;h_\lambda i_\#,M_\lambda)} \arrow{e} \node{H^k(K;h_\infty i_\#,M_\infty)}
	\end{diagram}\end{equation*}
	Since $K$ is a finite complex, the bottom horizontal arrow is an isomorphism by Theorem 1 of
	K.~Brown \cite{ken-brown}. Since $H^k(X;h_\lambda,M_\lambda)\to H^k(K;h_\lambda i_\#,M_\lambda)$
	is injective for all $\lambda$ so is the induced map of colimits. By commutativity also 
	\begin{equation*}
		\tag{$\star$}	\colim_\lambda H^k(X;h_\lambda,M_\lambda)\to H^k(X;h_\infty,M_\infty)
	\end{equation*}
	is injective. In particular if $M_\infty=0$, also $\colim_\lambda H^k(X;h_\lambda,M_\lambda)=0$,
	for $2\lqs k\lqs n$.

	Let $\Tilde X$ denote the universal cover of $X$ at $x_0$.
	If $\pi_1(X)$ is finitely generated, then we may assume $X$ to have a single $0$-cell $x_0$
	and finitely many $1$-cells. Thus $C_i(\Tilde X)$, for $i=0,1$, are finitely generated (free) $\ZZ\pi$-modules.
	By Theorem 2 of \cite{ken-brown} it follows that if $M_\infty=0$ then $\colim_\lambda H^k(X;h_\lambda,M_\lambda)=0$
	for $k=0,1$. Reapplying Theorem 2 of \cite{ken-brown} it follows that for any direct system
	$\setof{(h_\lambda,M_\lambda)\,\vert\,\lambda}$ with colimit $(h_\infty,M_\infty)$ the morphism ($\star$)
	is an isomorphism for $k<n$ and a monomorphism for $k=n$. Our claim follows by Wall \cite{wall}, Theorem A.
\item 
	Assume $Y^X$ has CW type for all CW complexes $Y$. Proceeding as above we may assume
	that, in addition, (\ddag) is a monomorphism for all $k\gqs 2$. Thus $X$ has finite cohomological
	dimension and is finitely dominated by \cite{wall}, Theorem F. (See also Mislin \cite{mislin}, Theorem 3.4).
\qed\end{abc}\nqed
\end{proof}

We say that a space $Z$ is $n$-coconnected if $\pi_k(Z)=0$ for all $k\gqs n+1$.

\begin{lem}\label{nasty}
Let $X$ be a connected CW complex such that $Y^X$ has CW homotopy type for every CW complex $Y$
(respectively every CW complex $Y$ with $\pi_k(Y)=0$ for $k\gqs n+1$). Then for any set
\[ \setof{Y_\lambda\,\vert\,\lambda\in\Lambda} \] of arbitrary CW complexes (respectively of $n$-coconnected
CW complexes) there exists a finite subcomplex $K$ of $X$ such that for any $L\gqs K$ and every $\lambda$,
the restriction induced function \[	[X,Y_\lambda]_*\to[L,Y_\lambda]_*		\] is injective.
\end{lem}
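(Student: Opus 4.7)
The strategy is to reduce the problem to a single CW test complex $Y$ that assembles all the $Y_\lambda$'s, exploit natural retractions to transfer injectivity to each $Y_\lambda$, and then obtain uniform injectivity via a Postnikov-tower argument invoking \propref{group_components}.

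I would first set $Y \coloneqq \bigvee_{\lambda \in \Lambda} Y_\lambda$ in case~(a), and in case~(b) let $Y$ be a CW representative of the $n$-th Postnikov section of this wedge (which is itself an $n$-coconnected CW complex). In either situation $Y$ satisfies the hypothesis's restriction, so $Y^X$, and hence $(Y,*)^{(X,*)}$ by \corref{cor_stasheff_theorem}, has CW homotopy type.

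Next, each $Y_\lambda$ is a pointed homotopy retract of $Y$: the wedge inclusion $i_\lambda\colon Y_\lambda\hookrightarrow\bigvee_\mu Y_\mu$ has the wedge projection $r_\lambda$ as a strict left inverse, and in case~(b) the projection factors through the Postnikov map $p\colon\bigvee_\mu Y_\mu\to Y$ to give $r_\lambda'\colon Y\to Y_\lambda$ precisely because $Y_\lambda$ is itself $n$-coconnected, so that $r_\lambda'\circ(p\circ i_\lambda)\simeq\mathrm{id}_{Y_\lambda}$. A standard diagram chase applied to the naturality square
\[\begin{diagram}
\node{[X,Y_\lambda]_*} \arrow{e} \arrow{s} \node{[X,Y]_*} \arrow{s} \node{[X,Y_\lambda]_*} \arrow{s} \arrow{w}\\
\node{[L,Y_\lambda]_*} \arrow{e} \node{[L,Y]_*} \node{[L,Y_\lambda]_*} \arrow{w}
\end{diagram}\]
(with horizontal composites the identity) then shows that pointed-set injectivity of $[X,Y]_*\to[L,Y]_*$ implies pointed-set injectivity of $[X,Y_\lambda]_*\to[L,Y_\lambda]_*$ for every $\lambda$ and every $L\geq K$.

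It remains to produce a single finite $K\subset X$ such that $[X,Y]_*\to[L,Y]_*$ is injective for $L\geq K$. \thmref{u_M-L_general} delivers this at each path component $C$ of $(Y,*)^{(X,*)}$ with a finite witness $K_C$, but $K_C$ a priori depends on $C$. To extract uniformity, I would climb the Postnikov tower of $Y$: each Eilenberg--MacLane stage $K(\pi_kY,k)$ is an H-space, so \propref{group_components} applied to each (valid since the hypothesis covers every $K(\pi_kY,k)$ as a test target) yields a finite $K_k$ making $[X,K(\pi_kY,k)]_*\to[L,K(\pi_kY,k)]_*$ injective. An obstruction-theoretic induction along the principal fibrations building $Y$, using the long exact pointed-set sequences of \lemref{obstruction} at each stage, then assembles these into the desired uniform finite $K$. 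In case~(b) the tower is finite, so $K\coloneqq\bigcup_{k\leq n}K_k$ suffices directly.

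The main obstacle I anticipate is case~(a), where $Y$ may have infinitely many nontrivial homotopy groups so that the Postnikov climb does not terminate at a finite stage; one must then argue that only finitely many stages actually contribute obstructions for maps from $X$. The natural route, in the spirit of the applications of this lemma in {\bf e.} and {\bf f.} of \thmref{milnor_forever}, is to exploit the strong hypothesis by directly applying \propref{group_components} to an auxiliary H-group target that detects the full Postnikov information of $Y$, and to bootstrap finiteness of the required $K$ from the cohomological consequences (via twisted coefficients $L_\pi(M,k)$) that this injectivity entails.
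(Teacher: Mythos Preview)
Your retract reduction is fine, and you correctly identify the crux: you need a single finite $K$ making $[X,Y]_*\to[L,Y]_*$ injective at \emph{every} class, whereas \thmref{u_M-L_general} only gives a finite $K_C$ depending on the component $C$. Your proposed fix via the Postnikov tower, however, has a genuine gap. Even in case (b) where the tower is finite, the ``obstruction-theoretic induction'' you invoke is not carried out and is delicate: from injectivity of $[X,Y_{i-1}]_*\to[L,Y_{i-1}]_*$ and of $H^i(X;\pi_i)\to H^i(L;\pi_i)$ you would need to deduce injectivity of $[X,Y_i]_*\to[L,Y_i]_*$, but the fibres of $[X,Y_i]_*\to[X,Y_{i-1}]_*$ are orbits of $H^i(X;\pi_i)$ with stabilizers (images of $\pi_1$ of the function space under the $k$-invariant) that vary with the basepoint and need not behave well under restriction to $L$. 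Nothing in \lemref{obstruction} handles this. In case (a) you admit the tower does not terminate and your final paragraph is not an argument.

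The paper sidesteps the uniformity problem with a different idea. Fix a single $Y$, choose a representative $g(C)$ of each class $C\in[X,Y]_*$, and set $Z=\prod_{C\in[X,Y]_*}Y$; take a CW approximation $W\to Z$. Since a product of $n$-coconnected spaces is $n$-coconnected, $W$ lies in the allowed class in both cases, so $(W,*)^{(X,*)}$ has CW type. The map $\Gamma\colon X\to Z$ with $C$-th coordinate $g(C)$ lifts to some $G\colon X\to W$, and CW type gives a finite $K$ with $[X,W]_*\to[L,W]_*$ injective \emph{at the single class} $[G]$. This is exactly what \thmref{u_M-L_general} provides without any dependence on components of $Y^X$. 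Now if $f,g\colon X\to Y$ have $f\vert_L\simeq g\vert_L$, one modifies $\Gamma$ in the $[g]$-th coordinate to get $\Phi$ with $\Phi\vert_L\simeq\Gamma\vert_L$, hence $\Phi\simeq\Gamma$, hence $f\simeq g$ by projecting. Finally, applying this with $Y$ the CW approximation of $\prod_\lambda Y_\lambda$ (not the wedge) handles the whole family at once; the product again preserves $n$-coconnectedness, so no Postnikov-section workaround is needed.
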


\begin{proof}
Let $Y$ be a connected CW complex. Pick a choice function $[X,Y]_*\to(Y,*)^{(X,*)}$, say $C\mapsto g(C)$.
Set \[ Z=\prod_{C\in[X,Y]_*}Y \] and let $\omega\colon W\to Z$ be a CW approximation for $Z$. By contracting
a path in $W$ if necessary we may assume that for some $w_0\in W$ we have $\omega(w_0)=z_0=\setof{y_0}$.
Note that $Z$ is path-connected, and since $(Z,z_0)^{(S^k,*)}\approx\prod_{C}(Y,y_0)^{(S^k,*)}$, also $Z$
is $n$-coconnected if $Y$ is. The same properties hold for $W$, and thus by assumption, 
$(W,w_0)^{(X,x_0)}$ has CW homotopy type.

Note that $\omega$ induces a natural equivalence of functors $\omega_\#\colon[\_,W]_*\to[\_,Z]_*$.

Define $\Gamma\colon X\to Z$ by $\Gamma=\setof{\Gamma_C}_C$ where $\Gamma_C=g(C)$,
and let $G\colon X\to W$ be a representative for $\omega_\#^{-1}[\Gamma]$.

There exists a finite subcomplex $K$ of $X$ such that for any $L$ containing $K$ the restriction induced
function $[X,W]_*\to[L,W]_*$ is `injective at $G$', that is, if for some function $F\colon X\to W$ the
restriction $F\vert_L$ is homotopic to $G\vert_L$, then $F$ and $G$ are homotopic.

In view of the natural equivalence the function $[X,Z]_*\to[L,Z]_*$ is `injective at $\Gamma$'.
Assume that for $f,g\colon X\to Y$ the restrictions $f\vert_L\colon L\to Y$ and $g\vert_L\colon L\to Y$ are homotopic.
With no loss of generality we may assume $g=g(C_0)$ for some $C_0$. We define $\Phi=\setof{\Phi_C}_C\colon X\to Z$ by
\[ \Phi_C=g(C)\text{ if }C\neq C_0\text{ and }\Phi_{C_0}=f. \]
Then clearly $\Phi\vert_L$ and $\Gamma\vert_L$ are homotopic. Therefore $\Phi$ and $\Gamma$ are homotopic.
By projecting the homotopy, it follows that $f$ and $g$ are homotopic.
We have shown that $[X,Y]_*\to[L,Y]_*$ is injective for any $L$ containing $K$.
Applying this with $Y$ the CW approximation of $\prod_{\lambda}Y_\lambda$ we prove the lemma.
\end{proof}


\chapter{Spaces of maps into finite type Postnikov sections}\label{iff}

\setcounter{thm}{0}

For a CW complex $Y$ let $Y\!\!\scal{k}$ denote the $k$-connected cover of $Y$.
In this chapter our aim is to prove \thmref{converse2kahn} below. In \exref{discrepancy}
at the end we illustrate one of the reasons for the discrepancy between the necessary
conditions and the sufficient conditions given.


\begin{thm}\label{converse2kahn}\label{monster}
Let $X$ be a simply connected CW complex and let $Y$ be a simply connected complex of finite type
with $\pi_k(Y)=0$ for all $k\gqs n+1$, and $\pi_n(Y)\neq 0$.

Set $r=\sup\setof{k\,\vert\,\pi_k(Y)\otimes\QQ\neq 0}$ and let $r_p=\sup\setof{k\,\vert\,\pi_k(Y)*\ZZ_p\neq 0}$
for a prime $p$. As usual, we understand $\sup\emptyset=-\infty$.

If $Y^X$ has CW type then the following hold.
\begin{enumerate}
\item	For $i\lqs r-2$ the group $H_i(X)$ is finitely generated, and there exists a decomposition
	$H_{r-1}(X)\cong\ZZ^{d_{r-1}}\oplus H_{r-1}'$ with $\Hom(H_{r-1}',\ZZ)=0$.
\item	For each $i\lqs n$ there exists a finitely generated subgroup $B_i\lqs H_i(X)$
	(we may assume $B_{r-1}\gqs\ZZ^{d_{r-1}}$ and $B_i=H_i(X)$ for $i\lqs r-2$) so that
	\begin{enroman}
		\item	For $i\lqs r_p$ the quotient $H_i(X)/B_i$ is $p$-divisible, and moreover
		\item	for $i\lqs r_p-1$ the divisible $p$-localization $(H_i(X)/B_i)_{(p)}$ admits
			a direct sum decomposition into $Q\oplus\big(\oplus_j\ZZ_{p^\infty}\big)$ where
			$Q$ is a rational group and $j$ ranges over a finite set.
	\end{enroman}
\end{enumerate}

If, in addition, either $X$ is an H-cogroup or $Y\!\!\scal{r-1}$ is an H-group or $r\gqs n-1$
then in fact
\begin{enumerate}
\item[(1*)]	for $i\lqs r-1$ the group $H_i(X)$ is finitely generated and there exists a decomposition
	$H_r(X)\cong\ZZ^{d_r}\oplus H_r'$ with $\Hom(H_r',\ZZ)=0$, and we may assume $B_r\gqs\ZZ^{d_r}$.
\end{enumerate}

Conversely, conditions (1*) and (2) are sufficient for $Y^X$ to have CW type.
\end{thm}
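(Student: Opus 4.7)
My plan is to build the proof around the Postnikov tower of $Y$ together with the apparatus of Chapter \ref{eilenberg-maclane_target} on mapping spaces into Eilenberg-MacLane spaces. Since $Y$ is a simply connected finite-type Postnikov complex with $\pi_k Y = 0$ for $k > n$, it is assembled by a tower of principal fibrations $\ast = Y_1 \to Y_2 \to \cdots \to Y_n \simeq Y$ where each $Y_k \to Y_{k-1}$ is pulled back from the path fibration over $K(\pi_k Y, k+1)$; finite type of $Y$ means each $\pi_k Y$ splits as $\ZZ^{d_k} \oplus T_k$ with $T_k$ a finite abelian group. The thresholds $r$ and $r_p$ mark, respectively, the top dimension in which $Y$ has a rational homotopy group and the top dimension in which $\pi_\ast Y$ has $p$-torsion, so they govern which Eilenberg-MacLane factors contribute nontrivially to $Y_{(0)}$ and $Y_{(p)}$.

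For sufficiency, I would argue by induction up the Postnikov tower, reducing to the claim that $K(\pi_k Y, k)^X$ has CW homotopy type for every $k \leq n$; once that is in hand, \propref{finite_postnikov_decomposition} assembles $Y^X$ as an iterated principal fibration with CW type base and fibers, and Stasheff's theorem concludes. To verify the claim for a fixed $k$, I would split $K(\pi_k Y, k) \simeq K(\ZZ,k)^{d_k} \times \prod_j K(\ZZ_{p_j^{s_j}}, k)$ and invoke \thmref{thom_enhanced}, further reducing to the question of CW type of $K(G, k)^{M(H_i(X), i)}$ for $G$ either infinite cyclic or a cyclic $p$-group, and each $i \leq k$. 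Conditions (1*) and (2) are designed exactly to match the case analysis of \thmref{moore_eilenberg}: (1*) guarantees that for $i \leq r$ the group $H_i(X)$ differs from a finitely generated subgroup by something paired trivially against $\ZZ$, while (2) guarantees, at each prime $p \leq r_p$, $p$-divisibility of $H_i(X)/B_i$ together with finite rank of its divisible torsion, which is precisely the data required by \thmref{moore_eilenberg} with the cyclic $p$-group target.

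For necessity, I would first reduce to the rational and $p$-local cases using \thmref{localization} and \corref{razcep}, which transfer the CW hypothesis from $Y^X$ to $Y_{(0)}^X$ and each $Y_{(p)}^X$. Since $Y_{(0)}$ splits as a product of rational Eilenberg-MacLane spaces $\prod_{k \leq r} K(\pi_k Y \otimes \QQ, k)$, \exref{product} forces each factor mapping space to have CW type, and \thmref{moore_eilenberg} converts this into the finite generation statements in (1) and the divisibility statements in (2). To obtain the decomposition $H_{r-1}(X) \cong \ZZ^{d_{r-1}} \oplus H'_{r-1}$ with $\Hom(H'_{r-1}, \ZZ) = 0$ I would apply \propref{target_kzn}, equivalently \corref{cohomology_obstruction} combined with \lemref{character_group}, to the $K(\ZZ, r)$-summand appearing in the rationalization. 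For each prime $p \leq r_p$, the analogous analysis of $Y_{(p)}^X$ run through the Postnikov tower of $Y_{(p)}$ (whose obstructions are controlled by the same \thmref{moore_eilenberg}) produces the $p$-divisibility and rank data in (2).

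The main obstacle will be the refinement from (1) to (1*) under the auxiliary hypotheses. The issue is one dimension: without further structure, the injectivity statement in \thmref{u_M-L_general} yields restrictions on $H_i(X)$ only up to $i = r-1$, and pushing this one dimension further to $H_r(X)$ requires upgrading the pointed-set condition $\pi_0(Y^X) \to \pi_0(Y^L)$ to an injection of actual abelian groups; this is exactly what the H-cogroup or H-group assumption delivers via \propref{group_components}, and what the connectivity assumption $r \geq n-1$ provides for free by vanishing of the relevant higher Postnikov obstructions. The technical heart of the argument will be making this dimension-shift compatible with both the rational and $p$-local data, and tracking the subgroups $B_i$ uniformly in $p$ so that the resulting conditions match (1*) and (2) verbatim.
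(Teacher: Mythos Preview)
Your sufficiency argument is essentially the paper's: reduce via \propref{finite_postnikov_decomposition} and \propref{homology_decomposition} to $K(G,k)^{M(H_iX,i)}$, split $\pi_kY$ into free and torsion parts, and check \thmref{moore_eilenberg} and \propref{target_kzn} against conditions (1*) and (2). That part is fine.

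The necessity argument has a real gap. You assert that \thmref{localization} and \corref{razcep} transfer the CW hypothesis from $Y^X$ to $Y_{(0)}^X$ and $Y_{(p)}^X$, but neither result does this. \thmref{localization} localizes the \emph{domain}, not the target; \corref{razcep} requires $\H_*(X)$ to be torsion, which is not assumed. In the arithmetic pullback square of \corref{the_square}, CW type of the total space does not propagate to the corners: Stasheff's theorem would require the base $(Y_{(0)},*)^{(X_{(0)},*)}$ to already have CW type, which is exactly what you want to prove. You also claim $Y_{(0)}$ splits as a product of rational Eilenberg--MacLane spaces; rational $k$-invariants need not vanish, so this is not automatic.

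The paper avoids this by a bootstrap. First, using that the fibre $F_L=(Y,*)^{(X/L,*)}$ contracts in $Y^X$ for large finite $L$ (\propref{strong_obstruction_general}), one runs the Federer spectral sequence for $(Y,*)^{(X/L,*)}$ in a decreasing induction on diagonals $q-p=k$, extracting finiteness of the groups $H^p(X/L;\pi_q Y)$ one diagonal at a time; the delicate point is that on each new diagonal the possibly infinite part of $E^2_{-p,q}$ is bounded by primes disjoint from those appearing further out, so it cannot be killed by differentials and must already be finite. Only \emph{after} this establishes enough about $H_i(X)$ does one pass to a torsion replacement of $Y$: the paper uses Zabrodsky's integral approximation $Y\to\Bar Y$ with fibre $Y_t$, where $\Omega\Bar Y\simeq\prod_k K(\ZZ^{d_k},k-1)$. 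The already-obtained conditions on $H_i(X)$ for $i\lqs r-1$ now show $(\Omega\Bar Y)^X$ has CW type via \propref{target_kzn}, whence $(Y_t)^X$ has CW type by Stasheff; and since $Y_t\simeq\prod_p Y_{t(p)}$ is a \emph{finite} product over primes, the $p$-local data in (2) follows.

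For (1*), your outline is directionally right but underestimates the work. After passing to $Y\!\!\scal{r-1}$, the Puppe sequences $H^{i+1}(X;\pi_{i+1})\to[X,Y_{i+1}]_*\to[X,Y_i]_*\to H^{i+2}(X;\pi_{i+1})$ become exact sequences of groups under the H-structure, and one must show that if $H_{r-1}(X)$ were not finitely generated then $[X,Y]_*$ would contain either a quotient of $\Hat\ZZ_t$ by a finitely generated subgroup or an uncountable group local away from all relevant primes; the non-cancellation lemmas (\lemref{no_cancellation}, \lemref{no_cancellation_2}) show such subgroups survive the tower, contradicting \propref{group_components}. The case $r\gqs n-1$ is handled separately (for $r=n$ directly by \lemref{injectivity}), not by a vanishing argument.
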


\begin{rem_n} The $(r-1)$-connected cover of $Y$ in the statement of the theorem admits the structure of an
$H$-group for example if $n\lqs 2r-3$, see Copeland \cite{copeland}, section 3.2, in combination with Spanier
\cite{spanier}, Chapter 7, Exercise H.
\end{rem_n}

We record a few auxiliary results. See Section \ref{explicit_determinations} for a recollection of some definitions
and results from the theory of abelian groups.

\begin{lem}\label{lem_ovaj}
Let $P$ be a finite set of primes and let $A$ be a $P$-divisible abelian group.
\begin{enroman}
\item	If $A$ has an element of infinite order, then for each $p\in P$ there exists a subgroup of $A$,
	isomorphic to $\ZZ[\frac{1}{p}]$.
\item	If $A$ has an element of order $p\in P$ then $A$ contains $\ZZ_{p^\infty}$ as a direct summand.\qed
\end{enroman}
\end{lem}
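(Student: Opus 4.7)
The plan is to prove both parts by a direct inductive construction using the $p$-divisibility hypothesis (the finiteness of $P$ plays no role here; only divisibility at the single prime $p$ in question is used).

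For \navedi{1}, I would fix $a_0 := a$ of infinite order and, using $p$-divisibility, inductively pick $a_i \in A$ with $p a_i = a_{i-1}$. Each $a_i$ inherits infinite order from $a_0$: if $n a_i = 0$ then $n a_0 = n p^i a_i = 0$, forcing $n = 0$. Viewing $\ZZ[\tfrac{1}{p}]$ as the colimit of $\ZZ \xrightarrow{p} \ZZ \xrightarrow{p} \cdots$, the assignment $1/p^i \mapsto a_i$ defines a group homomorphism $\phi \colon \ZZ[\tfrac{1}{p}] \to A$, well-defined precisely because $p \cdot a_{i+1} = a_i$. Every element of $\ZZ[\tfrac{1}{p}]$ has the form $m/p^i$ and its image $m a_i$ is nonzero whenever $m \neq 0$, so $\phi$ is injective; its image is the desired subgroup.

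For \navedi{2}, I would start with $a_1 := a$ of order $p$ and inductively produce $a_{i+1}$ with $p a_{i+1} = a_i$. Since $p^i a_i = p^{i-1} a_{i-1} = \dots = p a_1 = 0$, the order of $a_i$ divides $p^i$, and since $p^{i-1} a_i = a_1 \neq 0$ it equals $p^i$ exactly. The subgroup $H \leq A$ generated by $\{a_i\}_{i \geq 1}$ is therefore isomorphic to $\ZZ_{p^\infty}$ via $a_i \leftrightarrow 1/p^i + \ZZ$: the canonical surjection $\ZZ_{p^\infty} \twoheadrightarrow H$ has kernel a proper subgroup of $\ZZ_{p^\infty}$, necessarily finite, but $H$ contains elements of arbitrarily high order, so the kernel is trivial. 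Finally, $\ZZ_{p^\infty}$ is divisible and hence injective in the category of abelian groups (Fuchs \cite{fuchs1}, Theorem 21.2), so the inclusion $H \hookrightarrow A$ splits and $H$ appears as a direct summand.

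Both steps are essentially routine; the only point requiring slight care is the verification in \navedi{2} that $H$ is isomorphic to the full group $\ZZ_{p^\infty}$ rather than a proper quotient, handled by the order computation above. I do not anticipate any real obstacle.
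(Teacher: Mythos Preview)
Your proof is correct and essentially what one would expect; the paper itself omits the argument entirely (the lemma is stated with a \qed\ and no proof, treating it as a routine fact from abelian group theory, with Fuchs \cite{fuchs1} as background reference). One small wrinkle: in \navedi{2} your justification that the kernel is trivial is not quite right as stated --- a quotient $\ZZ_{p^\infty}/\ZZ_{p^k}$ is again isomorphic to $\ZZ_{p^\infty}$ and so still has elements of arbitrarily high order, so that observation alone does not force the kernel to vanish. The clean argument is the one you have already set up: since $1/p^i + \ZZ$ has order $p^i$ and its image $a_i$ also has order exactly $p^i$, the map is injective on each cyclic subgroup $\langle 1/p^i + \ZZ\rangle$, hence injective on their union $\ZZ_{p^\infty}$.
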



\def\P{\ZZ_{p^n}}

\begin{prop}\label{ovaj}
Let $A$ be an arbitrary abelian group.
\begin{abc}
\item If $\Hom(A,\P)$ is finitely generated, then there exists
	a finitely generated subgroup $B$ of $A$ so that $A/B$ is $p$-divisible, and consequently
	$(A/B)_{(p)}=A_{(p)}/B_{(p)}$ is a divisible $p$-local group which is trivial if and only
	if $A/B$ is $\PP\setminus\setof{p}$-torsion.
	In particular $\Hom(A,\P)$ is finite. If $(A/B)_{(p)}$ is nontrivial, then $B$ can be so chosen that
	$A/B$ contains $\ZZ_{p^\infty}$ as a direct summand. Moreover, $B$ may be replaced with
	any bigger finitely generated subgroup without affecting the properties stated.
\item If also $\Ext(A,\ZZ_{p^n})$ is finitely generated (and hence finite), then in the splitting
	$(A/B)_{(p)}\cong\big(\oplus_i\QQ\big)\oplus\big(\oplus_j\ZZ_{p^\infty}\big)$ the index $j$
	must range over a finite family.
\end{abc}
\end{prop}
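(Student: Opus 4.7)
The plan is to derive (a) from duality applied to $A/pA$ together with the structural lemma \lemref{lem_ovaj}, and to derive (b) from the long exact sequence for $\Ext$ combined with a standard computation of $\Ext$ into $\ZZ_{p^n}$ from the divisible $p$-local groups.

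For (a), I first observe that $\Hom(A,\ZZ_{p^n})$, being finitely generated and annihilated by $p^n$, is finite. Multiplication by $p^{n-1}$ embeds $\ZZ_p\hookrightarrow\ZZ_{p^n}$ and induces an injection $\Hom(A,\ZZ_p)\hookrightarrow\Hom(A,\ZZ_{p^n})$, hence $\Hom(A/pA,\ZZ_p)=\Hom(A,\ZZ_p)$ is finite; since a $\ZZ_p$-vector space has finite dual iff it is finite-dimensional, $A/pA$ is finite. Lifting any $\ZZ_p$-basis to $b_1,\dots,b_k\in A$ and setting $B=\langle b_1,\dots,b_k\rangle$ yields $A=pA+B$, so $A/B$ is $p$-divisible; the same equation holds for any finitely generated $B'\supseteq B$, disposing of the moreover clause. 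Then $(A/B)_{(p)}$ is a $\ZZ_{(p)}$-module that is also $p$-divisible, hence divisible, and it vanishes exactly when $A/B$ is $(\PP\setminus\setof{p})$-torsion.

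For the $\ZZ_{p^\infty}$-summand assertion assume $(A/B)_{(p)}\neq 0$. If $A/B$ has an element of order $p$ then \lemref{lem_ovaj}\navedi{2} immediately supplies the summand. Otherwise $(A/B)_{(p)}$ is torsion-free divisible, so $A/B$ has an element of infinite order; \lemref{lem_ovaj}\navedi{1} then provides a copy of $\ZZ[\tfrac{1}{p}]\lqs A/B$. Choose $a\in A$ projecting to $1\in\ZZ\lqs\ZZ[\tfrac{1}{p}]$ and replace $B$ by the still-finitely-generated $B'=B+\langle a\rangle$. The quotient $A/B'$ remains $p$-divisible by the moreover clause and now contains $\ZZ[\tfrac{1}{p}]/\ZZ\cong\ZZ_{p^\infty}$, so a second application of \lemref{lem_ovaj}\navedi{2} splits off a $\ZZ_{p^\infty}$ summand.

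For (b), the six-term sequence obtained by applying $\Hom(-,\ZZ_{p^n})$ to $0\to B\to A\to A/B\to 0$ sandwiches $\Ext(A/B,\ZZ_{p^n})$ between a quotient of $\Hom(B,\ZZ_{p^n})$ (finite because $B$ is finitely generated and the target is finite) and a subgroup of $\Ext(A,\ZZ_{p^n})$ (finite by hypothesis, since it is finitely generated and $p^n$-bounded), so $\Ext(A/B,\ZZ_{p^n})$ is itself finite. Both the kernel and cokernel of the localization map $A/B\to(A/B)_{(p)}$ are $(\PP\setminus\setof{p})$-torsion; since $\ZZ_{p^n}$ is a $\ZZ_{(p)}$-module, such groups have vanishing $\Hom$ and $\Ext$ into $\ZZ_{p^n}$, so $\Ext(A/B,\ZZ_{p^n})\cong\Ext((A/B)_{(p)},\ZZ_{p^n})$. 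Substituting the decomposition and using the standard computations $\Ext(\QQ,\ZZ_{p^n})=0$ (the bounded group $\ZZ_{p^n}$ is cotorsion) and $\Ext(\ZZ_{p^\infty},\ZZ_{p^n})\cong\ZZ_{p^n}$ (read off from the six-term sequence attached to $0\to\ZZ\to\ZZ[\tfrac{1}{p}]\to\ZZ_{p^\infty}\to 0$) yields $\Ext(A/B,\ZZ_{p^n})\cong\prod_j\ZZ_{p^n}$, forcing $j$ to range over a finite family. The delicate step is the $\ZZ_{p^\infty}$-summand clause of (a): the summand must sit inside $A/B$ itself, not merely inside $(A/B)_{(p)}$, so when $A/B$ happens to be $p$-torsion-free one is forced to enlarge $B$ in order to manufacture the $p$-torsion before invoking the structural lemma.
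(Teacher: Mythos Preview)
Your argument is correct and matches the paper's proof essentially step for step: both show $A/pA$ is finite by duality, take $B$ generated by lifts of a basis, invoke \lemref{lem_ovaj} (enlarging $B$ when $A/B$ is $p$-torsion-free) for the $\ZZ_{p^\infty}$ summand, and for (b) compute $\Ext(A/B,\ZZ_{p^n})\cong\prod_j\ZZ_{p^n}$ via the six-term sequence for $0\to B\to A\to A/B\to 0$. One small omission: your treatment of the ``moreover'' clause only checks that $p$-divisibility persists under enlarging $B$ to $B'$, not that the $\ZZ_{p^\infty}$ summand does; but this is immediate, since the image of $\ZZ_{p^\infty}\subset A/B$ in $A/B'$ is $\ZZ_{p^\infty}$ modulo a finite cyclic group, hence again $\ZZ_{p^\infty}$, and \lemref{lem_ovaj}\navedi{2} then applies to the $p$-divisible $A/B'$.
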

\begin{proof}
Since $A/pA$ is a vector space over $\ZZ_p$, it is isomorphic to a direct sum $A/pA\cong\oplus_N\ZZ_p$.
Apply $\Hom(\_,\P)$ to the exact sequence $0\to p\cdot A\to A\to A/pA\to 0$ to note that
$\Hom(\oplus_N\ZZ_p,\P)\cong\prod_N\ZZ_p$ is a subgroup of $\Hom(A,\P)$. Then $N$ must be finite
if $\Hom(A,\P)$ is finitely generated.

Choose a finite subset $S$ of $A$ whose image generates $A/pA$ and let $B=\scal{S}$. Then $B/pB\to A/pA$ is onto
which shows that $A/B$ is divisible by $p$. By \lemref{lem_ovaj} the localization $(A/B)_{(p)}$ is trivial if and only
if $A/B$ is $\PP\setminus\setof{p}$-torsion. If this is not the case, then $A/B$ contains a subgroup isomorphic
to either $\ZZ_{p^\infty}$ or $\ZZ[\frac{1}{p}]$. In the latter case enlarge $S$ to kill the subgroup $\ZZ$ of
$\ZZ[\frac{1}{p}]$ in $A/B$ to obtain $\ZZ_{p^\infty}$.

Let $B'$ be a finitely generated subgroup of $A$ that contains $B$. The sequence $0\to B'/B\to A/B\to A/B'\to 0$
is exact. This is to say that if $A/B$ is $p$-divisible, so is $A/B'$, and if $A/B$ is
$\PP\setminus\setof{p}$-torsion, so is $A/B'$. Moreover if $A/B$ contains a subgroup isomorphic to $\ZZ_{p^\infty}$
then $A/B'$ contains a subgroup isomorphic to a quotient of $\ZZ_{p^\infty}$ modulo a finitely generated subgroup,
which is again isomorphic to $\ZZ_{p^\infty}$.

Since $(A/B)_{(p)}$ is a divisible $p$-local group it is isomorphic to the direct sum of a rational group $R$ and
some copies of $\ZZ_{p^\infty}$, say of cardinality $M$. Then $\Hom(A/B,\P)$ is trivial
and \[ \Ext(A/B,\P)\cong\Ext\big((A/B)_{(p)},\P\big)\cong\prod_{M}\P. \]
An application of $\Hom(\_,\P)$ to $0\to B\to A\to A/B\to 0$ concludes the proof.
\end{proof}

\begin{lem}\label{ext}
Let $R$ be an infinite set of primes and let $A$ be an $R$-torsion group. If $A$ is not finitely generated,
then $\Ext(A,\ZZ)$ contains an uncountable $R$-local subgroup.
\end{lem}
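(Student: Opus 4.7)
The plan is to show that $\Ext(A,\ZZ)$ itself is an uncountable $R$-local group, so no proper subgroup need be singled out.

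First I would apply $\Hom(A,-)$ to the exact sequence $0 \to \ZZ \to \QQ \to \QQ/\ZZ \to 0$. Since $A$ is torsion, $\Hom(A,\QQ)=0$, and since $\QQ$ is injective, $\Ext(A,\QQ)=0$, so the connecting map gives a natural isomorphism $\Ext(A,\ZZ) \cong \Hom(A,\QQ/\ZZ)$. For any prime $q \notin R$, multiplication by $q$ is an automorphism of the $R$-torsion group $A$ (since $\gcd(q,n)=1$ for every torsion order $n$ occurring in $A$), hence induces an automorphism of $\Ext(A,\ZZ)$. Thus $\Ext(A,\ZZ)$ is automatically $R$-local, and it suffices to prove it is uncountable.

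Using the primary decomposition $A \cong \bigoplus_{p \in R} A_{(p)}$, I would rewrite
\[ \Hom(A,\QQ/\ZZ) \cong \prod_{p \in R} \Hom(A_{(p)},\ZZ_{p^\infty}), \]
and split into two cases. If every $A_{(p)}$ is finitely generated, then $A_{(p)} \neq 0$ for infinitely many $p \in R$; for each such $p$ any cyclic direct summand of $A_{(p)}$ produces a nonzero element of $\Hom(A_{(p)},\ZZ_{p^\infty})$, and an infinite product of nontrivial groups is uncountable. Otherwise some $A_{(p)}$ is infinite; splitting $A_{(p)} = D \oplus C$ with $D$ the maximal divisible subgroup, if $D \neq 0$ then $D$ has $\ZZ_{p^\infty}$ as a direct summand, so $\Hom(A_{(p)},\ZZ_{p^\infty})$ surjects onto $\Hom(\ZZ_{p^\infty},\ZZ_{p^\infty}) \cong \Hat\ZZ_p$, which is uncountable.

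The remaining subcase, where $A_{(p)}$ is reduced and infinite, will be the main obstacle. My plan there is to show that $A_{(p)}/pA_{(p)}$ must be infinite. A basic subgroup $B$ of $A_{(p)}$ satisfies $pA_{(p)}+B=A_{(p)}$ (because $A_{(p)}/B$ is divisible) and $B\cap pA_{(p)}=pB$ (by purity), whence $B/pB \cong A_{(p)}/pA_{(p)}$. Since $B$ is a direct sum of cyclic $p$-groups, each contributing a $\ZZ_p$ summand to $B/pB$, finiteness of $B/pB$ would force $B$ to be a finite direct sum of finite cyclics, hence bounded. Kulikov's theorem then makes $B$ a direct summand of $A_{(p)}$, so $A_{(p)}=B \oplus A_{(p)}/B$ with $A_{(p)}/B$ divisible; reducedness of $A_{(p)}$ then forces $A_{(p)}/B=0$, making $A_{(p)}=B$ finite, a contradiction. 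Hence $A_{(p)}/pA_{(p)}$ is an infinite vector space over $\ZZ_p$, its $\ZZ_p$-dual $\Hom(A_{(p)}/pA_{(p)},\ZZ_p)$ is uncountable, and composing with $A_{(p)} \twoheadrightarrow A_{(p)}/pA_{(p)}$ and $\ZZ_p \hookrightarrow \ZZ_{p^\infty}$ embeds this dual into $\Hom(A_{(p)},\ZZ_{p^\infty})$, finishing the argument.
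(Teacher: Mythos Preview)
Your proof is correct and actually establishes a little more than the lemma asks: you show $\Ext(A,\ZZ)$ is itself $R$-local and uncountable, not merely that it contains such a subgroup. The overall structure (primary decomposition, then case analysis on the $p$-primary components) matches the paper's argument, but the execution differs in two respects worth noting.

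First, you pass through the isomorphism $\Ext(A,\ZZ)\cong\Hom(A,\QQ/\ZZ)$, which gives $R$-locality of the whole group for free and lets you work entirely with $\Hom$ into $\ZZ_{p^\infty}$. The paper instead works directly with $\Ext$, using the exact sequences $0\to\Ext(A_{(r)}\otimes\ZZ_r,\ZZ)\to\Ext(A_{(r)},\ZZ)$ and $0\to\Ext(A_{(r)}/B,\ZZ)\to\Ext(A_{(r)},\ZZ)$.

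Second, your case split is by whether $A_{(p)}$ has a nontrivial divisible part, while the paper splits on whether $A_{(p)}\otimes\ZZ_p$ is infinite. These are complementary: your basic-subgroup argument in the reduced case shows precisely that a reduced infinite $p$-group has $A_{(p)}/pA_{(p)}$ infinite, so your case 2b lands in the paper's ``$A_{(p)}\otimes\ZZ_p$ infinite'' case, and conversely the paper's ``$A_{(p)}\otimes\ZZ_p$ finite but $A_{(p)}$ infinite'' case forces a nontrivial divisible part, landing in your case 2a. The paper's route is slightly lighter here---it finds a $\ZZ_{p^\infty}$ in a quotient $A_{(p)}/B$ directly without invoking basic subgroups or Kulikov's theorem---but your argument is perfectly valid and the extra machinery is standard.
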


\begin{proof}
We split $A$ into the direct sum of $r$-primary components $A\cong\oplus_{r\in R}A_{(r)}$. Since $A_{(r)}$ is torsion, the
sequence $0\to\Ext(A_{(r)}\otimes\ZZ_r,\ZZ)\to\Ext(A_{(r)},\ZZ)$ is exact. If $A_{(r)}\otimes\ZZ_r$ is infinite then
$\Ext(A_{(r)},\ZZ)$ contains as a subgroup an infinite product $\prod\ZZ_r$.

If $A_{(r)}\otimes\ZZ_r$ is finite then for a finitely generated subgroup $B$ of $A_{(r)}$ the quotient
$A_{(r)}/B$ is $r$-divisible. Since $A_{(r)}/B$ is $r$-torsion, it contains $\ZZ_{r^\infty}$ as a direct summand,
if it is nontrivial.
Again $0\to\Ext(A_{(r)}/B,\ZZ)\to\Ext(A_{(r)},\ZZ)$ is exact, hence $\Ext(A_{(r)},\ZZ)$ contains a subgroup isomorphic
to $\Hat\ZZ_r$.

If for all $r$ the group $A_{(r)}$ is finitely generated, and $A$ itself is not finitely generated, then
$A$ splits into a direct sum $\oplus_{\lambda\in\Lambda}C_\lambda$ where $\Lambda$ is infinite and for
each $\lambda\in\Lambda$, the summand $C_\lambda$ is a cyclic group of order a power of some element of $R$.
Then $\Ext(A,\ZZ)\cong\prod_{\lambda}C_\lambda$, and the result follows.
\end{proof}

\begin{lem}\label{lem_cor_character_group}
Let $H$ be an abelian group with a finitely generated subgroup $B$ such that the quotient $H/B$ is
$p$-divisible for some prime $p$. Then $\Hom(H,\ZZ)$ is finitely generated hence $H$ admits a splitting
$H\cong\ZZ^d\oplus H'$ with $\Hom(H',\ZZ)=0$. 
\end{lem}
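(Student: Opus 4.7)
The plan is to exploit the exact sequence
\[
0\to\Hom(H/B,\ZZ)\to\Hom(H,\ZZ)\to\Hom(B,\ZZ)
\]
coming from the short exact sequence $0\to B\to H\to H/B\to 0$. The first term vanishes: any homomorphism $\varphi\colon H/B\to\ZZ$ sends an arbitrary element $x\in H/B$ to an element of $\ZZ$ that is divisible by every power of $p$ (since $H/B$ is $p$-divisible), and the only such integer is $0$. Hence $\Hom(H,\ZZ)$ injects into $\Hom(B,\ZZ)$.

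Next, since $B$ is a finitely generated abelian group, $\Hom(B,\ZZ)$ is a free abelian group of finite rank. Any subgroup of such a group is itself free of finite rank, so $\Hom(H,\ZZ)$ is finitely generated free, say of rank $d$. This proves the first assertion.

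For the splitting, I would apply \navedi{2} of \lemref{character_group} directly: since the torsion-free rank of $\Hom(H,\ZZ)$ is exactly $d$, there exists a decomposition $H\cong\ZZ^d\oplus H'$ with $\Hom(H',\ZZ)=0$. There is no real obstacle here; the only point requiring a moment's care is the vanishing of $\Hom(H/B,\ZZ)$, which rests on the elementary fact that $\ZZ$ is a reduced (indeed residually finite, or simply Hausdorff in its $p$-adic topology) group, so contains no nontrivial $p$-divisible elements.
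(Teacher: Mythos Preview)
Your proof is correct and follows essentially the same route as the paper: the exact sequence coming from $0\to B\to H\to H/B\to 0$, vanishing of $\Hom(H/B,\ZZ)$ by $p$-divisibility, injection of $\Hom(H,\ZZ)$ into the finitely generated $\Hom(B,\ZZ)$, and then \lemref{character_group} for the splitting. Your write-up is in fact more explicit than the paper's, which compresses these steps into two sentences.
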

\begin{proof}
The exact sequence $0\to B\to H\to H/B\to 0$ induces the exact sequence
$0\to\Hom(H/B,\ZZ)\to\Hom(H,\ZZ)\to\Hom(B,\ZZ)\to\dots$. Since $H/B$ is $p$-divisible, $\Hom(H/B,\ZZ)=0$,
hence $\Hom(H,\ZZ)$ is finitely generated by \lemref{character_group}.
\end{proof}

For convenience we record another easy lemma.
\begin{lem}\label{hom_ext_to_finite}
Let $Q,P\subset\PP$ where $\PP$ denotes the set of all primes. Set $P'=\PP\setminus P$.
Let $\pi$ be a finite $Q$-torsion group. Let $H$ be an arbitrary abelian group with a
finitely generated subgroup $B$.
\begin{enroman}
\item	Suppose that the quotient $H/B$ is $P$-divisible. Then $\Hom(H,\pi)$ is isomorphic to a direct
	sum of a finite $Q\cap P$-group and a $Q\cap P'$-bounded group.
\item	Suppose that the quotient $H/B$ is $Q$-divisible, and the (divisible) localization $(H/B)_{(Q)}$
	is isomorphic to the direct sum of some copies of the rationals $\QQ$, some copies of groups
	$\ZZ_{q^{\infty}}$ with $q\in Q\setminus P$ and at most finitely many copies of groups $\ZZ_{q^\infty}$
	with $q\in Q\cap P$. Then $\Ext(H,\pi)$ is an extension of a $Q\cap P'$-bounded group by a finite $Q\cap P$-group.
	\qed
\end{enroman}
\end{lem}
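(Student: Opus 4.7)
The plan is to apply the contravariant functor $\Hom(-,\pi)$ and its derived functor $\Ext^1(-,\pi)$ to the short exact sequence $0\to B\to H\to H/B\to 0$, and then decompose everything along the primary partition $Q=(Q\cap P)\sqcup(Q\cap P')$ of the $Q$-torsion coefficient group $\pi$. Since $B$ is finitely generated and $\pi$ is finite $Q$-torsion, both $\Hom(B,\pi)$ and $\Ext(B,\pi)$ are finite $Q$-torsion groups; and every $Q$-torsion abelian group splits canonically as the direct sum of its $Q\cap P$-primary and $Q\cap P'$-primary components.

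For (i), I would split $\pi=\pi_{Q\cap P}\oplus\pi_{Q\cap P'}$ and analyze each summand separately. On the $Q\cap P$ side, the key input is that $\Hom(H/B,\pi_{Q\cap P})=0$: indeed $H/B$ is $P$-divisible, while $\pi_{Q\cap P}$ is a finite (hence reduced) $P$-group, so any homomorphism has $P$-divisible image in a reduced finite $P$-group, which forces it to vanish. The exact sequence then embeds $\Hom(H,\pi_{Q\cap P})$ into the finite $Q\cap P$-group $\Hom(B,\pi_{Q\cap P})$. On the $Q\cap P'$ side, $\Hom(H,\pi_{Q\cap P'})$ is trivially bounded by the exponent of $\pi_{Q\cap P'}$, hence is $Q\cap P'$-bounded. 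Adding the two summands yields the asserted direct sum decomposition.

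For (ii), I first reduce $\Ext(H/B,\pi)$ to $\Ext\bigl((H/B)_{(Q)},\pi\bigr)$: both the kernel and cokernel of $H/B\to(H/B)_{(Q)}$ are $(\PP\setminus Q)$-torsion, so $\Hom$ and $\Ext$ into the $Q$-torsion group $\pi$ vanish on them. Using the assumed splitting $(H/B)_{(Q)}\cong\bigoplus_I\QQ\oplus\bigoplus_{j\in J}\ZZ_{q_j^\infty}$ with $J_P=\{j:q_j\in Q\cap P\}$ finite, I compute $\Ext$ termwise. The rational summands contribute nothing, since $\Ext(\QQ,\pi)=\lm{n}\pi$ under multiplication by $n$, and this $\lim{}^1$ vanishes by Mittag-Leffler for a finite group $\pi$. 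For each quasicyclic summand, the short exact sequence $0\to\ZZ\to\ZZ[\tfrac{1}{q}]\to\ZZ_{q^\infty}\to 0$ gives $\Ext(\ZZ_{q^\infty},\pi)\cong\pi_q$. Hence $\Ext(H/B,\pi)$ is the direct sum of the finite $Q\cap P$-group $\prod_{j\in J_P}\pi_{q_j}$ and the $Q\cap P'$-bounded group $\prod_{j\in J\setminus J_P}\pi_{q_j}$ (bounded by the exponent of $\pi_{Q\cap P'}$).

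Finally, I would feed this back into the exact sequence $\Hom(B,\pi)\to\Ext(H/B,\pi)\to\Ext(H,\pi)\to\Ext(B,\pi)\to 0$. The image of $\Ext(H/B,\pi)$ in $\Ext(H,\pi)$ is still a direct sum of a finite $Q\cap P$-group and a $Q\cap P'$-bounded group (quotients respect this splitting because the two primary components have coprime supports), and $\Ext(B,\pi)$ is a finite $Q$-group. Applying primary decomposition to $\Ext(H,\pi)$ along $Q=(Q\cap P)\sqcup(Q\cap P')$, the $Q\cap P$-part is an extension of finite by finite (hence finite), and the $Q\cap P'$-part is an extension of bounded by bounded (hence bounded), giving the asserted extension structure. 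The only genuinely delicate step is the reduction to $(H/B)_{(Q)}$ in the $\Ext$-computation; once that vanishing is established, the rest is primary decomposition and routine $\Ext$ bookkeeping for divisible groups.
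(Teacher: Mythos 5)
Your proof is correct; the paper states this lemma with the proof omitted as routine, and your argument -- applying $\Hom(-,\pi)$ and $\Ext(-,\pi)$ to $0\to B\to H\to H/B\to 0$, splitting $\pi$ into its $Q\cap P$- and $Q\cap P'$-primary parts, and reducing $\Ext(H/B,\pi)$ to $\Ext\big((H/B)_{(Q)},\pi\big)$ via the $(\PP\setminus Q)$-torsion kernel and cokernel of localization -- is exactly the intended one. You even obtain slightly more than asserted in (ii), namely a split extension, which is consistent with how the lemma is invoked later in the paper.
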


\begin{lem}\label{no_cancellation}
Let $B$ be a bounded abelian group. Let $\Hat\ZZ_p$ denote the group of $p$-adic integers,
$\Hat\ZZ_p=\lim\ZZ_{p^i}$. Any morphism $\Hat\ZZ_p\to B$ factors through a canonical projection
$\pi_n\colon\Hat\ZZ_p\to\ZZ_{p^n}$; hence the kernel is isomorphic to $\Hat\ZZ_p$.
\end{lem}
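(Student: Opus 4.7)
The plan is to reduce to showing that the image of $f\colon\Hat\ZZ_p\to B$ is killed by a power of $p$, and then use that $\Hat\ZZ_p$ has only the obvious $p$-adic subgroups.

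First I would fix an exponent $N$ for $B$ and factor $N=p^nm$ with $\gcd(m,p)=1$. Since $m$ is a unit in the local ring $\Hat\ZZ_p$, multiplication by $m$ is an automorphism of $\Hat\ZZ_p$; consequently every element of $\im f$ is divisible by $m$ within $\im f$. For $x=my\in\im f$ one then has $p^nx=p^nmy=Ny=0$, so $\im f$ is annihilated by $p^n$.

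Next, $\ker\pi_n=p^n\Hat\ZZ_p$ and the quotient $\Hat\ZZ_p/p^n\Hat\ZZ_p$ is canonically isomorphic to $\ZZ_{p^n}$. Since $p^n$ kills $\im f$, the map $f$ vanishes on $p^n\Hat\ZZ_p$ and therefore factors as claimed through $\pi_n$. Finally, the ideals of $\Hat\ZZ_p$ are precisely the $p^k\Hat\ZZ_p$ for $0\lqs k\lqs\infty$, and $\ker f$ contains $p^n\Hat\ZZ_p$, so $\ker f=p^k\Hat\ZZ_p$ for some $k\lqs n$; multiplication by $p^k$ supplies an isomorphism $\Hat\ZZ_p\xrightarrow{\cong}p^k\Hat\ZZ_p$, giving the stated conclusion about the kernel.

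The argument is essentially a one-step structural observation rather than a hard computation; the only place one has to be slightly careful is ruling out that a prime-to-$p$ part of the exponent $N$ could force something pathological. That is dispatched by the $m$-divisibility of $\Hat\ZZ_p$, which I would isolate as the main point of the proof.
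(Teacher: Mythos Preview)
Your argument is correct and is in fact more direct than the paper's. The paper first reduces to $B$ a bounded $p$-group, invokes the Pr\"{u}fer--Baer theorem to write $B\cong\bigoplus_\mu C_\mu$ with each $C_\mu$ cyclic of order dividing $p^n$, then computes $\Hom(\Hat\ZZ_p,\ZZ_{p^r})\cong\ZZ_{p^r}$ via the exact sequence coming from $0\to\ZZ\to\Hat\ZZ_p\to\Hat\ZZ_p/\ZZ\to 0$ (using that the cokernel is divisible and $\ZZ_{p^r}$ reduced), and finally embeds the direct sum into the product to assemble the factorization. Your route avoids both the structure theorem and the $\Hom$/$\Ext$ calculation: the single observation that $m$ is a unit in $\Hat\ZZ_p$ forces $p^n\cdot\im f=0$ immediately, and the rest is the correspondence between subgroups of $\Hat\ZZ_p$ containing $p^n\Hat\ZZ_p$ and subgroups of $\ZZ_{p^n}$.

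One small wording point: you appeal to the classification of \emph{ideals} of $\Hat\ZZ_p$, but $\ker f$ is a priori only a subgroup. This is harmless here since you have already shown $p^n\Hat\ZZ_p\subset\ker f$, so $\ker f$ corresponds to a subgroup of the finite cyclic group $\ZZ_{p^n}$ and is therefore of the required form; it would be cleaner to phrase it that way rather than via ideals. The paper's approach, by contrast, yields as a byproduct the exact value $\Hom(\Hat\ZZ_p,\ZZ_{p^r})\cong\ZZ_{p^r}$, which is slightly more information, but for the lemma as stated your argument is both shorter and more self-contained.
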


\begin{proof}
With no loss of generality we may assume that $B$ is a bounded $p$-group.
By the theorem of Pr\"{u}fer and Baer (see \propref{abelian_groups})
$B$ decomposes as $B\cong\bigoplus_\mu C_\mu$ where the $C_\mu$ are
cyclic $p$-groups bounded by a common bound, say $p^n$. Set $G_r=\ZZ_{p^r}$.

First we determine $\Hom(\Hat\ZZ_p,G_r)$. The quotient $\Hat\ZZ_p/\ZZ$ is divisible, $G_r$ is a reduced group, and $\ZZ$
is free, hence the exact sequence obtained by applying $\Hom(\_,G_r)$ to $0\to\ZZ\to\Hat\ZZ_p\to\Hat\ZZ_p/\ZZ\to 0$
simplifies to
\[ 0\to\Hom(\Hat\ZZ_p,G_r)\to G_r\to \Ext(\Hat\ZZ_p/\ZZ,G_r)\to\Ext(\Hat\ZZ_p,G_r)\to 0. \]
The natural projection $\pi_r\colon\Hat\ZZ_p\to G_r$ is clearly nontrivial, hence
$\Hom(\Hat\ZZ_p,G_r)\cong G_r$ and every morphism $\Hat\ZZ_p\to G_r$
is obtained as the composite of $\pi_r$ followed by an endomorphism $G_r\to G_r$.

Evidently then for $r\lqs n$ every morphism $\Hat\ZZ_p\to G_r$ factors through $G_n$ since $G_r\lqs G_n$.

As for the general case, since $\bigoplus_\mu C_\mu\lqs\prod_\mu C_\mu$,
a morphism $\Hat\ZZ_p\to B$ may be viewed as a morphism $\varphi\colon\Hat\ZZ_p\to\prod C_\mu$.
Since $C_\mu\cong G_{r_\mu}$ with $r_\mu\lqs n$, it follows by the above that
$\varphi$ factors as a composite $\Hat\ZZ_p\xrightarrow{\pi_n}\ZZ_{p^n}\xrightarrow{\psi}\prod C_\mu$.
Since the projection $\pi_n$ is surjective and $\varphi$ maps to $\bigoplus C_\mu$, so does $\psi$.
\end{proof}

\begin{lem}\label{no_cancellation_2}
Let $\PP=P\cup P'$ be a partition of the primes.
Let $A$ be a $P'$-local group, and let $G$ be a $P$-bounded group.
Then $\Hom(A,G)=0$.
\end{lem}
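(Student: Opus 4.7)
The plan is straightforward; the result follows directly by combining the two hypotheses. First I would unpack the definitions. Since $A$ is $P'$-local, $A$ carries the structure of a $\ZZ_{(P')}$-module, which means that multiplication by any prime $p \in P$ (and hence by any integer whose prime divisors all lie in $P$) is an automorphism of $A$. On the other side, $G$ being $P$-bounded means that there is a positive integer $b$, all of whose prime divisors lie in $P$, such that $b \cdot g = 0$ for every $g \in G$ (see the discussion on page \pageref{divisible_reduced_bounded}).

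Next I would take an arbitrary homomorphism $\varphi \colon A \to G$ and an arbitrary element $a \in A$. By the first observation, since the prime divisors of $b$ belong to $P$, the map $a' \mapsto b a'$ is an automorphism of $A$, so there exists $a' \in A$ with $a = b a'$. Then
\[
\varphi(a) = \varphi(b a') = b \cdot \varphi(a') = 0,
\]
using that $b$ annihilates $G$. Since $a \in A$ was arbitrary, $\varphi = 0$, whence $\Hom(A,G) = 0$.

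There is no genuine obstacle: the lemma is a direct computation once the definitions of ``$P'$-local'' (unique $p$-divisibility for $p \in P$) and ``$P$-bounded'' (annihilation by an integer $b$ with prime divisors in $P$) are made explicit. The only minor point to verify is that the prime divisors of the bound $b$ indeed give $P$-divisibility of $A$, which is immediate from the standing conventions on localization at a set of primes.
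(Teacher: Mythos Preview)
Your proof is correct and more direct than the paper's. The paper argues via localization: since $G$ is $P$-local, $\Hom(A,G)\cong\Hom(A_{(P)},G)$; since $A$ is $P'$-local, $A_{(P)}$ is rational, hence divisible; and since $G$ is bounded it is reduced, so $\Hom(\text{divisible},\text{reduced})=0$. Your argument bypasses the localization functor and the divisible/reduced dichotomy, extracting only the elementary fact actually needed: $A$ is $b$-divisible and $G$ is $b$-annihilated for the same integer $b$. Both routes are short; yours is self-contained, while the paper's fits its surrounding framework of localization and structure theory for abelian groups.
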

\begin{proof}
Since $G$ is $P$-local, $\Hom(A,G)$ is isomorphic to $\Hom(A_{(P)},G)$. Since $A$ is $P'$-local,
its $P$-localization $A_{(P)}$ is a rational group. In particular, it is divisible. As it is bounded,
$G$ is a reduced group, hence $\Hom(A_{(P)},G)=0$.
\end{proof}

\begin{conv_n}
As discussed in section \ref{explicit_determinations}, the torsion-free rank and $p$-ranks
for primes $p$ are invariants of a divisible abelian group $A$.
Instead of saying that the $p$-rank of a divisible group $A$ is finite we will also say
that $A$ {\it admits} only finitely many $p$-torsion (or quasicyclic) summands.
\end{conv_n}


\begin{lem}\label{injectivity}
Assume that $\pi_k(Y)=0$ for $k\gqs n+1$, and that for some $m\lqs n$, the skeleton $T=X^{(m-1)}$ is compact.
If $Y^X$ has the type of a CW complex, then there exists a finite subcomplex $L$ containing $T$ so that the morphisms
\begin{align*}
\tag{\dag} H^m(X,T;\pi_n(Y))&\to H^m(L,T;\pi_n(Y)),\text{ and } \\
\tag{\ddag} H^{m+1}(X,T;\pi_n(Y)&\to H^{m+1}(L,T;\pi_n(Y))
\end{align*}
induced by inclusion are injective.
\end{lem}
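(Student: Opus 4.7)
The plan is to reduce both injectivities to a single application of \thmref{u_M-L_general} (the uniform Mittag-Leffler obstruction) applied to a well-chosen function space of CW type, and then convert the resulting $\pi_k$-injectivity into injectivity of cohomology groups via the top Postnikov stage of $Y$.

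For the setup, pick $x_0 \in T$ and (after attaching a whisker if necessary, using the reduction at the end of the introduction) a nondegenerate $y_0 \in Y$. By \corref{cor_stasheff_theorem} the pointed mapping space $(Y, y_0)^{(X, x_0)}$ has CW type; by Milnor's theorem $(Y, y_0)^{(T, x_0)}$ has CW type since $T$ is compact; and by \lemref{restriction_map} the restriction between them is a fibration. Stasheff's theorem then gives that its fibre over the constant map $T \to y_0$ has CW type, and by \corref{fibre_of_restriction_fibration} this fibre equals $(Y, y_0)^{(X/T, *)}$. Writing $W = X/T$ (which is $(m-1)$-connected) and $c \colon W \to y_0$ for the constant map, the path components of $(Y, y_0)^{(W, *)}$ are open; \thmref{u_M-L_general}, applied to the inverse system indexed over subcomplexes of $X$ containing $T$, therefore yields a finite subcomplex $L \supseteq T$ such that the restriction
\[ \pi_k\bigl((Y, y_0)^{(W, *)}, c\bigr) \to \pi_k\bigl((Y, y_0)^{(L/T, *)}, c\bigr) \]
is injective at the basepoint for every $k \geq 0$.

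To finish, I would use the top Postnikov fibration $K(\pi_n Y, n) \to Y \to Y_{n-1}$ (valid since $Y \simeq Y_n$ by hypothesis). Because $W$ is $(m-1)$-connected, obstruction theory yields $[\Sigma^{n-m}W, Y_{n-1}]_* = 0 = [\Sigma^{n-m+1}W, Y_{n-1}]_*$, and the associated Puppe sequence produces
\[ \pi_{n-m}\bigl((Y, y_0)^{(W, *)}, c\bigr) \cong [\Sigma^{n-m}W, K(\pi_n Y, n)]_* = H^m(X, T; \pi_n Y), \]
establishing (\dag) by naturality. For $m \leq n-1$, a further application of the Puppe sequence produces a natural map $\alpha \colon H^{m+1}(X, T; \pi_n Y) \to \pi_{n-m-1}\bigl((Y, y_0)^{(W, *)}, c\bigr)$, and naturality combined with $\pi_{n-m-1}$-injectivity reduces (\ddag) to the injectivity of $\alpha$. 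The key technical obstacle is to upgrade the obvious statement ``$\alpha$ has trivial preimage of $c$'' to full set-theoretic injectivity. This uses that $K(\pi_n Y, n)$ is a topological abelian group, so $K(\pi_n Y, n) \to Y \to Y_{n-1}$ is a principal fibration; the fibre of $[\Sigma^{n-m-1}W, Y]_* \to [\Sigma^{n-m-1}W, Y_{n-1}]_*$ over any point is then a torsor under $H^{m+1}(X, T; \pi_n Y)$, the image of $\alpha$ is the orbit of the basepoint, and the stabilizer coincides with the image of $[\Sigma^{n-m}W, Y_{n-1}]_* = 0$, hence is trivial, making $\alpha$ injective as a map of sets. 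The edge case $m = n$ for (\ddag), where $\Sigma^{n-m-1}W$ no longer makes sense, would need a separate argument, for instance via the identification $H^{n+1}(X, X^{(n-1)}; \pi_n Y) \cong H^{n+1}(X; \pi_n Y)$ (since $\dim X^{(n-1)} = n - 1$) combined with an analogous principal-fibration argument for a higher auxiliary target.
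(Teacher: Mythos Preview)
Your argument is correct and follows the same overall strategy as the paper's proof: pass to $(Y,*)^{(X/T,*)}$ via Stasheff, invoke the fibre-contraction obstruction to obtain injectivity of $\pi_k$ along restriction to a finite $L/T$, and then read off cohomology in degrees $m$ and $m+1$ from the top homotopy group $\pi_n(Y)$.

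The route you take to that last step is dual to the paper's. The paper replaces $Y$ by its connected covers $W_{n-1}=K(\pi_nY,n)$ (for~\dag) and $W_{n-2}$ (for~\ddag), justified by the $(n-2)$-connectivity of $\Sigma^{n-m-1}(X/T)$, and for~(\ddag) uses the two-stage fibration $K(\pi_nY,n)\to W_{n-2}\to K(\pi_{n-1}Y,n-1)$ together with its long exact sequence. You instead keep $Y$ and use the Postnikov fibration $K(\pi_nY,n)\to Y\to Y_{n-1}$, killing the base contributions via $[\Sigma^{n-m}W,Y_{n-1}]_*=0$ and establishing set-theoretic injectivity of $\alpha$ by the principal-fibration torsor argument (your invocation of \lemref{easy}\,(i) is exactly right here). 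Both approaches are equally short; yours makes the injectivity of $\alpha$ more explicit than the paper's ``the assertion follows easily.'' You cite \thmref{u_M-L_general} where the paper cites \lemref{obstruction}, but these are the same consequence of \propref{strong_obstruction_general}.

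One remark: the edge case $m=n$ for~(\ddag) that you flag is indeed not handled by the paper's proof either (it would require $k=-1$), but the lemma is only ever applied with $m\lqs n-1$ in the subsequent Steps, so this does not matter.
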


\begin{proof}
By Stasheff's theorem the fibre $(Y,*)^{(X/T,*)}$ of $(Y,*)^{(X,*)}\to(Y,*)^{(T,*)}$ over the constant map 
also has CW type. We appeal to \lemref{obstruction} with $X/T$ in place of $X$ and infer the existence of
a finite subcomplex $L$ of $X$ containing $T$ such that the functions
\[ \beta_k\colon[S^k\wedge(X/T),Y]_*\to [S^k\wedge(L/T),Y]_* \]
are `monomorphisms'. We are interested in cases $k=n-m$ and $k=n-m-1$. Let $W_i$ denote the $i$-connected cover
of $Y$. For connectivity reasons we may replace $Y$ with $W_{n-1}$ in case $k=n-m$, respectively $W_{n-2}$
in case $k=n-m-1$. Since $W_{n-1}$ is an Eilenberg-MacLane space $K(\pi_n(Y),n)$, the function $\beta_{n-m}$ is equivalent
to the natural morphism $H^m(X/T;\pi_n(Y))\to H^m(L/T;\pi_n(Y))$ and the assertion follows if $k=n-m$.

If $k=n-m-1$ then we may assume a fibration $F\hookrightarrow W_{n-2}\to B$ with $F$ and $B$ Eilenberg-MacLane spaces.
Then the induced map $(W_{n-2},*)^{(A,*)}\to(B,*)^{(A,*)}$ is a fibration for any CW complex $A$ (see
\propref{mapping_space_covariant}). The fibre over
the constant map is $(F,*)^{(A,*)}$. Using naturality of this construction in $A$ (applied to inclusion $L/T\to X/T$)
and naturality of the arising long homotopy exact sequence, the assertion follows easily also in this case.
\end{proof}

\begin{proof}[Proof of sufficiency of \thmref{monster}]
By \propref{finite_postnikov_decomposition} it suffices to show that for each $j$ the space $(K(\pi_jY,j),*)^{(X,*)}$ has CW
type. Then by \propref{homology_decomposition} it suffices to show that for each $i$ the space
$(K(\pi_jY,j),*)^{(M(H_iX,i),*)}$ has CW type.

Let $\tau_j$ denote the torsion subgroup of $\pi_jY$. Then $\pi_jY\cong\tau_j\oplus\ZZ^{\nu_j}$ where
$\nu_j$ is the rank of the free abelian group $\pi_jY/\tau_j$. Hence
$K(\pi_jY,j)\simeq K(\tau_j,j)\times K(\ZZ,j)^{\nu_j}$ and 
\[ K(\pi_jY,j)^{M(H_iX,i)}\simeq K(\tau_j,j)^{M(H_iX,i)}\times \big[K(\ZZ,j)^{M(H_iX,i)}\big]^{\nu_j}. \]
Conditions (2) and (1*) now justify the application of \thmref{moore_eilenberg} and \propref{target_kzn}
to conclude the proof.
\end{proof}

We turn to necessity.

First note that if $r<n$, and (2) of \thmref{monster} holds then the quotient $H_r(X)/B_r$ is divisible by some
prime, and \lemref{lem_cor_character_group} implies the existence of a decomposition $H_r(X)\cong\ZZ^{d_r}\oplus H_r'$
with $\Hom(H_r',\ZZ)=0$.

\begin{step}\label{one}
There exists a finite subcomplex $L$ of $X$ so that the space $F_L=(Y,*)^{(X/L,*)}$ has finitely generated
abelian homotopy groups $\pi_k$ for $k\gqs 1$ and at most a countable set of path components.
\end{step}

\begin{proof}
By \propref{strong_obstruction_general} there exists a finite subcomplex $K$ of $X$ such that for all $L\gqs K$ the fibre $F_L$
of $Y^X\to Y^L$ over the constant map deforms in $Y^X$ to a point. Since $F_L$ is homeomorphic to $(Y,*)^{(X/L,*)}$, we
infer immediately from the fibration homotopy exact sequence that for $k\gqs 1$ the group $\pi_k(F_L,*)$ is finitely
generated abelian and that the set of path components $\pi_0(F_L)$ is at most countable. Moreover $F_L$ is an H-space
(not necessarily homotopy associative or with inverse). We may assume a minimal decomposition for $X$ (in the sense of
\lemref{minimal_decomposition}) and hence for some $m\gqs 2$ the skeleton $X^{(m-1)}$ is finite. Therefore we may choose
$L$ so that it contains $X^{(m-1)}$ and consequently that the $(m-1)$-skeleton of $X/L$ is trivial.
\end{proof}

\begin{step}\label{two}
Assume that $r=n$, i.e. the top group $\pi_n(Y)$ contains a $\ZZ$ summand.
Then the groups $H_i(X)$ are finitely generated for $i\lqs n-1$,
and there exists a decomposition $H_n(X)\cong\ZZ^d\oplus H'$ with
$\Hom(H',\ZZ)=0$. If the torsion subgroup $\tau_n$ of $\pi_n$ is
nonempty, then there exists a finitely generated subgroup $B'\lqs H'$
with the quotient $H'/B'$ divisible by any prime $t$ with $\tau_n\otimes\ZZ_t$
nontrivial.
\end{step}

\begin{proof}
Assume inductively that $X$ has finite $(m-1)$-skeleton for some $m\gqs 2$.
Assume also $m\lqs n-1$. It follows from \lemref{injectivity} that both $H^m(X;\pi_nY)$ and $H^{m+1}(X;\pi_nY)$
are finitely generated. In particular, $H^m(X;\ZZ)$ and $H^{m+1}(X;\ZZ)$ are finitely generated, and hence $H_m(X)$
is finitely generated. The decomposition of $X$ is assumed minimal and therefore $X^{(m)}$ is finite. This may be
continued until $m=n-1$, as asserted. For $m=n-1$ we can still go on and apply \lemref{injectivity} to conclude
that $H^n(X;\pi_n)$ is finitely generated, and hence that $\Hom(H_n(X),\ZZ)$ and $\Hom(H_n(X),\tau_n)$ are finitely
generated. The asserted decomposition $H_n(X)\cong\ZZ^d\oplus H'$ follows immediately from \lemref{character_group}.
Now $\Hom(H_n(X),\pi_n)$ is isomorphic with a direct sum of $\Hom(H',\tau_n)$ and a finitely generated group.
This implies the existence of $B'$ as in the statement by \propref{ovaj}.
\end{proof}

\begin{conv_n} Let $A$ be an abelian group and $G,H$ two finite abelian groups. 
Let $P$ be the (finite) set of primes $p$ for which $G\otimes\ZZ_p\neq 0$ and $Q$
the set of primes $q$ for which $H\otimes\ZZ_q\neq 0$.

By abuse of language, saying that $A$ is a $G$-group we mean it is a torsion $P$-primary group;
saying it is $G$-divisible, we mean it is divisible by primes belonging to $P$; saying it is $G$-bounded
we mean that it is bounded by a number all of whose prime divisors belong to $P$; saying it is a $G\cap H$-group
we mean it is a torsion $P\cap Q$-primary group; saying it is $G\setminus H$-bounded we mean it is bounded
by a number all of whose prime divisors belong to $P\setminus Q$.

In particular we note that if $B$ is $G\setminus H$-bounded and $A$ is $H$-bounded then $\Hom(A,B)=0$.
\end{conv_n}

\begin{step}\label{three}
Assume that $r<n$. There exist finitely generated subgroups $B_i\lqs H_i(X)$ for $2\lqs i\lqs n$ so that the
following hold.
\begin{enroman}
\item	If $r=-\infty$ then for each $i\lqs n$
\begin{itemize}
	\item[($D_i$)] the quotient $H_i(X)/B_i$ is $T(\pi_{i+1})\cup\pi_{i+2}\cup\dots\cup\pi_n$-divisible, and
		for $t\in T(\pi_{i+1})\cup\pi_{i+2}\cup\dots\cup\pi_n$ the divisible localization $(H_{i-1}(X)/B_{i-1})_{(t)}$
		contains at most finitely many torsion summands.
\end{itemize}
\item	If $r$ is positive then for $i\lqs r-2$ the groups $H_i(X)$ are finitely generated,
	and there exists a decomposition $H_{r-1}(X)\cong\ZZ^{d_{r-1}}\oplus H_{r-1}'$ with $\Hom(H_{r-1}',\ZZ)$ trivial.
	(Therefore it makes sense to assume $r-m>1$.)
	Moreover, the property ($D_i$) can be established for $i\gqs r-1$.
\end{enroman}
\end{step}

\begin{proof}
Set $\Xi=X/L$. We consider the Federer spectral sequence converging to the homotopy groups
of $Y^\Xi$. This is a second quadrant homology type spectral sequence with $E^2_{-p,q}\cong H^p(\Xi;\pi_q Y)$
for $q-p\gqs 1$ and $E^2_{-p,q}\lqs H^p(\Xi;\pi_q Y)$ for $q-p=0$. For details see Federer \cite{federer}.

Set $H_i=H_i(\Xi)$ and $\pi_j=\pi_j(Y)$.

Assume for a decreasing induction on $k$ that for $q-p=k$ (with $q\gqs r$ and $p\gqs 2$) there exists a finitely
generated subgroup $B_p$ of $H_p$ such that
\begin{enroman}
\item[($*$)]	the quotient group $H_p/B_p$ is $T(\pi_q)\cup\pi_{q+1}\cup\dots\cup\pi_{n}$-divisible,
\item[($**$)]	for a prime $t$ with $t\in\pi_{q+1}\cup\dots\cup\pi_n$ the divisible $t$-local group
	$\big(H_p/B_p\big)_{(t)}$ admits only a finite number of torsion (i.e. $\ZZ_{t^\infty}$) summands.
\end{enroman}

Using \lemref{hom_ext_to_finite} it is easy to see that ($*$) and ($**$) imply that $H^{p}(\Xi;\pi_q)$ is a finite
$\pi_q$-group and $E^2_{-(p+1),q}\cong H^{p+1}(\Xi;\pi_q)$ is a split extension of a
$\pi_q\setminus(\pi_{q+1}\cup\dots\cup\pi_n)$-bounded
group by a finite $\pi_q\cap\big(\pi_{q+1}\cup\dots\cup\pi_n\big)$-group, say
\[ 0\to I^2_{-(p+1),q}\to H^{p+1}(\Xi;\pi_q)\to F^2_{-(p+1),q}\to 0. \]
Evidently then for each $u\gqs 3$ there exists an exact sequence
\[ 0\to I^u_{-(p+1),q}\to E^u_{-(p+1),q}\to F^u_{-(p+1),q}\to 0 \]
where $I^u_{-(p+1),q}$ is a homology group of $I^{u-1}_{-(p+1),q}$,
and $F^u_{-(p+1),q}$ is a homology group of $F^{u-1}_{-(p+1),q}$.
In particular, $I^u_{-(p+1),q}$ is a $\pi_q\setminus(\pi_{q+1}\cup\dots\cup\pi_n)$-bounded group
and $F^u_{-(p+1),q}$ is a finite $\pi_q\cap\big(\pi_{q+1}\cup\dots\cup\pi_n\big)$-group.

For $k=n-m+1$ there is nothing to prove.
Assume that the inductive hypothesis holds for $k>\max\setof{r-m,1}$ and consider the diagonal $q-p=k-1$.
Suppose that for some $u\gqs 2$ the subgroup $I^u_{-p,q}$ of $E^u_{-p,q}$ is not finitely generated.
The group $I^u_{-p,q}$ is $\pi_q\setminus(\pi_{q+1}\cup\dots\cup\pi_n)$-bounded, while $E^u_{-p-u,q+u-1}$ is
$\pi_{q+u-1}$-bounded (or even trivial), hence the kernel of $d^u\colon E^u_{-p,q}\to E^u_{-p-u,q+u-1}$ contains
$I^u_{-p,q}$. Since by inductive hypothesis the diagonal $j-i=k$ has all entries $E^u_{-i,j}$ finitely generated,
the group $I^{u+1}_{-p,q}$ contains a quotient of $I^u_{-p,q}$ by a finitely generated subgroup and hence cannot be finitely
generated. Hence $I^\infty_{-p,q}$ is not finitely generated. The contradiction implies that $I^2_{-p,q}$ is finite,
hence also the group $H^p(\Xi;\pi_q)$ is finite.

Let $2\lqs p$, $q<n$, and $q-p=k-1$. Since $q+1-p=k$, our inductive hypothesis guarantees that the group $H_p/B_p$ is
$\pi_{q+1}\cup\dots\cup\pi_n$-divisible and that for $t\in\pi_{q+2}\cup\dots\cup\pi_n$ its localization $(H_p/B_p)_{(t)}$
admits only finitely many torsion summands. By the above the groups $H^p(\Xi;\pi_q)$ and $H^{p+1}(\Xi;\pi_{q-1})$ are
finite. In particular, the groups $\Hom(H_p,\pi_q)$ and $\Ext(H_p,\pi_{q-1})$ are finite. By \propref{ovaj} we may enlarge
$B_p$ to yield ($*$) and ($**$) for the pair $(p,q)$. For $q=n$ we get a finitely generated subgroup $B_{n-k+1}$
of $H_{n-k+1}$ such that the quotient group $H_{n-k+1}/B_{n-k+1}$ is $\pi_n$-divisible.

The above works as stated unless $k-1=r-m$ or $k-1=0$.
Note that $k-1=0$ occurs if $r=-\infty$.

\begin{itemize}
\item	If $k-1=0$, then we have shown that the groups $H^{i-1}(\Xi;\pi_i)$ are finite for all $i$.
Let $W_i$ denote the $i$-connected cover of $Y$ for $i\gqs 1$. We may assume that
$K(\pi_n,n)=W_{n-1}\to W_{n-2}\to\dots\to W_2\to W_1=Y$ is a sequence of principal fibrations
where for each $i$ the fibre of $W_i\to W_{i-1}$ is a $K(\pi_{i},i-1)$. Therefore since
$[\Xi,W_i]_*\to[\Xi,W_{i-1}]_*$ collapses precisely the orbits of the action of $H^{i-1}(\Xi;\pi_i)$ on $[\Xi,W_i]_*$,
it follows by a trivial induction that if $[\Xi,W_{n-1}]_*\cong H^n(\Xi;\pi_n)$ is uncountable then so
is $[\Xi,Y]_*$ which is a contradiction. Hence $H^n(\Xi;\pi_n)$ is finitely generated,
there exists a finitely generated subgroup $B_n$ of $H_n$ such that $H_n/B_n$ is $\pi_n$-divisible,
and $B_{n-1}$ may be enlarged to assume that the divisible $(H_{n-1}/B_{n-1})_{(\pi_n)}$ contains
at most finitely many $\ZZ_{t^\infty}$ summands if $\pi_n\otimes\ZZ_t\neq 0$. In particular, $H^n(\Xi;\pi_n)$ is finite.

\item
If $k-1=r-m$, we proceed just as above, showing that the groups $H^p(\Xi;\pi_q)$ are finite for pairs $(p,q)$,
where $q-p=k-1$ and $q>r$. This establishes ($*$) and ($**$) for $p>m$. Now consider the pair $(m,r)$ corresponding
to $E^2_{-m,r}\cong H^m(\Xi;\pi_r)\cong\Hom(H_m,\pi_r)$. Since $H_m/B_m$ is $\pi_{r+1}\cup\dots\cup\pi_n$-divisible
the restriction $\Hom(H_m,\ZZ)\to\Hom(B_m,\ZZ)$ is an injection which by \lemref{character_group} implies the
existence of a decomposition $H_m\cong\ZZ^{d_m}\oplus H_m'$ with $\Hom(H_m',\ZZ)=0$.
Since $B_m$ may be assumed to contain the summand $\ZZ^{d_m}$ there exists finitely generated $B_m'\lqs H_m'$
with $H_m'/B_m'$ isomorphic to $H_m/B_m$.

Using \lemref{hom_ext_to_finite} we note that $\Hom(H_m,\pi_r)$ is a split extension of a
$\tau_r\setminus(\pi_{r+1}\cup\dots\cup\pi_n)$-bounded group, say $I^2_{-m,r}$, by a finitely generated group.
Reasoning as above it follows that the group $I^2_{-m,r}$ is finitely generated. Hence also
$H^m(\Xi;\pi_r)$ is finitely generated.

In particular, $\Hom(H_m,\tau_r)$ is finitely generated (in fact finite).
By \propref{ovaj} we may enlarge $B_m$ so that $H_m/B_m$ is $\tau_r\cup\pi_{r+1}\cup\dots\cup\pi_n$-divisible.
As we already know that $H^{m+1}(\Xi;\pi_{r+1})$ is finite, so is $\Ext(H_m,\pi_{r+1})$. Applying
the inductive hypothesis we infer that the localization $(H_m/B_m)_{(t)}$ has at most finitely many
torsion summands for $t\in\pi_{r+1}\cup\dots\cup\pi_n$.

We have assumed that $r-m>1$ hence also $k-2=r-m-1\gqs 1$. On diagonal $k-1$ there appear now only
finitely generated groups and we may proceed as above to show that the groups $H^p(\Xi;\pi_q)$ are finite for all pairs $(p,q)$ with $q-p=k-2$
and $q>r$. This establishes ($*$) and ($**$) for $p>m+1$. Now we consider $E^2_{-(m+1),r}\cong H^{m+1}(\Xi;\pi_r)$. Split
\begin{align*}
 H^{m+1}(\Xi;\pi_r)\cong&\Hom(H_{m+1},\ZZ^\rho)\oplus\Ext(H_m,\ZZ^\rho) \\
	\oplus&\Hom(H_{m+1},\tau_r)\oplus\Ext(H_m,\tau_r).
\end{align*}

We know that $H_{m+1}/B_{m+1}$ is $\pi_{r+1}\cup\dots\cup\pi_n$-divisible. Using the above determined properties
of the quotient group $H_m/B_m$ and \lemref{hom_ext_to_finite}, we infer that the group $H^{m+1}(\Xi;\tau_r)$ is
a split extension of a $\tau_r\setminus(\pi_{r+1}\cup\dots\cup\pi_n)$-bounded group, say $I^2_{-(m+1),r}$, by a finite
$\tau_r\cap(\pi_{r+1}\cup\dots\cup\pi_n)$-group. Then also $E^2_{-(m+1),r}$ is a split extension of $I^2_{-(m+1),r}$
and we proceed as before to infer that $I^2_{-(m+1),r}$ is finite. Therefore $H^{m+1}(\Xi;\tau_r)$ is a finite group,
whence by \propref{ovaj} we may assume that $B_{m+1}$ is such that the quotient $H_{m+1}/B_{m+1}$ is divisible by
$\tau_r\cup\pi_{r+1}\cup\dots\cup\pi_n$, and that for $t\in\tau_r\cup\pi_{r+1}\cup\dots\cup\pi_n$ the divisible
localization $(H_m/B_m)_{(t)}$ only admits finitely many torsion summands.

By \lemref{lem_cor_character_group} the divisibility properties of $H_{m+1}/B_{m+1}$ imply a decomposition
$H_{m+1}\cong\ZZ^{d_{m+1}}\oplus H_{m+1}'$ with $\Hom(H_{m+1}',\ZZ)$ trivial (and $H_{m+1}/B_{m+1}\cong H_{m+1}'/B_{m+1}'$
for a suitable $B_{m+1}'$).

Next we take care of $\Ext(H_m,\ZZ^\rho)$. Set $P=\tau_r\cup\pi_{r+1}\cup\dots\cup\pi_n$ and let $R=\PP\setminus\setof P$.
Since $H_m/B_m$ is $P$-divisible, it is either an $R$-torsion group or we may enlarge $B_m$ if necessary to assume that
$H_m/B_m$ admits a $\ZZ_{t^\infty}$-summand for some $t\in P$. Thus if $H_m/B_m$ is not finitely generated, $\Ext(H_m,\ZZ)$
either contains a subgroup isomorphic to $\Hat\ZZ_t$ modulo a finitely generated group or it contains
an uncountable $R$-local subgroup $\Gamma_R$ (see \lemref{ext}).

On diagonal $k-3$ there are only $P$-bounded groups strictly above row $r$. Hence if $\Gamma_R\lqs E^2_{-(m+1),r}$, then
by \lemref{no_cancellation_2} also $\Gamma_R\lqs E^u_{-(m+1),r}$ for $u\gqs 3$. If $E^2_{-(m+1),r}$ contains a quotient
of $\Hat\ZZ_t$ by a finitely generated subgroup, then by \lemref{no_cancellation} each $E^u_{-(m+1),r}$ for $u\gqs 3$
admits a subgroup isomorphic to a quotient of $\Hat\ZZ_t$ by a finitely generated group. See the figure below.

\vspace{2mm}

\centerline{
\begin{pspicture}(-0.5,0)(9,7)
  \psframe[linestyle=none,fillstyle=solid,fillcolor=gray4](3,6)(4,7)
  \psframe[linestyle=none,fillstyle=solid,fillcolor=gray4](4,6)(5,7)
  \psframe[linestyle=none,fillstyle=solid,fillcolor=gray4](5,6)(6,7)
  \psframe[linestyle=none,fillstyle=solid,fillcolor=gray4](4,5)(5,6)
  \psframe[linestyle=none,fillstyle=solid,fillcolor=gray4](5,5)(6,6)
  \psframe[linestyle=none,fillstyle=solid,fillcolor=gray4](5,4)(6,5)
  \psframe[linestyle=none,fillstyle=solid,fillcolor=gray2](2,6)(3,7)
  \psframe[linestyle=none,fillstyle=solid,fillcolor=gray2](3,5)(4,6)
  \psframe[linestyle=none,fillstyle=solid,fillcolor=gray2](4,4)(5,5)
  \psline[linestyle=dashed]{-}(-0.5,3)(6,3)
  \psline[linestyle=dashed]{-}(-0.5,4)(6,4)
  \psline[linestyle=dashed]{-}(-0.5,5)(6,5)
  \psline[linestyle=dashed]{-}(-0.5,6)(6,6)
  \psline[linestyle=dashed]{-}(-0.5,7)(6,7)
  \psline[linestyle=dashed]{-}(0,2.5)(0,7)
  \psline[linestyle=dashed]{-}(1,2.5)(1,7)
  \psline[linestyle=dashed]{-}(2,2.5)(2,7)
  \psline[linestyle=dashed]{-}(3,2.5)(3,7)
  \psline[linestyle=dashed]{-}(4,2.5)(4,7)
  \psline[linestyle=dashed]{-}(5,2.5)(5,7)
  \psline[linestyle=dashed]{-}(6,2.5)(6,7)
  \psline[linestyle=dashed]{-}(-0.5,1)(6,1)
  \psline[linestyle=dashed]{-}(0,1)(0,1.5)
  \psline[linestyle=dashed]{-}(1,1)(1,1.5)
  \psline[linestyle=dashed]{-}(2,1)(2,1.5)
  \psline[linestyle=dashed]{-}(3,1)(3,1.5)
  \psline[linestyle=dashed]{-}(4,1)(4,1.5)
  \psline[linestyle=dashed]{-}(5,1)(5,1.5)
  \psline[linestyle=dashed]{-}(6,1)(6,1.5)
  \rput(.5,.5){\rnode{A}{$H^{m+2+n-r}(\Xi,\_)$}}
  \rput(5.5,.5){\rnode{B}{$H^{m}(\Xi,\_)$}}
  \rput(.5,6.5){\rnode{C}{\psframebox{$*$}}}
  \rput(4.5,3.5){\rnode{D}{\psframebox{$\Hat\ZZ_t$}}}
  \rput(8.5,3.5){\rnode{E}{$\pi_r(Y)$}}
  \rput(8.5,6.5){\rnode{F}{$\pi_n(Y)$}}
  \ncline[linewidth=0.8mm]{->}{D}{C}
  \psline[linewidth=0.8mm]{->}(.5,.8)(.5,1)
  \psline[linewidth=0.8mm]{->}(5.5,.8)(5.5,1)
\end{pspicture}}

Both cases lead to contradiction hence $H_m/B_m$ is finitely generated, and hence so is $H_m$. The exact sequence
$H_m(L)\to H_m(X)\to H_m(X,L)=H_m$ implies that also $H_m(X)$ is finitely generated, hence the $m$-skeleton $X^{(m)}$ is
finite and we may restart with \stepref{one} in case still $m<r-2$. Therefore with no loss of generality we may assume
$m=r-2$.

We have shown that for $q-p\gqs r+1-m$ as well as for $q-p\in\setof{r-m,r-m-1}$ with $q\gqs r+1$ the groups
$H^p(\Xi;\pi_q)$ are finite. Furthermore the groups $H^m(\Xi;\pi_r)$ and $H^{m+1}(\Xi;\pi_r)$
are finitely generated. By considering exact sequences
\[ \H^p(\Xi;\pi_q)\cong H^p(X,L;\pi_q)\to H^p(X;\pi_q)\to H^p(L;\pi_q) \]
we note that exactly analogous properties hold for the groups $H^p(X;\pi_q)$. Hence we may apply
\propref{ovaj} and \lemref{character_group} to deduce the asserted properties of groups $H_i(X)$.
\qed
\end{itemize}\nqed
\end{proof}

\begin{step}\label{four}
Let $Y\to\Bar Y$ denote Zabrodsky's integral approximation with homotopy fibre $Y_t$.
Then $Y_t^X$ has the homotopy type of a CW complex, and (2) of \thmref{converse2kahn} holds.
\end{step}

\begin{proof}
The map $Y_t\to Y$ is a principal fibration with fibre $\Omega\Bar Y$ hence the fibre of the associated
fibration $(Y_t,*)^{(X,*)}\to(Y,*)^{(X,*)}$ over any point is either empty or homotopy equivalent to the
space $(\Omega\Bar Y,*)^{(X,*)}$.

According to Zabrodsky \cite{zabrodsky}, $\Omega\Bar Y$ is homotopy equivalent to the product
\[ K(\ZZ^{d_r},r-1)\times\dots\times K(\ZZ^{d_2},1) \] where $d_i$ denotes the torsion-free rank of $\pi_i(Y)$.
Then $Y_t^X$ has CW type by \propref{target_kzn}, together with Stasheff's theorem.

For $j\gqs r+1$, the morphism $\pi_j(Y_t)\to\pi_j(Y)$ is bijective, and $\pi_r(Y_t)\to\pi_r(Y)$ is a monomorphism
onto the torsion subgroup $\tau_r$ of $\pi_r(Y)$. In general there exist short exact sequences
\[ 0\to\coker(\pi_{j+1}Y\to\pi_{j+1}\Bar Y)\to\pi_j Y_t\to T(\pi_jY)\to 0. \] Since the groups
$\coker(\pi_{j+1}Y\to\pi_{j+1}\Bar Y)$ are finite, the following implication holds
\begin{equation*} \tag{$*$} \pi_jY*\ZZ_p\neq 0\implies\pi_j Y_t*\ZZ_p\neq 0. \end{equation*}

Note that $Y_t\simeq\prod_{p\in P}Y_t{}_{(p)}$ where $P$ is a finite set of primes. This implies
that for each $p\in P$ the space $(Y_t{}_{(p)},*)^{(X,*)}$ has CW type and \navedi{1} of \stepref{three}
applies to establish (2) of \thmref{converse2kahn} by virtue of ($*$), as claimed.

In particular, for $i\gqs r+1$ the groups $H^i(X;\pi_i)$ are finite.
\end{proof}

\begin{step}\label{five}
Let $Y\!\!\scal{r-1}$ denote the $(r-1)$-connected cover of $Y$. Then $Y\!\!\scal{r-1}^X$ has CW homotopy type.
\end{step}

\begin{proof}
The map $Y\!\!\scal{r-1}\to Y$ is the principal fibration obtained by taking the homotopy fibre of $Y\to Y_{r-1}$.
The fibres of the induced principal fibration $(Y\!\!\scal{r-1},*)^{(X,*)}\to(Y,*)^{(X,*)}$ are either empty or are
homotopy equivalent to $(\Omega Y_{r-1},*)^{(X,*)}$. Since $\pi_k(\Omega Y_{r-1})=0$ for $k\gqs r-1$
and $X$ has finite $(r-2)$-skeleton, the assertion follows.
\end{proof}

\begin{step}\label{six}
By the previous step we may assume that $Y$ is $(r-1)$-connected.
Assume that $X$ is an H-cogroup or that $Y$ is an H-group.
Then also $H^r(X;\pi_r)$ is finitely generated.
\end{step}

\begin{proof}
Note that in case $X$ is an H-cogroup we cannot afford to replace it by a (homologically more
convenient) quotient $X/L$ for suitable $L$. Hence we proceed with $X$ as it is.

We consider the Postnikov tower of principal fibrations $Y_n\to Y_{n-1}\to\dots\to Y_{r}$ for $Y$.
We know that for $i\gqs r+1$ the groups $H^{i}(X;\pi_{i})$ are finite, and the groups $H^{i+1}(X;\pi_i)$
are always $\pi_i$-bounded.

We know that $H_r/B_r$ is $\pi_{r+1}\cup\dots\cup\pi_n$-divisible, and therefore $\Hom(H_r,\ZZ)\to\Hom(B_r,\ZZ)$
is an injection. This guarantees $H_r\cong\ZZ^{d_r}\oplus H_r'$ with $\Hom(H_r',\ZZ)=0$.

We also know that $H_{r-1}/B_{r-1}$ is $\tau_r\cup\pi_{r+1}\cup\dots\cup\pi_n$-divisible
and that for $t\in\pi_{r+1}\cup\dots\cup\pi_n$ the localization $(H_{r-1}/B_{r-1})_{(t)}$ contains
at most finitely many torsion summands. Set $P=\tau_r\cup\pi_{r+1}\cup\dots\cup\pi_n$. If $H_{r-1}$ itself
is not finitely generated, we may assume that either $H_{r-1}/B_{r-1}$ contains a $\ZZ_{t^\infty}$ summand
for some $t\in P$ or that $H_{r-1}/B_{r-1}$ is an infinitely generated $\PP\setminus P$-torsion group.
It follows that $\Ext(H_{r-1},\ZZ)$ contains either a subgroup isomorphic to a quotient of $\Hat\ZZ_t$
modulo a finitely generated subgroup or an uncountable $\PP\setminus P$-local group, the latter by \lemref{ext}.
This leads to contradiction as follows.

Our additional hypotheses guarantee that for each $i\gqs r$ the Puppe exact sequence
\begin{equation*}
H^{i+1}(X;\pi_{i+1})\to[X,Y_{i+1}]_*\to[X,Y_i]_*\to H^{i+2}(X;\pi_{i+1})
\end{equation*}
derived from the principal fibration $Y_{i+1}\to Y_i$, is an exact sequence of groups and group-morphisms. Since
$Y_r=K(\pi_r,r)$, the group $[X,Y_r]_*$ is isomorphic to $H^r(X;\pi_r)$. If $\Ext(H_{r-1},\ZZ)$ has a subgroup
isomorphic to a quotient of $\Hat\ZZ_t$ by a finitely generated subgroup we apply
\lemref{no_cancellation}, and if $\Ext(H_{r-1},\ZZ)$ has an uncountable $\PP\setminus P$-local subgroup $\Gamma$ we apply
\lemref{no_cancellation_2} in a straightforward induction to show that $[X,Y_n]_*=[X,Y]_*$ contains a subgroup which is an
extension of a finite group by a quotient of $\Hat\ZZ_t$ by a finitely generated group or an extension of a finite group
by an uncountable $\PP\setminus P$-local subgroup. In particular, $[X,Y]_*$ is uncountable; which contradicts
\propref{group_components}. This establishes (1*) for this case, and concludes the proof of \thmref{converse2kahn}.
\end{proof}

\begin{example}\label{discrepancy}
Suppose that $Y$ is the total space of a fibration $K(\ZZ_p,n)\to Y\to K(\ZZ,r)$ where $n>r$. This is to say
that $Y$ is the homotopy fibre of a single $k$-invariant $k\colon K(\ZZ,r)\to K(\ZZ_p,n+1)$. Let $X$ be a simply
connected CW complex. If $Y^X$ has CW type, then $H_i(X)$ is finitely generated for $1\lqs i\lqs r-1$.

By \thmref{monster} we know already that for each $i\lqs n$ the group $H_i(X)$ admits a finitely generated
subgroup $B_i$ such that $H_i(X)/B_i$ is $p$-divisible and $(H_{i-1}(X)/B_{i-1})_{(p)}$ only admits finitely many
quasicyclic summands. Moreover for $i\lqs r-2$ already $H_i(X)$ is finitely generated.

By passing to cohomology, it follows that $\H^i(X;\ZZ_p)$ is finite for all $i\lqs n$, and that
$\H^{i}(X;\ZZ)$ is finitely generated for $i\lqs r-1$. If $L$ is a finite subcomplex of $X$ then
the quotient $X/L$ has the same cohomological properties as $X$ and by \propref{ovaj} and \lemref{character_group}
we may infer also the same homological properties as $X$.

Assume a minimal decomposition of $X$ with the associated homology decomposition $\setof{X_i}$. We may pick a finite
subcomplex $L$ of $X$ such that $L$ contains $X_{r-2}$, the image $B_{r-1}'$ of $H_{r-1}(L)\to H_{r-1}(X)$ contains
the subgroup $B_{r-1}$, and the fibre of the restriction fibration $Y^X\to Y^L$ deforms to a point in the total space.
Set $\Xi=X/L$. Note that $\H_i(\Xi)=0$ for $i\lqs r-2$ and that $H_{r-1}(\Xi)\cong H_{r-1}(X)/B_{r-1}'$ is $p$-divisible.
Thus if $H_{r-1}(\Xi)$ is not finitely generated, we may assume by \propref{ovaj} that either it contains
a quasicyclic summand $\ZZ_{p^\infty}$ or that it is a $\PP\setminus\setof{p}$-torsion group. Moreover,
as in \stepref{one} it follows that the set $[\Xi,Y]_*$ is countable.

We replace $X$ by $\Xi$.

We are interested in the induced function
\begin{equation*} \tag{$*$} k_\#\colon[X,K(\ZZ,r)]_*\to[X,K(\ZZ_p,n+1)]_* \end{equation*}
More precisely we would like to determine (the size of) the preimage of $k_\#^{-1}(0)$.

The homotopy class of $k$ is represented by an element $\Hat k$ of $H^{n+1}\big(K(\ZZ,r);\ZZ_p\big)$.
By results of Cartan and Borel the mod-$p$ cohomology algebra $H^*\big(K(\ZZ,r);\ZZ_p\big)$
has a $p$-simple system of generators which are transgressive with respect to the Serre cohomology spectral
sequence of the fibration

\begin{equation*} \tag{$**$} K(\ZZ,r)\to PK(\ZZ,r+1)\to K(\ZZ,r+1). \end{equation*}

Using the associated vector-space basis for $H^*\big(K(\ZZ,r);\ZZ_p\big)$ we expand $\Hat k$ as
$\Hat k=\sum_i\lambda_i\xi_i$ where $\lambda_i\in\ZZ_p$ and $\xi_i\in H^{n+1}\big(K(\ZZ,r);\ZZ_p\big)$.
For each $i$ we may write
\begin{equation*} \tag{\dag} \xi_i=\alpha_i\smile\beta_i \end{equation*}
where $\alpha_i$ is transgressive (and $\beta_i$ may be $1$).

Applying $H^{*}(\_;\ZZ_p)$ to $f\colon X\to K(\ZZ,r)$ we get the induced morphism
\[ f^{*}\colon H^{*}\big(K(\ZZ,r);\ZZ_p\big)\to H^*\big(X;\ZZ_p\big). \]
Thus we may represent the composite \[ X\xrightarrow{f} K(\ZZ,r)\xrightarrow{k} K(\ZZ_p,n+1) \]
with the cohomology class
\begin{equation*}\tag{\ddag} f^*(\Hat k)=\sum_{i}\lambda_i\,f^*(\alpha_i)\smile f^*(\beta_i). \end{equation*}
Set $a_i=\deg\alpha_i\lqs n+1$.
Transgressive elements are in the image of the cohomology suspension with respect to ($**$), hence for each $i$ there
exists an element $\eta_i\in H^{a_i+1}\big(K(\ZZ,r+1);\ZZ_p\big)$ such that $\sigma^*(\eta_i)=\alpha_i$ where
\[ \sigma^*\colon [K(\ZZ,r+1),K(\ZZ_p,a_i+1)]_*\to [\Omega K(\ZZ,r+1),\Omega K(\ZZ_p,a_i+1)]_* \]
is induced by the adjoint of the natural map \[ l\colon S\Omega K(\ZZ,r+1)\to K(\ZZ,r+1). \]
This implies that $f^*(\alpha_i)$ may be represented in $[X,K(\ZZ_p,a_i)]_*$ as the composite
\[ X\xrightarrow{f}K(\ZZ,r)\xrightarrow{\simeq}\Omega K(\ZZ,r+1)\xrightarrow{\Omega\psi_i}
	\Omega K(\ZZ_{p},a_i+1)\xrightarrow{\simeq}K(\ZZ_p,a_i) \]
for some map $\psi_i\colon K(\ZZ,r+1)\to K(\ZZ_p,a_i+1)$. Now we view $f^*(\alpha_i)$ rather as
$\Omega\psi_i{}_{\#}([f])$. Since $\Omega\psi_i{}$ is an H-group morphism, the induced function $\Omega\psi_i{}_\#$
is a group homomorphism. So we consider $\Omega\psi_i{}_\#\colon[X,K(\ZZ,r)]_*\to[X,K(\ZZ_p,a_i)]_*$.

We distinguish two possibilities. The first is that $H_{r-1}(X)$ contains a quasicyclic summand $\ZZ_{p^\infty}$.
Then $[X,K(\ZZ,r)]_*$ contains $\Ext(\ZZ_{p^\infty},\ZZ)\cong\Hat\ZZ_p$ as a direct summand. Since 
$[X,K(\ZZ_p,a_i)]_*\cong\H^{a_i}(X;\ZZ_p)$ is $p$-bounded (even finite if $a_i\lqs n$), the kernel $A_i$ of
\[ \Omega\psi_i{}_\#\vert_{\Hat\ZZ_p}\colon\Hat\ZZ_{p}\to[X,K(\ZZ_p,a_i)]_* \]
is a subgroup of finite index in $\Hat\ZZ_p$, by \lemref{no_cancellation}. The intersection $A=\cap_iA_i$
of finitely many subgroups of finite index is also a subgroup of finite index in $\Hat\ZZ_p$. This is to
say that $\Omega\psi_i{}_\#[f]=f^*(\alpha_i)=0$ for $[f]\in A\lqs\Hat\ZZ_p\lqs H^r(X;\ZZ)$. By (\ddag)
also $k_\#[f]=0$ for all $[f]\in A$. Hence the function $k_\#$ sends an uncountable set to $0\in[X,K(\ZZ_p,n+1)]_*$.
By exactness of the sequence (of pointed sets)
\[ [X,Y]_*\to[X,K(\ZZ,r)]_*\to[X,K(\ZZ_p,n+1)]_* \]
the set $[X,Y]_*$ is uncountable.

The other possibility is that $H_{r-1}(X)$ is a $\PP\setminus\setof{p}$-torsion group which is not finitely generated.
By \lemref{ext} the groups $\Ext(H_{r-1}(X),\ZZ)$ and hence $H^r(X;\ZZ)$ contain an uncountable
$\PP\setminus\setof{p}$-local subgroup $\Gamma$. By \lemref{no_cancellation_2} every morphism from $\Gamma$ into a bounded
$p$-group is trivial, and, proceeding as above, it follows that $\Gamma\subset k_\#^{-1}(0)$.

Both cases contradict the assumption that $H_{r-1}(X)$ is not finitely generated.\blackqed\qed
\end{example}


\appendix
\chapter{Pullbacks and lifting functions}\label{lifting_functions}
\setcounter{thm}{0}

\begin{lem}\label{pullback_covariant}
Let $A$ be the pullback of $B\xrightarrow{\beta} D\xleftarrow{\gamma} C$, and let $X$ be any space. Then $A^X$ is
the pullback of $B^X\xrightarrow{\beta_\#} D^X\xleftarrow{\gamma_\#} C^X$. If $\beta\colon(B,b_0)\to(D,d_0)$ and
$\gamma\colon(C,c_0)\to(D,d_0)$ are maps, then $(A,(b_0,c_0))^{(X,x_0)}$ is the pullback of 
$(B,b_0)^{(X,x_0)}\xrightarrow{\beta_\#}(D,d_0)^{(X,x_0)}\xleftarrow{\gamma_\#}(C,c_0)^{(X,x_0)}$.
\end{lem}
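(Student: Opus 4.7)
The plan is to reduce the statement to the well-known fact that the functor $(-)^X$ preserves binary products, then observe that the pullback is simply a distinguished subspace of the product cut out by an equation.

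First I would recall the standard homeomorphism $(B\times C)^X\approx B^X\times C^X$ given by $f\mapsto(\pi_B\circ f,\pi_C\circ f)$, which is valid for the compact open topology on Hausdorff spaces (see for instance Maunder \cite{maunder}, Chapter 6). This is a special case of \lemref{very_first}. Next I would recall that the pullback $A=B\times_DC$ is, as a topological space, the subspace of $B\times C$ consisting of those $(b,c)$ for which $\beta(b)=\gamma(c)$; equivalently, $A$ is the equalizer of the two maps $\beta\circ\pi_B,\,\gamma\circ\pi_C\colon B\times C\to D$. Consequently $A^X\hookrightarrow(B\times C)^X$ carries the subspace topology, and a function $f\colon X\to B\times C$ factors through $A$ if and only if $\beta\circ\pi_B\circ f=\gamma\circ\pi_C\circ f$.

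Transporting this characterization across the homeomorphism $(B\times C)^X\approx B^X\times C^X$, I would observe that $A^X$ is identified precisely with the subspace of $B^X\times C^X$ consisting of pairs $(g,h)$ with $\beta_\#(g)=\gamma_\#(h)$, which is by definition the pullback of $B^X\xrightarrow{\beta_\#}D^X\xleftarrow{\gamma_\#}C^X$. Since both spaces inherit their topology as subspaces of $B^X\times C^X$ cut out by the same set-theoretic equation, the induced bijection is a homeomorphism, and naturality of all identifications gives the required universal property.

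For the pointed case, I would note that the product $B\times C$ with base point $(b_0,c_0)$ has $A$ as a based subspace (the coherent choice ensures $\beta(b_0)=\gamma(c_0)=d_0$), and the homeomorphism $(B,b_0)^{(X,x_0)}\times(C,c_0)^{(X,x_0)}\approx(B\times C,(b_0,c_0))^{(X,x_0)}$ restricts verbatim, yielding the pointed statement by the same equalizer argument. The main thing to verify carefully is the compatibility of subspace topologies, but since both sides are defined by the same continuous equation on the common ambient space $B^X\times C^X$, there is no real obstacle beyond this routine identification.
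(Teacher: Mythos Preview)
Your proposal is correct and follows essentially the same approach as the paper: reduce to the product homeomorphism $(B\times C)^X\approx B^X\times C^X$ (Maunder, Theorem 6.2.34) and then check that the pullback subspace on one side corresponds to the pullback subspace on the other. The paper's proof is just a two-line version of what you have written out in detail.
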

\begin{proof}
The natural map $F\colon(B\times C)^X\to B^X\times C^X$ is a homeomorphism (see Maunder \cite{maunder}, Theorem 6.2.34),
and it is trivial to check that $F((B\sqcap C)^X)=B^X\sqcap C^X$.
\end{proof}

\begin{defn}
Let $p\colon E\to B$ be a map and let $\eps_0\colon B^I\to B$ be evaluation at $0$.
Let $\Bar E$ denote the pullback of $E\xrightarrow{p}B\xleftarrow{\eps_0}B^I$. A lifting function for $p$
is a map $\lambda\colon\Bar E\to E^I$ that makes the following diagram commute.
\begin{equation*}\begin{diagram}
\node{\Bar E} \arrow{se,t}{\lambda} \arrow[2]{e} \arrow[2]{s} \node[2]{B^I} \arrow[2]{s,r}{\eps_0} \\
\node[2]{E^I} \arrow{ne,b}{p_\#} \arrow{sw,t}{\eps_0} \\
\node{E} \arrow[2]{e,b}{p} \node[2]{B}
\end{diagram}\end{equation*}
It is well known that $p$ has a lifting function if and only if it is a fibration. (See Fadell \cite{fadell2}.)
\end{defn}

The author was unable to find a proof of the following result which
should be folklore. (However, for the compactly generated refinement of the compact open
topology the analogous result {\it is} well known, see Fritsch and Piccinini \cite{fp}.)

\begin{prop}\label{mapping_space_covariant}
Let $p\colon E\to B$ be a fibration and let $X$ be a compactly generated space.
Then $p_\#\colon E^X\to B^X$ is a fibration.

If $p(e_0)=b_0$ where $e_0$ and $b_0$ are nondegenerate base points of their respective spaces,
then $p_\#\colon(E,e_0)^{(X,x_0)}\to(B,b_0)^{(X,x_0)}$ is a fibration, for any choice of $x_0$.
\end{prop}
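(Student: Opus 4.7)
The plan is to construct an explicit lifting function for $p_\#\colon E^X\to B^X$ from a given lifting function $\lambda\colon\bar E\to E^I$ for $p$, by applying $\lambda$ pointwise in $X$. Writing an element of the pullback $\overline{E^X}=E^X\sqcap_{B^X}(B^X)^I$ as a pair $(f,\gamma)$ with $p\circ f=\gamma(0)$ pointwise, define
\[
\mu\colon\overline{E^X}\to(E^X)^I,\qquad \mu(f,\gamma)(t)(x)=\lambda\bigl(f(x),\,s\mapsto\gamma(s)(x)\bigr)(t).
\]
The lifting-function identities $\mu(f,\gamma)(0)=f$ and $p_\#\mu(f,\gamma)=\gamma$ follow directly from the corresponding identities for $\lambda$, applied coordinatewise in $x$; hence the only real content is continuity of $\mu$.

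To establish continuity, I would factor $\mu$ through two auxiliary maps. First there is a pairing
\[
\Phi\colon\overline{E^X}\times X\to\bar E,\qquad (f,\gamma,x)\mapsto\bigl(f(x),\,s\mapsto\gamma(s)(x)\bigr),
\]
whose two coordinates are the evaluation $E^X\times X\to E$ and a twisted evaluation $(B^X)^I\times X\to B^I$, $(\gamma,x)\mapsto(s\mapsto\gamma(s)(x))$. Second there is the canonical map $\bar E\times I\to E$ given by $(e,\sigma,t)\mapsto\lambda(e,\sigma)(t)$, which is continuous since it is $\lambda$ followed by evaluation at a compactum. The map $\mu$ is then the adjoint of the composite
\[
\overline{E^X}\times X\times I\xrightarrow{\Phi\times\id_I}\bar E\times I\longrightarrow E,
\]
so continuity of $\mu$ reduces to continuity of $\Phi$ together with the fact that $I$ is compact Hausdorff (hence exponentiable), which allows passage across the $I$-adjunction for free.

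The principal obstacle is the continuity of the evaluations appearing in $\Phi$: the exponential law $Z^{W\times X}\cong (Z^X)^W$ used in the backward direction normally requires $X$ locally compact Hausdorff, whereas here $X$ is only compactly generated. The remedy is the standard one: since $X$ carries the weak topology with respect to its compact subsets, a map with source $A\times X$ is continuous iff its restrictions $A\times K\to(\cdot)$ are continuous for all compacta $K\subset X$, and on each $A\times K$ the classical local-compactness argument for evaluation applies. This is precisely where the compact-generation hypothesis on $X$ is used and should be the bulk of the technical work; once continuity of evaluation and of $(\gamma,x)\mapsto\gamma(-)(x)$ is checked via this compactum-by-compactum reduction, continuity of $\Phi$, and hence of $\mu$, is immediate.

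For the pointed statement I would deduce it from the free one. Nondegeneracy of $e_0$ and $b_0$ makes the evaluations $\eps_{x_0}\colon E^X\to E$ and $B^X\to B$ into fibrations with fibres $(E,e_0)^{(X,x_0)}$ and $(B,b_0)^{(X,x_0)}$ respectively, and $p_\#$ covers $p$ along these evaluations. To get a lifting function for the restricted map, I would replace $\lambda$ by a regular lifting function for $p$ (possible for every Hurewicz fibration by a standard regularization, using the cofibration $\{b_0\}\hookrightarrow B$ which follows from nondegeneracy of $b_0$): regularity says $\lambda(e,\const_b)=\const_e$. Pointwise application as above then automatically sends pointed data to pointed data, so $\mu$ restricts to a lifting function for the pointed map $p_\#\colon(E,e_0)^{(X,x_0)}\to(B,b_0)^{(X,x_0)}$, completing the proof.
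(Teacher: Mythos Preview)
Your map $\mu$ is exactly the lifting function the paper constructs; the issue is in the continuity argument. The claim that ``a map with source $A\times X$ is continuous iff its restrictions to $A\times K$ are continuous for all compacta $K\subset X$'' is false for general $A$ when $X$ is merely compactly generated: taking $A=Y^X$ it would force the evaluation $Y^X\times X\to Y$ to be continuous, whereas for Hausdorff $X$ this happens precisely when $X$ is locally compact. Hence your pairing $\Phi\colon\overline{E^X}\times X\to\bar E$ need not be continuous, and the proposed factorization does not establish continuity of $\mu$.

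The paper avoids ever forming a product with $X$ on the source side. Since $I$ is compact and $X$ is compactly generated, $X\times I$ is again compactly generated, and one has natural homeomorphisms $(B^X)^I\cong B^{X\times I}\cong(B^I)^X$ and likewise for $E$ (Dugundji, Theorem~XII.5.3). Combined with \lemref{pullback_covariant} this identifies $\overline{E^X}$ with $(E\sqcap B^I)^X$ and $(E^X)^I$ with $(E^I)^X$; under these identifications your $\mu$ becomes the postcomposition $\lambda_\#$, which is continuous for free. The compact-generation hypothesis is spent once, on the exponential law with the compact factor $I$, rather than on an assertion about $A\times X$ for a large space $A$. Your argument can be repaired along similar lines: to check that $\overline{E^X}\times I\to E^X$ is continuous, test against subbasic sets $G(K,V)$, which reduces to continuity of $\overline{E^X}\times I\times K\to E$ for compact $K\subset X$; that map does factor through $\bar E$ since evaluation on compacta is continuous.

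For the pointed statement your plan works, though the justification is slightly tangled: a \emph{regular} lifting function (satisfying $\lambda(e,\const_{p(e)})=\const_e$ for all $e$) exists for any Hurewicz fibration with no base-point hypothesis, so invoking the cofibration $\{b_0\}\hookrightarrow B$ is beside the point there. The paper instead uses nondegeneracy of $e_0$ and $b_0$ only to conclude that $(e_0,\const_{b_0})$ is nondegenerate in $E\sqcap B^I$, so $\lambda$ can be adjusted at that single point to satisfy $\lambda(e_0,\const_{b_0})=\const_{e_0}$; then $\lambda_\#$ restricts to the pointed subspaces. Either route suffices.
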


\begin{proof}
Let $\lambda\colon E\sqcap B^I\to E^I$ be a lifting function for $p$. Here $E\sqcap B^I$ is the pullback
of $E\xrightarrow{p}B\xleftarrow{\eps_0}B^I$. The left-hand side diagram below commutes.
\begin{equation*}\begin{diagram}
\node{E\sqcap B^I} \arrow{se,t}{\lambda} \arrow[2]{e} \arrow[2]{s} \node[2]{B^I} \arrow[2]{s,r}{\eps_0} \\
\node[2]{E^I} \arrow{ne,b}{p_\#} \arrow{sw,t}{\eps_0} \\
\node{E} \arrow[2]{e,b}{p} \node[2]{B}
\end{diagram}\hspace{5ex}\begin{diagram}
\node{(E\sqcap B^I)^X} \arrow{se,t,..}{\Lambda} \arrow[2]{e} \arrow[2]{s} \node[2]{(B^I)^X} \arrow[2]{s,r}{(\eps_0)_\#} \\
\node[2]{(E^I)^X} \arrow{ne,b}{p_{\#\#}} \arrow{sw,t}{(\eps_0)_\#} \\
\node{E^X} \arrow[2]{e,b}{p_\#} \node[2]{B}
\end{diagram} \end{equation*}
We claim that there exists a lifting function $\Lambda\colon E^X\sqcap(B^X)^I\to(E^X)^I$ for $p_\#$.

Since $X$ is compactly generated, so is the product $X\times I$, and therefore $(B^X)^I$ is homeomorphic
to $(B^I)^X$ and $(E^X)^I$ is homeomorphic to $(E^I)^X$ (see Dugundji \cite{dugundji}, Theorem XII.5.3).
By \lemref{pullback_covariant} the pullback
$E^X\sqcap(B^I)^X$ is homeomorphic to $(E\sqcap B^I)^X$, and the lifting function diagram for $p_\#$
transcribes into the right-hand side diagram above. Thus we may set \begin{equation*}\tag{\dag} \Lambda=\lambda_\#,
\end{equation*} and the first assertion follows.

If $p(e_0)=b_0$ and $e_0,b_0$ are nondegenerate, then $\Bar e=(e_0,\const_{b_0})$ is nondegenerate in $E\sqcap B^I$,
and consequently $\lambda$ may be so chosen that $\lambda(\Bar e)=\const_{e_0}$. The function $\Lambda$ as defined
in (\dag) is a lifting function for $p_\#\colon(E,e_0)^{(X,x_0)}\to(B,b_0)^{(X,x_0)}$, as is readily verified.
\end{proof}

Following the proof of Theorem 2.8.14 of Spanier \cite{spanier} we extract

\begin{lem}\label{fib1}
Let $p\colon E\to B$ be a fibration and let $f_0,f_1\colon X\to B$ be homotopic maps. Set $E_0=f_0^*E$, and $E_1=f_1^*E$.
The fibrations $E_0\to X$ and $E_1\to X$ are fibre homotopy equivalent.

Let $\Bar E$ denote the pullback of $E\to B\xleftarrow{\eps_0}B^I$ where $\eps_0$ denotes evaluation at $0$.
Let $\lambda\colon\Bar E\to E^I$ be a lifting function for $p$. Let $\Tilde h\colon X\to B^I$ denote the adjoint
of a homotopy between $f_0$ and $f_1$. Then the map
\begin{equation*}\tag{$\star$} E_0\to E_1,\,\,\,\,(x,e)\mapsto(x,[\eps_1\circ\lambda](e,\Tilde h(x))) \end{equation*}
is a fibre homotopy equivalence with the obvious inverse. \qed
\end{lem}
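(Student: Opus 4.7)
The plan is to follow the argument that underlies Spanier's Theorem 2.8.14, but keep careful track of the explicit formula coming from the lifting function $\lambda$. First I would verify that the assignment $\varphi\colon(x,e)\mapsto\bigl(x,[\eps_1\circ\lambda](e,\Tilde h(x))\bigr)$ actually lands in $E_1$: since $(x,e)\in E_0$ we have $p(e)=f_0(x)=\Tilde h(x)(0)=\eps_0(\Tilde h(x))$, so the pair $(e,\Tilde h(x))$ lies in $\Bar E$; then the defining property $p\circ\eps_t\circ\lambda=\eps_t$ of the lifting function at $t=1$ shows that the endpoint of $\lambda(e,\Tilde h(x))$ lies over $\Tilde h(x)(1)=f_1(x)$. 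Continuity and the fact that $\varphi$ is a map over $X$ are immediate.

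Next I would construct the `obvious inverse' $\psi\colon E_1\to E_0$ by the same formula, but using the reverse homotopy: let $\Tilde h'\colon X\to B^I$ be the adjoint of $(x,t)\mapsto h(x,1-t)$ and set $\psi(x,e')=\bigl(x,[\eps_1\circ\lambda](e',\Tilde h'(x))\bigr)$. Again $\psi$ is a well-defined continuous map over $X$.

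The main work is to exhibit fibre homotopies $\psi\circ\varphi\simeq\id_{E_0}$ and $\varphi\circ\psi\simeq\id_{E_1}$ over $X$. Both are handled the same way, so I describe only the first. The composite $\psi\circ\varphi$ sends $(x,e)$ to the endpoint of the lift, via $\lambda$, of the concatenated path $\Tilde h(x)*\Tilde h'(x)$ starting at $e$. This concatenation is homotopic, rel endpoints, to the constant path $\const_{f_0(x)}$ through a standard reparametrization $K\colon X\times I\to B^I$ with $K(x,0)=\Tilde h(x)*\Tilde h'(x)$ and $K(x,1)=\const_{f_0(x)}$, each $K(x,s)$ being a loop at $f_0(x)$. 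Feeding $K$ into $\lambda$ produces, for every $(x,e)\in E_0$ and every $s\in I$, a path in $E$ starting at $e$ and lying over $K(x,s)$; its endpoint is an element of $E$ lying over $K(x,s)(1)=f_0(x)$, hence defines a point of $E_0$. Taking $s$ as the homotopy parameter gives the required fibre homotopy from $\psi\circ\varphi$ to the map sending $(x,e)$ to the endpoint of the lift of the constant path, which by the normalization $\lambda(e,\const_{p(e)})=\const_e$ (arranged by choice of $\lambda$, or absorbed by a further fibre homotopy) is the identity.

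The main obstacle is technical rather than conceptual: one must be careful that the reparametrization homotopy $K$ and the resulting lift via $\lambda$ are simultaneously continuous in $(x,e,s)$, and that the construction stays over $X$ throughout so that the resulting homotopy is genuinely a fibre homotopy. Once this is set up, the symmetric argument with $f_0$ and $f_1$ interchanged gives the other composition, and $\varphi$ is a fibre homotopy equivalence as claimed. \qed
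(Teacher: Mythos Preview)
Your outline is essentially the argument Spanier gives in Theorem~2.8.14, which is exactly what the paper defers to (the lemma is stated with a \qed and no proof beyond the citation). So the approach matches.

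There is one imprecision worth fixing. You write that $\psi\circ\varphi$ sends $(x,e)$ to the endpoint of the $\lambda$-lift of the concatenation $\Tilde h(x)*\Tilde h'(x)$. That is not what two successive applications of $\lambda$ give: the second coordinate of $\psi\varphi(x,e)$ is $\eps_1\lambda\bigl(\eps_1\lambda(e,\Tilde h(x)),\,\Tilde h'(x)\bigr)$, i.e.\ the endpoint of the \emph{concatenation of two $\lambda$-lifts}, which in general differs from $\eps_1\lambda\bigl(e,\Tilde h(x)*\Tilde h'(x)\bigr)$. Your homotopy built from $K$ connects the latter, not the former, to the identity. The repair is routine: either insert a preliminary fibre homotopy between the two (both are endpoints of lifts of the same path with the same initial point, hence vertically homotopic via the covering homotopy property), or run the deformation directly on the two-step lift by shrinking $\Tilde h$ and $\Tilde h'$ simultaneously, i.e.\ set
\[
H_s(x,e)=\Bigl(x,\ \eps_1\lambda\bigl(\eps_1\lambda(e,\Tilde h(x)\vert_{[0,1-s]}),\,\Tilde h'(x)\vert_{[s,1]}\bigr)\Bigr),
\]
reparametrized so that at $s=1$ both restricted paths are constant. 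With that adjustment (and the normalization of $\lambda$ you already flagged) the argument goes through.
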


\begin{defn}
Let $p\colon E\to B$ be a fibration, and let $f\colon X\to B$ be a map. If $f$ is homotopic to $\const_{b_0}$ then by
\lemref{fib1} the pullback fibration $f^*E\to X$ is fibre homotopy equivalent to the trivial fibration $X\times F\to X$
where $F$ is the fibre of $p$ over $b_0$.

We will call the fibre homotopy equivalence $f^*E\to X\times F$ given by ($\star$) the `canonical
fibre homotopy equivalence', and the associated section $X\to f^*E$ the `canonical section'. Note
that the equivalence ($\star$) depends on the choice of homotopy.
\end{defn}

The following is well known. It is contained implicitly for example in Fadell \cite{fadell2}.

\begin{lem}\label{fib2}
Let $p\colon E\to B$ be a fibration with $p(e_0)=b_0$, and let $F$ denote the fibre of $p$ over $b_0$. Let $\Lambda$ be the
homotopy fibre of the inclusion $F\hookrightarrow E$. More precisely, $\Lambda$ is the pullback of $F\hookrightarrow
E\xleftarrow{\eps_1}PE$ where $\eps_1$ is evaluation at $1$. We may view $\Lambda=(E,e_0,F)^{(I,0,1)}\subset E^I$ and
$\Omega(B,b_0)\equiv\setof{e_0}\times\Omega(B,b_0)\subset\Bar E$. In this way composition with $p$ yields a map
$\Lambda\to\Omega(B,b_0)$, while the lifting function $\lambda\colon\Bar E\to E^I$ maps $\Omega(B,b_0)$ to $\Lambda$.

The maps $\Lambda\to\Omega(B,b_0)$ and $\Omega(B,b_0)\to\Lambda$ are homotopy inverses of each other. \qed
\end{lem}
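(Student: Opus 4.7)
My plan is to exhibit the two maps explicitly, verify that one composite is the identity on the nose, and construct an explicit ``sliding'' homotopy for the other composite from the lifting function $\lambda$. Write $\alpha\colon\Lambda\to\Omega(B,b_0)$ for postcomposition with $p$; it lands in $\Omega(B,b_0)$ since $\gamma(1)\in F$ forces $p\gamma(1)=b_0$. Write $\beta\colon\Omega(B,b_0)\to\Lambda$ for $\omega\mapsto\lambda(e_0,\omega)$; by the axioms $\eps_0\circ\lambda=\mathrm{proj}_E$ and $p_{\#}\circ\lambda=\mathrm{proj}_{B^I}$, the path $\lambda(e_0,\omega)$ starts at $e_0$ and projects to $\omega$, so its endpoint lies in $p^{-1}(b_0)=F$. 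The same axioms give $\alpha\circ\beta=\id_{\Omega(B,b_0)}$ immediately, with no homotopy needed.

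For $\beta\circ\alpha\simeq\id_{\Lambda}$, both $\gamma$ and $\beta\alpha(\gamma)=\lambda(e_0,p\gamma)$ are lifts of the loop $p\gamma$ starting at $e_0$, and my intent is to deform one into the other by preserving the prefix of $\gamma$ and replacing the tail with the canonical lift. Explicitly, I would define $H\colon\Lambda\times I\to\Lambda$ by
\[
H(\gamma,t)(s)=\begin{cases} \gamma(s), & 0\lqs s\lqs t,\\ \lambda\big(\gamma(t),\omega_{\gamma,t}\big)\big(\tfrac{s-t}{1-t}\big), & t\lqs s\lqs 1,\ t<1,\end{cases}
\]
with $\omega_{\gamma,t}(u)=p\gamma(t+u(1-t))$, together with $H(\gamma,1)=\gamma$. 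The two branches agree at $s=t$ because $\lambda(\gamma(t),\omega_{\gamma,t})(0)=\gamma(t)$. Each $H(\gamma,t)$ starts at $e_0$ and ends in $F$ (since $\omega_{\gamma,t}$ ends at $b_0$), and at $t=0$ one recovers $\lambda(e_0,p\gamma)=\beta\alpha(\gamma)$, while at $t=1$ one recovers $\gamma$. Continuity of $H$ on $\Lambda\times[0,1)$ is clear from continuity of evaluation, of $\lambda\colon\Bar E\to E^I$, and of the reparameterization. The main obstacle is continuity at $t=1$: I would circumvent it by first replacing $\lambda$ with a regular lifting function (satisfying $\lambda(e,\const_{p(e)})=\const_e$), whose existence is standard; then $\lambda(\gamma(1),\omega_{\gamma,1})=\const_{\gamma(1)}$, and continuity at $t=1$ follows by a uniform continuity argument for $\lambda$ restricted to compacta, performed in the compact-open topology on $E^I$.
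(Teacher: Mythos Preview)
Your argument is essentially correct, and in fact the paper gives no proof at all: the lemma is stated with a trailing \qed\ and the preamble ``The following is well known. It is contained implicitly for example in Fadell \cite{fadell2}.'' So there is nothing to compare against except the folklore you are reconstructing.

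Two remarks on the details. First, your continuity claim at $t=1$ is right but the phrase ``uniform continuity argument for $\lambda$ restricted to compacta'' undersells what is actually needed. The clean way is: with $\lambda$ regular, the map $\Psi(\gamma,t)=(\gamma(t),\omega_{\gamma,t})$ is continuous $\Lambda\times I\to\Bar E$ even at $t=1$, hence $G=\mathrm{ev}\circ(\lambda\Psi\times\id_I)\colon\Lambda\times I\times I\to E$ is continuous with $G(\gamma,1,u)=\gamma(1)$ for all $u$. Now a tube--lemma argument (compactness of $I$) gives, for any open $V\ni\gamma_0(1)$, a neighbourhood $N$ of $(\gamma_0,1)$ with $G(N\times I)\subset V$; this absorbs the undefined reparameterization $(s-t)/(1-t)$ and yields continuity of $\hat H$ at $(\gamma_0,1,1)$.

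Second, a small logical point: the lemma speaks of \emph{the} lifting function $\lambda$, not necessarily a regular one, so after replacing $\lambda$ by a regular $\lambda'$ you have shown that $\alpha$ and $\beta'(\omega)=\lambda'(e_0,\omega)$ are mutual inverses. To get the statement for the original $\beta$, just observe that $\alpha\beta=\id$ holds for \emph{any} lifting function, so $\beta$ is a right inverse of the homotopy equivalence $\alpha$ and therefore a two--sided homotopy inverse.
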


The following is a particular case of lifting functions for restriction fibrations.

\begin{lem}\label{fib3}
Let $W$ be a Hausdorff space and $PW=\setof{\gamma\,\vert\,\gamma\colon I\to W,\,\,\gamma(0)=w_0}$ the path space.
The pullback $\overline{PW}$ of $PW\xrightarrow{\eps_1}W\xleftarrow{\eps_0}W^I$ may be identified with
$(W,w_0)^{(I\times 0\cup 1\times I,(0,0))}$, the space $PW^I$ may be identified with $(W,w_0)^{(I\times I,0\times I)}$,
and if $\rho\colon I\times I\to I\times 0\cup 1\times I$ is any retraction such that $\rho(0\times I)=\setof{(0,0)}$,
then \[ \mu\colon\overline{PW}\to PW^I,\,\,\,\mu(\phi)=\phi\circ\rho \] is a lifting function for $\eps_1\colon PW\to W$.
In particular, $\rho$ may be chosen so that $\rho(s,1)=(2s,0)$ for $s\lqs\frac{1}{2}$ and $\rho(s,1)=(1,2s-1)$ for
$s\gqs\frac{1}{2}$. \qed
\end{lem}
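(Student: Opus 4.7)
The plan is to verify the claim by first making careful use of the exponential/adjunction identifications, and then constructing the retraction $\rho$ explicitly.

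First I would set up the two homeomorphisms declared at the start of the lemma. An element of $\overline{PW}$ is a pair $(\gamma,\delta)\in PW\times W^I$ with $\gamma(1)=\delta(0)$; gluing along this common value produces a continuous map $\phi\colon I\times 0\cup 1\times I\to W$ with $\phi(s,0)=\gamma(s)$ and $\phi(1,t)=\delta(t)$, so in particular $\phi(0,0)=w_0$. This is a homeomorphism onto $(W,w_0)^{(I\times 0\cup 1\times I,(0,0))}$. Similarly, an element of $PW^I$ is a continuous map $F\colon I\to PW$, equivalent (by the exponential law for the Hausdorff space $W$ with compact open topology, see Maunder \cite{maunder}, 6.2.38) to a map $\tilde F\colon I\times I\to W$ satisfying $\tilde F(0,t)=w_0$, i.e.\ an element of $(W,w_0)^{(I\times I,0\times I)}$. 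Under these identifications, the two canonical maps involved in the lifting function diagram read off as $\tilde F\mapsto \tilde F(\cdot,0)$ (corresponding to $\varepsilon_0\colon PW^I\to PW$) and $\tilde F\mapsto \tilde F(1,\cdot)$ (corresponding to $(\varepsilon_1)_\#\colon PW^I\to W^I$), while the right- and bottom projections of $\overline{PW}$ restrict $\phi$ to $I\times 0$ and $1\times I$ respectively.

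Next I would verify that $\mu(\phi)=\phi\circ\rho$ is a lifting function, given any retraction $\rho\colon I\times I\to I\times 0\cup 1\times I$ collapsing $0\times I$ to $(0,0)$. Continuity of $\mu$ follows from continuity of composition in compact open topology (with the domain $I\times I$ compact Hausdorff). The condition $\mu(\phi)(0,t)=\phi(\rho(0,t))=\phi(0,0)=w_0$ shows $\mu(\phi)\in(W,w_0)^{(I\times I,0\times I)}$, so $\mu$ is well-defined. Since $\rho$ restricts to the identity on $I\times 0$, the equation $\mu(\phi)(s,0)=\phi(s,0)=\gamma(s)$ gives $\varepsilon_0\circ\mu=$ projection to $PW$; since $\rho$ restricts to the identity on $1\times I$, the equation $\mu(\phi)(1,t)=\phi(1,t)=\delta(t)$ gives $(\varepsilon_1)_\#\circ\mu=$ projection to $W^I$. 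These are precisely the two relations that define a lifting function.

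Finally I would construct the specific retraction $\rho$ with the additional property $\rho(s,1)=(2s,0)$ for $s\le\tfrac12$ and $\rho(s,1)=(1,2s-1)$ for $s\ge\tfrac12$. I would parametrize $I\times 0\cup 1\times I$ by $I$ via $\alpha(u)=(2u,0)$ for $u\le\tfrac12$ and $\alpha(u)=(1,2u-1)$ for $u\ge\tfrac12$, and then construct a continuous $\tau\colon I\times I\to I$ with $\tau(s,0)=s/2$, $\tau(1,t)=(t+1)/2$, $\tau(0,t)=0$, and $\tau(s,1)=s$; setting $\rho=\alpha\circ\tau$ gives the desired retraction. One concrete choice is to split $I\times I$ along the diagonal $\{s=t\}$ and define $\tau$ by affine interpolation from the corners on each triangle, verifying that the two pieces agree on the diagonal. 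The main (minor) obstacle is bookkeeping: checking continuity at the gluing points and at the interior corner $(\tfrac12,\tfrac12)$, which reduces to elementary piecewise-linear geometry.
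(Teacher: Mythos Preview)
Your verification is correct. The paper gives no proof of this lemma (it ends with \qed\ immediately after the statement), treating the identifications and the lifting-function property as routine; your argument supplies exactly the expected details, and your piecewise-affine construction of $\tau$ (with $\tau(s,t)=(s+t)/2$ on $\{s\ge t\}$ and $\tau(s,t)=s$ on $\{s\le t\}$) does match on the diagonal and yields the required $\rho$.
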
 

Lemmas \ref{fib1}, \ref{fib2}, \ref{fib3} readily yield

\begin{prop}\label{fibre_contraction}
Let $p\colon E\to B$ be a fibration and let $\Bar E$ denote the pullback of $E\to B\xleftarrow{\eps_0}B^I$ 
where $\eps_0$ denotes evaluation at $0$. Let $\lambda\colon\Bar E\to E^I$ be a lifting function for $p$.
Let $F$ be the fibre of $p$ over $b_0\in B$.

Assume that $F$ contracts in $E$ to $e_0\in E$, and let $k\colon F\to E^I$ denote the adjoint of a contracting homotopy.
Further let $\eps_1\colon E^I\to E$ denote evaluation at $1$.

Then the map
\[ \Omega(B,b_0)\to F\times\Omega(E,e_0),\,\,\,\gamma\mapsto\big(\eps_1[\lambda(e_0,\gamma)],\lambda(e_0,\gamma)*
								k(\eps_1[\lambda(e_0,\gamma)])\big) \]
is a homotopy equivalence with inverse
\[ F\times\Omega(E,e_0)\to\Omega(B,b_0),\,\,\, (x,\omega)\mapsto p\circ[\omega*k^{-1}(x)]
	=(p\circ\omega)*(p\circ k^{-1}(x)).\qed \]
\end{prop}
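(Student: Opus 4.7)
The strategy is to factor both maps in the statement through the homotopy fibre $\Lambda$ of the inclusion $F\hookrightarrow E$, thereby combining \lemref{fib2} with a direct homotopy equivalence $\Lambda\simeq F\times\Omega(E,e_0)$ built from the contraction $k$. Recall $\Lambda=(E,e_0,F)^{(I,0,1)}$, the space of paths in $E$ starting at $e_0$ and ending in $F$. By \lemref{fib2}, the assignments $\gamma\mapsto\lambda(e_0,\gamma)$ and $\phi\mapsto p\circ\phi$ are mutually inverse homotopy equivalences between $\Omega(B,b_0)$ and $\Lambda$.

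Next I would define $\alpha\colon\Lambda\to F\times\Omega(E,e_0)$ by $\phi\mapsto\big(\phi(1),\phi*k(\phi(1))\big)$ and $\beta\colon F\times\Omega(E,e_0)\to\Lambda$ by $(x,\omega)\mapsto\omega*k^{-1}(x)$. Since $k(x)$ is a path from $x$ to $e_0$, the concatenation $\phi*k(\phi(1))$ is a genuine loop at $e_0$; and since $k^{-1}(x)$ is a path from $e_0$ to $x$, the concatenation $\omega*k^{-1}(x)$ is a path from $e_0$ to $x\in F$, hence in $\Lambda$. Both maps are continuous in the compact--open topology. Composing these with the equivalences of \lemref{fib2} yields exactly the maps stated in the proposition, where the first coordinate $\phi(1)$ is rewritten as $\eps_1[\lambda(e_0,\gamma)]$, and $p\circ(\omega*k^{-1}(x))$ distributes over concatenation.

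It then remains to show that $\alpha$ and $\beta$ are homotopy inverses. For $\alpha\circ\beta$, we have
\[
(x,\omega)\mapsto\omega*k^{-1}(x)\mapsto\big((\omega*k^{-1}(x))(1),\,(\omega*k^{-1}(x))*k(x)\big)=\big(x,\omega*(k^{-1}(x)*k(x))\big),
\]
and $k^{-1}(x)*k(x)$ is a path concatenated with its reverse, hence null-homotopic as a loop at $e_0$ via the standard homotopy (reparametrising to shrink the join point); this gives $\alpha\circ\beta\simeq\id$ through a continuous homotopy in $(x,\omega)$. For $\beta\circ\alpha$, we get $\phi\mapsto\phi*k(\phi(1))*k^{-1}(\phi(1))\simeq\phi$ by the same argument, and the homotopy is natural in $\phi$. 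This shows $\alpha,\beta$ are homotopy inverses, so the stated composites with the equivalences of \lemref{fib2} are homotopy equivalences as claimed.

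\noindent The main technical obstacle will be to write the standard null-homotopies of $k^{-1}(x)*k(x)$ and $k(\phi(1))*k^{-1}(\phi(1))$ in a way that is continuous in the parameter $(x,\omega)$ or $\phi$, i.e.\ as genuine maps $\big(F\times\Omega(E,e_0)\big)\times I\to\Omega(E,e_0)$ and $\Lambda\times I\to\Lambda$; this is where one must take care with the compact-open topology and the concatenation/reparametrisation maps, but it reduces to the standard argument producing the explicit shrinking homotopy $H_s(t)=\gamma\gamma^{-1}$ evaluated on a truncated interval $[0,1-s]$, and then continuity is automatic since all building blocks (concatenation, $k$, $k^{-1}$, evaluation at endpoint) are continuous.
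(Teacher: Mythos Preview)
Your proof is correct and follows essentially the same route as the paper: factor through the homotopy fibre $\Lambda$ of $F\hookrightarrow E$, use \lemref{fib2} for $\Omega(B,b_0)\simeq\Lambda$, and then exhibit $\Lambda\simeq F\times\Omega(E,e_0)$ via the contraction $k$. The only difference is packaging: the paper obtains the second equivalence as an instance of \lemref{fib1} applied to the path fibration $\eps_1\colon PE\to E$ (pulling back along the inclusion $F\hookrightarrow E$ versus along $\const_{e_0}$, with $k$ as the connecting homotopy and the explicit lifting function of \lemref{fib3} producing the concatenation formula), whereas you verify $\alpha\circ\beta\simeq\id$ and $\beta\circ\alpha\simeq\id$ directly via the standard null-homotopy of $\gamma*\gamma^{-1}$. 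Your direct argument is perfectly fine and arguably more transparent; the paper's version has the advantage that fibre-homotopy equivalence over $F$ comes for free from \lemref{fib1}, so the first coordinate is automatically preserved.
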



\chapter{Quasitopological groups}\label{quasi_groups}
\setcounter{thm}{0}

If $G$ is an uncountable abelian group then the geometric realization $Y$ of the simplicial Eilenberg-MacLane
group $K(G,n)$ (see Milnor \cite{milnor3}) is not a topological group in its CW topology with the cartesian product.
More precisely, there exists a function $\mu\colon Y\times Y\to Y$ which is the multiplication of an abelian
group but is continuous only on compact subsets of $Y\times Y$.

In general this is still perfectly acceptable since the category of compactly generated spaces is usually suitable
enough for homotopy theory. However, if we want to consider function spaces of maps into $Y$ equipped with the compact
open topology to have a good grip on the open sets, we encounter problems which stem from the fact that even for compact
spaces $K$ the function space $Y^K$ need not be compactly generated.

\begin{defn}
Let $Y$ be a (Hausdorff) topological space with distinguished element $y_0$ and a function $\mu\colon Y\times Y\to Y$
such that $(Y,\mu)$ is a monoid with unit $y_0$, and for every compact subset $C\subset Y$ the restrictions
\begin{equation*}\tag{\dag} \mu\vert_{Y\times C}\colon Y\times C\to Y,\,\,\,\mu\vert_{C\times Y}\colon C\times Y\to Y,
\end{equation*} are continuous. Then we say that $Y$ is a {\it quasitopological monoid}.

If, in addition, there exists a continuous function $\inv\colon Y\to Y$ such that $(Y,\mu,\inv)$ is a group
with unit $y_0$ then we say that $Y$ is a {\it quasitopological group}.

A morphism of quasitopological groups is a continuous homomorphism.
\end{defn}

Evidently if $Y$ is compactly generated then $\mu$ is continuous on compact subsets of $Y\times Y$ if and only if
the restrictions (\dag) are continuous for compact $C$.

Therefore by \cite{milnor3} for every abelian group $G$ there exists a CW complex $Y$ of type $K(G,n)$ which
is an abelian quasitopological group. (And if $G$ is countable, `quasi' may be omitted.)

For another example, by Dold and Thom \cite{dold_thom} for every connected simplicial complex $X$, the infinite symmetric
product $SP(X)$ is an abelian quasitopological monoid.

Clearly,
\begin{lem}
If $Y$ is an abelian quasitopological monoid or a quasitopological group then continuity of (\dag) for all compact $C$
is equivalent to continuity of $\mu\vert_{Y\times C}\colon Y\times C\to Y$ for all compact $C$.\qed
\end{lem}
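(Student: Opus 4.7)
One direction is trivial: if both restrictions in \textup{(\dag)} are continuous for every compact $C$, then in particular $\mu|_{Y\times C}$ is. So the substantive content is the converse, and I would treat the two cases (abelian quasitopological monoid, quasitopological group) separately.

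In the abelian case, let $C\subset Y$ be compact. The flip map $\tau\colon C\times Y\to Y\times C$, $(c,y)\mapsto(y,c)$, is a homeomorphism, and commutativity of $\mu$ gives the identity $\mu|_{C\times Y}=\mu|_{Y\times C}\circ\tau$. Continuity of $\mu|_{Y\times C}$ therefore transfers directly to $\mu|_{C\times Y}$.

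For the quasitopological group case the plan is to reduce left-multiplication-by-compact to right-multiplication-by-compact via inversion, using the standard identity $\mu(c,y)=\inv\bigl(\mu(\inv(y),\inv(c))\bigr)$. First I would observe that since $\inv\colon Y\to Y$ is continuous, the image $C':=\inv(C)$ is again compact. Then I would write $\mu|_{C\times Y}$ as the composite
\[
C\times Y\xrightarrow{\tau}Y\times C\xrightarrow{\inv\times\inv}Y\times C'\xrightarrow{\mu|_{Y\times C'}}Y\xrightarrow{\inv}Y,
\]
in which $\tau$ is a homeomorphism, $\inv\times\inv$ is continuous because $\inv$ is, $\mu|_{Y\times C'}$ is continuous by hypothesis (applied to the compact set $C'$), and the final $\inv$ is continuous. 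Composition of continuous maps then yields continuity of $\mu|_{C\times Y}$.

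There is no real obstacle here since the argument is entirely formal and uses only the continuity of $\inv$ together with the group identity; the only thing one must be careful about is that we do \emph{not} need $\mu$ to be globally continuous on $Y\times Y$, so one must keep track of the fact that each occurrence of $\mu$ in the composite above is a legitimate restriction to a slice of the form $Y\times(\text{compact})$, which is guaranteed by the hypothesis applied to $C'$.
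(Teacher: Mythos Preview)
Your argument is correct. The paper treats this lemma as self-evident and gives no proof (just a \qed); your write-up is exactly the natural way to fill in the details, using commutativity in the abelian case and the identity $cy=(y^{-1}c^{-1})^{-1}$ together with continuity of $\inv$ in the group case.
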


\begin{prop}\label{quasi_function}
Let $(Y,\mu,\inv,y_0)$ be a quasitopological group and $K$ a compactum. Given $f,g\in Y^K$, the function
$K\to Y$, defined by $x\mapsto\mu(f(x),g(x))$, is continuous. Define
\[ M\colon Y^K\times Y^K\to Y^K,\,\,\,M(f,g)(x)=\mu(f(x),g(x));\,\,\,\,
	I\colon Y^K\to Y^K,\,\,\,I(f)=\inv\circ f. \]
Then $(Y^K,M,I,\const_{y_0})$ is a quasitopological group. If $A$ is any subset of $K$ then
$(Y,y_0)^{(K,A)}$ is a closed quasitopological subgroup.
\end{prop}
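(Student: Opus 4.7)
The plan is to exploit the exponential law for the compact open topology together with the compact-strip continuity of $\mu$ provided by the quasitopological group structure on $Y$.

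First, for fixed $f,g\in Y^K$, continuity of $x\mapsto\mu(f(x),g(x))$ follows because $g(K)\subset Y$ is compact, so $\mu\vert_{Y\times g(K)}$ is continuous by hypothesis; composing with the continuous map $x\mapsto(f(x),g(x))\colon K\to Y\times g(K)$ yields $M(f,g)\in Y^K$. Likewise $I(f)=\inv\circ f$ is continuous since $\inv$ is continuous on $Y$. The monoid and group axioms, with unit $\const_{y_0}$, hold pointwise from those of $(Y,\mu,\inv,y_0)$. Continuity of $I\colon Y^K\to Y^K$ is the standard fact that post-composition with a continuous map is continuous in the compact open topology.

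The crux is checking that $M\vert_{Y^K\times C}$ and $M\vert_{C\times Y^K}$ are continuous for every compact $C\subset Y^K$; I will argue for the first, the second being symmetric. Since $K$ is a compactum and hence locally compact Hausdorff, the evaluation map
\[
e\colon Y^K\times K\to Y,\qquad(h,x)\mapsto h(x),
\]
is continuous (see Maunder \cite{maunder}, Theorem 6.2.27). Therefore $D:=e(C\times K)$ is a compact subset of $Y$ and, by hypothesis, $\mu\vert_{Y\times D}\colon Y\times D\to Y$ is continuous. The composite
\[
Y^K\times C\times K\;\xrightarrow{(f,g,x)\mapsto(f(x),g(x))}\;Y\times D\;\xrightarrow{\mu\vert_{Y\times D}}\;Y
\]
is therefore continuous, and its exponential adjoint ($K$ locally compact Hausdorff) is precisely the restriction $M\vert_{Y^K\times C}\colon Y^K\times C\to Y^K$, proving its continuity.

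Finally, for any $A\subset K$ the space $(Y,y_0)^{(K,A)}=\bigcap_{a\in A}\mathrm{ev}_a^{-1}(y_0)$ is closed in $Y^K$, since each evaluation $\mathrm{ev}_a\colon Y^K\to Y$ is continuous and $\{y_0\}$ is closed in the Hausdorff space $Y$. It is closed under $M$ and $I$ because $\mu(y_0,y_0)=y_0$ and $\inv(y_0)=y_0$, so the inherited structure is that of a closed quasitopological subgroup. The main obstacle is the adjunction argument in the middle step: one cannot adjoin the product $Y^K\times Y^K\times K\to Y$ directly since $\mu$ is only compact-strip continuous, so the restriction to the strip $Y\times D$ determined by $C$ is essential.
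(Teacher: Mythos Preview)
Your proof is correct and follows essentially the same route as the paper: both arguments use continuity of evaluation $e\colon Y^K\times K\to Y$ to obtain the compact set $D=e(C\times K)$, and then exploit continuity of $\mu\vert_{Y\times D}$. The only cosmetic difference is that the paper phrases the key step via the homeomorphism $Y^K\times D^K\approx(Y\times D)^K$ followed by post-composition with $\mu\vert_{Y\times D}$, whereas you equivalently pass through the evaluation--adjunction; your version also spells out the parts the paper dismisses as trivial.
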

\begin{proof}
Let $\Gamma$ be a compact subset of $Y^K$. The evaluation map $e\colon Y^K\times K\to Y$, $e(f,x)=f(x)$,
is continuous, and hence $L=e(\Gamma\times K)=\setof{f(x)\,\vert\,f\in\Gamma,\,x\in K}$ is compact. Note that
$\Gamma\subset L^K$. Since $\mu\vert_{Y\times L}$ is continuous, so is the composite
\[ Y^K\times\Gamma\hookrightarrow Y^K\times L^K\xrightarrow{\approx}(Y\times L)^K\xrightarrow{(\mu\vert_{Y\times L})_\#}
	Y^K. \]
The other assertions are trivial.
\end{proof}

Quasitopological groups have enough structure to avoid base point problems;
the proof carries over from the standard case mutatis mutandis.
\begin{prop}\label{quasi_base_point}
\begin{enumerate}
\item	Let $G$ be a quasitopological group with unit $e$ and let $Z$ be a space with $z_0\in Z$.
	Let $f\colon Z\to G$ be a map. Then $f$ can be homotoped to a map that sends $z_0$ to
	any point in the path component of $f(z_0)$.

	If $f,g\colon Z\to G$ are freely homotopic, then they are homotopic with a homotopy preserving $z_0$.
\item	If $G_1$ and $G_2$ are freely homotopy equivalent quasitopological groups then for any choice
	of base points $g_1\in G_1$, $g_2\in G_2$, the pairs $(G_1,g_1)$ and $(G_2,g_2)$ are homotopy equivalent. \qed
\end{enumerate}
\end{prop}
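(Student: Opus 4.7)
The plan is to imitate the classical proof for topological groups, using the multiplication $\mu$ to translate by elements tracing out paths in $G$, and to invoke the quasitopological axiom to guarantee continuity, since at each stage the `left factor' of $\mu$ will vary over a compact subset of $G$ (namely the image of some path).

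For \navedi{1}, given $f\colon Z\to G$ and a point $p$ in the path component of $f(z_0)$, I would pick a path $\delta\colon I\to G$ from $f(z_0)$ to $p$ and set $\alpha(t)=\mu(\delta(t),\inv(f(z_0)))$, which is a path from $e$ to $p\cdot\inv(f(z_0))$. The image $\alpha(I)$ is compact, so $\mu\vert_{\alpha(I)\times G}\colon\alpha(I)\times G\to G$ is continuous by the quasitopological assumption, whence
\[ H\colon Z\times I\to G,\qquad H(z,t)=\mu(\alpha(t),f(z)) \]
is continuous as the composite $Z\times I\to\alpha(I)\times G\to G$. Then $H(\_,0)=f$ and $H(z_0,1)=p$, proving the first claim. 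For the second claim, suppose $f,g\colon Z\to G$ are pointed (so $f(z_0)=g(z_0)=g_0$) and freely homotopic via $F\colon Z\times I\to G$. The loop $\gamma(t)=F(z_0,t)$ at $g_0$ has compact image, hence
\[ \widetilde F(z,t)=\mu\bigl(\mu(g_0,\inv(\gamma(t))),\,F(z,t)\bigr) \]
is continuous by the same compactness argument, interpolates between $f$ and $g$, and satisfies $\widetilde F(z_0,t)=g_0$ for all $t$.

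For \navedi{2}, pick free homotopy inverses $\varphi\colon G_1\to G_2$ and $\psi\colon G_2\to G_1$. By the first part of \navedi{1}, I would homotope $\varphi$ to $\varphi'$ with $\varphi'(g_1)=g_2$, and $\psi$ to $\psi'$ with $\psi'(g_2)=g_1$. Then $\psi'\varphi'$ and $\varphi'\psi'$ are pointed self-maps of $(G_1,g_1)$ and $(G_2,g_2)$ respectively, and remain freely homotopic to the identities. Applying the second part of \navedi{1} upgrades each of these free homotopies to a base point preserving homotopy, so $\varphi'$ is a pointed homotopy equivalence. The main technical point throughout is the continuity of the homotopies constructed, and this is exactly what the quasitopological axiom is designed to ensure; no other obstacles arise.
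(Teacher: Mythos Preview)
Your proof is correct and is precisely the standard argument the paper alludes to: it states only that ``the proof carries over from the standard case mutatis mutandis'' and gives no details. Your careful check that at each stage the left factor of $\mu$ ranges over a compact set (the image of a path) is exactly the point needed for the quasitopological hypothesis to supply continuity, so there is nothing to add.
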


The proof of the following is straightforward.
\begin{lem}\label{quasi_pullback}
\begin{enumerate}
\item	The cartesian product of finitely many quasitopological groups is a quasitopological group.
\item	If $B\xrightarrow{\varphi}A\xleftarrow{\eps}P$ is a diagram of quasitopological groups and
	morphisms, then the pullback space $E$ admits a natural quasitopological group structure so that
	the pullback square is a diagram of quasitopological groups and morphisms.\qed
\end{enumerate}
\end{lem}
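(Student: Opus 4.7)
The plan is to handle part (1) by showing that the product group structure is coordinate-wise, so continuity on compacta reduces to continuity on each factor, and then to build part (2) by restricting the product quasitopological structure to the pullback as a subgroup.

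For part (1), given quasitopological groups $G_1,\dots,G_n$, I would equip $G=G_1\times\dots\times G_n$ with the coordinate-wise multiplication $\mu$ and inversion $\inv$. Continuity of $\inv=\inv_1\times\dots\times\inv_n$ is automatic from the product continuity of each factor. For the partial continuity condition of $\mu$, let $C\subset G$ be compact, and set $C_i=\pi_i(C)$; each $C_i$ is compact in $G_i$, and $C\subset C_1\times\dots\times C_n$. Up to reshuffling factors, $\mu|_{G\times C_1\times\dots\times C_n}$ is the product of the maps $\mu_i|_{G_i\times C_i}$, each of which is continuous by assumption on $G_i$. Restricting to $G\times C$ then yields continuity of $\mu|_{G\times C}$, which suffices in view of the lemma preceding the statement.

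For part (2), I would first note that $E=\{(b,p)\in B\times P\colon\varphi(b)=\eps(p)\}$ is an algebraic subgroup of $B\times P$: if $(b,p),(b',p')\in E$ then $\varphi(bb')=\varphi(b)\varphi(b')=\eps(p)\eps(p')=\eps(pp')$, using that $\varphi,\eps$ are homomorphisms, and similarly for inverses. Since $B\times P$ is a quasitopological group by part (1), the inversion $\inv_{B\times P}$ is continuous, and its restriction to the closed subspace $E$ (closed because $B,A,P$ are Hausdorff) lands in $E$ and is continuous. For multiplication on $E$, let $C\subset E$ be compact. Then $C$ is compact in $B\times P$, so $\mu_{B\times P}|_{(B\times P)\times C}$ is continuous by part (1); restricting the source to the subspace $E\times C\hookrightarrow(B\times P)\times C$ and noting that the image lies in the subgroup $E$ yields continuity of $\mu_E|_{E\times C}\colon E\times C\to E$. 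The symmetric restriction $\mu_E|_{C\times E}$ is handled analogously. Finally, the projections $E\to B$ and $E\to P$ are continuous group homomorphisms by construction, so the pullback square is a diagram of quasitopological groups and morphisms.

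No serious obstacle arises: the only subtle point is remembering that a compact subset $C$ of $E$ is automatically compact in $B\times P$, so that the partial continuity on the product, established in part (1), passes directly to the subgroup $E$ with its subspace topology. Everything else is formal verification of the algebraic and topological axioms.
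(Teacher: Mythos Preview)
Your proof is correct and is precisely the routine verification the paper has in mind; the paper omits the argument entirely, labeling it ``straightforward'' and placing the \qed\ in the statement. One tiny remark: your appeal to ``the lemma preceding the statement'' for one-sided continuity is really an appeal to its proof (using $\mu(c,y)=\inv(\mu(\inv y,\inv c))$ once $\inv$ is known to be continuous), since that lemma as literally stated presupposes the quasitopological group structure you are establishing; alternatively you could just verify $\mu|_{C\times G}$ directly by the same symmetric argument.
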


\propref{quasi_function} and \lemref{quasi_pullback} yield
\begin{cor}
Let $\varphi\colon B\to A$ be a morphism of quasitopological groups. Then the canonical factorization
$B\xrightarrow{\simeq} B'\xrightarrow{\varphi'}A$ of $\varphi$ into the composite of a homotopy equivalence
followed by a fibration is a sequence of morphisms of quasitopological groups.\qed
\end{cor}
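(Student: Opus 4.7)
The plan is to exhibit the mapping path space construction of the factorization explicitly as a pullback of quasitopological groups, then recognize each leg as a morphism by inspection. First I would apply \propref{quasi_function} with the compactum $K=I$ to conclude that the path space $A^I$ is itself a quasitopological group, with pointwise multiplication $M(f,g)(t)=\mu(f(t),g(t))$ and pointwise inversion. Both evaluation maps $\eps_0,\eps_1\colon A^I\to A$ are continuous and satisfy $\eps_t(M(f,g))=\mu(\eps_t f,\eps_t g)$, so each is a morphism of quasitopological groups.

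Next I would form the canonical mapping path space $B'$ as the pullback of
\[ B\xrightarrow{\varphi}A\xleftarrow{\eps_0}A^I, \]
so that $B'=\setof{(b,\gamma)\,\vert\,\gamma(0)=\varphi(b)}$, and the canonical factorization is $B\xrightarrow{s}B'\xrightarrow{\varphi'}A$ with $s(b)=(b,\const_{\varphi(b)})$ and $\varphi'(b,\gamma)=\gamma(1)=\eps_1\circ\pr_{A^I}$. Since $\varphi$ and $\eps_0$ are morphisms of quasitopological groups, \lemref{quasi_pullback} endows $B'$ with a quasitopological group structure (coordinate-wise multiplication and inversion from $B\times A^I$) which makes both projections $B'\to B$ and $B'\to A^I$ morphisms.

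It then remains to check that $s$ and $\varphi'$ are morphisms. For $s$, the map $b\mapsto\const_{\varphi(b)}$ factors through the constant-path embedding $A\to A^I$, which is continuous and clearly a homomorphism; paired with $\id_B$ it lands in $B'$ and defines a morphism $B\to B'$. For $\varphi'$, it is the composite of the projection $B'\to A^I$ (a morphism by the pullback construction) with $\eps_1$ (a morphism by the first paragraph), hence a morphism. Since nothing in the plan requires more than the quasitopological group structure on $A^I$ and the pullback lemma, there is no real obstacle; the entire argument is a matter of verifying that standard continuous constructions (path space, constant-path section, endpoint evaluation, pullback) respect the group operations on compact subsets.
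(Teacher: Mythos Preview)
Your proof is correct and follows essentially the same approach as the paper: the paper simply states that \propref{quasi_function} and \lemref{quasi_pullback} yield the corollary, and you have unpacked exactly these two ingredients (applying \propref{quasi_function} with $K=I$ to get the quasitopological group structure on $A^I$, then using \lemref{quasi_pullback} for the mapping path space pullback) together with the routine verifications that the section and endpoint evaluation are morphisms.
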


\begin{defn}
Let $G$ be a quasitopological group and $E$ a space. Then $E$ is a left $G$-space if
there exists a left action $\Phi\colon G\times E\to E$ such that the restrictions $\Phi\vert_{C\times E}$
and $\Phi\vert_{G\times D}$ are continuous for all compact subsets $C$ of $G$ and $D$ of $E$.
\end{defn}

\begin{prop}\label{compactly_open}
Let $p\colon E\to B$ be a surjective morphism of quasitopological groups and let $G=\ker p$. Then $G$
is a quasitopological group and $E$ is a free left $G$-space with the following property.

For any space $Z$ and continuous maps $f_1,f_2\colon Z\to E$ such that $p\circ f_1=p\circ f_2$ there
exists a function $f\colon Z\to G$ which is continuous on the compact subsets of $Z$ such that
$f(z)\cdot f_1(z)=f_2(z)$, for all $z\in Z$.\qed
\end{prop}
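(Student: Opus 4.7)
The plan is to verify the three assertions \emph{($G$ is a quasitopological group; $E$ is a free left $G$-space; the lifting property)} in order, each being essentially a routine consequence of the quasitopological group axioms.

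First I would check that $G=\ker p$ is a quasitopological group. Since $B$ is Hausdorff and $p$ is continuous, $G$ is closed in $E$. As $G$ is a subgroup, $\mu_E$ and $\inv_E$ restrict to $G$; for compact $C\subset G$ the restrictions $\mu_E|_{G\times C}$ and $\mu_E|_{C\times G}$ are continuous because $C$ is also compact in $E$ and $E$ is quasitopological. Similarly, $\inv_E|_G$ is continuous.

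Next, define the action $\Phi\colon G\times E\to E$ by $\Phi(g,e)=\mu_E(g,e)$. Freeness is immediate. For compact $C\subset G$, continuity of $\Phi|_{C\times E}$ follows from continuity of $\mu_E|_{C\times E}$ (since $C$ is compact in $E$); for compact $D\subset E$, continuity of $\Phi|_{G\times D}$ follows from continuity of $\mu_E|_{E\times D}$. Finally, $p(e_1)=p(e_2)$ if and only if $p(e_2\cdot e_1^{-1})=e_B$, i.e.\ $e_2\cdot e_1^{-1}\in G$, so $p$ identifies $e_1$ and $e_2$ precisely when they lie in the same $G$-orbit.

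For the main assertion, given $f_1,f_2\colon Z\to E$ continuous with $p\circ f_1=p\circ f_2$, define
\[ f(z)=\mu_E\bigl(f_2(z),\inv_E(f_1(z))\bigr). \]
Then $p(f(z))=p(f_2(z))\cdot p(f_1(z))^{-1}=e_B$, so $f$ takes values in $G$, and by construction $f(z)\cdot f_1(z)=f_2(z)$. It remains to verify that $f$ is continuous on every compact subset $K\subset Z$. Consider the continuous map $h\colon Z\to E\times E$ given by $h(z)=(f_2(z),\inv_E(f_1(z)))$. Its image $h(K)$ lies in $D_1\times D_2$, where $D_1=f_2(K)$ and $D_2=\inv_E(f_1(K))$ are both compact in $E$. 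The hypothesis on $\mu_E$ makes $\mu_E|_{D_1\times E}$ continuous, hence so is $\mu_E|_{D_1\times D_2}$, and therefore $f|_K=\mu_E\circ h|_K$ is continuous.

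The only mildly subtle point is the last one, ensuring that even though $\mu_E$ is not globally continuous, its restriction to a product of two compact sets is continuous, which follows at once from the one-sided compact continuity axiom; everything else is a direct check.
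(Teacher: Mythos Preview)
Your proposal is correct and is precisely the routine verification the paper omits (the statement ends with \qed\ and no proof). Each step---closedness of $G$, the quasitopological structure on $G$, the $G$-action on $E$, and the definition $f(z)=f_2(z)\cdot f_1(z)^{-1}$ together with the observation that $\mu_E$ restricted to a product of compacta is continuous---is exactly what is needed.
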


\begin{defn}
Let $G$ be a quasitopological group. A free left action of $G$ on $E$ with the property
as in \propref{compactly_open} is {\it compactly open}. (Compare the definition preceding
\lemref{lem_g_action_1} on page \pageref{lem_g_action_1}.)
\end{defn}

\begin{cor}\label{quasi_j_cohen}
Let $\setof{G_i}$ be a sequence of quasitopological groups.
Let $\setof{p_i}\colon\setof{E_i}\to\setof{B_i}$ be a level preserving morphism of inverse sequences
where for each $i$, the space $E_i$ is a free left $G_i$-space and $p_i\colon E_i\to B_i$ is a compactly open
fibration. Assume that the maps $E_i\to E_{i-1}$ are equivariant given by morphisms $\omega_i\colon G_i\to G_{i-1}$.

Let $p_\infty\colon E_\infty\to B_\infty$ denote the induced inverse limit map.
If the $\omega_i$ are fibrations then $p_\infty$ is a fibration for the class of compactly generated spaces. \qed
\end{cor}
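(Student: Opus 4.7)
The plan is to verify the homotopy lifting property for $p_\infty$ with respect to an arbitrary compactly generated Hausdorff space $Z$ by constructing coherent lifts at each finite level and then assembling them via the universal property of the inverse limit. So start with a continuous $f\colon Z\to E_\infty$ and a homotopy $H\colon Z\times I\to B_\infty$ satisfying $p_\infty\circ f=H\vert_{Z\times 0}$, and write $f_i$ and $H_i$ for the composites of $f$ and $H$ with the canonical limit projections to $E_i$ and $B_i$ respectively. I will write $q_i\colon E_i\to E_{i-1}$ for the bonding maps in $\setof{E_i}$. The goal of the induction is to produce lifts $\tilde H_i\colon Z\times I\to E_i$ with $\tilde H_i\vert_{Z\times 0}=f_i$, $p_i\circ\tilde H_i=H_i$, and $q_i\circ\tilde H_i=\tilde H_{i-1}$.

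For the induction step, first invoke the classical homotopy lifting property of the Hurewicz fibration $p_i$ to obtain some lift $\tilde H_i'\colon Z\times I\to E_i$ with $\tilde H_i'\vert_{Z\times 0}=f_i$ and $p_i\circ\tilde H_i'=H_i$. Then $q_i\circ\tilde H_i'$ and the already-constructed $\tilde H_{i-1}$ lie pointwise in the same $p_{i-1}$-fibre (both project to $H_{i-1}$) and agree on $Z\times 0$. Because the fibres of $p_{i-1}$ are exactly the $G_{i-1}$-orbits and the action is compactly open, \propref{compactly_open} yields a function $\phi_{i-1}\colon Z\times I\to G_{i-1}$, continuous on every compact subset of $Z\times I$, satisfying
\[
\tilde H_{i-1}(z,t)=\phi_{i-1}(z,t)\cdot q_i\tilde H_i'(z,t),
\]
with $\phi_{i-1}\vert_{Z\times 0}$ identically the unit of $G_{i-1}$. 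Since $Z$ is compactly generated Hausdorff and $I$ is compact Hausdorff, the product $Z\times I$ is itself compactly generated Hausdorff, so continuity on compact subsets upgrades $\phi_{i-1}$ to a genuinely continuous map.

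Now use that $\omega_i\colon G_i\to G_{i-1}$ is a Hurewicz fibration to lift $\phi_{i-1}$ through $\omega_i$ starting from the constant map at the unit of $G_i$, obtaining $\psi_i\colon Z\times I\to G_i$ with $\omega_i\circ\psi_i=\phi_{i-1}$ and $\psi_i\vert_{Z\times 0}$ constant at the unit. Define $\tilde H_i(z,t)=\psi_i(z,t)\cdot\tilde H_i'(z,t)$; continuity of this pointwise product follows from the quasitopological-action axiom, which guarantees that the restriction of the $G_i$-action on $E_i$ to $\psi_i(K)\times E_i$ is continuous for each compact $K\subset Z\times I$, combined with compact generation of $Z\times I$. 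Equivariance of $q_i$ then gives $q_i\circ\tilde H_i=(\omega_i\circ\psi_i)\cdot(q_i\circ\tilde H_i')=\phi_{i-1}\cdot(q_i\circ\tilde H_i')=\tilde H_{i-1}$; that $\psi_i\vert_{Z\times 0}$ is the unit yields $\tilde H_i\vert_{Z\times 0}=f_i$; and the fact that the $G_i$-orbits are the $p_i$-fibres yields $p_i\circ\tilde H_i=H_i$. The coherent family $\setof{\tilde H_i}$ then induces, by the universal property of the inverse limit together with the fact that $E_\infty$ carries the subspace topology from $\prod_iE_i$, a continuous $\tilde H\colon Z\times I\to E_\infty$ with $p_\infty\circ\tilde H=H$ and $\tilde H\vert_{Z\times 0}=f$.

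The main obstacle is the middle step of the induction: ensuring that the correcting map $\phi_{i-1}$, which by the compactly open hypothesis is a priori only continuous on compact subsets of $Z\times I$, is actually globally continuous. This is precisely where the assumption that $Z$ be compactly generated enters, and it is also the reason the conclusion is restricted to test spaces of that class rather than yielding a genuine Hurewicz fibration.
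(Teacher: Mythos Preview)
Your argument is correct and is exactly the adaptation of J.~Cohen's proof of \propref{prop_j_cohen} to the quasitopological setting that the paper has in mind; the paper itself gives no proof (the corollary is marked \qed), treating it as a routine modification once \propref{compactly_open} is in hand. Your identification of the one genuinely new point---that the correcting function $\phi_{i-1}$ is only continuous on compacta, forcing the restriction to compactly generated test spaces---is precisely the content the corollary is meant to record.
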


\backmatter


\bibliographystyle{amsalpha}

\begin{thebibliography}{AA}

\bibitem{allaud} Guy Allaud, \booktitle{De-looping homotopy equivalences.}
	Arch. Math. (Basel) {\bf 23} (1972), 167-169.

\bibitem{ken-brown} Kenneth S.~Brown, \booktitle{Homological criteria for finiteness.}
	Comment.~Math.~Helv.~\textbf{50} (1975), 129-135.

\bibitem{brown-heath} Ronald Brown, Philip R. Heath, \booktitle{Coglueing homotopy equivalences.}
	 Math. Z. {\bf 113} (1970) 313-325. 

\bibitem{cauty} Robert Cauty, \booktitle{Sur les espaces d'applications dans les CW-complexes.}
	 Arch. Math. (Basel) {\bf 27} (1976), no. 3, 306-311.

\bibitem{cmn} Frederick R. Cohen, John C. Moore, Joseph A. Neisendorfer, \booktitle{The double suspension
	and exponents of the homotopy groups of spheres.} Ann. of Math. (2) {\bf 110} (1979), no. 3, 549-565.

\bibitem{joel_cohen_1} Joel M.~Cohen, \booktitle{The homotopy groups of inverse limits.}
	 Proc.~London Math.~Soc.~(3) {\bf 27} (1973), 159-177.

\bibitem{joel_cohen_2} Joel M.~Cohen, \booktitle{Inverse limits of principal fibrations.}
	 Proc.~London Math.~Soc.~(3) {\bf 27} (1973), 178-192.

\bibitem{dold} Albrecht Dold, \booktitle{Partitions of unity in the theory of fibrations.}
	 Ann.~of Math.~(2) {\bf 78} (1963), 223-255.

\bibitem{dold_thom} Abrecht Dold, Ren\'{e} Thom, \booktitle{Quasifaserungen und unendliche symmetrische Produkte.}
	Ann. of Math. (2) {\bf 67} (1958), 239-281.

\bibitem{dror} Emmanuel Dror Farjoun, \booktitle{Cellular spaces, null spaces and homotopy localization.}
	Lecture Notes in Mathematics, 1622. Springer-Verlag, Berlin, 1996.

\bibitem{dugundji} James Dugundji, \booktitle{Topology.} Allyn \& Bacon, Inc., Boston, Mass. 1966.

\bibitem{dyd-geog} Jerzy Dydak, Ross Geoghegan, \booktitle{The singular cohomology of the inverse limit
	of a Postnikov tower is representable.} Proc. Amer. Math. Soc. {\bf 98} (1986), 649-654.

\bibitem{corr-dyd-geog} Jerzy Dydak, Ross Geoghegan, \booktitle{Correction to: ``The singular cohomology of
	the inverse limit of a Postnikov tower is representable'' [Proc. Amer. Math. Soc. {\bf 98} (1986), no. 4]},
	Proc. Amer. Math. Soc. {\bf 103} (1988), no. 1, 334.  

\bibitem{edwards-hastings} David A.~Edwards, Harold M.~Hastings, \booktitle{\v{C}ech and Steenrod homotopy
	theories with applications to geometric topology.} Lecture Notes in Mathematics, Vol. 542.
	Springer-Verlag, Berlin-New York, 1976.

\bibitem{fadell} Edward Fadell, \booktitle{The equivalence of fiber spaces and bundles.}
	 Bull.~Amer.~Math.~Soc.~ {\bf 66} (1960), 50-53.

\bibitem{fadell2} Edward Fadell, \booktitle{On fiber spaces.} Trans.~Amer.~Math.~Soc. {\bf 90} (1959), 1-14. 

\bibitem{federer} Herbert Federer, \booktitle{A study of function spaces by spectral sequences.}
	Trans. Amer. Math. Soc. {\bf 82} (1956), 340-361.

\bibitem{fht} Yves F\'{e}lix, Stephen Halperin, Jean-Claude Thomas, \booktitle{Rational homotopy theory.}
	 Graduate Texts in Mathematics, 205. Springer-Verlag, New York, 2001.

\bibitem{fp} Rudolf Fritsch, Renzo A. Piccinini, \booktitle{Cellular structures in topology.}
	Cambridge University Press, 1990.

\bibitem{fuchs1} L\'{a}szl\'{o} Fuchs, \booktitle{Infinite abelian groups Vol. I.}
	Pure and Applied Mathematics. Vol. 36-I. Academic Press, New York-London, 1970.

\bibitem{fuchs2} L\'{a}szl\'{o} Fuchs, \booktitle{Infinite abelian groups Vol. II.}
	Pure and Applied Mathematics. Vol. 36-II. Academic Press, New York-London, 1973.

\bibitem{geoghegan} Ross Geoghegan, \booktitle{The inverse limit of homotopy equivalences between
	towers of fibrations is a homotopy equivalence-a simple proof.} Topology Proc. {\bf 4} (1979),
	99-101.

\bibitem{gitler} Samuel Gitler, \booktitle{Cohomology operations with local coefficients.}
	Amer.~J.~Math.~\textbf{85} (1963), 156-188.

\bibitem{gray-mcgibbon} Brayton Gray, Charles A. McGibbon, \booktitle{Universal phantom maps.}
	Topology {\bf 32} (1993), no. 2, 371-394.

\bibitem{halperin} Victor Halperin, \booktitle{Equivariant localization of diagrams.} Thesis. Hebrew University
	of Jerusalem, 1998.

\bibitem{hatcher} Allen Hatcher, \booktitle{Algebraic topology.} Cambridge University Press, Cambridge, 2002.

\bibitem{h-m-r} Peter Hilton, Guido Mislin, Joeseph Roitberg, \booktitle{Localization of nilpotent groups and spaces.}
	North-Holland Mathematics Studies, No. 15. Notas de Matem\'{a}tica, No. 55. [Notes on Mathematics, No. 55]
	North-Holland Publishing Co., Amsterdam-Oxford; American Elsevier Publishing Co., Inc., New York, 1975.

\bibitem{james} Ioan M. James, \booktitle{Reduced product spaces.} Ann. of Math. (2) {\bf 62} (1955), 170-197.

\bibitem{kahn} Peter J. Kahn, \booktitle{Some function spaces of CW type.} Proc. Amer. Math. Soc. {\bf 90} (1984),
	no. 4, 599-607.

\bibitem{kuratowski} Casimir Kuratowski, \booktitle{Sur les espaces localement connexes et p\'{e}aniens en dimension n.}
	Fund. Math. {\bf 24} (1935), 269-287.

\bibitem{lesh} Kathryn Lesh, \booktitle{Extensions of maps from suspensions of finite projective spaces.}
	Math. Z. {\bf 205} (1990), no. 3, 437-450. 

\bibitem{lewis} Gaunce Lewis, Jr., \booktitle{When is the natural map $X\rightarrow \Omega \Sigma X$ a cofibration?}
	Trans. Amer. Math. Soc. {\bf 273} (1982), no. 1, 147-155.

\bibitem{loth} Peter Loth, \booktitle{Classifications of abelian groups and Pontrjagin duality.}
	Algebra, Logic and Applications, 10. Gordon and Breach Science Publishers, Amsterdam, 1998.

\bibitem{mardesic-segal} Sibe Marde\v{s}i\'{c}, Jack Segal, \booktitle{Shape theory. The inverse system approach.}
	North-Holland Mathematical Library, 26. North-Holland Publishing Co., Amsterdam-New York, 1982.

\bibitem{maunder} Charles R. F. Maunder, \booktitle{Algebraic topology.}
	Dover Publications, Inc., Mineola, New York, 1996. 

\bibitem{mcgibbon} Charles A. McGibbon, \booktitle{Phantom maps.} Handbook of algebraic topology, 1209-1257,
	North-Holland, Amsterdam, 1995. 

\bibitem{mcgibbon_neisen} Charles A. McGibbon, Joseph A. Neisendorfer, \booktitle{On the homotopy groups
	of a finite-dimensional space.} Comment. Math. Helv. {\bf 59} (1984), no. 2, 253-257. 

\bibitem{mcgibbon_wilkerson} Charles A. McGibbon, Clarence W. Wilkerson, \booktitle{Loop spaces of finite complexes
	at large primes.} Proc. Amer. Math. Soc. {\bf 96} (1986), no. 4, 698-702.

\bibitem{meier} Willi Meier, \booktitle{Pullback theorems and phantom maps.} Quart. J. Math. Oxford Ser. (2) {\bf 29}
	 (1978), no. 116, 469-481.

\bibitem{miller} Haynes Miller, \booktitle{The Sullivan conjecture on maps from classifying spaces.}
	Ann. of Math. (2) {\bf 120} (1984), no. 1, 39-87.

\bibitem{milnor2} John Milnor, \booktitle{Construction of universal bundles. I.} Ann. of Math. (2) {\bf 63} (1956), 272-284.

\bibitem{milnor3} John Milnor, \booktitle{The geometric realization of a semi-simplicial complex.}
	 Ann.~of Math.~(2) {\bf 65} (1957), 357-362.

\bibitem{milnor} John Milnor, \booktitle{On spaces having the homotopy type of a CW-complex.}
	Trans. Amer. Math. Soc. {\bf 90} (1959), 272-280.

\bibitem{mislin2} Guido Mislin, \booktitle{Finitely dominated nilpotent spaces.}
	Ann. of Math. (2) \textbf{103} (1976), 547-556.

\bibitem{mislin} Guido Mislin, \booktitle{Wall's finiteness obstruction.}
	Handbook of algebraic topology, 1259-1291, North-Holland, Amsterdam, 1995. 

\bibitem{neisendorfer} Joseph A. Neisendorfer, \booktitle{Localization and connected covers of finite complexes.}
	The \v{C}ech centennial (Boston, MA, 1993), 385-390, Contemp. Math., 181, Amer. Math. Soc., Providence, RI, 1995.

\bibitem{neisendorfer2} Joseph A. Neisendorfer, \booktitle{A survey of Anick-Gray-Theriault constructions
	and applications to the exponent theory of spheres and Moore spaces.}
	Une d\'{e}gustation topologique: homotopy theory in the Swiss Alps (Arolla, 1999),
	159-174, Contemp. Math., 265, Amer. Math. Soc., Providence, RI, 2000.

\bibitem{neisen-selick} Joseph A. Neisendorfer, Paul S. Selick, \booktitle{Some examples of spaces with or without exponents.}
	Current trends in algebraic topology, Part 1 (London, Ont., 1981), pp. 343-357,
	CMS Conf. Proc., 2, Amer. Math. Soc., Providence, R.I., 1982. 

\bibitem{puppe} Dieter Puppe, \booktitle{Some well known weak homotopy equivalences are genuine homotopy equivalences.}
	Symposia Mathematica, Vol. V (INDAM, Rome, 1969/70), pp. 363-374. Academic Press, London, 1971.

\bibitem{roitberg} Joseph Roitberg, \booktitle{Computing homotopy classes of phantom maps.}
	The Hilton Symposium 1993 (Montreal, PQ), 141-168, CRM Proc. Lecture Notes, 6, Amer. Math. Soc.,
	Providence, RI, 1994. 

\bibitem{rutter} John W.~Rutter, \booktitle{On skeleton preserving homotopy self-equivalences of CW complexes.}
	Groups of self-equivalences and related topics (Montreal, PQ, 1988), 147-156, Lecture Notes in Math., 1425,
	Springer, Berlin, 1990. 

\bibitem{serre} Jean-Pierre Serre, \booktitle{Cohomologie modulo $2$ des complexes d'Eilenberg-MacLane.}
	 Comment. Math. Helv. {\bf 27} (1953), 198-232.

\bibitem{schon} Rolf Sch\"{o}n, \booktitle{Fibrations over a CWh-base.} Proc. Amer. Math. Soc. {\bf 62} (1976), no. 1,
	165-166 (1977).

\bibitem{selick} Paul Selick, \booktitle{Moore conjectures.} Algebraic topology---rational homotopy
	(Louvain-la-Neuve, 1986), 219-227, Lecture Notes in Math., 1318, Springer, Berlin, 1988.

\bibitem{skljarenko} Evgenij G. Skljarenko, \booktitle{Certain remarks on limit spaces of Postnikov systems.}
	Mat. Sb. (N.S.) {\bf 69 (111)} (1966), 61-64.

\bibitem{smrekar} Jaka Smrekar, \booktitle{Compact open topology and CW homotopy type.}
	Topology and its applications {\bf 130} (2003), 291-304.

\bibitem{spanier2} Edwin H. Spanier, \booktitle{Infinite symmetric products, function spaces, and duality.}
	Ann.~of Math.~(2) {\bf 69} (1959), 142-198.

\bibitem{spanier} Edwin H. Spanier, \booktitle{Algebraic topology.}
	Corrected reprint of the 1966 original. Springer-Verlag, New York, 19??. 

\bibitem{stasheff} James Stasheff, \booktitle{A classification theorem for fibre spaces.} Topology {\bf 2} (1963), 239-246.

\bibitem{steenrod} Norman E. Steenrod, \booktitle{A convenient category of topological spaces.}
	Michigan Math. J. {\bf 14} (1967), 133-152.

\bibitem{thom} Ren\'{e} Thom, \booktitle{L'homologie des espaces fonctionnels.} Colloque de topologie alg\'{e}brique,
	Louvain, 1956, 29-39. Georges Thone, Li\`{e}ge; Masson \& Cie, Paris, 1957. 

\bibitem{varadarajan} Kalathoor Varadarajan, \booktitle{The finiteness obstruction of C. T. C. Wall.}
	 Canadian Mathematical Society Series of Monographs and Advanced Texts. A Wiley-Interscience
	Publication. John Wiley \& Sons, Inc., New York, 1989.

\bibitem{wall} Charles T.~C.~Wall, \booktitle{Finiteness conditions for ${\rm CW}$-complexes.}
	Ann. of Math. (2) \textbf{81} (1965), 56-69.

\bibitem{whitehead} John H.~C.~Whitehead, \booktitle{A certain exact sequence.} Ann.~of Math.~(2) {\bf 52} (1950), 51-110.

\bibitem{zabrodsky} Alexander Zabrodsky, \booktitle{On phantom maps and a theorem of H. Miller.}
	Israel J. Math. {\bf 58} (1987), no. 2, 129-143.

\bibitem{zabrodsky2} Alexander Zabrodsky, \booktitle{Maps between classifying spaces.}
	Algebraic topology and algebraic $K$-theory (Princeton, N.J., 1983), 228-246, Ann. of Math. Stud., 113,
	Princeton Univ. Press, Princeton, NJ, 1987.

\hrule

\bibitem{copeland} Arthur H.~Copeland, Jr., \booktitle{Binary operations on sets of mapping classes.}
	Michigan Math.~J.~{\bf 6} (1959), 7-23.
 
\bibitem{davey-priestley} Brian A.~Davey, Hilary A. Priestley, \booktitle{Introduction to lattices and order.}
	 Cambridge Mathematical Textbooks. Cambridge University Press, Cambridge, 1990.

\bibitem{strom2} Arne Str\o m, \booktitle{Note on cofibrations II.} Math. Scand. {\bf 22} (1968), 130-142.



\end{thebibliography}

\end{document}